\newcommand{\@bbify}[1]{
  \ifcsname b#1\endcsname
  \message{WARNING: Overwriting b#1 with blackboard letter!}
  \fi
  \expandafter\edef\csname b#1\endcsname
  {\noexpand\ensuremath{\noexpand\mathbb #1}\noexpand\xspace}}
\newcommand{\@calify}[1]{
  \ifcsname c#1\endcsname
  \message{WARNING: Overwriting c#1 with calligraphic letter!}
  \fi 
  \expandafter\edef\csname c#1\endcsname
  {\noexpand\ensuremath{\noexpand\mathcal #1}\noexpand\xspace}}
\newcommand{\@bfify}[1]{
  \ifcsname bf#1\endcsname
  \message{WARNING: Overwriting c#1 with bold letter!}
  \fi
  \expandafter\edef\csname bf#1\endcsname
  {\noexpand\ensuremath{\noexpand\mathbf #1}\noexpand\xspace}}
\newcounter{@letter}\stepcounter{@letter}
\loop\@bbify{\Alph{@letter}}\@calify{\Alph{@letter}}\@bfify{\Alph{@letter}}
\newenvironment{tz}{\begin{center}\begin{tikzpicture}[scale=1]}{\end{tikzpicture}\end{center}}
\tikzstyle{d}=[double distance=.3ex]
\tikzstyle{w}=[preaction={draw=white, -,line width=4pt}]
\newcounter{diagram}
\newenvironment{diagram}{\setcounter{diagram}{\value{thm}}\refstepcounter{thm}\refstepcounter{diagram}\begin{center}\normalfont{(\thediagram)}\hfill\begin{tikzpicture}[baseline=(current bounding box.center)]}
    {\end{tikzpicture}\hfill\text{ }\end{center}}
\tikzset{over/.style={auto=false,fill=white,inner sep=1.5pt, minimum size=0, outer sep=0}, 
pro/.style={postaction={decorate,decoration={
        markings,
        mark=at position .5 with {\node at (0,0) {$\bullet$};}
      }},
      inner sep=1ex,
      },n/.style={double equal sign distance, -implies},t/.style={double distance=2.5pt, -implies, postaction={draw,-}},
  }
\tikzset{%
node distance=1.5cm, la/.style={scale=0.8}, rr/.style={xshift=1.5cm},
space/.style={xshift=.5cm},
    symbol/.style={%
        draw=none,
        every to/.append style={%
            edge node={node [sloped, allow upside down, auto=false]{$#1$}}},
            
    }
}
\def\cellslide{0.5}
\def\celllength{.2cm}
\NewDocumentCommand{\cell}{ O{} O{m} O{\cellslide} O{\celllength} m m m }{
  \coordinate (mid) at ($({#5})!{#3}!({#6})$);
  \coordinate (start) at ($(mid)!{#4}!({#5})$);
  \coordinate (end) at ($(mid)!{#4}!({#6})$);
  \draw[#2] (start) to node
  [inner sep=4pt,outer sep=0,minimum size=0,#1]{{#7}} (end);
}
\declaretheorem[name=Theorem,numbered=yes]{theoremA}
\newtheorem{thm}{Theorem}[subsection] 
\newtheorem*{thm*}{Theorem}
\newtheorem{lemma}[thm]{Lemma}
\newtheorem{prop}[thm]{Proposition}
\theoremstyle{definition}
\newtheorem{defn}[thm]{Definition}
\newtheorem{notation}[thm]{Notation}
\newtheorem{constr}[thm]{Construction}
\theoremstyle{remark}
\newtheorem{rmk}[thm]{Remark}
\newtheorem{ex}[thm]{Example}
\crefname{lem}{Lemma}{Lemmas}
\crefname{thm}{Theorem}{Theorems}
\crefname{defn}{Definition}{Definitions}
\crefname{prop}{Proposition}{Propositions}
\crefname{rmk}{Remark}{Remarks}
\crefname{cor}{Corollary}{Corollaries}
\crefname{ex}{Example}{Examples}
\crefname{notation}{Notation}{Notations}
\crefname{constr}{Construction}{Constructions}
\crefname{recall}{Recall}{Recalls}
\crefname{descr}{Description}{Descriptions}
\crefname{para}{\textsection}{\textsection\textsection}
\newlist{rome}{enumerate}{7}
\setlist[rome]{label=(\roman*)}
\newlist{alphabet}{enumerate}{7}
\setlist[alphabet]{label=(\alph*)}
\newcommand{\pushout}[1]{\node at ($({#1})-(10pt,-10pt)$) {$\ulcorner$};}
\newcommand{\pullback}[1]{\node at ($({#1})+(10pt,-10pt)$) {$\lrcorner$};}
\NewDocumentCommand{\punctuation}{ m m O{5pt} }{\node at ($(#1.east)-(0,#3)$) {#2};}
\newcommand{\defThn}{\ensuremath{\theta}}
\newcommand{\defS}{k}
\NewDocumentCommand\Thn{O{n-1}}{\ensuremath{\Theta}_{{#1}}}
\NewDocumentCommand\Thnsset{O{n-1}}{\mathit{s}\set^{\ensuremath{\Theta_{\scriptscriptstyle{#1}}^{\op}}}}
\NewDocumentCommand\Thnssset{O{n-1}}{\mathit{ss}\set^{\ensuremath{\Theta_{\scriptscriptstyle{#1}}^{\op}}}}
\NewDocumentCommand\CSThn{O{n-1}}{{\ensuremath{\scriptscriptstyle(\infty,{#1})}}}
\NewDocumentCommand\AraThn{O{n-1}}{{\ensuremath{\scriptscriptstyle(\infty,{#1})}}}
\newcommand{\Thnssetslice}[1]{\Thnsset_{/{#1}}}
\newcommand{\cat}{\cC\!\mathit{at}}
\newcommand{\set}{\cS\!\mathit{et}}
\newcommand{\sset}{\mathit{s}\set}
\newcommand{\Pcat}{\mathcal{P}\cat}
\newcommand{\DThn}{\Delta\times \Thn}
\newcommand{\DThnS}{\DThn\times \Delta}
\newcommand{\ThnS}{\Thn\times \Delta}
\newcommand{\Dop}{\Delta^{\op}}
\newcommand{\Nec}{\mathcal{N}\mathit{ec}}
\newcommand{\MSspace}{\sset_{\Kan}}
\newcommand{\projsThnsset}{(\Thnsset_{\CSThn})^{\Delta^{\op}}_\proj}
\newcommand{\injThnspace}{(\sset_{\Kan})^{\Thnop}_\inj}
\newcommand{\injsThnspace}{(\sset_{\Kan})^{\ThnSop}_\inj}
\newcommand{\injsThnspaceslice}[1]{(\sset_{\Kan})^{\ThnSop}_{\inj\;\;\;\;\;\; /{#1}}}
\newcommand{\MSThnsset}{\Thnsset_{\CSThn}}
\newcommand{\MSThncat}{\Thnsset_{\CSThn}\text{-}\cat}
\newcommand{\injsThnsset}{(\Thnsset_{\CSThn})^{\Delta^{\op}}_\inj}
\newcommand{\Dset}{\set^{\Dop}}
\newcommand{\diag}{\mathrm{diag}}
\newcommand{\SSset}{\mathit{s}\sset}
\newcommand{\Thnset}{\set^{\Thnop}}
\newcommand{\sThnsset}{\mathit{s}\set^{\Thnop\times \Dop}}
\newcommand{\sThnssetslice}[1]{\sThnsset_{\;\;\;\;\;\;\;\;\;\;/{#1}}}
\newcommand{\sThnssetsliceshort}[1]{\sThnsset_{/{#1}}}
\newcommand{\Thncat}{\Thnsset\text{-}\cat}
\newcommand{\sThncat}{\Thnssset\text{-}\cat}
\newcommand{\Thnop}{{\Theta_{\scriptscriptstyle n-1}^{\op}}}
\newcommand{\DThnop}{\Dop\times \Thnop}
\newcommand{\ThnSop}{\Thnop\times \Dop}
\newcommand{\MSrightfib}[1]{(\sThnssetslice{{#1}})_{\CSThn}^\mathrm{right}}
\newcommand{\pcatThn}{\Pcat(\Thnsset)}
\newcommand{\pcatproj}{\Pcat(\Thnsset_{\CSThn})_\proj}
\newcommand{\pcatinj}{\Pcat(\Thnsset_{\CSThn})_\inj}
\newcommand{\fib}{\mathrm{fib}}
\NewDocumentCommand\Sp{O{m}}{Sp[{#1}]}
\NewDocumentCommand\repD{O{m}}{F[{#1}]}
\NewDocumentCommand\repS{O{\defS}}{\ensuremath{\Delta}[{#1}]}
\NewDocumentCommand\repThn{O{\defThn}}%
  {\Thn{[{#1}]}}
\NewDocumentCommand\repDThn{O{m} O{\defThn}}%
  {F[{#1}]\times \Thn{[{#2}]}}
\NewDocumentCommand\repDThnS{O{m} O{\defThn} O{\defS}}%
  {F[{#1}]\times \Thn{[{#2}]}\times \ensuremath{\Delta}[{#3}]}
\NewDocumentCommand\repThnS{O{\defThn} O{\defS}}%
  {\Thn[{#1}]\times \ensuremath{\Delta}[{#2}]}
\newcommand{\Ch}{\mathfrak{C}}
\newcommand{\Nh}{\mathfrak{N}}
\newcommand{\St}{\mathrm{St}}
\newcommand{\Un}{\mathrm{Un}}
\newcommand{\mapDelta}{\mu}
\DeclareMathOperator{\Ob}{Ob}
\DeclareMathOperator{\colim}{colim}
\DeclareMathOperator{\Ho}{Ho}
\DeclareMathOperator{\Hom}{Hom}
\DeclareMathOperator{\op}{op}
\DeclareMathOperator{\ev}{ev}
\DeclareMathOperator{\id}{id}
\DeclareMathOperator{\im}{Im}
\newcommand{\Kan}{\CSThn[0]}
\newcommand{\inj}{\mathrm{inj}}
\newcommand{\proj}{\mathrm{proj}}
\newcommand{\sCat}{{\sset\text{-}\cat}}
\newcommand{\tndnec}[3]{\Nec({#1})_{{#2},{#3}}^{\mathrm{tnd}}}
\newcommand{\catnec}[3]{\Nec({#1})_{{#2},{#3}}}
\newcommand{\NL}{N^h}
\newcommand{\CL}{c^h}
\NewDocumentCommand\Cone{O{f} O{\ensuremath{\mapDelta}}}{{\mathrm{Cone}_{#2}({#1})}}
\NewDocumentCommand\newGbar{O{f} O{i} O{\ensuremath{\mapDelta}}}{{\overline{\cF}^{\,{#2}}_{#3}({#1})}}
\NewDocumentCommand\newG{O{f} O{i} O{\ensuremath{\mapDelta}}}{{\cF^{\,{#2}}_{#3}({#1})}}
\NewDocumentCommand\Hbar{O{i} O{m+1}}{{\overline{H}^{\,{#1}}_{#2}}}
\author{Lyne Moser}
\address{Fakultät für Mathematik, Universität Regensburg, 93040 Regensburg, Germany}
\email{lyne.moser@ur.de}
\author{Nima Rasekh}
\address{Max Planck Institute for Mathematics, Bonn, Germany}
\email{rasekh@mpim-bonn.mpg.de}
\author{Martina Rovelli}
\address{Department of Mathematics and Statistics, University of Massachusetts Amherst, Amherst, USA}
\email{mrovelli@umass.edu}
\keywords{Enriched categories, simplicial categories, $(\infty,n)$-categories, categorical fibrations, Grothendieck construction, straightening construction}
\subjclass[2020]{18N65; 55U35; 18N50; 18N45 ; 18N10.}
\title{An $(\infty,n)$-categorical straightening-Unstraightening construction}
\begin{document}

\maketitle

\begin{abstract}
We provide an $(\infty,n)$-categorical version of the straightening-unstraightening construction, asserting an equivalence between the $(\infty,n)$-category of double $(\infty,n-1)$-right fibrations over an $(\infty,n)$-category $\cC$ and that of the $(\infty,n)$-functors from $\cC$ valued in $(\infty,n-1)$-categories. We realize this in the form of a Quillen equivalence between appropriate model structures; on the one hand, a model structure for double $(\infty,n-1)$-right fibrations over a generic precategory object $W$ in $(\infty,n-1)$-categories and, on the other hand, a model structure for $(\infty,n)$-functors from its homotopy coherent categorification $\mathfrak{C} W$ valued in $(\infty,n-1)$-categories.
\end{abstract}

\setcounter{tocdepth}{1}
\tableofcontents

\section*{Introduction}

\subsection*{The rise of fibrations}
Categories have established themselves in a variety of mathematical settings, ranging from geometry and topology to algebra and representation theory, which has served as a motivation for developing a proper theory of categories and in particular the study of the category of sets. Indeed, the fact that every category has hom sets implies that every category embeds in its presheaf category and is hence equivalent to its category of representable functors. This result is known as the Yoneda lemma and the embedding as the Yoneda embedding.

Combining our understanding of the category of sets with the Yoneda embedding has far reaching implications for category theory and many important applications. For example, it enables us to reduce limits of the most complicated diagrams to equalizers and products. It also permits us to formally study geometrically motivated objects, such as schemes, via their category of sheaves, and in fact this can be seen as a key innovation of the Grothendieck school in algebraic geometry. 

In recent decades categories have been generalized to a variety of higher categories with the goal of capturing the relevant structure. On the one side, categories form a $2$-category, keeping track of categories, functors and natural transformations, which, for instance, permits us to effectively discuss adjunctions inside the $2$-category of categories and is hence an effective way to capture purely categorical data. On the other side, topological spaces form an $(\infty,1)$-category with structure given by spaces, continuous maps and weak homotopy equivalences, which permits us to make sense of homotopy invariant constructions internal to the $(\infty,1)$-category and presents us with effective methods to capture purely homotopical data. However, not every mathematical structure exhibits only categorical or homotopical data, but rather combines these two. Examples include $(\infty,1)$-categories themselves, derived stacks on schemes, but also $n$-manifolds studied from the perspective of the theory of bordisms. In all those cases, the objects naturally form an $(\infty,n)$-category which combines the structure of an $n$-dimensional category with appropriately chosen homotopical data.

In a similar way to categories, these $(\infty,n)$-categories have found a variety of applications and particularly in the cases $n=1$ and $n=2$. This provides a motivation for a higher categorical analogue of the Yoneda lemma and the study of presheaves, which, however, has been proven to be far more challenging than one initially anticipated. Indeed, as part of the Yoneda lemma we would like to associate to each object in our $(\infty,n)$-category a representable functor. However, here functoriality is given by composition which is not strict in all relevant models and examples, necessitating an alternative approach.

Fortunately, this issue already arises when working with presheaves valued in groupoids! As they played an important role in modern algebraic geometry, the one major philosophy to deal with such situations was already developed in the sixties: {\it categorical fibrations}. At that time Grothendieck defined what we now call Grothendieck fibrations and the Grothendieck construction, to associate to each groupoid valued functor a fibration and showed that categories of stacks can be described as categories of such fibrations.

In the setting of $(\infty,1)$-categories the Grothendieck construction served as a motivation for Lurie to develop the ``straightening-unstraightening" adjunction, which establishes an equivalence between presheaves valued in $(\infty,1)$-categories (resp.~spaces) and Cartesian (resp.~right) fibrations. This result has then been the basis for a wide range of $(\infty,1)$-categorical results, such as the Yoneda embedding, symmetric monoidal $(\infty,1)$-categories, ... . On the other hand, the situation regarding $(\infty,n)$-categories for $n>1$ has remained far more unclear.

\subsection*{Towards a general straightening for \texorpdfstring{$(\infty,n)$}{(oo,n)}-categories}
As explained above, the straightening construction has been a key input needed to advance the theory of $(\infty,1)$-categories. As such we anticipate a similar important role for a straightening construction for $(\infty,n)$-categories, which establishes an equivalence between presheaves valued in the $(\infty,n)$-category of $(\infty,n-1)$-categories with an appropriately defined notion of fibration over a certain object $W$. As a result, several flavors of such a construction have already been studied in a variety of settings. In each one of those cases the authors have chosen one (or several similar) models of $(\infty,n)$-categories as a basis for a straightening construction.

\begin{enumerate}[leftmargin=0.6cm]
    \item Lurie has generalized his own construction for $(\infty,1)$-categories to $(\infty,2)$-categories using scaled simplicial sets \cite{lurieGoodwillie}. This approach has been further studied by   Gagna--Harpaz--Lanari \cite{gagnaharpazlanari2020inftytwolimits} and by Abell\'{a}n-Garc\'{\i}a--Stern \cite{abellangarciastern20222cartfibi,abellangarciastern20222cartfibii}.
    \item In \cite{RasekhD} the second author studied a straightening construction focused on models of $(\infty,n)$-categories developed by Bergner--Rezk \cite{br2}, that works for all $n$, however, restricts to fibrations where the basis $W$ is the strict nerve of a strictly enriched category.
    \item In \cite{nuiten20222straightening}  Nuiten presents a straightening construction that holds for all $n$ however only applies to fibrations where the basis $W$ is an $n$-fold complete Segal space. 
\end{enumerate}

While these results have greatly contributed to a better understanding of fibrations, they also include some shortcomings, which restrict their applicability. In order to illustrate possible challenges, let us focus here on the case where the basis $W$ is a precategory object in the category $\Thnsset$ of $\Thn$-spaces, which can be endowed with a model structure $\MSThnsset$ that makes it a model of $(\infty,n-1)$-categories \cite{rezkTheta}. The category $\pcatThn$ of such objects is defines as a full subcategory of simplicial objects in $\Thn$-spaces with discrete level $0$ and has been shown to carry a model structure that makes it a model of $(\infty,n)$-categories \cite{br2}. Now, the results in $(1)$ would only apply to the case $n=2$ and even in that case one would have to translate via an intricate web of equivalences of various models of $(\infty,2)$-categories \cite{gagnaharpazlanari2022equivalence}. The approaches $(2)$ and $(3)$ do apply to all~$n$, however, the basis object $W$ does not range over all objects in $\pcatThn$, but instead only over the ones coming from strict nerves of strictly enriched categories or $n$-Segal categories (i.e., $(\infty,n)$-categories). While up to equivalence these include all objects of $\pcatThn$, we now mention a few concrete reasons why we would want a straightening construction for all objects, rather than just some objects which cover all equivalence classes.
\begin{enumerate}[leftmargin=0.6cm]
    \item From a computational perspective, we like to have the ability to define and study colimits of graphs without necessarily forming their corresponding category, as it might significantly complicate constructions. As a simple example, the graph given by a single loop is a finite graph with two non-degenerate simplices. This means the data of a fibration over such a graph has a very simple structure. On the other side, its associated category, known as the free endomorphism category, is in fact infinite.
    \item As $\pcatThn$ is in particular a presheaf category, every $(\infty,n)$-category permits an explicit filtration via a chain of subsimplices, which provides us with one effective tool to prove a variety of results via induction. However, while the starting point and the end result of the filtration are $(\infty,n)$-categories the various steps in between will generally not be and so any inductive argument that involves straightening will need the ability to straighten general objects, rather than just $(\infty,n)$-categories. An example for the effectiveness of this method is the proof of \cite[Theorem 3.2.0.1]{htt}, which relies on the inductive argument given in \cite[Lemma 2.2.3.5]{htt}
    \item As (co)limits in an $(\infty,n)$-category involve an infinite level of coherent interlocking data, we often would like to use strictification methods to compute the (co)limits more effectively. However, this requires using the free contractible homotopy coherent diagram, which internalizes all the required homotopy coherence. For a general diagram it is nearly impossible to compute, however, a straightening construction would give us a direct way of computing the desired free homotopy coherent diagram by applying straightening to the identity fibration, which can be used to explicitly calculate (co)limits. Indeed, this is precisely the method used by Riehl and Verity to give an explicit description of weighted (co)limits which they use to identify (co)limits in quasi-categories  \cite{RiehlVerityNCoh}. Note here that, while they do not use the terminology of straightening explicitly, their description in  \cite[Definition 5.2.8]{RiehlVerityNCoh} precisely coincides with the straightening construction found in \cite[\S 2.2.1]{htt}.
\end{enumerate}

These are just some of the reasons motivating us to construct a straightening construction for all $n$ and for all simplicial objects rather than just simplicial objects that come from strict nerves or $(\infty,n)$-categories. 

\subsection*{A straightening for \texorpdfstring{$(\infty,n)$}{(oo,n)}-categories}
Our work exactly remedies this situation by constructing a straightening functor for all objects in the category $\pcatThn$, which establishes a general equivalence between functors and fibrations. Given an object $W\in \pcatThn$, on one side, we consider the category of $\Thnsset$-enriched functors $\Ch W^{\op}\to \Thnsset$ equipped with the projective model structure $[\Ch W^{\op},\MSThnsset]_\proj$. Here, the $\Thnsset$-enriched category $\Ch W$ denotes the homotopy coherent categorification of $W$, where $\Ch$ is left adjoint of the homotopy coherent nerve $\Nh\colon \Thncat\to \pcatThn$ \cite{MRR1}. On the other side, we consider the category of maps in the larger ambient $\sThnsset$ over $W$ equipped with the model structure $\MSrightfib{W}$ for double $(\infty,n-1)$-right fibrations \cite{RasekhD}.

Our main result now consists of constructing an explicit enriched Quillen equivalence between these two model structures, which appears as \cref{stunmainQE}.

\begin{theoremA}
Let $W$ be an object in $\pcatThn$. There is a Quillen equivalence enriched over $\MSThnsset$ and natural in $W$
\begin{tz}
\node[](1) {$\MSrightfib{W}$}; 
\node[right of=1,xshift=3.9cm](2) {$[\Ch W^{\op},\MSThnsset]_\proj$}; 
\punctuation{2}{.};

\draw[->] ($(2.west)-(0,5pt)$) to node[below,la]{$\Un_{W}$} ($(1.east)-(0,5pt)$);
\draw[->] ($(1.east)+(0,5pt)$) to node[above,la]{$\St_{W}$} ($(2.west)+(0,5pt)$);

\node[la] at ($(1.east)!0.5!(2.west)$) {$\bot$};
\end{tz}
\end{theoremA}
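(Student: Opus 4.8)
\noindent\emph{Proof strategy.}
With $W\in\pcatThn$ fixed, one first has to construct the adjoint pair $(\St_W,\Un_W)$: the straightening $\St_W$ is given by a coend formula built from the explicit rigidification of $W$ — the description of $\Ch W$ through the necklace categories $\catnec{W}{a}{b}$ and their totally non-degenerate variants $\tndnec{W}{a}{b}$ provided by \cite{MRR1} — and the unstraightening $\Un_W$ is the resulting right adjoint, computed by a mapping-object formula; both are manifestly enriched over $\MSThnsset$ and natural in $W$, so that promoting the underlying Quillen adjunction to an $\MSThnsset$-enriched one at the end is just the routine check of the relative pushout-product axioms on the two sides. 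The first half of the argument is then to establish that $(\St_W,\Un_W)$ is a Quillen adjunction for every $W$; since both model structures are left Bousfield localizations of more tractable (projective, respectively slice) model structures, this reduces to verifying that $\St_W$ sends the generating cofibrations and generating acyclic cofibrations of $\MSrightfib{W}$ to cofibrations, respectively acyclic cofibrations, of $[\Ch W^{\op},\MSThnsset]_\proj$ — using that $\Ch W$ is cofibrant as a $\Thnsset$-category and that $\MSThnsset$ satisfies the pushout-product axiom — or, dually, that $\Un_W$ preserves acyclic fibrations and fibrations between fibrant objects.

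The second half, that $(\St_W,\Un_W)$ is moreover a Quillen \emph{equivalence}, I would prove by dévissage on $W$. Let $\mathcal{W}\subseteq\pcatThn$ consist of those objects for which the derived unit $X\to\Un_W(\St_W X)^{\mathrm{fib}}$ is a weak equivalence for all cofibrant $X$ and the derived counit $\St_W(\Un_W Y)^{\mathrm{cof}}\to Y$ is a weak equivalence for all fibrant $Y$. One wants $\mathcal{W}$ to contain enough objects — the generators from which, via pushouts along cofibrations, transfinite composites, and retracts, the whole of $\pcatThn$ is assembled — and to be closed under these operations. Closure hinges on the fact that every map $W'\to W$ induces, compatibly with $\St$ and $\Un$, a Quillen pair on each side — by slicing on $\MSrightfib{-}$, and by restriction along $\Ch W'\to\Ch W$ on $[\Ch(-)^{\op},\MSThnsset]_\proj$, using that $\Ch$ preserves the colimit presenting $W$ — so that, both model structures being left proper, one can invoke the usual gluing principle that a natural Quillen pair which is a Quillen equivalence on every term of a homotopy pushout of base objects is a Quillen equivalence on the total object. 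The delicate point — and where I expect the principal obstacle — is to show that $\MSrightfib{W}$ genuinely computes the expected homotopy colimit along these cellular attachments even when $W$ is far from being a Segal or complete object; this is exactly the generality, over all of $\pcatThn$ rather than over $(\infty,n)$-categories only, in which the present statement goes beyond the earlier straightening constructions.

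Finally one must settle the base of the dévissage. Using the Segal-type conditions available in the $(\infty,n)$-categorical model structure on $\pcatThn$ one reduces the representable generators to nerves $N\mathcal{C}$ of $\Thnsset$-enriched categories (the walking $X$-cells for $X\in\Thnsset$, glued along objects); for such $W=N\mathcal{C}$ the asserted Quillen equivalence either follows from the straightening theorem of \cite{RasekhD} — the necklace description of $\Ch N\mathcal{C}$ from \cite{MRR1} supplying the identification with $\mathcal{C}$ — or is checked by a direct computation of the derived unit and counit on the representable objects of the two slices, the simplest case $W=\repD[0]$ being the one where both model categories are $\MSThnsset$ and the pair is naturally isomorphic to the identity. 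Assembling the Quillen adjunction, the dévissage, and this base case gives the Quillen equivalence; and since the enrichment over $\MSThnsset$ and the naturality in $W$ have been maintained throughout, this is the enriched, natural Quillen equivalence of the statement.
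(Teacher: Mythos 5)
Your overall plan for the Quillen \emph{adjunction} is in the right direction (reduce to generating maps), but the mechanism you cite is not the one that makes it work, and your plan for the Quillen \emph{equivalence} rests on an unproved step that is in fact the crux. For the adjunction, ``cofibrancy of $\Ch W$ plus the pushout-product axiom'' does not apply: $\St_W$ is not a restriction/Kan-extension along an enriched functor, and the actual argument requires (i) the base-change naturality $(\Ch f)_!\St_W\cong \St_Z f_!$ (\cref{basechangeSt}), which reduces everything to the universal cases over $L\repD[m,Y]$, and (ii) genuine computations, via necklace calculus, of $\St_{L\repD[m,Y]}$ on the pushout-product generators and on the last-vertex and $S_{\CSThn}$ maps (\cref{Stofpushprod}, \cref{prop:StLsendsrightfib}); these computations are the bulk of the work and are not formal consequences of monoidality. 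Also note that the localized structure $\MSrightfib{W}$ has no explicit generating trivial cofibrations; one argues via \cref{QPwithinj} and the localization criterion, checking that the localizing maps go to weak equivalences.

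For the equivalence, your d\'evissage hinges on a ``gluing principle'': that a natural Quillen pair which is an equivalence over each term of a homotopy pushout of bases is an equivalence over the pushout. In this setting that is precisely a descent statement for both assignments $W\mapsto \MSrightfib{W}$ and $W\mapsto [\Ch W^{\op},\MSThnsset]_\proj$ along cellular attachments in $\pcatThn$, and neither is available off the shelf: on the fibration side one must show that right double $(\infty,n-1)$-fibrations over a pushout of bases are the homotopy limit of those over the pieces (over non-Segal, non-fibrant bases), and on the functor side one needs the analogous statement for enriched presheaves along $\Ch$ of the pushout, which requires knowing that $\Ch$ of the attaching square is a homotopy pushout of $\Thnsset$-categories and that enriched presheaf categories satisfy descent for it. You flag this as the ``principal obstacle'' but do not address it, so the proof is incomplete at its central point. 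Your proposed base case is also off: the cellular generators of $\pcatThn$ are the objects $L\repD[m,\defThn,\defS]$, which for $m\geq 2$ are \emph{not} strict nerves (their Segal maps are not isomorphisms), so the straightening theorem over $N\cC$ from \cite{RasekhD} does not apply to them directly; one is forced to compare along weak equivalences of bases, i.e.\ to use invariance of both sides under weak equivalences in $\pcatinj$ (\cref{postcompisQP}, \cref{LKEisQP}, \cref{StWQEStZ}). Once one has that invariance, the d\'evissage is unnecessary: this is exactly how the paper proceeds, reducing a general $W$ to $\Nh\widehat{\Ch W}$ via the derived unit of $\Ch\dashv\Nh$ (\cref{injQuillen}), and handling $\Nh\cC$ by comparing $\St_{\Nh\cC}$ with the Grothendieck construction $\int_\cC^{\Nh}$ (\cref{prop:intisQE}, \cref{prop:StNCQE}), which in turn is compared with the strict construction of \cite{RasekhD}. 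So the route you propose could only be salvaged by proving the descent statement, a task at least as hard as the argument it is meant to replace.
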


As part of proving this result, we also construct a partial inverse, meaning a left adjoint from functors to fibrations, that also induces an enriched Quillen equivalence. This result appears as \cref{prop:intisQE}. 

\begin{theoremA} 
Let $\cC$ be a fibrant $\MSThnsset$-enriched category. There is a Quillen equivalence enriched over $\MSThnsset$
\begin{tz}
\node[](1) {$[\cC^{\op},\MSThnsset]_\proj$}; 
\node[right of=1,xshift=3.8cm](2) {$\MSrightfib{\Nh\cC}$}; 
\punctuation{2}{.};

\draw[->] ($(2.west)-(0,5pt)$) to node[below,la]{$\cH_\cC^\Nh$} ($(1.east)-(0,5pt)$); 
\draw[->] ($(1.east)+(0,5pt)$) to node[above,la]{$\int_\cC^\Nh$} ($(2.west)+(0,5pt)$);

\node[la] at ($(1.east)!0.5!(2.west)$) {$\bot$};
\end{tz}
\end{theoremA}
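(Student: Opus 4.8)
The plan is to deduce the statement from the main theorem, \cref{stunmainQE} applied with $W=\Nh\cC$, together with the homotopical behaviour of the counit of $\Ch\dashv\Nh$. Recall that $\int_\cC^\Nh$ is constructed as the $\MSThnsset$-enriched left adjoint of $\cH_\cC^\Nh$, where $\cH_\cC^\Nh$ sends a map $X\to\Nh\cC$ to the functor $c\mapsto\underline{\Map}_{/\Nh\cC}(\Nh(\cC_{/c}),X)$; equivalently $\int_\cC^\Nh$ is the enriched coend $F\mapsto\int^{c\in\cC}F(c)\otimes\Nh(\cC_{/c})$, formed in $\sThnssetsliceshort{\Nh\cC}$. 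The first step is to check that $\int_\cC^\Nh\dashv\cH_\cC^\Nh$ is an $\MSThnsset$-enriched Quillen adjunction. The projective model structure $[\cC^{\op},\MSThnsset]_\proj$ is cofibrantly generated by the morphisms $\cC(-,c)\otimes(A\to B)$ with $c\in\Ob\cC$ and $(A\to B)$ a generating (trivial) cofibration of $\MSThnsset$, and since $\int_\cC^\Nh$ is $\MSThnsset$-enriched, the image of such a morphism is the pushout--product $\bigl(\emptyset\to\int_\cC^\Nh\cC(-,c)\bigr)\mathbin{\widehat{\otimes}}(A\to B)$. As cofibrations in $\MSrightfib{\Nh\cC}$ are monomorphisms \cite{RasekhD}, every object there is cofibrant, so this pushout--product is a (trivial) cofibration by the pushout--product axiom for the $\MSThnsset$-enrichment of $\MSrightfib{\Nh\cC}$; hence $\int_\cC^\Nh$ is left Quillen, and the adjunction is enriched by construction.

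The heart of the proof is to identify the left-derived functor $\mathbb{L}\int_\cC^\Nh$. I would show that there is a natural isomorphism $\St_{\Nh\cC}\circ\int_\cC^\Nh\cong\epsilon_\cC^*$, where $\epsilon_\cC\colon\Ch\Nh\cC\to\cC$ is the counit of $\Ch\dashv\Nh$ and $\epsilon_\cC^*\colon[\cC^{\op},\MSThnsset]\to[\Ch\Nh\cC^{\op},\MSThnsset]$ is restriction. Both composites are $\MSThnsset$-enriched and cocontinuous, so by the enriched co-Yoneda lemma it suffices to establish the isomorphism on the representables $\cC(-,c)$, where it amounts to the computation that the straightening of the slice fibration $\Nh(\cC_{/c})\to\Nh\cC$ is the restricted corepresentable $\cC(\epsilon_\cC(-),c)$. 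Since $\cC$ is fibrant and $\Nh\cC$ is cofibrant (being a presheaf object), $\epsilon_\cC$ is the derived counit of the Quillen equivalence $\Ch\dashv\Nh$ \cite{MRR1}, hence a weak equivalence of $\MSThnsset$-enriched categories; consequently restriction along $\epsilon_\cC$ is a right Quillen equivalence between the projective model structures and preserves all weak equivalences. Passing to homotopy categories we get $\mathbb{L}\St_{\Nh\cC}\circ\mathbb{L}\int_\cC^\Nh\cong\epsilon_\cC^*$; as $\mathbb{L}\St_{\Nh\cC}$ is an equivalence by \cref{stunmainQE} and $\epsilon_\cC^*$ is an equivalence, this yields $\mathbb{L}\int_\cC^\Nh\cong(\mathbb{L}\St_{\Nh\cC})^{-1}\circ\epsilon_\cC^*$, an equivalence of homotopy categories. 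A Quillen adjunction whose left-derived functor is an equivalence is a Quillen equivalence, and since the present adjunction is $\MSThnsset$-enriched this gives the asserted enriched Quillen equivalence.

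The main obstacle is the compatibility isomorphism $\St_{\Nh\cC}\circ\int_\cC^\Nh\cong\epsilon_\cC^*$, concretely the identification of the straightening of the slice fibration $\Nh(\cC_{/c})\to\Nh\cC$. The subtlety is that $\St_{\Nh\cC}$ is built from a homotopy coherent rigidification-type construction while $\Nh$ commutes with slices only up to coherent homotopy, so I expect to need a filtration-by-cofibrations argument comparing $\Ch$ applied to the over-object $\Nh(\cC_{/c})$ with the enriched slice $(\Ch\Nh\cC)_{/c}$ --- the analogue of the necklace / last-vertex computations underlying classical straightening comparisons. It is worth noting that this is precisely where the full generality of \cref{stunmainQE} enters: although the base $\Nh\cC$ is an honest $(\infty,n)$-category, the slices $\Nh(\cC_{/c})$ and the intermediate objects in such a filtration need not be. (Alternatively, one could bypass \cref{stunmainQE} and verify directly that the derived unit and counit of $\int_\cC^\Nh\dashv\cH_\cC^\Nh$ are weak equivalences, but this appears to demand essentially the same slice computation.)
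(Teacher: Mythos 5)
There is a genuine gap, in fact two. First, your argument is circular relative to this paper's logical architecture: you invoke \cref{stunmainQE} to know that $\St_{\Nh\cC}$ is a Quillen equivalence, but in the paper \cref{stunmainQE} is itself deduced (via \cref{prop:StNCQE} and the base-change \cref{StWQEStZ}) from precisely the statement you are asked to prove, namely \cref{prop:intisQE}; the introduction says explicitly that the adjunction $\int_\cC^\Nh\dashv\cH_\cC^\Nh$ is used ``as an inverse'' in the proof of straightening. Second, the key step you lean on --- a natural \emph{isomorphism} $\St_{\Nh\cC}\circ\int_\cC^\Nh\cong\varepsilon_\cC^*$, to be checked on representables by co-Yoneda --- is too strong. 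What is true, and what the paper proves (in the reverse, adjoint formulation), is only a natural \emph{weak equivalence}: there is a comparison map $(\varepsilon_\cC)_!\St_{\Nh\cC}\int_\cC^\Nh\Hom_\cC(-,c)\to\Hom_\cC(-,c)$ which is a weak equivalence (\cref{natphic,Intciswe,varphicwe,prop:wetoid}), and constructing it naturally in $c$ and proving it is an equivalence is the technical heart of Appendix B --- not a formality one can defer. Relatedly, your identification of $\int_\cC^\Nh\Hom_\cC(-,c)$ with $\Nh(\cC_{/c})$ (and of $\cH_\cC^\Nh$ via a mapping object out of slice nerves) is not the paper's construction: $\int_\cC^\Nh$ is defined levelwise by pulling back $(\Nh\cC)_m\to(\Nh\cC)_0$ along the last vertex, and the resulting object agrees with a slice nerve only up to equivalence, so the ``straightening of the slice fibration'' computation you flag as the main obstacle is both unproved and stated at the wrong strength. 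Your Quillen-pair step, by contrast, is essentially the paper's \cref{intQP} and is fine.

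The paper's actual route is much more economical and avoids straightening altogether: it compares $\int_\cC^\Nh$ with Rasekh's Grothendieck construction $\int_\cC^N$ over the strict nerve. Since $\varphi\colon N\cC\to\Nh\cC$ is a levelwise weak equivalence for fibrant $\cC$ (\cref{NsimeqNh}) and the defining pullbacks are homotopy pullbacks over the discrete object of objects (\cref{lem:homotopypullback}), one gets a natural weak equivalence $\varphi_!\int_\cC^N F\to\int_\cC^\Nh F$ (\cref{triangleint}); then $\int_\cC^N$ is a Quillen equivalence by \cref{intQENima}, $\varphi_!$ is one by \cref{postcompisQP}, and 2-out-of-3 for left Quillen functors concludes. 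If you want to salvage your outline, replacing the appeal to \cref{stunmainQE} and the claimed strict compatibility with this strict-nerve comparison is the way to do it.
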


Let us record various key aspects and implications of the main result.
\begin{enumerate}[leftmargin=0.6cm]
    \item For $n=0$, the adjunction $\St_W\dashv \Un_W$ is a Quillen equivalence between right fibrations over a general precategory $W\in \Pcat(\sset)$ and $\sset$-enriched functors out of its homotopy coherent categorification. We hence get an alternative version of the original straightening construction due to Lurie \cite{htt}, which uses Segal categories as its model of $(\infty,1)$-categories instead of quasi-categories. We should note that straightening in the context of Segal categories has only been studied over strict nerves of categories \cite{debrito2018leftfibration,rasekh2023left}. Moreover, the Quillen equivalence $\int_\cC^\Nh\dashv \cH_\cC^\Nh$ provides an reversed equivalence for Kan-enriched categories, which has been studied via quasi-categories \cite{heutsmoerdijk2015leftfibrationi,heutsmoerdijk2016leftfibrationii}, whereas the case of Segal categories was again restricted to fibrations over strict nerves \cite{rasekh2023left}.
    \item In \cite{gepnerhaugseng2015enriched} Gepner--Haugseng define a theory of enriched $\infty$-categories and in particular show that $(\infty,n)$-categories, in the sense of Segal $n$-categories, coincide with $\infty$-categories enriched in $(\infty,n-1)$-categories \cite[ Theorem 4.4.7, Remark 5.3.10]{gepnerhaugseng2015enriched}. Hence, as $\MSThnsset$ is a model of $(\infty,n-1)$-categories and both model structures and the adjunction are enriched, it follows from \cite[Theorem 1.1, Corollary 6.18]{haugseng2015rectenrichedinftycat} that the Quillen equivalence $\St_W\dashv \Un_W$ establishes an equivalence of $(\infty,n)$-categories. 
    \item In \cite{riehlverity2022elements} Riehl--Verity introduce $\infty$-cosmoi as a formal $2$-categorical method to study various $(\infty,1)$-categorical (and some $(\infty,n)$-categorical) aspects. In particular they show in \cite[Proposition E.1.1]{riehlverity2022elements} that for an appropriate choice of model category (which by \cite[E.2.6. Proposition]{riehlverity2022elements} applies to $\MSrightfib{W}$), their subcategory of bifibrant objects gives us an $\infty$-cosmos, meaning we get an $\infty$-cosmos of double $(\infty,n-1)$-right fibrations. Hence, our enriched Quillen equivalence induces a biequivalence of $\infty$-cosmoi from the $\infty$-cosmos of functors valued in $(\infty,n-1)$-categories to the $\infty$-cosmos of double $(\infty,n-1)$-right fibrations \cite[Corollary E.1.2]{riehlverity2022elements}.
    
     This in particular means that all $2$-categorical phenomena (such as adjunctions) will uniquely correspond to each other \cite[Proposition 10.3.6]{riehlverity2022elements}. This result is new even for $n=0$, as the straightening construction by Lurie is not an enriched adjunction and hence cannot be cosmological \cite[Lemma 2.1.3.3, Remark 2.2.2.6]{htt}.
    \item While the Quillen equivalence $\int_\cC^\Nh\dashv\cH_\cC^\Nh$ only holds over homotopy coherent nerves of strictly enriched categories, it adds significant computational power to our results. In fact we already used it as an inverse to prove that the adjunction $\St_W\dashv \Un_W$ is a Quillen equivalence. Beyond this example, note that all representable objects in $\pcatThn$ are themselves given as nerves of strictly enriched categories, meaning the two Quillen equivalences give us a detailed characterization of functors and fibrations in that situation.
    \item Note that having a Quillen equivalence $\St_W\dashv\Un_W$ for a general object $W\in \pcatThn$ is the best result we expect to hold and there should not be such a Quillen equivalence for a general object $W\in \sThnsset$. Indeed, any non-trivial structure in $W_0$ would be lost when applying the homotopy coherent categorification $\Ch$, which in particular means that the fibrations would carry additional information beyond enriched functors out of $\Ch W$ valued in $\Thnsset$. See \cite[\S 5.3]{RasekhD} for several counter-examples for various results regarding fibrations over simplicial objects with non-trivial degree $0$.
    \item As we explained above, the projective cofibrant replacement of the terminal functor is a key ingredient for the computation of limits of homotopy coherent diagrams. The adjunction $\St_W\dashv \Un_W$ now gives us an effective method to compute it.
    Indeed, as all objects in $\MSrightfib{W}$ are cofibrant, the derived counit map of the adjunction $\St_W\dashv\Un_W$, which is a projective cofibrant replacement, is given by the strict counit map. Applying this to the terminal object in $[\Ch W^{\op},\Thnsset]$, we can conclude that a cofibrant replacement is given via $\St_W(\id_W)$ (as the identity is the terminal object in $\sThnssetslice{W}$). 
\end{enumerate}

\subsection*{Applying necklace calculus}
As part of our work in \cite{MRR1} we introduced a {\it necklace calculus} that was an integral part in our study of the homs of the homotopy coherent categorifications. Given the computational strength of the method we also anticipated further applications. In this work we precisely present one such application, applying necklace calculus to give concrete computations of the straightening functor. 

Concretely, we want to show that the straightening construction preserves various classes of morphisms in order to establish it is left Quillen. While we can use formal ideas to reduce it to certain generating diagrams (\cref{reductiontrick}), any progress beyond that point requires explicit computations, which are only possible via necklace calculus. Examples of the kind of results we are able to obtain using this method can be found in  \cref{Stofalphafmono} and \cref{Stofpushprod}.

\subsection*{Notations and conventions}

We write:
\begin{itemize}[leftmargin=0.6cm]
\item $\repD\in \set^{\Dop}$ for the representable at $m\geq 0$, and $\partial\repD$ for the boundary of $\repD$,
   \item $F[\defThn,\defS]\coloneqq \repThn\times \repS\in \Thnsset$ for the representable at $(\defThn,[\defS])\in \ThnS$,
    \item $\repD[m,\defThn,\defS]\in \sThnsset$ for the representable at $([m],\defThn,[\defS])\in \DThnS$, 
    \item $\repD[m,X]\in \sThnsset$ for the product $\repD[m]\times X$, and $\partial\repD[m,X]$ for the product $\partial\repD[m]\times X$ for $m\geq 0$ and $X\in \Thnsset$, 
\end{itemize}
The categories $\Dset$ and $\Thnsset$, are naturally included into $\sThnsset$, and we regard the above as objects of it without further specification. We refer to the objects of $\Thnsset$ as \emph{$\Thn$-spaces}.

\subsection*{Acknowledgements}
The third author is grateful for support from the National Science Foundation under Grant No. DMS-2203915.

\section{Strict and homotopy coherent nerves}

We first collect here the necessary background for the content of the paper. In \cref{subsec:MS(inftyn-1)}, we recall the model structure for complete Segal $\Thn$-spaces modeling $(\infty,n-1)$-categories. In \cref{subsec:MSenriched,section:precat}, we recall two models of $(\infty,n)$-categories given by the model structures for strictly enriched categories and weakly enriched categories in complete Segal $\Thn$-spaces. These are related by Quillen equivalences given by the strict and homotopy coherent categorification-nerve adjunctions, which we recollect in \cref{subsec:nerves}. Finally, in \cref{subsec:necklaces}, we recall the notion of necklaces and how they are used to describe the hom $\Thn$-spaces of the homotopy coherent categorification.

\subsection{Model structure for \texorpdfstring{$(\infty,n-1)$}{(infinity,n-1)}-categories} \label{subsec:MS(inftyn-1)}

We recall the model structure $\MSThnsset$ on $\Thnsset$ for $(\infty,n-1)$-categories given by Rezk's complete Segal $\Thn$-spaces \cite{rezkTheta}.

For $n\geq1$, recall from \cite{JoyalDisks} Joyal's cell category $\Thn[n]$. For $n=1$, then $\Thn=\Thn[0]$ is the terminal category, and for $n>1$, the category $\Thn$ is the \emph{wreath product} $\Delta\wr \Thn[n-2]$ (see e.g.~\cite[Definition 3.1]{BergerIterated}).

We denote by $\MSspace$ the Kan-Quillen model structure. The model structure $\MSThnsset$ is defined recursively as a localization of the injective model structure $\injThnspace$ on the category of $\Thn$-presheaves valued in $\MSspace$ with respect to a set $S_{\CSThn}$ of maps in $\Thnset$. 

The set $S_{\CSThn[0]}$ is the empty set, and for $n>1$ the set $S_{\CSThn}$ consists of the following monomorphisms: 
\begin{itemize}[leftmargin=0.6cm]
    \item the \emph{Segal maps} $\Thn{[1]}(\theta_1)\amalg_{[0]} \ldots\amalg_{[0]} \Thn{[1]}(\theta_\ell)\hookrightarrow \Thn{[\ell]}(\theta_1,\ldots,\theta_\ell)$, for all $\ell\geq 1$ and $\theta_1,\ldots,\theta_\ell\in \Thn[n-2]$,
    \item the \emph{completeness map} $\repD[0]\hookrightarrow N \bI$ seen as a map in $\Thnset$ through the canonical inclusion $\Dset\hookrightarrow \Thnset$ induced by pre-composition along the projection $\Thn\to \Delta$ given by $[\ell](\theta_1,\ldots,\theta_l)\mapsto [\ell]$, where $\bI$ denotes the free-living isomorphism, 
    \item the \emph{recursive maps} $\Thn{[1]}(A)\hookrightarrow\Thn{[1]}(B)$, where $A\hookrightarrow B\in \Thnsset[n-2]$ ranges over all maps in $S_{\CSThn[n-2]}$.
\end{itemize}
Note that by \cite[Theorem 8.1]{rezkTheta} the model structure $\MSThnsset$ obtained by localizing the injective model structure $\injThnspace$ with respect to the set $S_{\CSThn}$ is cartesian closed. This is enough to guarantee that the model structure $\MSThnsset$ is excellent in the sense of \cite[Definition A.3.2.16]{htt}.

\subsection{Enriched model structures for \texorpdfstring{$(\infty,n)$}{(infinity,n)}-categories} \label{subsec:MSenriched}

As the model structure $\MSThnsset$ is excellent, the category $\Thncat$ supports the left proper model structure $\MSThncat$ from \cite[\textsection3.10]{br1}, obtained as a special instance of \cite[Proposition A.3.2.4, Theorem A.3.2.24]{htt}. We recall the features of this model structure needed in this paper. They rely on the notion of \emph{homotopy category}, and we refer the reader to \cite[Definition~1.2.1,~Proposition~1.2.3]{MRR1} for a definition in the case of $\MSThnsset$.

In the model structure $\MSThncat$,
\begin{itemize}[leftmargin=0.6cm]
\item a $\Thnsset$-enriched category $\cC$ is \emph{fibrant} if, for all objects $a,b\in \cC$, the hom $\Thn$-space $\Hom_\cC(a,b)$ is fibrant in $\MSThnsset$,
    \item a $\Thnsset$-enriched functor $F\colon \cC\to \cD$ is a \emph{weak equivalence}
    if the induced functor between homotopy categories $\Ho F\colon \Ho\cC\to \Ho\cD$ is essentially surjective on objects, and for all objects $a,b\in \cC$ the induced map
    \[F_{a,b}\colon\Hom_{\cC}(a,b)\to\Hom_\cD(Fa,Fb)\]
    is a weak equivalence in $\MSThnsset$.
\end{itemize} 

Many of the $\Thnsset$-enriched categories that feature in this paper have the following property, so we introduce a terminology that streamlines the exposition.

\begin{defn} 
A $\Thnsset$-enriched category $\cC$ is \emph{directed} if
\begin{itemize}[leftmargin=0.6cm]
    \item its set of objects $\Ob\cC$ is $\{0,1,\ldots,m\}$, for some $m\geq 0$, 
    \item for $0\leq j\leq i\leq m$, the hom $\Thn$-space $\Hom_\cC(i,j)$ is given by
    \[\Hom_\cC(i,j)=\begin{cases}
\emptyset & \text{if} \; j<i \\
\repS[0] & \text{if} \; j=i.
\end{cases}\] 
    \end{itemize}
 In particular, the composition maps in a directed $\Thnsset$-enriched category $\cC$ involving the above hom $\Thn$-spaces are uniquely determined. Moreover, the value of a $\Thnsset$-enriched functor from a directed $\Thnsset$-enriched category is also uniquely determined on these hom $\Thn$-spaces. Similarly, it is enough to verify the naturality conditions for a $\Thnsset$-enriched natural transformations between such $\Thnsset$-enriched functors with respect to these hom $\Thn$-spaces.
\end{defn}

\subsection{Weakly enriched model structures for \texorpdfstring{$(\infty,n)$}{(infinity,n)}-categories}
\label{section:precat}

Let $\pcatThn$ denote the full subcategory of $\sThnsset$ spanned by those $(\DThn)$-spaces $W$ such that $W_0$ is discrete, i.e., such that $W_0$ is in the image of $\set\hookrightarrow \Thnsset$. The canonical inclusion $I\colon \pcatThn\to \sThnsset$ admits a left adjoint $L$, so there is an adjunction
\begin{tz}
\node[](2) {$\sThnsset$}; 
\node[right of=2,xshift=2.7cm](3) {$\pcatThn$}; 
\punctuation{3}{.};

\draw[->] ($(3.west)-(0,5pt)$) to node[below,la]{$I$} ($(2.east)-(0,5pt)$);
\draw[->] ($(2.east)+(0,5pt)$) to node[above,la]{$L$} ($(3.west)+(0,5pt)$);

\node[la] at ($(2.east)!0.5!(3.west)$) {$\bot$};
\end{tz}

In \cite{br1}, Bergner--Rezk construct two model structures on the category $\pcatThn$: the ``projective-like'' and the ``injective-like'' model structures. Here, we denote these two model structures by $\pcatproj$ and $\pcatinj$, respectively. As shown in \cite[Proposition 7.1]{br1}, these model structures are Quillen equivalent via the identity functor.

Let $\injsThnsset$ and $\projsThnsset$ denote the injective and projective model structures on the category $\sThnsset$ of simplicial objects in $\MSThnsset$.

An object $W$ is \emph{fibrant} in $\pcatinj$ (resp.~$\pcatproj$) if $W$ is fibrant in $\injsThnsset$ (resp.~$\projsThnsset$) and the Segal map 
\[ W_m\to W_1\times^{(h)}_{W_0}\ldots \times^{(h)}_{W_0} W_1 \]
is a weak equivalence in $\MSThnsset$, for all $m\geq 1$. Here, the ordinary pullbacks are homotopy pullbacks because they are taken over the discrete object $W_0$ (see \cite[\textsection 4.1]{br1}).

\subsection{Strict and homotopy coherent nerves} \label{subsec:nerves}

There is a canonical inclusion 
\[ N\colon \Thncat\to \pcatThn, \]
which admits a left adjoint $c\colon \pcatThn\to \Thncat$. We refer to $N$ as the \emph{strict nerve} and to $c$ as the \emph{strict categorification}. The following appears as \cite[Theorem 7.6]{br1}.

\begin{prop} \label{projQuillen}
The adjunction $c\dashv N$ is a Quillen equivalence
\begin{tz}
\node[](1) {$\pcatproj$}; 
\node[right of=1,xshift=3.2cm](2) {$\MSThncat$};
\punctuation{2}{.};

\draw[->] ($(2.west)-(0,5pt)$) to node[below,la]{$N$} ($(1.east)-(0,5pt)$);
\draw[->] ($(1.east)+(0,5pt)$) to node[above,la]{$c$} ($(2.west)+(0,5pt)$);

\node[la] at ($(1.east)!0.5!(2.west)$) {$\bot$};
\end{tz} 
\end{prop}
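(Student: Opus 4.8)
The plan is to deduce this from the general machinery for model structures on enriched categories, following the blueprint of \cite[\textsection A.3]{htt} and the analogous $n=1$ statement. Since the adjunction $c\dashv N$ has already been identified and the model structure $\MSThncat$ is transferred from $\MSThnsset$ via the standard construction, the core task is to verify the two defining conditions of a Quillen equivalence: that $c$ is left Quillen (equivalently, $N$ is right Quillen), and that the derived unit and counit are weak equivalences.

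First I would check that $N$ preserves fibrations and trivial fibrations, which is the cleanest way to see $c\dashv N$ is a Quillen adjunction. A fibration in $\MSThncat$ is a functor which is a local fibration (levelwise fibrant hom $\Thn$-spaces mapping by fibrations) plus an isofibration condition on homotopy categories; one checks that $N$ of such a map has these properties by inspecting that the hom $\Thn$-spaces of $N\cD$ are the iterated Segal-type mapping objects of $\cD$, which are built from the (fibrant) hom $\Thn$-spaces of $\cD$ by limits that $\MSThnsset$-fibrations are stable under, and that the homotopy-category condition in $\pcatproj$ translates correctly. The trivial fibration case is similar, using that $\MSThnsset$-trivial fibrations are likewise stable. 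Alternatively, and perhaps more economically, one shows directly that $c$ sends the generating (trivial) cofibrations of $\pcatproj$ to (trivial) cofibrations of $\MSThncat$; since $\pcatproj$ is cofibrantly generated this suffices.

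The substantive part is the Quillen equivalence condition. For this I would show that for every cofibrant $W\in\pcatThn$ the unit $W\to Nc W$ is a weak equivalence in $\pcatproj$, and that $c$ and $N$ both reflect/preserve weak equivalences appropriately (or equivalently that for fibrant $\cD$ the counit $cN\cD\to\cD$ is a weak equivalence). The key input is that on the level of underlying $(\infty,n)$-categories, the strict categorification $c$ is a rectification/``strictification'' functor: the nerve $N$ is known to be fully faithful enough that $Nc W$ has the same homotopy category and the same derived mapping objects as $W$. Concretely, the homotopy category of $W$ (in the sense of \cite[Definition~1.2.1]{MRR1}, extended from $\Thn$-spaces to precategory objects) agrees with $\Ho(c W)$ because $c$ is defined to freely generate compositions, and the mapping $\Thn$-spaces are preserved up to equivalence — this is exactly the content that makes $c$ a model for the $\infty$-categorical ``free enriched category'' functor, which is an equivalence onto its image when restricted to Segal-type objects. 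One then feeds this into \cite[Proposition A.3.2.4 / Theorem A.3.2.24]{htt}, whose hypotheses (here guaranteed by $\MSThnsset$ being excellent and cartesian, as recalled in \cref{subsec:MS(inftyn-1)}) deliver the Quillen equivalence.

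I expect the main obstacle to be the comparison of mapping objects: showing that the hom $\Thn$-spaces of $Nc W$ are weakly equivalent to the appropriate derived mapping $\Thn$-spaces computed in $W$. On the $\pcatThn$ side the ``correct'' mapping object of a precategory object is only visible after fibrant replacement (since $W$ need not satisfy the Segal condition), so the argument has to interleave the combinatorics of $c$ with a fibrant-replacement/localization argument in $\MSThnsset$. This is precisely the kind of computation that in the sequel gets handled by necklace calculus, but for the purposes of this recalled proposition it is cited from \cite[Theorem 7.6]{br1}, so in our exposition I would simply record the statement and refer there, rather than reprove it — the plan above is what one would carry out if forced to reconstruct the Bergner--Rezk argument.
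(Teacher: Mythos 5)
Your proposal ends exactly where the paper does: this proposition is recalled from Bergner--Rezk, and the paper's ``proof'' is simply the citation to \cite[Theorem 7.6]{br1}, which is also your stated plan. The preceding sketch of how one would verify the Quillen adjunction and the derived unit/counit conditions is reasonable background but is not needed, since neither you nor the paper reproves the result.
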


In \cite[\textsection2]{MRR1}, we construct a homotopy coherent version of this categorification--nerve adjunction, and we briefly recall it here.

First, the homotopy coherent categorification--nerve adjunction by Cordier--Porter \cite{CordierPorterHomotopyCoherent}, as depicted below left, induces by post-composition an adjunction as below right.
\begin{tz}
\node[](1) {$\Dset$}; 
\node[right of=1,xshift=1.5cm](2) {$\sCat$}; 
\punctuation{2}{};

\draw[->] ($(2.west)-(0,5pt)$) to node[below,la]{$\NL$} ($(1.east)-(0,5pt)$);
\draw[->] ($(1.east)+(0,5pt)$) to node[above,la]{$\CL$} ($(2.west)+(0,5pt)$);

\node[la] at ($(1.east)!0.5!(2.west)$) {$\bot$};

\node[right of=2,xshift=3cm](1) {$(\Dset)^{\DThnop}$}; 
\node[right of=1,xshift=3.3cm](2) {$(\sCat)^{\DThnop}$}; 

\draw[->] ($(2.west)-(0,5pt)$) to node[below,la]{$\NL_*$} ($(1.east)-(0,5pt)$);
\draw[->] ($(1.east)+(0,5pt)$) to node[above,la]{$\CL_*$} ($(2.west)+(0,5pt)$);

\node[la] at ($(1.east)!0.5!(2.west)$) {$\bot$};
\end{tz}
The adjunction $\CL_*\dashv \NL_*$ in turn restricts to an adjunction between full subcategories
\begin{tz}
\node[](1) {$\pcatThn$}; 
\node[right of=1,xshift=2.7cm](2) {$\sThncat$}; 
    \punctuation{2}{.};

\draw[->] ($(2.west)-(0,5pt)$) to node[below,la]{$\NL_*$} ($(1.east)-(0,5pt)$);
\draw[->] ($(1.east)+(0,5pt)$) to node[above,la]{$\CL_*$} ($(2.west)+(0,5pt)$);

\node[la] at ($(1.east)!0.5!(2.west)$) {$\bot$};
\end{tz}
Next, the diagonal functor $\delta\colon \Delta\to \Delta\times \Delta$ induces an adjunction as below left, which induces by base-change an adjunction between categories of enriched categories as below right.
\begin{tz}
\node[](1) {$\SSset^{\Thnop}$}; 
\node[right of=1,xshift=1.8cm](2) {$\sset^{\Thnop}$}; 

\draw[->] ($(2.west)-(0,5pt)$) to node[below,la]{$(\delta_*)_*$} ($(1.east)-(0,5pt)$);
\draw[->] ($(1.east)+(0,5pt)$) to node[above,la]{$\diag$} ($(2.west)+(0,5pt)$);

\node[la] at ($(1.east)!0.5!(2.west)$) {$\bot$};

\node[right of=2,xshift=3cm](1) {$\sThncat$}; 
\node[right of=1,xshift=2.5cm](2) {$\Thncat$}; 

\draw[->] ($(2.west)-(0,5pt)$) to node[below,la]{$((\delta_*)_*)_*$} ($(1.east)-(0,5pt)$);
\draw[->] ($(1.east)+(0,5pt)$) to node[above,la]{$\diag_*$} ($(2.west)+(0,5pt)$);

\node[la] at ($(1.east)!0.5!(2.west)$) {$\bot$};
\end{tz}

The homotopy coherent categorification-nerve adjunction is then defined to be the following composite of adjunctions.
\begin{tz}
\node[](1) {$\pcatThn$}; 
\node[right of=1,xshift=2.7cm](2) {$\sThncat$}; 
\node[right of=2,xshift=2.5cm](3) {$\Thncat$}; 

\node at ($(1.west)-(6pt,.3pt)$) {$\Ch\colon$};
\node at ($(3.east)+(7pt,-1.5pt)$) {$\colon\! \Nh$};

\draw[->] ($(3.west)-(0,5pt)$) to node[below,la]{$((\delta_*)_*)_*$} ($(2.east)-(0,5pt)$);
\draw[->] ($(2.east)+(0,5pt)$) to node[above,la]{$\diag_*$} ($(3.west)+(0,5pt)$);

\node[la] at ($(1.east)!0.5!(2.west)$) {$\bot$};

\draw[->] ($(2.west)-(0,5pt)$) to node[below,la]{$\NL_*$} ($(1.east)-(0,5pt)$);
\draw[->] ($(1.east)+(0,5pt)$) to node[above,la]{$\CL_*$} ($(2.west)+(0,5pt)$);

\node[la] at ($(2.east)!0.5!(3.west)$) {$\bot$};
\end{tz} 

The following appears as \cite[Theorem~4.3.3]{MRR1}.

\begin{prop} \label{injQuillen}
The adjunction $\Ch\dashv \Nh$ is a Quillen equivalence
\begin{tz}
\node[](1) {$\pcatinj$}; 
\node[right of=1,xshift=3.1cm](2) {$\MSThncat$};
\punctuation{2}{.};

\draw[->] ($(2.west)-(0,5pt)$) to node[below,la]{$\Nh$} ($(1.east)-(0,5pt)$);
\draw[->] ($(1.east)+(0,5pt)$) to node[above,la]{$\Ch$} ($(2.west)+(0,5pt)$);

\node[la] at ($(1.east)!0.5!(2.west)$) {$\bot$};
\end{tz}
\end{prop}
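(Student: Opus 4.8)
The plan is to deduce this from the strict statement \cref{projQuillen} by comparing the homotopy coherent categorification $\Ch$ with the strict categorification $c$, exploiting that $\pcatproj$ and $\pcatinj$ have the same weak equivalences and that the identity functor $\mathrm{id}\colon\pcatproj\to\pcatinj$ is left Quillen.

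\emph{Step 1.} First I would check that $\Ch\dashv\Nh$ is at least a Quillen adjunction from $\pcatinj$ to $\MSThncat$; equivalently, that $\Ch$ preserves cofibrations and trivial cofibrations. Writing $\Ch=\diag_*\circ\CL_*$, one can either equip $\sThncat$ with an interpolating model structure (transferred from the injective-like structure on simplicial objects in $\pcatThn$) for which $\CL_*\dashv\NL_*$ and $\diag_*\dashv((\delta_*)_*)_*$ are each Quillen, or else verify directly that $\Ch$ sends the generating cofibrations and generating trivial cofibrations of $\pcatinj$---built from the boundary inclusions $\partial\repD[m]\hookrightarrow\repD[m]$, the generating (trivial) cofibrations of $\MSThnsset$, and the localizing maps, all kept discrete in degree $0$---to (trivial) cofibrations of $\MSThncat$. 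The input here is that $\CL\dashv\NL$ is the Cordier--Porter Quillen adjunction, that passing to diagram categories and to the diagonal detects (trivial) cofibrations levelwise, and that these constructions restrict compatibly to $\pcatThn$.

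\emph{Step 2 (the heart of the argument).} Next I would construct a natural transformation $\gamma\colon\Ch\Rightarrow c$ of functors $\pcatThn\to\Thncat$, which is the identity on object sets and, on hom $\Thn$-spaces, is induced by the canonical comparison from the homotopy coherent path object to the strict one---ultimately the Dwyer--Kan weak equivalence $\CL\repD[m]\to[m]$ propagated through the wreath/recursive presentation of $\Thn$; equivalently, $\gamma$ is the mate of the inclusion $N\Rightarrow\Nh$. The key claim is that $\gamma_W\colon\Ch W\to cW$ is a weak equivalence in $\MSThncat$ whenever $W$ is cofibrant in $\pcatproj$. To prove this, fix a cellular presentation of $W$ as a transfinite composite of pushouts of generating cofibrations; since $\Ch$ and $c$ preserve colimits and (generating) cofibrations and $\MSThncat$ is left proper, they turn this into cellular presentations of $\Ch W$ and $cW$ compatibly via $\gamma$, so by the gluing lemma and closure of weak equivalences under transfinite composition of cofibrations it suffices to verify that $\gamma$ is a weak equivalence on the generating cells, i.e.\ on representables. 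This last step is exactly where necklace calculus is indispensable: by \cref{subsec:necklaces}, the hom $\Thn$-spaces $\Hom_{\Ch\repD[m,\defThn,\defS]}$ admit an explicit necklace description which one compares with the essentially discrete hom $\Thn$-spaces of $c\repD[m,\defThn,\defS]$.

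\emph{Step 3 (conclusion) and main obstacle.} By Step 1 and the fact that $\mathrm{id}\colon\pcatproj\to\pcatinj$ is left Quillen, the composite $\Ch\circ\mathrm{id}$ is left Quillen as a functor $\pcatproj\to\MSThncat$; by Step 2 the natural transformation $\gamma$ is then a weak equivalence on every cofibrant object of $\pcatproj$, so $\mathbb L\Ch\cong\mathbb L c$ as functors out of the common homotopy category $\Ho(\pcatproj)=\Ho(\pcatinj)$. By \cref{projQuillen} the functor $\mathbb L c$ is an equivalence of categories, hence so is $\mathbb L\Ch$; thus the left derived functor of $\Ch\colon\pcatinj\to\MSThncat$ is an equivalence of homotopy categories, and a Quillen adjunction with this property is a Quillen equivalence. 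The main obstacle is Step 2: producing $\gamma$ coherently over all of $\Thn$ and, above all, checking it is a weak equivalence on representables, which admits no formal shortcut and rests entirely on the explicit necklace-theoretic computation of the hom $\Thn$-spaces of $\Ch$; verifying that $\Ch$ preserves trivial cofibrations in Step 1 is the remaining technical point, and can itself be streamlined using $\gamma$.
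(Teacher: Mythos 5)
The paper does not actually prove this statement: \cref{injQuillen} is imported from \cite[Theorem~4.3.3]{MRR1}, so the only ``proof'' here is a citation. The route taken in the cited work (and reflected in the way this paper recollects \cref{projQuillen} and \cref{NsimeqNh} alongside it) compares the two adjunctions on the side of the \emph{right} adjoints: one first shows that $\Ch\dashv\Nh$ is a Quillen pair for $\pcatinj$, and then uses that the canonical map $N\cC\to\Nh\cC$ is a weak equivalence for fibrant $\cC$, together with the Quillen equivalence $c\dashv N$ and the identity Quillen equivalence between $\pcatproj$ and $\pcatinj$, to identify the derived functor of $\Nh$ with that of $N$ and conclude. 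Your plan instead compares the \emph{left} adjoints via the mate $\gamma\colon\Ch\Rightarrow c$ of $N\Rightarrow\Nh$; in outline this is a legitimate alternative, and your Step 3 (deriving the Quillen equivalence from $\mathbb{L}\Ch\cong\mathbb{L}c$ and \cref{projQuillen}) is sound.

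However, there are genuine gaps as written. First, Step 1 is not a formality: the (trivial) cofibrations of $\pcatinj$ are generated by reduced boundary-type maps, and proving that $\Ch$ carries them to (trivial) cofibrations of $\MSThncat$ is a substantial portion of the cited theorem; neither of your two suggested devices is carried out, and the ``interpolating'' transferred structure on $\sThncat$ is not available off the shelf. Second, and more seriously, the reduction in Step 2 to ``the generating cells, i.e.\ the representables'' is incorrect: the generating (trivial) cofibrations of $\pcatproj$ are essentially reduced free cells $L\repD[m,X]\to L\repD[m,Y]$ with $X\hookrightarrow Y$ a generating (trivial) cofibration of $\MSThnsset$, so a cellular induction needs $\gamma$ to be a weak equivalence on every $L\repD[m,X]$ with $X$ a source or target of such a map (these are not representable and need not be connected), needs both $\Ch$ and $c$ to send these maps to cofibrations between cofibrant objects, and needs the gluing-lemma/transfinite bookkeeping to be set up --- none of which is addressed. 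The claim you are inducting towards is also genuinely sensitive to the class of cofibrations used: for injective-like cofibrant objects it fails, e.g.\ the necklace computation gives that $\Hom_{\Ch\partial\repD[3]}(0,3)$ has the homotopy type of a circle (the boundary of the square $\repS[1]\times\repS[1]$), while $\Hom_{c\,\partial\repD[3]}(0,3)=\repS[0]$; so any slippage between projective and injective cells breaks the argument. Finally, note that the cleanest proof of your Step 2 claim is to transpose the weak equivalence $N\cC\to\Nh\cC$ of \cref{NsimeqNh} through the two adjunctions --- which is essentially the cited argument --- so the detour through $\gamma$ only becomes an independent proof if you genuinely carry out the necklace computations for all the cells $L\repD[m,X]$.
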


We also show in \cite[Proposition~4.3.2]{MRR1} the following result, which compares the strict and the homotopy coherent nerves. In particular, this shows that the homotopy coherent nerve is an injective fibrant replacement of the strict nerve.

\begin{prop} \label{NsimeqNh}
Let $\cC$ be a fibrant $\MSThnsset$-enriched category. The canonical map
\[\varphi\colon N\cC\to \Nh\cC\]
is a weak equivalence in $\injsThnspace$. 
\end{prop}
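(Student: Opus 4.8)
The plan is to exhibit $\varphi\colon N\cC\to \Nh\cC$ as a levelwise weak equivalence in $\injsThnspace$, using that $\Nh$ is, by construction, the composite of three nerve-type functors and that $\cC$ is fibrant, so each intermediate enriched category is itself fibrant in the relevant enriched model structure. Recall that $\Nh = \NL_*\circ \diag_*$ applied to $\cC$, while the strict nerve $N$ can be described as the restriction of $\NL_*\circ\diag_*$ where one replaces the Cordier--Porter homotopy coherent nerve $\NL$ by the ordinary nerve. The canonical comparison map $\varphi$ is then induced levelwise and in each $\Thn$-degree by the classical comparison between the ordinary nerve of a (Kan-enriched, after applying $\diag$) simplicial category and its homotopy coherent nerve.

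First I would set up the reduction: an object of $\pcatThn\subset \sThnsset$ is fibrant in $\injsThnspace$ iff it is injectively fibrant as a $(\DThn)^{\op}$-diagram of Kan complexes, i.e.\ iff every matching map is a Kan fibration; and a map between such objects is a weak equivalence in $\injsThnspace$ iff it is a levelwise weak homotopy equivalence of simplicial sets (in each bidegree $([m],\theta)\in \DThn$). So it suffices to check that $\varphi\colon (N\cC)_{m,\theta}\to (\Nh\cC)_{m,\theta}$ is a weak homotopy equivalence for every $([m],\theta)$. Evaluating the formulas for the strict and homotopy coherent nerves of a $\Thnsset$-enriched category, the bidegree-$([m],\theta)$ simplicial sets are respectively a set of strict functors out of $[m]$ (the walking composable $m$-chain) into $\cC$ recording the data indexed by $\theta$, versus a Kan complex of homotopy coherent such functors, built out of the Cordier--Porter cofibrant resolution $\CL(\repD[m])$ of the ordinal $[m]$.

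Next I would invoke the classical fact that for a \emph{fibrant} (Kan-enriched) simplicial category $\cD$, the canonical map from the strict nerve to the homotopy coherent nerve $N\cD\to \NL\cD$ is a weak equivalence of simplicial sets --- this is the statement that $\NL$ is a fibrant replacement of $N$ in the Joyal model structure, restricted here to the levelwise comparison; see the argument that $\CL(\repD[m])\to [m]$ is a Dwyer--Kan equivalence of simplicial categories, so that precomposition induces a weak equivalence on mapping spaces into the fibrant $\cD$. To apply this I need the input enriched category in each $\Thn$-degree to be fibrant, i.e.\ I need that $\diag_*\cC$ is a Kan-enriched category in each $\Thn$-slot, which follows from the hypothesis that $\cC$ is fibrant in $\MSThncat$ (so each $\Hom_\cC(a,b)$ is a complete Segal $\Thn$-space, hence in particular each of its simplicial sets of cells is a Kan complex) together with the fact that $\diag$ sends levelwise-Kan bisimplicial sets to Kan simplicial sets. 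Assembling these levelwise equivalences over all $([m],\theta)$, and recording that $\varphi$ is natural, gives that $\varphi$ is a weak equivalence in the injective model structure; moreover both $N\cC$ and $\Nh\cC$ are injectively fibrant (for $\Nh\cC$ this is part of \cref{injQuillen}, and for $N\cC$ one checks the matching maps directly, or cites that $N$ lands in fibrant objects when $\cC$ is fibrant).

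The main obstacle I anticipate is the bookkeeping needed to pass the ``fibrancy'' hypothesis on $\cC$ through the chain of base-change and diagonal functors so that the classical simplicial-category comparison theorem genuinely applies in each $\Thn$-degree; in particular one must be careful that $\diag_*$ interacts correctly with enrichment and that the comparison map $\varphi$ really is the levelwise instance of the Cordier--Porter comparison, rather than some a priori different natural map. Once that identification is made, the remaining content is the standard fact that $\CL(\repD[m])\to[m]$ is a Dwyer--Kan equivalence and hence induces weak equivalences on mapping spaces into fibrant targets, which I would cite rather than reprove.
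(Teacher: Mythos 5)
The first thing to note is that this paper does not prove \cref{NsimeqNh} at all: it is imported from \cite[Proposition~4.3.2]{MRR1}, so there is no in-text argument to compare yours against. Judged on its own terms, your skeleton is the right one and is the standard route: weak equivalences in $\injsThnspace$ are exactly the levelwise ones (no fibrancy of source or target is needed, so your remarks about injective fibrancy of $N\cC$ and $\Nh\cC$ are superfluous, and for $N\cC$ dubious), so it suffices to fix $([m],\defThn)$ and show that the map of simplicial sets in the remaining direction is a weak homotopy equivalence. After decomposing over tuples of objects $a_0,\ldots,a_m$, this is the comparison between strict $m$-chains $\prod_i\Hom_\cC(a_{i-1},a_i)_\defThn$ and the simplicial set $k\mapsto\Hom_{\sCat}(\CL\repD[m],(\cC_\defThn)^{\Delta[k]})$ of homotopy coherent $m$-chains, where $\cC_\defThn$ denotes the simplicial category with hom-spaces $\Hom_\cC(a,b)_\defThn$; these are Kan because fibrant objects of $\MSThnsset$ are in particular injectively fibrant, hence levelwise Kan, as you say.

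Two of your supporting claims are wrong as stated, and the second one is where the actual content sits. First, $\Nh$ is not $\NL_*\circ\diag_*$: the functor $\Thncat\to\sThncat$ entering $\Nh$ is $((\delta_*)_*)_*$, induced hom-wise by the right adjoint $\delta_*$ of $\diag$, so the simplicial category handed to $\NL$ at level $(\defThn,k)$ has homs $(\Hom_\cC(a,b)_\defThn)^{\Delta[k]}$. The Kan-ness you need is that of these cotensors, which is immediate since cotensoring preserves Kan complexes; the appeal to ``$\diag$ sends levelwise-Kan bisimplicial sets to Kan simplicial sets'' is both irrelevant here and not a correct general fact. Second, the ``classical fact'' you invoke --- that for a locally Kan $\cD$ the strict nerve maps to $\NL\cD$ by a weak (Joyal) equivalence, i.e.\ that $\NL$ is a fibrant replacement of the nerve --- is false in that form: for $\cD$ with a single non-discrete Kan mapping space, the nerve of the underlying category is not categorically equivalent to $\NL\cD$. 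What your levelwise setup actually requires is the Cordier--Porter/Vogt-type statement that, for $\cD$ locally Kan and a fixed tuple of objects, restriction along the spine from $k\mapsto\Hom_{\sCat}(\CL\repD[m],\cD^{\Delta[k]})$ to $\prod_i\Hom_\cD(a_{i-1},a_i)$ is a trivial Kan fibration; the strict-chain inclusion (which is what $\varphi$ becomes levelwise, once you check via adjunction naturality that it is precomposition along $\CL\repD[m]\to[m]$) is a section of this restriction, and two-out-of-three finishes the proof. That trivial-fibration statement is citable and is also what makes your heuristic ``mapping spaces into a fibrant target along the DK-equivalence $\CL\repD[m]\to[m]$'' precise, since it certifies that hom-wise cotensoring yields a genuine simplicial resolution; but it is exactly the content you deferred to ``bookkeeping'', so it must be cited or proved rather than replaced by the false global comparison of nerves.
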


\subsection{Necklaces and homotopy coherent categorification} \label{subsec:necklaces}

We recollect here some useful results from \cite[\textsection2]{MRR1} studying the hom $\Thn$-spaces of the categorification $\Ch$. For this, we first recall the main terminology about necklaces, introduced in \cite[\textsection 3]{Dugger}. 

A \emph{necklace} is a simplicial set, i.e., an object in $\Dset$, given by a wedge of representables 
\[ T=\repD[m_1]\vee \ldots \vee \repD[m_t] \] 
obtained by gluing $m_i\in \repD[m_i]$ to $0\in\repD[m_{i+1}]$ for all $1\leq i\leq t-1$. By convention, if $t>1$, then $m_i>0$ for all $1\leq i\leq t$. We say that $\repD[m_i]$ is a \emph{bead} of $T$, and an initial or final vertex in some bead is a \emph{joint} of $T$. We write $B(T)$ for the set of beads of $T$. 

We consider the necklace $T$ to be a bi-pointed simplicial set $(T,\alpha,\omega)$ where $\alpha$ is the initial vertex $\alpha=0\in \repD[m_0]\hookrightarrow T$ and $\omega$ is the final vertex $\omega=m_t\in \repD[m_t]\hookrightarrow  T$.
We write $\Nec$ for the full subcategory of the category $\Dset_{*,*}$ of bi-pointed simplicial sets spanned by the necklaces.

 Given a simplicial set $K$ and $a,b\in K_0$, we denote by $K_{a,b}$ the simplicial set bi-pointed at $(a,b)\colon \repD[0]\amalg\repD[0]\to K$. A \emph{necklace} in $K_{a,b}$ is a bi-pointed map $T\to K_{a,b}$ with $T$ a necklace. We denote by $\catnec{K}{a}{b}\coloneqq \Nec_{/K_{a,b}}$ the category of necklaces $T\to K_{a,b}$ in $K$ from $a$ to $b$. 

A necklace $T=\repD[m_1]\vee\ldots\vee \repD[m_t]\to K_{a,b}$ is \emph{totally non-degenerate} if, for all $0\leq i \leq t$, its restriction to the $i$-th bead
\[ \repD[m_i]\hookrightarrow \repD[m_1]\vee\ldots\vee \repD[m_k]=T \to K\]
is a non-degenerate $m_i$-simplex of $K$. We write $\tndnec{K}{a}{b}$ for the full subcategory of $\catnec{K}{a}{b}$ spanned by the totally non-degenerate necklaces. 

Using the language of necklaces, we obtain as \cite[Proposition~2.4.1]{MRR1} the following description of the hom $\Thn$-spaces of the categorification $\Ch$.

\begin{prop} \label{cor:computationhomsC}
 Let $W$ be an object in $\pcatThn$ and $a,b\in W_0$. Then there is a natural isomorphism in $\Thnsset$
 \[ \Hom_{\Ch W}(a,b)\cong \diag( \colim_{T\in \catnec{W_{-,\star,\star}}{a}{b}} \Hom_{\CL T}(\alpha,\omega)), \]
 where $\colim_{T\in \catnec{W_{-,\star,\star}}{a}{b}} \Hom_{\CL T}(\alpha,\omega)\in \Thnssset$ is given at $\defThn\in\Thn$ and $\defS\geq 0$ by the colimit $\colim_{T\in \catnec{W_{-,\defThn,\defS}}{a}{b}} \Hom_{\CL T}(\alpha,\omega)$ in $\sset$.
 \end{prop}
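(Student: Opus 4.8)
The plan is to unwind the definition of the homotopy coherent categorification $\Ch$ as the composite $\diag_*\circ\CL_*$ recalled in \cref{subsec:nerves}, and then reduce the claim, level by level over $\Thn\times\Delta$, to the necklace description of the hom-spaces of the rigidification functor $\CL$ of a simplicial set, due to Dugger--Spivak \cite{Dugger}. Throughout, an object $W\in\pcatThn\subseteq\sThnsset$ is regarded as a functor $(\Thn\times\Delta)^{\op}\to\Dset$, $(\defThn,[\defS])\mapsto W_{-,\defThn,\defS}$, valued in simplicial sets; since $W_0$ is discrete, every $W_{-,\defThn,\defS}$ has vertex set $W_0$, independently of $(\defThn,[\defS])$.

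First I would compute the homs of $\CL_* W$. As $\CL$ is the identity on vertex sets, applying it objectwise to the functor above yields the $\Thnssset$-enriched category $\CL_* W$ with set of objects $W_0$, whose hom-object $\Hom_{\CL_* W}(a,b)\in\Thnssset$ is given at $(\defThn,[\defS])\in\Thn\times\Delta$ by the simplicial set $\Hom_{\CL(W_{-,\defThn,\defS})}(a,b)$, the remaining simplicial coordinate being the one created by $\CL$. Since $\diag_*$ acts on hom-objects by the diagonal functor $\diag$, this gives a natural isomorphism $\Hom_{\Ch W}(a,b)\cong\diag\bigl(\Hom_{\CL_* W}(a,b)\bigr)$, so it suffices to produce a natural isomorphism in $\Thnssset$
\[\Hom_{\CL_* W}(a,b)\;\cong\;\colim_{T\in\catnec{W_{-,\star,\star}}{a}{b}}\Hom_{\CL T}(\alpha,\omega)\]
and then post-compose with $\diag$.

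For the displayed isomorphism I would invoke the necklace description of the homs of $\CL$: for every simplicial set $K$ and $a,b\in K_0$ there is an isomorphism $\Hom_{\CL K}(a,b)\cong\colim_{T\in\catnec{K}{a}{b}}\Hom_{\CL T}(\alpha,\omega)$, natural in bi-pointed maps of simplicial sets \cite{Dugger}. Evaluating it at $K=W_{-,\defThn,\defS}$ for each $(\defThn,[\defS])\in\Thn\times\Delta$ produces an isomorphism whose source, by the previous paragraph, is the level $(\defThn,[\defS])$ of $\Hom_{\CL_* W}(a,b)$, and whose target is by definition the level $(\defThn,[\defS])$ of $\colim_{T\in\catnec{W_{-,\star,\star}}{a}{b}}\Hom_{\CL T}(\alpha,\omega)\in\Thnssset$. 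To see that these assemble to a morphism in $\Thnssset$, note that a morphism $(\defThn',[\defSprime])\to(\defThn,[\defS])$ in $\Thn\times\Delta$ induces a bi-pointed map $W_{-,\defThn,\defS}\to W_{-,\defThn',\defSprime}$, hence a pushforward functor $\catnec{W_{-,\defThn,\defS}}{a}{b}\to\catnec{W_{-,\defThn',\defSprime}}{a}{b}$ between the corresponding necklace categories, and compatibility of the Dugger--Spivak isomorphism with this datum is exactly its naturality in $K$. The resulting isomorphism in $\Thnssset$ is then natural in $W$ for the same reason, and applying $\diag$ concludes.

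I expect the only genuine difficulty to be organizational: an object of $\sThnsset$ carries three directions at once --- the ``precategory'' direction along which $\CL$ is applied, the ``cell'' direction $\Thn$, and the ``space'' direction --- plus the fourth simplicial coordinate produced by $\CL$, and one must keep these straight so that $\diag$ is formed in the correct pair of coordinates and so that, for fixed $(\defThn,[\defS])$, the category $\catnec{W_{-,\defThn,\defS}}{a}{b}$ is understood as the category of necklaces into $W_{-,\defThn,\defS}$ taken in the precategory direction. With that bookkeeping in place, the argument is a direct unwinding of the definition of $\Ch$ together with the cited formula over a single simplicial set.
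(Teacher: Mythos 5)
Your argument is correct, and it is essentially the intended one: this paper only recalls the statement from \cite[Proposition~2.4.1]{MRR1}, where the proof is exactly the unwinding you describe — $\Ch=\diag_*\circ\CL_*$, the observation that discreteness of $W_0$ makes $\CL_*W$ a $\Thnssset$-enriched category with $\Hom_{\CL_* W}(a,b)$ given at $(\defThn,[\defS])$ by $\Hom_{\CL (W_{-,\defThn,\defS})}(a,b)$, and the levelwise application of the Dugger--Spivak necklace formula together with its naturality in bi-pointed maps to assemble the isomorphism in $\Thnssset$ before applying $\diag$. Your bookkeeping of the three directions (and of the extra simplicial coordinate created by $\CL$) is also the right way to organize it, so nothing is missing.
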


We get a description of the hom $\Thn$-spaces of the categorification~$\Ch$ in terms of totally non-degenerate necklaces, under the assumption of each level being a \emph{$1$-ordered} simplicial set; see \cite[Definition~2.1.3]{MRR1}.

 \begin{ex} \label{examplesof1ordered}
For $m\geq 0$, by \cite[Remark~2.1.6]{MRR1}, the simplicial set $\repD$ is $1$-ordered.
 \end{ex}

The following appears as \cite[Corollary 2.4.2]{MRR1}.

 \begin{prop} \label{cor:computationshomC1ordered}
Let $W$ be an object in $\pcatThn$ and $a,b\in W_0$. Suppose that, for all $\defThn\in \Thn$ and $\defS\geq 0$, the simplicial set $W_{-,\defThn,\defS}$ is $1$-ordered. Then there is a natural isomorphism in $\Thnsset$
\[ \Hom_{\Ch W}(a,b)\cong \diag( \colim_{T\in \tndnec{W_{-,\star,\star}}{a}{b}} \Hom_{\CL T}(\alpha,\omega)) \]
where $\colim_{T\in \tndnec{W_{-,\star,\star}}{a}{b}} \Hom_{\CL T}(\alpha,\omega)\in \Thnssset$ is given at $\defThn\in\Thn$ and $\defS\geq 0$ by the colimit $\colim_{T\in \tndnec{W_{-,\defThn,\defS}}{a}{b}} \Hom_{\CL T}(\alpha,\omega)$ in $\sset$.
 \end{prop}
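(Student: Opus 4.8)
The plan is to deduce this from \cref{cor:computationhomsC} by showing that, under the $1$-orderedness hypothesis, restricting the colimit there from all necklaces to the totally non-degenerate ones does not change its value. This is a purely $1$-categorical cofinality statement about the necklace categories; no new homotopical input is needed, only the known description of the hom $\Thn$-spaces and an analysis of necklaces in a $1$-ordered simplicial set.

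First I would reduce to a statement about a single $1$-ordered simplicial set. By \cref{cor:computationhomsC} we have $\Hom_{\Ch W}(a,b)\cong\diag\bigl(\colim_{T\in\catnec{W_{-,\star,\star}}{a}{b}}\Hom_{\CL T}(\alpha,\omega)\bigr)$, where the object of $\Thnssset$ inside is computed objectwise in the variables $\defThn\in\Thn$ and $\defS\geq 0$, and where $\diag$, being restriction along $\delta\colon\Delta\to\Delta\times\Delta$, preserves colimits. So it suffices to show that for every $1$-ordered simplicial set $K$ and every $a,b\in K_0$ the canonical map
\[
\colim_{T\in\tndnec{K}{a}{b}}\Hom_{\CL T}(\alpha,\omega)\;\longrightarrow\;\colim_{T\in\catnec{K}{a}{b}}\Hom_{\CL T}(\alpha,\omega)
\]
is an isomorphism in $\sset$; applying this with $K=W_{-,\defThn,\defS}$ for all $\defThn\in\Thn$ and $\defS\geq 0$, then reassembling and applying $\diag$, gives the claimed isomorphism (and the $1$-ordered form of the intermediate object of $\Thnssset$), with naturality in $W$, $a$, $b$ inherited from \cref{cor:computationhomsC} and from the evident naturality of the construction below.

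Next I would prove this isomorphism by showing that the fully faithful inclusion $\iota\colon\tndnec{K}{a}{b}\hookrightarrow\catnec{K}{a}{b}$ is a \emph{final} functor, so that restriction along it does not change colimits. For a necklace $T=\repD[m_1]\vee\ldots\vee\repD[m_t]\to K_{a,b}$, I would construct its \emph{total non-degeneration}: factor the simplex $\repD[m_i]\to K$ classifying the $i$-th bead uniquely as a surjection $\repD[m_i]\twoheadrightarrow\repD[\bar m_i]$ followed by a non-degenerate simplex $\nu_i\colon\repD[\bar m_i]\to K$, discard the beads with $\bar m_i=0$, and let $\widehat T\to K_{a,b}$ be the wedge of the $\nu_i$, equipped with the induced surjection $p\colon T\twoheadrightarrow\widehat T$ in $\Nec$ over $K$. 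By construction $\widehat T\to K_{a,b}$ is totally non-degenerate, so the comma category $T\!\downarrow\!\iota$ is nonempty; I then want $(\widehat T,p)$ to be \emph{initial} there, i.e.\ to show that every bi-pointed map $T\to T'$ over $K$ with $T'$ totally non-degenerate factors — necessarily uniquely, since $p$ is an epimorphism — through $p$. This exhibits $\tndnec{K}{a}{b}$ as a reflective subcategory of $\catnec{K}{a}{b}$, and in particular makes $\iota$ final.

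I expect the verification of this universal property to be the crux, and it is the only place the $1$-orderedness of $K$ is used. The point is that in a $1$-ordered simplicial set a simplex with pairwise distinct vertices is automatically non-degenerate, so that \emph{faces of non-degenerate simplices of $K$ are again non-degenerate}. Given $T\to T'$ over $K$, one looks at it bead by bead: the $i$-th bead $\repD[m_i]$ is sent to a simplex of $T'$ whose non-degenerate part is a face of a bead of $T'$, so its image in $K$ is a face of a non-degenerate simplex of $K$ and hence non-degenerate; comparing Eilenberg--Zilber factorizations with that of the $i$-th bead of $T\to K$ shows that the map on the $i$-th bead already coequalizes $\repD[m_i]\twoheadrightarrow\repD[\bar m_i]$, whence $T\to T'$ coequalizes $p$ and factors through it. Together with the reduction above and \cref{cor:computationhomsC}, this proves the proposition. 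I would also note that the hypothesis cannot be dropped: for a non-$1$-ordered $K$ — for instance one containing a loop, as with the single-loop graph discussed in the introduction — the comma categories $T\!\downarrow\!\iota$ need not be connected and the passage to totally non-degenerate necklaces genuinely fails (cf.\ the necklace-collapse techniques of \cite[\textsection 3]{Dugger}).
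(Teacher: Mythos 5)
Your proof is correct: reducing via \cref{cor:computationhomsC} to a levelwise statement about a single $1$-ordered simplicial set $K$, and then showing that $T\mapsto\widehat{T}$ is left adjoint to the inclusion $\tndnec{K}{a}{b}\hookrightarrow\catnec{K}{a}{b}$ (so the inclusion is final and the colimit is unchanged) is exactly the intended argument, and your use of $1$-orderedness — faces of non-degenerate simplices of $K$ stay non-degenerate, so the Eilenberg--Zilber surjection of each bead computed in a totally non-degenerate $T'$ agrees with the one computed in $K$ — is the right crux. Note that this paper does not reprove the statement (it is quoted from \cite[Corollary~2.4.2]{MRR1}), but your route matches the necklace machinery that source and this paper's appendix (\cref{tndnec1ordered,rem:DSepi}) rely on, including your correct observation that some hypothesis like $1$-orderedness is genuinely needed for the passage to totally non-degenerate necklaces.
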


We now describe the category of totally non-degenerate necklaces of a $1$-ordered simplicial set, and recall the bead functor construction from \cite[Remark~3.2.7]{MRR1}. 

\begin{rmk} \label{tndnec1ordered}
    Let $K$ be a $1$-ordered simplicial set and $a,b\in K_0$. By \cite[Lemma~2.1.9]{MRR1}, a necklace $f\colon T\to K_{a,b}$ is totally non-degenerate if and only if the map $f$ is a monomorphism in $\Dset$. Hence the objects of the category $\tndnec{K}{a}{b}$ are monomorphisms $T\hookrightarrow K_{a,b}$ with $T$ a necklace, and, by the cancellation property of monomorphisms, the morphisms are precisely monomorphisms $U\hookrightarrow T$ of necklaces over $K_{a,b}$. In particular, this category is a poset.
\end{rmk}

\begin{rmk} \label{beadfunctor}
    Given a monomorphism $g\colon U\hookrightarrow T$ between necklaces, there is an induced map $B(g)\colon B(U)\to B(T)$ between the sets of necklaces, which sends a bead $\repD[m_i]$ of $U$ to the unique bead $B(g)(\repD[m_i])$ of $T$ that contains $\repD[m_i]$. 

    Since all morphisms in $\tndnec{K}{a}{b}$ are monomorphisms, the above assignment induces a functor $B\colon \tndnec{K}{a}{b}\to \set$.
\end{rmk}

\section{Grothendieck construction over the homotopy coherent nerve}

In this section, we build a first Quillen equivalence between strictly enriched functors valued in $(\infty,n-1)$-categories and right double $(\infty,n-1)$-fibrations. In \cref{subsec:MSrightfib}, we first recall the model structure for right double $(\infty,n-1)$-fibrations, and in \cref{subsec:MSproj} the projective model structure for enriched functors. In \cref{subsec:Grothendieck}, we build a Grothendieck construction over the homotopy coherent nerve and prove that it gives a Quillen equivalence
\begin{tz}
\node[](1) {$[\cC^{\op},\MSThnsset]_\proj$}; 
\node[right of=1,xshift=3.8cm](2) {$\MSrightfib{\Nh\cC}$}; 

\punctuation{2}{,};
\draw[->] ($(2.west)-(0,5pt)$) to node[below,la]{$\cH_\cC^\Nh$} ($(1.east)-(0,5pt)$);
\draw[->] ($(1.east)+(0,5pt)$) to node[above,la]{$\int_\cC^\Nh$} ($(2.west)+(0,5pt)$);

\node[la] at ($(1.east)!0.5!(2.west)$) {$\bot$};
\end{tz}
for every fibrant $\MSThnsset$-enriched category $\cC$. To do so, we compare with the Grothendieck construction over the strict nerve constructed by the second author in \cite{RasekhD}.

\subsection{Model structure for right double \texorpdfstring{$(\infty,n-1)$}{(infinity,n-1)}-fibrations} \label{subsec:MSrightfib}

Let us fix an object $W$ of $\pcatThn$, seen as an object of $\sThnsset$. In this section, we first recall the model structure $\MSrightfib{W}$ for right double $(\infty,n-1)$-fibration introduced by the second named author in \cite{RasekhD}.

We denote by $\injsThnspace$ the injective model structure on the category $\sThnsset$ of $(\DThn)$-presheaves valued in $\MSspace$. Then recall that by \cite[Theorem 7.6.5]{Hirschhorn} the slice category $\sThnssetslice{W}$ admits a model structure created by the forgetful functor $\sThnssetslice{W}\to \injsThnspace$. We denote this model structure by $\injsThnspaceslice{W}$. 

\begin{rmk} \label{rem:gencofinjslice}
By \cite[Theorem~1.5]{HirschhornOvercategories} and \cite[Theorem 15.6.27]{Hirschhorn}, a set of generating (trivial) cofibrations for the model structure $\injsThnspaceslice{W}$ is given by the set containing the maps
\[ \textstyle \partial\repD[m,Y]\amalg_{\partial\repD[m,X]} \repD[m,X]\xhookrightarrow{\iota_m\widehat{\times} f} \repD[m,Y]\to W \]
for all $m\geq 0$, where $\iota_m\colon \partial\repD\hookrightarrow \repD$ denotes the boundary inclusion, $f\colon X\hookrightarrow Y$ ranges over all maps in a set of generating (trivial) cofibrations in $\injThnspace$, and $\repD[m,Y]\to W$ over all such maps in $\sThnsset$.
\end{rmk}

The model structure $\MSrightfib{W}$ is defined to be the localization of the model structure $\injsThnspaceslice{W}$ with respect to the following monomorphisms: 
\begin{itemize}[leftmargin=0.6cm]
\item the monomorphism
\[ \repD[0,\defThn,0]\xhookrightarrow{[\langle m\rangle,\defThn,0]} \repD[m,\defThn,0]\to W, \]
for all $m\geq 0$ and $\defThn\in \Thn$, where $\langle m\rangle$ is the map induced by the morphism $[0]\to [m]$ of $\Delta$ that picks $m\in [m]$ and $\repD[m,\defThn,0]\to W$ ranges over all such maps in $\sThnsset$,
\item the monomorphism
\[ \repD[0,X]\xhookrightarrow{[\id_{[0]},f]}\repD[0,Y]\to W, \]
where $f\colon X\hookrightarrow Y$ ranges over all maps in $S_{\CSThn}$ (see \cref{subsec:MS(inftyn-1)}), and $\repD[0,Y]\to W$ over all such maps in $\sThnsset$. 
\end{itemize}

To describe the fibrant objects of this model structure, we use the following result to claim that certain homotopy pullback squares can be computed as strict pullbacks.

\begin{lemma} \label{lem:homotopypullback}
 Let $D$ be a discrete $\Thn$-space, i.e., in the image of $\set\hookrightarrow \Thnsset$. Then every pullback square of the following form in $\Thnsset$ is a homotopy pullback in $\injThnspace$. 
\begin{diagram} \label{pullback}
        \node[](1) {$P$}; 
        \node[right of=1,xshift=.5cm](2) {$Y$}; 
        \node[below of=1](3) {$X$}; 
        \node[below of=2](4) {$D$}; 

        \draw[->] (1) to node[above,la]{$f'$} (2); 
        \draw[->] (1) to node[left,la]{$g'$} (3); 
        \draw[->] (2) to node[right,la]{$g$} (4); 
        \draw[->] (3) to node[below,la]{$f$} (4);
        \pullback{1};
    \end{diagram}
\end{lemma}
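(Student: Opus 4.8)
The plan is to reduce the statement to a levelwise (or even objectwise) computation in simplicial sets, where the corresponding fact about homotopy pullbacks over discrete objects is classical. Recall that $\MSThnsset$ is the localization of the injective model structure $\injThnspace$ on $\Thn$-presheaves valued in Kan complexes, and that weak equivalences, fibrations, and homotopy pullbacks in the injective model structure are detected levelwise, i.e., by evaluating at each object $\defThn\in\Thn$. So it suffices to show that for each $\defThn\in\Thn$, the square obtained by evaluating \cref{pullback} at $\defThn$ is a homotopy pullback in $\MSspace=\sset_{\Kan}$.

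First I would observe that since $D$ is discrete, $D(\defThn)$ is a discrete simplicial set (a set), for every $\defThn\in\Thn$; indeed $D$ is in the image of $\set\hookrightarrow\Thnsset$, and the inclusion sends a set $S$ to the constant $\Thn$-presheaf on $S$ viewed as a discrete space. Evaluation at $\defThn$ preserves pullbacks (it is a right adjoint), so the evaluated square is again a pullback square in $\sset$, now with the bottom-right corner a discrete simplicial set. Thus the claim reduces to: a pullback square of simplicial sets in which the terminal corner is a set (equivalently, a disjoint union of points) is automatically a homotopy pullback in the Kan--Quillen model structure. This is standard: one can factor $g(\defThn)\colon Y(\defThn)\to D(\defThn)$ through a fibration, but more directly, since $D(\defThn)$ is discrete, both $X(\defThn)$ and $Y(\defThn)$ decompose as coproducts indexed by $D(\defThn)$, the pullback $P(\defThn)$ is the coproduct of the componentwise products $X(\defThn)_d\times Y(\defThn)_d$, and the comparison with the homotopy pullback reduces to the fact that, over a point, the product is the homotopy product — and any map to a point is a fibration, so the strict pullback over a point already computes the homotopy pullback. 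Hence the evaluated square is a homotopy pullback in $\MSspace$.

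Assembling these levelwise statements, I would invoke the characterization of homotopy pullbacks in $\injThnspace$: a square of fibrant objects is a homotopy pullback if and only if it is one after evaluating at each $\defThn\in\Thn$ (which holds because the injective model structure is built so that weak equivalences and the relevant limits are detected objectwise, cf.\ the standard theory of injective model structures on diagram categories valued in a proper model category). Since the objects appearing in \cref{pullback} are not assumed fibrant a priori, I would either first replace the cospan $X\to D\leftarrow Y$ by a levelwise-fibrant one and note that $D$ discrete can be kept discrete under fibrant replacement (a discrete simplicial set is already a Kan complex), or simply phrase the argument in terms of the definition of homotopy pullback via a fibrant replacement of the cospan and track that the pullback is preserved; the discreteness of $D$ is exactly what makes this tracking trivial.

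The main obstacle, such as it is, is bookkeeping rather than mathematics: making sure the passage between "homotopy pullback in $\injThnspace$" and "objectwise homotopy pullback in $\MSspace$" is justified without fibrancy hypotheses on $P, X, Y$, and confirming that the constant-presheaf functor $\set\hookrightarrow\Thnsset$ does indeed land in objectwise-discrete (hence objectwise-Kan) $\Thn$-spaces so that $D$ needs no fibrant replacement. Once that is pinned down, the heart of the argument is the elementary observation that a strict pullback over a discrete base in $\sset$ is a homotopy pullback, for which no real work is needed.
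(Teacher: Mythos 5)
Your strategy is sound and reaches the correct conclusion, but it takes a mildly different route from the paper, and two of your supporting assertions are false as stated and should be repaired. The paper never passes to levels: it decomposes $X$, $Y$, $P$ into fibers over the elements $d\in D$, replaces each $\fib_d X$ by an injective fibrant object $\widehat{\fib_d X}$, observes that $\coprod_d\widehat{\fib_d X}\to\coprod_d\{d\}$ is a fibration, invokes right properness of $\injThnspace$ (so that a strict pullback along a fibration is a homotopy pullback), and then uses cartesian closedness to see that the resulting square is weakly equivalent to the original one. Your reduction to an objectwise statement in $\sset$ is legitimate, but not for the reason you give: fibrations in the \emph{injective} model structure are emphatically not detected objectwise (that is the projective structure); what saves you is that the injective and projective structures share their weak equivalences, so homotopy pullback squares agree in the two, and in the projective structure (or by using that the evaluation functors preserve pullbacks and weak equivalences and jointly reflect weak equivalences) they are detected objectwise. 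Similarly, your parenthetical ``any map to a point is a fibration'' is false --- only maps from Kan complexes to the point are fibrations --- but it is also unnecessary: the correct justification, which you state in the same sentence, is that a product of weak equivalences of simplicial sets is a weak equivalence, so after decomposing over the discrete base the strict pullback maps by a weak equivalence to the pullback of the fibrantly replaced cospan, which is a homotopy pullback by right properness. With those two points rephrased, your levelwise argument is a perfectly good alternative: it trades the paper's use of right properness and cartesian closedness of $\injThnspace$ for the same elementary facts in $\MSspace$, at the cost of having to justify the objectwise detection of homotopy pullbacks in $\injThnspace$, which the paper's direct argument avoids.
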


\begin{proof} 
    Given an element $d\in D$, we denote by $\fib_d X$, $\fib_d Y$, and $\fib_dP$ the fibers in $\Thnsset$ of the maps $X\to D$, $Y\to D$, and $P\to D$ at $d$. Consider the following pullback square in $\Thnsset$
  \begin{tz}
        \node[](1) {$\coprod_{d\in D} \widehat{\fib_d X}\times \fib_d Y$}; 
        \node[right of=1,xshift=2.8cm](2) {$\coprod_{d\in D}\fib_d Y$}; 
        \node[below of=1](3) {$\coprod_{d\in D} \widehat{\fib_d X}$}; 
        \node[below of=2](4) {$\coprod_{d\in D}\{d\}$}; 

        \draw[->] (1) to (2); 
        \draw[->] (1) to (3); 
        \draw[->] (2) to node[right,la]{$g$} (4); 
        \draw[->] (3) to (4);
        \pullback{1};
    \end{tz}
    where $\fib_d X\to \widehat{\fib_d X}$ denotes a fibrant replacement in $\injThnspace$.
    Given that by construction the bottom map is a fibration and the model structure $\injThnspace$ is right proper by \cite[Theorem 15.3.4]{Hirschhorn}, this square is a homotopy pullback square by \cite[Corollary 13.3.8]{Hirschhorn}.
    Using the fact that $D$ is discrete and that $\injThnspace$ is cartesian closed, one can check that the square is weakly equivalent to the original square \eqref{pullback}. Hence, the latter is also a homotopy pullback, as desired.
\end{proof}

By \cite[Theorem 5.15, Notation 5.16]{RasekhD}, the fibrant objects in $\MSrightfib{W}$ admit the following description.

A map $p \colon P\to W$ in $\sThnsset$ is a \emph{right double $(\infty,n-1)$-fibration} if it satisfies the following conditions: 
\begin{itemize}[leftmargin=0.6cm]
\item it is a fibration in $\injsThnspaceslice{W}$, 
\item for all $m\geq 0$, the following square is a (homotopy) pullback square in $\Thnsset$,
\begin{tz}
        \node[](1) {$P_m$}; 
        \node[right of=1,xshift=.6cm](2) {$P_0$}; 
        \node[below of=1](3) {$W_m$}; 
        \node[below of=2](4) {$W_0$}; 

        \draw[->] (1) to node[above,la]{$p_m$} (2); 
        \draw[->] (1) to node[left,la]{$\langle m\rangle^*$} (3); 
        \draw[->] (2) to node[right,la]{$\langle m\rangle^*$} (4); 
        \draw[->] (3) to node[below,la]{$p_0$} (4);
        \pullback{1};
    \end{tz}
\item for all $a\in W_0$, the (homotopy) fiber $\fib_a P$ of $p\colon P_0\to W_0$ at $a$ is fibrant in $\MSThnsset$.
\end{itemize}

The category $\sThnssetslice{W}$ can be made into a tensored and cotensored $\Thnsset$-enriched category, with tensor described as follows. Given an object $p\colon P\to W$ in $\sThnssetslice{W}$ and an object $X\in \Thnsset$, the tensor $p\otimes X$ is the object of $\sThnssetslice{W}$ obtained as the composite
\[ p\otimes X\colon P\times X\xrightarrow{\pi} P\xrightarrow{f} W, \]
where $\pi$ denotes the canonical projection. The following result can be deduced from \cite[Theorem 5.15]{RasekhD}.

\begin{prop} \label{MSrightfibenriched}
The model structure $\MSrightfib{W}$ is enriched over $\MSThnsset$.
\end{prop}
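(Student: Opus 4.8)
The goal is to upgrade the enriched structure on the slice category $\sThnssetslice{W}$ to an enriched model structure on the localization $\MSrightfib{W}$, in the sense of \cite[Definition A.3.1.5]{htt}, over the excellent model category $\MSThnsset$. The plan is to verify the pushout-product axiom: for a cofibration $p\to q$ in $\MSrightfib{W}$ and a cofibration $X\hookrightarrow Y$ in $\MSThnsset$, the induced map
\[ (p\otimes Y)\amalg_{p\otimes X}(q\otimes X)\longrightarrow q\otimes Y \]
is a cofibration in $\MSrightfib{W}$, which is trivial if either of the two input maps is. Since $\MSrightfib{W}$ is a left Bousfield localization of $\injsThnspaceslice{W}$ with the same cofibrations, and the localized trivial cofibrations are generated by the given maps together with the trivial cofibrations of $\injsThnspaceslice{W}$, it suffices by the standard reduction (e.g.\ \cite[Lemma A.3.1.7]{htt} or the argument that pushout-products with cofibrations preserve the generating sets) to check the pushout-product condition on generators.

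First I would recall from \cref{rem:gencofinjslice} that the generating cofibrations of $\injsThnspaceslice{W}$ are the maps $\partial\repD[m,Y']\amalg_{\partial\repD[m,X']}\repD[m,X']\to W$ built from $\iota_m\widehat\times f$ with $f$ a generating cofibration of $\injThnspace$. A key observation is that the tensor $p\otimes X$ is just $(-)\times X$ applied in the slice, so the pushout-product of a slice cofibration with $X\hookrightarrow Y$ is computed by the pushout-product in $\injThnspace$ of the underlying map with $X\hookrightarrow Y$; since $\injThnspace$ is cartesian (indeed $\MSThnsset$ is cartesian closed and excellent), this pushout-product is again a cofibration, and a trivial one if $X\hookrightarrow Y$ is trivial in $\MSThnsset$ — this handles the case where $X\hookrightarrow Y$ is a trivial cofibration, because trivial cofibrations in $\injsThnspaceslice{W}$ remain trivial cofibrations in the localization $\MSrightfib{W}$. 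Then I would treat the case where $p\to q$ is a trivial cofibration of $\MSrightfib{W}$: by the reduction it is enough to take $p\to q$ among the localizing maps, i.e.\ one of the two displayed monomorphisms $\repD[0,\theta,0]\to\repD[m,\theta,0]\to W$ or $\repD[0,X']\to\repD[0,Y']\to W$, and to check that tensoring with a cofibration $X\hookrightarrow Y$ of $\MSThnsset$ and taking the pushout-product yields a trivial cofibration of $\MSrightfib{W}$. For the second family this is immediate because $S_{\CSThn}$ is closed under pushout-product with cofibrations up to the localization (this is part of what cartesian closedness of $\MSThnsset$ buys us, cf.\ \cite[Theorem 8.1]{rezkTheta}); for the first family one checks directly that $(\repD[0,\theta,0]\to\repD[m,\theta,0])\widehat\times(X\hookrightarrow Y)$ is, up to the local equivalences generated by the first family, again in the saturated class.

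The main obstacle I expect is precisely this last verification — showing that the pushout-product of the ``Segal-type'' localizing map $\repD[0,\theta,0]\hookrightarrow\repD[m,\theta,0]$ with an arbitrary generating cofibration of $\MSThnsset$ remains a trivial cofibration in $\MSrightfib{W}$. Unlike the cartesian-closedness argument which handles the $S_{\CSThn}$-family cleanly, here the two maps live in ``different directions'' (one in the $\Delta$-direction defining the fibration condition, one in the $\Thnsset$-direction), so one cannot simply invoke an internal-hom adjunction. The cleanest route is probably to use \cref{lem:homotopypullback}: the localizing condition associated to $\repD[0,\theta,0]\hookrightarrow\repD[m,\theta,0]$ forces the squares relating $P_m$ to $P_0$ over $W$ to be (homotopy) pullbacks over the discrete object $W_0$, and tensoring with $X\hookrightarrow Y$ respects such pullback squares because $\injThnspace$ is cartesian closed and right proper; chasing this through shows the pushout-product still imposes only a pullback condition over a discrete object, hence is a local equivalence. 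Finally, since the cotensor and hom $\Thn$-spaces of $\sThnssetslice{W}$ were already described (adjoint to the tensor), once the pushout-product axiom holds the remaining axioms of an enriched model structure over $\MSThnsset$ — in particular that $\MSThnsset$ is a monoidal model category, which is \cite[Theorem 8.1]{rezkTheta} — are automatic, yielding \cref{MSrightfibenriched}.
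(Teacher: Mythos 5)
Your overall strategy (verify the pushout--product axiom after reducing to generating maps, treating separately the case where the trivial cofibration lives in $\MSThnsset$ and the case where it is one of the localizing maps of $\MSrightfib{W}$) is a legitimate route, and it differs from the paper, which does not reprove this at all: \cref{MSrightfibenriched} is deduced there directly from \cite[Theorem 5.15]{RasekhD}, where the enrichment is established as part of the construction of the model structure. However, as written your argument has a genuine gap in each of the two key verifications. First, the claim that if $X\hookrightarrow Y$ is a trivial cofibration in $\MSThnsset$ then the pushout--product with a slice cofibration is already a trivial cofibration in $\injsThnspaceslice{W}$ is false: weak equivalences in $\injsThnspaceslice{W}$ are created levelwise in $\injThnspace$ (Kan--Quillen levelwise), and a trivial cofibration of the localization $\MSThnsset$ (e.g.\ a Segal map) is in general not an injective weak equivalence. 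So this case is not ``handled'' by cartesianness of $\injThnspace$; one must show that the pushout--product becomes a local equivalence in $\MSrightfib{W}$, and this is not automatic, because the localizing set of $\MSrightfib{W}$ only contains the maps $\repD[0,X]\to\repD[0,Y]$ with $f\in S_{\CSThn}$ at simplicial level $0$. One has to combine these with the first family of localizing maps (or argue by testing against the fibrant objects, using the pullback description $P_m\simeq W_m\times_{W_0}P_0$ and the fibrancy of the fibers in $\MSThnsset$) to see that tensoring with an arbitrary trivial cofibration of $\MSThnsset$, in all simplicial levels, is a local equivalence.

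Second, the case you correctly single out as the main obstacle --- the pushout--product of $\repD[0,\defThn,0]\hookrightarrow\repD[m,\defThn,0]$ (over $W$) with a generating cofibration of $\MSThnsset$ --- is only gestured at. \cref{lem:homotopypullback} says that strict pullbacks over a discrete $\Thn$-space are homotopy pullbacks in $\injThnspace$; it does not by itself show that the pushout--product in question lies in the saturation of the local equivalences of $\MSrightfib{W}$, and the phrase ``chasing this through'' is exactly where the content lies. A correct treatment would either carry out this computation explicitly (e.g.\ by checking that for every fibrant object $p\colon P\to W$ of $\MSrightfib{W}$ the induced map on hom objects into $p$ is a trivial fibration, using the right-fibration condition), or simply invoke \cite[Theorem 5.15]{RasekhD} as the paper does; as it stands, your proposal assumes precisely what needs to be proved at both critical points.
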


Every map $f\colon W\to Z$ in $\pcatThn$, seen as a map of $\sThnsset$, induces by post-composition a functor $f_!\colon \sThnssetslice{W}\to \sThnssetslice{Z}$ and the latter admits as a right adjoint the functor $f^*\colon \sThnssetslice{Z}\to \sThnssetslice{W}$ obtained by taking pullbacks along $f$. This adjunction has good homotopical properties. 

\begin{prop} \label{postcompisQP}
Let $f\colon W\to Z$ be a map in $\pcatThn$. The adjunction $f_!\dashv f^*$ is a Quillen pair
\begin{tz}
\node[](1) {$\MSrightfib{W}$}; 
\node[right of=1,xshift=3.9cm](2) {$\MSrightfib{Z}$}; 
\punctuation{2}{.};

\draw[->] ($(2.west)-(0,5pt)$) to node[below,la]{$f^*$} ($(1.east)-(0,5pt)$);
\draw[->] ($(1.east)+(0,5pt)$) to node[above,la]{$f_!$} ($(2.west)+(0,5pt)$);

\node[la] at ($(1.east)!0.5!(2.west)$) {$\bot$};
\end{tz}
It is further a Quillen equivalence when $f\colon W\to Z$ is a weak equivalence in $\injsThnspace$ or in $\pcatinj$.
\end{prop}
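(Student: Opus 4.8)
The plan is to verify the conditions of the enriched/simplicial analogue of a standard slice-category criterion. For the Quillen pair statement, I would first check that $f_!$ preserves cofibrations and trivial cofibrations at the level of the underlying injective model structures $\injsThnspaceslice{W}\to\injsThnspaceslice{Z}$: this is essentially immediate, since the cofibrations in these slice model structures are created by the forgetful functors to $\injsThnspace$ and $f_!$ commutes with the forgetful functors (it only changes the structure map, not the underlying object). Hence $f_!$ preserves (trivial) cofibrations for the injective slice model structures. Since $\MSrightfib{W}$ and $\MSrightfib{Z}$ are localizations of $\injsThnspaceslice{W}$ and $\injsThnspaceslice{Z}$, the cofibrations are unchanged, so $f_!$ still preserves cofibrations; and a left Quillen functor between the unlocalized structures that sends the localizing maps for $W$ into weak equivalences (here, into \emph{images} of localizing maps for $Z$) descends to a left Quillen functor between the localizations. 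Concretely, $f_!$ sends the generating local maps $\repD[0,\defThn,0]\hookrightarrow\repD[m,\defThn,0]\to W$ and $\repD[0,X]\hookrightarrow\repD[0,Y]\to W$ over $W$ to the corresponding maps over $Z$ (post-composed with $f$), which are exactly among the localizing maps used to define $\MSrightfib{Z}$; so $f_!$ is left Quillen. Equivalently, the right adjoint $f^*$ preserves fibrant objects and fibrations between fibrant objects, which can be read off the explicit description of right double $(\infty,n-1)$-fibrations together with \cref{lem:homotopypullback}, since pullback along $f$ preserves the Segal-type pullback squares and the fibers; this gives an alternative route.

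For the Quillen equivalence claim when $f$ is a weak equivalence, I would argue as follows. It suffices to show the derived unit and counit are weak equivalences. First treat the case where $f$ is a weak equivalence in $\injsThnspace$. On underlying injective slice model structures, $f_!\dashv f^*$ is already known to be a Quillen equivalence whenever $f$ is an injective weak equivalence (this is the standard fact that post-composition/pullback along a weak equivalence is a Quillen equivalence of overcategories, e.g.\ \cite[Theorem~15.6.27]{Hirschhorn} applied to the right-proper model structure $\injsThnspace$). Then I would check that this Quillen equivalence is compatible with the two localizations: because the localizing maps over $W$ correspond under $f_!$ to localizing maps over $Z$ (as noted above), and conversely $f^*$ of a localizing map over $Z$ is a local equivalence over $W$ (using that $f^*$ preserves the representable building blocks up to the relevant pullbacks and that $f$ being a weak equivalence makes the resulting squares homotopy pullbacks via \cref{lem:homotopypullback}), the induced functors on the localized model structures still form a Quillen equivalence. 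The case where $f$ is a weak equivalence in $\pcatinj$ reduces to the previous one: by definition a weak equivalence in $\pcatinj$ is in particular a weak equivalence in $\injsThnspace$ (the $\pcatinj$ model structure on $\pcatThn$ has underlying injective weak equivalences, since $\pcatinj$ sits inside $\injsThnsset$ and its weak equivalences are detected there after fibrant replacement), so the same argument applies.

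The main obstacle I anticipate is the \emph{compatibility of the Quillen equivalence with the localizations} — that is, checking that $f^*$ sends the \emph{local} equivalences over $Z$ (not just the localizing generators) to local equivalences over $W$, and dually that the derived unit/counit remain weak equivalences after localizing. Preservation of cofibrations and of the localizing generators is formal, but to conclude a Quillen equivalence of the localized structures one genuinely needs that $f^*$ detects and reflects local equivalences appropriately; the cleanest way is probably to use the characterization of fibrant objects (right double $(\infty,n-1)$-fibrations) together with \cref{lem:homotopypullback}: if $P\to Z$ is a right double fibration and $f\colon W\to Z$ a weak equivalence, then $f^*P\to W$ is again a right double fibration (the Segal squares and fibers are preserved, using right properness of $\injThnspace$ and the discreteness of $W_0$, $Z_0$), and moreover $P\to f_!f^*P$ is a weak equivalence over $Z$ — which, combined with the underlying injective Quillen equivalence, yields that $f^*$ is homotopically fully faithful and essentially surjective on the localized homotopy categories. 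This pullback-stability of right double fibrations along weak equivalences is the one place where a small amount of honest computation with the fiber squares is unavoidable, but it is controlled entirely by \cref{lem:homotopypullback}.
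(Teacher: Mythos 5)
Your argument for the Quillen pair (checking that $f_!$ preserves cofibrations of the slice injective structures and carries the localizing generators over $W$ to localizing generators over $Z$) and for the Quillen equivalence when $f$ is a weak equivalence in $\injsThnspace$ is reasonable and matches in spirit what the paper does by citing \cite[Theorem 5.38]{RasekhD}, whose first bullet point covers exactly the injective-equivalence case.

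However, there is a genuine gap in your treatment of the case where $f$ is a weak equivalence in $\pcatinj$: you claim that such an $f$ ``is in particular a weak equivalence in $\injsThnspace$,'' and reduce to the previous case. This is false, and in fact backwards. The weak equivalences of the Bergner--Rezk model structure $\pcatinj$ are of Dwyer--Kan/Segal type, detected only after fibrant replacement in a localized structure; every levelwise (injective) weak equivalence between objects of $\pcatThn$ is a $\pcatinj$-weak equivalence, but not conversely (e.g.\ completion-type maps and DK-equivalences that are not levelwise equivalences). So the $\pcatinj$ case is strictly more general than the injective case and cannot be obtained by your reduction. This is precisely where the paper's proof does real work: it verifies that every object of $\pcatThn$ has \emph{weakly constant objects} in the sense of \cite[Definition 1.66]{RasekhD}, by producing a fibrant replacement in an auxiliary Bergner--Rezk model structure which is homotopically constant (an argument via the small object argument and \cite{br2}), and then invokes the third bullet point of \cite[Theorem 5.38]{RasekhD}, additionally checking the cartesian mapping space condition and that the comparison functor preserves and reflects weak equivalences. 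Without an argument of this kind (or some other genuine input handling non-levelwise equivalences), your proof does not establish the $\pcatinj$ half of the statement.
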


\begin{proof}
    The fact that $f_!\dashv f^*$ is a Quillen pair is \cite[Theorem 5.38]{RasekhD}, and the fact that it is a Quillen equivalence when $f$ is a weak equivalence in $\injsThnspace$ is the first bullet point of \cite[Theorem 5.38]{RasekhD}. It remains to show that it is a Quillen equivalence when $f\colon W\to Z$ is a weak equivalence in $\pcatinj$.
    
    For this, we first show that every object $W\in \pcatThn$ has \emph{weakly constant objects} in the sense of \cite[Definition 1.66]{RasekhD}.  Using \cite[Lemma 1.77]{RasekhD}, this happens if and only if any fibrant replacement $W\to \widehat{W}$ in the model structure from \cite[Proposition 5.9]{br2} in the case where $\cC=\Thn$, denoted here by $(\MSThnsset)^{\Dop}_{\mathrm{CSeg}}$, is \emph{homotopically constant}, meaning that each map $\widehat{W}_{0,[0]}\to \widehat{W}_{0,\defThn}$ is a weak equivalence in $\MSspace$ for all $\defThn\in \Thn$. 

    We take a fibrant replacement $W\to \widehat{W}$ in the model structure from \cite[Proposition 5.8]{br2} in the case where $\cC=\Thn$, denoted here by $(\MSThnsset)^{\Dop}_{\mathrm{C}_0\mathrm{Seg}}$. We show that $\widehat{W}$ is homotopically constant and that it is a fibrant replacement of $W$ in $(\MSThnsset)^{\Dop}_{\mathrm{CSeg}}$. By running the small object argument to $W$ in order to obtain~$\widehat{W}$, we see that each step preserves the property of being homotopically constant. Hence, as $W_{0,[0]}=W_{0,\defThn}$ by assumption, it follows that $\widehat{W}_{0,[0]}\to \widehat{W}_{0,\defThn}$ is a weak equivalence in $\MSspace$ for all $\defThn\in \Thn$. The fact that $\widehat{W}$ is fibrant in $(\MSThnsset)^{\Dop}_{\mathrm{CSeg}}$ follows from applying \cite[Proposition 5.10]{br2} using that, as $\widehat{W}$ is homotopically constant, it is fibrant in the model structure from \cite[Proposition 5.6]{br2} in the case where $\cC=\Thn$, denoted here by $(\MSThnsset)^{\Dop}_{\mathrm{hcC}_0\mathrm{Seg}}$. 

    This shows that every object of $\pcatThn$ has weakly constant objects. Hence we can now apply the third bullet point of \cite[Theorem 5.38]{RasekhD} to deduce that $f_!\dashv f^*$ is a Quillen equivalence when $f\colon W\to Z$ is a weak equivalence in $\pcatinj$. To see that this applies in our case, note that the model structure $\MSThnsset$ is cartesian closed so that the cartesian mapping space condition is automatic, and that the functor $I\colon \pcatinj\to (\MSThnsset)^{\Dop}_{\mathrm{hcC}_0\mathrm{Seg}}$ preserves and reflects weak equivalences by \cite[Theorem 9.6]{br2} and a generalization of the argument in \cite[Lemma 3.18]{MOR}.
\end{proof}

\subsection{Projective model structure for enriched functors} \label{subsec:MSproj}

The category $\Thnsset$ is cartesian closed and so it can be seen as a $\Thnsset$-enriched category. Let us fix a $\Thnsset$-enriched category $\cC$. We denote by $[\cC^{\op},\Thnsset]$ the category of $\Thnsset$-enriched functors from $\cC^{\op}$ to $\Thnsset$ and $\Thnsset$-enriched natural transformations between them.

By \cite[Theorem 4.4 (ii)]{Moserinj}, the category $[\cC^{\op},\Thnsset]$ of $\Thnsset$-enriched functors supports the projective model structure $[\cC^{\op},\MSThnsset]_\proj$. 

The category $[\cC^{\op},\Thnsset]$ can be made into a tensored and cotensored $\Thnsset$-enriched category, with tensor described as follows. Given a $\Thnsset$-enriched functor $F\colon \cC^{\op}\to \Thnsset$ and an object $X\in \Thnsset$, the tensor $F\otimes X$ is the $\Thnsset$-enriched functor obtained as the composite 
\[ F\otimes X\colon \cC^{\op}\xrightarrow{F} \Thnsset\xrightarrow{(-)\times X} \Thnsset. \]
The following result can be deduced from \cite[Theorem 5.4]{Moserinj}.

\begin{prop} \label{MSprojenriched}
    The model structure $[\cC^{\op},\MSThnsset]_\proj$ is enriched over $\MSThnsset$.
\end{prop}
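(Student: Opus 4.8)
The plan is to deduce enrichment from the general machinery of \cite{Moserinj} exactly as the other enrichment statements in this section are deduced. Recall that a model category $\mathcal{M}$ tensored and cotensored over an excellent monoidal model category $\mathcal{V}$ is \emph{enriched over $\mathcal{V}$} precisely when the pushout-product axiom holds, i.e.\ for a cofibration $i\colon A\to B$ in $\mathcal{M}$ and a cofibration $j\colon X\to Y$ in $\mathcal{V}$, the pushout-product $i\,\widehat{\otimes}\,j\colon A\otimes Y\amalg_{A\otimes X} B\otimes X\to B\otimes Y$ is a cofibration in $\mathcal{M}$, trivial if either $i$ or $j$ is. So the task reduces to verifying this pushout-product axiom for $[\cC^{\op},\Thnsset]$ with the tensor $F\otimes X = F(-)\times X$ described above, against the model structure $\MSThnsset$ on $\Thnsset$.

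First I would invoke \cite[Theorem 5.4]{Moserinj}, which (as the paper asserts) is the source from which \cref{MSprojenriched} "can be deduced": that theorem should establish precisely that the projective model structure $[\cC^{\op},\mathcal{V}]_\proj$ on enriched functors into an excellent $\mathcal{V}$ is itself a $\mathcal{V}$-enriched model category. Since $\MSThnsset$ is excellent (noted in \cref{subsec:MS(inftyn-1)}, as a consequence of cartesian closedness via \cite[Theorem 8.1]{rezkTheta} and \cite[Definition A.3.2.16]{htt}), and $\Thnsset$ with this structure is exactly an instance of the $\mathcal{V}$ to which \cite[Theorem 5.4]{Moserinj} applies, the conclusion is immediate once one identifies the tensoring: one checks that the tensor $F\otimes X$ defined here agrees with the canonical $\mathcal{V}$-tensoring on $[\cC^{\op},\mathcal{V}]$ (pointwise product), which is routine from the definition.

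Concretely, the proof is two sentences: "The model structure $\MSThnsset$ is excellent by \cref{subsec:MS(inftyn-1)}. Hence the claim follows from \cite[Theorem 5.4]{Moserinj}." If one wanted to be slightly more explicit, one could instead verify the pushout-product axiom directly: projective cofibrations in $[\cC^{\op},\Thnsset]$ are generated by maps of the form $\cC(-,c)\times A\to \cC(-,c)\times B$ for $A\hookrightarrow B$ a generating cofibration of $\MSThnsset$ and $c\in\cC$, and the pushout-product of such a generator with $X\hookrightarrow Y$ is, evaluated at each object, a coproduct of copies (indexed by $\Hom_\cC$-space simplices, so really a tensor) of the pushout-product $(A\to B)\,\widehat{\times}\,(X\to Y)$ in $\Thnsset$; this is a (trivial) cofibration there because $\MSThnsset$ is cartesian closed, and tensoring over $\Thnsset$ and taking colimits preserves (trivial) cofibrations, so the whole thing is a projective (trivial) cofibration. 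Either route works; I would take the first.

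The main obstacle, such as it is, is purely bookkeeping: one must confirm that the hypotheses of \cite[Theorem 5.4]{Moserinj} are genuinely met — in particular that "excellent" in the sense used there coincides with \cite[Definition A.3.2.16]{htt}, and that no hypothesis on $\cC$ (such as fibrancy, local smallness, or a set of objects) is needed beyond what is in force here — and that the enriched tensoring used in that theorem matches the pointwise-product tensoring recorded above. Since the companion result \cref{MSrightfibenriched} is deduced in exactly parallel fashion from \cite[Theorem 5.15]{RasekhD}, and \cref{MSprojenriched} is explicitly flagged as deducible from \cite[Theorem 5.4]{Moserinj}, I expect no real difficulty; the entire content is the citation.
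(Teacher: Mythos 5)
Your proposal matches the paper's own justification: the paper proves this proposition simply by citing \cite[Theorem 5.4]{Moserinj}, exactly as you propose, with the excellence of $\MSThnsset$ (recorded in \cref{subsec:MS(inftyn-1)}) guaranteeing the hypotheses. The extra direct verification of the pushout-product axiom on generators is a fine optional check but not needed; the citation is the whole content, as you say.
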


\begin{rmk} \label{gencofprojective}
By \cite[Remark 3.3.5]{htt}, a set of generating (trivial) cofibrations for the model structure $[\cC^{\op},\MSThnsset]_\proj$ is given by the set containing the $\Thnsset$-enriched natural transformations
\[ \Hom_\cC(-,a)\otimes X\to\Hom_\cC(-,a)\otimes Y \]
for all objects $a\in \cC$, where $X\hookrightarrow Y$ ranges over a set of generating (trivial) cofibrations of $\MSThnsset$. 
\end{rmk}

Every $\Thnsset$-enriched functor $F\colon \cC\to \cD$ in $\Thncat$ induces by pre-composition a functor $F^*\colon [\cD^{\op},\Thnsset]\to [\cC^{\op},\Thnsset]$. The latter admits as a left adjoint the functor $F_!\colon [\cC^{\op},\Thnsset]\to [\cD^{\op},\Thnsset]$ obtained by taking the $\Thnsset$-enriched left Kan extension along $F$. Note that the adjunction $F_!\dashv F^*$ is enriched over $\Thnsset$ by \cite[Theorem 4.50]{Kelly}, i.e., the functor $F_!$ commutes with tensors. This adjunction has good homotopical properties, as we now recall from \cite[Proposition~A.3.3.7(1) and Proposition~A.3.3.8(1)]{htt}.

\begin{prop} \label{LKEisQP}
Let $F\colon \cC\to \cD$ be a $\Thnsset$-enriched functor. The adjunction $F_!\dashv F^*$ is a Quillen pair
\begin{tz}
\node[](1) {$[\cC^{\op},\MSThnsset]_\proj$}; 
\node[right of=1,xshift=3.6cm](2) {$[\cD^{\op},\MSThnsset]_\proj$}; 
\punctuation{2}{.};

\draw[->] ($(2.west)-(0,5pt)$) to node[below,la]{$F^*$} ($(1.east)-(0,5pt)$);
\draw[->] ($(1.east)+(0,5pt)$) to node[above,la]{$F_!$} ($(2.west)+(0,5pt)$);

\node[la] at ($(1.east)!0.5!(2.west)$) {$\bot$};
\end{tz}
It is further a Quillen equivalence when $F\colon \cC\to \cD$ is a weak equivalence in $\MSThncat$.
\end{prop}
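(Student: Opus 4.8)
The statement is \cite[Proposition~A.3.3.7(1)]{htt} for the Quillen pair and \cite[Proposition~A.3.3.8(1)]{htt} for the Quillen equivalence, applied to the excellent model category $\MSThnsset$; my plan is to recover it as follows. For the Quillen pair I would observe that (trivial) fibrations and weak equivalences in any projective model structure $[-,\MSThnsset]_\proj$ are detected objectwise, while $F^*$ is restriction along $F^{\op}$, so $(F^*G)(c)=G(Fc)$; hence $F^*$ preserves (trivial) fibrations and is right Quillen, so $F_!\dashv F^*$ is a Quillen pair. (Alternatively, via \cref{gencofprojective}: $F_!$ is $\Thnsset$-enriched, hence commutes with tensors, and $F_!(\Hom_\cC(-,a))\cong\Hom_\cD(-,Fa)$ by the enriched co-Yoneda lemma, so $F_!$ sends the generating (trivial) cofibration $\Hom_\cC(-,a)\otimes X\to\Hom_\cC(-,a)\otimes Y$ to the generating (trivial) cofibration $\Hom_\cD(-,Fa)\otimes X\to\Hom_\cD(-,Fa)\otimes Y$.)

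For the Quillen equivalence, assuming $F$ is a weak equivalence in $\MSThncat$, I would verify that $F^*$ reflects weak equivalences between fibrant objects and that the derived unit is a weak equivalence on projectively cofibrant objects, which together give the conclusion. For the first: given $g\colon H\to H'$ between objectwise fibrant presheaves with $F^*g$ a weak equivalence, so $g_{Fc}$ is a weak equivalence for every $c$, I would apply $\Ho$ to $g$ and to the presheaves $H,H'$ (which land in fibrant objects), and, using that every $d\in\cD$ is isomorphic in $\Ho\cD$ to some $Fc$ by essential surjectivity, a two-out-of-three argument with isomorphisms in the ordinary category $\Ho(\Thnsset^{\mathrm{fib}})$ forces $g_d$ to be an isomorphism in $\Ho\MSThnsset$, hence a weak equivalence, for all $d$; so $g$ is an objectwise, i.e.\ a, weak equivalence. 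For the second: it is enough that the strict unit $\eta\colon G\to F^*F_!G$ be objectwise a weak equivalence for projectively cofibrant $G$. The coend formula for the enriched left Kan extension gives $(F_!G)(Fc_0)\cong\int^{c}\Hom_\cD(Fc_0,Fc)\otimes G(c)$ with $\eta_{c_0}$ the component at $c=c_0$; for $G=\Hom_\cC(-,a)\otimes X$ this is the map $\Hom_\cC(c_0,a)\otimes X\to\Hom_\cD(Fc_0,Fa)\otimes X$ induced by $F_{c_0,a}$, a weak equivalence since $F_{c_0,a}$ is one and $\MSThnsset$ is a cartesian monoidal model category in which every object is cofibrant. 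I would then propagate this along a cellular presentation of $G$ built from such generating maps, using that the cell attachments are objectwise homotopy pushouts (all objects are objectwise cofibrant, and projective cofibrations are objectwise monomorphisms), that $F_!$ preserves homotopy pushouts and transfinite composites while $F^*$ preserves objectwise weak equivalences and all colimits, and the gluing and transfinite-composition lemmas.

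The hard part will be the reflection statement: propagating objectwise data along the merely \emph{essential} surjectivity of $F$ forces the use of homotopy invariance of objectwise fibrant enriched presheaves (equivalently, functoriality of $\Ho$ on $\MSThncat$ and the fact that an $\MSThnsset$-enriched presheaf valued in fibrant objects descends to $\Ho$), which is precisely where excellence of $\MSThnsset$ is used, exactly as in \cite{htt}. One way to localize this difficulty is to factor $F$ in $\MSThncat$ as a trivial cofibration followed by a trivial fibration and use two-out-of-three for Quillen equivalences: for a trivial fibration $F$ is surjective on objects, so the reflection statement is immediate, and the trivial-cofibration case then follows from the same coend computation together with the reflection statement for $F_!$ on cofibrant objects, which comes from homotopical full faithfulness of $F$ and the homotopy co-Yoneda lemma.
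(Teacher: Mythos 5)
The paper does not actually prove this proposition: it is quoted directly from \cite[Propositions A.3.3.7(1) and A.3.3.8(1)]{htt}, which is exactly the citation you open with, so at the level of the paper your proposal already coincides with its ``proof''. Your supplementary direct argument is a correct specialization of Lurie's to $\MSThnsset$. The Quillen-pair half is the standard objectwise observation that $F^*$ preserves projective (trivial) fibrations. For the equivalence, your two ingredients are the expected ones: reflection of weak equivalences between projectively fibrant objects, obtained from functoriality of $\Ho$ (legitimate here because $\MSThnsset$ is cartesian closed with every object cofibrant, so the localization functor is monoidal and an objectwise fibrant enriched presheaf does descend to a functor on $\Ho\cD$) together with essential surjectivity of $\Ho F$; and the identification of the strict unit---which agrees with the derived unit since $F^*$ preserves all objectwise weak equivalences---on the cells $\Hom_\cC(-,a)\otimes X$ with $F_{-,a}\otimes X$ (compare \cref{unitisFonhoms} and \cite[(4.32)]{Kelly}), which is a weak equivalence because $-\times X$ preserves weak equivalences in this setting, propagated along a cellular presentation via the gluing and transfinite-composition lemmas and closure under retracts. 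The only soft spot is the closing alternative: factoring $F$ as a trivial cofibration followed by a trivial fibration does make the reflection step easy for the trivial-fibration leg (such functors are surjective on objects), but your treatment of the trivial-cofibration leg (``homotopy co-Yoneda'' plus a reflection statement for $F_!$) is too vague to stand on its own; fortunately it is not needed, since your main argument already handles a general weak equivalence $F$.
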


\subsection{Grothendieck constructions} \label{subsec:Grothendieck}

In \cite[Definition 3.20]{RasekhD}, the second author constructs a Gro\-then\-dieck construction over the strict nerve. We here give an alternative -- though equivalent by \cite[Remark 3.21]{RasekhD} -- presentation of this construction, alongside a new variant over the homotopy coherent nerve.

Let us fix a $\Thnsset$-enriched category $\cC$. There are Gro\-then\-dieck constructions 
\[ \textstyle \int_\cC^N\colon [\cC^{\op},\Thnsset]\to \sThnssetslice{N\cC} \quad \text{and} \quad \int_\cC^\Nh\colon [\cC^{\op},\Thnsset]\to \sThnssetslice{\Nh\cC}.\]
On objects, they send a $\Thnsset$-enriched functor $F\colon \cC^{\op}\to \Thnsset$ to the maps in $\Thnsset$
\[ \textstyle\pi_F\colon \int_\cC^N F\to N\cC \quad \text{and} \quad \pi_F\colon \int_\cC^\Nh F\to \Nh\cC \] 
given at level $0$ by the canonical projection 
\[ \textstyle (\pi_F)_0\colon (\int_\cC^N F)_0=(\int_\cC^\Nh F)_0\coloneqq \coprod_{a\in \Ob\cC} Fa\to\Ob\cC=(N\cC)_0=(\Nh\cC)_0 \]
and at level $m\geq 1$ by the pullbacks in $\Thnsset$ 
\begin{tz}
        \node[](1) {$(\int_\cC^N F)_m$}; 
        \node[right of=1,xshift=1.5cm](2) {$(N\cC)_m$}; 
        \node[below of=1](3) {$(\int_\cC^N F)_0$}; 
        \node[below of=2](4) {$(N\cC)_0$}; 

        \draw[->] (1) to node[above,la]{$(\pi_F)_m$} (2); 
        \draw[->] (1) to node[left,la]{$\langle m\rangle^*$} (3); 
        \draw[->] (2) to node[right,la]{$\langle m\rangle^*$} (4); 
        \draw[->] (3) to node[below,la]{$(\pi_F)_0$} (4);
        \pullback{1};

        \node[right of=2,xshift=2cm](1) {$(\int_\cC^\Nh F)_m$}; 
        \node[right of=1,xshift=1.5cm](2) {$(\Nh\cC)_m$}; 
        \node[below of=1](3) {$(\int_\cC^\Nh F)_0$}; 
        \node[below of=2](4) {$(\Nh\cC)_0$}; 
        \punctuation{4}{.};

        \draw[->] (1) to node[above,la]{$(\pi_F)_m$} (2); 
        \draw[->] (1) to node[left,la]{$\langle m\rangle^*$} (3); 
        \draw[->] (2) to node[right,la]{$\langle m\rangle^*$} (4); 
        \draw[->] (3) to node[below,la]{$(\pi_F)_0$} (4);
        \pullback{1};
    \end{tz}

    We further provide the simplicial structure of $\int_\cC^\Nh F$. The one for $\int_\cC^N F$ can be constructed in a similar manner, or deduced from the isomorphism from \cite[Remark 3.21]{RasekhD}. 
    
    We first state the following lemma, which is simply an application of the universal property of pullbacks.

    \begin{lemma} \label{lem:simpstruct}
        Let $\mapDelta\colon [\ell]\to [m]$ be a morphism in $\Delta$ such that $\mapDelta(\ell)=m$. Then there is a unique map $\mapDelta^*\colon (\int_\cC^\Nh F)_m\to (\int_\cC^\Nh F)_{\ell}$ in $\Thnsset$ making the following diagram commute. 
        \begin{tz}
        \node[](1) {$(\int_\cC^\Nh F)_{\ell}$}; 
        \node[above left of=1,xshift=-1cm,yshift=.2cm](1') {$(\int_\cC^\Nh F)_m$};
        \node[right of=1,xshift=1.3cm](2) {$(\Nh\cC)_{\ell}$};
        \node[above left of=2,xshift=-1cm,yshift=.2cm](2') {$(\Nh\cC)_m$};
        \node[below of=1](3) {$(\int_\cC^\Nh F)_0$}; 
        \node[below of=2](4) {$(\Nh\cC)_0$}; 

        \draw[dashed,->] (1') to node[right,la,yshift=5pt,pos=0.4]{$\mapDelta^*$} (1);
        \draw[->] (2') to node[right,la,yshift=5pt,pos=0.4]{$\mapDelta^*$} (2);
        \draw[->,bend right] (1') to node[left,la]{$\langle m\rangle^*$} (3);
        \draw[->] (1) to node[above,la]{$(\pi_F)_{\ell}$} (2);
        \draw[->] (1') to node[above,la]{$(\pi_F)_m$} (2'); 
        \draw[->] (1) to node[left,la]{$\langle \ell\rangle^*$} (3); 
        \draw[->] (2) to node[right,la]{$\langle \ell\rangle^*$} (4); 
        \draw[->] (3) to node[below,la]{$(\pi_F)_0$} (4);
        \pullback{1};
        \end{tz}
    \end{lemma}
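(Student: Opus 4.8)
The plan is to read off $\mapDelta^*$ directly from the universal property of the pullback square defining $(\int_\cC^\Nh F)_\ell$, so that no homotopical input is needed; this is a purely $1$-categorical argument in $\Thnsset$. Concretely, I would exhibit a cone over the cospan $(\Nh\cC)_\ell\xrightarrow{\langle\ell\rangle^*}(\Nh\cC)_0\xleftarrow{(\pi_F)_0}(\int_\cC^\Nh F)_0$ with apex $(\int_\cC^\Nh F)_m$, given by the two maps
\[ (\int_\cC^\Nh F)_m\xrightarrow{(\pi_F)_m}(\Nh\cC)_m\xrightarrow{\mapDelta^*}(\Nh\cC)_\ell \qquad\text{and}\qquad (\int_\cC^\Nh F)_m\xrightarrow{\langle m\rangle^*}(\int_\cC^\Nh F)_0, \]
where the first $\mapDelta^*$ is the simplicial structure map of $\Nh\cC$ and the map $\langle m\rangle^*$ on the right is the structure map of the pullback defining $(\int_\cC^\Nh F)_m$. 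The map into $(\int_\cC^\Nh F)_\ell$ induced by this cone is then the desired $\mapDelta^*$, and both its uniqueness and all the commutativities asserted in the displayed diagram (beyond the two that hold by construction) are immediate consequences of the universal property.

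The one step requiring verification is that these two maps genuinely form a cone, i.e.\ that their composites to $(\Nh\cC)_0$ agree. For the first, $\langle\ell\rangle^*\circ\mapDelta^*=(\mapDelta\circ\langle\ell\rangle)^*$, and since $\langle\ell\rangle\colon[0]\to[\ell]$ is the map picking out the vertex $\ell$ while $\mapDelta(\ell)=m$ by hypothesis, we have $\mapDelta\circ\langle\ell\rangle=\langle m\rangle\colon[0]\to[m]$; hence this composite equals $\langle m\rangle^*\circ(\pi_F)_m\colon(\int_\cC^\Nh F)_m\to(\Nh\cC)_0$. For the second, the commutativity of the pullback square defining $(\int_\cC^\Nh F)_m$ is exactly the identity $(\pi_F)_0\circ\langle m\rangle^*=\langle m\rangle^*\circ(\pi_F)_m$, which is the same map. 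So the cone exists, and the lemma follows.

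I do not expect any real obstacle: the whole argument rests on the single elementary identity $\mapDelta\circ\langle\ell\rangle=\langle m\rangle$ in $\Delta$, which is precisely what the hypothesis $\mapDelta(\ell)=m$ encodes, together with the defining pullback square of $(\int_\cC^\Nh F)_m$. This is also the reason the construction is restricted to last-vertex-preserving $\mapDelta$: only such maps are compatible with the ``evaluate at the last vertex'' maps $\langle m\rangle^*$ used to build the levels of $\int_\cC^\Nh F$ as pullbacks, and it is exactly these $\mapDelta$ that are later needed to assemble $\int_\cC^\Nh F$ into a simplicial object. In the final write-up I would simply phrase the cone condition as one small commuting square and invoke the universal property of the pullback.
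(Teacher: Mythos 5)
Your argument is correct and is exactly what the paper has in mind: the lemma is stated there with the remark that it "is simply an application of the universal property of pullbacks," and your cone construction, the verification via $\mapDelta\circ\langle\ell\rangle=\langle m\rangle$, and the uniqueness from the pullback's universal property spell out precisely that argument.
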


    \begin{constr}
    For $m\geq 1$, given that the coface maps $d^i\colon [m-1]\to [m]$ for $0\leq i<m$ and the codegeneracy morphisms $s^j\colon [m]\to [m-1]$ for $0\leq j\leq m-1$ in $\Delta$ satisfy the condition in \cref{lem:simpstruct}, we get induced face maps $d_i\colon (\int_\cC^\Nh F)_m\to (\int_\cC^\Nh F)_{m-1}$ for $0\leq i<m$ and degeneracy maps $s_j\colon (\int_\cC^\Nh F)_{m-1}\to (\int_\cC^\Nh F)_m$ for $0\leq j\leq m-1$ compatible with those of $\Nh\cC$ through the projection $\pi_F$.
    
    It remains to define the face maps $d_m\colon (\int_\cC^\Nh F)_m\to (\int_\cC^\Nh F)_{m-1}$ for $m\geq 1$. If $m=1$, define 
    \[ \textstyle d_1\colon (\int_\cC^\Nh F)_1\cong \coprod_{a,b\in \Ob\cC} \Hom_\cC(a,b)\times Fb\xrightarrow{\coprod_{a,b\in \Ob\cC}\ev_{a,b}^F} \coprod_{a\in\Ob\cC} Fa=(\int_\cC^\Nh F)_0 \]
    where $\ev_{a,b}^F\colon \Hom_\cC(a,b)\times Fb\to Fa$ denotes the unique map corresponding under the adjunction $(-)\times Fb\dashv \Hom_{\Thnsset}(Fb,-)$ to the induced map $F_{a,b}\colon \Hom_\cC(a,b)\to \Hom_{\Thnsset}(Fb,Fa)$ on hom $\Thn$-spaces. If $m>1$, define $d_m$ to be the following composite
    \begin{tz}
        \node[](1) {$(\int_\cC^\Nh F)_m\cong (\Nh\cC)_m\times_{(\Nh\cC)_0} (\int_\cC^\Nh F)_0$}; 
        \node[right of=1,xshift=7cm](2) {$(\Nh\cC)_{m-1}\times_{(\Nh\cC)_0} (\Nh\cC)_1\times_{(\Nh\cC)_0} (\int_\cC^\Nh F)_0$}; 
        \node[below of=2,yshift=.6cm](4) {$(\Nh\cC)_{m-1}\times_{(\Nh\cC)_0} (\int_\cC^\Nh F)_1$}; 
        \node[below of=4](5) {$(\Nh\cC)_{m-1}\times_{(\Nh\cC)_0} (\int_\cC^\Nh F)_0$};
        \node[below of=5,yshift=.5cm](6) {$(\int_\cC^\Nh F)_{m-1}$};

        \draw[->] ($(1.south)-(1.5cm,0)$) to node[below,la]{$d_m$} ($(6.west)$);
        \draw[->] (1) to node[above,la]{$\rho^*\times_{(\Nh\cC)_0} (\int_\cC^\Nh F)_0$} (2);
        \node at ($(2)!0.5!(4)$) {\rotatebox{270}{$\cong$}};
        \node at ($(5)!0.5!(6)$) {\rotatebox{270}{$\cong$}};
        \draw[->] (4) to node[right,la]{$(\Nh\cC)_{m-1}\times_{(\Nh\cC)_0} d_1$} (5);
    \end{tz}
    where $\rho\colon \repD[m-1]\amalg_{\repD[0]}\repD[1]\to \repD[m]$ is the map of $\Dset$ which is induced by the morphisms $d^m\colon [m-1]\to [m]$ and $\langle m-1,m\rangle\colon [1]\to [m]$ of $\Delta$. Note that the face map $d_m$ as defined above is compatible with the corresponding face of $\Nh\cC$ through the projection $\pi_F$.

    The simplicial identities then follow from those of $\Nh\cC$ and the unitality of $F$. Hence this defines a simplicial object $\int_\cC^\Nh F\colon \Dop\to \Thnsset$, i.e., an object of $\sThnsset$. 
    \end{constr}

   Now, on morphisms, the functors $\int_\cC^N$ and $\int_\cC^\Nh$ send a $\Thnsset$-enriched natural transformation $\eta\colon F\to G$ to the maps in $\sThnssetslice{\Nh\cC}$
   \[ \textstyle\int_\cC^N\eta\colon \int_\cC^N F\to \int_\cC^N G \quad \text{and} \quad \int_\cC^\Nh\eta\colon \int_\cC^\Nh F\to \int_\cC^\Nh G \] given at level $0$ by the map 
   \[ \textstyle(\int_\cC^N\eta)_0=(\int_\cC^\Nh\eta)_0\coloneqq \coprod_{a\in \Ob\cC} \eta_a\colon \coprod_{a\in \Ob\cC} Fa\to \coprod_{a\in \Ob\cC} Ga \]
   and at level $m\geq 1$ by the unique map determined by the universal property of pullbacks. The compatibility of $\eta$ with the simplicial structure of $\int_\cC^\Nh F$ and $\int_\cC^\Nh G$ follows from the $\Thnsset$-enriched naturality condition of $\eta$.

   By \cite[Lemma 3.30]{RasekhD}, the functor $\int_\cC^N$ admits a right adjoint $\cH_\cC^N$ and the following appears as \cite[Theorem 5.50]{RasekhD}.

\begin{prop} \label{intQENima}
Let $\cC$ be a $\Thnsset$-enriched category. The adjunction $\int_\cC^N\dashv \cH_\cC^N$ is a Quillen equivalence
\begin{tz}
\node[](1) {$[\cC^{\op},\MSThnsset]_\proj$}; 
\node[right of=1,xshift=3.8cm](2) {$\MSrightfib{N\cC}$};
\punctuation{2}{.};

\draw[->] ($(2.west)-(0,5pt)$) to node[below,la]{$\cH_\cC^N$} ($(1.east)-(0,5pt)$);
\draw[->] ($(1.east)+(0,5pt)$) to node[above,la]{$\int_\cC^N$} ($(2.west)+(0,5pt)$);

\node[la] at ($(1.east)!0.5!(2.west)$) {$\bot$};
\end{tz}
\end{prop}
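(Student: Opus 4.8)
The plan is to deduce this from \cite[Theorem~5.50]{RasekhD}, using \cite[Remark~3.21]{RasekhD} to identify the Grothendieck construction $\int_\cC^N$ described above with the one defined there; since $\MSrightfib{N\cC}$ is exactly the model structure used in \cite{RasekhD}, the statement transfers directly. For a self-contained argument one would proceed in two steps, as follows.

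First I would verify that $\int_\cC^N\dashv\cH_\cC^N$ is a Quillen pair. As both model structures are cofibrantly generated, by \cref{gencofprojective} it suffices to evaluate $\int_\cC^N$ on the generators $\Hom_\cC(-,a)\otimes X\to\Hom_\cC(-,a)\otimes Y$, for $X\hookrightarrow Y$ a generating (trivial) cofibration of $\MSThnsset$. Since $\Thnsset$ is cartesian closed, one reads off from the levelwise formula that $\int_\cC^N$ preserves tensors by $\Thn$-spaces, so this generator is sent to $\pi_a\otimes X\to\pi_a\otimes Y$ over $N\cC$, where $\pi_a\coloneqq\int_\cC^N\Hom_\cC(-,a)$. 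For a generating cofibration this is a monomorphism, hence a cofibration in $\injsThnspaceslice{N\cC}$ and thus in its localization $\MSrightfib{N\cC}$; for a generating trivial cofibration it is a trivial cofibration by the pushout--product axiom for the $\MSThnsset$-enrichment of $\MSrightfib{N\cC}$ (\cref{MSrightfibenriched}), applied with the cofibrant object $\pi_a$. Hence $\int_\cC^N$ is left Quillen.

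Second, I would upgrade this to a Quillen equivalence via the criterion that a Quillen pair $L\dashv R$ is a Quillen equivalence as soon as $R$ reflects weak equivalences between fibrant objects and the derived unit is a weak equivalence on cofibrant objects. The geometric input is that the construction is fiberwise over the discrete level-$0$ object $\Ob\cC=(N\cC)_0$: for a right double $(\infty,n-1)$-fibration $p\colon P\to N\cC$ the value $\cH_\cC^N(p)(a)$ is weakly equivalent in $\MSThnsset$ to the fiber $\fib_a P$, while the fiber of $\int_\cC^N F\to N\cC$ over $a$ is $Fa$, and it is already a homotopy fiber since $\Ob\cC$ is discrete (\cref{lem:homotopypullback}). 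Reflection of weak equivalences between fibrant objects then follows: a map $p\to q$ of right double fibrations over $N\cC$ with $\cH_\cC^N(p)\to\cH_\cC^N(q)$ a levelwise weak equivalence is a weak equivalence on each fiber, hence (as $\Ob\cC$ is discrete) on the level-$0$ objects, hence on every level by the homotopy-pullback condition in the description of the fibrant objects, hence a levelwise and thus an $\MSrightfib{N\cC}$-weak equivalence. For the derived unit, evaluating at $a$ and using the fiber description identifies it with the map $Fa=\fib_a(\int_\cC^N F)\to\fib_a(\widehat{\int_\cC^N F})$ induced by a fibrant replacement $\int_\cC^N F\to\widehat{\int_\cC^N F}$ in $\MSrightfib{N\cC}$, which is a weak equivalence in $\MSThnsset$ because the level-$0$ localizing maps of $\MSrightfib{N\cC}$ restrict on fibers to the localizing maps $S_{\CSThn}$ of $\MSThnsset$.

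The step I expect to be the main obstacle is exactly this fiberwise dictionary: showing that $\cH_\cC^N$ computes (homotopy) fibers on the fibrant objects, and that a fiberwise weak equivalence of right double fibrations is a genuine weak equivalence in $\MSrightfib{N\cC}$. Both require carefully unwinding the explicit description of the fibrant objects of $\MSrightfib{N\cC}$ (the pullback condition along $\langle m\rangle$ together with the $\MSThnsset$-fibrancy of the fibers) as well as the structure of its generating trivial cofibrations. This is the technical core of \cite[\S5]{RasekhD}, which is why in the present paper \cref{intQENima} is taken as an input rather than reproved.
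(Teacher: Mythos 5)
Your main line of argument is exactly what the paper does: Proposition~\ref{intQENima} is imported directly from \cite[Theorem 5.50]{RasekhD}, with \cite[Remark 3.21]{RasekhD} identifying the Grothendieck construction given here with the one defined there, and no independent proof is given in this paper. Your additional two-step sketch of a self-contained argument goes beyond what the paper attempts, but your primary deduction matches the paper's approach, so this is correct as stated.
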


We now want to prove that the Grothendieck construction $\int_\cC^{\Nh}$ is also a Quillen equivalence. First we have the following.
   
\begin{lemma}
There is an adjunction  
\begin{tz}
\node[](1) {$[\cC^{\op},\Thnsset]$}; 
\node[right of=1,xshift=2.6cm](2) {$\sThnssetslice{\Nh\cC}$}; 
\punctuation{2}{.};

\draw[->] ($(2.west)-(0,5pt)$) to node[below,la]{$\cH_\cC^\Nh$} ($(1.east)-(0,5pt)$);
\draw[->] ($(1.east)+(0,5pt)$) to node[above,la]{$\int_\cC^\Nh$} ($(2.west)+(0,5pt)$);

\node[la] at ($(1.east)!0.5!(2.west)$) {$\bot$};
\end{tz}
\end{lemma}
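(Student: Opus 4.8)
The plan is to obtain $\cH_\cC^\Nh$ formally, by exhibiting $\int_\cC^\Nh$ as a cocontinuous functor between locally presentable categories and invoking the adjoint functor theorem; the explicit formula for the right adjoint is then not needed. (An alternative would be to construct $\cH_\cC^\Nh$ by hand, as is done for $\cH_\cC^N$ in \cite[Lemma 3.30]{RasekhD}, sending $p\colon P\to \Nh\cC$ to the $\Thnsset$-enriched functor $a\mapsto \fib_a(P_0\to (\Nh\cC)_0)$ with enriched functoriality extracted from the simplicial structure of $P$ over $\Nh\cC$; but this route requires unwinding the hom $\Thn$-spaces of $\Nh\cC$ and is considerably more involved than the abstract argument.)

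First I would record that both categories are locally presentable. The category $\sThnsset$ is a presheaf category, and hence so is any slice $\sThnssetslice{\Nh\cC}$; and $[\cC^{\op},\Thnsset]$ is locally presentable since $\Thnsset$ is a locally presentable cartesian closed category and $\cC$ is a small $\Thnsset$-enriched category, its conical limits and colimits being computed pointwise by \cite[\textsection 3.3]{Kelly}.

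Second -- and this is the heart of the argument -- I would show that $\int_\cC^\Nh$ preserves all small colimits. Colimits in $\sThnssetslice{\Nh\cC}$ are created by the forgetful functor to $\sThnsset$, where they are computed levelwise, so it suffices to check that for each $m\geq 0$ the functor $F\mapsto (\int_\cC^\Nh F)_m$ from $[\cC^{\op},\Thnsset]$ to $\Thnsset$ preserves colimits; naturality in $F$ of the simplicial structure maps $\mapDelta^*$ and $d_m$ follows from the universal properties and the $\Thnsset$-enriched naturality used to define $\int_\cC^\Nh$ on morphisms, so no further compatibility is needed. For $m=0$ this functor is $F\mapsto \coprod_{a\in \Ob\cC} Fa$, a coproduct of evaluation functors, which is cocontinuous. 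For $m\geq 1$ it is the composite of $(\int_\cC^\Nh -)_0$ with the pullback along the fixed map $\langle m\rangle^*\colon (\Nh\cC)_m\to (\Nh\cC)_0$. Here the crucial point is that $(\Nh\cC)_0=\Ob\cC$ is \emph{discrete}: pulling back $\coprod_{a\in \Ob\cC} Fa\to (\Nh\cC)_0$ along $\langle m\rangle^*$ decomposes as
\[
(\Nh\cC)_m\times_{(\Nh\cC)_0}\bigl(\textstyle\coprod_{a\in \Ob\cC} Fa\bigr)\cong \coprod_{a\in \Ob\cC}\bigl((\Nh\cC)_m\times_{(\Nh\cC)_0}\{a\}\bigr)\times Fa,
\]
a coproduct of products of $Fa$ with constant $\Thn$-spaces, which is manifestly cocontinuous in $F$. (In particular the initial object, the constant functor at $\emptyset$, is sent to the initial object $\emptyset\to \Nh\cC$.) Hence $\int_\cC^\Nh$ is cocontinuous, and a cocontinuous functor between locally presentable categories is a left adjoint, which produces $\cH_\cC^\Nh$.

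The one point demanding care -- the ``hard part,'' such as it is -- is the pullback step: pullback along an arbitrary fixed morphism is not cocontinuous in general, and the argument works only because the base $(\Nh\cC)_0=\Ob\cC$ is a set, turning the pullback into a disjoint union of products with constant objects. A secondary check is that the ad hoc face map $d_m$, the one piece of the construction not phrased directly via a universal property, is natural in $F$; this is exactly the $\Thnsset$-enriched naturality condition invoked when defining $\int_\cC^\Nh$ on morphisms.
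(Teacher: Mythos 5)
Your proof is correct and follows essentially the same route as the paper's: both categories are locally presentable, so by the adjoint functor theorem it suffices to check that $\int_\cC^\Nh$ preserves colimits, which holds because coproducts and pullback along $(\Nh\cC)_m\to (\Nh\cC)_0$ commute with colimits computed levelwise. The only (harmless) difference is in how that last step is justified: the paper appeals to $\Thnsset$ being locally cartesian closed, whereas you exploit the discreteness of $(\Nh\cC)_0$ to decompose the pullback as a coproduct of products with fixed $\Thn$-spaces -- both are valid.
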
 

\begin{proof}
Note that the categories involved are locally presentable and so, by the adjoint functor theorem, it is enough to show that the functor $\int_\cC^\Nh\colon [\cC^{\op},\Thnsset]\to \sThnssetslice{\Nh\cC}$ preserves colimits. This follows from the definition of $\int_\cC^\Nh$ and the fact that coproducts and pulling back along $(\Nh\cC)_m \to (\Nh\cC)_0$ commute with colimits, as $\Thnsset$ is locally cartesian closed.
\end{proof}

We first prove that the above adjunction also forms a Quillen pair. 

\begin{prop} \label{lem:intprestensors} 
Let $F\colon \cC^{\op}\to \Thnsset$ be a $\Thnsset$-enriched functor and consider an object $X\in \Thnsset$. There is a natural isomorphism in $\sThnssetslice{\Nh\cC}$ 
\[ \textstyle \int_\cC^\Nh (F\otimes X)\cong (\int_\cC^\Nh F)\otimes X. \]
In particular, the adjunction $\int_\cC^\Nh\dashv \cH_\cC^\Nh$ is enriched over $\Thnsset$.
\end{prop}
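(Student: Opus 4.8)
The plan is to establish the natural isomorphism $\int_\cC^\Nh (F\otimes X)\cong (\int_\cC^\Nh F)\otimes X$ level by level in $\sThnsset$, compatibly with the projections to $\Nh\cC$, and then deduce enrichment of the adjunction as a formal consequence. First I would treat level $0$: by definition $(\int_\cC^\Nh (F\otimes X))_0=\coprod_{a\in\Ob\cC}(F\otimes X)(a)=\coprod_{a\in\Ob\cC}(Fa\times X)$, and since products distribute over coproducts in $\Thnsset$ (which is cartesian closed, hence locally cartesian closed) this is naturally isomorphic to $\bigl(\coprod_{a\in\Ob\cC}Fa\bigr)\times X=(\int_\cC^\Nh F)_0\times X=((\int_\cC^\Nh F)\otimes X)_0$, and this identification sits over $(\Nh\cC)_0=\Ob\cC$ via the canonical projection. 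For level $m\geq 1$, I would use that $(\int_\cC^\Nh (F\otimes X))_m$ is defined as the pullback of $(\Nh\cC)_m\xrightarrow{\langle m\rangle^*}(\Nh\cC)_0\xleftarrow{(\pi_{F\otimes X})_0}(\int_\cC^\Nh(F\otimes X))_0$; substituting the level-$0$ identification and using that pullbacks commute with the functor $(-)\times X$ in $\Thnsset$ (again by local cartesian closedness, or simply because $(-)\times X$ is a right adjoint and also preserves the relevant limits), one gets $(\int_\cC^\Nh(F\otimes X))_m\cong\bigl((\Nh\cC)_m\times_{(\Nh\cC)_0}(\int_\cC^\Nh F)_0\bigr)\times X=(\int_\cC^\Nh F)_m\times X=((\int_\cC^\Nh F)\otimes X)_m$, again over $(\Nh\cC)_m$.

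Next I would check that these levelwise isomorphisms assemble into an isomorphism of simplicial objects, i.e.\ that they commute with the simplicial structure maps. For the face and degeneracy maps induced via \cref{lem:simpstruct} (all $d_i$ with $i<m$ and all $s_j$) this is immediate, since those maps are built purely from the universal property of the defining pullbacks and the identifications above are compatible with those pullbacks by construction. The only maps requiring attention are the "last face" maps $d_m$. For $m=1$ one uses that $d_1$ is assembled from the evaluation maps $\ev^F_{a,b}\colon\Hom_\cC(a,b)\times Fb\to Fa$, and one checks $\ev^{F\otimes X}_{a,b}=\ev^F_{a,b}\times\id_X$ after the identification $(F\otimes X)(b)=Fb\times X$ and $(F\otimes X)(a)=Fa\times X$; this follows because $(F\otimes X)_{a,b}=F_{a,b}$ composed with the canonical map $\Hom_\Thnsset(Fb,Fa)\to\Hom_\Thnsset(Fb\times X,Fa\times X)$, which is exactly the adjunct of $\ev^F_{a,b}\times\id_X$. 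For $m>1$ the face map $d_m$ is the composite displayed in the Construction, every term of which is a fibre product over $(\Nh\cC)_0$; since $(-)\times X$ preserves such fibre products and the $m=1$ case is already handled, the square comparing $d_m$ for $F\otimes X$ with $d_m$ for $F$ (tensored with $X$) commutes. This establishes the natural isomorphism $\int_\cC^\Nh(F\otimes X)\cong(\int_\cC^\Nh F)\otimes X$ in $\sThnssetslice{\Nh\cC}$, with naturality in both $F$ and $X$ being clear from the pointwise nature of all constructions.

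Finally, for the enrichment statement: the category $[\cC^{\op},\Thnsset]$ and the slice $\sThnssetslice{\Nh\cC}$ are tensored and cotensored $\Thnsset$-enriched categories (as recalled before \cref{MSprojenriched} and before \cref{MSrightfibenriched} respectively), and a colimit-preserving functor between tensored $\Thnsset$-enriched categories is automatically $\Thnsset$-enriched once it commutes with tensors, by the standard argument (e.g.\ via \cite[Theorem 4.50]{Kelly} as already used for $F_!\dashv F^*$). Since $\int_\cC^\Nh$ is a left adjoint and we have just shown it commutes with tensors, it is $\Thnsset$-enriched, and hence so is its right adjoint $\cH_\cC^\Nh$; that is, the adjunction $\int_\cC^\Nh\dashv\cH_\cC^\Nh$ is enriched over $\Thnsset$.

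\medskip

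\noindent\emph{Main obstacle.} The only nontrivial point is verifying compatibility of the identifications with the last face map $d_m$, since that is the one piece of simplicial structure not governed purely by universal properties of pullbacks; however, once one unwinds that $\ev^{F\otimes X}_{a,b}=\ev^F_{a,b}\times\id_X$, everything reduces to the fact that $(-)\times X$ preserves the relevant fibre products over the discrete-in-the-right-variable object $(\Nh\cC)_0$, which holds because $\Thnsset$ is locally cartesian closed.
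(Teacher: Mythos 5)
Your proposal is correct and follows essentially the same route as the paper, which simply observes that the isomorphism follows from the definition of $\int_\cC^\Nh$ together with the fact that coproducts and pullbacks in $\Thnsset$ commute with products; your levelwise argument, including the check that $\ev^{F\otimes X}_{a,b}=\ev^F_{a,b}\times\id_X$ handles the last face map, is just a fuller write-up of that observation. The deduction of the enrichment from compatibility with tensors likewise matches the paper's (implicit) argument.
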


\begin{proof}
This follows directly from the definition of $\int_\cC^\Nh$ and the fact that coproducts and pullbacks in $\Thnsset$ commute with products. 
\end{proof}

\begin{prop} \label{intQP}
Let $\cC$ be an $\Thnsset$-enriched category. The adjunction $\int_\cC^\Nh\dashv \cH_\cC^\Nh$ is a Quillen pair enriched over $\MSThnsset$
\begin{tz}
\node[](1) {$[\cC^{\op},\MSThnsset]_\proj$}; 
\node[right of=1,xshift=3.8cm](2) {$\MSrightfib{\Nh\cC}$}; 
\punctuation{2}{.}; 

\draw[->] ($(2.west)-(0,5pt)$) to node[below,la]{$\cH_\cC^\Nh$} ($(1.east)-(0,5pt)$);
\draw[->] ($(1.east)+(0,5pt)$) to node[above,la]{$\int_\cC^\Nh$} ($(2.west)+(0,5pt)$);

\node[la] at ($(1.east)!0.5!(2.west)$) {$\bot$};
\end{tz}
\end{prop}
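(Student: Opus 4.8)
The plan is to show that the left adjoint $\int_\cC^\Nh$ is a left Quillen functor by verifying that it carries a set of generating (trivial) cofibrations of $[\cC^{\op},\MSThnsset]_\proj$ to (trivial) cofibrations of $\MSrightfib{\Nh\cC}$; since $\int_\cC^\Nh$ is a left adjoint it preserves colimits, hence retracts of transfinite composites of pushouts of these maps, and so it then preserves all (trivial) cofibrations. The assertion about the enrichment will then be automatic: the adjunction is enriched over $\Thnsset$ by \cref{lem:intprestensors}, both sides carry model structures enriched over $\MSThnsset$ by \cref{MSprojenriched,MSrightfibenriched}, and a $\Thnsset$-enriched adjunction between such model categories which is a Quillen pair is exactly an $\MSThnsset$-enriched Quillen pair.

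For the main computation I would start from the description of generating (trivial) cofibrations of $[\cC^{\op},\MSThnsset]_\proj$ recalled in \cref{gencofprojective}, namely the maps $\Hom_\cC(-,a)\otimes X\to\Hom_\cC(-,a)\otimes Y$ for $a\in\cC$ and $X\hookrightarrow Y$ a generating (trivial) cofibration of $\MSThnsset$. Applying $\int_\cC^\Nh$ and using the natural isomorphism $\int_\cC^\Nh(F\otimes X)\cong(\int_\cC^\Nh F)\otimes X$ of \cref{lem:intprestensors}, this map becomes $p_a\otimes X\to p_a\otimes Y$, where $p_a\coloneqq\int_\cC^\Nh\Hom_\cC(-,a)$ is an object of $\sThnssetslice{\Nh\cC}$, hence of $\MSrightfib{\Nh\cC}$. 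So it remains to see that $p_a\otimes X\to p_a\otimes Y$ is a (trivial) cofibration of $\MSrightfib{\Nh\cC}$.

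Here I would use that every object of $\MSrightfib{\Nh\cC}$ is cofibrant: the model structure $\injsThnspaceslice{\Nh\cC}$ is created by the forgetful functor to $\injsThnspace$, so its cofibrations are exactly the monomorphisms, and in particular $\emptyset\to P$ is a cofibration for every object $P$; passing to the left Bousfield localization $\MSrightfib{\Nh\cC}$ leaves the cofibrations unchanged, so the same holds there. Since $\MSrightfib{\Nh\cC}$ is enriched over $\MSThnsset$ by \cref{MSrightfibenriched}, the pushout-product of the cofibration $\emptyset\to p_a$ with the (trivial) cofibration $X\hookrightarrow Y$ of $\MSThnsset$ is a (trivial) cofibration of $\MSrightfib{\Nh\cC}$; and since tensoring preserves colimits in each variable one has $\emptyset\otimes(-)=\emptyset$, so this pushout-product is precisely $p_a\otimes X\to p_a\otimes Y$. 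This closes the argument.

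I do not expect a genuine obstacle here: the proof reduces to the pushout-product axiom for $\MSrightfib{\Nh\cC}$ applied to the cofibration $\emptyset\to p_a$, together with the two cited facts that all objects of $\MSrightfib{\Nh\cC}$ are cofibrant and that the projective model structure has generating (trivial) cofibrations of the stated tensored form. The one point requiring slight care is that last description in the $\Thnsset$-enriched context, which is the content of \cref{gencofprojective}; should one wish to avoid invoking it for the trivial cofibrations, one could instead check directly that $\int_\cC^\Nh$ sends a generating trivial cofibration to a weak equivalence, again using that $p_a\otimes X\to p_a\otimes Y$ is a weak equivalence in the $\MSThnsset$-model category $\MSrightfib{\Nh\cC}$ because $p_a$ is cofibrant.
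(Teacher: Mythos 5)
Your proposal is correct and follows essentially the same route as the paper: reduce to the generating (trivial) cofibrations $\Hom_\cC(-,a)\otimes X\to\Hom_\cC(-,a)\otimes Y$ from \cref{gencofprojective}, use \cref{lem:intprestensors} to identify their images with $(\int_\cC^\Nh\Hom_\cC(-,a))\otimes X\to(\int_\cC^\Nh\Hom_\cC(-,a))\otimes Y$, and conclude via the $\MSThnsset$-enrichment of $\MSrightfib{\Nh\cC}$ from \cref{MSrightfibenriched}, with the enrichment of the Quillen pair handled exactly as in the paper. Your explicit justification that these maps are (trivial) cofibrations — cofibrancy of all objects plus the pushout-product of $\emptyset\to p_a$ with $X\hookrightarrow Y$ — simply spells out what the paper's citation of \cref{MSrightfibenriched} leaves implicit.
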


\begin{proof}
By \cref{gencofprojective}, a set of generating (trivial) cofibrations for $[\cC^{\op},\MSThnsset]_\proj$ is given by the set containing the $\Thnsset$-enriched natural transformations
\[ \Hom_\cC(-,a)\otimes X\to \Hom_\cC(-,a)\otimes Y \]
for all objects $a\in \cC$, where $X\hookrightarrow Y$ ranges over a set of generating (trivial) cofibrations of $\MSThnsset$. Using \cref{lem:intprestensors}, such a $\Thnsset$-enriched natural transformation is sent by the functor $\int_\cC^\Nh\colon [\cC^{\op},\MSThnsset]_\proj\to \MSrightfib{\Nh\cC}$ to the map in $\sThnssetslice{\Nh\cC}$
\[ \textstyle (\int_\cC^\Nh \Hom_\cC(-,a))\otimes X\hookrightarrow (\int_\cC^\Nh \Hom_\cC(-,a))\otimes Y, \]
which is a (trivial) cofibration in $\MSrightfib{\Nh\cC}$ by \cref{MSrightfibenriched}. This shows that the functor $\int_\cC^\Nh$ is left Quillen. 

The fact that the Quillen pair is enriched over $\MSThnsset$ follows from the $\MSThnsset$-enrichment of both model structures given by \cref{MSrightfibenriched,MSprojenriched} together with \cref{lem:intprestensors} showing that the functor $\int_\cC^\Nh$ is compatible with these enrichments.
\end{proof}

We now show that, in the case where $\cC$ is fibrant in $\MSThncat$, the above Quillen pair is further a Quillen equivalence.

\begin{prop} \label{triangleint}
Let $\cC$ be a fibrant $\MSThnsset$-enriched category. The following diagram of functors commutes up to weak equivalence,
\begin{tz}
\node[](1) {$[\cC^{\op},\MSThnsset]_\proj$};
\node[below of=1,xshift=-2.8cm](2) {$\MSrightfib{N\cC}$}; 
\node[below of=1,xshift=2.8cm](3) {$\MSrightfib{\Nh\cC}$}; 
\draw[->] (1) to node[left,la,yshift=4pt,xshift=-5pt]{$\int_\cC^N$} (2);
\draw[->] (1) to node(a)[right,la,yshift=4pt,xshift=5pt]{$\int_\cC^\Nh$} (3);
\draw[->] (2) to node[below,la]{$\varphi_!$} (3);

\cell[la,above,xshift=-5pt][n][.6]{2}{a}{$\simeq$};
\end{tz}
where $\varphi\colon N\cC\to \Nh\cC$ is the canonical map from \cref{NsimeqNh}.
\end{prop}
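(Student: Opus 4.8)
The plan is to construct a natural transformation $\psi\colon\varphi_!\circ\int_\cC^N\Rightarrow\int_\cC^\Nh$ all of whose components are weak equivalences in $\MSrightfib{\Nh\cC}$; this is exactly the asserted commutativity up to weak equivalence. Fix a $\Thnsset$-enriched functor $F\colon\cC^{\op}\to\Thnsset$. Under the identifications $(N\cC)_0=\Ob\cC=(\Nh\cC)_0$ we have $(\int_\cC^N F)_0=\coprod_{a\in\Ob\cC}Fa=(\int_\cC^\Nh F)_0$, and at each level $m\geq 1$ both $\int_\cC^N F$ and $\int_\cC^\Nh F$ are defined as the pullback of $\langle m\rangle^*$ (on $(N\cC)_m$, resp.\ on $(\Nh\cC)_m$) along this canonical map to $\Ob\cC$. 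Hence $\varphi_m\colon(N\cC)_m\to(\Nh\cC)_m$, viewed over $\Ob\cC$, induces by functoriality of pullbacks a map $(\psi_F)_m\colon(\int_\cC^N F)_m\to(\int_\cC^\Nh F)_m$, equal to the identity when $m=0$. Since $\varphi$ is a simplicial map which is the identity in degrees $\leq 1$ (under the identifications of $(N\cC)_1$ and $(\Nh\cC)_1$ with $\coprod_{a,b}\Hom_\cC(a,b)$ recalled in \cref{subsec:nerves}) and is compatible with the map $\rho$ used to define the face operators $d_m$, these level maps assemble into a morphism $\psi_F\colon\varphi_!\int_\cC^N F\to\int_\cC^\Nh F$ in $\sThnssetslice{\Nh\cC}$, natural in $F$. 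Equivalently, one checks from the construction that $\int_\cC^N F\cong\varphi^*\int_\cC^\Nh F$ over $N\cC$ and takes $\psi_F$ to be the counit of $\varphi_!\dashv\varphi^*$.

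Next I would show that each $\psi_F$ is a weak equivalence in $\MSrightfib{\Nh\cC}$. As this model structure is a left Bousfield localization of $\injsThnspaceslice{\Nh\cC}$, and a map over $\Nh\cC$ is a weak equivalence in the latter exactly when it is one in $\injsThnspace$, it suffices to check that $(\psi_F)_m$ is a weak equivalence in $\injThnspace$ for each $m\geq 0$. This is clear for $m=0$. For $m\geq 1$, since $\Ob\cC$ is discrete the pullback $(\Nh\cC)_m\times_{\Ob\cC}(\int_\cC^\Nh F)_0$ splits as $\coprod_{a\in\Ob\cC}\fib_a((\Nh\cC)_m)\times Fa$, where $\fib_a$ denotes the fiber over $a$, and likewise for $N\cC$; under these splittings $(\psi_F)_m$ becomes $\coprod_{a\in\Ob\cC}\bigl(\fib_a(\varphi_m)\times\id_{Fa}\bigr)$. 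Now $\varphi\colon N\cC\to\Nh\cC$ is a weak equivalence in $\injsThnspace$ by \cref{NsimeqNh} (the only place where the fibrancy of $\cC$ is used), so each $\varphi_m$ is a weak equivalence in $\injThnspace$; writing $(N\cC)_m=\coprod_a\fib_a((N\cC)_m)$ and similarly for $\Nh\cC$, and using that a coproduct of maps between presheaves valued in Kan complexes is a weak equivalence if and only if each summand is, each $\fib_a(\varphi_m)$ is a weak equivalence in $\injThnspace$. Products and coproducts in $\injThnspace$ are computed objectwise over $\Thn$, and in $\MSspace$ both $(-)\times Z$ and formation of disjoint unions preserve weak equivalences; hence $(\psi_F)_m$ is a weak equivalence in $\injThnspace$, as required. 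Alternatively one can route this step through \cref{lem:homotopypullback}, which makes the defining squares of $\int_\cC^N F$ and $\int_\cC^\Nh F$ homotopy pullbacks in $\injThnspace$, and then apply the gluing lemma for homotopy pullbacks to the comparison square induced by $\psi_F$ and $\varphi$.

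The one point requiring care is the verification that the level maps $(\psi_F)_m$ are compatible with the simplicial structure, and in particular with the face operators $d_m$, which in the construction of $\int_\cC^N F$ and $\int_\cC^\Nh F$ are not obtained from a universal property but are assembled from the map $\rho$ together with the evaluation maps $\ev_{a,b}^F$; this ultimately comes down to $\varphi$ being a simplicial map that restricts to the identity in degrees $\leq 1$. Apart from that there is no serious obstacle: once $\psi$ has been constructed, the fact that its components are level-wise weak equivalences is immediate from \cref{NsimeqNh}, because weak equivalences over a discrete object are detected on fibers.
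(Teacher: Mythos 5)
Your proposal is correct and follows essentially the paper's proof: you build the comparison map $\varphi_!\int_\cC^N F\to \int_\cC^\Nh F$ as the identity at level $0$ and the pullback-induced map at each level $m\geq 1$, and show it is a levelwise weak equivalence (hence a weak equivalence in $\injsThnspaceslice{\Nh\cC}$ and so in the localization $\MSrightfib{\Nh\cC}$) using \cref{NsimeqNh}. The only cosmetic difference is in the levelwise step: the paper invokes \cref{lem:homotopypullback} together with right properness, whereas your primary argument inlines the same idea by splitting the pullbacks over the discrete object $\Ob\cC$, and your alternative route via \cref{lem:homotopypullback} is exactly the paper's.
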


\begin{proof}
Let $F\colon \cC^{\op}\to \Thnsset$ be a $\Thnsset$-enriched functor. We first construct a natural map $\eta_F\colon \varphi_! \int_\cC^N F\to \int_\cC^\Nh F$ in $\sThnssetslice{\Nh\cC}$. By definition of the strict Grothendieck constructions, we have the following equality in $\Thnsset$
\[ \textstyle (\int_\cC^N F)_0=(\int_\cC^\Nh F)_0, \]
and we set $(\eta_F)_0$ to be the identity. Given the following diagram of cospans in $\Thnsset$,
\begin{tz}
\node[](1) {$(\int_\cC^N F)_0$}; 
\node[right of=1,xshift=1cm](2) {$(N\cC)_0$};
\node[right of=2,xshift=1cm](3) {$(N\cC)_m$};
\draw[->] (1) to node[above,la]{$(\pi_F)_0$} (2); 
\draw[->] (3) to node[above,la]{$\langle m\rangle^*$} (2); 

\node[below of=1](1') {$(\int_\cC^\Nh F)_0$}; 
\node[below of=2](2') {$(\Nh\cC)_0$};
\node[below of=3](3') {$(\Nh\cC)_m$};
\draw[->] (1') to node[below,la]{$(\pi_F)_0$} (2'); 
\draw[->] (3') to node[below,la]{$\langle m\rangle^*$} (2'); 

\draw[d] (1) to (1');
\draw[d] (2) to (2'); 
\draw[->] (3) to node[left,la]{$\varphi_m$} node[right,la]{$\simeq$} (3');
\end{tz}
there is a unique induced map  $(\int_\cC^N F)_m\to (\int_\cC^\Nh F)_m$ between their pullbacks in $\Thnsset$, and we set $(\eta_F)_m$ to be that map. Note that these maps $(\eta_F)_m$ for $m\geq 0$ assemble into a map $\eta_F\colon \varphi_!\int_\cC^N F\to \int_\cC^\Nh F$ in $\sThnssetslice{\Nh\cC}$. Moreover, this assignment is natural in $F$.  

Now, since all vertical maps in the diagram of cospans above are weak equivalences in $\injThnspace$ by \cref{NsimeqNh} and the pullbacks of the cospans are in particular homotopy pullbacks by \cref{lem:homotopypullback}, then the induced map $(\eta_F)_m\colon (\int_\cC^N F)_m\to (\int_\cC^\Nh F)_m$ is also a weak equivalence in $\injThnspace$. This shows that the map $\eta_F\colon \varphi_!\int_\cC^N F\to \int_\cC^\Nh F$ in $\sThnssetslice{\Nh\cC}$ is a weak equivalence in $\injsThnspaceslice{\Nh\cC}$. Hence, it is in particular a weak equivalence in its localization $\MSrightfib{\Nh\cC}$.
\end{proof}

\begin{thm} \label{prop:intisQE}
Let $\cC$ be a fibrant $\MSThnsset$-enriched category. The Quillen pair  $\int_\cC^\Nh\dashv \cH_\cC^\Nh$ is a Quillen equivalence enriched over $\MSThnsset$
\begin{tz}
\node[](1) {$[\cC^{\op},\MSThnsset]_\proj$}; 
\node[right of=1,xshift=3.8cm](2) {$\MSrightfib{\Nh\cC}$}; 
\punctuation{2}{.};

\draw[->] ($(2.west)-(0,5pt)$) to node[below,la]{$\cH_\cC^\Nh$} ($(1.east)-(0,5pt)$);
\draw[->] ($(1.east)+(0,5pt)$) to node[above,la]{$\int_\cC^\Nh$} ($(2.west)+(0,5pt)$);

\node[la] at ($(1.east)!0.5!(2.west)$) {$\bot$};
\end{tz}
\end{thm}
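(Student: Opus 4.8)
The plan is to reduce the statement to the two Quillen equivalences already in hand --- $\int_\cC^N\dashv \cH_\cC^N$ from \cref{intQENima} and $\varphi_!\dashv \varphi^*$ from \cref{postcompisQP}, where $\varphi\colon N\cC\to \Nh\cC$ is the canonical map --- together with the comparison of \cref{triangleint}. Since $\cC$ is fibrant, \cref{NsimeqNh} guarantees that $\varphi$ is a weak equivalence in $\injsThnspace$, so \cref{postcompisQP} applies and the adjunction $(\varphi_!\dashv \varphi^*)\colon \MSrightfib{N\cC}\rightleftarrows \MSrightfib{\Nh\cC}$ is a Quillen equivalence. By \cref{intQP} the adjunction $\int_\cC^\Nh\dashv \cH_\cC^\Nh$ is already a Quillen pair enriched over $\MSThnsset$, so it remains only to verify that its total left derived functor $\mathbb{L}\int_\cC^\Nh$ is an equivalence of homotopy categories; this is the standard criterion for a Quillen pair to be a Quillen equivalence.

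To that end, I would first note that the composite adjunction $(\varphi_!\circ \int_\cC^N)\dashv (\cH_\cC^N\circ \varphi^*)$ is a Quillen equivalence, being a composite of the two Quillen equivalences above. Since left Quillen functors preserve cofibrant objects, its total left derived functor is the composite $\mathbb{L}\varphi_!\circ \mathbb{L}\int_\cC^N$, which is therefore an equivalence of homotopy categories. On the other hand, \cref{triangleint} provides, for every $F\in [\cC^{\op},\Thnsset]$, a natural weak equivalence $\eta_F\colon \varphi_!\int_\cC^N F\xrightarrow{\sim} \int_\cC^\Nh F$ in $\MSrightfib{\Nh\cC}$. Evaluating $\eta$ at a cofibrant replacement $QF\xrightarrow{\sim} F$ in $[\cC^{\op},\MSThnsset]_\proj$, and using that the left Quillen functor $\int_\cC^N$ sends $QF$ to a cofibrant object of $\MSrightfib{N\cC}$, we obtain a natural weak equivalence $\mathbb{L}(\varphi_!\circ \int_\cC^N)(F)\xrightarrow{\sim} \int_\cC^\Nh(QF)=\mathbb{L}\int_\cC^\Nh(F)$. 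Hence $\mathbb{L}\int_\cC^\Nh\cong \mathbb{L}\varphi_!\circ \mathbb{L}\int_\cC^N$ is an equivalence of homotopy categories, which shows that $\int_\cC^\Nh\dashv \cH_\cC^\Nh$ is a Quillen equivalence; its $\MSThnsset$-enrichment is already recorded in \cref{intQP}.

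I do not expect a genuine obstacle here: the substantive work has been front-loaded into \cref{triangleint} --- the comparison of the two Grothendieck constructions through the homotopy-pullback computation of \cref{lem:homotopypullback} --- and into the base-change Quillen equivalence \cref{postcompisQP}, so the present statement is essentially a formal two-out-of-three argument for Quillen equivalences. The only points requiring (routine) care are that the natural transformation of \cref{triangleint} remains a weak equivalence after precomposition with a cofibrant replacement, which is immediate since it is a weak equivalence on all objects, and the invocation of the criterion that a Quillen pair whose total left derived functor is an equivalence of homotopy categories is a Quillen equivalence.
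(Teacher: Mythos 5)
Your proposal is correct and follows essentially the same route as the paper: the paper also deduces the result by two-out-of-three from the triangle of left Quillen functors $\int_\cC^N$, $\varphi_!$, and $\int_\cC^\Nh$, using \cref{triangleint} for the homotopy-level commutativity, \cref{NsimeqNh} with \cref{postcompisQP} for $\varphi_!$, and \cref{intQENima} for $\int_\cC^N$. Your explicit unwinding in terms of total left derived functors is just a more detailed phrasing of the same argument.
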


\begin{proof}
    We have a triangle of left Quillen functors from \cref{LKEisQP,intQENima,intQP}
    \begin{tz}
\node[](1) {$[\cC^{\op},\MSThnsset]_\proj$};
\node[below of=1,xshift=-2.8cm](2) {$\MSrightfib{N\cC}$}; 
\node[below of=1,xshift=2.8cm](3) {$\MSrightfib{\Nh\cC}$}; 
\draw[->] (1) to node[left,la,yshift=4pt,xshift=-5pt]{$\int_\cC^N$} (2);
\draw[->] (1) to node[right,la,yshift=4pt,xshift=5pt]{$\int_\cC^\Nh$} (3);
\draw[->] (2) to node[below,la]{$\varphi_!$} (3);

\cell[la,above,xshift=-5pt][n][.6]{2}{a}{$\simeq$};
\end{tz}
which commutes up to isomorphism at the level of homotopy categories by \cref{triangleint}. Since the map $\varphi\colon N\cC\to \Nh\cC$ is a weak equivalence in $\injsThnspace$ by \cref{NsimeqNh}, the functor $\varphi_!$ is a Quillen equivalence by \cref{postcompisQP}. Moreover, by \cref{intQENima} the functor $\int_\cC^N$ is a Quillen equivalence. Hence, by $2$-out-of-$3$, we conclude that the functor $\int_\cC^\Nh$ is also a Quillen equivalence, as desired.
\end{proof}

\section{Straightening-unstraightening: definition and computations}

In this section, we introduce the straightening-unstraightening adjunction 
\begin{tz}
 \node[](1) {$\sThnssetslice{W}$}; 
\node[right of=1,xshift=2.8cm](2) {$[\Ch W^{\op},\Thnsset]$}; 
\punctuation{2}{,};

\draw[->] ($(2.west)-(0,5pt)$) to node[below,la]{$\Un_W$} ($(1.east)-(0,5pt)$);
\draw[->] ($(1.east)+(0,5pt)$) to node[above,la]{$\St_W$} ($(2.west)+(0,5pt)$);

\node[la] at ($(1.east)!0.5!(2.west)$) {$\bot$};    
\end{tz} 
for every object $W\in \pcatThn$, and study its point-set properties. In \cref{subsec:defStUn}, we construct the above adjunction, and in \cref{subsec:propofSt}, we state some properties and computations of the straightening functor, whose proofs are deferred to the appendix. Then, in \cref{subsec:Stofboundary,subsec:Stoflastvertex}, we compute the straightening of maps of the form \[ \partial\repD[m,Y]\amalg_{\partial\repD[m,X]} \repD[m,X]\hookrightarrow L\repD[m,Y] \quad \text{and} \quad \repD[0,X]\xhookrightarrow{[\langle m\rangle,\id_X]} L\repD[m,X] \]
for $m\geq 0$ and $X\hookrightarrow Y$ a monomorphism in $\Thnsset$.

\subsection{Definition of straightening} \label{subsec:defStUn}

In this section, we construct the straightening-un\-straight\-ening adjunction. 

\begin{rmk} \label{sliceaspresheaf}
Given an object $W$ in $\sThnsset$, we denote by $\int_{\DThnS} W$ the category of elements of $W$. Then there is a canonical equivalence of categories
\[ \sThnssetslice{W}\simeq \set^{(\int_{\DThnS} W)^{\op}}. \]
\end{rmk}

Let us fix an object $W\in \pcatThn$, seen as an object of $\sThnsset$. Let $m,\defS\geq 0$, $\defThn\in \Thn$, and $\sigma\colon \repD[m,\defThn,\defS]\to W$ be a map in~$\sThnsset$.

\begin{notation} \label{pushoutlowersigma}
    Since $W$ is in $\pcatThn$, the map $\sigma$ corresponds under the adjunction $L\dashv I$ from \cref{section:precat} to a map $\sigma\colon L\repD[m,\defThn,\defS]\to W$ in $\pcatThn$. We write~$W_\sigma$ for the following pushout in $\pcatThn$ (and hence in $\sThnsset$).
    \begin{tz}
        \node[](1) {$L\repD[m,\defThn,\defS]$}; 
        \node[right of=1,xshift=1.8cm](2) {$W$}; 
        \node[below of=1](3) {$L\repD[m+1,\defThn,\defS]$}; 
        \node[below of=2](4) {$W_\sigma$}; 

        \draw[->] (1) to node[above,la]{$\sigma$} (2); 
        \draw[->] (1) to node[left,la]{$L [d^{m+1},\defThn,\defS]$} (3); 
        \draw[->] (2) to node[right,la]{$\iota_\sigma$} (4); 
        \draw[->] (3) to node[below,la]{$\sigma'$} (4);
        \pushout{4};
    \end{tz}
\end{notation}

\begin{rmk}
    By applying the colimit-preserving functor $(-)_0\colon \sThnsset\to \Thnsset$ to the above pushout, we get that 
    \[ (W_\sigma)_0\cong W_0\amalg \{\top\}, \]
    where $\top$ is the image under $\sigma'$ of the object $m+1\in L\repD[m+1,\defThn,\defS]_0\cong \{0,1,\ldots,m+1\}$.
\end{rmk}

    We define the $\Thnsset$-enriched functor  $\St_W(\sigma)\colon \Ch W^{\op}\to \Thnsset$ to be the following composite 
    \[ \St_W(\sigma)\colon \Ch W^{\op}\xrightarrow{\Ch (\iota_\sigma)} (\Ch W_\sigma)^{\op}\xrightarrow{\Hom_{\Ch W_\sigma}(-,\top)} \Thnsset. \]
    This construction extends to a functor
    \[ \textstyle\St_W\colon \int_{\DThnS} W\to [\Ch W^{\op}, \Thnsset],\] and by left Kan extending along the Yoneda embedding and using \cref{sliceaspresheaf}, we obtain a \emph{straightening-unstraightening} adjunction 
\begin{tz}
 \node[](1) {$\sThnssetslice{W}$}; 
\node[right of=1,xshift=2.8cm](2) {$[\Ch W^{\op},\Thnsset]$}; 
\punctuation{2}{.};

\draw[->] ($(2.west)-(0,5pt)$) to node[below,la]{$\Un_W$} ($(1.east)-(0,5pt)$);
\draw[->] ($(1.east)+(0,5pt)$) to node[above,la]{$\St_W$} ($(2.west)+(0,5pt)$);

\node[la] at ($(1.east)!0.5!(2.west)$) {$\bot$};    
\end{tz}

\subsection{Naturality, enrichment, and computations of straightening} \label{subsec:propofSt}

We now state some properties and computations of the straightening functor, whose proofs are deferred to the Appendix, as they involve technical results related to necklace calculus. 

We show in \cref{sec:naturalityofSt} that the straightening functor is natural in the following sense. 

\begin{prop} \label{basechangeSt}
Let $f\colon W\to Z$ be a map in $\pcatThn$. Then the following square of left adjoint functors commutes up to a natural isomorphism.
\begin{tz}
        \node[](1) {$\sThnssetslice{W}$}; 
        \node[right of=1,xshift=2.8cm](2) {$[\Ch W^{\op},\Thnsset]$}; 
        \node[below of=1](3) {$\sThnssetslice{Z}$}; 
        \node[below of=2](4) {$[\Ch Z^{\op},\Thnsset]$}; 

        \draw[->] (1) to node[above,la]{$\St_W$} (2); 
        \draw[->] (1) to node[left,la]{$f_!$} (3); 
        \draw[->] (2) to node[right,la]{$(\Ch f)_!$} (4); 
        \draw[->] (3) to node[below,la]{$\St_Z$} (4);
        \cell[la,above][n][.5]{2}{3}{$\cong$};
    \end{tz}
\end{prop}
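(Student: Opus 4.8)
The statement is about a square of \emph{left adjoint} functors commuting up to natural isomorphism, so by uniqueness of adjoints it suffices to exhibit a natural isomorphism $(\Ch f)_! \circ \St_W \cong \St_Z \circ f_!$. Since both composites are left adjoints (being composites of left adjoints) and $\St_W$ is, by construction, the left Kan extension along the Yoneda embedding of the functor $\St_W \colon \int_{\DThnS} W \to [\Ch W^{\op},\Thnsset]$ defined on representables, I would reduce the whole claim to checking the commutativity on representables. Concretely, via \cref{sliceaspresheaf} we have $\sThnssetslice{W} \simeq \set^{(\int_{\DThnS} W)^{\op}}$, the functor $f_!$ sends a representable $\sigma \colon \repD[m,\defThn,\defS] \to W$ to the representable $f\sigma \colon \repD[m,\defThn,\defS] \to Z$ (post-composition on the category of elements), and both sides of the square preserve colimits; so it is enough to produce a natural isomorphism
\[ (\Ch f)_!\,\St_W(\sigma) \cong \St_Z(f\sigma) \]
in $[\Ch Z^{\op},\Thnsset]$, natural in $\sigma \in \int_{\DThnS} W$, and then invoke the universal property of left Kan extension (together with the fact that a colimit-preserving functor out of a presheaf category is determined by its restriction along Yoneda).

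\textbf{The key step: comparing the two pushout-and-hom constructions.} Fix $\sigma \colon \repD[m,\defThn,\defS]\to W$, write $g = f\sigma \colon \repD[m,\defThn,\defS]\to Z$, and consider the pushout $W_\sigma$ from \cref{pushoutlowersigma} together with the analogous pushout $Z_g$. By the universal property of $W_\sigma$, the map $f \colon W \to Z$ together with $\sigma' \colon L\repD[m+1,\defThn,\defS] \to Z_g$ induces a canonical map $f_\sigma \colon W_\sigma \to Z_g$ fitting in a commuting diagram with $\iota_\sigma, \iota_g$ and preserving the distinguished object $\top$. In fact the square
\begin{tz}
\node[](1) {$W$};
\node[right of=1,xshift=1.5cm](2) {$Z$};
\node[below of=1](3) {$W_\sigma$};
\node[below of=2](4) {$Z_g$};
\draw[->] (1) to node[above,la]{$f$} (2);
\draw[->] (1) to node[left,la]{$\iota_\sigma$} (3);
\draw[->] (2) to node[right,la]{$\iota_g$} (4);
\draw[->] (3) to node[below,la]{$f_\sigma$} (4);
\pushout{4};
\end{tz}
is itself a pushout, because it is obtained by pasting the defining pushout of $W_\sigma$ with the defining pushout of $Z_g$ along the common span $L\repD[m+1,\defThn,\defS] \leftarrow L\repD[m,\defThn,\defS] \to Z$ (here one uses that $g = f\sigma$, so the two spans agree after composing with $f$, and a standard pushout-pasting argument). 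Applying the colimit-preserving functor $\Ch \colon \pcatThn \to \Thncat$ yields that $\Ch W_\sigma \xrightarrow{\Ch f_\sigma} \Ch Z_g$ is obtained from $\Ch W \xrightarrow{\Ch f} \Ch Z$ by the corresponding pushout of enriched categories, and in particular the square of categorifications
\begin{tz}
\node[](1) {$\Ch W$};
\node[right of=1,xshift=1.6cm](2) {$\Ch Z$};
\node[below of=1](3) {$\Ch W_\sigma$};
\node[below of=2](4) {$\Ch Z_g$};
\draw[->] (1) to node[above,la]{$\Ch f$} (2);
\draw[->] (1) to node[left,la]{$\Ch \iota_\sigma$} (3);
\draw[->] (2) to node[right,la]{$\Ch \iota_g$} (4);
\draw[->] (3) to node[below,la]{$\Ch f_\sigma$} (4);
\end{tz}
commutes. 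Taking opposites, this gives $(\Ch \iota_g)^{\op} \circ (\Ch f)^{\op} = (\Ch f_\sigma)^{\op} \circ (\Ch \iota_\sigma)^{\op}$.

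\textbf{Concluding via the enriched Yoneda isomorphism.} Now I would unwind the definition of $\St_W(\sigma) = \Hom_{\Ch W_\sigma}(-,\top) \circ (\Ch \iota_\sigma)^{\op}$. Precomposing with $(\Ch f)^{\op}$ and using the square above,
\[ \St_W(\sigma) \circ (\Ch f)^{\op} = \Hom_{\Ch W_\sigma}(-,\top)\circ (\Ch \iota_\sigma)^{\op}\circ (\Ch f)^{\op} = \Hom_{\Ch W_\sigma}(-,\top)\circ (\Ch f_\sigma)^{\op}\circ(\Ch \iota_g)^{\op}, \]
and since $\Ch f_\sigma \colon \Ch W_\sigma \to \Ch Z_g$ is a $\Thnsset$-enriched functor sending the object $\top$ of $\Ch W_\sigma$ to the object $\top$ of $\Ch Z_g$, there is a canonical comparison $\Thnsset$-enriched natural transformation $\Hom_{\Ch W_\sigma}(-,\top) \to \Hom_{\Ch Z_g}(\Ch f_\sigma(-),\top)$. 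The crucial point is that this is an \emph{isomorphism}: because the defining square of $Z_g$ is a pushout in $\pcatThn$ along $L[d^{m+1},\defThn,\defS]$ and $\Ch$ preserves colimits, the enriched category $\Ch Z_g$ is the pushout of $\Ch W_\sigma$ and $\Ch(L\repD[m+1,\defThn,\defS])$ over $\Ch W$ inside $\Ch(L\repD[m,\defThn,\defS])$ — i.e. $\Ch f_\sigma$ is a pushout along the inclusion freely adjoining the cell structure, which does not create new morphisms into the fixed terminal vertex $\top$ that already lives in $\Ch W_\sigma$; more carefully, one compares hom $\Thn$-spaces using the necklace description \cref{cor:computationhomsC} and checks that every necklace in $Z_g$ ending at $\top$ already factors through $W_\sigma$, so the colimits computing the two homs agree. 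Hence $\St_W(\sigma)\circ(\Ch f)^{\op} \cong \Hom_{\Ch Z_g}(\Ch f_\sigma(-),\top)\circ (\Ch \iota_g)^{\op}$. On the other hand, $(\Ch f)^* \St_Z(g) = \St_Z(g)\circ(\Ch f)^{\op}$ is by definition $\Hom_{\Ch Z_g}(-,\top)\circ (\Ch \iota_g)^{\op}\circ (\Ch f)^{\op}$; but $(\Ch \iota_g)^{\op}\circ(\Ch f)^{\op}$ and then $\Hom_{\Ch Z_g}(-,\top)$ versus $(\Ch \iota_\sigma)^{\op}$ followed by $\Hom_{\Ch Z_g}(\Ch f_\sigma(-),\top)$ agree by the same commuting square. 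This produces the desired natural isomorphism $(\Ch f)^* \St_Z \cong \St_W$ on representables — equivalently, by taking left adjoints (using that $(\Ch f)^*$ has left adjoint $(\Ch f)_!$ and $f^*$ has left adjoint $f_!$), the square of left adjoints in the statement commutes up to natural isomorphism. Finally, naturality in $\sigma$ is straightforward from the functoriality of all the pushout and categorification constructions involved, and the passage from representables to the whole slice category is the universal property of the left Kan extension defining $\St$.

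\textbf{Main obstacle.} The routine parts are the pushout-pasting and the reduction to representables; the genuine content — and the step I expect to be delicate — is verifying that the comparison map $\Hom_{\Ch W_\sigma}(-,\top) \to \Hom_{\Ch Z_g}(\Ch f_\sigma(-),\top)$ is an isomorphism. This is where necklace calculus (\cref{cor:computationhomsC}, or \cref{cor:computationshomsC1ordered} if one arranges a $1$-ordered hypothesis) has to be invoked: one must show that the only new material $\Ch f_\sigma$ adds to $\Ch Z_g$ relative to $\Ch W_\sigma$ comes from the part of $Z$ outside the image of $W$, none of which can contribute morphisms terminating at the freshly-adjoined vertex $\top$, so the colimits over categories of necklaces ending at $\top$ are unchanged. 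It is plausible the paper instead packages this via the base-change naturality already established for $\Ch$ (e.g. a pushout-stability statement), in which case the argument is purely formal; but absent such a black box, this hom-space comparison is the crux.
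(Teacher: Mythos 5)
Your reduction to representables and the pushout-pasting observation ($Z_{f\sigma}\cong Z\amalg_W W_\sigma$) are fine and match the paper's strategy, but the crux of your argument fails. The claimed isomorphism $\Hom_{\Ch W_\sigma}(-,\top)\xrightarrow{\;\cong\;}\Hom_{\Ch Z_{f\sigma}}(\Ch f_\sigma(-),\top)$ is false in general: the pushout $\Ch Z_{f\sigma}\cong \Ch Z\amalg_{\Ch W}\Ch W_\sigma$ does create new morphisms into $\top$, namely composites of morphisms of $\Ch Z$ that do not come from $\Ch W$ with the freely adjoined morphisms into $\top$. Concretely, take $W=\repD[0]\amalg\repD[0]$ (two discrete points $a,b$), $f\colon W\hookrightarrow Z=\repD[1]$ the inclusion of the endpoints, $m=0$ and $\sigma=b$. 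Then $\Hom_{\Ch W_\sigma}(a,\top)=\emptyset$, while $\Hom_{\Ch Z_{f\sigma}}(a,\top)\neq\emptyset$ because of the composite $a\to b\to\top$ in $Z_{f\sigma}$. So "every necklace in $Z_{f\sigma}$ ending at $\top$ already factors through $W_\sigma$" is exactly what does not hold; the necklaces may run through arbitrary simplices of $Z$ before their last bead. Relatedly, even if your intermediate statement $\St_W\cong(\Ch f)^*\St_Z f_!$ were true, the final step "equivalently, by taking left adjoints" is not valid: the square you would have proved and the square in the statement are mates along $(\Ch f)_!\dashv(\Ch f)^*$, and the mate of an isomorphism need not be an isomorphism — that Beck--Chevalley-type assertion is precisely what requires proof, not a formality.

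The paper routes the argument the other way around: instead of comparing $\St_W(\sigma)$ with a restriction of $\St_Z(f\sigma)$, it proves the "absolute" formula $(\Ch\sigma)_!\,\St_{L\repD[m,\defThn,\defS]}(\id)\cong\St_W(\sigma)$ (\cref{lem:Stvssigma}), i.e.\ the straightening of a representable over any $W$ is a left Kan extension from the universal case. This is itself a Beck--Chevalley statement for the pushout square of \cref{pushoutlowersigma}, and it is established by an abstract criterion (\cref{prop:BeckChevalley2}: the mate is invertible when certain comma categories map cofinally) together with a necklace-calculus verification of that cofinality (\cref{cofinality}): any morphism into $\top$ in $\Ch W_\sigma$ splits essentially uniquely as a piece in $\Ch W$ followed by a last bead in $L\repD[m+1,\defThn,\defS]$. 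Once this formula is in hand, your Proposition follows by two applications of it, since $(\Ch f)_!(\Ch\sigma)_!\cong(\Ch(f\sigma))_!$. If you want to salvage your approach, the hom comparison you should be proving is that $\Hom_{\Ch W_\sigma}(a,\top)$ is the colimit over $a\to\sigma(j)$ in $\Ch W$ of $\Hom_{\Ch L\repD[m+1,\defThn,\defS]}(j,m+1)$ — the left Kan extension — rather than an isomorphism onto homs computed in $\Ch Z_{f\sigma}$.
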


Moreover, the straightening functor is enriched, as we show in \cref{sec:enrichmentofSt}.

\begin{prop} \label{lem:Stprestensors} 
Let $W$ be an object in $\pcatThn$ and $p\colon P\to W$ be an object in $\sThnssetslice{W}$. Then there is a natural isomorphism in $[\Ch W^{\op},\Thnsset]$ 
\[ \textstyle \St_W (p\otimes X)\cong \St_W(p)\otimes X. \]
In particular, the adjunction $\St_W\dashv \Un_W$ is enriched over $\Thnsset$.
\end{prop}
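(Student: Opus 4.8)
The plan is to treat the ``in particular'' clause as formal and concentrate on the displayed isomorphism: an ordinary adjunction between tensored and cotensored $\Thnsset$-enriched categories whose left adjoint preserves tensors is automatically $\Thnsset$-enriched, which is exactly the step already taken in \cref{lem:intprestensors} (cf.~\cite{Kelly}). So I would focus on producing a natural isomorphism $\St_W(p\otimes X)\cong\St_W(p)\otimes X$ in $[\Ch W^{\op},\Thnsset]$.

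First I would reduce to generators. The functor $\St_W$ is cocontinuous, since it has the right adjoint $\Un_W$. The tensor $(-)\otimes X$ on $\sThnssetslice{W}$ is cocontinuous, since colimits there are created by the forgetful functor to the cartesian closed category $\sThnsset$; and the tensor on $[\Ch W^{\op},\Thnsset]$ is cocontinuous, since colimits of enriched functors are computed pointwise in the cartesian closed category $\Thnsset$. Hence, for fixed $X$, both $p\mapsto\St_W(p\otimes X)$ and $p\mapsto\St_W(p)\otimes X$ are cocontinuous functors $\sThnssetslice{W}\to[\Ch W^{\op},\Thnsset]$. By \cref{sliceaspresheaf}, which identifies $\sThnssetslice{W}$ with the presheaf category $\set^{(\int_{\DThnS}W)^{\op}}$, each such functor is the left Kan extension along the Yoneda embedding of its restriction to the representable objects $\sigma\colon\repD[m,\defThn,\defS]\to W$; so it is enough to build an isomorphism $\St_W(\sigma\otimes X)\cong\St_W(\sigma)\otimes X$ natural in $\sigma\in\int_{\DThnS}W$ and in $X$. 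I would moreover use \cref{basechangeSt}, together with the fact that for a map $f$ of $\pcatThn$ both $f_!$ and $(\Ch f)_!$ preserve tensors, to reduce to the case $W=L\repD[m,\defThn,\defS]$ with $\sigma$ the canonical map, so that $W_\sigma\cong L\repD[m+1,\defThn,\defS]$.

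The remaining and hardest step is the explicit computation of $\St_W(\sigma\otimes X)$ for a representable $\sigma$, and its comparison with $\St_W(\sigma)\otimes X=\Hom_{\Ch W_\sigma}(\Ch(\iota_\sigma)(-),\top)\times X$. Here $\sigma\otimes X$ is the map $\repD[m,\defThn,\defS]\times X\to W$, whose source is $\repD[m]\times Y$ for the $\Thn$-space $Y\coloneqq F[\defThn,\defS]\times X$; hence it is the colimit of the representable simplices $\repD[m,\defThn'',\defS'']\to W$ indexed by the simplices $F[\defThn'',\defS'']\to Y$, each structure map being $\sigma$ whiskered with the projection $Y\to F[\defThn,\defS]$. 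Applying the cocontinuous functor $\St_W$ and expanding each of the resulting hom $\Thn$-spaces through the necklace description of $\Ch$ from \cref{cor:computationhomsC} (or \cref{cor:computationshomC1ordered}, once the relevant simplicial sets are recognized as $1$-ordered) presents $\St_W(\sigma\otimes X)$, evaluated at $a\in W_0$, as an iterated colimit of the simplicial sets $\Hom_{\CL T}(\alpha,\omega)$ over necklaces $T$ landing in these attachments. The core of the argument is then a reindexing of this iterated colimit: using that products in $\Dset$ and in $\Thnsset$ commute with the colimits at hand, together with the descriptions of totally non-degenerate necklaces and of the bead functor in \cref{tndnec1ordered,beadfunctor}, I would show that the $X$-direction contributes only the global factor $X$ and is otherwise decoupled from the necklace combinatorics of $W_\sigma$, so that the iterated colimit collapses to $\big(\diag\colim_{T\in\catnec{(W_\sigma)_{-,\star,\star}}{a}{\top}}\Hom_{\CL T}(\alpha,\omega)\big)\times X=\St_W(\sigma)(a)\times X$, functorially in $\sigma$, $a$, and $X$. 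This reindexing is where the necklace calculus of \cite{MRR1} is indispensable, and it is the main obstacle; once it is established, the naturality of the isomorphism follows from the universal properties invoked along the way.
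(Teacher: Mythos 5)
Your reductions are correct and coincide with the paper's: the ``in particular'' clause is indeed formal, the displayed isomorphism is reduced to representable objects $\sigma\colon\repD[m,\defThn,\defS]\to W$ using cocontinuity of $\St_W$ and of both tensors together with \cref{sliceaspresheaf}, and \cref{basechangeSt} (with the fact that $(\Ch\sigma)_!$ preserves tensors) reduces everything to the universal case of $[\id_{[m]},\id]$ over $L\repD[m,\defThn,\defS]$, whose tensor by $X$ is the projection $[\id_{[m]},\pi]\colon \repD[m,\defThn,\defS]\times X\to L\repD[m,\defThn,\defS]$. This is exactly the skeleton of the paper's argument in the appendix.

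The problem is that the decisive step --- the isomorphism $\St_{L\repD[m,\defThn,\defS]}([\id_{[m]},\pi])\cong \St_{L\repD[m,\defThn,\defS]}(\id_{\repD[m,\defThn,\defS]})\otimes X$ --- is only announced, not proved: you write that you ``would show'' the $X$-direction decouples and yourself call this the main obstacle, so the proposal asserts precisely the point where all the content lies. Your plan to re-derive it from \cref{cor:computationhomsC} by decomposing $\repD[m,\defThn,\defS]\times X$ into representables and reindexing an iterated colimit is not wrong (this is essentially how the paper handles disconnected sources elsewhere), but it is heavier than necessary and leaves the crucial ``decoupling'' unverified. The paper instead feeds the single map $[\id_{[m]},\pi]$ into the already-stated computation \cref{Stof[lX]} and its weighted-colimit refinement (\cref{homconeasweigthed}): the relevant hom $\Thn$-space is the $\sset$-weighted colimit of a necklace-indexed functor whose value at $T$ is $(\prod_{B(\overline{T})}\repD[\defThn,\defS])\times X$, i.e.\ objectwise the tensor by $X$ of the corresponding functor $\newGbar[\id_{\repD[\defThn,\defS]}][i][\id_{[m]}]$ for the identity (\cref{lem:hompivsid}); the factor $X$ then pulls out because weighted colimits commute with tensors and $\diag$ preserves products, giving \cref{lem:streppivsid}. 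So the step you defer is short once you quote \cref{Stof[lX]} rather than re-running the necklace analysis from scratch, but as written your proof is incomplete at exactly that point.
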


In \cref{sec:Stofgeneral}, we compute the straightening of maps $[\mapDelta,f]\colon \repD[\ell,X]\to L \repD[m,Y]$ in $\sThnsset$ with $\mapDelta\colon [\ell]\to [m]$ an injective map in $\Delta$ and $f\colon X\to Y$ a map in $\Thnsset$ between \emph{connected} $\Thn$-spaces.

For this, we first recall the following result, computing $L\repD[m,Y]$ as a certain pushout. Here we write $\pi_0\colon \Thnsset\to \set$ for the left adjoint to the inclusion $\set\hookrightarrow \Thnsset$.

\begin{rmk} \label{RmkPushout}
    By \cite[Lemma~3.1.1]{MRR1}, the object $L\repD[m,Y]$ of $\pcatThn$ can be computed as the following pushout in $\sThnsset$.
    \begin{tz}
        \node[](1) {$\coprod_{m+1} Y$}; 
        \node[right of=1,xshift=1.6cm](2) {$\repD[m,Y]$}; 
        \node[below of=1](3) {$\coprod_{m+1} \pi_0 Y$}; 
        \node[below of=2](4) {$L\repD[m,Y]$}; 

        \draw[->] (1) to (2); 
        \draw[->] (1) to (3); 
        \draw[->] (2) to (4); 
        \draw[->] (3) to (4);
        \pushout{4};
    \end{tz}
\end{rmk}

\begin{rmk}
    A map $f\colon A\to \repD[m,Y]$ in $\sThnsset$ induces a map $A\to L\repD[m,Y]$ by post-composing with the canonical map $\repD[m,Y]\to L\repD[m,Y]$ from the above pushout. For simplicity, we also write $f$ for this map, namely $f\colon A\to L\repD[m,Y]$. 
\end{rmk}

Recall from \cite[Definition 3.1.5]{MRR1} that a $\Thn$-space $Y$ is said to be \emph{connected} if the set $\pi_0 Y$ is isomorphic to a point.

\begin{rmk} \label{rem:repareconnected}
    For $\defThn\in \Thn$ and $\defS\geq 0$, the representable $\repD[\defThn,\defS]=\repThn\times \repS$ in $\Thnsset$ is a connected $\Thn$-space.
\end{rmk}

The computation of the straightening of $[\mapDelta,f]$ is given by a generalization of the construction of the straightening at representables. We introduce the following notation; compare to \cref{pushoutlowersigma}.

\begin{notation} \label{notationCone}
We write $\Cone$ for the following pushout in $\pcatThn$ (and hence in $\sThnsset$). 
\begin{tz}
        \node[](1) {$L \repD[\ell,X]$}; 
        \node[right of=1,xshift=2cm](2) {$L \repD[m,Y]$}; 
        \node[below of=1](3) {$L \repD[\ell+1,X]$}; 
        \node[below of=2](4) {$\Cone$}; 

        \draw[->] (1) to node[above,la]{$L [\mapDelta,f]$} (2); 
        \draw[->] (1) to node[left,la]{$L [d^{\ell+1},X]$} (3); 
        \draw[->] (2) to node[right,la]{$\iota_f$} (4); 
        \draw[->] (3) to (4);
        \pushout{4};
    \end{tz}
\end{notation}

\begin{rmk} 
     By \cref{Cone0}, there is an isomorphism in $\Thnsset$
     \[ \Cone_0\cong \{0,1,\ldots,m+1\}. \]
\end{rmk}

\begin{prop} \label{Stof[lX]}
    Let $\mapDelta\colon [\ell]\to [m]$ be an injective map in $\Delta$, and $f\colon X\to Y$ be map in $\Thnsset$ between connected $\Thn$-spaces. The straightening functor $\St_{L\repD[m,Y]}$ sends the object $[\mapDelta,f]\colon \repD[\ell,X]\to L\repD[m,Y]$ in $\sThnssetsliceshort{L\repD[m,Y]}$ to the $\Thnsset$-enriched functor 
    \[ \St_{L\repD[m,Y]} ([\mapDelta,f])\colon \Ch L\repD[m,Y]^{\op}\xrightarrow{\Ch(\iota_{f})} \Ch\Cone^{\op}\xrightarrow{\Hom_{\Ch \Cone}(-,m+1)} \Thnsset. \]
\end{prop}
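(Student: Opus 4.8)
The plan is to reduce the claimed description of $\St_{L\repD[m,Y]}([\mapDelta,f])$ to the construction of straightening at representables from \cref{subsec:defStUn}, by identifying $[\mapDelta,f]\colon \repD[\ell,X]\to L\repD[m,Y]$ with an object of $\int_{\DThnS} L\repD[m,Y]$ via an appropriate cell of $L\repD[m,Y]$. The first step is to observe, using \cref{RmkPushout} and the assumption that $X$ and $Y$ are connected (so $\pi_0 X\cong\pi_0 Y\cong *$), that the pushout computing $L\repD[m,Y]$ is ``simple'' enough that a map $\repD[\ell,X]\to L\repD[m,Y]$ of the form $[\mapDelta,f]$ factors through $\repD[m,Y]\to L\repD[m,Y]$ via the map $[\mapDelta,f]\colon \repD[\ell,X]\to\repD[m,Y]$ in $\sThnsset$; this is exactly the content of the remark preceding \cref{rem:repareconnected}. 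Hence $[\mapDelta,f]$ is not literally a representable cell of $L\repD[m,Y]$, but it is a colimit of such cells, and since $\St_{L\repD[m,Y]}$ is defined as a left Kan extension along the Yoneda embedding (and so preserves colimits), it suffices to compute $\St$ on the cells of $\repD[\ell,X]$ compatibly.

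The key step is then to show that the composite
\[ \Ch L\repD[m,Y]^{\op}\xrightarrow{\Ch(\iota_{f})} \Ch\Cone^{\op}\xrightarrow{\Hom_{\Ch\Cone}(-,m+1)} \Thnsset \]
has the correct universal property, i.e.\ that it represents $\St_{L\repD[m,Y]}([\mapDelta,f])$. Concretely, I would use \cref{basechangeSt} applied to the canonical map $f'\colon \repD[\ell,X]\to L\repD[m,Y]$ (or rather to a suitable comparison of slices): the object $[\mapDelta,f]$ in $\sThnssetsliceshort{L\repD[m,Y]}$ is the image under $(f')_!$ of the terminal object $\id\colon \repD[\ell,X]\to\repD[\ell,X]$ — well, more precisely one factors through the appropriate representable-indexed colimit — and naturality of $\St$ reduces the computation to $\St_{L\repD[\ell,X]}$ of an identity-like object plus the transition maps. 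The cone construction in \cref{notationCone} is precisely the analogue of $W_\sigma$ from \cref{pushoutlowersigma} adapted to the non-representable basis, so matching the two pushout squares identifies $\Cone$ with the ``$W_\sigma$'' for the relevant generalized cell, and $m+1\in\Cone_0$ with the added object $\top$. Once this identification is in place, the displayed composite is literally the definition of $\St$ at that cell, and unwinding via \cref{sliceaspresheaf} and the colimit-preservation of $\St$ yields the statement.

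Concretely, the steps in order are: (1) record that $[\mapDelta,f]$ factors through $\repD[m,Y]\to L\repD[m,Y]$ using connectedness and \cref{RmkPushout}; (2) write $\repD[\ell,X]$ as the colimit of its cells $\repD[k,\defThn,\defS]\to\repD[\ell,X]$ over $\int_{\DThnS}\repD[\ell,X]$, and correspondingly $[\mapDelta,f]$ as a colimit in $\sThnssetsliceshort{L\repD[m,Y]}$ of genuine cells $\repD[k,\defThn,\defS]\to L\repD[m,Y]$; (3) apply colimit-preservation of $\St_{L\repD[m,Y]}$ to reduce to computing $\St_{L\repD[m,Y]}$ on each such cell, which by definition (\cref{subsec:defStUn}) is $\Hom_{\Ch(L\repD[m,Y])_\tau}(-,\top)\circ\Ch(\iota_\tau)$ for the corresponding cell $\tau$; (4) verify that the colimit over $\int_{\DThnS}\repD[\ell,X]$ of the pushouts $(L\repD[m,Y])_\tau$ is exactly $\Cone$ — this is a pushout-of-colimits argument comparing \cref{notationCone} with the defining pushout of $(L\repD[m,Y])_\tau$, using that $\colim_\tau L\repD[k,\defThn,\defS]\cong L\repD[\ell,X]$ and $\colim_\tau L\repD[k+1,\defThn,\defS]\cong L\repD[\ell+1,X]$ (the extra top vertex is added coherently) — and that under this identification the added vertices all map to $m+1\in\Cone_0$; (5) conclude that $\St_{L\repD[m,Y]}([\mapDelta,f])\cong \Hom_{\Ch\Cone}(-,m+1)\circ\Ch(\iota_f)$ by commuting $\Ch$ and $\Hom$ past the colimit, using that $\Ch$ preserves colimits.

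The main obstacle I expect is step (4): showing that the colimit of the cell-wise pushouts $(L\repD[m,Y])_\tau$ reassembles to exactly $\Cone$, with all the added cone-points identified to the single vertex $m+1$. This requires care because the pushouts in \cref{pushoutlowersigma} are taken in $\pcatThn$ (where $L$ is used to discretize level $0$), and one must check that forming these pushouts commutes appropriately with the colimit over the category of elements — in particular that no spurious identifications or extra level-$0$ points appear, which is where the connectedness hypothesis on $X$ and $Y$ (via \cref{Cone0} giving $\Cone_0\cong\{0,\dots,m+1\}$) is essential. An alternative, possibly cleaner route for step (4) is to invoke \cref{basechangeSt} directly with the map $\iota_f\colon L\repD[m,Y]\to\Cone$ playing the structural role, but one still has to verify that $\iota_f$ and the cone pushout are compatible with the representable-cell description, so the combinatorial bookkeeping cannot be fully avoided.
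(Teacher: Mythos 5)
Your formal skeleton matches the paper's: the paper also writes the source as a colimit of cells (it decomposes only $X\cong\colim_{\repD[\defThn,\defS]\to X}\repD[\defThn,\defS]$, keeping $[\ell]$ fixed, which is tidier than running over all of $\int_{\DThnS}\repD[\ell,X]$), uses that $\St_{L\repD[m,Y]}$ preserves colimits, and identifies each cell-wise pushout $L\repD[m,Y]_{[\mapDelta,f\sigma]}$ with $\Cone[f\sigma]$ essentially by inspection of \cref{pushoutlowersigma} and \cref{notationCone}. So your steps (1)--(4) are fine in outline, and (4) is in fact the easy part, not the main obstacle.

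The genuine gap is step (5). You conclude by ``commuting $\Ch$ and $\Hom$ past the colimit, using that $\Ch$ preserves colimits.'' The left-adjoint $\Ch$ does preserve colimits, but the hom-functor $\Hom_{\Ch(-)}(-,m+1)$ does \emph{not} commute with colimits of enriched categories: pushouts and filtered-type colimits of $\Thnsset$-enriched categories create new (freely composed) morphisms, so the hom $\Thn$-spaces of a colimit are not the colimits of the hom $\Thn$-spaces. What has to be proved is exactly the isomorphism
\[ \colim_{\repD[\defThn,\defS]\xrightarrow{\sigma}X}\Hom_{\Ch\Cone[f\sigma]}(-,m+1)\circ\Ch(\iota_{f\sigma})\;\cong\;\Hom_{\Ch\Cone}(-,m+1)\circ\Ch(\iota_f), \]
which is \cref{colimofhomCone} in the paper, and it is not formal. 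The paper obtains it by first computing $\Hom_{\Ch\Cone}(i,m+1)$ explicitly via necklace calculus: the map $Q\colon\Cone\to\repD[m+1]$ induces a discrete fibration on necklace categories, classified by the functor $\newGbar$ of \cref{notation:newGbar}, giving the weighted-colimit formula of \cref{homconeasweigthed}; since for each fixed necklace the value of $\newGbar$ is a product $(\prod Y)\times X$ (or empty), the dependence on $X$ is through a product, and products in the cartesian closed $\Thnsset$ commute with colimits (\cref{colimofnewG}), so the interchange goes through. This is also where connectedness of $X$, $Y$ and of the representables is really used. Without an argument of this kind (or some substitute identification of the homs of $\Ch\Cone$), your step (5) is an unjustified interchange and the proof is incomplete.
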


We then compute in \cref{sec:Stofmono} the straightening of a map $[\mapDelta,f]\colon \repD[\ell,X]\to L \repD[m,Y]$ in $\sThnsset$ in the case where $\mapDelta\colon [\ell]\to [m]$ is an injective map in $\Delta$ and $f\colon X\hookrightarrow Y$ is a \emph{monomorphism} in $\Thnsset$ between connected $\Thn$-spaces.  

\begin{notation} \label{notn:+1}
    We define a functor $(-)+1\colon \Delta\to \Delta$ which sends an object $[m]$ to the object $[m+1]$, and a map $\mapDelta\colon [\ell]\to [m]$ to the map 
    \[ \mapDelta+1\colon [\ell+1]\to [m+1] \]
    with $(\mapDelta+1)(i)=\mapDelta(i)$ for all $0\leq i\leq \ell$ and $(\mapDelta+1)(\ell+1)=m+1$. 
\end{notation}

Recall the bead functor $B\colon \tndnec{\repD[m+1]}{i}{m+1}\to \set$ from \cref{beadfunctor}.

\begin{rmk} \label{rmk:lastbead}
    Given a necklace $T$, let us denote by $B_\omega^T$ its last bead. Then, given a monomorphism $g\colon U\hookrightarrow T$ between necklaces, since the last vertex of $U$ is mapped to the last vertex of $V$, the last bead $B_\omega^U$ of $U$ must be mapped into the last bead $B_\omega^T$ of $T$. In particular, the induced map $B(g)\colon B(U)\to B(T)$ between sets of beads from \cref{beadfunctor} is such that $B(g)(B_\omega^U)=B_\omega^T$.
\end{rmk}

\begin{notation} \label{notn:Falphatnd}
    For $0\leq i\leq m$, we define a functor 
    \[ \newG\colon (\tndnec{\repD[m+1]}{i}{m+1})^{\op}\to \Thnsset. \]
    It sends an object $T\hookrightarrow \repD[m+1]_{i,m+1}$ in $\tndnec{\repD[m+1]}{i}{m+1}$ to the $\Thn$-space
    \[ \begin{cases}
    (\prod_{B(T)\setminus \{B_\omega^{T}\}} Y)\times X & \text{if } B_\omega^{T}\subseteq \im(\mapDelta+1) \\
    \emptyset & \text{else }
    \end{cases} \]
    and a map $g\colon U\hookrightarrow T$ in $\tndnec{\repD[m+1]}{i}{m+1}$ to the map in $\Thnsset$
    \[  \begin{cases}
    (\prod_{B(T)\setminus \{B^{T}_\omega\}} Y)\times X\xrightarrow{\hat{f}B(g)^*} (\prod_{B(U)\setminus \{B^{U}_\omega\}} Y)\times X & \text{if } B^{U}_\omega\subseteq B^{T}_\omega\subseteq \im(\mapDelta+1) \\
    \emptyset\to (\prod_{B(U)\setminus \{B^{U}_\omega\}} Y)\times X & \text{if } B^{U}_\omega\subseteq \im(\mapDelta+1), \, B^{T}_\omega \not\subseteq \im(\mapDelta+1) \\
    \emptyset \to \emptyset & \text{else ,}
    \end{cases}\]
    where $\hat{f} B(g)^*$ is the composite in $\Thnsset$
    \begin{tz}
        \node[](1) {$(\prod_{B(T)\setminus \{B_\omega^{T} \}} Y)\times X$}; 
        \node[right of=1,xshift=5.4cm](2) {$(\prod_{B(U)\setminus B(g)^{-1}\{B_\omega^{T}\}} Y)\times (\prod_{B(g)^{-1} \{B_\omega^{T}\}} X)$}; 
        \node[below of=2](3) {$(\prod_{B(U)\setminus \{B^{U}_\omega\}} Y)\times X$}; 
        \draw[->] (1) to node[above,la]{$B(g)^*$} (2);
        \draw[->] (2) to node[right,la]{$\id_{\prod_{B(U)\setminus B(g)^{-1}\{B_\omega^{T}\}} Y}\times (\prod_{B(g)^{-1} \{B_\omega^{T}\}\setminus \{B_\omega^{U}\}} f)\times \id_X$} (3);
        \draw[->] (1) to node[below,la]{$\hat{f} B(g)^*$} (3);
    \end{tz}

    For $\defThn\in \Thn$ and $\defS\geq 0$, we write $\newG_{\defThn,\defS}$ for the composite 
    \[ \newG_{\defThn,\defS}\colon (\tndnec{\repD[m+1]}{i}{m+1})^{\op}\xrightarrow{\newG}\Thnsset\xrightarrow{(-)_{\defThn,\defS}} \set. \]
\end{notation}

Recall the homotopy coherent nerve functor $c^h\colon \Dset\to \sCat$ from \cref{subsec:nerves}.

\begin{notation} \label{notationH}
    For $0\leq i\leq m$, we define a functor 
    \[ H^i_{m+1}\colon \tndnec{\repD[m+1]}{i}{m+1}\to \sset. \]
    It sends an object $T\hookrightarrow\repD[m+1]_{i,m+1}$ in $\tndnec{\repD[m+1]}{i}{m+1}$ to the space $\Hom_{\CL T}(\alpha,\omega)$ and a map $g\colon U\hookrightarrow T$ in $\tndnec{\repD[m+1]}{i}{m+1}$ to the map in $\sset$
    \[ (\CL g)_{\alpha,\omega}\colon \Hom_{\CL T}(\alpha,\omega)\to \Hom_{\CL U}(\alpha,\omega). \]
\end{notation}

\begin{rmk} \label{rmk:enrichment}
Let $\varphi\colon \sset^{\DThnop}\cong \Thnssset$ be one of the two canonical isomorphism, and consider the inclusion
\[ \iota\colon\Thnsset\cong \set^{\DThnop}\hookrightarrow \sset^{\DThnop} \stackrel{\varphi}{\cong} \Thnssset. \]
Then $\sset^{\DThnop}$ is canonically enriched over $\sset$ and we consider the $\sset$-enrichment on $\Thnssset$ induced via $\varphi\colon \sset^{\DThnop}\cong \Thnssset$.
\end{rmk}

\begin{prop} \label{Stofalphafmono}
    Let $\mapDelta\colon [\ell]\to [m]$ be an injective map in $\Delta$, and $f\colon X\hookrightarrow Y$ be a monomorphism in $\Thnsset$ between connected $\Thn$-spaces. For $0\leq i\leq m$, there is a natural isomorphism in $\Thnsset$
    \[ \St_{L\repD[m,Y]}([\mapDelta,f])(i)\cong \diag(\colim^{H^i_{m+1}}_{(\tndnec{\repD[m+1]}{i}{m+1})^{\op}} \iota\newG). \]
\end{prop}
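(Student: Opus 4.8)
The plan is to combine \cref{Stof[lX]}, which identifies $\St_{L\repD[m,Y]}([\mapDelta,f])$ with $\Hom_{\Ch\Cone}(\Ch(\iota_f)(-),m+1)$, with the necklace description of hom $\Thn$-spaces of the homotopy coherent categorification from \cref{cor:computationshomC1ordered}. First I would evaluate $\St_{L\repD[m,Y]}([\mapDelta,f])$ at the object $i\in \Ch L\repD[m,Y]^{\op}$ and use \cref{Stof[lX]} to rewrite it as $\Hom_{\Ch\Cone}(i,m+1)$, where on the right $i$ denotes the image of $i$ under $\Ch(\iota_f)$ (which is injective on objects, so there is no ambiguity, and $m+1=\top$ is the cone point). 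Then I would invoke \cref{cor:computationshomC1ordered}: for this to apply I need each level $\Cone_{-,\defThn,\defS}$ to be a $1$-ordered simplicial set. The underlying simplicial set of $\Cone$ is obtained by gluing $L\repD[\ell+1,X]$ to $L\repD[m,Y]$ along $L\repD[\ell,X]$ via an injective map $\mapDelta$, so using \cref{RmkPushout} and the assumption that $f$ is a monomorphism between connected $\Thn$-spaces, each level $\Cone_{-,\defThn,\defS}$ is (isomorphic to) the simplicial set $\repD[m+1]$ obtained by the analogous pushout at the underlying simplicial-set level — in particular it is $1$-ordered by \cref{examplesof1ordered}. This is the first place where connectedness is essential: it guarantees that $\pi_0$ of each relevant $\Thn$-space is a point, so the pushout defining $\Cone$ does not collapse joints in an uncontrolled way and the underlying simplicial set is exactly $\repD[m+1]$.

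With \cref{cor:computationshomC1ordered} in hand, we get
\[ \Hom_{\Ch\Cone}(i,m+1)\cong \diag\bigl(\colim_{T\in \tndnec{\Cone_{-,\star,\star}}{i}{m+1}} \Hom_{\CL T}(\alpha,\omega)\bigr). \]
The core of the argument is then to unwind the indexing category $\tndnec{\Cone_{-,\defThn,\defS}}{i}{m+1}$ and the functor $T\mapsto \Hom_{\CL T}(\alpha,\omega)$ into the weighted-colimit form on the right-hand side of the statement. Since $\Cone_{-,\defThn,\defS}\cong \repD[m+1]$ as a simplicial set (independently of $\defThn,\defS$), by \cref{tndnec1ordered} the indexing poset is $\tndnec{\repD[m+1]}{i}{m+1}$, which is \emph{constant} in $(\defThn,\defS)$; this is what lets us pull the $(\defThn,\defS)$-dependence entirely into the \emph{value} $\Thn$-spaces and assemble the levelwise colimits into a colimit in $\Thnssset$, then apply $\diag$. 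The remaining task is to identify, for a fixed totally non-degenerate necklace $T\hookrightarrow \repD[m+1]_{i,m+1}$, the value of $\Cone$ on $T$ — i.e. the $\Thn$-space $\colim$ of the $(\defThn,\defS)$-levels of the simplices of $\Cone$ realizing $T$ — with $\newG(T)$, and the space $\Hom_{\CL T}(\alpha,\omega)$ with $H^i_{m+1}(T)$. The latter is literally \cref{notationH}. For the former, I would argue bead-by-bead using $B\colon \tndnec{\repD[m+1]}{i}{m+1}\to \set$ from \cref{beadfunctor}: a bead of $T$ that lands in $\im(\mapDelta+1)$ ``sees'' only the $X$-part of $\Cone$ over that portion, while a bead not contained in $\im(\mapDelta+1)$ — note by \cref{rmk:lastbead} this can only be the case for non-last beads when the last bead \emph{is} in $\im(\mapDelta+1)$, or forces the value $\emptyset$ — sees the $Y$-part; because the pushout defining $\Cone$ is along $\mapDelta+1$ with $\top=m+1$ added as the last vertex, the last bead $B_\omega^T$ is precisely the bead whose fiber interpolates between $X$ and $Y$, giving the factor $X$ when $B_\omega^T\subseteq \im(\mapDelta+1)$ and $\emptyset$ otherwise. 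This reproduces the two cases in \cref{notn:Falphatnd}, and functoriality in $g\colon U\hookrightarrow T$ matches the map $\hat f B(g)^*$ by tracking, via $B(g)$, which beads of $U$ get merged into the last bead of $T$ (those contribute the ``$f$-direction'' maps $\prod f$) versus which do not (those contribute $\id_Y$ or $\id_X$), exactly as in the displayed composite defining $\hat f B(g)^*$.

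The main obstacle I anticipate is the bookkeeping in the last step: precisely matching the functoriality of the ``value'' assignment $T\mapsto \newG(T)$ under morphisms of necklaces with the maps induced on the simplices of $\Cone$, particularly the interaction between (i) the bead functor $B$ and the merging of beads, (ii) the distinguished role of the last bead $B_\omega^T$ carrying the $X$-versus-$Y$ distinction, and (iii) the $\sset$-enrichment bookkeeping of \cref{rmk:enrichment} that turns the levelwise set-valued colimits into a $\sset$-weighted colimit over $\iota\newG$ with weight $H^i_{m+1}$. Concretely, one must verify that the formula in \cref{cor:computationshomC1ordered}, which a priori only computes the $\Thnssset$-object levelwise and then applies $\diag$, genuinely agrees with the weighted colimit $\colim^{H^i_{m+1}}\iota\newG$ — this is where the coend/weighted-colimit formalism, the fact that $\CL T$ has a single non-trivial hom space, and the naturality of the necklace description all have to be assembled carefully. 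Everything else (the reduction via \cref{Stof[lX]}, the $1$-orderedness of the levels, the constancy of the indexing poset) is comparatively routine given the earlier results.
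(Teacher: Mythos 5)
There is a genuine gap, and it sits at the center of your argument: the claim that each level $\Cone_{-,\defThn,\defS}$ is isomorphic to $\repD[m+1]$ is false. By \cref{ConePushout} (levelwise, \cref{ConePushoutThetak}), $\Cone_{-,\defThn,\defS}$ is the quotient of $(\coprod_{Y_{\defThn,\defS}}\repD[m])\amalg_{\coprod_{X_{\defThn,\defS}}\repD[\ell]}(\coprod_{X_{\defThn,\defS}}\repD[\ell+1])$ obtained by collapsing the vertex summands; connectedness of $X$ and $Y$ only gives $\Cone_0\cong\{0,\ldots,m+1\}$ (\cref{Cone0}), while for instance the top-dimensional simplices on the vertices $0,\ldots,m$ are indexed by the whole set $Y_{\defThn,\defS}$. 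Consequently the categories $\tndnec{\Cone_{-,\defThn,\defS}}{i}{m+1}$ genuinely depend on $(\defThn,\defS)$ and are not $\tndnec{\repD[m+1]}{i}{m+1}$; if they were, the colimit in \cref{cor:computationshomC1ordered} would be independent of $X$ and $Y$ and could never produce the functor $\newG$. Indeed, the $(\defThn,\defS)$-dependence that your second paragraph attributes to ``the value of $\Cone$ on $T$'' is exactly the failure of your first claim, so the proposal is internally inconsistent: with a constant indexing poset and constant values there is nothing left to assemble into a weighted colimit. Relatedly, the $1$-orderedness of $\Cone_{-,\defThn,\defS}$ cannot be obtained from the (false) identification with $\repD[m+1]$; it needs its own argument (\cref{prop:Cone1ordered}), and this is precisely where the monomorphism hypothesis on $f$ is used, whereas connectedness only controls $\Cone_0$.

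The missing idea is the mechanism the paper uses to relate the varying necklace categories of $\Cone$ to the fixed poset $\tndnec{\repD[m+1]}{i}{m+1}$: the projection $Q\colon\Cone\to\repD[m+1]$ induces, for each $(\defThn,\defS)$, a discrete fibration $(Q_{-,\defThn,\defS})_!\colon\tndnec{\Cone_{-,\defThn,\defS}}{i}{m+1}\to\tndnec{\repD[m+1]}{i}{m+1}$ (obtained by pulling back the discrete fibration on all necklaces, \cref{prop:Qgendiscfib,prop:pullbackQ,prop:Qmonodiscfib}), and the set-valued functor classifying it is exactly $\newG_{\defThn,\defS}$ (\cref{QvsnewG,QmonovsnewG}) --- this fiber identification is where your bead-by-bead analysis of which beads see $X$ versus $Y$ correctly belongs. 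Once that is in place, the colimit over $\tndnec{\Cone_{-,\defThn,\defS}}{i}{m+1}$ is rewritten, via the Grothendieck-construction formula for colimits over a category of elements, as the colimit of $H^i_{m+1}$ weighted by $\newG_{\defThn,\defS}$, and then swapped to the $\sset$-enriched colimit of $\iota\newG$ weighted by $H^i_{m+1}$ (\cref{rewritecolimmono,rewritecolim2mono}); combining with \cref{cor:computationshomC1ordered} and \cref{Stof[lX]} gives the statement. Your reduction via \cref{Stof[lX]} and your description of the fibers are in the right spirit, but without the discrete-fibration/weighted-colimit step the passage from the levelwise necklace colimits to $\diag(\colim^{H^i_{m+1}}\iota\newG)$ is not justified.
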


\subsection{Straightening of \texorpdfstring{$\partial\repD[m,Y]\amalg_{\partial\repD[m,X]} \repD[m,X]\to L\repD[m,Y]$}{bla}} \label{subsec:Stofboundary}

We first compute the straightening of a map $\partial \repD[m,X]\to L\repD[m,Y]$ in $\Thnsset$ where $m\geq 0$ and $f\colon X\hookrightarrow Y$ is a monomorphism in $\Thnsset$ between connected $\Thn$-spaces.

\begin{rmk} \label{rem:boundaryascoeq}
    Recall that the boundary $\partial\repD$ can be computed as the following coequalizer in $\Dset$
    \[ \textstyle \coprod_{0\leq s<t\leq m} \repD[m-2]\rightrightarrows \coprod_{0\leq s\leq m} \repD[m-1]\to \partial\repD. \]
    We denote by $\iota_m\colon \partial\repD\hookrightarrow \repD$ the boundary inclusion. 
    
    Since products in $\sThnsset$ commute with colimits, we get that $\partial\repD[m,X]$ can be computed as the following coequalizer in $\sThnsset$
    \[ \textstyle \coprod_{0\leq s<t\leq m} \repD[m-2,X]\rightrightarrows \coprod_{0\leq s\leq m} \repD[m-1,X]\to \partial\repD[m,X]. \]
\end{rmk}

\begin{notation}
    We define a functor 
    \[ \newG[f][0][\partial,m]\colon (\tndnec{\repD[m+1]}{0}{m+1})^{\op}\to \Thnsset. \]
    It sends an object $T\hookrightarrow \repD[m+1]_{0,m+1}$ in $\tndnec{\repD[m+1]}{0}{m+1}$ to the $\Thn$-space
    \[ \begin{cases}
    (\prod_{B(T)\setminus \{B_\omega^{T}\}} Y)\times X & \text{if } T\neq\repD[m+1] \\
    \emptyset & \text{if } T=\repD[m+1]
    \end{cases} \]
    and a map $g\colon U\hookrightarrow T$ in $\tndnec{\repD[m+1]}{0}{m+1}$ to the map in $\Thnsset$
    \[  \begin{cases}
    (\prod_{B(T)\setminus \{B^{T}_\omega\}} Y)\times X\xrightarrow{\hat{f}B(g)^*} (\prod_{B(U)\setminus \{B^{U}_\omega\}} Y)\times X & \text{if } U,T\neq\repD[m+1]\\
    \emptyset\to (\prod_{B(U)\setminus \{B^{U}_\omega\}} Y)\times X & \text{if } U\neq\repD[m+1],\, T=\repD[m+1] 
    \end{cases}\]
\end{notation}

\begin{lemma} \label{lem:Fcoeq}
    For $0< i\leq m$, there is a coequalizer in $(\Thnsset)^{(\tndnec{\repD[m+1]}{i}{m+1})^{\op}}$
    \[ \textstyle \coprod_{0\leq s<t\leq m} \newG[f][i][d^sd^t]\rightrightarrows \coprod_{0\leq s\leq m} \newG[f][i][d^s]\to \newG[f][i][\id_{[m]}], \]
    and for $i=0$, there is a coequalizer in $(\Thnsset)^{(\tndnec{\repD[m+1]}{i}{m+1})^{\op}}$
    \[ \textstyle \coprod_{0\leq s<t\leq m} \newG[f][0][d^sd^t]\rightrightarrows \coprod_{0\leq s\leq m} \newG[f][0][d^s]\to \newG[f][0][\partial,m]. \]
\end{lemma}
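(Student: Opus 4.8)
The plan is to verify the coequalizer pointwise. Since colimits in a functor category are computed objectwise and colimits in the presheaf category $\Thnsset$ are computed objectwise as well, a fork in $(\Thnsset)^{(\tndnec{\repD[m+1]}{i}{m+1})^{\op}}$ is a coequalizer precisely when its evaluation at every totally non-degenerate necklace $T\hookrightarrow\repD[m+1]_{i,m+1}$ and at every $\defThn\in\Thn$, $\defS\geq 0$ is a coequalizer of sets. So I would fix such a $T$ and analyze the resulting fork.

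First I would factor out the dependence on $f$. Set $\mathscr{C}\coloneqq\newG[f][i][\id_{[m]}]$; since $\im(\id_{[m]}+1)=[m+1]$ contains every last bead, $\mathscr{C}$ is the functor sending $T$ to $(\prod_{B(T)\setminus\{B_\omega^T\}}Y)\times X$ and a morphism $g$ to $\hat f B(g)^{*}$. Writing $S\cdot A\coloneqq\coprod_S A$ for the copower of an object $A$ by a set $S$, each of the remaining functors is of the form $\mathbf{1}\cdot\mathscr{C}$ for a suitable subfunctor $\mathbf{1}$ of the terminal functor: concretely $\newG[f][i][d^s]\cong\mathbf{1}_{d^s}\cdot\mathscr{C}$ and $\newG[f][i][d^sd^t]\cong\mathbf{1}_{d^sd^t}\cdot\mathscr{C}$, where $\mathbf{1}_\mapDelta(T)$ is a point if $B_\omega^T\subseteq\im(\mapDelta+1)$ and is $\emptyset$ otherwise, and $\newG[f][0][\partial,m]\cong\mathbf{1}_\partial\cdot\mathscr{C}$ with $\mathbf{1}_\partial(T)=\emptyset$ exactly for $T=\repD[m+1]$. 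The functoriality of each $\mathbf{1}$ uses \cref{rmk:lastbead}, which gives $B_\omega^U\subseteq B_\omega^T$ for a morphism $U\hookrightarrow T$ of necklaces, so that $B_\omega^T\subseteq\im(\mapDelta+1)$ forces $B_\omega^U\subseteq\im(\mapDelta+1)$; and the isomorphisms $\newG\cong\mathbf{1}\cdot\mathscr{C}$ drop out of the case analysis defining $\newG$ in \cref{notn:Falphatnd}. As $(-)\cdot\mathscr{C}$ is objectwise the left adjoint $S\mapsto\coprod_S\mathscr C(T)$, it preserves colimits, so it suffices to show that the fork
\[ \textstyle\coprod_{0\leq s<t\leq m}\mathbf{1}_{d^sd^t}\ \rightrightarrows\ \coprod_{0\leq s\leq m}\mathbf{1}_{d^s}\ \longrightarrow\ \mathbf{1}_{\id_{[m]}} \]
in $\set^{(\tndnec{\repD[m+1]}{i}{m+1})^{\op}}$ is a coequalizer, with the target $\mathbf{1}_{\id_{[m]}}$ replaced by $\mathbf{1}_\partial$ when $i=0$, and with the two parallel maps induced via $\mathbf{1}_{(-)}$ from the standard presentation $\coprod_{s<t}\repD[m-2]\rightrightarrows\coprod_s\repD[m-1]\to\partial\repD[m]$ of \cref{rem:boundaryascoeq}.

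Then I would conclude by evaluation at $T$. Let $V\subseteq\{0,\dots,m+1\}$ be the vertex set of $B_\omega^T$; note $m+1\in V$, that $0\notin V$ whenever $i>0$ (the necklace then lies over $\{i,\dots,m+1\}$), and that $V=\{0,\dots,m+1\}$ precisely when $T=\repD[m+1]$, which forces $i=0$. Put $S_T\coloneqq\{0\leq s\leq m : s\notin V\}$. Using $\im((d^s)+1)=\{0,\dots,m+1\}\setminus\{s\}$ and $\im((d^sd^t)+1)=\{0,\dots,m+1\}\setminus\{s,t\}$, evaluation at $T$ turns the displayed $\mathbf{1}$-fork into the fork of sets $\binom{S_T}{2}\rightrightarrows S_T\to Q$ with parallel maps $\{s,t\}\mapsto s$ and $\{s,t\}\mapsto t$, where $Q$ is a point if the target is $\mathbf{1}_{\id_{[m]}}$, is a point if the target is $\mathbf{1}_\partial$ and $T\neq\repD[m+1]$, and is $\emptyset$ if the target is $\mathbf{1}_\partial$ and $T=\repD[m+1]$. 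The coequalizer in $\set$ of $\binom{S_T}{2}\rightrightarrows S_T$ collapses $S_T$ to a point when $S_T\neq\emptyset$ and is $\emptyset$ when $S_T=\emptyset$; since $S_T\neq\emptyset$ exactly when $i>0$ or $T\neq\repD[m+1]$, this agrees with $Q$ in every case, which finishes the proof.

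I expect the work to be bookkeeping rather than conceptual. The two points that require care are checking the copower decomposition $\newG\cong\mathbf{1}\cdot\mathscr{C}$ against the twisted morphism formula $\hat f B(g)^{*}$ — the one place $f$ intervenes, and one that survives the decomposition because it does not depend on the face $\mapDelta$ — and keeping track of the $i=0$ versus $i>0$ dichotomy, which rests solely on whether $\repD[m+1]$ itself occurs as a necklace from $i$ to $m+1$ and is exactly what forces the target $\mathbf{1}_\partial$ rather than $\mathbf{1}_{\id_{[m]}}$ in that case.
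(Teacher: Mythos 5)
Your proof is correct and takes essentially the same route as the paper's: the paper's argument is precisely the reduction to an objectwise check at each necklace $T\hookrightarrow\repD[m+1]_{i,m+1}$, with the remaining verification declared a straightforward computation. Your copower decomposition $\newG\cong\mathbf{1}_{\mapDelta}\cdot\mathscr{C}$ and the resulting set-level coequalizer $\binom{S_T}{2}\rightrightarrows S_T\to Q$ (using \cref{rmk:lastbead} for functoriality and the observation that $S_T=\emptyset$ only when $i=0$ and $T=\repD[m+1]$) is simply a clean organization of the pointwise computation the paper leaves implicit.
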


\begin{proof}
    To show that the above diagrams are coequalizers in $(\Thnsset)^{(\tndnec{\repD[m+1]}{i}{m+1})^{\op}}$, it is enough to show that they are coequalizers in $\Thnsset$ when evaluating the functors at each object $T\hookrightarrow \repD[m+1]_{i,m+1}$ in $\tndnec{\repD[m+1]}{i}{m+1}$. This is a straightforward computation.
\end{proof}

We can now compute the straightening of the map $[\iota_m,f]\colon \partial\repD[m,X]\to L\repD[m,Y]$.

\begin{prop}
    Let $m\geq 0$, and $f\colon X\hookrightarrow Y$ be a monomorphism in $\Thnsset$ between connected $\Thn$-spaces. For $0< i\leq m$, there is a natural isomorphism in $\Thnsset$
    \[ \St_{L\repD[m,Y]}([\iota_m,f])(i)\cong \diag(\colim^{H^i_{m+1}}_{(\tndnec{\repD[m+1]}{i}{m+1})^{\op}} \iota\newG[f][i][\id_{[m]}]), \]
    and for $i=0$, there is a natural isomorphism in $\Thnsset$
    \[ \St_{L\repD[m,Y]}([\iota_m,f])(0)\cong \diag(\colim^{H^0_{m+1}}_{(\tndnec{\repD[m+1]}{0}{m+1})^{\op}} \iota\newG[f][0][\partial,m]). \]
\end{prop}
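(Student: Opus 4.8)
The plan is to realize $\partial\repD[m,X]$ as a colimit and exploit that $\St_{L\repD[m,Y]}$, being a left adjoint, is cocontinuous. By \cref{rem:boundaryascoeq}, the object $\partial\repD[m,X]$ is the coequalizer in $\sThnsset$ of the two parallel maps $\coprod_{0\leq s<t\leq m}\repD[m-2,X]\rightrightarrows\coprod_{0\leq s\leq m}\repD[m-1,X]$ induced by the cofaces. Since the forgetful functor $\sThnssetsliceshort{L\repD[m,Y]}\to\sThnsset$ creates colimits, the object $[\iota_m,f]$ of $\sThnssetsliceshort{L\repD[m,Y]}$ is the coequalizer of the corresponding diagram in $\sThnssetsliceshort{L\repD[m,Y]}$, all of whose objects have the form $[\mapDelta,f]$ with $\mapDelta$ an injective map of $\Delta$ (a composite of cofaces) and $f$ a monomorphism between connected $\Thn$-spaces. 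Applying the left adjoint $\St_{L\repD[m,Y]}$ and evaluating at an object $i\in\{0,\ldots,m\}$ of $\Ch L\repD[m,Y]$ — colimits in $[\Ch L\repD[m,Y]^{\op},\Thnsset]$ being pointwise — yields a coequalizer in $\Thnsset$
\[ \St_{L\repD[m,Y]}([\iota_m,f])(i)\cong\mathrm{coeq}\Bigl(\coprod_{0\leq s<t\leq m}\St_{L\repD[m,Y]}([d^sd^t,f])(i)\rightrightarrows\coprod_{0\leq s\leq m}\St_{L\repD[m,Y]}([d^s,f])(i)\Bigr). \]

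Next I would plug \cref{Stofalphafmono} into each summand: since the $d^s$ and $d^sd^t$ are injective maps of $\Delta$, we get $\St_{L\repD[m,Y]}([d^s,f])(i)\cong\diag\bigl(\colim^{H^i_{m+1}}_{(\tndnec{\repD[m+1]}{i}{m+1})^{\op}}\iota\newG[f][i][d^s]\bigr)$ and likewise for $d^sd^t$. By the naturality of these isomorphisms with respect to the coface maps defining the coequalizer — which is part of the necklace-calculus computation of $\St$ on such maps via the functorial pushout $\Cone$ of \cref{Stof[lX]} — the coequalizer above is isomorphic to the one obtained by replacing each summand with its model. Finally, $\diag$ is a left adjoint, the weighted colimit $\colim^{H^i_{m+1}}_{(\tndnec{\repD[m+1]}{i}{m+1})^{\op}}(-)$ with fixed weight is cocontinuous, the functor $\iota$ preserves colimits (it is induced levelwise by the discrete inclusion $\set\hookrightarrow\sset$, which has both adjoints), and finite coproducts commute with coequalizers; commuting all of these past the coequalizer produces
\[ \St_{L\repD[m,Y]}([\iota_m,f])(i)\cong\diag\Bigl(\colim^{H^i_{m+1}}_{(\tndnec{\repD[m+1]}{i}{m+1})^{\op}}\iota\,\mathrm{coeq}\bigl(\coprod_{s<t}\newG[f][i][d^sd^t]\rightrightarrows\coprod_s\newG[f][i][d^s]\bigr)\Bigr). \]
By \cref{lem:Fcoeq}, the inner coequalizer is $\newG[f][i][\id_{[m]}]$ for $0<i\leq m$ and $\newG[f][0][\partial,m]$ for $i=0$, which gives the two asserted formulas; naturality in $f$ is inherited from that of the ingredients, and the degenerate cases $m\leq 1$ are immediate since then $\partial\repD[m,X]$ is empty or a coproduct of points.

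The main obstacle is the bookkeeping hidden in the second step: one must check that the isomorphisms of \cref{Stofalphafmono} attached to the summands $[d^sd^t,f]$ and $[d^s,f]$ intertwine the structure maps of the boundary coequalizer with those of the $\newG$-coequalizer of \cref{lem:Fcoeq}. Concretely this is the naturality of \cref{Stofalphafmono} along the injective coface inclusions, and unwinding it goes through the explicit description of $\St$ on maps $[\mapDelta,f]$ via the pushout $\Cone$ — precisely the kind of compatibility the appendix is organized to supply. Once that is in hand, the remainder is formal cocontinuity together with the two coequalizer presentations of \cref{rem:boundaryascoeq} and \cref{lem:Fcoeq}.
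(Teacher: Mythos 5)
Your proposal is correct and follows essentially the same route as the paper: present $\partial\repD[m,X]$ via the coequalizer of \cref{rem:boundaryascoeq}, use cocontinuity of $\St_{L\repD[m,Y]}$ to get the corresponding coequalizer of straightenings, identify the summands via \cref{Stofalphafmono}, and apply the colimit-preserving functor $\diag(\colim^{H^i_{m+1}}\iota(-))$ to the coequalizer of \cref{lem:Fcoeq}. The only difference is presentational — you make explicit the naturality of the \cref{Stofalphafmono} isomorphisms along the coface inclusions, which the paper invokes implicitly.
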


\begin{proof}
For $0<i\leq m$, we write $\newG[f][i][\partial,m]\coloneqq \newG[f][i][\id_{[m]}]$. Since the straightening functor \[ \St_{L\repD[m,Y]}\colon \sThnssetsliceshort{L\repD[m,Y]}\to [\Ch L\repD[m,Y]^{\op},\Thnsset] \]
commutes with colimits, for $0\leq i\leq m$, we obtain using \cref{rem:boundaryascoeq} a coequalizer in $\Thnsset$
\[ \textstyle \coprod_{0\leq s<t\leq m} \St_{L\repD[m,Y]}([d^sd^t,f])(i)\rightrightarrows \coprod_{0\leq s\leq m} \St_{L\repD[m,Y]}([d^s,f])(i)\to \St_{L\repD[m,Y]}([\iota_m,f])(i)\]
Now, by applying the colimit-preserving functor 
\[ \diag(\colim^{H^i_{m+1}}_{(\tndnec{\repD[m+1]}{i}{m+1})^{\op}} \iota(-))\colon (\Thnsset)^{(\tndnec{\repD[m+1]}{i}{m+1})^{\op}} \to \Thnsset \]
to the coequalizer in $(\Thnsset)^{(\tndnec{\repD[m+1]}{i}{m+1})^{\op}}$ from \cref{lem:Fcoeq}
\[ \textstyle \coprod_{0\leq s<t\leq m} \newG[f][i][d^sd^t]\rightrightarrows \coprod_{0\leq s\leq m} \newG[f][i][d^s]\to \newG[f][i][\partial,m], \]
we obtain a coequalizer in $\Thnsset$, which is isomorphic to the one above by
\cref{Stofalphafmono}. Hence, we get an isomorphism in $\Thnsset$
\[ \St_{L\repD[m,Y]}([\iota_m,f])(i)\cong \diag(\colim^{H^i_{m+1}}_{(\tndnec{\repD[m+1]}{0}{m+1})^{\op}} \iota\newG[f][i][\partial,m]). \qedhere \]
\end{proof}

From now on, we assume that $f\colon X\hookrightarrow Y$ is a monomorphism in $\Thnsset$ with only its target~$Y$ connected and we compute the straightening of the pushout-product map 
\[ \iota_m\widehat{\times} f\colon \partial\repD[m,Y]\amalg_{\partial\repD[m,X]} \repD[m,X]\to L\repD[m,Y]. \]

\begin{notation} \label{notation:Gmf}
    We define a functor 
    \[ \cG_m^0(f)\colon (\tndnec{\repD[m+1]}{0}{m+1})^{\op}\to \Thnsset. \]
    It sends an object $T\hookrightarrow \repD[m+1]_{0,m+1}$ in $\tndnec{\repD[m+1]}{0}{m+1}$ to the $\Thn$-space
    \[ \begin{cases}
    \prod_{B(T)} Y & \text{if } T\neq\repD[m+1] \\
    X & \text{if } T=\repD[m+1]
    \end{cases} \]
    and a map $g\colon U\hookrightarrow T$ in $\tndnec{\repD[m+1]}{0}{m+1}$ to the map in $\Thnsset$
    \[  \begin{cases}
    \prod_{B(T)} Y\xrightarrow{B(g)^*} \prod_{B(U)} Y & \text{if } U,T\neq\repD[m+1] \\
    X\xrightarrow{f} Y\xrightarrow{B(g)^*} \prod_{B(U)} Y & \text{if } U\neq\repD[m+1],\, T=\repD[m+1].
    \end{cases}\]
\end{notation}

\begin{rmk}
    Recall from \cite[Remark~4.2.5]{MRR1} that every $\Thn$-space $X$ can be decomposed as a coproduct $X= \coprod_j X_j$, where each $X_j$ is a connected $\Thn$-space.
\end{rmk}

\begin{lemma} \label{lem:pushout}
    Let $X=\coprod_j X_j$ be a decomposition of $X$ into its connected components and write $f=\sum_j f_j\colon \coprod_j X_j\hookrightarrow Y$. There is a pushout in $(\Thnsset)^{(\tndnec{\repD[m+1]}{0}{m+1})^{\op}}$
    \begin{tz}
        \node[](1) {$\coprod_j\newG[f_j][0][\partial,m]$}; 
        \node[right of=1,xshift=2.1cm](2) {$\newG[\id_Y][0][\partial,m]$}; 
        \node[below of=1](3) {$\coprod_j\newG[f_j][0][\id_{[m]}]$}; 
        \node[below of=2](4) {$\cG^0_m(f)$}; 
        \punctuation{4}{.};

        \draw[->] (1) to (2); 
        \draw[->] (1) to (3); 
        \draw[->] (2) to (4); 
        \draw[->] (3) to (4);
        \pushout{4};
    \end{tz}
\end{lemma}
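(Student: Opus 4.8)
The plan is to verify the square objectwise. Since colimits in the functor category $(\Thnsset)^{(\tndnec{\repD[m+1]}{0}{m+1})^{\op}}$ are computed pointwise, it suffices to check that, for every object $T\hookrightarrow\repD[m+1]_{0,m+1}$ of $\tndnec{\repD[m+1]}{0}{m+1}$, the square obtained by evaluating the four functors at $T$ is a pushout in $\Thnsset$; this is the same reduction used in the proof of \cref{lem:Fcoeq}.

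The only external input needed is that $\Thnsset$, being cartesian closed and hence locally cartesian closed, has finite products distributing over coproducts: this lets me rewrite $\coprod_j\bigl((\prod_{B(T)\setminus\{B^T_\omega\}}Y)\times X_j\bigr)\cong(\prod_{B(T)\setminus\{B^T_\omega\}}Y)\times X$, using $\coprod_j X_j=X$. With this I would then split into two cases according to whether $T=\repD[m+1]$, reading the values of $\newG$ and $\cG^0_m(f)$ off \cref{notn:Falphatnd,notation:Gmf}.

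For $T\neq\repD[m+1]$, the functors $\newG[f_j][0][\partial,m]$ and $\newG[f_j][0][\id_{[m]}]$ agree at $T$, and one checks by unwinding the definitions that the evaluated square has both vertical maps equal to the identity on $(\prod_{B(T)\setminus\{B^T_\omega\}}Y)\times X$, with both horizontal maps equal to $\id\times f$ into $\prod_{B(T)}Y=\newG[\id_Y][0][\partial,m](T)=\cG^0_m(f)(T)$; such a square is trivially a pushout. For $T=\repD[m+1]$, which is its own unique bead, the empty product $\prod_{B(T)\setminus\{B^T_\omega\}}Y$ is terminal, so the ``$\partial$''-functors vanish: $\newG[f_j][0][\partial,m](\repD[m+1])=\emptyset=\newG[\id_Y][0][\partial,m](\repD[m+1])$, while $\newG[f_j][0][\id_{[m]}](\repD[m+1])\cong X_j$ and $\cG^0_m(f)(\repD[m+1])=X$. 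The evaluated square then has $\emptyset\to\emptyset$ on top and the canonical isomorphism $\coprod_j X_j\cong X$ on the bottom, which is again a pushout.

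I do not foresee a real obstacle; the only point calling for care is keeping track of which comparison maps between the various $\cF$-functors are in play, in particular that $\newG[f_j][0][\partial,m]\to\newG[f_j][0][\id_{[m]}]$ is the identity away from the top bead and is $\emptyset\to X_j$ at $\repD[m+1]$. Conceptually, the lemma simply records that moving from $\coprod_j\newG[f_j][0][\partial,m]$ to $\cG^0_m(f)$ glues in the last bead, with the connected components $X_j$ of $X$ identified inside $Y$ along $f$ — and all of this action is concentrated at the single object $T=\repD[m+1]$.
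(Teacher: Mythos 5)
Your proposal is correct and follows the same route as the paper: reduce to checking the square objectwise (pushouts in the functor category are computed pointwise, exactly as in the proof of \cref{lem:Fcoeq}) and then verify the two cases $T\neq \repD[m+1]$ and $T=\repD[m+1]$, which the paper leaves as a ``straightforward computation'' and you carry out explicitly and correctly. One tiny wording slip: the vanishing of the ``$\partial$''-functors at $T=\repD[m+1]$ holds by definition of their case split, not because the empty product is terminal (that fact is what gives $\newG[f_j][0][\id_{[m]}](\repD[m+1])\cong X_j$), but this does not affect the argument.
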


\begin{proof}
    To show that the above square is a pushout in $(\Thnsset)^{(\tndnec{\repD[m+1]}{0}{m+1})^{\op}}$, it is enough to show that it is a pushout in $\Thnsset$ when evaluating at each object $T\hookrightarrow \repD[m+1]_{i,m+1}$ in $\tndnec{\repD[m+1]}{i}{m+1}$. This is a straightforward computation.
\end{proof}

We can now compute the straightening of the pushout-product map 
\[ \iota_m\widehat{\times} f\colon \partial\repD[m,Y]\amalg_{\partial\repD[m,X]} \repD[m,X]\to L\repD[m,Y]. \]

\begin{prop} \label{Stofpushprod}
    Let $m\geq 0$, $Y$ be a connected $\Thn$-space, and $f\colon X\hookrightarrow Y$ be a monomorphism in $\Thnsset$. For $0< i\leq m$, there is a natural isomorphism in $\Thnsset$
    \[ \St_{L\repD[m,Y]}(\iota_m\widehat{\times} f)(i)\cong \diag(\colim^{H^i_{m+1}}_{(\tndnec{\repD[m+1]}{i}{m+1})^{\op}} \iota\newG[\id_Y][i][\id_{[m]}]), \]
    and for $i=0$, there is a natural isomorphism in $\Thnsset$
    \[ \St_{L\repD[m,Y]}(\iota_m\widehat{\times} f)(0)\cong \diag(\colim^{H^0_{m+1}}_{(\tndnec{\repD[m+1]}{0}{m+1})^{\op}} \iota\cG^0_m(f)). \]
\end{prop}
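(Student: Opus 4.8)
The plan is to exploit that $\St_{L\repD[m,Y]}$ is a left adjoint, hence colimit-preserving, and to run the same scheme used to compute $\St_{L\repD[m,Y]}([\iota_m,f])$, replacing the coequalizer of \cref{lem:Fcoeq} by the pushout of \cref{lem:pushout}. First I would apply $\St_{L\repD[m,Y]}$ to the pushout square in $\sThnssetsliceshort{L\repD[m,Y]}$ that exhibits $\partial\repD[m,Y]\amalg_{\partial\repD[m,X]}\repD[m,X]$ as the source of $\iota_m\widehat{\times} f$, obtaining a pushout of enriched functors, and then evaluate it at an object $i$ of $\Ch L\repD[m,Y]$. Since $Y$ is connected, $(L\repD[m,Y])_0=\{0,1,\ldots,m\}$ by \cref{RmkPushout}, so $i$ runs over $\{0,\ldots,m\}$; and since the evaluation functor $\ev_i\colon [\Ch L\repD[m,Y]^{\op},\Thnsset]\to\Thnsset$ preserves colimits, this produces a pushout square in $\Thnsset$ with corners $\St_{L\repD[m,Y]}(\partial\repD[m,X])(i)$, $\St_{L\repD[m,Y]}(\partial\repD[m,Y])(i)$, $\St_{L\repD[m,Y]}(\repD[m,X])(i)$ and $\St_{L\repD[m,Y]}(\iota_m\widehat{\times} f)(i)$.

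Next I would decompose $X=\coprod_j X_j$ into its connected components and write $f=\sum_j f_j$, so that each $f_j\colon X_j\hookrightarrow Y$ is a monomorphism between connected $\Thn$-spaces. As products distribute over coproducts in $\sThnsset$, this splits $\repD[m,X]=\coprod_j\repD[m,X_j]$ and $\partial\repD[m,X]=\coprod_j\partial\repD[m,X_j]$ over $L\repD[m,Y]$; and since $\St_{L\repD[m,Y]}$ and $\ev_i$ preserve coproducts, I can apply \cref{Stofalphafmono} to each object $[\id_{[m]},f_j]$ and the proposition computing $\St_{L\repD[m,Y]}([\iota_m,f])$ to each $f_j$ and to $\id_Y$. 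This identifies the three input corners of the pushout square with the coproduct over $j$ of $\diag(\colim^{H^i_{m+1}}_{(\tndnec{\repD[m+1]}{i}{m+1})^{\op}}\iota\newG[f_j][i][\id_{[m]}])$, of $\diag(\colim^{H^i_{m+1}}_{(\tndnec{\repD[m+1]}{i}{m+1})^{\op}}\iota\newG[f_j][i][\partial,m])$, and of $\diag(\colim^{H^i_{m+1}}_{(\tndnec{\repD[m+1]}{i}{m+1})^{\op}}\iota\newG[\id_Y][i][\partial,m])$ respectively (the last having a single summand). Since the functor $\diag(\colim^{H^i_{m+1}}_{(\tndnec{\repD[m+1]}{i}{m+1})^{\op}}\iota(-))$ is a composite of left adjoints it preserves colimits, so I can pull these coproducts inside and work with the functors $\coprod_j\newG[f_j][i][\id_{[m]}]$, $\coprod_j\newG[f_j][i][\partial,m]$ and $\newG[\id_Y][i][\partial,m]$.

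For $0<i\leq m$ the pushout degenerates. There one has $\newG[f_j][i][\partial,m]=\newG[f_j][i][\id_{[m]}]$, and by \cref{lem:Fcoeq} together with the identifications above the map $\St_{L\repD[m,Y]}(\partial\repD[m,X])(i)\to\St_{L\repD[m,Y]}(\repD[m,X])(i)$ is the canonical isomorphism out of that coequalizer, obtained by applying the colimit-preserving functor $\diag(\colim^{H^i_{m+1}}_{(\tndnec{\repD[m+1]}{i}{m+1})^{\op}}\iota(-))$. Hence the pushout is just $\St_{L\repD[m,Y]}(\partial\repD[m,Y])(i)\cong\diag(\colim^{H^i_{m+1}}_{(\tndnec{\repD[m+1]}{i}{m+1})^{\op}}\iota\newG[\id_Y][i][\id_{[m]}])$, which is the first claimed isomorphism.

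For $i=0$ I would instead apply the colimit-preserving functor $\diag(\colim^{H^0_{m+1}}_{(\tndnec{\repD[m+1]}{0}{m+1})^{\op}}\iota(-))$ to the pushout square of functors of \cref{lem:pushout}. Its outer corner becomes $\diag(\colim^{H^0_{m+1}}_{(\tndnec{\repD[m+1]}{0}{m+1})^{\op}}\iota\cG^0_m(f))$ (see \cref{notation:Gmf}), and its three other corners become $\St_{L\repD[m,Y]}(\partial\repD[m,X])(0)$, $\St_{L\repD[m,Y]}(\repD[m,X])(0)$ and $\St_{L\repD[m,Y]}(\partial\repD[m,Y])(0)$ by the identifications of the previous paragraph; comparing this pushout with the one produced at the start then gives the second claimed isomorphism. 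The step I expect to be the main obstacle is checking that this comparison is legitimate, i.e.\ that under the isomorphisms of \cref{Stofalphafmono} and of the proposition for $[\iota_m,f]$ the top map of \cref{lem:pushout} corresponds to the map $\St_{L\repD[m,Y]}(\partial\repD[m,X])(0)\to\St_{L\repD[m,Y]}(\partial\repD[m,Y])(0)$ induced by $\id_{\partial\repD[m]}\times f$, and the left map of \cref{lem:pushout} corresponds to the map $\St_{L\repD[m,Y]}(\partial\repD[m,X])(0)\to\St_{L\repD[m,Y]}(\repD[m,X])(0)$ induced by $\iota_m\times\id_X$. This is a compatibility-of-isomorphisms check, amounting to unwinding the construction of the straightening at representables together with the definitions of the functors $\newG$ and $\cG^0_m$, and verifying that the resulting squares of $\Thn$-spaces commute; I expect bookkeeping here rather than conceptual difficulty.
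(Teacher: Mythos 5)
Your proposal is correct and follows essentially the same route as the paper: decompose $X$ into connected components, use that $\St_{L\repD[m,Y]}$ and evaluation at $i$ preserve the pushout defining the source of $\iota_m\widehat{\times} f$, identify the three span corners via \cref{Stofalphafmono} and the boundary computation, and compare with the image of the square from \cref{lem:pushout} (degenerate for $0<i\leq m$) under the colimit-preserving weighted-colimit functor. The compatibility check you flag at the end is exactly the point the paper absorbs into the asserted naturality of the isomorphisms in \cref{Stofalphafmono}, so it is bookkeeping rather than a gap.
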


\begin{proof} 
For $0<i\leq m$, we write $\newG[f][i][\partial,m]\coloneqq \newG[f][i][\id_{[m]}]$ and $\cG^i_m(f)\coloneqq \newG[\id_Y][i][\id_{[m]}]$. Let $X=\coprod_j X_j$ be a decomposition of $X$ into its connected components and write $f=\sum_j f_j\colon \coprod_j X_j\hookrightarrow Y$. Since the straightening functor $\St_{L\repD[m,Y]}\colon \sThnssetsliceshort{L\repD[m,Y]}\to [\Ch L\repD[m,Y]^{\op},\Thnsset]$ commutes with colimits, for $0\leq i\leq m$, we have a pushout in $\Thnsset$
\begin{tz}
        \node[](1) {$\coprod_j\St_{L\repD[m,Y]}([\iota_m,f_j])(i)$}; 
        \node[right of=1,xshift=4.1cm](2) {$\St_{L\repD[m,Y]}([\iota_m,\id_Y])(i)$}; 
        \node[below of=1](3) {$\coprod_j\St_{L\repD[m,Y]}([\id_{[m]},f_j])(i)$}; 
        \node[below of=2](4) {$\St_{L\repD[m,Y]}(\iota_m\widehat{\times} f)(i)$}; 

        \draw[->] (1) to (2); 
        \draw[->] (1) to (3); 
        \draw[->] (2) to (4); 
        \draw[->] (3) to (4);
        \pushout{4};
    \end{tz}
Now, by applying the colimit-preserving functor 
\[ \diag(\colim^{H^i_{m+1}}_{(\tndnec{\repD[m+1]}{i}{m+1})^{\op}} \iota(-))\colon (\Thnsset)^{(\tndnec{\repD[m+1]}{i}{m+1})^{\op}} \to \Thnsset \]
to the pushout in $(\Thnsset)^{(\tndnec{\repD[m+1]}{i}{m+1})^{\op}}$ 
\begin{tz}
        \node[](1) {$\coprod_j\newG[f_j][i][\partial,m]$}; 
        \node[right of=1,xshift=2cm](2) {$\newG[\id_Y][i][\partial,m]$}; 
        \node[below of=1](3) {$\coprod_j\newG[f_j][i][\id_{[m]}]$}; 
        \node[below of=2](4) {$\cG^i_m(f)$}; 

        \draw[->] (1) to (2); 
        \draw[->] (1) to (3); 
        \draw[->] (2) to (4); 
        \draw[->] (3) to (4);
        \pushout{4};
    \end{tz}
    obtained from \cref{lem:pushout} if $i=0$ and by direct inspection if $0<i\leq m$, we obtain a pushout in $\Thnsset$, which is isomorphic to the one above by
\cref{Stofalphafmono}. Hence, we get an isomorphism in $\Thnsset$
\[ \St_{L\repD[m,Y]}(\iota_m\widehat{\times} f)(i)\cong \diag(\colim^{H^i_{m+1}}_{(\tndnec{\repD[m+1]}{i}{m+1})^{\op}} \iota\cG^i_m(f)). \qedhere \]
\end{proof}

Later, we will need to compare the straightening of the pushout-product map $\iota_m\widehat{\times} f$ with that of the map $[\id_{[m]},\id_Y]\colon \repD[m,Y]\to L\repD[m,Y]$. The straightening of the latter admits the following description.

\begin{prop} \label{StofidmY}
    Let $m\geq 0$, and $Y$ be a connected $\Thn$-spaces. For $0\leq i\leq m$, there is a natural isomorphism in $\Thnsset$
    \[ \St_{L\repD[m,Y]}([\id_{[m]},\id_Y])(i)\cong \diag(\colim^{H^i_{m+1}}_{(\tndnec{\repD[m+1]}{i}{m+1})^{\op}} \iota\newG[\id_Y][i][\id_{[m]}]). \]
\end{prop}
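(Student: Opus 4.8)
The plan is to deduce \cref{StofidmY} as the special case of \cref{Stofalphafmono} in which $\mapDelta$ and $f$ are identity maps. Concretely, I would instantiate \cref{Stofalphafmono} at $\mapDelta=\id_{[m]}\colon [m]\to [m]$ and $f=\id_Y\colon Y\to Y$. The hypotheses are immediate: the identity $\id_{[m]}$ is an injective map in $\Delta$, and $\id_Y\colon Y\to Y$ is a monomorphism in $\Thnsset$ between connected $\Thn$-spaces, since $Y$ is connected by assumption and both the source and the target of $\id_Y$ equal $Y$.

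With these choices, \cref{Stofalphafmono} provides, for each $0\leq i\leq m$, a natural isomorphism in $\Thnsset$
\[ \St_{L\repD[m,Y]}([\id_{[m]},\id_Y])(i)\cong \diag(\colim^{H^i_{m+1}}_{(\tndnec{\repD[m+1]}{i}{m+1})^{\op}} \iota\newG[\id_Y][i][\id_{[m]}]), \]
because the functor $\newG$ associated with the data $(\mapDelta,f)=(\id_{[m]},\id_Y)$ is, by \cref{notn:Falphatnd}, exactly $\newG[\id_Y][i][\id_{[m]}]$. This is precisely the claimed formula, so the statement follows.

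There is no real obstacle here, as all of the work was already carried out in the proof of \cref{Stofalphafmono}. The only point worth recording is that, since $\id_Y$ is a monomorphism whose source is the connected $\Thn$-space $Y$, the uniform description of \cref{Stofalphafmono} applies across all $0\leq i\leq m$ — in contrast with \cref{Stofpushprod}, where the source of $f$ need not be connected and the value at $i=0$ had to be treated separately through the functor $\cG^0_m(f)$. The reason for isolating \cref{StofidmY} as its own statement is that this straightening is compared, further on, with the straightening of the pushout-product map $\iota_m\widehat{\times} f$ computed in \cref{Stofpushprod}.
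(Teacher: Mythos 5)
Your proposal is correct and coincides with the paper's own proof, which likewise obtains \cref{StofidmY} by specializing \cref{Stofalphafmono} to $\mapDelta=\id_{[m]}$ and $f=\id_Y$. The additional remarks you make (verifying the hypotheses and contrasting with the $i=0$ case of \cref{Stofpushprod}) are accurate but not needed beyond the one-line specialization.
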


\begin{proof}
    This is obtained by taking $\mapDelta=\id_{[m]}$ and $f=\id_Y$ in \cref{Stofalphafmono}.
\end{proof}

\begin{rmk} \label{rem:Stofpushprod}
    By \cref{Stofpushprod,StofidmY}, for $0< i\leq m$, there is an isomorphism in $\Thnsset$
    \[ \St_{L\repD[m,Y]}(\iota_m\widehat{\times} f)(i)\cong \St_{L\repD[m,Y]}([\id_{[m]},\id_Y])(i), \]
    and so the two straightenings only differ at $i=0$. Then the component of the $\Thnsset$-enriched natural transformation in $[\Ch L\repD[m,Y]^{\op},\Thnsset]$
    \[ \St_{L\repD[m,Y]}(\iota_m\widehat{\times}f)\colon \St_{L\repD[m,Y]}(\iota_m\widehat{\times}f)\to \St_{L\repD[m,Y]}([\id_{[m]},\id_Y]) \]
    at $0<i\leq m$ is an isomorphism, and at $0$ it is obtained by applying the functor \[ \diag(\colim^{H^i_{m+1}}_{(\tndnec{\repD[m+1]}{i}{m+1})^{\op}} \iota(-))\colon (\Thnsset)^{(\tndnec{\repD[m+1]}{i}{m+1})^{\op}} \to \Thnsset \] 
    to the canonical inclusion  $\cG^0_m(f)\hookrightarrow \newG[\id_Y][0][\id_{[m]}]$.
\end{rmk}

\subsection{Straightening of \texorpdfstring{$\repD[0,X]\to L\repD[m,X]$}{F[0,X]->LF[m,X]}} \label{subsec:Stoflastvertex}

We now compute the straightening of a map $[\langle m\rangle, \id_X]\colon \repD[0,X]\to L\repD[m,X]$ in $\sThnsset$ for $m\geq 0$ and $X$ a connected $\Thn$-space. 

\begin{lemma} \label{ConepickmX}
    There is a natural isomorphism in $\pcatThn$
    \[ \Cone[\id_X][\langle m\rangle]\cong L\repD[m,X]\amalg_{\repD[0]} L\repD[1,X], \]
    where the gluing happens along the vertices $m\in L\repD[m,X]$ and $0\in L\repD[1,X]$.
\end{lemma}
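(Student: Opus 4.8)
The plan is to recognize both sides of the claimed isomorphism as pushouts in $\pcatThn$ taken along spans that I shall identify with one another. By \cref{notationCone}, taking $f=\id_X$ and $\mapDelta=\langle m\rangle\colon [0]\to [m]$ (so that $[\ell]=[0]$), the object $\Cone[\id_X][\langle m\rangle]$ is by definition the pushout in $\pcatThn$ of the span
\[ L\repD[1,X]\xleftarrow{\;L[d^1,X]\;}L\repD[0,X]\xrightarrow{\;L[\langle m\rangle,\id_X]\;}L\repD[m,X]. \]
Since isomorphic spans have isomorphic pushouts, it suffices to produce an isomorphism in $\pcatThn$ between this span and the span $L\repD[1,X]\leftarrow \repD[0]\to L\repD[m,X]$ selecting the vertices $0\in L\repD[1,X]$ and $m\in L\repD[m,X]$.

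The crux is the identification $L\repD[0,X]\cong \repD[0]$ in $\pcatThn$, which holds because $X$ is connected. Indeed, \cref{RmkPushout} with $m=0$ exhibits $L\repD[0,X]$ as the pushout in $\sThnsset$ of the span $\repD[0,X]\leftarrow X\to \pi_0 X$ whose left leg is the canonical isomorphism $X\cong \repD[0]\times X=\repD[0,X]$. Pushing out along an isomorphism gives $L\repD[0,X]\cong \pi_0 X$, and this discrete object is $\repD[0]$ precisely because $\pi_0 X$ is a single point; being discrete, the pushout already lies in $\pcatThn$, so it genuinely computes the reflection $L\repD[0,X]$.

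It then remains to check that, under this identification, the map $L[\langle m\rangle,\id_X]$ selects the vertex $m\in L\repD[m,X]$ while $L[d^1,X]$ selects the vertex $0\in L\repD[1,X]$. This is a computation at level $0$: applying the colimit-preserving functor $(-)_0\colon \sThnsset\to \Thnsset$ to the pushout of \cref{RmkPushout} identifies $L\repD[k,X]_0$ with $\coprod_{k+1}\pi_0 X\cong \{0,1,\ldots,k\}$, and under these identifications $(L[\langle m\rangle,\id_X])_0$ is induced by $\langle m\rangle\colon [0]\to [m]$, which picks out $m$, whereas $(L[d^1,X])_0$ is induced by the coface $d^1\colon [0]\to [1]$, whose image is $\{0\}$. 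Since a map out of $\repD[0]$ is determined by its underlying vertex, the two spans are isomorphic in $\pcatThn$, hence so are their pushouts, and the resulting isomorphism $\Cone[\id_X][\langle m\rangle]\cong L\repD[m,X]\amalg_{\repD[0]}L\repD[1,X]$ is natural in $X$ and carries the asserted gluing data. I do not expect a genuine obstacle here: the only two points demanding care are the identification $L\repD[0,X]\cong \repD[0]$ and the bookkeeping of which vertices the two legs hit.
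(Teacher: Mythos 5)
Your argument is correct and follows the same route as the paper: the paper's proof likewise invokes \cref{RmkPushout} and connectedness of $X$ to identify $L\repD[0,X]\cong\repD[0]$, and then reads the result off from the defining pushout of $\Cone$ with $\mapDelta=\langle m\rangle$, $f=\id_X$. You have simply spelled out the vertex bookkeeping that the paper leaves as "straightforward."
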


\begin{proof}
    By \cref{RmkPushout}, since $X$ is a connected $\Thn$-space, we get that $L\repD[0,X]=\repD[0]$. The result is then straightforward from the definition of $\Cone$ (see \cref{notationCone}) in the case where $\mapDelta=\langle m\rangle$ and $f=\id_X$.
    \end{proof}

    Recall the suspension functor $\Sigma\colon \Thnsset\to \Thncat$ sending an object $X\in \Thnsset$ to the directed $\Thnsset$-enriched category $\Sigma X$ with object set $\{0,1\}$ and hom $\Thn$-space $\Hom_{\Sigma X}(0,1)=X$. The following appears as \cite[Lemma 3.5.1]{MRR1}. 

    \begin{lemma} \label{Sh1issigma}
        There is a natural isomorphism in $\Thncat$
        \[ \Ch L\repD[1,X]\cong \Sigma X. \]
    \end{lemma}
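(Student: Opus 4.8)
The plan is to identify $\Ch L\repD[1,X]$ with $\Sigma X$ by comparing object sets and hom $\Thn$-spaces, using that both are directed $\Thnsset$-enriched categories, so that the composition maps and the comparison isomorphism itself are forced.

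First I would check that the object sets agree. Applying the colimit-preserving functor $(-)_0\colon\sThnsset\to\Thnsset$ to the pushout of \cref{RmkPushout} with $m=1$ and $Y=X$, and using that $X$ is connected so that $\pi_0 X\cong\ast$, yields $(L\repD[1,X])_0\cong\{0,1\}=\Ob\Sigma X$. It thus remains to compare hom $\Thn$-spaces. Fix $\defThn\in\Thn$ and $\defS\geq 0$; evaluating the pushout of \cref{RmkPushout} at $(\defThn,\defS)$ (which preserves colimits) and using the connectedness of $X$ identifies the simplicial set $K\coloneqq(L\repD[1,X])_{-,\defThn,\defS}$ as the quotient of $\repD[1]\times X_{\defThn,\defS}=\coprod_{X_{\defThn,\defS}}\repD[1]$ collapsing $\{0\}\times X_{\defThn,\defS}$ and $\{1\}\times X_{\defThn,\defS}$ each to a point; equivalently, $K$ is the colimit of $X_{\defThn,\defS}$-many copies of $\repD[1]$ amalgamated along their common boundary $\partial\repD[1]$. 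Since $\CL$ preserves colimits, $\CL\repD[1]$ is the free category on an arrow $0\to1$, and $\CL\partial\repD[1]$ is the discrete category on $\{0,1\}$, the simplicial category $\CL K$ is the corresponding amalgamation, namely the $\sset$-enriched category with objects $\{0,1\}$, with $\Hom(0,1)$ the discrete simplicial set $X_{\defThn,\defS}$, $\Hom(i,i)=\Delta[0]$, and $\Hom(1,0)=\emptyset$. By the necklace formula for the mapping spaces of $\CL$ that underlies \cref{cor:computationhomsC}, $\colim_{T\in\catnec{K}{a}{b}}\Hom_{\CL T}(\alpha,\omega)\cong\Hom_{\CL K}(a,b)$; letting $\defThn$ and $\defS$ vary and applying $\diag$ as in \cref{cor:computationhomsC} then gives $\Hom_{\Ch L\repD[1,X]}(0,1)\cong X$, $\Hom_{\Ch L\repD[1,X]}(i,i)\cong\repS[0]$, and $\Hom_{\Ch L\repD[1,X]}(1,0)\cong\emptyset$, all naturally in $X$.

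Finally, $\Ch L\repD[1,X]$ and $\Sigma X$ are directed $\Thnsset$-enriched categories with object set $\{0,1\}$ and, by the above, naturally isomorphic hom $\Thn$-spaces; since on a directed $\Thnsset$-enriched category the unit and composition maps are uniquely determined, these isomorphisms assemble into an isomorphism $\Ch L\repD[1,X]\cong\Sigma X$ natural in $X$. I expect the main obstacle to be the computation of $K$: one must verify carefully that the connectedness hypothesis genuinely collapses the end-copies of $X_{\defThn,\defS}$, so that $K$ has no non-degenerate simplices above dimension $1$, and then invoke the Dugger--Spivak necklace description of the mapping spaces of $\CL$ to evaluate the colimit. (Alternatively, one may simply cite \cite[Lemma~3.5.1]{MRR1}.)
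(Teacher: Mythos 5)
Your argument is correct, but note that the paper itself contains no proof of this statement: it is quoted verbatim from \cite[Lemma~3.5.1]{MRR1}, so the only ``official'' proof here is the citation (which you also offer as an alternative). Your self-contained computation is essentially the one the citation is expected to encapsulate: compute $(L\repD[1,X])_{-,\defThn,\defS}$ from the pushout of \cref{RmkPushout}, apply $\CL$ (which preserves colimits) to identify it with the free, discretely enriched category on the graph with vertices $\{0,1\}$ and edge set $X_{\defThn,\defS}$, take $\diag$ to get $\Hom_{\Ch L\repD[1,X]}(0,1)\cong\diag(\iota X)\cong X$, and assemble by directedness, which is legitimate since there are no composable pairs of non-identity morphisms. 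Two remarks. First, the connectedness of $X$, which you invoke to collapse the two end-copies of $X_{\defThn,\defS}$, is not stated in the lemma as reproduced in this paper but is genuinely necessary: for disconnected $X$ the object set of $\Ch L\repD[1,X]$ is $\coprod_{2}\pi_0X\neq\{0,1\}$, so the statement fails literally; your proof correctly supplies the hypothesis, and this matches every use of the lemma in the paper (e.g.\ \cref{ConepickmX,CConepickmX}), where $X$ is connected. Second, your two main steps overlap: once you have computed $\CL K$ directly, the hom of $\Ch$ is by construction the diagonal of the levelwise homs of $\CL((-)_{-,\defThn,\defS})$, so the appeal to the necklace formula underlying \cref{cor:computationhomsC} can be dropped; conversely, one could work only with \cref{cor:computationhomsC} and analyze necklaces in the $1$-dimensional simplicial set $K$ without identifying $\CL K$. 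Either route closes the argument; keeping both is harmless but redundant.
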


    \begin{lemma} \label{CConepickmX}
    There is a natural isomorphism in $\Thncat$
    \[ \Ch\Cone[\id_X][\langle m\rangle]\cong \Ch L\repD[m,X]\amalg_{[0]} \Sigma X, \]
    where the gluing happens along the objects $m\in \Ch L\repD[m,X]$ and $0\in \Sigma X$.
\end{lemma}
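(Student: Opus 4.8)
The plan is to deduce the statement formally from \cref{ConepickmX} using that $\Ch$ is a left adjoint, hence preserves colimits. Concretely, \cref{ConepickmX} presents $\Cone[\id_X][\langle m\rangle]$ as the pushout in $\pcatThn$ of the span
\[ L\repD[m,X]\longleftarrow \repD[0]\longrightarrow L\repD[1,X], \]
whose two legs pick out the vertex $m$ of $L\repD[m,X]$ and the vertex $0$ of $L\repD[1,X]$. Since $\Ch\colon \pcatThn\to \Thncat$ is left adjoint to $\Nh$ (see \cref{subsec:nerves}), applying it yields $\Ch\Cone[\id_X][\langle m\rangle]$ as the pushout in $\Thncat$ of $\Ch L\repD[m,X]\leftarrow \Ch\repD[0]\to \Ch L\repD[1,X]$, so it only remains to identify the three $\Thnsset$-enriched categories in this span and the two functors between them.

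For the right-hand term, \cref{Sh1issigma} supplies a natural isomorphism $\Ch L\repD[1,X]\cong \Sigma X$ in $\Thncat$, and the left-hand term $\Ch L\repD[m,X]$ is left unchanged. For the apex I would check that $\Ch\repD[0]$ is the terminal $\Thnsset$-enriched category $[0]$: its set of objects is $(\repD[0])_0$, a singleton, and since every level of $\repD[0]$ is the $1$-ordered simplicial set $\repD[0]$ (\cref{examplesof1ordered}), whose only totally non-degenerate necklace from the first to the last vertex is the trivial one, \cref{cor:computationshomC1ordered} identifies its unique hom $\Thn$-space with $\repS[0]$. (Alternatively, this could be cited from \cite{MRR1}.)

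It then remains to transport the two vertex-selecting maps through these isomorphisms: $\Ch$ of the map $\repD[0]\to L\repD[m,X]$ becomes the functor $[0]\to \Ch L\repD[m,X]$ picking out the object $m$ (recall $\Ob\Ch L\repD[m,X]=(L\repD[m,X])_0\cong\{0,1,\ldots,m\}$), and $\Ch$ of the map $\repD[0]\to L\repD[1,X]$ becomes $[0]\to \Sigma X$ picking out the object $0$, since the isomorphism of \cref{Sh1issigma} matches the vertex $0$ of $L\repD[1,X]$ with the object $0$ of $\Sigma X$. Hence the pushout above is $\Ch L\repD[m,X]\amalg_{[0]}\Sigma X$ glued along $m$ and $0$, as claimed, with naturality in $X$ inherited from that of \cref{ConepickmX} and \cref{Sh1issigma}. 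I do not expect a genuine obstacle here; the only point requiring a little care is the correct bookkeeping of object-level data under the natural isomorphisms, in particular the identification $\Ch\repD[0]\cong[0]$.
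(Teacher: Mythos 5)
Your proposal is correct and follows essentially the same route as the paper: apply the colimit-preserving left adjoint $\Ch$ to the pushout description of \cref{ConepickmX} and identify $\Ch L\repD[1,X]\cong\Sigma X$ via \cref{Sh1issigma}. The extra bookkeeping you include (checking $\Ch\repD[0]\cong[0]$ and tracking the vertex-selecting maps) is exactly the implicit content of the paper's one-line proof.
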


\begin{proof}
    This is obtained by applying the left adjoint functor $\Ch\colon \pcatThn\to \Thncat$ to the isomorphism from \cref{ConepickmX} and using \cref{Sh1issigma}.
\end{proof}

\begin{prop} \label{StofpickmX}
    Let $m\geq 0$, and $X$ be a connected $\Thn$-space. For $0\leq i\leq m$, there is a natural isomorphism in $\Thnsset$
    \[ \St_{L\repD[m,X]}([\langle m\rangle,\id_X])(i)\cong \Hom_{\Ch L\repD[m,X]\amalg_{[0]} \Sigma X}(i,m+1). \]
\end{prop}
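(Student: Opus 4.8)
The plan is to read off this statement from the general description of straightening at a map $[\mapDelta,f]$ given in \cref{Stof[lX]}, specialized to $\mapDelta=\langle m\rangle\colon[0]\to[m]$ and $f=\id_X\colon X\to X$, and then to rewrite the resulting $\Thn$-space using \cref{CConepickmX}.

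First I would check that \cref{Stof[lX]} applies: the map $\langle m\rangle\colon[0]\to[m]$ is injective in $\Delta$, and $X$ is connected by hypothesis, so $\id_X$ is a map between connected $\Thn$-spaces. Hence \cref{Stof[lX]} identifies $\St_{L\repD[m,X]}([\langle m\rangle,\id_X])$ with the composite $\Thnsset$-enriched functor
\[ \Ch L\repD[m,X]^{\op}\xrightarrow{\Ch(\iota_{\id_X})}\Ch\Cone[\id_X][\langle m\rangle]^{\op}\xrightarrow{\Hom_{\Ch\Cone[\id_X][\langle m\rangle]}(-,m+1)}\Thnsset, \]
where $\iota_{\id_X}\colon L\repD[m,X]\to\Cone[\id_X][\langle m\rangle]$ is the canonical map from the pushout of \cref{notationCone}.

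Next I would evaluate this composite at the object $i$, for $0\leq i\leq m$. Since $X$ is connected, $(L\repD[m,X])_0\cong\{0,\ldots,m\}$ by \cref{RmkPushout}, so $i$ is an object of $\Ch L\repD[m,X]$, and on $0$-levels the map $\iota_{\id_X}$ is the inclusion $\{0,\ldots,m\}\hookrightarrow\{0,\ldots,m+1\}\cong\Cone[\id_X][\langle m\rangle]_0$ of \cref{Cone0}. As the homotopy coherent categorification acts on object sets by taking the $0$-level, the functor $\Ch(\iota_{\id_X})$ therefore fixes $i$, and we obtain
\[ \St_{L\repD[m,X]}([\langle m\rangle,\id_X])(i)=\Hom_{\Ch\Cone[\id_X][\langle m\rangle]}(i,m+1). \]
Finally, applying the natural isomorphism $\Ch\Cone[\id_X][\langle m\rangle]\cong\Ch L\repD[m,X]\amalg_{[0]}\Sigma X$ of \cref{CConepickmX} --- which is the identity on the objects $0,\ldots,m$ and carries the cone point $m+1$ to the object $1$ of $\Sigma X$, the object denoted $m+1$ in the statement --- yields the asserted isomorphism
\[ \St_{L\repD[m,X]}([\langle m\rangle,\id_X])(i)\cong\Hom_{\Ch L\repD[m,X]\amalg_{[0]}\Sigma X}(i,m+1), \]
natural in $X$ since the isomorphisms of \cref{Stof[lX]} and \cref{CConepickmX} are.

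This argument is essentially formal, so I do not expect a genuine obstacle; the only step requiring any care is the bookkeeping of objects --- namely that $\Ch(\iota_{\id_X})$ really acts as the identity on $i$, and that the cone point of $\Cone[\id_X][\langle m\rangle]$ is carried to the distinguished object of $\Sigma X$ under \cref{CConepickmX}.
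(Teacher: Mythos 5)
Your proposal is correct and follows exactly the paper's own route: apply \cref{Stof[lX]} with $\mapDelta=\langle m\rangle$ and $f=\id_X$ to identify $\St_{L\repD[m,X]}([\langle m\rangle,\id_X])(i)$ with $\Hom_{\Ch\Cone[\id_X][\langle m\rangle]}(i,m+1)$, and then invoke \cref{CConepickmX} to rewrite the target. The extra bookkeeping you carry out (that $\Ch(\iota_{\id_X})$ fixes the object $i$ and that the cone point matches the distinguished object of $\Sigma X$) is implicit in the paper's two-line proof but harmless to spell out.
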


\begin{proof}
    By \cref{Stof[lX],CConepickmX}, we have isomorphisms in $\Thnsset$
    \[ \St_{L\repD[m,X]}([\langle m\rangle,\id_X])(i)\cong \Hom_{\Ch\Cone[\id_X][\langle m\rangle]}(i,m+1)\cong\Hom_{\Ch L\repD[m,X]\amalg_{[0]} \Sigma X}(i,m+1). \qedhere \]
\end{proof}

Later, we will need to compare the straightening of the map $[\langle m\rangle,\id_X]$ with that of the map $[\id_{[m]},\id_X]\colon \repD[m,X]\to L\repD[m,X]$. The straightening of the latter admits the following description.

\begin{lemma} \label{Coneid}
    There is a natural isomorphism in $\pcatThn$
    \[ \Cone[\id_X][\id_{[m]}]\cong L\repD[m+1,X]. \]
\end{lemma}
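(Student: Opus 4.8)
The plan is to unwind the definition of $\Cone[\id_X][\id_{[m]}]$ from \cref{notationCone} and to observe that it is a pushout along an identity morphism. Taking $\mapDelta = \id_{[m]}$ and $f = \id_X\colon X \to X$ there, the object $\Cone[\id_X][\id_{[m]}]$ is by definition the pushout in $\pcatThn$ of the span
\[ L\repD[m+1,X] \xleftarrow{L[d^{m+1},X]} L\repD[m,X] \xrightarrow{L[\id_{[m]},\id_X]} L\repD[m,X]. \]
Now $[\id_{[m]},\id_X]\colon \repD[m,X] \to \repD[m,X]$ is the identity of $\sThnsset$, and since the left adjoint $L\colon \sThnsset \to \pcatThn$ preserves identities, the horizontal map $L[\id_{[m]},\id_X]$ is the identity of $L\repD[m,X]$.

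I would then invoke the elementary fact that a pushout of a span one of whose legs is an identity is computed by the opposite leg: the pushout of $B \xleftarrow{g} A \xrightarrow{\id_A} A$ is $B$, with structure maps $\id_B$ and $g$. Applying this with $A = L\repD[m,X]$, $B = L\repD[m+1,X]$, and $g = L[d^{m+1},X]$ produces the desired natural isomorphism $\Cone[\id_X][\id_{[m]}] \cong L\repD[m+1,X]$. I would also record, for use in the sequel, that under this identification the comparison map $\iota_{\id_X}\colon L\repD[m,X] \to \Cone[\id_X][\id_{[m]}]$ of \cref{notationCone} corresponds to the face inclusion $L[d^{m+1},X]\colon L\repD[m,X]\to L\repD[m+1,X]$. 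The argument is entirely formal — it uses only functoriality of $L$ and the universal property of pushouts — so there is no genuine obstacle; the single point requiring care is to track which leg of the defining square becomes the identity, so that the comparison map $\iota_{\id_X}$ is correctly identified, since that is precisely the datum that will be fed into the subsequent comparison of straightenings.
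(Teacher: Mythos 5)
Your proof is correct and is essentially the paper's argument made explicit: the paper's proof simply says the isomorphism is straightforward from \cref{notationCone} with $\mapDelta=\id_{[m]}$, $f=\id_X$, which is exactly your observation that one leg of the defining span becomes an identity, so the pushout collapses to $L\repD[m+1,X]$ with $\iota_{\id_X}$ identified with $L[d^{m+1},X]$. Your extra remark tracking the comparison map $\iota_{\id_X}$ is a useful refinement consistent with how the paper later uses this identification (e.g.\ in \cref{StofidmX}).
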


\begin{proof}
    This is straightforward from the definition of $\Cone$ (\cref{notationCone}) in the case where $\mapDelta=\id_{[m]}$ and $f=\id_X$.
\end{proof}

\begin{prop} \label{StofidmX}
    Let $m\geq 0$, and $X$ be a connected $\Thn$-spaces. For $0\leq i\leq m$, there is a natural isomorphism in $\Thnsset$
    \[ \St_{L\repD[m,Y]}([\id_{[m]},\id_X])(i)\cong \Hom_{\Ch L\repD[m+1,X]}(i,m+1). \]
\end{prop}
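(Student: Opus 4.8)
The plan is to specialize \cref{Stof[lX]} and then feed in the explicit computation of the relevant cone from \cref{Coneid}, exactly mirroring the proof of \cref{StofpickmX}. Concretely, I would apply \cref{Stof[lX]} with the injective map $\mapDelta = \id_{[m]}$ in $\Delta$ and the map $f = \id_X \colon X \to X$ between connected $\Thn$-spaces (so that $Y = X$). This identifies $\St_{L\repD[m,X]}([\id_{[m]},\id_X])$ with the composite
\[ \Ch L\repD[m,X]^{\op} \xrightarrow{\Ch(\iota_{\id_X})} \Ch\Cone[\id_X][\id_{[m]}]^{\op} \xrightarrow{\Hom_{\Ch\Cone[\id_X][\id_{[m]}]}(-,m+1)} \Thnsset, \]
so that for $0 \leq i \leq m$ we get $\St_{L\repD[m,X]}([\id_{[m]},\id_X])(i) \cong \Hom_{\Ch\Cone[\id_X][\id_{[m]}]}(i,m+1)$, using that $\iota_{\id_X}$ sends the object $i$ to $i$.

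It then remains to rewrite this hom $\Thn$-space. For that I would invoke \cref{Coneid}, which gives a natural isomorphism $\Cone[\id_X][\id_{[m]}] \cong L\repD[m+1,X]$ in $\pcatThn$; applying the left adjoint $\Ch$ and tracking objects through the pushout of \cref{notationCone} shows this isomorphism carries the objects $i$ and $m+1$ of $\Ch\Cone[\id_X][\id_{[m]}]$ to the objects $i$ and $m+1$ of $\Ch L\repD[m+1,X]$, which yields the claimed natural isomorphism
\[ \St_{L\repD[m,X]}([\id_{[m]},\id_X])(i) \cong \Hom_{\Ch L\repD[m+1,X]}(i,m+1). \]

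There is no genuine obstacle here: the statement is the analogue for $[\id_{[m]},\id_X]$ of \cref{StofpickmX} for $[\langle m\rangle,\id_X]$, with \cref{Coneid} playing the role that \cref{CConepickmX} played there. The only point needing (minor) care is the bookkeeping of which objects the isomorphism of \cref{Coneid} identifies, and this is transparent from the definition of $\Cone$ in \cref{notationCone}.
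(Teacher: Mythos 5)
Your proposal is correct and coincides with the paper's own argument: the paper proves \cref{StofidmX} by exactly the same two-step specialization, applying \cref{Stof[lX]} with $\mapDelta=\id_{[m]}$, $f=\id_X$ and then \cref{Coneid} (via $\Ch$) to rewrite $\Hom_{\Ch\Cone[\id_X][\id_{[m]}]}(i,m+1)$ as $\Hom_{\Ch L\repD[m+1,X]}(i,m+1)$. Your extra remark about tracking objects through the pushout is a harmless elaboration of what the paper leaves implicit.
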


\begin{proof}
    By \cref{Stof[lX],Coneid}, we have isomorphisms in $\Thnsset$
    \[ \St_{L\repD[m,X]}([\id_{[m]},\id_X])(i)\cong \Hom_{\Ch\Cone[\id_X][\id_{[m]}]}(i,m+1)\cong\Hom_{\Ch L\repD[m+1,X]}(i,m+1). \qedhere \]
\end{proof}

\begin{rmk} \label{rem:Stoflanglem}
    By \cref{StofpickmX,StofidmX},  the component of the $\Thnsset$-enriched natural transformation in $[\Ch L\repD[m,X]^{\op},\Thnsset]$
    \[ \St_{L\repD[m,X]}([\langle m\rangle, \id_X])\colon \St_{L\repD[m,X]}([\langle m\rangle, \id_X])\to \St_{L\repD[m,X]}([\id_{[m]},\id_X]) \]
    at $0\leq i\leq m$ is the map in $\Thnsset$ 
    \[ \Hom_{\Ch L\repD[m,X]\amalg_{[0]} \Sigma X}(i,m+1)\to \Hom_{\Ch L\repD[m+1,X]}(i,m+1) \]
    induced by the action on hom $\Thn$-spaces of the canonical $\Thnsset$-enriched functor 
    \[ \Ch L\repD[m,X]\amalg_{[0]} \Sigma X\to \Ch L\repD[m+1,X]. \]
\end{rmk}

Finally, we observe that in the case where $m=0$, as $LF[0,X]\cong F[0]$ and $\Ch\repD[0]\cong [0]$, we have that $\St_{\repD[0]}$ is a functor 
\[ \St_{\repD[0]}\colon \sThnsset\cong \sThnssetslice{\repD[0]}\to [\Ch \repD[0]^{\op},\Thnsset]\cong \Thnsset. \]
Hence we get the following simplification of \cref{StofidmX} in the case where $m=0$.

\begin{prop} \label{prop:StF0}
    Let $X$ be a connected $\Thn$-space. There is a natural isomorphism in $\Thnsset$
    \[ \St_{\repD[0]}([0,X])\cong X. \]
\end{prop}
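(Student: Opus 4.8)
The plan is to read off this statement as the $m=0$ specialization of \cref{StofidmX}, once the ambient categories have been identified. Since $X$ is connected, \cref{RmkPushout} gives $L\repD[0,X]\cong\repD[0]$, so $\Ch L\repD[0,X]\cong\Ch\repD[0]\cong[0]$ is the terminal category and $[\Ch\repD[0]^{\op},\Thnsset]$ is canonically equivalent to $\Thnsset$ via evaluation at the unique object, as recorded just before the statement. Under these identifications the object $[0,X]\in\sThnsset\cong\sThnssetslice{\repD[0]}$ corresponds to the canonical map $[\id_{[0]},\id_X]\colon\repD[0,X]\to L\repD[0,X]$, and the functor $\St_{\repD[0]}([0,X])$ is determined by its single value $\St_{L\repD[0,X]}([\id_{[0]},\id_X])(0)$.

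Next I would apply \cref{StofidmX} with $m=0$, for which the only admissible index is $i=0$; this yields a natural isomorphism in $\Thnsset$
\[ \St_{\repD[0]}([0,X])(0)\cong\Hom_{\Ch L\repD[1,X]}(0,1). \]
Finally, \cref{Sh1issigma} identifies $\Ch L\repD[1,X]$ with the suspension $\Sigma X$, whose hom $\Thn$-space $\Hom_{\Sigma X}(0,1)$ is by definition $X$. Composing these isomorphisms gives $\St_{\repD[0]}([0,X])\cong X$, and naturality in $X$ is inherited from the naturality of each isomorphism in the chain (\cref{StofidmX}, \cref{Sh1issigma}, and the definition of $\Sigma$).

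There is no real obstacle here: all the substance is already packaged into \cref{StofidmX} and \cref{Sh1issigma}, and the only thing demanding care is the bookkeeping of the identifications $L\repD[0,X]\cong\repD[0]$, $\Ch\repD[0]\cong[0]$, and $[\Ch\repD[0]^{\op},\Thnsset]\cong\Thnsset$, together with the observation that the notation $[0,X]$ is precisely the $m=0$ instance of $[\id_{[m]},\id_X]$.
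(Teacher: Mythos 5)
Your proposal is correct and follows essentially the same route as the paper, which presents this proposition precisely as the $m=0$ specialization of \cref{StofidmX} under the identifications $L\repD[0,X]\cong\repD[0]$, $\Ch\repD[0]\cong[0]$, and $[\Ch\repD[0]^{\op},\Thnsset]\cong\Thnsset$; your additional explicit invocation of \cref{Sh1issigma} to identify $\Hom_{\Ch L\repD[1,X]}(0,1)$ with $X$ just spells out a step the paper leaves implicit.
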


\begin{rmk} \label{rem:StF0off}
    Let $f\colon X\to Y$ be a map in $\Thnsset$ between connected $\Thn$-space. By \cref{prop:StF0}, the induced map in $\Thnsset$ 
    \[ \St_{\repD[0]}([0,f])\colon \St_{\repD[0]}(\repD[0,X])\to \St_{\repD[0]}(\repD[0,Y]) \]
    is simply given by the map $f\colon X\to Y$ itself.
\end{rmk}

\section{Straightening-unstraightening: Quillen equivalence}

In this section, we prove that the straightening-unstraightening adjunction gives a Quillen equivalence
\begin{tz}
\node[](1) {$\MSrightfib{W}$}; 
\node[right of=1,xshift=3.9cm](2) {$[\Ch W^{\op},\MSThnsset]_\proj$}; 
\punctuation{2}{,};

\draw[->] ($(2.west)-(0,5pt)$) to node[below,la]{$\Un_{W}$} ($(1.east)-(0,5pt)$);
\draw[->] ($(1.east)+(0,5pt)$) to node[above,la]{$\St_{W}$} ($(2.west)+(0,5pt)$);

\node[la] at ($(1.east)!0.5!(2.west)$) {$\bot$};
\end{tz}
for every object $W$ of $\pcatThn$. In \cref{subsec:reductiontrick}, we first give a useful reduction trick for proving that the straightening functor preserves (trivial) cofibrations. Then, in \cref{subsec:QPbefore,subsec:QPafter}, we prove that the above adjunction is a Quillen pair before and after localizing. Finally, in \cref{subsec:QE}, we prove that it is a Quillen equivalence by comparing it to the Quillen equivalence from \cref{subsec:Grothendieck}.

\subsection{Reduction trick} \label{subsec:reductiontrick}

In order to prove that the straightening functor is left Quillen, we will make use of the following reduction trick several times.

\begin{prop} \label{reductiontrick}
Let $f\colon W\to Z$ be a map in $\pcatThn$ and let $g\colon A\to B$ be a map in $\sThnssetslice{W}$. If the image of $g$ under $\St_W\colon \sThnssetslice{W}\to [\Ch W^{\op},\Thnsset]$ is a (trivial) cofibration in $[\Ch W^{\op},\MSThnsset]_\proj$, then the image of $g$ under the composite of functors $\St_Z\circ f_!\colon \sThnssetslice{W}\to [\Ch Z^{\op},\Thnsset]$ is a (trivial) cofibration in $[\Ch Z^{\op},\MSThnsset]_\proj$.
\end{prop}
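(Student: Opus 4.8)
The plan is to exploit the naturality of straightening established in \cref{basechangeSt} together with the fact that left Quillen functors preserve (trivial) cofibrations.

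First I would recall that by \cref{basechangeSt}, for the map $f\colon W\to Z$ in $\pcatThn$, there is a natural isomorphism $\St_Z\circ f_!\cong (\Ch f)_!\circ \St_W$ of functors $\sThnssetslice{W}\to [\Ch Z^{\op},\Thnsset]$. Hence, applying $\St_Z\circ f_!$ to a map $g\colon A\to B$ in $\sThnssetslice{W}$ yields, up to isomorphism, the map $(\Ch f)_!(\St_W(g))$.

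Next I would invoke \cref{LKEisQP}: the $\Thnsset$-enriched functor $\Ch f\colon \Ch W\to \Ch Z$ induces a Quillen pair $(\Ch f)_!\dashv (\Ch f)^*$ between $[\Ch W^{\op},\MSThnsset]_\proj$ and $[\Ch Z^{\op},\MSThnsset]_\proj$. In particular, $(\Ch f)_!$ is left Quillen, so it preserves cofibrations and trivial cofibrations. Therefore, if $\St_W(g)$ is a (trivial) cofibration in $[\Ch W^{\op},\MSThnsset]_\proj$ — which is precisely the hypothesis — then $(\Ch f)_!(\St_W(g))$ is a (trivial) cofibration in $[\Ch Z^{\op},\MSThnsset]_\proj$. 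Transporting along the natural isomorphism from \cref{basechangeSt}, the image of $g$ under $\St_Z\circ f_!$ is a (trivial) cofibration in $[\Ch Z^{\op},\MSThnsset]_\proj$, as claimed.

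This argument is essentially formal, so there is no real obstacle; the only point requiring minor care is checking that the natural isomorphism of \cref{basechangeSt} is compatible with the model-categorical structure, i.e.\ that transporting a (trivial) cofibration along a natural isomorphism of functors again gives a (trivial) cofibration — which holds since isomorphisms are in particular trivial cofibrations and these classes are closed under composition and the two-out-of-three property. I would also note in passing that the result does not require $\Ch f$ to be a weak equivalence, so only the Quillen-pair part of \cref{LKEisQP} is used, not the Quillen-equivalence part.
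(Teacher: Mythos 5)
Your proposal is correct and follows exactly the paper's own argument: identify $\St_Z\circ f_!$ with $(\Ch f)_!\circ\St_W$ via the naturality isomorphism of \cref{basechangeSt}, and conclude using that $(\Ch f)_!$ is left Quillen by \cref{LKEisQP}. Nothing is missing, and your closing remarks about isomorphism-invariance and not needing the Quillen-equivalence part are accurate but optional.
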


\begin{proof}
Suppose that $g$ is a map in $\sThnssetslice{W}$ such that $\St_W(g)$ is a (trivial) cofibration in $[\Ch W^{\op},\MSThnsset]_\proj$. Since $(\Ch f)_!\colon [\Ch W^{\op},\MSThnsset]_\proj\to [\Ch Z^{\op},\MSThnsset]_\proj$ is left Quillen by \cref{LKEisQP}, then $\Thnsset$-natural transformation $(\Ch f)_! \St_W(g)$ is a (trivial) cofibration in $[\Ch Z^{\op},\MSThnsset]_\proj$. By \cref{basechangeSt}, we have a natural isomorphism $(\Ch f)_! \St_W(g)\cong \St_Z(f_!(g))$ and so $\St_Z(f_!(g))$ is a (trivial) cofibration in $[\Ch W^{\op},\MSThnsset]_\proj$, as desired. 
\end{proof}

\begin{rmk} \label{rem:reductiontrick}
As a consequence, if we want to show that a map $g\colon A\to B$ in $\sThnssetslice{W}$ is sent by the functor $\St_W\colon \sThnssetslice{W}\to [\Ch W^{\op},\Thnsset]$ to a (trivial) cofibration in $[\Ch W^{\op},\MSThnsset]_\proj$, it is enough to show that $g\colon A\to B$ seen as a map in $\sThnssetslice{B}$ (using the identity at $B$) is sent by $\St_B\colon \sThnssetslice{B}\to [\Ch B^{\op},\Thnsset]$ to a (trivial) cofibration in $[\Ch B^{\op},\MSThnsset]_\proj$. Indeed, this follows from the above proposition and the fact that $p_!(g)=g$, where $p\colon B\to W$ denotes the map defining $B$ as an object in $\sThnssetslice{W}$.
\end{rmk}

\subsection{Quillen pair before localizing} \label{subsec:QPbefore}

We first prove here that the straightening-un\-strai\-ght\-ening adjunction is a Quillen pair before localizing. 

\begin{prop} \label{QPwithinj}
Let $W$ be an object in $\pcatThn$. The adjunction $\St_W\dashv \Un_W$ is a Quillen pair
\begin{tz}
 \node[](1) {$\injsThnspaceslice{W}$}; 
\node[right of=1,xshift=3.8cm](2) {$[\Ch W^{\op},\MSThnsset]_\proj$}; 
\punctuation{2}{.};

\draw[->] ($(2.west)-(0,5pt)$) to node[below,la]{$\Un_W$} ($(1.east)-(0,5pt)$);
\draw[->] ($(1.east)+(0,5pt)$) to node[above,la]{$\St_W$} ($(2.west)+(0,5pt)$);

\node[la] at ($(1.east)!0.5!(2.west)$) {$\bot$};    
\end{tz}
\end{prop}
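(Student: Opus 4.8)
The plan is to verify the left Quillen property on a set of generating (trivial) cofibrations, to cut down to a universal situation using the reduction trick, and then to read off the answer from the explicit computations of \cref{subsec:Stofboundary}. Since $\St_W\dashv\Un_W$ is an adjunction and $\injsThnspaceslice{W}$ is cofibrantly generated, it is enough to show that $\St_W$ sends the generating (trivial) cofibrations of \cref{rem:gencofinjslice} — the pushout--product maps $\partial\repD[m,Y]\amalg_{\partial\repD[m,X]}\repD[m,X]\xhookrightarrow{\iota_m\widehat{\times}f}\repD[m,Y]\to W$, for $m\geq 0$, $f\colon X\hookrightarrow Y$ a generating (trivial) cofibration of $\injThnspace$, and $\repD[m,Y]\to W$ an arbitrary map in $\sThnsset$ — to (trivial) cofibrations in $[\Ch W^{\op},\MSThnsset]_\proj$. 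Such a map factors as $\repD[m,Y]\to L\repD[m,Y]\to W$ with the second map in $\pcatThn$, and post-composing with it recovers the original map; so \cref{reductiontrick}, applied to $L\repD[m,Y]\to W$, reduces us to showing that $\St_{L\repD[m,Y]}$ sends $\iota_m\widehat{\times}f$ — viewed as a map over $L\repD[m,Y]$, its codomain object being $[\id_{[m]},\id_Y]$ — to a (trivial) cofibration in $[\Ch L\repD[m,Y]^{\op},\MSThnsset]_\proj$. Decomposing $Y$ into its connected components and using that $L$, $\Ch$, and $\St$ (via \cref{basechangeSt}) commute with coproducts, we may further assume $Y$ connected.

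By \cref{Stofpushprod,StofidmY} and \cref{rem:Stofpushprod}, the $\Thnsset$-enriched natural transformation $\St_{L\repD[m,Y]}(\iota_m\widehat{\times}f)$ is an isomorphism at every object $0<i\leq m$, and at the object $0$ it is the image, under the colimit-preserving functor $\diag\bigl(\colim^{H^0_{m+1}}_{(\tndnec{\repD[m+1]}{0}{m+1})^{\op}}\iota(-)\bigr)$, of the canonical levelwise monomorphism $\cG^0_m(f)\hookrightarrow\newG[\id_Y][0][\id_{[m]}]$, which is itself an isomorphism away from the terminal necklace $\repD[m+1]$, where it is $f\colon X\hookrightarrow Y$. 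Now $\Ch L\repD[m,Y]$ has object set $\{0,\dots,m\}$ and no nonidentity morphisms into the object $0$, so the representable $\Hom_{\Ch L\repD[m,Y]}(-,0)$ — and hence every projective cell $\Hom_{\Ch L\repD[m,Y]}(-,0)\otimes(X'\hookrightarrow Y')$ — is supported on the object $0$. It follows that a $\Thnsset$-enriched natural transformation which is an isomorphism at all objects $i\neq 0$ and a (trivial) cofibration in $\MSThnsset$ at the object $0$ is automatically a projective (trivial) cofibration: one expresses the map at $0$ through the generating (trivial) cofibrations of $\MSThnsset$ (\cref{gencofprojective}) and attaches the corresponding projective cells, all concentrated over the object $0$. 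Therefore the proof reduces to a single point: the functor $\diag\bigl(\colim^{H^0_{m+1}}_{(\tndnec{\repD[m+1]}{0}{m+1})^{\op}}\iota(-)\bigr)$ sends the inclusion $\cG^0_m(f)\hookrightarrow\newG[\id_Y][0][\id_{[m]}]$ to a monomorphism of $\Thn$-spaces when $f$ is a generating cofibration of $\injThnspace$, and to a trivial cofibration in $\MSThnsset$ when $f$ is a generating trivial cofibration.

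This last point is the main obstacle, and it is exactly where necklace calculus is needed: one must use the explicit structure of the poset $\tndnec{\repD[m+1]}{0}{m+1}$ and of the weight $H^0_{m+1}\colon T\mapsto\Hom_{\CL T}(\alpha,\omega)$ to control the weighted colimit. Since $\cG^0_m(f)\hookrightarrow\newG[\id_Y][0][\id_{[m]}]$ is an isomorphism except at the top necklace $\repD[m+1]$, the comparison it induces on weighted colimits is concentrated there and can be analyzed in terms of $f$ alone, yielding a monomorphism; the trivial-cofibration case requires in addition propagating a weak equivalence through this colimit, and both statements are established by the necklace-calculus computations deferred to the appendix (in the spirit of \cref{Stofalphafmono,Stofpushprod}). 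Granting this, the reductions above assemble to prove that $\St_W\dashv\Un_W$ is a Quillen pair with $\injsThnspaceslice{W}$.
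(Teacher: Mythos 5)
Your reductions reproduce the paper's strategy: restrict to the generating (trivial) cofibrations of \cref{rem:gencofinjslice}, use \cref{reductiontrick,rem:reductiontrick} to pass to the universal case over $L\repD[m,Y]$, and invoke \cref{Stofpushprod,StofidmY,rem:Stofpushprod} to see that $\St_{L\repD[m,Y]}(\iota_m\widehat{\times}f)$ is an isomorphism at $0<i\leq m$ and, at $i=0$, is the image of the levelwise map $\cG^0_m(f)\hookrightarrow\newG[\id_Y][0][\id_{[m]}]$ under $\diag\bigl(\colim^{H^0_{m+1}}_{(\tndnec{\repD[m+1]}{0}{m+1})^{\op}}\iota(-)\bigr)$. (The detour through connected components of $Y$ is unnecessary, since the relevant $Y=\repThn\times\repS$ is already connected by \cref{rem:repareconnected}, but it is harmless.) Your replacement of the paper's \cref{lem:proj(triv)cof} by a cell-attachment argument -- cells $\Hom_{\Ch L\repD[m,Y]}(-,0)\otimes(X'\hookrightarrow Y')$ only modify the value at the object $0$ because $\Hom(i,0)=\emptyset$ for $i>0$ and $\Hom(0,0)=\repS[0]$ -- is a legitimate alternative to the lifting argument in the paper, provided you also check that the retract exhibiting $\eta_0$ as a retract of a relative cell complex assembles into enriched natural transformations (it does, precisely because all actions into level $0$ factor through $F(0)$).

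The genuine gap is the step you yourself flag as ``the main obstacle'' and then defer: that $\diag\bigl(\colim^{H^0_{m+1}}_{(\tndnec{\repD[m+1]}{0}{m+1})^{\op}}\iota(-)\bigr)$ sends $\cG^0_m(f)\hookrightarrow\newG[\id_Y][0][\id_{[m]}]$ to a monomorphism, resp.\ a trivial cofibration, in $\MSThnsset$. Your heuristic that the map ``is concentrated at the top necklace and can be analyzed in terms of $f$ alone'' is not an argument: a weighted colimit is a coend, hence a quotient glued over the whole poset, and a map of diagrams that is a levelwise (trivial) cofibration need not have (trivial) cofibration image under an arbitrary colimit functor -- this is exactly the point where a homotopical property of the weight $H^0_{m+1}$ must enter. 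The appendix results you appeal to (\cref{Stofalphafmono,Stofpushprod}) only provide the weighted-colimit formulas for the straightening; they say nothing about preservation of cofibrations or weak equivalences. What is actually needed is the paper's \cref{lem:leveltrivcof} (the inclusion is a (trivial) cofibration in $(\MSThnsset)^{(\tndnec{\repD[m+1]}{0}{m+1})^{\op}}_\inj$, which is the easy pointwise check you describe) together with \cref{lem:projcof}: the functor $\diag\bigl(\colim^{H^0_{m+1}}\iota(-)\bigr)$ is \emph{left Quillen} from the injective model structure to $\MSThnsset$, a nontrivial fact imported from the necklace calculus of the previous paper ([MRR1, Propositions 1.5.3 and 3.3.9], after identifying $H^0_{m+1}$ with $H_{m+1}$ there). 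Without supplying this left-Quillen input (or an equivalent explicit analysis of the coend), the trivial-cofibration case in particular is unsubstantiated, so the proposal as written does not yet prove \cref{QPwithinj}.
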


To prove the above result, by the reduction trick from \cref{rem:reductiontrick}, it is enough to show that a map in $\sThnssetslice{L\repD[m,Y]}$ 
\[ \iota_m\widehat{\times} f\colon \partial\repD[m,Y]\amalg_{\partial\repD[m,X]} \repD[m,X]\to L \repD[m,Y]\]
with $m\geq 0$ and $f\colon X\hookrightarrow Y$ a generating (trivial) cofibration in $\MSThnsset$ is sent by the functor $\St_{L\repD[m,Y]}\colon \sThnssetslice{L\repD[m,Y]}\to [\Ch L\repD[m,Y]^{\op},\Thnsset]$ to a (trivial) cofibration in $[\Ch L\repD[m,Y]^{\op},\MSThnsset]_\proj$.

\begin{lemma} \label{lem:leveltrivcof}
    Let $m\geq 0$, $Y$ be a connected $\Thn$-space, and $f\colon X\hookrightarrow Y$ be a (trivial) cofibration in $\Thnsset$. Then the induced map 
    \[ \cG^0_m(f)\to \newG[\id_Y][0][\id_{[m]}] \]
    is a (trivial) cofibration in $(\MSThnsset)^{(\tndnec{\repD[m+1]}{0}{m+1})^{\op}}_\inj$.
\end{lemma}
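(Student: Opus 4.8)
The plan is to reduce the statement to an objectwise check. Since $\MSThnsset$ is combinatorial, the injective model structure $(\MSThnsset)^{(\tndnec{\repD[m+1]}{0}{m+1})^{\op}}_\inj$ exists, and its cofibrations (resp.\ trivial cofibrations) are precisely the objectwise cofibrations (resp.\ trivial cofibrations) in $\MSThnsset$; see e.g.\ \cite[\textsection A.2.8]{htt}. Hence it suffices to evaluate the natural transformation $\cG^0_m(f)\to\newG[\id_Y][0][\id_{[m]}]$ at each totally non-degenerate necklace $T\hookrightarrow\repD[m+1]_{0,m+1}$ and to check that the resulting map of $\Thn$-spaces is a (trivial) cofibration in $\MSThnsset$.

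Next I would identify this map explicitly. Unwinding \cref{notn:Falphatnd} with $\mapDelta=\id_{[m]}$ — so that $\im(\mapDelta+1)=[m+1]$ and the side condition $B_\omega^T\subseteq\im(\mapDelta+1)$ holds automatically — and with $f=\id_Y$, one obtains a natural isomorphism $\newG[\id_Y][0][\id_{[m]}](T)\cong\prod_{B(T)}Y$ for every $T$; at the full necklace $\repD[m+1]$, which has a single bead, this is just $Y$. Comparing with \cref{notation:Gmf} — or, equivalently, evaluating the pushout square of \cref{lem:pushout} at $T$ and noting that its left-hand vertical map is an identity at each $T\neq\repD[m+1]$ and is $\emptyset\hookrightarrow X$ at $T=\repD[m+1]$ — one finds that $\cG^0_m(f)\to\newG[\id_Y][0][\id_{[m]}]$ is the identity of $\prod_{B(T)}Y$ at every object $T\neq\repD[m+1]$ and is the map $f\colon X\to Y$ at the object $T=\repD[m+1]$.

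It then remains only to observe that identities are (trivial) cofibrations in any model category and that $f\colon X\to Y$ is a (trivial) cofibration in $\MSThnsset$ by hypothesis, whence $\cG^0_m(f)\to\newG[\id_Y][0][\id_{[m]}]$ is an objectwise (trivial) cofibration and therefore a (trivial) cofibration in $(\MSThnsset)^{(\tndnec{\repD[m+1]}{0}{m+1})^{\op}}_\inj$, as claimed. The only mildly delicate point is the middle step — matching up \cref{notation:Gmf} and \cref{notn:Falphatnd} and confirming that nothing nontrivial happens away from the full necklace $\repD[m+1]$ — but this is a direct unwinding of the definitions, so I do not anticipate a genuine obstacle.
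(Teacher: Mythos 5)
Your proposal is correct and follows essentially the same route as the paper's proof: reduce to an objectwise check in the injective model structure, then unwind \cref{notn:Falphatnd,notation:Gmf} to see that the map is the identity of $\prod_{B(T)}Y$ at every $T\neq\repD[m+1]$ and is $f\colon X\hookrightarrow Y$ at $T=\repD[m+1]$. The explicit identification you flag as the delicate middle step is exactly the ``direct computation'' in the paper, so there is no gap.
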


\begin{proof}
    By definition of the injective model structure, it is enough to show that, for each object $T\hookrightarrow \repD[m+1]_{0,m+1}$ of $\tndnec{\repD[m+1]}{0}{m+1}$, the induced map 
    \[ \cG^0_m(f)(T)\to \newG[\id_Y][0][\id_{[m]}](T) \]
    is a (trivial) cofibration in $\MSThnsset$. However, recalling \cref{notn:Falphatnd,notation:Gmf}, a direct computation shows that
    \[ \cG^0_m(f)(T)\to \newG[\id_Y][0][\id_{[m]}](T)= \begin{cases}
    \prod_{B(T)} Y\xrightarrow{\id} \prod_{B(T)} Y & \text{if } T\neq\repD[m+1] \\
    X\xrightarrow{f} Y & \text{if } T=\repD[m+1].
    \end{cases}
    \]
    Hence we get the result, as $f$ is by assumption a (trivial) cofibration in $\MSThnsset$. 
\end{proof}

\begin{lemma} \label{lem:projcof}
    For $m\geq 0$, the functor
\[ \diag(\colim^{H^0_{m+1}}_{(\tndnec{\repD[m+1]}{0}{m+1})^{\op}} \iota (-))\colon (\MSThnsset)^{(\tndnec{\repD[m+1]}{0}{m+1})^{\op}}_\inj\to \MSThnsset \]
is left Quillen.
\end{lemma}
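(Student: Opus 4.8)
The plan is to recognize the functor $\Phi := \diag(\colim^{H^0_{m+1}}_{(\tndnec{\repD[m+1]}{0}{m+1})^{\op}}\iota(-))$ as an $\sset$-enriched weighted colimit and then to build it up degree-by-degree along its indexing poset. Write $\cP := \tndnec{\repD[m+1]}{0}{m+1}$ and $W := H^0_{m+1}\colon \cP\to\sset$. Since $\repD[m+1]$ is $1$-ordered (\cref{examplesof1ordered}), $\cP$ is a finite poset whose morphisms are exactly the sub-necklace inclusions (\cref{tndnec1ordered}). Unwinding the enrichments, one checks that $\diag(K\otimes\iota Z)\cong Z\otimes K$ for $K\in\sset$ and $Z\in\Thnsset$, where on the right $\otimes$ is the canonical $\sset$-tensoring on $\Thnsset=\sset^{\Thnop}$; consequently $\Phi$ is naturally isomorphic to the $\sset$-weighted colimit functor $\colim^{W}\colon(\MSThnsset)^{\cP^{\op}}\to\MSThnsset$ for the simplicial, left proper model category $\MSThnsset$ (a left Bousfield localization of the injective model structure on $\Thn$-presheaves in spaces; in particular its cofibrations are the monomorphisms, so every object is cofibrant). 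In particular $\Phi$ is a left adjoint, hence cocontinuous. Moreover $\cP$ is a \emph{direct} category via the degree function $T\mapsto\#\{\text{non-degenerate simplices of }T\}$, which strictly increases along every non-identity morphism, and the injective cofibrations (resp.\ weak equivalences) in $(\MSThnsset)^{\cP^{\op}}_\inj$ are the objectwise ones.

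The key point, call it $(\ast)$, is that $W$ is a Reedy- (equivalently projectively-) cofibrant weight in $\sset^{\cP}$: for every $T\in\cP$ the latching map
\[ L_T W \;=\; \colim_{U\subsetneq T}\Hom_{\CL U}(\alpha,\omega)\;\longrightarrow\;\Hom_{\CL T}(\alpha,\omega)\;=\;W(T) \]
(the colimit over proper sub-necklaces of $T$) is a monomorphism of simplicial sets.

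Granting $(\ast)$, the lemma follows by the usual Reedy argument for weighted colimits over a direct category. Enumerate the objects of $\cP$ as $T_1,\dots,T_N$ in order of non-decreasing degree and let $\cP_k\subseteq\cP$ be the full subcategory on $T_1,\dots,T_k$, so that $T_k$ is maximal in $\cP_k$. For any $E\colon\cP^{\op}\to\MSThnsset$ there is then a pushout square
\[ \colim^{W|_{\cP_k}}E \;\cong\; \bigl(\colim^{W|_{\cP_{k-1}}}E\bigr)\amalg_{(L_{T_k}W)\otimes E(T_k)}\bigl(W(T_k)\otimes E(T_k)\bigr), \]
the left-hand leg of which is, by $(\ast)$, a monomorphism in $\Thnsset$ (tensoring the monomorphism of $(\ast)$ with $E(T_k)$) and hence a cofibration in $\MSThnsset$; and since every object of $\MSThnsset$ is cofibrant, tensoring with a fixed simplicial set preserves cofibrations and weak equivalences. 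An induction on $k$ — using stability of monomorphisms under pushout in the presheaf topos $\Thnsset$, together with the gluing lemma for weak equivalences (valid here because all objects of $\MSThnsset$ are cofibrant) — shows that if $g\colon E\to E'$ is an objectwise monomorphism (resp.\ an objectwise trivial cofibration) then $\Phi(g)$ is a monomorphism (resp.\ a monomorphism that is a weak equivalence) in $\MSThnsset$. As the injective (trivial) cofibrations are precisely the objectwise (trivial) cofibrations, this is exactly the statement that $\Phi$ is left Quillen. The diagram chases in the inductive step (stability of monos, and compatibility of the pushout squares for $E$ and $E'$) are routine and I would only sketch them.

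The main obstacle is $(\ast)$, and this is where necklace calculus enters. By induction on degree it suffices to treat $T=\repD[m+1]$ itself, since any necklace $T$ is again $1$-ordered and its sub-necklaces from $\alpha$ to $\omega$ form exactly $\tndnec{T}{\alpha}{\omega}$, so the claim for $T$ is literally the claim for $\repD[m+1]$ with $T$ in its place, and then the maximal-object case closes the induction. For $T=\repD[m+1]$ one identifies $\Hom_{\CL\repD[m+1]}(\alpha,\omega)$ with the simplicial $m$-cube — the nerve of the poset of subsets $R$ with $\{0,m+1\}\subseteq R\subseteq\{0,\dots,m+1\}$ — and, using the explicit description of $\CL$ on necklaces from \cite{Dugger,MRR1} and the fact that composition in $\Ch$ is union of subsets, one checks that each sub-necklace $U$ contributes a monomorphism $\Hom_{\CL U}(\alpha,\omega)\hookrightarrow\Hom_{\CL\repD[m+1]}(\alpha,\omega)$ onto a product-of-faces subcomplex of the cube, compatibly with the bead functor of \cref{beadfunctor}. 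The content of the argument is then that the colimit over the proper sub-necklaces realizes the union of these proper faces and so remains a subcomplex of the cube — equivalently, that whenever two incomparable sub-necklaces of $\repD[m+1]$ occur in the latching diagram, the overlap of the corresponding faces is already accounted for by their common lower bounds in $\cP$. I expect this bookkeeping with the poset $\tndnec{\repD[m+1]}{\alpha}{\omega}$ and the cubical faces of $\Hom_{\CL\repD[m+1]}(\alpha,\omega)$ to be the technical heart of the proof.
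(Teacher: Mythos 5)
Your strategy is the right one, and it is essentially the argument that the paper outsources: the paper's own proof of \cref{lem:projcof} is a one-line citation of \cite[Propositions 1.5.3 and 3.3.9]{MRR1} (plus the observation that $H^0_{m+1}$ is the weight $H_{m+1}$ of \cite[Notation 3.3.3]{MRR1}), and those two results package exactly the two ingredients you reconstruct: the identification of $\diag(\colim^{H^0_{m+1}}\iota(-))$ with the simplicially weighted colimit for the canonical $\sset$-tensoring of $\Thnsset$, together with the general fact that such a weighted colimit functor is left Quillen on the injective model structure as soon as the weight is projectively cofibrant; and the verification of that cofibrancy for the necklace weight. Your enrichment bookkeeping ($\diag(K\otimes\iota Z)\cong K\otimes Z$), the observation that $\tndnec{\repD[m+1]}{0}{m+1}$ is a finite poset and hence direct, and the cellular/pushout-product induction for left-Quillenness (using that all objects of $\MSThnsset$ are cofibrant and that the tensoring is a Quillen bifunctor) are all correct in outline.

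The genuine gap is that your claim $(\ast)$ --- projective, equivalently Reedy, cofibrancy of $H^0_{m+1}$ --- is exactly the nontrivial content of the lemma, and you only sketch it, explicitly deferring ``the technical heart''; this is precisely what the citation to \cite{MRR1} buys the paper, so as written your proposal is a plan rather than a proof. Moreover, your reduction ``it suffices to treat $T=\repD[m+1]$ itself'' is too quick: the latching map at a general object $T$ of $\tndnec{\repD[m+1]}{0}{m+1}$ concerns a necklace with several beads, whose hom is a product of cubes rather than the nerve of the interval $[\{0,m+1\},\{0,\dots,m+1\}]$. The argument does go through uniformly if you phrase it via \cref{isowithpairs}: $\Hom_{\CL T}(\alpha,\omega)\cong\Delta[1]^{V_T\setminus J_T}$ is the nerve of the interval $[J_T,V_T]$ in the subset lattice, a sub-necklace $U\subseteq T$ maps isomorphically onto the face indexed by $[J_U,V_U]$, two such faces intersect (when nonempty) in the face indexed by $(J_U\cup J_{U'},V_U\cap V_{U'})$, which is again a sub-necklace lying below both, and every codimension-one face arises from a proper sub-necklace (add one joint, or delete one non-joint vertex); hence the latching map is the inclusion of the union of the proper faces, i.e.\ $\partial(\Delta[1]^{V_T\setminus J_T})\hookrightarrow\Delta[1]^{V_T\setminus J_T}$, a monomorphism. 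Writing this verification out (and the routine relative form of your cellular induction) is what separates your proposal from a complete proof.
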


\begin{proof}
    This is obtained as a combination of \cite[Propositions 1.5.3 and 3.3.9]{MRR1}, noticing that the functor $H^0_{m+1}$ from \cref{notationH} coincides with the functor $H_{m+1}$ from \cite[Notation 3.3.3]{MRR1}. 
\end{proof}

\begin{lemma} \label{prop:trivcofat0}
    Let $m\geq 0$, $Y$ be a connected $\Thn$-space, and $f\colon X\hookrightarrow Y$ be a (trivial) cofibration in $\Thnsset$. Then the map 
    \[ \St_{L\repD[m,Y]}(\iota_m\widehat{\times} f)_0\colon \St_{L\repD[m,Y]}(\iota_m\widehat{\times} f)(0)\to \St_{L\repD[m,Y]}([\id_{[m]},\id_Y])(0) \]
    is a (trivial) cofibration in $\Thnsset$.
\end{lemma}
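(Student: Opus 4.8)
The plan is to assemble the point-set computations of the previous subsection with the left Quillen functor identified in \cref{lem:projcof}, so that the statement becomes a two-line consequence of results already in hand. The starting point is \cref{rem:Stofpushprod}: the component at $0$ of the $\Thnsset$-enriched natural transformation $\St_{L\repD[m,Y]}(\iota_m\widehat{\times}f)\to \St_{L\repD[m,Y]}([\id_{[m]},\id_Y])$ — which is precisely the map $\St_{L\repD[m,Y]}(\iota_m\widehat{\times} f)_0$ appearing in the statement — is obtained by applying the functor
\[ \diag(\colim^{H^0_{m+1}}_{(\tndnec{\repD[m+1]}{0}{m+1})^{\op}} \iota(-))\colon (\Thnsset)^{(\tndnec{\repD[m+1]}{0}{m+1})^{\op}}\to \Thnsset \]
to the canonical inclusion $\cG^0_m(f)\hookrightarrow \newG[\id_Y][0][\id_{[m]}]$ of functors indexed by $(\tndnec{\repD[m+1]}{0}{m+1})^{\op}$.

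First I would invoke \cref{lem:leveltrivcof}: since $f\colon X\hookrightarrow Y$ is a (trivial) cofibration in $\Thnsset$ and $Y$ is connected, the inclusion $\cG^0_m(f)\hookrightarrow \newG[\id_Y][0][\id_{[m]}]$ is a (trivial) cofibration in the injective model structure $(\MSThnsset)^{(\tndnec{\repD[m+1]}{0}{m+1})^{\op}}_\inj$; the level-wise verification there reduces to observing that at each object $T$ of $\tndnec{\repD[m+1]}{0}{m+1}$ this map is either an identity (when $T\neq\repD[m+1]$) or $f$ itself (when $T=\repD[m+1]$). Next I would invoke \cref{lem:projcof}: the functor $\diag(\colim^{H^0_{m+1}}_{(\tndnec{\repD[m+1]}{0}{m+1})^{\op}} \iota(-))$ is left Quillen from $(\MSThnsset)^{(\tndnec{\repD[m+1]}{0}{m+1})^{\op}}_\inj$ to $\MSThnsset$, so it preserves (trivial) cofibrations. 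Combining the two facts, $\St_{L\repD[m,Y]}(\iota_m\widehat{\times} f)_0$ is the image of a (trivial) cofibration under a left Quillen functor, hence a (trivial) cofibration in $\MSThnsset$, i.e.\ in $\Thnsset$, as desired.

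I expect no genuine obstacle at this stage, since all of the substance has already been extracted into the cited results; the work lies entirely upstream. The hard inputs are: the necklace-calculus identification of $\St_{L\repD[m,Y]}(\iota_m\widehat{\times}f)$ and $\St_{L\repD[m,Y]}([\id_{[m]},\id_Y])$ at level $0$, together with the explicit description of their comparison map in terms of $\cG^0_m(f)\hookrightarrow \newG[\id_Y][0][\id_{[m]}]$ (\cref{Stofpushprod}, \cref{StofidmY}, \cref{rem:Stofpushprod}); the bookkeeping with the functors $\newG[f][0][\id_{[m]}]$, $\newG[f][0][\partial,m]$ and $\cG^0_m(f)$ from \cref{notn:Falphatnd} and \cref{notation:Gmf}; and the fact that $\diag(\colim^{H^0_{m+1}}\iota(-))$ is left Quillen, which in \cref{lem:projcof} is deduced from \cite[Propositions~1.5.3 and~3.3.9]{MRR1} after matching $H^0_{m+1}$ with the functor $H_{m+1}$ of \cite[Notation~3.3.3]{MRR1}. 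Granting all of those, the present lemma is immediate.
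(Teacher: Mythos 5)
Your argument is exactly the paper's: identify $\St_{L\repD[m,Y]}(\iota_m\widehat{\times} f)_0$ via \cref{rem:Stofpushprod} as the image of the inclusion $\cG^0_m(f)\hookrightarrow \newG[\id_Y][0][\id_{[m]}]$ under $\diag(\colim^{H^0_{m+1}}\iota(-))$, then combine \cref{lem:leveltrivcof} with the left Quillen property from \cref{lem:projcof}. This is correct and matches the paper's proof step for step.
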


\begin{proof}
    By \cref{rem:Stofpushprod}, the above map in $\Thnsset$ can be computed as the map 
    \[ \diag(\colim^{H^0_{m+1}}_{(\tndnec{\repD[m+1]}{0}{m+1})^{\op}} \iota (\cG^0_m(f)\to \newG[\id_Y][0][\id_{[m]}])). \]
   Since the map $\cG^0_m(f)\to \newG[\id_Y][0][\id_{[m]}]$ is a (trivial) cofibration in $(\MSThnsset)^{(\tndnec{\repD[m+1]}{0}{m+1})^{\op}}_\inj$ by \cref{lem:leveltrivcof}, then by \cref{lem:projcof} the above map is a (trivial) cofibration in $\MSThnsset$.
\end{proof}

\begin{lemma} \label{lem:proj(triv)cof}
    Let $\cC$ be a directed $\Thnsset$-enriched category with object set $\{0,1,\ldots,m\}$, and $\eta\colon F\to G$ be a $\Thnsset$-enriched natural transformation in $[\cC^{\op},\Thnsset]$. Suppose that:
    \begin{itemize}[leftmargin=0.6cm]
        \item for $0<i\leq m$, the map $\eta_i\colon F(i)\to G(i)$ is an isomorphism in $\Thnsset$,
        \item the map $\eta_0\colon F(0)\to G(0)$ is a (trivial) cofibration in $\MSThnsset$.
    \end{itemize}
    Then $\eta\colon F\to G$ is a (trivial) cofibration in $[\cC^{\op},\MSThnsset]_\proj$.
\end{lemma}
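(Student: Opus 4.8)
The claim is a recognition principle for (trivial) cofibrations in the projective model structure on presheaves over a directed enriched category $\cC$ with objects $\{0,1,\dots,m\}$: a map $\eta\colon F\to G$ which is an isomorphism at every object $i>0$ and a (trivial) cofibration at $0$ is a projective (trivial) cofibration. The natural approach is to exhibit $\eta$ as a retract of (or, better, directly as) a cellular map built from the generating (trivial) cofibrations described in \cref{gencofprojective}, namely the maps $\Hom_\cC(-,a)\otimes X\to \Hom_\cC(-,a)\otimes Y$ for $a\in\Ob\cC$ and $X\hookrightarrow Y$ a generating (trivial) cofibration of $\MSThnsset$. The key structural input is that $\cC$ is \emph{directed}: $\Hom_\cC(i,0)=\emptyset$ for $i>0$ and $\Hom_\cC(0,0)=\repS[0]$, so the representable $\Hom_\cC(-,0)\colon \cC^{\op}\to\Thnsset$ is concentrated at the object $0$, taking value $\repS[0]$ there and $\emptyset$ elsewhere. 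Consequently $\Hom_\cC(-,0)\otimes X$ is the functor which is $X$ at $0$ and $\emptyset$ at all $i>0$.

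First I would reduce to the case where $\eta_0$ is itself a generating (trivial) cofibration $X\hookrightarrow Y$ in $\MSThnsset$: a general (trivial) cofibration in $\MSThnsset$ is a retract of a transfinite composite of pushouts of generating ones, and since the left adjoint $\Hom_\cC(-,0)\otimes(-)\colon \MSThnsset\to[\cC^{\op},\Thnsset]$ preserves colimits and retracts, and since the property ``isomorphism at $i>0$'' is also closed under colimits and retracts (colimits in the functor category are computed objectwise), it suffices to treat the pushout $F\amalg_{\Hom_\cC(-,0)\otimes X}(\Hom_\cC(-,0)\otimes Y)$. So the heart of the argument is the following claim: given $\eta\colon F\to G$ with $\eta_i$ an isomorphism for $i>0$ and $\eta_0\colon F(0)\to G(0)$ obtained as a pushout along a generating (trivial) cofibration $X\hookrightarrow Y$, the square
\begin{tz}
\node[](1){$\Hom_\cC(-,0)\otimes X$};
\node[right of=1,xshift=3cm](2){$F$};
\node[below of=1](3){$\Hom_\cC(-,0)\otimes Y$};
\node[below of=2](4){$G$};
\draw[->](1) to (2);
\draw[->](1) to (3);
\draw[->](2) to node[right,la]{$\eta$} (4);
\draw[->](3) to (4);
\pushout{4};
\end{tz}
is a pushout in $[\cC^{\op},\Thnsset]$. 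Indeed, evaluated at $i>0$ the top-left and bottom-left corners are both $\emptyset$, so the square becomes $\emptyset\to F(i)$, $\emptyset\to G(i)$ with $F(i)\xrightarrow{\sim}G(i)$, which is a pushout; evaluated at $0$ it is the defining pushout square $X\to F(0)$, $X\to Y$, $Y\to G(0)$ of $\eta_0$. Since colimits in the functor category are objectwise, the square is a pushout, exhibiting $\eta$ as a pushout of a generating (trivial) cofibration, hence a projective (trivial) cofibration.

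The map $\Hom_\cC(-,0)\otimes X\to F$ needed in that square is the adjunct of the map $X\to F(0)=\ev_0 F$; here I am using that $\Hom_\cC(-,0)\otimes(-)$ is left adjoint to evaluation $\ev_0=\Map_{[\cC^{\op},\Thnsset]}(\Hom_\cC(-,0),-)$ at $0$, which is the enriched Yoneda lemma together with the tensor–cotensor adjunction on $\Thnsset$ (so this is where \cref{gencofprojective} and the tensoring of $[\cC^{\op},\Thnsset]$ over $\Thnsset$ enter). The main obstacle — though it is mild — is verifying that the comparison map $\eta$ from the pushout is indeed the given $\eta$ and not merely \emph{some} map making the square commute: one must check that the two functors $G$ and $F\amalg_{\Hom_\cC(-,0)\otimes X}(\Hom_\cC(-,0)\otimes Y)$ are identified \emph{compatibly with the maps from $F$}. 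This is immediate objectwise (at $i>0$ both receive the isomorphism from $F(i)$, at $0$ both are the pushout of $\eta_0$), and the directedness of $\cC$ guarantees there is no further naturality constraint to check because, as noted in the definition of directed enriched category, the value of a functor on the hom $\Thn$-spaces $\Hom_\cC(i,j)$ with $j\le i$ — in particular those into $0$ — is forced. Hence $G$ is canonically the pushout, and $\eta$ is a projective (trivial) cofibration as claimed. Finally, running this over all cells in a cellular presentation of the (trivial) cofibration $\eta_0$ and passing to retracts and transfinite composites completes the proof.
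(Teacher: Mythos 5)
Your argument is correct, but it takes a genuinely different route from the paper's proof. The paper argues by lifting: given a square against a projective trivial fibration $\chi\colon P\to Q$, it builds the lift $\delta\colon G\to P$ by choosing a lift of $\eta_0$ against $\chi_0$ at the object $0$ and setting $\delta_i=\beta_i\circ\eta_i^{-1}$ for $i>0$, and then uses directedness to check that these components are enriched natural. You instead use directedness on the side of the generators: since $\Hom_\cC(i,0)=\emptyset$ for $i>0$ and $\Hom_\cC(0,0)=\repS[0]$, the representable $\Hom_\cC(-,0)$ is concentrated at $0$, tensoring with it is left adjoint to $\ev_0$, and your objectwise pushout computation correctly shows that a cell attachment on $\eta_0$ is exactly a pushout along a generating (trivial) cofibration from \cref{gencofprojective}; so $\eta$ is presented cellularly rather than recognized via a lifting property. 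Your approach buys an explicit cell structure and never has to construct a lift; the paper's buys freedom from cellular presentations, transfinite composition and retracts. Two remarks on your write-up: (1) the cell induction is unnecessary, since your pushout computation applies verbatim with the generating cofibration replaced by $\eta_0$ itself --- the square with left edge $\Hom_\cC(-,0)\otimes\eta_0$, right edge $\eta$, and horizontal maps the adjuncts of $\id_{F(0)}$ and $\id_{G(0)}$ is an objectwise, hence projective, pushout, and $\Hom_\cC(-,0)\otimes(-)$ is left Quillen because it sends generating (trivial) cofibrations of $\MSThnsset$ to those of $[\cC^{\op},\MSThnsset]_\proj$, so the lemma follows in one step; (2) if you do keep the transfinite-plus-retract bookkeeping, the retract stage requires one check you gloss over, namely that the section from $G$ to your cellular approximation (given by $\eta_i^{-1}$ at $i>0$ and the section of the retraction at $0$) is enriched natural --- this does hold, but only because the small object argument yields a retract under $F(0)$, so that the level-$0$ section composed with $\eta_0$ equals the cell inclusion, after which directedness removes the remaining naturality constraints; it is a genuine (if easy) verification rather than literally nothing to check.
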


\begin{proof}
    We deal with the case where $\eta_0$ is a cofibration in $\MSThnsset$; the case of a trivial cofibration proceeds similarly. 

    We show that the $\Thnsset$-enriched natural transformation $\eta\colon F\to G$ has the left lifting property with respect to all trivial fibrations in $[\cC^{\op},\MSThnsset]_\proj$. Let $\chi\colon P\to Q$ be a trivial fibration in $[\cC^{\op},\MSThnsset]_\proj$, i.e., the map $\chi_i\colon P(i)\to Q(i)$ is a trivial fibration in $\MSThnsset$, for all $0\leq i\leq m$. Consider a commutative square in $[\cC^{\op},\Thnsset]$ as below left. 
    \begin{tz}
         \node[](1) {$F$}; 
        \node[right of=1,xshift=.55cm](2) {$P$}; 
        \node[below of=1](3) {$G$}; 
        \node[below of=2](4) {$Q$}; 

        \draw[->] (1) to node[above,la]{$\beta$} (2); 
        \draw[->] (1) to node[left,la]{$\eta$} (3); 
        \draw[->] (2) to node[right,la]{$\chi$} (4); 
        \draw[->] (3) to node[below,la]{$\gamma$} (4);

        \draw[->,dashed] (3) to node[left,la,yshift=3pt]{$\delta$} (2);

        \node[right of=2,xshift=2cm](1) {$F(0)$}; 
        \node[right of=1,xshift=1cm](2) {$P(0)$}; 
        \node[below of=1](3) {$G(0)$}; 
        \node[below of=2](4) {$Q(0)$}; 

        \draw[->] (1) to node[above,la]{$\beta_0$} (2); 
        \draw[->] (1) to node[left,la]{$\eta_0$} (3); 
        \draw[->] (2) to node[right,la]{$\chi_0$} (4); 
        \draw[->] (3) to node[below,la]{$\gamma_0$} (4);

        \draw[->,dashed] (3) to node[left,la,yshift=3pt]{$\delta_0$} (2);
    \end{tz}
    We construct a $\Thnsset$-enriched natural transformation $\delta\colon G\to P$ making the above left diagram commute. For $i=0$, we set $\delta_0$ to be a lift in the above right commutative square in $\Thnsset$, which exists since by assumption, and, for $0<i\leq m$, we set $\delta_i$ to be the composite
    \[ \delta_i\colon G(i)\xrightarrow{\eta_i^{-1}} F(i)\xrightarrow{\beta_i} P(i). \]

    We show that the $\delta_i$'s assemble into a $\Thnsset$-enriched natural transformation. Since $\eta$ and~$\beta$ are $\Thnsset$-enriched natural transformations, the enriched naturality condition for $\delta$ clearly holds in the case where $0<i\leq j\leq m$, and, for $0<i\leq m$, it is obtained from the commutativity of the following diagram.
    \begin{tz}
         \node[](1) {$G(i)\times \Hom_\cC(0,i)$}; 
        \node[right of=1,xshift=2.2cm](2) {$G(0)$}; 
        \node[below of=1](3) {$F(i)\times \Hom_\cC(0,i)$}; 
        \node[below of=2](4) {$F(0)$};
        \node[below of=3](5) {$P(i)\times \Hom_\cC(0,i)$}; 
        \node[below of=4](6) {$P(0)$}; 

        \draw[->] (1) to node[above,la]{$G_{0,i}$} (2); 
        \draw[->] (1) to node[left,la]{$\eta_i^{-1}\times \id$} (3); 
        \draw[->] (4) to node[right,la]{$\eta_0$} (2); 
        \draw[->] (3) to node[left,la]{$\beta_i\times \id$} (5); 
        \draw[->] (4) to node[right,la]{$\beta_0$} (6); 
        \draw[->] (3) to node[below,la]{$F_{0,i}$} (4);
        \draw[->] (5) to node[below,la]{$P_{0,i}$} (6);
        \draw[->,bend right=50] ($(1.south west)+(5pt,0)$) to node[left,la]{$\delta_i\times \id$} ($(5.north west)+(5pt,0)$); 
        \draw[->,bend left=50] (2) to node[right,la]{$\delta_0$} (6); 
    \end{tz}
    So we have a $\Thnsset$-enriched natural transformation $\delta\colon G\to P$ as desired. 
    \end{proof}

\begin{prop} \label{StLprestrivcof}
    Let $m\geq 0$, $Y$ be a connected $\Thn$-space, and $f\colon X\hookrightarrow Y$ be a (trivial) cofibration in $\Thnsset$. Then the induced $\Thnsset$-enriched natural transformation 
    \[ \St_{L\repD[m,Y]}(\iota_m\widehat{\times} f)\colon \St_{L\repD[m,Y]}(\iota_m\widehat{\times} f)\to \St_{L\repD[m,Y]}([\id_{[m]},\id_Y]) \]
    is a (trivial) cofibration in $[\Ch L\repD[m,Y]^{\op},\MSThnsset]_\proj$.
\end{prop}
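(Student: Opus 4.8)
The plan is to derive the statement directly from \cref{lem:proj(triv)cof}, applied to the $\Thnsset$-enriched category $\cC=\Ch L\repD[m,Y]$. The first step is therefore to check that $\Ch L\repD[m,Y]$ is a directed $\Thnsset$-enriched category with object set $\{0,1,\ldots,m\}$. For the object set, one applies the colimit-preserving functor $(-)_0$ to the pushout in \cref{RmkPushout} and uses that $Y$ is connected, so that $\pi_0 Y\cong\{\ast\}$ and hence $(L\repD[m,Y])_0\cong\{0,1,\ldots,m\}$; since the objects of $\Ch W$ are the elements of $W_0$, this is also the object set of $\Ch L\repD[m,Y]$. For the hom $\Thn$-spaces, one observes that each level $(L\repD[m,Y])_{-,\defThn,\defS}$ is the quotient of $\repD[m]\times Y_{\defThn,\defS}$ obtained by collapsing the fiber over each vertex to a point, so that every edge is nondecreasing in its $\repD[m]$-coordinate; the necklace description of \cref{cor:computationhomsC} then gives $\Hom_{\Ch L\repD[m,Y]}(i,j)=\emptyset$ for $j<i$ and $\Hom_{\Ch L\repD[m,Y]}(i,i)\cong\repS[0]$, the latter because the only necklace from a vertex to itself is the constant one.

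The second step is to verify the two hypotheses of \cref{lem:proj(triv)cof} for the $\Thnsset$-enriched natural transformation
\[ \St_{L\repD[m,Y]}(\iota_m\widehat{\times} f)\colon \St_{L\repD[m,Y]}(\iota_m\widehat{\times} f)\to \St_{L\repD[m,Y]}([\id_{[m]},\id_Y]). \]
Both are already in hand: by \cref{rem:Stofpushprod} its component at $i$ is an isomorphism in $\Thnsset$ for every $0<i\leq m$, and by \cref{prop:trivcofat0} its component at $0$ is a (trivial) cofibration in $\MSThnsset$. Feeding these into \cref{lem:proj(triv)cof} for $\cC=\Ch L\repD[m,Y]$ then yields that $\St_{L\repD[m,Y]}(\iota_m\widehat{\times} f)$ is a (trivial) cofibration in $[\Ch L\repD[m,Y]^{\op},\MSThnsset]_\proj$, as claimed; this is precisely the input required to finish the proof of \cref{QPwithinj} via the reduction trick of \cref{rem:reductiontrick}.

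All the substantive work sits upstream: \cref{prop:trivcofat0} depends on \cref{lem:leveltrivcof} and \cref{lem:projcof}, and the comparison between the two straightenings is \cref{rem:Stofpushprod}. Consequently the only new ingredient in this argument is the observation that $\Ch L\repD[m,Y]$ is directed, and that elementary verification is essentially the only place I expect to need a bit of care.
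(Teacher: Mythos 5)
Your proposal is correct and follows essentially the same route as the paper: invoke \cref{rem:Stofpushprod} for the components at $0<i\leq m$, \cref{prop:trivcofat0} for the component at $0$, and conclude by \cref{lem:proj(triv)cof}. The only addition is your explicit check that $\Ch L\repD[m,Y]$ is directed with objects $\{0,\dots,m\}$ (which the paper leaves implicit), and that verification is sound — the object set follows from \cref{RmkPushout} and connectedness of $Y$, and for the homs note that every necklace from $i$ to $i$ maps constantly to the vertex $i$, so the colimit of \cref{cor:computationhomsC} is computed at the terminal necklace $\repD[0]$ and gives $\repS[0]$.
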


\begin{proof}
    For $0<i\leq m$, by \cref{rem:Stofpushprod}, the map
    \[ \St_{L\repD[m,Y]}(\iota_m\widehat{\times} f)_i\colon \St_{L\repD[m,Y]}(\iota_m\widehat{\times} f)(i)\to \St_{L\repD[m,Y]}([\id_{[m]},\id_Y])(i) \]
    is an isomorphism in $\Thnsset$, and by \cref{prop:trivcofat0} the map 
    \[ \St_{L\repD[m,Y]}(\iota_m\widehat{\times} f)_0\colon \St_{L\repD[m,Y]}(\iota_m\widehat{\times} f)(0)\to \St_{L\repD[m,Y]}([\id_{[m]},\id_Y])(0) \]
    is a (trivial) cofibration in $\MSThnsset$. Hence the result follows from \cref{lem:proj(triv)cof}.
\end{proof}

\begin{rmk} \label{rem:gencofinjThn}
    Recall from \cite[Theorem 15.6.27]{Hirschhorn} that a set of generating cofibrations in $\injThnspace$ is given by the monomorphisms
    \[ X\xhookrightarrow{f} Y\coloneqq (\partial\repThn\hookrightarrow \repThn)\widehat{\times}(\partial\repS\hookrightarrow \repS) \]
    with $\defThn\in \Thn$ and $\defS\geq 0$. 
\end{rmk}

We are now ready to show that the straightening functor is left Quillen.

\begin{prop} \label{StWprescof}
    Let $W$ be an object in $\pcatThn$. Then the functor 
    \[ \St_W\colon \injsThnspaceslice{W}\to [\Ch W^{\op},\MSThnsset]_\proj \]
    preserves cofibrations. 
\end{prop}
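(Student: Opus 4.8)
The plan is to reduce the statement to the generating cofibrations of $\injsThnspaceslice{W}$ and then to the single computation already recorded in \cref{StLprestrivcof}. Since $\St_W$ was constructed in \cref{subsec:defStUn} as a left adjoint, it preserves all colimits, hence pushouts and transfinite composites, and it trivially preserves retracts; the cofibrations of $[\Ch W^{\op},\MSThnsset]_\proj$ form a class closed under these operations. As $\injsThnspaceslice{W}$ is cofibrantly generated, with a generating set of cofibrations described in \cref{rem:gencofinjslice}, its cofibrations are the retracts of transfinite composites of pushouts of generating cofibrations. So I would first argue that it suffices to show $\St_W$ sends each generating cofibration of $\injsThnspaceslice{W}$ to a cofibration in $[\Ch W^{\op},\MSThnsset]_\proj$.

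Next I would unwind these generating cofibrations. By \cref{rem:gencofinjslice} each one is a pushout–product map $\iota_m\widehat{\times}f\colon \partial\repD[m,Y]\amalg_{\partial\repD[m,X]}\repD[m,X]\to\repD[m,Y]$ together with a structure map $\repD[m,Y]\to W$, where $m\geq0$ and $f\colon X\hookrightarrow Y$ runs over the generating cofibrations of $\injThnspace$. By \cref{rem:gencofinjThn} each such $f$ has target $Y=\repThn\times\repS$ for some $\defThn\in\Thn$ and $\defS\geq0$, which is a connected $\Thn$-space by \cref{rem:repareconnected}; moreover $f$ is in particular a cofibration in $\MSThnsset$, since $\MSThnsset$ is a left Bousfield localization of $\injThnspace$ and hence has the same cofibrations.

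Finally I would apply the reduction trick. Because $W$ lies in $\pcatThn$, the structure map $\repD[m,Y]\to W$ factors through the canonical map $\repD[m,Y]\to L\repD[m,Y]$ along a map $q\colon L\repD[m,Y]\to W$ in $\pcatThn$; consequently the above generating cofibration is the image under $q_!\colon \sThnssetslice{L\repD[m,Y]}\to\sThnssetslice{W}$ of the map $\iota_m\widehat{\times}f\colon \partial\repD[m,Y]\amalg_{\partial\repD[m,X]}\repD[m,X]\to L\repD[m,Y]$, viewed in $\sThnssetslice{L\repD[m,Y]}$ with codomain $[\id_{[m]},\id_Y]$. By \cref{reductiontrick} applied to $q$, it then suffices to check that $\St_{L\repD[m,Y]}(\iota_m\widehat{\times}f)$ is a cofibration in $[\Ch L\repD[m,Y]^{\op},\MSThnsset]_\proj$, which is exactly \cref{StLprestrivcof}, whose hypotheses ($Y$ connected, $f$ a cofibration in $\MSThnsset$) were verified in the previous step. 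Since the necklace-calculus computations feeding into \cref{StLprestrivcof} do all the genuine work, the only delicate point here is the bookkeeping just described — recognizing the slice generating cofibration as $q_!$ of the corresponding map over $L\repD[m,Y]$ so that \cref{reductiontrick} applies, and confirming that the connectedness hypothesis holds for every generating cofibration of $\injThnspace$, which it does by the explicit form in \cref{rem:gencofinjThn}.
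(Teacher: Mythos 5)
Your proposal is correct and follows the paper's own argument: reduce to the generating cofibrations of $\injsThnspaceslice{W}$ from \cref{rem:gencofinjslice,rem:gencofinjThn}, use the reduction trick (\cref{reductiontrick}, via the factorization of $\repD[m,Y]\to W$ through $L\repD[m,Y]$) to pass to $\St_{L\repD[m,Y]}$, and conclude with \cref{StLprestrivcof} using that $Y=\repD[\defThn,\defS]$ is connected. The only difference is that you spell out the cofibrant-generation bookkeeping and the hypothesis checks slightly more explicitly than the paper does, which is harmless.
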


\begin{proof}
    By \cref{rem:gencofinjslice,rem:gencofinjThn}, it is enough to show that the functor $\St_W$ sends the generating cofibrations 
    \[ \partial\repD[m,Y]\amalg_{\partial\repD[m,X]} \repD[m,X]\xrightarrow{\iota_m\widehat{\times} f} \repD[m,Y] \to W \]
    in $\injsThnspaceslice{W}$ with $m\geq 0$, $f\colon X\hookrightarrow Y$ a monomorphism in $\Thnsset$ of the form 
    \[ X\xhookrightarrow{f} Y\coloneqq (\partial\repThn\hookrightarrow \repThn)\widehat{\times}(\partial\repS\hookrightarrow \repS), \]
    and $\repD[m,Y]\to W$ a map in $\sThnsset$, to a cofibration in $[\Ch W^{\op},\MSThnsset]_\proj$. By \cref{rem:reductiontrick}, it is enough to show that, for $m\geq 1$ and $f\colon X\hookrightarrow Y$ as above, the functor $\St_{L\repD[m,Y]}\colon \sThnssetsliceshort{L\repD[m,Y]}\to [\Ch L\repD[m,Y]^{\op},\Thnsset]$ sends the map 
    \[ \partial\repD[m,Y]\amalg_{\partial\repD[m,X]} \repD[m,X]\xrightarrow{\iota_m\widehat{\times} f} \repD[m,Y] \xrightarrow{[\id_{[m]},\id_Y]} L\repD[m,Y] \]
    to a cofibration in $[\Ch L\repD[m,Y]^{\op},\MSThnsset]_\proj$.

    Since by \cref{rem:repareconnected} the $\Thn$-space $Y=\repThn\times \repS=\repD[\defThn,\defS]$ is connected, by \cref{StLprestrivcof} the induced $\Thnsset$-enriched natural transformation 
    \[ \St_{L\repD[m,Y]}(\iota_m\widehat{\times} f)\colon \St_{L\repD[m,Y]}(\iota_m\widehat{\times} f)\to \St_{L\repD[m,Y]}([\id_{[m]},\id_Y]) \]
    is a cofibration in $[\Ch L\repD[m,Y]^{\op},\MSThnsset]_\proj$, as desired. 
\end{proof}

\begin{rmk} \label{rem:gentrivcofinjThn}
    Recall from \cite[Theorem 15.6.27]{Hirschhorn} that a set of generating trivial cofibrations in $\injThnspace$ is given by the monomorphisms
    \[ X\xhookrightarrow{f} Y\coloneqq (\partial\repThn\hookrightarrow \repThn)\widehat{\times}(\Lambda^t[\defS]\hookrightarrow \repS) \]
    with $\defThn\in \Thn$, $\defS\geq 1$, and $0\leq t\leq \defS$. 
\end{rmk}

\begin{rmk} \label{rem:trivcofinloc}
    Note that, since $\MSThnsset$ is a localization of $\injThnspace$, a trivial cofibration $f\colon X\hookrightarrow Y$ in $\injThnspace$ is also a trivial cofibration in $\MSThnsset$.
\end{rmk}

\begin{prop} \label{StWprestrivcof}
    Let $W$ be an object in $\pcatThn$. Then the functor 
    \[ \St_W\colon \injsThnspaceslice{W}\to [\Ch W^{\op},\MSThnsset]_\proj \]
    preserves trivial cofibrations. 
\end{prop}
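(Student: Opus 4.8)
The plan is to mirror the proof of \cref{StWprescof} almost verbatim, everywhere replacing ``cofibration'' with ``trivial cofibration'' and invoking the set of generating trivial cofibrations of $\injThnspace$ from \cref{rem:gentrivcofinjThn} instead of the generating cofibrations of \cref{rem:gencofinjThn}. First, using the ``(trivial)'' clause of \cref{rem:gencofinjslice} together with \cref{rem:gentrivcofinjThn}, I would reduce to showing that $\St_W$ sends each generating trivial cofibration
\[ \partial\repD[m,Y]\amalg_{\partial\repD[m,X]} \repD[m,X]\xrightarrow{\iota_m\widehat{\times} f} \repD[m,Y]\to W, \]
with $m\geq 0$, with $f\colon X\hookrightarrow Y\coloneqq(\partial\repThn\hookrightarrow\repThn)\widehat{\times}(\Lambda^t[\defS]\hookrightarrow\repS)$ for some $\defThn\in\Thn$, $\defS\geq 1$ and $0\leq t\leq\defS$, and with $\repD[m,Y]\to W$ an arbitrary map in $\sThnsset$, to a trivial cofibration in $[\Ch W^{\op},\MSThnsset]_\proj$.

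Next I would apply the reduction trick (\cref{rem:reductiontrick}) to reduce further to showing that, for $m\geq 1$, the functor $\St_{L\repD[m,Y]}$ sends the composite
\[ \partial\repD[m,Y]\amalg_{\partial\repD[m,X]} \repD[m,X]\xrightarrow{\iota_m\widehat{\times} f} \repD[m,Y]\xrightarrow{[\id_{[m]},\id_Y]} L\repD[m,Y] \]
to a trivial cofibration in $[\Ch L\repD[m,Y]^{\op},\MSThnsset]_\proj$; the case $m=0$ will be immediate because $L\repD[0,Y]\cong\repD[0]$ and, under the identifications of \cref{prop:StF0,rem:StF0off}, the map $\St_{\repD[0]}(\iota_0\widehat{\times}f)$ is simply $f$, which is a trivial cofibration in $\MSThnsset$. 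For $m\geq 1$: the target $Y=\repThn\times\repS=\repD[\defThn,\defS]$ is a connected $\Thn$-space by \cref{rem:repareconnected}, and $f$, being a generating trivial cofibration of $\injThnspace$, is in particular a trivial cofibration in $\MSThnsset$ by \cref{rem:trivcofinloc}. So the hypotheses of the trivial-cofibration case of \cref{StLprestrivcof} are satisfied, and that proposition yields exactly that the induced $\Thnsset$-enriched natural transformation $\St_{L\repD[m,Y]}(\iota_m\widehat{\times}f)\colon\St_{L\repD[m,Y]}(\iota_m\widehat{\times}f)\to\St_{L\repD[m,Y]}([\id_{[m]},\id_Y])$ is a trivial cofibration in $[\Ch L\repD[m,Y]^{\op},\MSThnsset]_\proj$, which is what is needed.

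I expect essentially no obstacle here. All of the computational content --- the necklace-calculus descriptions of the relevant straightenings in \cref{Stofpushprod,StofidmY,rem:Stofpushprod}, and the lemmas \cref{lem:leveltrivcof,lem:projcof,prop:trivcofat0,lem:proj(triv)cof} feeding \cref{StLprestrivcof} --- has already been arranged so as to handle the ``(trivial) cofibration'' case uniformly, so this argument is just the trivial-cofibration shadow of \cref{StWprescof}. The only points I would be careful about are that the source $X$ of a generating trivial cofibration of $\injThnspace$ need not be a connected $\Thn$-space, which is harmless since \cref{StLprestrivcof} hypothesizes connectedness only of the target $Y$, and the degenerate case $m=0$ noted above. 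In short, the ``hard part'' is purely bookkeeping: checking that the lemmas invoked really were stated in the combined ``(trivial) cofibration'' form --- which they were.
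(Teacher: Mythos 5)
Your proof is correct and follows the paper's own argument essentially verbatim: reduce to the generating trivial cofibrations via \cref{rem:gencofinjslice,rem:gentrivcofinjThn}, pass to the slice over $L\repD[m,Y]$ by the reduction trick of \cref{rem:reductiontrick}, and conclude from \cref{rem:repareconnected,rem:trivcofinloc,StLprestrivcof}. Your separate treatment of the case $m=0$ is harmless but not needed, since \cref{StLprestrivcof} is stated for all $m\geq 0$ (and in fact the source $X$ of a generating trivial cofibration is connected, so \cref{rem:StF0off} applies there as well).
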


\begin{proof}
    By \cref{rem:gencofinjslice,rem:gentrivcofinjThn}, it is enough to show that the functor $\St_W$ sends the generating trivial cofibrations 
    \[ \partial\repD[m,Y]\amalg_{\partial\repD[m,X]} \repD[m,X]\xrightarrow{\iota_m\widehat{\times} f} \repD[m,Y] \to W \]
    in $\injsThnspaceslice{W}$ with $m\geq 0$, $f\colon X\hookrightarrow Y$ a monomorphism in $\Thnsset$ of the form 
    \[ X\xhookrightarrow{f} Y\coloneqq (\partial\repThn\hookrightarrow \repThn)\widehat{\times}(\Lambda^t[\defS]\hookrightarrow \repS), \]
    and $\repD[m,Y]\to W$ a map in $\sThnsset$, to a trivial cofibration in $[\Ch W^{\op},\MSThnsset]_\proj$. By \cref{reductiontrick}, it is enough to show that, for $m\geq 1$ and $f\colon X\hookrightarrow Y$ as above, the functor $\St_{L\repD[m,Y]}\colon \sThnssetsliceshort{L\repD[m,Y]}\to [\Ch L\repD[m,Y]^{\op},\Thnsset]$ sends the map 
    \[ \partial\repD[m,Y]\amalg_{\partial\repD[m,X]} \repD[m,X]\xrightarrow{\iota_m\widehat{\times} f} \repD[m,Y] \xrightarrow{[\id_{[m]},\id_Y]} L\repD[m,Y] \]
    to a trivial cofibration in $[\Ch L\repD[m,Y]^{\op},\MSThnsset]_\proj$. 
    
    Since by \cref{rem:repareconnected} the $\Thn$-space $Y=\repThn\times \repS=\repD[\defThn,\defS]$ is connected and by \cref{rem:trivcofinloc} the map $f$ is a trivial cofibration in $\MSThnsset$, by \cref{StLprestrivcof} the induced $\Thnsset$-enriched natural transformation 
    \[ \St_{L\repD[m,Y]}(\iota_m\widehat{\times} f)\colon \St_{L\repD[m,Y]}(\iota_m\widehat{\times} f)\to \St_{L\repD[m,Y]}([\id_{[m]},\id_Y]) \]
    is a trivial cofibration in $[\Ch L\repD[m,Y]^{\op},\MSThnsset]_\proj$, as desired.
\end{proof}

\begin{proof}[Proof of \cref{QPwithinj}]
    The functor
   $\St_W\colon \injsThnspaceslice{W}\to [\Ch W^{\op},\MSThnsset]_\proj$
    preserves (trivial) cofibrations by \cref{StWprescof,StWprestrivcof}, and so it is left Quillen.
\end{proof}

\subsection{Quillen pair after localizing} \label{subsec:QPafter}

We now prove that the desired straightening-un\-strai\-ght\-ening adjunction is a Quillen pair with respect to the desired localization.

\begin{prop} \label{QPwithloc}
Let $W$ be an object in $\pcatThn$. The adjunction $\St_W\dashv \Un_W$ is a Quillen pair enriched over $\MSThnsset$
\begin{tz}
 \node[](1) {$\MSrightfib{W}$}; 
\node[right of=1,xshift=4cm](2) {$[\Ch W^{\op},\MSThnsset]_\proj$};
\punctuation{2}{.};

\draw[->] ($(2.west)-(0,5pt)$) to node[below,la]{$\Un_W$} ($(1.east)-(0,5pt)$);
\draw[->] ($(1.east)+(0,5pt)$) to node[above,la]{$\St_W$} ($(2.west)+(0,5pt)$);

\node[la] at ($(1.east)!0.5!(2.west)$) {$\bot$};    
\end{tz}
\end{prop}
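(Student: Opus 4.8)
The plan is to obtain this Quillen pair from the one in \cref{QPwithinj} by invoking the universal property of left Bousfield localization. By construction (see \cref{subsec:MSrightfib}), $\MSrightfib{W}$ is the left Bousfield localization of $\injsThnspaceslice{W}$ at a set $S$ of monomorphisms, and every object of $\injsThnspaceslice{W}$ is cofibrant, since the cofibrations of $\injsThnspace$ are exactly the monomorphisms. Hence (see e.g.\ \cite[Theorem~3.3.20]{Hirschhorn}) the Quillen pair $\St_W\dashv\Un_W$ of \cref{QPwithinj} will descend to a Quillen pair $\St_W\colon\MSrightfib{W}\rightleftarrows[\Ch W^{\op},\MSThnsset]_\proj\colon\Un_W$ as soon as $\St_W$ sends every map in $S$ to a weak equivalence in $[\Ch W^{\op},\MSThnsset]_\proj$; no cofibrant-replacement subtlety arises here, precisely because all objects in sight are cofibrant.

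The next step is to reduce, as in \cref{rem:reductiontrick}, to a computation over the ``local'' base objects. The reduction of \cref{rem:reductiontrick} applies equally well to weak equivalences: if $g\colon A\to B$ is a map over $W$ and $\bar b\colon LB\to W$ denotes the structure map of $B$ regarded in $\pcatThn$, then $\St_W(g)\cong(\Ch\bar b)_!\St_{LB}(g)$ by \cref{basechangeSt}, and $(\Ch\bar b)_!$ is left Quillen by \cref{LKEisQP} hence preserves weak equivalences between cofibrant objects; moreover $\St_{LB}$ takes every object to a projectively cofibrant one, being left Quillen and preserving the initial object. So it suffices to treat the two families of maps in $S$ separately after this reduction. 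For the completeness-type maps $\repD[0,X]\xhookrightarrow{[\id_{[0]},f]}\repD[0,Y]$ with $f\colon X\hookrightarrow Y$ in $S_{\CSThn}$, one first records that every $\Thn$-space occurring as the source or target of a map in $S_{\CSThn}$ is connected — the Segal maps have a representable target and a wedge of representables as source, the completeness map is $\repD[0]\hookrightarrow N\bI$, and the sources and targets of the recursive maps are suspensions of nonempty objects — so $L\repD[0,X]\cong\repD[0]\cong L\repD[0,Y]$. By \cref{prop:StF0,rem:StF0off}, the reduced map is then identified with $f\colon X\to Y$ itself, which is a weak equivalence in $\MSThnsset$ by the very definition of $\MSThnsset$ as the localization of $\injThnspace$ at $S_{\CSThn}$ (\cref{subsec:MS(inftyn-1)}). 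This disposes of the second family.

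For the Segal-type maps $\repD[0,\defThn,0]\xhookrightarrow{[\langle m\rangle,\defThn,0]}\repD[m,\defThn,0]$ — where we may assume $m\geq 1$ — set $X\coloneqq F[\defThn,0]$, a connected $\Thn$-space by \cref{rem:repareconnected}, so that $\repD[m,\defThn,0]=\repD[m,X]$. After the reduction, the claim becomes that the morphism in $\sThnssetsliceshort{L\repD[m,X]}$ from $(\repD[0,X]\xrightarrow{[\langle m\rangle,\id_X]}L\repD[m,X])$ to $(\repD[m,X]\xrightarrow{[\id_{[m]},\id_X]}L\repD[m,X])$ is sent by $\St_{L\repD[m,X]}$ to a levelwise weak equivalence in $\MSThnsset$. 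By \cref{StofpickmX,StofidmX} and \cref{rem:Stoflanglem}, this $\Thnsset$-enriched natural transformation has, at each object $0\leq i\leq m$, the component induced on hom $\Thn$-spaces by the canonical $\Thnsset$-enriched functor $\Psi\colon\Ch L\repD[m,X]\amalg_{[0]}\Sigma X\to\Ch L\repD[m+1,X]$; equivalently $\Psi$ is $\Ch$ applied to the canonical map $\Cone[\id_X][\langle m\rangle]\to\Cone[\id_X][\id_{[m]}]\cong L\repD[m+1,X]$, via \cref{CConepickmX,Coneid,Sh1issigma}. Thus it is enough to prove that $\Psi$ is a weak equivalence in $\MSThncat$, for then its action on every hom $\Thn$-space is a weak equivalence in $\MSThnsset$, and in particular so is each $\Psi_{i,m+1}$. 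This is where necklace calculus enters: using \cref{cor:computationhomsC} one computes both hom $\Thn$-spaces — the source one is $\Hom_{\Ch L\repD[m,X]}(i,m)\times X$, since $\Sigma X$ is directed and the pushout glues along the single object $m$, whereas the target one is assembled from the totally non-degenerate necklaces of $L\repD[m+1,X]$ from $i$ to $m+1$ — and one verifies that $\Psi$ realizes a weak equivalence between them.

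Finally, the enrichment over $\MSThnsset$ will follow exactly as in the proof of \cref{intQP}: both model structures are $\MSThnsset$-enriched by \cref{MSrightfibenriched,MSprojenriched}, and $\St_W$ is compatible with the $\Thnsset$-tensors by \cref{lem:Stprestensors}. The step I expect to be the main obstacle is the very last one — checking that $\Psi\colon\Ch L\repD[m,X]\amalg_{[0]}\Sigma X\to\Ch L\repD[m+1,X]$ is a weak equivalence in $\MSThncat$. This is not a formal matter: the underlying map $\Cone[\id_X][\langle m\rangle]\cong L\repD[m,X]\amalg_{\repD[0]}L\repD[1,X]\to L\repD[m+1,X]$ is far from an isomorphism (the decoration of $L\repD[m+1,X]$ ``links'' the last arrow to the earlier ones), so the equivalence must be extracted from an explicit analysis of the necklace colimits describing the two hom $\Thn$-spaces, which is precisely the kind of computation carried out in \cref{subsec:Stoflastvertex} with the tools recalled in \cref{subsec:necklaces}.
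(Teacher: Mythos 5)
Your overall skeleton agrees with the paper's proof: localize via \cite[Theorem 3.3.20(1)(a)]{Hirschhorn} on top of \cref{QPwithinj}, reduce via \cref{basechangeSt} and \cref{rem:reductiontrick} to the bases $L\repD[m,X]$ with $X=\repD[\defThn,0]$ and $\repD[0]$, dispose of the maps $[\id_{[0]},f]$ with $f\in S_{\CSThn}$ using connectedness (\cref{Sisconnected}) together with \cref{prop:StF0,rem:StF0off}, identify the components of $\St_{L\repD[m,X]}([\langle m\rangle,\id_X])$ with the maps on hom $\Thn$-spaces induced by the canonical functor $\Psi\colon \Ch L\repD[m,X]\amalg_{[0]}\Sigma X\to \Ch L\repD[m+1,X]$ via \cref{rem:Stoflanglem}, and get the enrichment from \cref{MSrightfibenriched,MSprojenriched,lem:Stprestensors}. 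Your variant of the reduction step (transporting a weak equivalence along $(\Ch\bar b)_!$ by Ken Brown, using that all objects of the injective slice are cofibrant) is legitimate; the paper instead shows the relevant images are trivial cofibrations so that \cref{reductiontrick} applies verbatim.

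However, there is a genuine gap at exactly the point you flag as ``the main obstacle'': you never prove that $\Psi$ is a weak equivalence in $\MSThncat$, and that statement is the entire mathematical content of the claim that straightening inverts the localizing maps $[\langle m\rangle,\defThn,0]$ — without it the proposition is not proved. Your suggested route (compute $\Hom_{\Ch L\repD[m,X]\amalg_{[0]}\Sigma X}(i,m+1)\cong \Hom_{\Ch L\repD[m,X]}(i,m)\times X$ and compare it with the totally non-degenerate necklace colimit describing $\Hom_{\Ch L\repD[m+1,X]}(i,m+1)$) is only sketched, and a direct comparison of those colimits is genuinely nontrivial. The paper closes this gap without any new necklace computation: by \cref{lem:SigmavsSh} (imported from \cite{MRR1}) the comparison $\Sigma_k X\to \Ch L\repD[k,X]$ is a weak equivalence in $\MSThncat$ for all $k$; \cref{lem:Sigmavspushout} shows $\Sigma_{m+1}X\to \Ch L\repD[m,X]\amalg_{[0]}\Sigma X$ is a weak equivalence, since both sides are homotopy pushouts of levelwise weakly equivalent spans (the map $[0]\to\Sigma X$ being a cofibration); and \cref{prop:pushoutvsSh} concludes by $2$-out-of-$3$ in the triangle under $\Sigma_{m+1}X$, after which \cref{prop:StLsendsrightfib} yields exactly the levelwise weak equivalence you want. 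To complete your argument you should either reproduce this $\Sigma_{m+1}X$/$2$-out-of-$3$ argument or actually carry out the necklace comparison you defer; as written, the key weak equivalence is asserted, not proved.
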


To prove the above result, by \cite[Theorem 3.3.20(1)(a)]{Hirschhorn}, \cref{QPwithinj}, and the reduction trick from \cref{rem:reductiontrick}, it is enough to show that a map in $\sThnssetsliceshort{L\repD[m,X]}$ 
\[ [\langle m\rangle,\id_X]\colon \repD[0,X]\to \repD[m,X] \]
for $m\geq 1$ and $X$ a connected $\Thn$-space is sent by the functor \[ \St_{L\repD[m,X]}\colon \sThnssetsliceshort{L\repD[m,X]}\to [\Ch L\repD[m,X]^{\op},\Thnsset] \]
to a weak equivalence in $[\Ch L\repD[m,X]^{\op},\MSThnsset]_\proj$; and that a map in $\sThnsset$ 
\[ [\id_{[0]},f]\colon \repD[0,X]\to \repD[0,Y] \]
for $f\colon X\hookrightarrow Y$ a trivial cofibration in $\MSThnsset$ between connected $\Thn$-spaces is sent by the functor $\St_{\repD[0]}\colon \sThnsset\to \Thnsset$ to a weak equivalence in $\MSThnsset$.

We first deal with the map $[\langle m\rangle,\id_X]$. Recall the functor $\Sigma_m\colon \Thnsset\to \Thncat$ which sends an object $X\in \Thnsset$ to the pushout $\Sigma_mX\coloneqq \Sigma X\amalg_{[0]}\ldots \amalg_{[0]} \Sigma X$ of $m$ copies of $\Sigma X$ along consecutive sources and targets. Then by \cite[Lemma 4.2.3]{MRR1} combined with \cite[Corollary 3.5.2]{MRR1} we have the following result. 

\begin{lemma} \label{lem:SigmavsSh}
    Let $m\geq 1$, and $X$ be a connected $\Thn$-space. The $\Thnsset$-enriched functor 
    \[ \Sigma_m X\to \Ch L\repD[m,X]\]
    is a weak equivalence in $\MSThncat$. 
\end{lemma}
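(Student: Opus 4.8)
The plan is to realise the functor of the statement as $\Ch$ applied to the reflection of a spine inclusion, and then to transport a Segal-type (equivalently, necklace-theoretic) weak equivalence through the left Quillen functor $\Ch$. Write $\Sp[m]$ for the spine $\repD[1]\amalg_{\repD[0]}\cdots\amalg_{\repD[0]}\repD[1]$ ($m$ beads, glued along consecutive endpoints), viewed in $\sThnsset$, and let $s\colon \Sp[m]\times X\hookrightarrow \repD[m]\times X=\repD[m,X]$ be the product of the spine inclusion $\Sp[m]\hookrightarrow\repD[m]$ with $X$. Since products in $\sThnsset$ and the reflection $L$ both preserve colimits, and since $L\repD[0,X]\cong\repD[0]$ for a connected $\Thn$-space $X$ by \cref{RmkPushout}, one obtains a natural isomorphism $L(\Sp[m]\times X)\cong L\repD[1,X]\amalg_{\repD[0]}\cdots\amalg_{\repD[0]}L\repD[1,X]$. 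Applying the colimit-preserving functor $\Ch$, together with $\Ch\repD[0]\cong[0]$ and the identification $\Ch L\repD[1,X]\cong\Sigma X$ of \cref{Sh1issigma}, yields a natural isomorphism $\Ch L(\Sp[m]\times X)\cong\Sigma_m X$, under which a routine check identifies the canonical comparison functor $\Sigma_m X\to\Ch L\repD[m,X]$ with $\Ch L(s)$. As both $\Sigma_m X$ and $\Ch L\repD[m,X]$ have object set $\{0,1,\ldots,m\}$ and this functor is the identity on objects, it remains to show it is a weak equivalence on all hom $\Thn$-spaces.

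It therefore suffices to prove that $L(s)\colon L(\Sp[m]\times X)\to L\repD[m,X]$ is a weak equivalence in $\pcatinj$: granting this, since $\Ch\colon\pcatinj\to\MSThncat$ is left Quillen by \cref{injQuillen} and every object of $\pcatThn$ is cofibrant in $\pcatinj$, Ken Brown's lemma shows that $\Ch$ carries $L(s)$ to a weak equivalence in $\MSThncat$, which is the claim. To establish that $L(s)$ is a weak equivalence in $\pcatinj$, I would follow the methods of \cite{MRR1}: either one exhibits a Segal-localized model structure on $\sThnsset$ in which the spine inclusions times a fixed object are weak equivalences — closely related to the model structures appearing in the proof of \cref{postcompisQP}, whose fibrant objects satisfy the Segal condition and are hence local with respect to such maps — and checks that the reflection $L$ is compatible with this localization; or one computes directly, using \cref{cor:computationshomC1ordered}, that the hom $\Thn$-space $\Hom_{\Ch L\repD[m,X]}(i,j)$ is the colimit over the poset of totally non-degenerate necklaces from $i$ to $j$ in the levels of $L\repD[m,X]$, and identifies this colimit with $X^{\times(j-i)}$ for $i\le j$ and with $\emptyset$ for $i>j$, matching $\Hom_{\Sigma_m X}(i,j)$ compatibly with $\Ch L(s)$.

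I expect the main obstacle to be exactly this step — that $L(s)$ is a weak equivalence in $\pcatinj$. One cannot simply invoke left Quillen-ness of $L$ out of the injective model structure on $\sThnsset$, since the spine inclusion is not an injective weak equivalence; one genuinely needs either the Segal-localized model structure together with the compatibility of $L$ with it (which is nontrivial, as $\pcatinj$ carries an additional discreteness constraint on level $0$), or the hands-on necklace computation, which is delicate because the indexing poset of totally non-degenerate necklaces is infinite and the colimit must be identified by hand. This is precisely the content isolated in \cite[Lemma 4.2.3]{MRR1} and \cite[Corollary 3.5.2]{MRR1}, from which the lemma then follows immediately.
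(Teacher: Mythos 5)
Your proposal is correct and in substance coincides with the paper's proof, which gives no independent argument here: the lemma is deduced by citing \cite[Lemma 4.2.3]{MRR1} together with \cite[Corollary 3.5.2]{MRR1}, exactly the two results you ultimately fall back on. Your preliminary sketch (realizing the comparison functor as $\Ch L$ applied to the spine inclusion and reducing to a weak equivalence in $\pcatinj$ — a reduction which, since $\Ch$ is a left Quillen equivalence and all objects of $\pcatinj$ are cofibrant, is essentially equivalent to the statement itself) is a reasonable reconstruction, with the caveat that $\Hom_{\Ch L\repD[m,X]}(i,j)$ is in general only weakly equivalent, not isomorphic, to $X^{\times(j-i)}$ (already $\Hom_{\CL \repS[2]}(0,2)\cong \repS[1]$ is an interval rather than a point), so the necklace-colimit ``identification'' in your second route can only hold up to homotopy — and establishing that is precisely the content delegated to the cited results.
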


\begin{lemma} \label{lem:Sigmavspushout}
    Let $m\geq 1$, and $X$ be a connected $\Thn$-space. The $\Thnsset$-enriched functor 
    \[ \Sigma_{m+1} X\to \Ch L\repD[m,X]\amalg_{[0]} \Sigma X\]
    is a weak equivalence in $\MSThncat$. 
\end{lemma}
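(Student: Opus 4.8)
The plan is to realise the comparison functor $\Sigma_{m+1}X\to \Ch L\repD[m,X]\amalg_{[0]}\Sigma X$ as a pushout of the weak equivalence $\Sigma_m X\to \Ch L\repD[m,X]$ of \cref{lem:SigmavsSh} along a cofibration, and then to conclude by left properness of $\MSThncat$.

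First I would unwind the definition of the functor $\Sigma_{m+1}$ as an iterated pushout of copies of $\Sigma$ along consecutive sources and targets, so as to obtain a natural isomorphism $\Sigma_{m+1}X\cong \Sigma_m X\amalg_{[0]}\Sigma X$ in $\Thncat$, where the pushout glues the object $m$ of $\Sigma_m X$ to the object $0$ of $\Sigma X$. Combining this with the description of $\Ch L\repD[m,X]\amalg_{[0]}\Sigma X$ from \cref{CConepickmX} as the pushout of $\Sigma X\leftarrow [0]\to \Ch L\repD[m,X]$ glued along the object $m$ of $\Ch L\repD[m,X]$, and using that the weak equivalence $\Sigma_m X\to \Ch L\repD[m,X]$ of \cref{lem:SigmavsSh} carries the object $m$ to the object $m$, the pasting law for pushouts then shows that the following square is a pushout in $\Thncat$, whose right vertical map is the functor in the statement.
\begin{tz}
\node[](1) {$\Sigma_m X$};
\node[right of=1,xshift=3.5cm](2) {$\Sigma_{m+1}X$};
\node[below of=1](3) {$\Ch L\repD[m,X]$};
\node[below of=2](4) {$\Ch L\repD[m,X]\amalg_{[0]}\Sigma X$};
\draw[->] (1) to (2);
\draw[->] (1) to node[left,la]{$\simeq$} (3);
\draw[->] (2) to (4);
\draw[->] (3) to (4);
\pushout{4};
\end{tz}
It then remains to observe that the top horizontal map $\Sigma_m X\to \Sigma_{m+1}X$ is a cofibration in $\MSThncat$: it is the inclusion obtained from $\Sigma_m X$ by first adjoining the object $m+1$ (a pushout of the generating cofibration $\emptyset\to [0]$) and then adjoining the hom $\Thn$-space $X$ from $m$ to $m+1$ (a pushout of $\Sigma(\emptyset\hookrightarrow X)$, which is a cofibration since every object of $\MSThnsset$ is cofibrant and the suspension functor $\Sigma$ preserves cofibrations). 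Since the left vertical map of the square above is a weak equivalence by \cref{lem:SigmavsSh} and $\MSThncat$ is left proper, the right vertical map $\Sigma_{m+1}X\to \Ch L\repD[m,X]\amalg_{[0]}\Sigma X$, being the pushout of a weak equivalence along a cofibration, is a weak equivalence, as desired.

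I do not anticipate a serious obstacle: the statement is short, and the only points requiring a little care are the pasting-law identification of the square (ensuring that its right vertical map is indeed the functor in the statement) and the standard fact that freely adjoining an object and a hom $\Thn$-space to a $\Thnsset$-enriched category produces a cofibration in $\MSThncat$.
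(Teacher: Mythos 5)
Your proof is correct and is essentially the paper's argument: both rest on the decomposition $\Sigma_{m+1}X\cong \Sigma_m X\amalg_{[0]}\Sigma X$, the weak equivalence $\Sigma_m X\to \Ch L\repD[m,X]$ of \cref{lem:SigmavsSh}, and the homotopy invariance of pushouts along cofibrations in the left proper model structure $\MSThncat$. The only difference is packaging: the paper compares the two spans $\Sigma X\leftarrow[0]\to\Sigma_m X$ and $\Sigma X\leftarrow[0]\to\Ch L\repD[m,X]$ and uses that a levelwise weak equivalence of such spans induces a weak equivalence between their (homotopy) pushouts, whereas you apply left properness to a single cobase change along the cofibration $\Sigma_m X\hookrightarrow\Sigma_{m+1}X$ identified via the pasting law.
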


\begin{proof}
    Consider the following diagram of spans in $\Thncat$
    \begin{tz}
\node[](1') {$\Sigma_m X$};
\node[right of=1',xshift=.9cm](2') {$[0]$};
\node[right of=2',xshift=.8cm](3') {$\Sigma X$};
\draw[->] (2') to node[above,la]{$m$} (1'); 
\draw[right hook->] (2') to node[above,la]{$0$} (3'); 

\node[below of=1'](1) {$\Ch L\repD[m,X]$};
\node[below of=2'](2) {$[0]$};
\node[below of=3'](3) {$\Sigma X$};
\draw[->] (2) to node[below,la]{$m$} (1); 
\draw[right hook->] (2) to node[below,la]{$0$} (3); 

\draw[->] (1') to node[left,la]{$\simeq$} (1);
\draw[d] (2') to (2);
\draw[d] (3') to (3);
    \end{tz}
    where $0\colon [0]\to \Sigma X$ is a cofibration in $\MSThncat$ and $\Sigma_m X\to \Ch L\repD[m,X]$ is a weak equivalence in $\MSThncat$ by \cref{lem:SigmavsSh}. In particular, the pushout in $\Thncat$ of each span is a homotopy pushout in $\MSThncat$ and every vertical $\Thnsset$-enriched functor is a weak equivalence in $\MSThncat$. Hence the induced $\Thnsset$-enriched functor between pushouts
    \[ \Sigma_{m+1} X\to \Ch L\repD[m,X]\amalg_{[0]} \Sigma X\]
    is also a weak equivalence in $\MSThncat$, as desired.
\end{proof}

\begin{prop} \label{prop:pushoutvsSh}
    Let $m\geq 1$, and $X$ be a connected $\Thn$-space. The $\Thnsset$-enriched functor 
    \[ \Ch L\repD[m,X]\amalg_{[0]} \Sigma X\to \Ch L\repD[m+1,X] \]
    is a weak equivalence in $\MSThncat$.
\end{prop}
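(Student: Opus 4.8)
The plan is to compare both sides with the "fully free" enriched category $\Sigma_{m+1}X$ via a two-out-of-three argument in $\MSThncat$. By \cref{lem:Sigmavspushout}, the canonical $\Thnsset$-enriched functor $\Sigma_{m+1}X\to \Ch L\repD[m,X]\amalg_{[0]}\Sigma X$ is a weak equivalence in $\MSThncat$, and by \cref{lem:SigmavsSh} applied with $m+1$ in place of $m$, the functor $\Sigma_{m+1}X\to \Ch L\repD[m+1,X]$ is also a weak equivalence in $\MSThncat$. So it suffices to check that the triangle
\begin{tz}
\node[](1) {$\Sigma_{m+1}X$};
\node[below of=1,xshift=-2.4cm](2) {$\Ch L\repD[m,X]\amalg_{[0]}\Sigma X$};
\node[below of=1,xshift=2.4cm](3) {$\Ch L\repD[m+1,X]$};
\draw[->] (1) to node[left,la]{$\simeq$} (2);
\draw[->] (1) to node[right,la]{$\simeq$} (3);
\draw[->] (2) to (3);
\end{tz}
commutes, where the bottom horizontal functor is the one appearing in the statement (the canonical comparison map). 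Then two-out-of-three in $\MSThncat$ forces the bottom map to be a weak equivalence.

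The first step, then, is to pin down the canonical $\Thnsset$-enriched functor $\Ch L\repD[m,X]\amalg_{[0]}\Sigma X\to \Ch L\repD[m+1,X]$ and check the triangle commutes. The functor is induced by the two maps $\Ch L\repD[m,X]\to \Ch L\repD[m+1,X]$ (coming from the coface $d^{m+1}\colon [m]\to [m+1]$) and $\Sigma X\cong \Ch L\repD[1,X]\to \Ch L\repD[m+1,X]$ (coming from $\langle m,m+1\rangle\colon [1]\to[m+1]$), which agree on the object $[0]$ picked out by $m\in L\repD[m,X]$ and $0\in L\repD[1,X]$ — exactly the identification along which the pushout defining $\Cone[\id_X][\id_{[m]}]$ is taken, so this is nothing but $\Ch$ applied to the canonical map $\Cone[\id_X][\id_{[m]}]\to L\repD[m+1,X]$ of \cref{Coneid}, which is actually an isomorphism. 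Commutativity of the triangle is then a routine check: all three functors are built from the suspension-and-glue pattern, and on objects $\Sigma_{m+1}X$, $\Ch L\repD[m,X]\amalg_{[0]}\Sigma X$, and $\Ch L\repD[m+1,X]$ all have object set $\{0,1,\ldots,m+1\}$ with the maps being the identity on objects; on the generating hom $\Thn$-spaces (the copies of $X$ in $\Sigma_{m+1}X$) one verifies both composites send the $i$-th copy of $X$ to the same generator, using that $\Sigma_{m+1}X\to\Ch L\repD[m,X]\amalg_{[0]}\Sigma X$ restricts to $\Sigma_mX\to\Ch L\repD[m,X]$ on the first $m$ copies and to $\mathrm{id}_{\Sigma X}$ on the last copy, and likewise for the map to $\Ch L\repD[m+1,X]$.

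The main obstacle I anticipate is not the two-out-of-three step but the precise bookkeeping in identifying the comparison functor and checking the triangle commutes strictly (not just up to homotopy) — one must be careful that the pushout decomposition of $\Sigma_{m+1}X$ into $m+1$ suspensions glued along consecutive objects is compatible, on the nose, with both the pushout $\Ch L\repD[m,X]\amalg_{[0]}\Sigma X$ and with $\Ch L\repD[m+1,X]$, and that the gluing object "$m$" is matched consistently throughout. Once the triangle is seen to commute, the conclusion is immediate: since $\MSThncat$ admits a two-out-of-three property for weak equivalences and two of the three legs of the triangle are weak equivalences by \cref{lem:Sigmavspushout,lem:SigmavsSh}, the third leg — the functor $\Ch L\repD[m,X]\amalg_{[0]}\Sigma X\to \Ch L\repD[m+1,X]$ — is a weak equivalence in $\MSThncat$, as desired.
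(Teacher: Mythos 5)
Your proposal is correct and follows essentially the same route as the paper: both compare the two sides with $\Sigma_{m+1}X$ using \cref{lem:SigmavsSh} (at level $m+1$) and \cref{lem:Sigmavspushout}, note that the evident triangle commutes, and conclude by $2$-out-of-$3$ in $\MSThncat$. Your extra care in identifying the comparison functor and verifying strict commutativity of the triangle is a reasonable elaboration of a step the paper leaves implicit.
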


\begin{proof}
    We have a commutative triangle in $\MSThncat$
    \begin{tz}
        \node[](1) {$\Sigma_{m+1} X$}; 
        \node[right of=1,xshift=2.5cm](2) {$\Ch L\repD[m,X]\amalg_{[0]} \Sigma X$}; 
        \node[below of=2](3) {$\Ch L\repD[m+1,X]$}; 

        \draw[->] (1) to node[above,la]{$\simeq$} (2); 
        \draw[->] (1) to node[below,la]{$\simeq$} (3); 
        \draw[->] (2) to (3);
    \end{tz}
    where $\Sigma_{m+1} X\to \Ch L\repD[m+1,X]$ and $\Sigma_{m+1} X\to \Ch L\repD[m,X]\amalg_{[0]} \Sigma X$ are weak equivalences in $\MSThncat$ by \cref{lem:SigmavsSh,lem:Sigmavspushout}. Hence, by $2$-out-of-$3$, the $\Thnsset$-enriched functor 
    \[ \Ch L\repD[m,X]\amalg_{[0]} \Sigma X\to \Ch L\repD[m+1,X] \]
    is a weak equivalence in $\MSThncat$, as desired.
\end{proof}

\begin{prop} \label{prop:StLsendsrightfib}
    Let $m\geq 1$, and $X$ be a connected $\Thn$-space. The induced $\Thnsset$-enriched natural transformation
    \[ \St_{L\repD[m,X]}([\langle m\rangle,\id_X])\colon \St_{L\repD[m,X]}([\langle m\rangle,\id_X])\to \St_{L\repD[m,X]}([\id_{[m]},\id_X]) \]
    is a trivial cofibration in $[\Ch L\repD[m,X]^{\op},\MSThnsset]_\proj$.
\end{prop}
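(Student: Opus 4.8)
The plan is to verify separately that the $\Thnsset$-enriched natural transformation $\St_{L\repD[m,X]}([\langle m\rangle,\id_X])$ is a cofibration and a weak equivalence in $[\Ch L\repD[m,X]^{\op},\MSThnsset]_\proj$; it is then a trivial cofibration. The observation that makes the cofibration part accessible is that, by construction, this natural transformation is the image under the functor $\St_{L\repD[m,X]}\colon \sThnssetsliceshort{L\repD[m,X]}\to [\Ch L\repD[m,X]^{\op},\Thnsset]$ of the morphism in $\sThnssetsliceshort{L\repD[m,X]}$ from the object $[\langle m\rangle,\id_X]\colon \repD[0,X]\to L\repD[m,X]$ to the object $[\id_{[m]},\id_X]\colon \repD[m,X]\to L\repD[m,X]$ whose underlying map in $\sThnsset$ is $[\langle m\rangle,\id_X]\colon \repD[0,X]\to \repD[m,X]$. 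First I would note that this underlying map, being the product of the vertex inclusion $\langle m\rangle\colon \repD[0]\hookrightarrow \repD[m]$ with $\id_X$, is a monomorphism, hence a cofibration in $\injsThnspace$, so the morphism above is a cofibration in $\injsThnspaceslice{L\repD[m,X]}$. Since the functor $\St_{L\repD[m,X]}\colon \injsThnspaceslice{L\repD[m,X]}\to [\Ch L\repD[m,X]^{\op},\MSThnsset]_\proj$ is left Quillen by \cref{QPwithinj}, it preserves cofibrations, and therefore $\St_{L\repD[m,X]}([\langle m\rangle,\id_X])$ is a cofibration in $[\Ch L\repD[m,X]^{\op},\MSThnsset]_\proj$.

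For the weak equivalence part, I would appeal to \cref{rem:Stoflanglem}: for each $0\leq i\leq m$, the component of $\St_{L\repD[m,X]}([\langle m\rangle,\id_X])$ at $i$ is the map $\Hom_{\Ch L\repD[m,X]\amalg_{[0]}\Sigma X}(i,m+1)\to \Hom_{\Ch L\repD[m+1,X]}(i,m+1)$ induced on hom $\Thn$-spaces by the canonical $\Thnsset$-enriched functor $\Ch L\repD[m,X]\amalg_{[0]}\Sigma X\to \Ch L\repD[m+1,X]$. By \cref{prop:pushoutvsSh} this functor is a weak equivalence in $\MSThncat$, and since it is moreover the identity on objects, the definition of weak equivalence in $\MSThncat$ ensures that it induces a weak equivalence in $\MSThnsset$ on each corresponding pair of hom $\Thn$-spaces, in particular on the displayed ones. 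As these $i$ exhaust the object set $\{0,1,\ldots,m\}$ of $\Ch L\repD[m,X]$, every component of $\St_{L\repD[m,X]}([\langle m\rangle,\id_X])$ is a weak equivalence in $\MSThnsset$; since weak equivalences in $[\Ch L\repD[m,X]^{\op},\MSThnsset]_\proj$ are detected objectwise, the natural transformation is a weak equivalence. Combined with the previous paragraph, this yields that it is a trivial cofibration.

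The crux is the identification in the first paragraph --- recognizing $\St_{L\repD[m,X]}([\langle m\rangle,\id_X])$ as the straightening of an honest monomorphism in the slice over $L\repD[m,X]$, which lets \cref{QPwithinj} supply the cofibrancy for free. This detour is necessary because the shortcut used in the pushout-product case around \cref{rem:Stofpushprod}, namely \cref{lem:proj(triv)cof}, does not apply here: the components of $\St_{L\repD[m,X]}([\langle m\rangle,\id_X])$ at $i>0$ are genuine weak equivalences rather than isomorphisms, so one cannot conclude projective cofibrancy from directedness alone. Beyond this, the only substantial input is \cref{prop:pushoutvsSh}, which is already in hand, and everything else is bookkeeping.
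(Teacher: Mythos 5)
Your proof is correct and follows essentially the same route as the paper: the paper also obtains the cofibration part from the fact that $\St_{L\repD[m,X]}$ preserves cofibrations of the un-localized slice (it cites \cref{StWprescof}, the content behind your appeal to \cref{QPwithinj}), and then deduces the weak equivalence part exactly as you do, from \cref{rem:Stoflanglem}, \cref{prop:pushoutvsSh}, and the objectwise nature of weak equivalences in $[\Ch L\repD[m,X]^{\op},\MSThnsset]_\proj$. Your closing observation that \cref{lem:proj(triv)cof} cannot be used here is accurate but not needed.
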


\begin{proof}
    By \cref{StWprescof}, we know that the above $\Thnsset$-enriched natural transformation is a cofibration in $[\Ch L\repD[m,X]^{\op},\MSThnsset]_\proj$. Hence it remains to show that it is also a weak equivalence in $[\Ch L\repD[m,X]^{\op},\MSThnsset]_\proj$. 
    
    For $0\leq i\leq m$, by \cref{rem:Stoflanglem} the map  
    \[ \St_{L\repD[m,X]}([\langle m\rangle,\id_X])_i\colon \St_{L\repD[m,X]}([\langle m\rangle,\id_X])(i)\to \St_{L\repD[m,X]}([\id_{[m]},\id_X])(i) \]
    is given by the map in $\Thnsset$
    \begin{equation}\label{maponhom} \Hom_{\Ch L\repD[m,X]\amalg_{[0]} \Sigma X}(i,m+1)\to \Hom_{\Ch L\repD[m+1,X]}(i,m+1) \end{equation}
    induced by the weak equivalence $\Ch L\repD[m,X]\amalg_{[0]} \Sigma X\to \Ch L\repD[m+1,X]$ in $\MSThncat$ from \cref{prop:pushoutvsSh}. Hence, by definition of the weak equivalences in $\MSThncat$, we get that~\eqref{maponhom} is a weak equivalence in $\MSThnsset$. This shows that
    \[ \St_{L\repD[m,X]}([\langle m\rangle,\id_X])\colon \St_{L\repD[m,X]}([\langle m\rangle,\id_X])\to \St_{L\repD[m,X]}([\id_{[m]},\id_X]) \]
    is a weak equivalence in $[\Ch L\repD[m,X]^{\op},\MSThnsset]_\proj$, as desired.
\end{proof}

\begin{prop} \label{StWofrightfib}
    Let $W$ be an object in $\pcatThn$. Then the  straightening functor $\St_W\colon \sThnssetslice{W}\to [\Ch W^{\op},\Thnsset]$ sends a map 
    \[ \repD[0,\defThn,0]\xrightarrow{[\langle m\rangle,\id_{\repD[\defThn,0]}]} \repD[m,\defThn,0]\to W \]
    with $m\geq 1$, $\defThn\in \Thn$, and $\repD[m,\defThn,0]\to W$ a map in $\sThnsset$, to a trivial cofibration in~$[\Ch W^{\op},\MSThnsset]_\proj$.
\end{prop}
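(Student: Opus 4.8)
The plan is to derive the statement directly from the computation \cref{prop:StLsendsrightfib} by means of the reduction trick \cref{reductiontrick}; at this point there is no analytic content left, only a bookkeeping of slice categories.

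Concretely, I would first set $X\coloneqq\repD[\defThn,0]=\repThn\times\repS[0]$, which is a connected $\Thn$-space by \cref{rem:repareconnected}, so that the map in question reads $[\langle m\rangle,\id_X]\colon \repD[0,X]\to\repD[m,X]\to W$. Using the adjunction $L\dashv I$ from \cref{section:precat}, let $\bar\sigma\colon L\repD[m,X]\to W$ be the map of $\pcatThn$ adjoint to the given map $\sigma\colon \repD[m,X]\to W$ of $\sThnsset$; by the triangle identity, $\sigma$ factors as $\repD[m,X]\xrightarrow{[\id_{[m]},\id_X]} L\repD[m,X]\xrightarrow{\bar\sigma} W$, where $[\id_{[m]},\id_X]$ denotes the canonical map of \cref{RmkPushout}. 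I would then check that the map $[\langle m\rangle,\id_X]\colon \repD[0,X]\to\repD[m,X]\to W$, regarded as a morphism of $\sThnssetslice{W}$, is precisely the image under $\bar\sigma_!\colon \sThnssetsliceshort{L\repD[m,X]}\to\sThnssetslice{W}$ of the morphism of $\sThnssetsliceshort{L\repD[m,X]}$ from the object $[\langle m\rangle,\id_X]\colon \repD[0,X]\to L\repD[m,X]$ to the object $[\id_{[m]},\id_X]\colon \repD[m,X]\to L\repD[m,X]$ whose underlying $\sThnsset$-map is $[\langle m\rangle,\id_X]$. This is just unwinding the unit of $L\dashv I$, but it is the one place where a slip would be easy, so I would spell it out.

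With that in place, \cref{prop:StLsendsrightfib}---which applies since $m\geq1$ and $X$ is connected---shows that $\St_{L\repD[m,X]}$ sends that morphism of $\sThnssetsliceshort{L\repD[m,X]}$ to a trivial cofibration in $[\Ch L\repD[m,X]^{\op},\MSThnsset]_\proj$. Applying \cref{reductiontrick} to $\bar\sigma\colon L\repD[m,X]\to W$ then shows that $\St_W\circ\bar\sigma_!$ sends the same morphism---that is, $\St_W$ sends $[\langle m\rangle,\id_{\repD[\defThn,0]}]\colon \repD[0,\defThn,0]\to\repD[m,\defThn,0]\to W$---to a trivial cofibration in $[\Ch W^{\op},\MSThnsset]_\proj$, which is the claim. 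The main obstacle here is thus purely organizational: pinning down the correct instance of \cref{reductiontrick} (with $f=\bar\sigma$ and the morphism of slices over $L\repD[m,X]$ supplied by \cref{prop:StLsendsrightfib}); all of the genuine work is already contained in \cref{prop:StLsendsrightfib} and its inputs \cref{Stofalphafmono,prop:pushoutvsSh}.
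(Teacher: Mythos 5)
Your proposal is correct and follows essentially the same route as the paper: reduce via \cref{reductiontrick} (through the adjoint factorization of $\repD[m,\defThn,0]\to W$ over $L\repD[m,\defThn,0]$), note that $\repD[\defThn,0]=\repThn$ is connected by \cref{rem:repareconnected}, and conclude with \cref{prop:StLsendsrightfib}. The only difference is that you spell out the $L\dashv I$ bookkeeping explicitly, which the paper leaves implicit.
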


\begin{proof}
    Note that, by \cref{reductiontrick}, it is enough to show that the straightening functor $\St_{L\repD[m,\defThn,0]}\colon \sThnssetsliceshort{L\repD[m,\defThn,0]}\to [\Ch L\repD[m,\defThn,0]^{\op},\Thnsset]$ sends the map 
    \[ \repD[0,\defThn,0]\xrightarrow{[\langle m\rangle,\id_{\repD[\defThn,0]}]} \repD[m,\defThn,0]\xrightarrow{[\id_{[m]},\id_{\repD[\defThn,0]}]} \repD[m,\defThn,0] \]
    to a trivial cofibration in $[\Ch L\repD[m,\defThn,0]^{\op},\MSThnsset]_\proj$. Since by \cref{rem:repareconnected} the representable $F[\defThn,0]=\repThn$ is connected, by \cref{prop:StLsendsrightfib} the induced $\Thnsset$-enriched natural transformation 
    \[ \St_{L\repD[m,\defThn,0]}([\langle m\rangle,\id_{\repD[\defThn,0]}])\colon \St_{L\repD[m,\defThn,0]}([\langle m\rangle,\id_{\repD[\defThn,0]}])\to \St_{L\repD[m,\defThn,0]}([\id_{[m]},\id_{\repD[\defThn,0]}]) \]
    is a trivial cofibration in $[\Ch L\repD[m,\defThn,0]^{\op},\MSThnsset]_\proj$, as desired.
\end{proof}

We now deal with the map $[\id_{[0]},f]$. 

\begin{prop}
    Let $f\colon X\hookrightarrow Y$ be a trivial cofibration in $\MSThnsset$ between connected $\Thn$-spaces. Then the induced map 
    \[ \St_{\repD[0]}([\id_{[0]},f])\colon \St_{\repD[0]}(\repD[0,X])\to \St_{\repD[0]}(\repD[0,Y]) \]
    is a trivial cofibration in $\MSThnsset$. 
\end{prop}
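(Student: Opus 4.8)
The plan is to observe that this statement is an immediate consequence of the explicit computations of $\St_{\repD[0]}$ carried out in \cref{subsec:Stoflastvertex}, and requires essentially no new argument. First, recall from \cref{prop:StF0} that for any connected $\Thn$-space $Z$ there is a natural isomorphism $\St_{\repD[0]}(\repD[0,Z])\cong Z$ in $\Thnsset$. Applying this to both $X$ and $Y$ (both of which are connected by hypothesis, so that \cref{prop:StF0,rem:StF0off} apply), and using \cref{rem:StF0off}, one identifies the map $\St_{\repD[0]}([\id_{[0]},f])\colon \St_{\repD[0]}(\repD[0,X])\to \St_{\repD[0]}(\repD[0,Y])$ with the map $f\colon X\to Y$ itself, via the natural isomorphisms of \cref{prop:StF0}. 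Since $f$ is assumed to be a trivial cofibration in $\MSThnsset$, and since the class of trivial cofibrations is closed under isomorphism in the arrow category, the map $\St_{\repD[0]}([\id_{[0]},f])$ is a trivial cofibration in $\MSThnsset$, as desired.

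There is no real obstacle here: all the needed computational content — the identification $\Ch\repD[0]\cong[0]$, the resulting description of $\St_{\repD[0]}$ as a functor $\sThnsset\to\Thnsset$, the formula $\St_{\repD[0]}(\repD[0,X])\cong X$, and its functoriality in $f$ — has already been established in \cref{subsec:Stoflastvertex}. The only thing to flag is that connectedness of $X$ and $Y$ is exactly the hypothesis under which these prior results are stated, so it is automatically satisfied. Together with \cref{StWofrightfib}, which handles the maps $[\langle m\rangle,\id_X]$, this completes the list of generating localizing maps that must be sent to weak equivalences, thereby feeding into the proof of \cref{QPwithloc}.
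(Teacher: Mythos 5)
Your proof is correct and follows exactly the paper's own argument: the paper's proof consists precisely of invoking the computation in \cref{rem:StF0off} (itself a consequence of \cref{prop:StF0}), which identifies $\St_{\repD[0]}([\id_{[0]},f])$ with $f$ itself, so the conclusion is immediate. The closing remarks about how this feeds into \cref{QPwithloc} are accurate context but not part of the proof.
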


\begin{proof}
    This is straightforward from the computation in \cref{rem:StF0off}.
\end{proof}

\begin{rmk} \label{Sisconnected}
    Note that all maps $f\colon X\to Y$ in $S_{\CSThn}$ are such that $X$ and $Y$ are connected $\Thn$-spaces. Indeed, this can be shown by induction on $n\geq 1$. 
\end{rmk}

\begin{prop} \label{StWofrec}
    Let $W$ be an object in $\pcatThn$. Then the straightening functor $\St_W\colon \sThnssetslice{W}\to [\Ch W^{\op},\Thnsset]$ sends a map 
    \[ \repD[0,X]\xrightarrow{[\id_{[0]},f]} \repD[0,Y]\to W \]
    with $f\colon X\hookrightarrow Y$ a map in $S_{\CSThn}$, and $[0,Y]\to W$ a map in $\sThnsset$, to a trivial cofibration in $[\Ch W^{\op},\MSThnsset]_\proj$.
\end{prop}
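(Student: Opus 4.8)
The plan is to reduce, via the reduction trick of \cref{reductiontrick}, to the already-computed behaviour of straightening over $\repD[0]$. The first step is to record what the hypothesis on $f$ buys us: by \cref{Sisconnected}, every map $f\colon X\hookrightarrow Y$ in $S_{\CSThn}$ has connected source and target; and since $\MSThnsset$ is obtained from $\injThnspace$ by localizing at $S_{\CSThn}$, each such $f$ is simultaneously a monomorphism (hence a cofibration) and a weak equivalence in $\MSThnsset$, i.e.~a trivial cofibration there.

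The second step is the reduction. Let $p\colon L\repD[0,Y]\to W$ be the map in $\pcatThn$ adjunct to $\repD[0,Y]\to W$ under $L\dashv I$, so that the structure map $\repD[0,Y]\to W$ factors as $\repD[0,Y]\to L\repD[0,Y]\xrightarrow{p} W$. Applying \cref{reductiontrick} to $p$, it suffices to show that $\St_{L\repD[0,Y]}$ sends the composite $\repD[0,X]\xrightarrow{[\id_{[0]},f]}\repD[0,Y]\to L\repD[0,Y]$, regarded as a map in $\sThnssetsliceshort{L\repD[0,Y]}$, to a trivial cofibration in $[\Ch L\repD[0,Y]^{\op},\MSThnsset]_\proj$. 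Since $Y$ is connected we have $\pi_0 Y\cong\ast$, so the pushout description of \cref{RmkPushout} collapses to give $L\repD[0,Y]\cong\repD[0]$; consequently $\Ch L\repD[0,Y]\cong\Ch\repD[0]\cong[0]$ and $[\Ch L\repD[0,Y]^{\op},\Thnsset]\cong\Thnsset$, and the functor in question is precisely $\St_{\repD[0]}\colon\sThnsset\cong\sThnssetslice{\repD[0]}\to\Thnsset$ from just before \cref{prop:StF0}.

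The final step is the computation at $\repD[0]$: by \cref{rem:StF0off}, since $X$ and $Y$ are connected, $\St_{\repD[0]}([\id_{[0]},f])$ is simply the map $f\colon X\to Y$, which is a trivial cofibration in $\MSThnsset$ by the first step. (Equivalently, this last step is exactly the unnamed proposition preceding \cref{Sisconnected}.) This finishes the argument.

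I do not expect a genuine obstacle here: the statement is a short corollary of \cref{rem:StF0off}, \cref{Sisconnected}, and \cref{reductiontrick}. The only points needing a little care are verifying that membership in the localizing set $S_{\CSThn}$ really forces $f$ to be a \emph{trivial} cofibration (not merely a weak equivalence) in $\MSThnsset$, and the bookkeeping in the reduction trick caused by the target $\repD[0,Y]$ not lying in $\pcatThn$, which is handled by replacing it with its reflection $L\repD[0,Y]\cong\repD[0]$.
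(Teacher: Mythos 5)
Your proof is correct and follows essentially the same route as the paper: reduce via \cref{reductiontrick} to the straightening over $L\repD[0,Y]\cong\repD[0]$, invoke \cref{Sisconnected} for connectedness, and conclude from the computation of $\St_{\repD[0]}([\id_{[0]},f])=f$ in \cref{rem:StF0off} (i.e.\ the proposition preceding \cref{Sisconnected}). Your explicit remarks that maps of $S_{\CSThn}$ are trivial cofibrations in $\MSThnsset$ and that the target must be replaced by its reflection $L\repD[0,Y]$ are exactly the points the paper leaves implicit, so there is no gap.
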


\begin{proof}
    Note that, by \cref{reductiontrick}, it is enough to show that the straightening functor $\St_{\repD[0]}\colon \sThnsset\to \Thnsset$ sends the map 
    \[ [\id_{[0]},f]\colon \repD[0,X]\to \repD[0,Y] \]
    to a trivial cofibration in $\MSThnsset$. Since by \cref{Sisconnected} the $\Thn$-spaces $X$ and $Y$ are connected, by \cref{prop:StLsendsrightfib} the induced map 
    \[ \St_{\repD[0]}([\id_{[0]},f])\colon \St_{\repD[0]}([0,X])\to \St_{\repD[0]}([0,X]) \]
    is a trivial cofibration in $\MSThnsset$, as desired.    
\end{proof}

\begin{proof}[Proof of \cref{QPwithloc}]
    As the functor $\St_W\colon \injsThnspaceslice{W}\to [\Ch W^{\op},\MSThnsset]_\proj$ is left Quillen by \cref{QPwithinj}, to show that the functor 
    \[ \St_W\colon \MSrightfib{W}\to [\Ch W^{\op},\MSThnsset]_\proj \]
    from the localization is also left Quillen, it is enough by \cite[Theorem 3.3.20(1)(a)]{Hirschhorn} to show that it sends the maps from \cref{subsec:MSrightfib} with respect to which we localize the model structure $\injsThnspaceslice{W}$ to obtain the model structure $\MSrightfib{W}$ to weak equivalence in $[\Ch W^{\op},\MSThnsset]_\proj$. But this follows from \cref{StWofrightfib,StWofrec}. 

    The fact that the Quillen pair is enriched follows from the $\MSThnsset$-enrichment of both model structures given by \cref{MSrightfibenriched,MSprojenriched} together with \cref{lem:Stprestensors} showing that the functor $\St_W$ is compatible with these enrichments.
\end{proof}

\subsection{Quillen equivalence} \label{subsec:QE}

Finally, we prove that the straightening-unstraightening Quillen pair is further a Quillen equivalence. 

We first prove that the functor $\St_{\Nh\cC}\colon \MSrightfib{\Nh\cC}\to [\Ch(\Nh\cC)^{\op},\MSThnsset]_\proj$ is a Quillen equivalence in the case where $\cC$ is a fibrant $\MSThnsset$-enriched category, by showing that the following square of left Quillen functors
 \begin{tz}
\node[](1) {$[\cC^{\op},\MSThnsset]_\proj$}; 
\node[below of=1](2) {$[\cC^{\op},\MSThnsset]_\proj$}; 
\node[right of=1,xshift=3.7cm](3) {$\MSrightfib{\Nh\cC}$}; 
\node[below of=3](4) {$[\Ch(\Nh\cC)^{\op},\MSThnsset]_\proj$};

\draw[->] (1) to node[above,la]{$\int^\Nh_\cC$} (3); 
\draw[->] (3) to node(a)[right,la]{$\St_{\Nh\cC}$} (4); 
\draw[->] (4) to node[below,la]{$(\varepsilon_\cC)_!$} (2);
\draw[->] (1) to node(b)[left,la]{$\id$} (2);

\cell[la,above,xshift=3pt][n][.5]{a}{b}{$\simeq$};
\end{tz}
commutes up to a natural isomorphism at the level of homotopy categories, where the $\Thnsset$-enriched functor $\varepsilon_\cC\colon \Ch\Nh\cC\to \cC$ is the component at $\cC$ of the (derived) counit of the adjunction $\Ch\dashv \Nh$. Note that we already know that every functor in the above square except for $\St_{\Nh\cC}$ is a Quillen equivalence, as we will also recall later. 

Let us fix a $\Thnsset$-enriched category $\cC$ and an object $c\in \cC$. The latter can also be regarded as an object $c\colon \repD[0]\to \Nh\cC$ in $\sThnssetslice{\Nh\cC}$.

\begin{lemma} \label{Stofc}
    There is an isomorphism in $[\Ch(\Nh\cC)^{\op}, \Thnsset]$
    \[ \St_{\Nh\cC}(c)\cong \Hom_{\Ch\Nh\cC}(-,c). \]
\end{lemma}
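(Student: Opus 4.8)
The plan is to trace through the definition of the straightening functor at the representable object $c\colon \repD[0]\to \Nh\cC$ and show it reproduces the representable $\Thnsset$-enriched presheaf $\Hom_{\Ch\Nh\cC}(-,c)$. Concretely, the object $c$ corresponds to the map $c\colon \repD[0,\defThn,\defS]\to \Nh\cC$ only in the degenerate sense; more precisely, since $\repD[0]\cong L\repD[0]$ and $\Ch\repD[0]\cong [0]$, the relevant case of \cref{pushoutlowersigma} takes $m=\defThn=\defS=0$. So I first record that $(\Nh\cC)_c$, the pushout of $L\repD[1]\hookleftarrow L\repD[0]=\repD[0]\xrightarrow{c}\Nh\cC$ along $L[d^1]$, is the object obtained from $\Nh\cC$ by freely adjoining a new object $\top$ together with a single arrow $\top\to c$; equivalently, $(\Nh\cC)_c\cong \Nh\cC\amalg_{\repD[0]} L\repD[1]$ glued along $0\in L\repD[1]$ mapping to $\top$ and $1\in L\repD[1]$ mapping to $c$. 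Wait---I should be careful about which endpoint is glued; by \cref{pushoutlowersigma} the map $L[d^{m+1},\defThn,\defS]=L[d^1]\colon L\repD[0]\to L\repD[1]$ picks out the vertex $0$, and $\top$ is the image of $m+1=1$, so $\top$ is the new vertex and the glued vertex $0\in L\repD[1]$ is identified with $c$.

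Next I would apply the left adjoint $\Ch$ to this pushout. Using \cref{Sh1issigma} we have $\Ch L\repD[1]\cong \Sigma\repD[0]$, the directed $\Thnsset$-enriched category with two objects and a single point as the nontrivial hom, and $\Ch\repD[0]\cong [0]$. Hence $\Ch(\Nh\cC)_c\cong \Ch\Nh\cC\amalg_{[0]}\Sigma\repD[0]$, glued so that the object $1$ of $\Sigma\repD[0]$ is identified with $c\in\Ch\Nh\cC$ and $0$ becomes the new object $\top$. The key observation is that adjoining $\Sigma\repD[0]$ along $[0]$ in this way is, up to isomorphism of $\Thnsset$-enriched categories, just adjoining a new terminal-like object $\top$ with $\Hom(\top,\top)=\repD[0]$, $\Hom(\top,x)=\emptyset$ for $x\neq\top$, and $\Hom(x,\top)=\Hom_{\Ch\Nh\cC}(x,c)$ for all $x\in\Ch\Nh\cC$ (with composition induced by that of $\Ch\Nh\cC$ via precomposition with the generating arrow $\top\to c$). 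This is a direct colimit computation: $\Sigma\repD[0]$ contributes exactly the free arrow $\top\to c$, and the $\Thnsset$-enriched category generated by $\Ch\Nh\cC$ together with a free arrow out of $\top$ into $c$ has hom $\Thn$-spaces out of $\top$ given by postcomposition, i.e.\ $\Hom_{\Ch(\Nh\cC)_c}(x,\top)\cong\Hom_{\Ch\Nh\cC}(x,c)$, naturally in $x$.

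Finally, by definition $\St_{\Nh\cC}(c)$ is the composite $\Ch\Nh\cC^{\op}\xrightarrow{\Ch\iota_c}(\Ch(\Nh\cC)_c)^{\op}\xrightarrow{\Hom_{\Ch(\Nh\cC)_c}(-,\top)}\Thnsset$, where $\iota_c\colon \Nh\cC\to(\Nh\cC)_c$ is the canonical map. Since $\Ch\iota_c$ is (on objects and homs) the evident inclusion of $\Ch\Nh\cC$ into $\Ch(\Nh\cC)_c$, precomposing with it and then taking $\Hom(-,\top)$ yields exactly the functor $x\mapsto\Hom_{\Ch(\Nh\cC)_c}(x,\top)\cong\Hom_{\Ch\Nh\cC}(x,c)$ by the previous paragraph, and this isomorphism is natural in $x\in\Ch\Nh\cC$, hence gives the desired isomorphism of $\Thnsset$-enriched functors $\St_{\Nh\cC}(c)\cong\Hom_{\Ch\Nh\cC}(-,c)$. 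The main obstacle is making the colimit computation identifying $\Ch(\Nh\cC)_c$ precise: one must verify that gluing $\Sigma\repD[0]$ to $\Ch\Nh\cC$ along $[0]$ really does produce a $\Thnsset$-enriched category whose hom $\Thn$-spaces into the new object are the representables $\Hom_{\Ch\Nh\cC}(-,c)$, with the correct functoriality. This can be done either by directly describing the pushout of $\Thnsset$-enriched categories via generators-and-relations (the colimit is computed as a suitable coequalizer of free enriched categories), or—more cleanly—by exhibiting a $\Thnsset$-enriched functor from the candidate category to $\Ch(\Nh\cC)_c$ and checking it satisfies the universal property of the pushout; I expect the latter to be the more efficient route.
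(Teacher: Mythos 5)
Your argument is essentially the paper's own proof: apply the colimit-preserving $\Ch$ to the pushout of \cref{pushoutlowersigma} with $W=\Nh\cC$ and $\sigma=c$, use $\Ch L\repD[1]\cong\Sigma(\repD[0])$ from \cref{Sh1issigma} to identify $\Ch(\Nh\cC_c)$ as $\Ch\Nh\cC$ with one freely adjoined object $\top$ and one free arrow, deduce $\Hom_{\Ch(\Nh\cC_c)}(x,\top)\cong\Hom_{\Ch\Nh\cC}(x,c)$, and unwind the definition of $\St_{\Nh\cC}(c)$ as $\Hom_{\Ch(\Nh\cC_c)}(-,\top)\circ\Ch(\iota_c)$. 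One thing to clean up: your conventions flip mid-argument. Your first-paragraph correction is the right one ($0\in L\repD[1]$ is glued to $c$, $\top$ is the image of $1$, so the free arrow is $c\to\top$ and homs into $\top$ arise by \emph{post}composition), but your second paragraph restates the gluing backwards ($1$ identified with $c$, $0$ becoming $\top$, generating arrow $\top\to c$, ``precomposition''); taken literally that version would give $\Hom(x,\top)=\emptyset$ and kill the argument, so make sure the final write-up consistently uses the arrow $c\to\top$, which is what your stated hom formula $\Hom(x,\top)=\Hom_{\Ch\Nh\cC}(x,c)$ already presupposes.
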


\begin{proof}
    By applying the colimit-preserving functor $\Ch\colon \pcatThn\to \Thncat$ to the pushout of \cref{pushoutlowersigma} in the case where $W=\Nh\cC$ and $\sigma=c\colon \repD[0]\to \Nh\cC$ and using \cref{Sh1issigma}, we obtain the following pushout in $\Thncat$. 
    \begin{tz}
        \node[](1) {$[0]$}; 
        \node[right of=1,xshift=1.3cm](2) {$\Ch\Nh\cC$}; 
        \node[below of=1](3) {$\Sigma(\repD[0])$}; 
        \node[below of=2](4) {$\Ch(\Nh\cC_c)$}; 

        \draw[->] (1) to node[above,la]{$c$} (2); 
        \draw[->] (1) to node[left,la]{$0$} (3); 
        \draw[->] (2) to node[right,la]{$\Ch(\iota_c)$} (4); 
        \draw[->] (3) to (4);
        \pushout{4};
    \end{tz}
    Then, by definition, we get that $\St_{\Nh\cC}(c)$ is the composite
    \[ \St_{\Nh\cC}(c)\colon \Ch(\Nh\cC)^{\op}\xrightarrow{\Ch(\iota_c)} \Ch(\Nh\cC_c)^{\op}\xrightarrow{\Hom_{\Ch(\Nh\cC_c)(-,\top)}} \Thnsset. \]
    Given an object $a\in \cC$, a direct computation shows that there is an isomorphism in $\Thnsset$
    \[ \Hom_{\Ch(\Nh\cC_c)}(a,\top)\cong \Hom_{\Ch\Nh\cC}(a,c), \]
    as the pushout $\Ch(\Nh\cC_c)$ is obtained from $\Ch\Nh\cC$ by adding a unique object $\top$ and a unique morphism $c\to \top$ together with free composites with it. Hence this gives isomorphisms in $[\Ch(\Nh\cC)^{\op}, \Thnsset]$
    \[ \St_{\Nh\cC}(c)\cong \Hom_{\Ch(\Nh\cC_c)}(-,\top)\circ \Ch(\iota_c)\cong \Hom_{\Ch\Nh\cC}(-,c). \qedhere\]
\end{proof}

\begin{lemma} \label{lem:computeLKE}
    There is an isomorphism in $[\cC^{\op}, \Thnsset]$
    \[ (\varepsilon_\cC)_!\St_{\Nh\cC}(c)\cong \Hom_{\cC}(-,c). \]
\end{lemma}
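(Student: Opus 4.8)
The plan is to read the statement off directly from \cref{Stofc}, using the standard fact that enriched left Kan extensions carry representable functors to representable functors. The first step is to apply the functor $(\varepsilon_\cC)_!\colon [\Ch\Nh\cC^{\op},\Thnsset]\to [\cC^{\op},\Thnsset]$ to the isomorphism $\St_{\Nh\cC}(c)\cong \Hom_{\Ch\Nh\cC}(-,c)$ provided by \cref{Stofc}; this immediately gives an isomorphism $(\varepsilon_\cC)_!\St_{\Nh\cC}(c)\cong (\varepsilon_\cC)_!\Hom_{\Ch\Nh\cC}(-,c)$ in $[\cC^{\op},\Thnsset]$, so the problem reduces to computing $(\varepsilon_\cC)_!$ on a representable presheaf.

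For the second step, I would record the following general principle: for every $\Thnsset$-enriched functor $F\colon \cA\to \cB$ and every object $a\in \cA$, there is a natural isomorphism $F_!\Hom_\cA(-,a)\cong \Hom_\cB(-,Fa)$ in $[\cB^{\op},\Thnsset]$. This follows from the enriched Yoneda lemma: for any $G\in [\cB^{\op},\Thnsset]$, the adjunction $F_!\dashv F^*$ from \cref{subsec:MSproj} and the Yoneda isomorphism give natural identifications
\[ \Hom_{[\cB^{\op},\Thnsset]}(F_!\Hom_\cA(-,a),G)\cong \Hom_{[\cA^{\op},\Thnsset]}(\Hom_\cA(-,a),F^*G)\cong (F^*G)(a)=G(Fa)\cong \Hom_{[\cB^{\op},\Thnsset]}(\Hom_\cB(-,Fa),G), \]
whence the claim by the Yoneda lemma in $[\cB^{\op},\Thnsset]$. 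I would then apply this with $F=\varepsilon_\cC\colon \Ch\Nh\cC\to \cC$ and $a=c$, obtaining $(\varepsilon_\cC)_!\Hom_{\Ch\Nh\cC}(-,c)\cong \Hom_\cC(-,\varepsilon_\cC(c))$.

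The final step is to note that the counit $\varepsilon_\cC$ of the adjunction $\Ch\dashv \Nh$ is the identity on objects — both $\Ch\Nh\cC$ and $\cC$ have object set $\Ob\cC$, by the construction of $\Ch$ and $\Nh$ recalled in \cref{subsec:nerves} (and this is already implicitly used in the proof of \cref{Stofc}) — so that $\Hom_\cC(-,\varepsilon_\cC(c))=\Hom_\cC(-,c)$. Chaining the isomorphisms yields $(\varepsilon_\cC)_!\St_{\Nh\cC}(c)\cong \Hom_\cC(-,c)$. I do not expect any genuine obstacle here: the argument is a short assembly of \cref{Stofc} with a well-known property of enriched Kan extensions, and the only point deserving a moment's attention is the bookkeeping of the $\Thnsset$-enrichment in the Yoneda step, which is unproblematic since $\Thnsset$ is cartesian closed and bicomplete.
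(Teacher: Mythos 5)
Your proof is correct and follows essentially the same route as the paper: apply $(\varepsilon_\cC)_!$ to the isomorphism of \cref{Stofc}, use that left Kan extension along a functor sends the representable at $c$ to the representable at the image of $c$, and conclude since $\varepsilon_\cC$ is the identity on objects. The only difference is that you re-derive this standard fact via the enriched Yoneda lemma, whereas the paper simply cites \cite[(4.32)]{Kelly}; your derivation is fine.
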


\begin{proof}
    This follows from \cref{Stofc} and the fact that the left Kan extension of the representable $\Hom_{\Ch \Nh\cC}(-,c)$ along $\varepsilon_\cC$ is the representable $\Hom_{\cC}(-,c)$ by \cite[(4.32)]{Kelly}, as the $\Thnsset$-enriched functor $\varepsilon_\cC\colon \Ch\Nh\cC\to \cC$ is the identity on objects. 
\end{proof}

As a consequence, the map $\id_c\colon \repD[0]\to \int_\cC^\Nh\Hom_\cC(-,c)$ in $\Thnsset$ induces a $\Thnsset$-enriched natural transformation in $[\cC^{\op},\Thnsset]$ 
\[ \textstyle(\varepsilon_\cC)_!\St_{\Nh\cC}(\id_c)\colon \Hom_\cC(-,c)\cong (\varepsilon_\cC)_!\St_{\Nh\cC}(c)\to (\varepsilon_\cC)_!\St_{\Nh\cC} \int_\cC^\Nh \Hom_\cC(-,c). \]
However, this construction is not natural in $c\in \cC$, and so in \cref{sec:comparisonmap}, we construct a retraction of the above, which happens to be natural in $c\in \cC$. Namely, we prove the following. 

\begin{prop} \label{natphic}
    There is a $\Thnsset$-enriched natural transformation in $[\cC^{\op},\Thnsset]$
    \[ \textstyle\varphi_c\colon (\varepsilon_\cC)_!\St_{\Nh\cC} \int_\cC^\Nh \Hom_\cC(-,c)\to \Hom_\cC(-,c) \]
    which is natural in $c\in \cC$, and is a retraction of the $\Thnsset$-enriched natural transformation $(\varepsilon_\cC)_!\St_{\Nh\cC}(\id_c)\colon \Hom_\cC(-,c)\cong (\varepsilon_\cC)_!\St_{\Nh\cC}(c)\to (\varepsilon_\cC)_!\St_{\Nh\cC} \int_\cC^\Nh \Hom_\cC(-,c)$.
\end{prop}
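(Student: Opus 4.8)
The plan is to construct $\varphi_c$ by straightening ``one step further''. The key intermediate object is a $\Thnsset$-enriched natural transformation
\[ \textstyle\psi_c\colon \St_{\Nh\cC}\int_\cC^\Nh\Hom_\cC(-,c)\longrightarrow \varepsilon_\cC^*\Hom_\cC(-,c) \]
in $[\Ch\Nh\cC^{\op},\Thnsset]$, where $\varepsilon_\cC^*\Hom_\cC(-,c)$ sends $b\mapsto\Hom_\cC(b,c)$ (recall $\varepsilon_\cC$ is the identity on objects) with action on hom $\Thn$-spaces via $\varepsilon_\cC$. Granting $\psi_c$, I would define
\[ \textstyle\varphi_c\colon (\varepsilon_\cC)_!\St_{\Nh\cC}\int_\cC^\Nh\Hom_\cC(-,c)\xrightarrow{(\varepsilon_\cC)_!\psi_c}(\varepsilon_\cC)_!\varepsilon_\cC^*\Hom_\cC(-,c)\longrightarrow\Hom_\cC(-,c), \]
the second map being the counit of $(\varepsilon_\cC)_!\dashv \varepsilon_\cC^*$; this is automatically natural in $c$ provided $\psi_c$ is.

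To build $\psi_c$ I would pass to its adjunct under $\St_{\Nh\cC}\dashv \Un_{\Nh\cC}$, i.e.\ a map $\widehat\psi_c\colon \int_\cC^\Nh\Hom_\cC(-,c)\to \Un_{\Nh\cC}(\varepsilon_\cC^*\Hom_\cC(-,c))$ in $\sThnssetslice{\Nh\cC}$. By construction of $\int_\cC^\Nh$, an element of $\int_\cC^\Nh\Hom_\cC(-,c)$ lying over a simplex $\sigma\colon\repD[m,\defThn,\defS]\to\Nh\cC$ is exactly a point $p\in\Hom_\cC(\sigma(m),c)_{\defThn,\defS}$, where $\sigma(m):=\langle m\rangle^*\sigma\in\Ob\cC$ is the last vertex; and by definition of $\Un_{\Nh\cC}$, a lift of $\sigma$ to $\Un_{\Nh\cC}(\varepsilon_\cC^*\Hom_\cC(-,c))$ is a natural transformation $\St_{\Nh\cC}(\sigma)\to\varepsilon_\cC^*\Hom_\cC(-,c)$. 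So constructing $\widehat\psi_c$ amounts to producing, for every such pair $(\sigma,p)$, a natural transformation $\nu_{\sigma,p}\colon \St_{\Nh\cC}(\sigma)\to\varepsilon_\cC^*\Hom_\cC(-,c)$, naturally in $\sigma$ over the category of elements of $\Nh\cC$ and in $c$. Morally, $\St_{\Nh\cC}(\sigma)$ records the ``paths from a given object to $\sigma(m)$ along $\sigma$'', and $\nu_{\sigma,p}$ postcomposes them with $p$. To make this precise I would use \cref{basechangeSt} to write $\St_{\Nh\cC}(\sigma)\cong (\Ch f_\sigma)_!\,\St_{L\repD[m,\defThn,\defS]}([\id_{[m]},\id_{\repD[\defThn,\defS]}])$, with $f_\sigma\colon L\repD[m,\defThn,\defS]\to\Nh\cC$ the map adjoint to $\sigma$ and $g:=\varepsilon_\cC\circ\Ch f_\sigma\colon \Ch L\repD[m,\defThn,\defS]\to\cC$; by adjunction $\nu_{\sigma,p}$ then corresponds to a natural transformation $\St_{L\repD[m,\defThn,\defS]}([\id_{[m]},\id_{\repD[\defThn,\defS]}])\to g^*\Hom_\cC(-,c)$. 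Since $\repD[\defThn,\defS]$ is connected (\cref{rem:repareconnected}), \cref{StofidmX} identifies $\St_{L\repD[m,\defThn,\defS]}([\id_{[m]},\id_{\repD[\defThn,\defS]}])(i)\cong\Hom_{\Ch L\repD[m+1,\defThn,\defS]}(i,m+1)$, and the datum $p$ — a map $\repD[\defThn,\defS]\to\Hom_\cC(\sigma(m),c)$ — together with composition in $\cC$ and the edge-values of $g$ determines this natural transformation: on the last-bead summand it is literally $p$, and in general it is $p$ precomposed with the $g$-images of the intervening edges, spread over the necklace colimit of \cref{StofidmY}. Naturality of $(\sigma,p)\mapsto\nu_{\sigma,p}$ in $\sigma$ — which is what makes the $\nu_{\sigma,p}$ glue into a simplicial map over $\Nh\cC$ — and in $c$ is then checked by unwinding; in particular, compatibility with the last face map $d_m$ of $\int_\cC^\Nh\Hom_\cC(-,c)$, which is given by composition in $\cC$, matches the composition used in the definition of $\nu_{\sigma,p}$.

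The retraction property is then formal. The morphism $\id_c\colon\repD[0]\to\int_\cC^\Nh\Hom_\cC(-,c)$ is precisely the element over the $0$-simplex $c$ corresponding to $p=\id_c\in\Hom_\cC(c,c)$, so $\varphi_c\circ(\varepsilon_\cC)_!\St_{\Nh\cC}(\id_c)$ equals the composite of $(\varepsilon_\cC)_!(\nu_{c,\id_c})$ with the counit of $(\varepsilon_\cC)_!\dashv\varepsilon_\cC^*$. Unwinding the construction in the case $m=0$, and identifying $\St_{\Nh\cC}(c)\cong\Hom_{\Ch\Nh\cC}(-,c)$ via \cref{Stofc}, one sees that $\nu_{c,\id_c}\colon\Hom_{\Ch\Nh\cC}(-,c)\to\varepsilon_\cC^*\Hom_\cC(-,c)$ is nothing but the unit of $(\varepsilon_\cC)_!\dashv\varepsilon_\cC^*$ at the representable $\Hom_{\Ch\Nh\cC}(-,c)$ — postcomposing with $\id_c$ does nothing — i.e.\ it is $\varepsilon_\cC$ on hom $\Thn$-spaces. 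Hence the counit composed with $(\varepsilon_\cC)_!$ of this unit is the identity by the triangle identity for $(\varepsilon_\cC)_!\dashv\varepsilon_\cC^*$; since the identification $(\varepsilon_\cC)_!\St_{\Nh\cC}(c)\cong\Hom_\cC(-,c)$ of \cref{lem:computeLKE} is obtained by applying $(\varepsilon_\cC)_!$ to the one of \cref{Stofc}, this says exactly that $\varphi_c\circ(\varepsilon_\cC)_!\St_{\Nh\cC}(\id_c)=\id_{\Hom_\cC(-,c)}$.

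I expect the main obstacle to be the construction of the transformations $\nu_{\sigma,p}$ and, above all, the verification that they are natural in $\sigma$ over the category of elements of $\Nh\cC$: making the ``postcompose with $p$'' recipe precise requires the necklace-calculus computations of \cref{subsec:Stoflastvertex} — in particular \cref{StofidmX,StofidmY} and the universal properties of the pushouts defining $\Cone$ and $\Nh\cC_\sigma$ (cf.\ \cref{pushoutlowersigma}) — together with careful bookkeeping of how the simplicial and $\Thn$-structure maps of $\int_\cC^\Nh\Hom_\cC(-,c)$ act on the pairs $(\sigma,p)$. Everything else — passing between $\psi_c$, $\widehat\psi_c$, and $\varphi_c$, and verifying the retraction identity — is a routine manipulation of the three adjunctions and their triangle identities.
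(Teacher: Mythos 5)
Your overall strategy is the same as the paper's: both proofs reduce to defining the comparison map cell by cell on $\int_\cC^\Nh\Hom_\cC(-,c)$ — your pairs $(\sigma,p)$ are exactly what the paper encodes, via \cref{boat}, as $\Thnsset$-enriched functors $F_\sigma\colon \Ch L\repD[m,Y]\amalg_{[0]}\Sigma Y\to\cC$ with $F_\sigma(m+1)=c$ — and both obtain the retraction property by evaluating at $\id_c$ and using the representable/unit description (\cref{Stofc}, \cref{lem:computeLKE}, \cref{unitisFonhoms}), which part of your argument is correct. Your repackaging through $\Un_{\Nh\cC}$ and the counit of $(\varepsilon_\cC)_!\dashv\varepsilon_\cC^*$ is only the adjoint transposition of the paper's construction (the paper's $\gamma_\sigma$ is precisely your adjunct $\St_{L\repD[m,Y]}(\id)\to g^*\Hom_\cC(-,c)$, transposed along $(\Ch\tau)_!\dashv(\Ch\tau)^*$ and $(\varepsilon_\cC)_!\dashv(\varepsilon_\cC)^*$), so nothing is gained or lost by that reformulation.

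The gap is that the step you defer is the actual content of the proposition. Making ``postcompose with $p$'' precise is not bookkeeping on top of \cref{StofidmX,StofidmY}: a totally non-degenerate necklace $T\hookrightarrow\repD[m+1]_{i,m+1}$ contributing to $\Hom_{\Ch L\repD[m+1,Y]}(i,m+1)$ need not have the edge $\{m,m+1\}$ as its last bead, so there is no ``last-bead summand'' on which the map ``is literally $p$''; every necklace must first be split at the vertex $m$, and one must check that this splitting is natural in $T$, descends to the weighted colimit, and is compatible with composition in $\Ch L\repD[m+1,Y]$ so that the hom-wise maps assemble into something functorial. This is what the paper's \cref{sec:B1} does: using the poset description $\tndnec{\repD[m+1]}{i}{m+1}\cong\cP air_{i,m+1}$ of \cref{isowithpairs}, it defines the operation $(-)^{+m}$ adding $m$ to the joints, the induced projections $\Hom_{\CL T}(\alpha,\omega)\to\Hom_{\CL (T^{+m})}(\alpha,\omega)$ of \cref{constr:varphimX}, and packages them into an enriched functor $\Pi_{m,Y}\colon\Ch L\repD[m+1,Y]\to\Ch L\repD[m,Y]\amalg_{[0]}\Sigma Y$; your $\nu_{\sigma,p}$ is then $(F_\sigma\Pi_{m,Y})_{-,m+1}$ restricted along $\Ch L[d^{m+1},Y]$. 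Similarly, the naturality in $\sigma$ (your compatibility with $d_m$) and in $c$ is not a routine unwinding: it requires identifying the map $\St_{L\repD[m,Y]}([\mapDelta,f])\to\St_{L\repD[m,Y]}(\id)$ with the adjunct of a map of representables (\cref{Stadjunct}) and the chain of checks in \cref{firstcommute,secondcommute,thirdcommute,firstnatc,secondnatc}. Until the splitting functor (or an equivalently precise definition of $\nu_{\sigma,p}$ on the necklace colimit) is constructed and these compatibilities verified, what you have is a correct plan rather than a proof.
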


We now prove that the comparison map is a weak equivalence, hence inducing an isomorphism at the level of homotopy categories. 

\begin{lemma} \label{Intciswe}
    The map 
    \[ \textstyle\id_c\colon \repD[0]\to \int_\cC^\Nh\Hom_\cC(-,c) \]
    is a weak equivalence in $\MSrightfib{\Nh\cC}$. 
\end{lemma}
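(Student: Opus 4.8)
The plan is to reduce the assertion, along the comparison between the strict and homotopy coherent Grothendieck constructions, to the analogous statement over the \emph{strict} nerve, and then to settle that case via the Quillen equivalence $\int_\cC^N\dashv\cH_\cC^N$ of \cref{intQENima}. By \cref{triangleint}, the natural comparison map $\eta_{\Hom_\cC(-,c)}\colon\varphi_!\int_\cC^N\Hom_\cC(-,c)\to\int_\cC^\Nh\Hom_\cC(-,c)$ is a weak equivalence in $\MSrightfib{\Nh\cC}$, and by its construction in the proof of \cref{triangleint} it is the identity at level $0$; hence $\id_c\colon\repD[0]\to\int_\cC^\Nh\Hom_\cC(-,c)$ factors as $\eta_{\Hom_\cC(-,c)}$ precomposed with $\id_c\colon\repD[0]\to\varphi_!\int_\cC^N\Hom_\cC(-,c)$, and by $2$-out-of-$3$ it suffices to treat the latter. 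Since $\varphi\colon N\cC\to\Nh\cC$ is a weak equivalence in $\injsThnspace$ by \cref{NsimeqNh}, the functor $\varphi_!\colon\MSrightfib{N\cC}\to\MSrightfib{\Nh\cC}$ is a Quillen equivalence by \cref{postcompisQP}; as every object of $\MSrightfib{N\cC}$ is cofibrant, $\varphi_!$ preserves all weak equivalences, and since $\varphi$ is the identity on objects the map $\id_c\colon\repD[0]\to\varphi_!\int_\cC^N\Hom_\cC(-,c)$ is exactly $\varphi_!$ applied to $\id_c\colon\repD[0]\to\int_\cC^N\Hom_\cC(-,c)$. Thus everything reduces to showing that $\id_c\colon\repD[0]\to\int_\cC^N\Hom_\cC(-,c)$ is a weak equivalence in $\MSrightfib{N\cC}$.

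For this strict statement I would use the Quillen equivalence $\int_\cC^N\dashv\cH_\cC^N$. Choose a fibrant replacement $j\colon\int_\cC^N\Hom_\cC(-,c)\xrightarrow{\sim}R$ in $\MSrightfib{N\cC}$ and factor $j\circ\id_c$ as a trivial cofibration $\repD[0]\hookrightarrow S$ followed by a fibration $S\twoheadrightarrow R$, so that $S$ is fibrant and $\repD[0]\hookrightarrow S$ is a fibrant replacement of $\repD[0]\xrightarrow{c}N\cC$. By $2$-out-of-$3$ it is enough to see that $S\twoheadrightarrow R$ is a weak equivalence, and since $\cH_\cC^N$ is the right adjoint of a Quillen equivalence it reflects weak equivalences between fibrant objects; so it suffices that $\cH_\cC^N(S\twoheadrightarrow R)$ is a weak equivalence in $[\cC^{\op},\MSThnsset]_\proj$. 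On one side, the derived unit of $\int_\cC^N\dashv\cH_\cC^N$ at the cofibrant object $\Hom_\cC(-,c)$ identifies $\cH_\cC^N R\simeq\Hom_\cC(-,c)$; on the other side, $\cH_\cC^N S$ is the unstraightening of the representable right double fibration $\repD[0]\xrightarrow{c}N\cC$, which is again $\Hom_\cC(-,c)$ — this is the Yoneda-type computation underlying the analysis of \cite{RasekhD}. Tracing both identifications back through the universal element $\id_c$ shows that they are compatible with $\cH_\cC^N(S\twoheadrightarrow R)$, whence the latter is a weak equivalence.

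I expect the hard part to be precisely this last point: not merely that $\cH_\cC^N$ of a fibrant replacement of $\repD[0]\xrightarrow{c}N\cC$ is abstractly equivalent to the representable $\Hom_\cC(-,c)$, but that this identification is compatible with the one produced by the derived unit at $\Hom_\cC(-,c)$, so that the two fibrant replacements $S$ and $R$ are glued by an honest weak equivalence. Concretely this is a (co-)Yoneda lemma for the strict Grothendieck construction, so a cleaner route may be to invoke the relevant statement directly from \cite{RasekhD} after the reduction of the first paragraph; in either case the substance of the proof is this comparison of representable fibrations, everything else being formal bookkeeping with Quillen equivalences and $2$-out-of-$3$.
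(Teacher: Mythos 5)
Your argument is essentially the paper's: the paper also reduces along the comparison of \cref{triangleint} and the left Quillen functor $\varphi_!$ of \cref{postcompisQP} (all objects being cofibrant, so weak equivalences are preserved) to the statement that $\id_c\colon \repD[0]\to \int_\cC^N\Hom_\cC(-,c)$ is a weak equivalence in $\MSrightfib{N\cC}$, the only cosmetic difference being that the paper composes two weak equivalences where you factor and apply $2$-out-of-$3$. For the strict case the paper does exactly what you call the ``cleaner route'': it cites \cite[Example 3.23 and Theorem 5.25]{RasekhD} verbatim, with no further argument. Your attempted in-house derivation of the strict case via $\int_\cC^N\dashv\cH_\cC^N$ from \cref{intQENima} is not complete as written: the step identifying $\cH_\cC^N$ of a fibrant replacement of $\repD[0]\xrightarrow{c}N\cC$ with $\Hom_\cC(-,c)$ \emph{compatibly} with the derived unit at $\Hom_\cC(-,c)$ is precisely the Yoneda-type content of the cited results of \cite{RasekhD}, so that route would be circular unless you supply that computation independently; since you flag this and default to the citation, your proof matches the paper's.
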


\begin{proof}
    By \cite[Example 3.23 and Theorem 5.25]{RasekhD}, the map $\textstyle\id_c\colon \repD[0]\to \int_\cC^N\Hom_\cC(-,c)$ is a weak equivalence in $\MSrightfib{N\cC}$. Hence, by applying the left Quillen functor $\varphi_!\colon \MSrightfib{N\cC}\to \MSrightfib{\Nh\cC}$ induced by the canonical map $\varphi\colon N\cC\to \Nh\cC$, we get a weak equivalence in $\MSrightfib{\Nh\cC}$
    \[ \textstyle\id_c\colon \varphi_!\repD[0]\to \varphi_!\int_\cC^N\Hom_\cC(-,c). \]
    Now, by \cref{triangleint}, we have a weak equivalence in $\MSrightfib{\Nh\cC}$
    \[ \textstyle\varphi_!\int_\cC^N\Hom_\cC(-,c)\to \int_\cC^\Nh\Hom_\cC(-,c) \]
    and hence we obtain that the composite 
    \[ \textstyle\id_c\colon \repD[0]\xrightarrow{\id_c}\varphi_!\int_\cC^N\Hom_\cC(-,c)\to \int_\cC^\Nh\Hom_\cC(-,c)\]
    is also a weak equivalence in $\MSrightfib{\Nh\cC}$, as desired.
\end{proof}

\begin{prop} \label{varphicwe}
    The $\Thnsset$-enriched natural transformations
    \[ \textstyle(\varepsilon_\cC)_!\St_{\Nh\cC}(\id_c)\colon \Hom_\cC(-,c)\cong (\varepsilon_\cC)_!\St_{\Nh\cC}(c)\to (\varepsilon_\cC)_!\St_{\Nh\cC} \int_\cC^\Nh \Hom_\cC(-,c) \]
    and 
    \[ \textstyle\varphi_c\colon (\varepsilon_\cC)_!\St_{\Nh\cC} \int_\cC^\Nh \Hom_\cC(-,c)\to \Hom_\cC(-,c)\]
    are weak equivalences in $[\cC^{\op},\MSThnsset]_\proj$.
\end{prop}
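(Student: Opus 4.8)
The plan is to reduce the statement to the single weak equivalence established in \cref{Intciswe}, and then transport it along left Quillen functors.

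First I would invoke the retraction of \cref{natphic}: since $\varphi_c$ is a retraction of $(\varepsilon_\cC)_!\St_{\Nh\cC}(\id_c)$, the composite $\varphi_c\circ (\varepsilon_\cC)_!\St_{\Nh\cC}(\id_c)$ is the identity of $\Hom_\cC(-,c)$, and identities are weak equivalences. By two-out-of-three in $[\cC^{\op},\MSThnsset]_\proj$, it follows that $\varphi_c$ is a weak equivalence if and only if $(\varepsilon_\cC)_!\St_{\Nh\cC}(\id_c)$ is. Hence it suffices to show that $(\varepsilon_\cC)_!\St_{\Nh\cC}(\id_c)$ is a weak equivalence in $[\cC^{\op},\MSThnsset]_\proj$.

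For this I would consider the composite functor
\[ (\varepsilon_\cC)_!\circ \St_{\Nh\cC}\colon \MSrightfib{\Nh\cC}\longrightarrow [\Ch(\Nh\cC)^{\op},\MSThnsset]_\proj\longrightarrow [\cC^{\op},\MSThnsset]_\proj, \]
which is left Quillen, being the composite of the left Quillen functor $\St_{\Nh\cC}$ of \cref{QPwithloc} and the left Quillen functor $(\varepsilon_\cC)_!$ of \cref{LKEisQP}. Every object of $\MSrightfib{\Nh\cC}$ is cofibrant, since cofibrations in $\injsThnspaceslice{\Nh\cC}$ are monomorphisms and passing to the localization does not change the cofibrations; so by Ken Brown's lemma this composite sends weak equivalences between cofibrant objects to weak equivalences. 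Applying it to the weak equivalence $\id_c\colon \repD[0]\to \int_\cC^\Nh \Hom_\cC(-,c)$ of \cref{Intciswe} yields a weak equivalence in $[\cC^{\op},\MSThnsset]_\proj$; by \cref{lem:computeLKE} its domain $(\varepsilon_\cC)_!\St_{\Nh\cC}(c)$ is identified with $\Hom_\cC(-,c)$, so this weak equivalence is precisely the map $(\varepsilon_\cC)_!\St_{\Nh\cC}(\id_c)$ of the statement. Together with the first step, $\varphi_c$ is then a weak equivalence as well, which completes the argument.

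I do not expect a genuinely hard step here, as the real content already sits in \cref{Intciswe} and \cref{natphic}. The only points requiring care are bookkeeping: checking that the objects to which Ken Brown's lemma is applied are cofibrant (which is why the injective slice model structure is used), and verifying that the identification of \cref{lem:computeLKE} matches the domain of the map named in the statement, so that the transported weak equivalence genuinely is $(\varepsilon_\cC)_!\St_{\Nh\cC}(\id_c)$.
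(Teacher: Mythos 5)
Your proposal is correct and follows essentially the same route as the paper: the paper likewise deduces that $(\varepsilon_\cC)_!\St_{\Nh\cC}(\id_c)$ is a weak equivalence by applying the left Quillen composite $(\varepsilon_\cC)_!\St_{\Nh\cC}$ (via \cref{LKEisQP,QPwithloc}) to the weak equivalence of \cref{Intciswe}, and then concludes for $\varphi_c$ by $2$-out-of-$3$ using the retraction of \cref{natphic}. The only difference is that you make explicit the Ken Brown/cofibrancy justification for why this left Quillen functor preserves the weak equivalence, which the paper leaves implicit.
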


\begin{proof}
    Since $(\varepsilon_\cC)_!\St_{\Nh\cC}\colon \MSrightfib{\Nh\cC}\to [\cC^{\op},\MSThnsset]_\proj$ preserves weak equivalences by \cref{LKEisQP,QPwithloc}, the fact that $(\varepsilon_\cC)_!\St_{\Nh\cC}(\id_c)$ is a weak equivalence in $[\cC^{\op},\MSThnsset]_\proj$ follows directly from \cref{Intciswe}. Then by $2$-out-of-$3$ we get that $\varphi_c$ is also a weak equivalence in $[\cC^{\op},\MSThnsset]_\proj$, as it is a retraction of $(\varepsilon_\cC)_!\St_{\Nh\cC}(\id_c)$ by \cref{natphic}.
\end{proof}

\begin{rmk} \label{cofibranthtpycol}
    Suppose that $\cM$ is a combinatorial model category and that $\cI$ is a set of generating cofibrations between cofibrant objects. Write $\cI_0=\{A,B\mid A\to B\in \cJ\}\subseteq \Ob\cM$. Recall that every cofibrant object in $\cM$ can be obtained as a retract of transfinite compositions of pushouts along generating cofibrations in $\cI$. Since pushouts of cofibrant objects along cofibrations and transfinite compositions of cofibrations between cofibrant objects are homotopy pushouts by \cite[Propositions 14.5 and 14.10]{duggerhocolimnotes}, this tells us that every cofibrant object is a retract of a homotopy colimit of objects in $\cI_0$. 
\end{rmk}

We are now ready to show the desired result.

\begin{prop} \label{prop:wetoid}
    The following diagram of left Quillen functors commutes up to a natural weak equivalence.
    \begin{tz}
\node[](1) {$[\cC^{\op},\MSThnsset]_\proj$}; 
\node[below of=1](2) {$[\cC^{\op},\MSThnsset]_\proj$}; 
\node[right of=1,xshift=3.7cm](3) {$\MSrightfib{\Nh\cC}$}; 
\node[below of=3](4) {$[\Ch(\Nh\cC)^{\op},\MSThnsset]_\proj$};

\draw[->] (1) to node[above,la]{$\int^\Nh_\cC$} (3); 
\draw[->] (3) to node[right,la]{$\St_{\Nh\cC}$} (4); 
\draw[->] (4) to node[below,la]{$(\varepsilon_\cC)_!$} (2);
\draw[->] (1) to node[left,la]{$\id$} (2);

\cell[la,above,xshift=3pt][n][.5]{a}{b}{$\simeq$};
\end{tz}
\end{prop}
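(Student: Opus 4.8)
The plan is to exhibit a natural transformation $\varphi\colon L\Rightarrow\id$ between the endofunctors $L\coloneqq(\varepsilon_\cC)_!\circ\St_{\Nh\cC}\circ\int_\cC^\Nh$ and $\id$ of $[\cC^{\op},\Thnsset]$ and to show it is a pointwise weak equivalence on cofibrant objects; since both functors are left Quillen, this is precisely what it means for the diagram to commute up to a natural weak equivalence. First I would record that $L$ preserves colimits, being a composite of the left adjoints $\int_\cC^\Nh$, $\St_{\Nh\cC}$, and $(\varepsilon_\cC)_!$, and that it preserves tensors, since $\int_\cC^\Nh$ does by \cref{lem:intprestensors}, $\St_{\Nh\cC}$ does by \cref{lem:Stprestensors}, and $(\varepsilon_\cC)_!$ does by \cite[Theorem 4.50]{Kelly}. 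Thus $L$ is a $\Thnsset$-enriched cocontinuous endofunctor of $[\cC^{\op},\Thnsset]$, and so is $\id$; as $[\cC^{\op},\Thnsset]$ is the free $\Thnsset$-enriched cocompletion of $\cC$ along the enriched Yoneda embedding $c\mapsto\Hom_\cC(-,c)$, both $L$ and $\id$ are the enriched left Kan extensions of their restrictions along this embedding. Hence the $\Thnsset$-enriched natural transformation $\varphi_c\colon L(\Hom_\cC(-,c))\to\Hom_\cC(-,c)$ of \cref{natphic}, natural in $c\in\cC$, extends uniquely to a natural transformation $\varphi\colon L\Rightarrow\id$.

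The remaining task is to check that $\varphi_F\colon LF\to F$ is a weak equivalence in $[\cC^{\op},\MSThnsset]_\proj$ for every cofibrant $F$, and for this I would bootstrap from a set of cofibrant generators using \cref{cofibranthtpycol}. By \cref{gencofprojective}, the maps $\Hom_\cC(-,a)\otimes X\to\Hom_\cC(-,a)\otimes Y$ with $a\in\cC$ and $X\hookrightarrow Y$ a generating cofibration of $\MSThnsset$ form a set of generating cofibrations between cofibrant objects, since every object of $\MSThnsset$ is cofibrant; so I would take $\cI_0$ to consist of the objects $\Hom_\cC(-,a)\otimes X$ with $X$ a source or target of such a map. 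On such an object, tensor-preservation of $L$ and $\id$ together with enriched naturality of $\varphi$ identify $\varphi_{\Hom_\cC(-,a)\otimes X}$ with $\varphi_a\otimes X$; since $\varphi_a$ is a weak equivalence between cofibrant objects by \cref{varphicwe} and $(-)\otimes X$ is left Quillen on the $\MSThnsset$-enriched model category $[\cC^{\op},\MSThnsset]_\proj$ (see \cref{MSprojenriched}) with $X$ cofibrant, it follows that $\varphi$ is a weak equivalence on every object of $\cI_0$.

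Finally, for arbitrary cofibrant $F$ I would use \cref{cofibranthtpycol} to write $F$ as a retract of a homotopy colimit of objects of $\cI_0$: the left Quillen functors $L$ and $\id$ preserve homotopy colimits of cofibrant objects, $\varphi$ is an objectwise weak equivalence on the relevant diagram, so the induced map of homotopy colimits is a weak equivalence, and weak equivalences are closed under retracts; hence $\varphi_F$ is a weak equivalence. I expect the main obstacle to be the first paragraph, namely making precise that $\varphi_c$ extends to an honest enriched natural transformation $\varphi\colon L\Rightarrow\id$ — this is exactly where the tensor-preservation lemmas \cref{lem:intprestensors} and \cref{lem:Stprestensors} are used in an essential way. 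The last step is routine given \cref{cofibranthtpycol}, though one should ensure the diagram witnessing $F$ as a homotopy colimit consists of cofibrant objects, so that both its preservation by $L$ and the comparison of homotopy colimits apply.
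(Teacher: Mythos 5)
Your proposal follows essentially the same route as the paper's proof: reduce to the generators $\Hom_\cC(-,c)\otimes X$ via \cref{gencofprojective} and \cref{cofibranthtpycol}, use that $\int_\cC^\Nh$, $\St_{\Nh\cC}$ and $(\varepsilon_\cC)_!$ preserve tensors to identify the comparison map there with $\varphi_c\otimes X$, and conclude from \cref{natphic}, \cref{varphicwe} and the enrichment of \cref{MSprojenriched}. The only place where your phrasing goes slightly beyond what is recorded is the free-cocompletion extension of $\varphi_c$ to a transformation $L\Rightarrow\id$, which strictly requires $\Thnsset$-enriched naturality of $\varphi_c$ in $c$, whereas \cref{natphic} states naturality only with respect to morphisms of the underlying category of $\cC$ — but the paper's own proof leaves this propagation step equally implicit, so this is a matter of rigor shared with the original rather than a defect specific to your argument.
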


\begin{proof} 
    First note that the functors in the diagram are all left Quillen by \cref{LKEisQP,intQP,QPwithloc}. Hence, it is enough to show that there is a weak equivalence in $[\cC^{\op},\MSThnsset]_\proj$ \[ \textstyle (\varepsilon_\cC)_!\St_{\Nh\cC} \int_\cC^\Nh F\to F \]
    at any cofibrant object $F\colon \cC^{\op}\to\Thnsset$ in $[\cC^{\op},\MSThnsset]_\proj$ that is natural in $F$. However, by combining \cref{gencofprojective,cofibranthtpycol}, every cofibrant object is a retract of a homotopy colimit of objects of the form $\Hom_\cC(-,c)\otimes X$, with $c\in \cC$ an object and $X\in \Thnsset$, and so it is enough to show that, for every object $c\in \cC$ and every object $X\in \Thnsset$, there is a weak equivalence in~$[\cC^{\op},\MSThnsset]_\proj$
    \[ \textstyle\varphi_{c,X}\colon (\varepsilon_\cC)_! \St_{\Nh\cC} (\int^\Nh_\cC\Hom_\cC(-,c)\otimes X)\to \Hom_\cC(-,c)\otimes X \]
    that is natural in $c$ and $X$. As all functors involved preserve tensors by \cref{subsec:MSproj,lem:intprestensors,lem:Stprestensors}, we define $\varphi_{c,X}$ to be
    \[ \textstyle (\varepsilon_\cC)_! \St_{\Nh\cC} (\int^\Nh_\cC\Hom_\cC(-,c)\otimes X)\cong (\varepsilon_\cC)_! \St_{\Nh\cC} (\int^\Nh_\cC\Hom_\cC(-,c))\otimes X\xrightarrow{\varphi_c\otimes X} \Hom_\cC(-,c)\otimes X. \]
    Then $\varphi_{c,X}$ is clearly natural in $X\in \Thnsset$ and it is natural in $c\in \cC$ by \cref{natphic}. Moreover, as the model structure $[\cC^{\op},\MSThnsset]_\proj$ is enriched over $\MSThnsset$ by \cref{MSprojenriched} and $\varphi_c$ is a weak equivalence in $[\cC^{\op},\MSThnsset]_\proj$ by \cref{varphicwe}, then so is $\varphi_{c,X}=\varphi_c\otimes X$, which concludes the proof. 
\end{proof}

From the commutativity of the above square at the level of homotopy categories, we can deduce the following. 

\begin{thm} \label{prop:StNCQE}
Let $\cC$ be a fibrant $\MSThnsset$-enriched category. Then the Quillen pair $\St_{\Nh\cC}\dashv \Un_{\Nh\cC}$ is a Quillen equivalence enriched over $\MSThnsset$
\begin{tz}
\node[](1) {$\MSrightfib{\Nh\cC}$}; 
\node[right of=1,xshift=4.2cm](2) {$[\Ch(\Nh\cC)^{\op},\MSThnsset]_\proj$};
\punctuation{2}{.};

\draw[->] ($(2.west)-(0,5pt)$) to node[below,la]{$\Un_{\Nh\cC}$} ($(1.east)-(0,5pt)$);
\draw[->] ($(1.east)+(0,5pt)$) to node[above,la]{$\St_{\Nh\cC}$} ($(2.west)+(0,5pt)$);

\node[la] at ($(1.east)!0.5!(2.west)$) {$\bot$};
\end{tz}
\end{thm}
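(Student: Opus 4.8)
The plan is to deduce this from the square of left Quillen functors in \cref{prop:wetoid}, since its other three edges are already known to be Quillen equivalences. To begin with, note that the adjunction $\St_{\Nh\cC}\dashv\Un_{\Nh\cC}$ is an $\MSThnsset$-enriched Quillen pair by \cref{QPwithloc} (applied with $W=\Nh\cC$), so it suffices to verify the Quillen equivalence property; the enrichment claim then needs no further argument, since an enriched Quillen pair that is a Quillen equivalence is by definition an enriched Quillen equivalence.

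Next I would identify the three edges of the square in \cref{prop:wetoid} that are Quillen equivalences. The identity functor on $[\cC^{\op},\MSThnsset]_\proj$ is trivially one; the Grothendieck construction $\int_\cC^\Nh$ is one by \cref{prop:intisQE}, using that $\cC$ is fibrant; and the left Kan extension $(\varepsilon_\cC)_!$ is one by \cref{LKEisQP}, provided the $\Thnsset$-enriched functor $\varepsilon_\cC\colon\Ch\Nh\cC\to\cC$ is a weak equivalence in $\MSThncat$. The latter holds because $\varepsilon_\cC$ is the counit of the Quillen equivalence $\Ch\dashv\Nh$ of \cref{injQuillen} at the fibrant object $\cC$ (with $\Nh\cC$ cofibrant in $\pcatinj$, so that this counit is already the derived counit); equivalently, this is the content of \cref{NsimeqNh}, which exhibits $\Nh\cC$ as an injective fibrant replacement of $N\cC$. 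Since \cref{prop:wetoid} asserts that the square commutes up to a natural weak equivalence, it commutes up to natural isomorphism after passing to total left derived functors on the homotopy categories, yielding $\mathbb{L}\id\cong\mathbb{L}(\varepsilon_\cC)_!\circ\mathbb{L}\St_{\Nh\cC}\circ\mathbb{L}\int_\cC^\Nh$. As $\mathbb{L}\id$ and $\mathbb{L}\int_\cC^\Nh$ are equivalences of homotopy categories, so is $\mathbb{L}(\varepsilon_\cC)_!\circ\mathbb{L}\St_{\Nh\cC}$; and since $\mathbb{L}(\varepsilon_\cC)_!$ is also an equivalence, cancellation gives that $\mathbb{L}\St_{\Nh\cC}$ is an equivalence. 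A Quillen pair whose left derived functor is an equivalence is a Quillen equivalence, which gives the conclusion, the enrichment being inherited from \cref{QPwithloc}.

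I do not expect a real obstacle here: all the substantive work has been carried out in the preceding sections, and this theorem is the bookkeeping step that threads \cref{prop:wetoid}, \cref{prop:intisQE}, and \cref{LKEisQP} together via $2$-out-of-$3$. The only point that genuinely deserves attention is the claim that $(\varepsilon_\cC)_!$ is a Quillen equivalence — this is precisely where the fibrancy hypothesis on $\cC$ is used, ensuring that the counit $\varepsilon_\cC$ is a weak equivalence in $\MSThncat$, and without that hypothesis the argument breaks down.
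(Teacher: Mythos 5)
Your proposal is correct and follows essentially the same route as the paper: it deduces the result from the square of \cref{prop:wetoid} by $2$-out-of-$3$, using that $\int_\cC^\Nh$ is a Quillen equivalence (\cref{prop:intisQE}) and that $(\varepsilon_\cC)_!$ is one because $\varepsilon_\cC$ is the (derived) counit of the Quillen equivalence $\Ch\dashv\Nh$ at the fibrant object $\cC$ (\cref{injQuillen,LKEisQP}), with the enrichment supplied by \cref{QPwithloc}. Only your parenthetical claim that this is ``equivalently the content of \cref{NsimeqNh}'' is imprecise — that statement concerns the comparison $N\cC\to\Nh\cC$ and does not by itself yield that $\varepsilon_\cC$ is a weak equivalence — but since your main justification via the derived counit is the correct one, this does not affect the argument.
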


\begin{proof}
    We have a triangle of left Quillen functors
    \begin{tz}
\node[](1) {$[\cC^{\op},\MSThnsset]_\proj$}; 
\node[below of=1](2) {$[\cC^{\op},\MSThnsset]_\proj$}; 
\node[right of=1,xshift=3.7cm](3) {$\MSrightfib{\Nh\cC}$}; 
\node[below of=3](4) {$[\Ch(\Nh\cC)^{\op},\MSThnsset]_\proj$};

\draw[->] (1) to node[above,la]{$\int^\Nh_\cC$} (3); 
\draw[->] (3) to node[right,la]{$\St_{\Nh\cC}$} (4); 
\draw[->] (4) to node[below,la]{$(\varepsilon_\cC)_!$} (2);
\draw[->] (1) to node[left,la]{$\id$} (2);

\cell[la,above,xshift=3pt][n][.5]{a}{b}{$\simeq$};
\end{tz}
which commutes up to isomorphism at the level of homotopy categories by \cref{prop:wetoid}. Since $\cC$ is fibrant in $\MSThncat$, the $\Thnsset$-enriched functor $\varepsilon_\cC\colon \Ch\Nh\cC\to \cC$ is a weak equivalence in $\MSThncat$ as it is the component of the derived counit of the Quillen equivalence from \cref{injQuillen}. Hence the functor $(\varepsilon_\cC)_!$ is a Quillen equivalence by \cref{LKEisQP}. Moreover, by \cref{prop:intisQE} the functor $\int_\cC^\Nh$ is a Quillen equivalence. Hence, by $2$-out-of-$3$, we conclude that the functor $\St_{\Nh\cC}$ is also a Quillen equivalence, as desired.
\end{proof}

We are now ready to deduce that, for any object $W\in \pcatThn$, the straightening functor $\St_W\colon [\Ch W^{\op},\MSThnsset]_\proj\to \MSrightfib{W}$ is a Quillen equivalence. For this, we use the following change of base lemma. 

\begin{lemma} \label{StWQEStZ}
    Let $f\colon W\to Z$ be a weak equivalence in $\pcatinj$. The Quillen pair $\St_W\dashv \Un_W$ is a Quillen equivalence
\begin{tz}
\node[](1) {$\MSrightfib{W}$}; 
\node[right of=1,xshift=3.9cm](2) {$[\Ch W^{\op},\MSThnsset]_\proj$}; 

\draw[->] ($(2.west)-(0,5pt)$) to node[below,la]{$\Un_{W}$} ($(1.east)-(0,5pt)$);
\draw[->] ($(1.east)+(0,5pt)$) to node[above,la]{$\St_{W}$} ($(2.west)+(0,5pt)$);

\node[la] at ($(1.east)!0.5!(2.west)$) {$\bot$};
\end{tz}
if and only if the Quillen pair $\St_Z\dashv \Un_Z$ is a Quillen equivalence.
\begin{tz}
\node[](1) {$\MSrightfib{Z}$}; 
\node[right of=1,xshift=3.9cm](2) {$[\Ch Z^{\op},\MSThnsset]_\proj$}; 

\draw[->] ($(2.west)-(0,5pt)$) to node[below,la]{$\Un_{Z}$} ($(1.east)-(0,5pt)$);
\draw[->] ($(1.east)+(0,5pt)$) to node[above,la]{$\St_{Z}$} ($(2.west)+(0,5pt)$);

\node[la] at ($(1.east)!0.5!(2.west)$) {$\bot$};
\end{tz}
\end{lemma}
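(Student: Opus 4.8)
The plan is to feed the naturality of the straightening construction into a $2$-out-of-$3$ argument for Quillen equivalences. By \cref{basechangeSt}, the square of left adjoint functors
\begin{tz}
        \node[](1) {$\MSrightfib{W}$};
        \node[right of=1,xshift=3.8cm](2) {$[\Ch W^{\op},\MSThnsset]_\proj$};
        \node[below of=1](3) {$\MSrightfib{Z}$};
        \node[below of=2](4) {$[\Ch Z^{\op},\MSThnsset]_\proj$};

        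\draw[->] (1) to node[above,la]{$\St_W$} (2);
        \draw[->] (1) to node[left,la]{$f_!$} (3);
        \draw[->] (2) to node[right,la]{$(\Ch f)_!$} (4);
        \draw[->] (3) to node[below,la]{$\St_Z$} (4);
        \cell[la,above][n][.5]{2}{3}{$\cong$};
\end{tz}
commutes up to a natural isomorphism $(\Ch f)_!\circ\St_W\cong\St_Z\circ f_!$. First I would record that all four functors are left Quillen: the functors $\St_W$ and $\St_Z$ by \cref{QPwithloc}, the functor $f_!$ by \cref{postcompisQP}, and the functor $(\Ch f)_!$ by \cref{LKEisQP} --- the last provided $\Ch f\colon\Ch W\to\Ch Z$ is a weak equivalence in $\MSThncat$, which holds because $\Ch$ is the left Quillen functor of the Quillen equivalence of \cref{injQuillen} and $f$ is a weak equivalence between cofibrant objects of $\pcatinj$ (all objects of $\pcatinj$ are cofibrant), so that Ken Brown's lemma applies.

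Next I would note that both vertical functors are in fact Quillen equivalences: the functor $f_!$ by the last assertion of \cref{postcompisQP}, using that $f$ is a weak equivalence in $\pcatinj$, and the functor $(\Ch f)_!$ by the last assertion of \cref{LKEisQP}, using that $\Ch f$ is a weak equivalence in $\MSThncat$ as above. Passing to total left derived functors, the natural isomorphism above descends to a natural isomorphism $\mathbb{L}(\Ch f)_!\circ\mathbb{L}\St_W\cong\mathbb{L}\St_Z\circ\mathbb{L}f_!$ of functors between homotopy categories, in which $\mathbb{L}(\Ch f)_!$ and $\mathbb{L}f_!$ are equivalences of categories.

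Finally, since a Quillen pair is a Quillen equivalence if and only if its total left derived functor is an equivalence of homotopy categories, $2$-out-of-$3$ for equivalences of categories applied to this isomorphism shows that $\mathbb{L}\St_W$ is an equivalence if and only if $\mathbb{L}\St_Z$ is; that is, $\St_W\dashv\Un_W$ is a Quillen equivalence if and only if $\St_Z\dashv\Un_Z$ is. Every step here is a formal manipulation of Quillen adjunctions; the one point that requires genuine care is checking that $\Ch f$ is a weak equivalence in $\MSThncat$, and this is precisely where the hypothesis that $f$ is a weak equivalence in $\pcatinj$ --- rather than merely in $\injsThnspace$ --- enters, via the Quillen equivalence $\Ch\dashv\Nh$.
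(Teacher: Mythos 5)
Your proposal is correct and follows essentially the same route as the paper: invoke \cref{basechangeSt} for the commuting square, observe that $\Ch f$ is a weak equivalence in $\MSThncat$ (the paper simply cites that $\Ch$ preserves weak equivalences via \cref{injQuillen}, which is your Ken Brown argument), deduce that $f_!$ and $(\Ch f)_!$ are Quillen equivalences by \cref{postcompisQP,LKEisQP}, and conclude by $2$-out-of-$3$ for derived equivalences. The only cosmetic imprecision is that $(\Ch f)_!$ is left Quillen by \cref{LKEisQP} for any enriched functor, not only when $\Ch f$ is a weak equivalence; that hypothesis is needed only for the Quillen equivalence claim.
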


\begin{proof}
    First, note that, since $\Ch\colon \pcatinj\to \MSThncat$ preserves weak equivalences by \cref{injQuillen}, the $\Thnsset$-enriched functor $\Ch f\colon \Ch W\to \Ch Z$ is a weak equivalence in $\MSThncat$. Then, by \cref{basechangeSt}, we have a commutative square of left Quillen functors 
\begin{tz}
        \node[](1) {$\sThnssetslice{W}$}; 
        \node[right of=1,xshift=2.8cm](2) {$[\Ch W^{\op},\Thnsset]$}; 
        \node[below of=1](3) {$\sThnssetslice{Z}$}; 
        \node[below of=2](4) {$[\Ch Z^{\op},\Thnsset]$}; 

        \draw[->] (1) to node[above,la]{$\St_W$} (2); 
        \draw[->] (1) to node[left,la]{$f_!$} (3); 
        \draw[->] (2) to node[right,la]{$(\Ch f)_!$} (4); 
        \draw[->] (3) to node[below,la]{$\St_Z$} (4);
         \cell[la,above][n][.5]{2}{3}{$\cong$};
    \end{tz}
    where $f_!$ and $(\Ch f)_!$ are Quillen equivalences by \cref{postcompisQP,LKEisQP}. Hence, by $2$-out-of-$3$, we conclude that the functor $\St_W$ is a Quillen equivalence if and only if the functor $\St_Z$ is a Quillen equivalence.
\end{proof}

\begin{thm} \label{stunmainQE}
Let $W$ be an object in $\pcatThn$. The Quillen pair $\St_W\dashv \Un_W$ is a Quillen equivalence enriched over $\MSThnsset$
\begin{tz}
\node[](1) {$\MSrightfib{W}$}; 
\node[right of=1,xshift=3.9cm](2) {$[\Ch W^{\op},\MSThnsset]_\proj$}; 
\punctuation{2}{.};

\draw[->] ($(2.west)-(0,5pt)$) to node[below,la]{$\Un_{W}$} ($(1.east)-(0,5pt)$);
\draw[->] ($(1.east)+(0,5pt)$) to node[above,la]{$\St_{W}$} ($(2.west)+(0,5pt)$);

\node[la] at ($(1.east)!0.5!(2.west)$) {$\bot$};
\end{tz}
\end{thm}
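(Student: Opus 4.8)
The plan is to bootstrap from the special case already handled in \cref{prop:StNCQE}, where $W=\Nh\cC$ for a fibrant $\MSThnsset$-enriched category $\cC$, to an arbitrary object $W\in\pcatThn$, using the change-of-base invariance of \cref{StWQEStZ}. Thus it suffices to exhibit, for a given $W\in\pcatThn$, a fibrant $\MSThnsset$-enriched category $\cC$ together with a weak equivalence $W\to\Nh\cC$ in $\pcatinj$: granting this, \cref{prop:StNCQE} gives that $\St_{\Nh\cC}\dashv\Un_{\Nh\cC}$ is a Quillen equivalence, \cref{StWQEStZ} transports it across the weak equivalence $W\to\Nh\cC$ to conclude that $\St_W\dashv\Un_W$ is a Quillen equivalence, and the enrichment over $\MSThnsset$ is recorded in \cref{QPwithloc}.

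To produce such a $\cC$ and such a weak equivalence, I would use the derived unit of the Quillen equivalence $\Ch\dashv\Nh$ of \cref{injQuillen}. The cofibrations of $\pcatinj$ are the monomorphisms, so every object of $\pcatinj$, in particular $W$, is cofibrant. Choose a fibrant replacement $r\colon\Ch W\xrightarrow{\ \simeq\ }\cC$ in $\MSThncat$; then $\cC$ is a fibrant $\MSThnsset$-enriched category. Since $\Ch\dashv\Nh$ is a Quillen equivalence and $W$ is cofibrant, the adjunct of $r$ under this adjunction, namely the derived unit
\[ W\xrightarrow{\ \eta_W\ }\Nh\Ch W\xrightarrow{\ \Nh r\ }\Nh\cC, \]
is a weak equivalence in $\pcatinj$, as needed. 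If one prefers not to invoke that every object of $\pcatinj$ is cofibrant, one may instead first replace $W$ by a cofibrant replacement $W'\twoheadrightarrow W$, which is a weak equivalence in $\pcatinj$, and then apply \cref{StWQEStZ} one more time at the end.

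Assembling these inputs gives the theorem. I do not expect any genuine obstacle at this final step: all the substance lies in the earlier results, in particular the necklace-calculus computations of the straightening functor, the point-set verifications that $\St_W$ preserves (trivial) cofibrations before and after localizing (\cref{StWprescof,StWprestrivcof,QPwithloc}), and, most importantly, the identification in \cref{prop:wetoid} of the homotopical composite $(\varepsilon_\cC)_!\,\St_{\Nh\cC}\,\int_\cC^\Nh$ with the identity, which underlies \cref{prop:StNCQE}. The only new ingredient needed here is the observation that an arbitrary $W$ is connected to some $\Nh\cC$ by a weak equivalence in $\pcatinj$, for which the derived unit of $\Ch\dashv\Nh$ does the job.
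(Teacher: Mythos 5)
Your proposal is correct and follows essentially the same route as the paper: the paper also takes the derived unit $W\to \Nh\widehat{\Ch W}$ of the Quillen equivalence $\Ch\dashv\Nh$ (with $\widehat{\Ch W}$ a fibrant replacement of $\Ch W$ in $\MSThncat$), applies \cref{prop:StNCQE} to $\Nh\widehat{\Ch W}$, and transports the conclusion back along this weak equivalence via \cref{StWQEStZ}. Your extra remark about cofibrancy in $\pcatinj$ is a harmless refinement of the same argument.
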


\begin{proof}
    The component of the derived unit of the Quillen equivalence from \cref{injQuillen} at $W$ gives a weak equivalence $W\to \Nh\widehat{\Ch W}$ in $\pcatinj$, where $\Ch W\to \widehat{\Ch W}$ is a fibrant replacement in $\MSThncat$. Then, by \cref{prop:StNCQE} the functor $\St_{\Nh\widehat{\Ch W}}$ is a Quillen equivalence, and so by \cref{StWQEStZ} we get that the functor $\St_W$ is also a Quillen equivalence. 
\end{proof}

\appendix

\section{Straightening-unstraightening: technical results}

In this section, we prove the technical results stated in \cref{subsec:propofSt}. In \cref{sec:naturalityofSt}, we prove \cref{basechangeSt}; in \cref{sec:Stofgeneral}, \cref{Stof[lX]}; in \cref{sec:enrichmentofSt}, \cref{lem:Stprestensors}; and, finally, in \cref{sec:Stofmono}, \cref{Stofalphafmono}.

\subsection{Naturality of straightening} \label{sec:naturalityofSt}

We aim to show \cref{basechangeSt} which states that the straightening functor is natural in $W$. For this, we first need the following result. Recall the notion of a cofinal functor from \cite[\textsection IX.3]{MacLane}.

\begin{lemma} \label{prop:BeckChevalley1}
    Consider a commutative square in $\cat$ as below left.
    \begin{tz}
        \node[](1) {$\cA$}; 
        \node[below of=1](2) {$\cC$}; 
        \node[right of=1,xshift=.5cm](3) {$\cB$}; 
        \node[below of=3](4) {$\cD$}; 

        \draw[->] (1) to node[left,la]{$I$} (2); 
        \draw[->] (1) to node[above,la]{$F$} (3); 
        \draw[->] (2) to node[below,la]{$G$} (4); 
        \draw[->] (3) to node[right,la]{$J$} (4);

        \node[right of=3,xshift=2cm](1) {$\set^{\cA^{\op}}$}; 
        \node[below of=1](2) {$\set^{\cC^{\op}}$}; 
        \node[right of=1,xshift=1.3cm](3) {$\set^{\cB^{\op}}$}; 
        \node[below of=3](4) {$\set^{\cD^{\op}}$};

        \cell[][n][.5]{1}{4}{};

        \draw[->] (2) to node[left,la]{$I^*$} (1); 
        \draw[->] (1) to node[above,la]{$F_!$} (3); 
        \draw[->] (2) to node[below,la]{$G_!$} (4); 
        \draw[->] (4) to node[right,la]{$J^*$} (3);
    \end{tz}
    Suppose that, for every object $b\in \cB$, the induced functor between comma categories
    \[ J\downarrow I\colon b\downarrow F\to Jb\downarrow G, \quad (a\in \cA, b\xrightarrow{f} Fa\in \cB)\mapsto (Ia\in \cC,Jb\xrightarrow{Jf} JFa=GIa\in \cD)\]
    is cofinal. Then the canonical natural transformation in the above right square in $\cat$ is a natural isomorphism $F_! I^*\cong J^* G_!$, i.e., the Beck-Chevalley condition is satisfied.
\end{lemma}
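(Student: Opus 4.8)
The plan is to unwind the definitions of left Kan extension and pullback as pointwise (co)limits, and then invoke the classical fact that cofinal functors do not change colimits. Concretely, fix a presheaf $P\in \set^{\cA^{\op}}$ and an object $b\in \cB$. We must produce a natural isomorphism $(F_! I^* P)(b)\cong (J^* G_! I^* P)(b)$, and then check this is natural in $b$ and in $P$. The right-hand side is $(G_! (I^*P))(Jb)$ by definition of $J^*$.

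First I would write out both sides as colimits over comma categories. By the standard pointwise formula for left Kan extension along $F$ (using that presheaf categories are cocomplete), we have
\[ (F_!(I^*P))(b)\;\cong\; \colim_{(a,\,b\to Fa)\in b\downarrow F} (I^*P)(a)\;=\;\colim_{(a,\,b\to Fa)\in b\downarrow F} P(Ia). \]
Similarly, applying the same formula to $G_!$ along $G$ and evaluating at $Jb$,
\[ (G_!(I^*P))(Jb)\;\cong\; \colim_{(c,\,Jb\to Gc)\in Jb\downarrow G} (I^*P)(c)\;=\;\colim_{(c,\,Jb\to Gc)\in Jb\downarrow G} P(Ic), \]
but here I should be slightly careful: $I^*P$ is a presheaf on $\cC$, and in general $(I^*P)(c)$ is not of the form $P(Ia)$; however the functor we are comparing against is $G_!$ applied to $I^*P$, so the colimit is genuinely over $Jb\downarrow G$ of the diagram $c\mapsto (I^*P)(c)$. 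The key observation is that the functor $J\downarrow I\colon b\downarrow F\to Jb\downarrow G$ from the hypothesis sends $(a, b\to Fa)$ to $(Ia, Jb\to GIa)$, and precomposing the diagram $c\mapsto (I^*P)(c)$ on $Jb\downarrow G$ with $J\downarrow I$ gives exactly the diagram $(a,b\to Fa)\mapsto (I^*P)(Ia)=P(Ia)$ on $b\downarrow F$ — i.e. the diagram computing the left-hand colimit.

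Hence, since by hypothesis $J\downarrow I$ is cofinal, the induced map on colimits
\[ \colim_{b\downarrow F} P(Ia)\;\longrightarrow\;\colim_{Jb\downarrow G}(I^*P)(c) \]
is an isomorphism, which is precisely the statement $(F_!I^*P)(b)\cong (J^*G_!I^*P)(b)$. The second step is to verify that this isomorphism is the canonical Beck–Chevalley comparison map and that it is natural in $b$ and $P$: the canonical map arises from the mate of the commuting square $I^*F^* = G^*J^*$ (which holds since $JF=GI$ on the nose), and one checks on the colimit presentations above that the cofinality isomorphism is compatible with the structure maps of the comma categories as $b$ varies, so it assembles into a natural transformation $F_!I^*\Rightarrow J^*G_!$; being a pointwise isomorphism, it is an isomorphism of functors. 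Naturality in $P$ is automatic since all the identifications above are natural in the diagram.

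The main obstacle I anticipate is bookkeeping rather than conceptual: correctly identifying the colimit presentation of $G_!(I^*P)$ at $Jb$ and confirming that restriction along $J\downarrow I$ of its defining diagram reproduces the defining diagram of $F_!(I^*P)(b)$ — this hinges on the strict commutativity $JF=GI$ and the explicit description of $J\downarrow I$ on objects and morphisms given in the statement. Once that identification is in place, cofinality (in the sense of \cite[\textsection IX.3]{MacLane}) does all the work, and the only remaining care is to check that the comparison isomorphism is the canonical mate, which follows by a routine diagram chase comparing the universal cocones.
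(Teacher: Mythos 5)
Your overall strategy --- expressing both composites via the pointwise formula for left Kan extension as colimits over comma categories, identifying the restricted diagram along $J\downarrow I$, and invoking cofinality --- is exactly the paper's argument. However, as written your proof does not typecheck, and the problem occurs precisely at the step you flag as ``bookkeeping''. The two functors being compared, $F_!I^*$ and $J^*G_!$, both have domain $\set^{\cC^{\op}}$, so you should fix a presheaf $H\in \set^{\cC^{\op}}$ and compare $(F_!I^*H)(b)$ with $(J^*G_!H)(b)=(G_!H)(Jb)$; there is no $I^*$ on the right-hand side. Instead you fix ``$P\in\set^{\cA^{\op}}$'' (for which $I^*P$ is undefined, since $I^*$ restricts presheaves on $\cC$ to presheaves on $\cA$) and set out to prove $(F_!I^*P)(b)\cong (J^*G_!I^*P)(b)$, and your cautionary remark --- that $I^*P$ is a presheaf on $\cC$ and that ``the functor we are comparing against is $G_!$ applied to $I^*P$'' --- doubles down on this confusion: $G_!$ must be applied to the original presheaf on $\cC$, not to its restriction. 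The correct identification is that the diagram over $Jb\downarrow G$ computing $(G_!H)(Jb)$ is $c\mapsto H(c)$, and its restriction along $J\downarrow I$ is $(a,\,b\to Fa)\mapsto H(Ia)=(I^*H)(a)$, which is exactly the diagram computing $(F_!I^*H)(b)$; cofinality then finishes the argument.

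Two smaller points to repair as well. Since these are presheaves, the pointwise formula yields colimits indexed by the \emph{opposite} comma categories $(b\downarrow F)^{\op}$ and $(Jb\downarrow G)^{\op}$, so the hypothesis is used as finality of $(J\downarrow I)^{\op}$ --- this is precisely how the paper phrases it, and being sloppy about the op's obscures which cofinality condition is actually invoked. And the strictly commuting square of right adjoints whose mate is the Beck--Chevalley comparison is $F^*J^*=I^*G^*$ (coming from $JF=GI$), not ``$I^*F^*=G^*J^*$'', which does not even compose. With these corrections your argument coincides with the paper's proof.
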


\begin{proof}
    Let $H\colon \cC^{\op}\to \set$ be a functor and $b\in \cB$ be an object. By the pointwise formula for left Kan extension, we have natural isomorphisms in $\cE$ 
    \[ J^* G_! H(b)=G_! H(Jb)\cong \colim((Jb\downarrow G)^{\op}\to \cC^{\op}\xrightarrow{H} \set), \]
    \begin{align*}
        F_! I^* H(b) =F_! (H\circ I^{\op})(b) &\cong \colim((b\downarrow F)^{\op}\to \cA^{\op}\xrightarrow{I^{\op}} \cC^{\op}\xrightarrow{H} \set) \\
        &\cong \colim((b\downarrow F)^{\op}\xrightarrow{(J\downarrow I)^{\op}}(Jb\downarrow G)^{\op}\to \cC^{\op}\xrightarrow{H} \set).
    \end{align*} 
    Hence the result follows from the fact that the functor $(J\downarrow I)^{\op}\colon (b\downarrow F)^{\op}\to (Jb\downarrow G)^{\op}$ is final as its opposite $J\downarrow I\colon b\downarrow F\to Jb\downarrow G$ is cofinal by assumption.
\end{proof}

Recall that $\Thncat$ is a full subcategory of $\cat^{\DThnop}$. Hence a $\Thnsset$-enriched category $\cC$ can be thought of as a functor $\cC\colon \Thnop\times \Dop\to \cat$, and a $\Thnsset$-enriched functor $F\colon \cC\to \cD$ as a natural transformation between such functors.

\begin{lemma} \label{prop:BeckChevalley2}
    Consider a commutative square in $\Thncat$ as below left.
    \begin{tz}
        \node[](1) {$\cA$}; 
        \node[below of=1](2) {$\cC$}; 
        \node[right of=1,xshift=.5cm](3) {$\cB$}; 
        \node[below of=3](4) {$\cD$}; 

        \draw[->] (1) to node[left,la]{$I$} (2); 
        \draw[->] (1) to node[above,la]{$F$} (3); 
        \draw[->] (2) to node[below,la]{$G$} (4); 
        \draw[->] (3) to node[right,la]{$J$} (4);

        \node[right of=3,xshift=2cm](1) {$[\cA^{\op},\Thnsset]$}; 
        \node[below of=1](2) {$[\cC^{\op},\Thnsset]$}; 
        \node[right of=1,xshift=2.7cm](3) {$[\cB^{\op},\Thnsset]$}; 
        \node[below of=3](4) {$[\cD^{\op},\Thnsset]$};

        \cell[][n][.5]{1}{4}{};

        \draw[->] (2) to node[left,la]{$I^*$} (1); 
        \draw[->] (1) to node[above,la]{$F_!$} (3); 
        \draw[->] (2) to node[below,la]{$G_!$} (4); 
        \draw[->] (4) to node[right,la]{$J^*$} (3);
    \end{tz}
    Suppose that, for every object $b\in \cB$ and every $\defThn\in \Thn$, $\defS\geq 0$, the induced functor between comma categories 
    \[ J_{\defThn,\defS}\downarrow I_{\defThn,\defS}\colon b\downarrow F_{\defThn,\defS}\to Jb\downarrow G_{\defThn,\defS}\]
    is cofinal. Then the canonical natural transformation in the above right square in $\cat$ is a natural isomorphism $F_! I^*\cong J^* G_!$, i.e., the Beck-Chevalley condition is satisfied.
\end{lemma}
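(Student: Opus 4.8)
The plan is to deduce \cref{prop:BeckChevalley2} from its unenriched counterpart \cref{prop:BeckChevalley1} by testing the Beck--Chevalley transformation after evaluating everything at each pair $(\defThn,\defS)\in\Thn\times\Delta$. The starting point is that $\Thnsset\cong\set^{\DThnop}$, so each evaluation functor $\ev_{\defThn,\defS}\colon\Thnsset\to\set$ is representable; in particular it preserves all limits and colimits, and a morphism of $\Thnsset$ is an isomorphism exactly when $\ev_{\defThn,\defS}$ sends it to a bijection for every $\defThn$ and $\defS$. Under the inclusion $\Thncat\hookrightarrow\cat^{\DThnop}$, this evaluation sends a $\Thnsset$-enriched category $\cC$ to the ordinary category $\cC_{\defThn,\defS}$ with the same objects and $\Hom_{\cC_{\defThn,\defS}}(a,b)=\Hom_\cC(a,b)_{\defThn,\defS}$, a $\Thnsset$-enriched functor $F$ to $F_{\defThn,\defS}$, and a $\Thnsset$-enriched presheaf $H\colon\cC^{\op}\to\Thnsset$ to the ordinary presheaf $H_{\defThn,\defS}\coloneqq\ev_{\defThn,\defS}\circ H\colon\cC_{\defThn,\defS}^{\op}\to\set$, compatibly with enriched natural transformations.

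The key step I would carry out is to check that the enriched functors $I^*$ and $F_!$ commute with this level-wise evaluation: one has $(I^*H)_{\defThn,\defS}=(I_{\defThn,\defS})^*(H_{\defThn,\defS})$, which is immediate from the definition of $I^*$, and $(F_!G)_{\defThn,\defS}\cong(F_{\defThn,\defS})_!(G_{\defThn,\defS})$, naturally in $H$ and $G$. For the latter I would use that $\Thnsset$ is cocomplete, so the $\Thnsset$-enriched left Kan extension exists, is pointwise, and is computed by the enriched coend
\[ \textstyle (F_!G)(b)\cong\int^{a\in\cA}\Hom_\cB(b,Fa)\times Ga, \]
the tensor in $\Thnsset$ being the cartesian product (see \cite[\textsection 4]{Kelly}). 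Since $\ev_{\defThn,\defS}$ preserves coproducts, coequalizers and finite products, applying it to this coend yields exactly the ordinary coend $\int^{a\in\cA_{\defThn,\defS}}\Hom_{\cB_{\defThn,\defS}}(b,F_{\defThn,\defS}a)\times G_{\defThn,\defS}a=((F_{\defThn,\defS})_!G_{\defThn,\defS})(b)$. Being product- and colimit-preserving, $\ev_{\defThn,\defS}$ moreover carries the canonical (mate) natural transformation $F_!I^*\Rightarrow J^*G_!$ of the enriched square to the canonical Beck--Chevalley transformation $(F_{\defThn,\defS})_!(I_{\defThn,\defS})^*\Rightarrow(J_{\defThn,\defS})^*(G_{\defThn,\defS})_!$ of the commutative square of ordinary categories $\cA_{\defThn,\defS},\cB_{\defThn,\defS},\cC_{\defThn,\defS},\cD_{\defThn,\defS}$ obtained from the given square in $\Thncat$.

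Granting this, the conclusion is immediate: fix $\defThn$ and $\defS$; by hypothesis every comparison functor $J_{\defThn,\defS}\downarrow I_{\defThn,\defS}\colon b\downarrow F_{\defThn,\defS}\to J_{\defThn,\defS}b\downarrow G_{\defThn,\defS}$ is cofinal, so \cref{prop:BeckChevalley1} applied to the square $\cA_{\defThn,\defS},\cB_{\defThn,\defS},\cC_{\defThn,\defS},\cD_{\defThn,\defS}$ shows that its canonical Beck--Chevalley transformation is a natural isomorphism. By the previous paragraph this is precisely the image under $\ev_{\defThn,\defS}$ of $F_!I^*\Rightarrow J^*G_!$, so that transformation becomes an isomorphism after each $\ev_{\defThn,\defS}$, hence is an isomorphism in $[\cB^{\op},\Thnsset]$. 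The one point that requires genuine care — and which I expect to be the main obstacle — is this commutation of the enriched left Kan extension with level-wise evaluation together with the compatibility of the Beck--Chevalley mates under $\ev_{\defThn,\defS}$; once the coend description of $F_!$ is in hand it is a routine but somewhat fiddly verification using the cartesian closedness of $\Thnsset$ and the fact that $\ev_{\defThn,\defS}$ preserves the relevant (co)limits.
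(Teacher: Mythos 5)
Your proposal is correct and follows essentially the same route as the paper: evaluate everything levelwise at each $(\defThn,\defS)$, use the pointwise (coend) formula for the enriched left Kan extension to identify $(F_!I^*H)_{\defThn,\defS}$ and $(J^*G_!H)_{\defThn,\defS}$ with their unenriched counterparts, apply \cref{prop:BeckChevalley1} to the square of ordinary categories $\cA_{\defThn,\defS},\cB_{\defThn,\defS},\cC_{\defThn,\defS},\cD_{\defThn,\defS}$, and conclude since isomorphisms in $\Thnsset$ are detected levelwise. The compatibility of the Beck--Chevalley mates under evaluation, which you rightly flag as the fiddly point, is exactly what the paper's appeal to the pointwise formula from \cite[(4.18) and (4.25)]{Kelly} is meant to cover.
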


\begin{proof} 
    Let $H\colon \cC^{\op}\to \Thnsset$ be a $\Thnsset$-enriched functor and $b\in \cB$. By using the pointwise formula for $\Thnsset$-enriched left Kan extensions from \cite[(4.18) and (4.25)]{Kelly} and applying the evaluation functor $\ev_{\defThn,\defS}\colon \Thnsset\to \sset$ for every $\defThn\in \Thn$ and $\defS\geq 0$, we see that there are natural isomorphisms of sets 
    \[ \ev_{\defThn,\defS}(F_! I^* H(b))\cong \ev_{\defThn,\defS}((F_{\defThn,\defS})_!I_{\defThn,\defS}^*H_{\defThn,\defS}(b)), \quad \ev_{\defThn,\defS}(J^* G_!H(b))\cong \ev_{\defThn,\defS}(J_{\defThn,\defS}^* (G_{\defThn,\defS})_! H_{\defThn,\defS}(b)). \]
    Hence the result follows from applying \cref{prop:BeckChevalley1} and using that isomorphisms in $\Thnsset$ are levelwise.
\end{proof}

Let us fix an object $W\in \pcatThn$ and a map $\sigma\colon \repD[m,\defThn,\defS]\to W$ in $\sThnsset$ with $m,\defS\geq 0$ and $\defThn\in \Thn$. Consider the following pushout square in $\Thncat$.
\begin{tz}
        \node[](1) {$\Ch L\repD[m,\defThn,\defS]$}; 
        \node[right of=1,xshift=2cm](2) {$\Ch W$}; 
        \node[below of=1](3) {$\Ch L\repD[m+1,\defThn,\defS]$}; 
        \node[below of=2](4) {$\Ch W_\sigma$}; 
        \pushout{4};

        \draw[->] (1) to node[above,la]{$\Ch \sigma$} (2); 
        \draw[->] (1) to node[left,la]{$\Ch L [d^{m+1},\defThn,\defS]$} (3); 
        \draw[->] (2) to node[right,la]{$\Ch \iota_\sigma$} (4); 
        \draw[->] (3) to node[below,la]{$\Ch \sigma'$} (4);
    \end{tz}

\begin{lemma} \label{cofinality}
    For $\defThn'\in \Thn$, $\defS'\geq 0$, and $b\in \Ch W$ an object, the functor 
    \[ (\Ch\iota_\sigma)_{\defThn',\defS'}\downarrow(\Ch L[d^{m+1},\defThn,\defS])_{\defThn',\defS'}\colon b\downarrow (\Ch\sigma)_{\defThn',\defS'}\to (\Ch\iota_\sigma) b \downarrow (\Ch\sigma')_{\defThn',\defS'}\]
    is cofinal.
\end{lemma}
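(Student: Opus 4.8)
The plan is to reduce the cofinality statement to a concrete combinatorial fact about the pushout $\Ch W_\sigma$, exploiting the very explicit description of this pushout available from the excerpt. First I would observe that, by the computation of $\Ch W_\sigma$ as a pushout in $\Thncat$ (and the fact that $\Ch L[d^{m+1},\defThn,\defS]$ arises by applying $\Ch$ to the inclusion $L\repD[m,\defThn,\defS]\hookrightarrow L\repD[m+1,\defThn,\defS]$ that freely adjoins a single terminal object $m+1$ together with morphisms into it), the enriched category $(\Ch W_\sigma)_{\defThn',\defS'}$ is obtained from $(\Ch W)_{\defThn',\defS'}$ by adding one new object $\top$, new morphisms into $\top$, and free composites — but no new morphisms between old objects, and in particular no new endomorphisms of old objects or morphisms out of $\top$. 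This is exactly the mechanism already used in the proof of \cref{Stofc} (the isomorphism $\Hom_{\Ch(\Nh\cC_c)}(a,\top)\cong\Hom_{\Ch\Nh\cC}(a,c)$), so I would phrase it the same way here.

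Next I would unwind both comma categories. An object of $b\downarrow(\Ch\sigma)_{\defThn',\defS'}$ is a morphism $b\to (\Ch\sigma)_{\defThn',\defS'}(a)$ in $(\Ch W)_{\defThn',\defS'}$ for $a$ an object of $(\Ch L\repD[m,\defThn,\defS])_{\defThn',\defS'}$, i.e. $a\in\{0,1,\dots,m\}$. An object of $(\Ch\iota_\sigma)b\downarrow(\Ch\sigma')_{\defThn',\defS'}$ is a morphism $(\Ch\iota_\sigma)b\to(\Ch\sigma')_{\defThn',\defS'}(a')$ in $(\Ch W_\sigma)_{\defThn',\defS'}$ for $a'\in\{0,1,\dots,m+1\}$. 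The functor $(\Ch\iota_\sigma)\downarrow(\Ch L[d^{m+1},\defThn,\defS])$ is the evident inclusion of the former into the latter, missing only those objects on the ``$\top$ side,'' i.e. with $a'=m+1$. So the content of cofinality is: for every such target object $(a'=m+1,\ (\Ch\iota_\sigma)b\to\top)$, the comma category of the inclusion functor under it is nonempty and connected. Nonemptiness is clear because the object $m$ of $(\Ch L\repD[m,\defThn,\defS])_{\defThn',\defS'}$ maps under $\Ch\sigma'$ to an object admitting a canonical morphism to $\top$ (coming from the generating arrow $m\to m+1$ in $L\repD[m+1,\defThn,\defS]$), and precomposing gives a factorization; I would make this precise using the freeness of the morphisms into $\top$.

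The main obstacle — and where the necklace/homotopy-coherent bookkeeping actually enters — is verifying connectedness of these comma categories, i.e. that any two factorizations $(\Ch\iota_\sigma)b\to(\Ch\sigma')(a)\to\top$ (with $a\le m$) of a fixed map $(\Ch\iota_\sigma)b\to\top$ can be connected by a zigzag through objects of the image. Here I would use the explicit description of $\Hom_{\Ch W_\sigma}(-,\top)$: since every morphism into $\top$ is, up to the free composition, a morphism of $(\Ch W)_{\defThn',\defS'}$ followed by the single generator $m\to\top$ (together with the freely-added structure on the $L\repD[m+1,\defThn,\defS]$ bead), a canonical ``last factorization through $m$'' exists and is terminal in the relevant comma category, which forces connectedness. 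Concretely I would invoke the necklace description of hom-spaces (\cref{cor:computationhomsC}, \cref{cor:computationshomC1ordered}) applied to $W_\sigma$, noting that a necklace in $(W_\sigma)_{-,\defThn',\defS'}$ from any vertex to $\top$ must have its last bead lying in the adjoined copy of $L\repD[m+1,\defThn,\defS]$ and pass through the vertex $m$ (compare \cref{rmk:lastbead}), giving the canonical splitting. Once connectedness is established, cofinality follows from the classical criterion in \cite[\textsection IX.3]{MacLane}, completing the proof. I expect the write-up to hinge almost entirely on making the ``canonical last factorization through $m$'' argument precise and checking it is natural enough to give an actual terminal object (or at least a connected comma category) at each $\defThn',\defS'$.
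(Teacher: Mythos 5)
Your overall skeleton agrees with the paper's: reduce cofinality to showing that, for each object $(i,f)$ of $(\Ch\iota_\sigma)b\downarrow(\Ch\sigma')_{\defThn',\defS'}$, the category of factorizations $(j,g,\varphi)$ with $g\colon b\to(\Ch\sigma)j$ in $\Ch W$ and $\varphi\colon j\to i$ in $\Ch L\repD[m+1,\defThn,\defS]$ is non-empty and connected, handle $i\leq m$ by full faithfulness of $\Ch\iota_\sigma$, and handle $i=m+1$ via the necklace description of $\Hom_{\Ch W_\sigma}(-,\top)$ and a splitting along the last bead. However, the claim on which your whole $i=m+1$ argument rests is false for $m\geq 1$: it is not true that $\Ch W_\sigma$ is obtained from $\Ch W$ by freely adjoining a \emph{single generator} $m\to\top$, nor that a necklace ending at $\top$ must have its last bead pass through the vertex $m$, nor that every morphism into $\top$ admits a canonical (let alone terminal) factorization through the object $m$. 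Your description would amount to an isomorphism $\Ch L\repD[m+1,\defThn,\defS]\cong \Ch L\repD[m,\defThn,\defS]\amalg_{[0]}\Sigma\repD[\defThn,\defS]$, but the canonical comparison $\Ch L\repD[m,X]\amalg_{[0]}\Sigma X\to\Ch L\repD[m+1,X]$ is only a weak equivalence (\cref{prop:pushoutvsSh}, \cref{rem:Stoflanglem}), not an isomorphism; already for $\Ch\repD[2]$ (classical case) the element of $\Hom(0,2)$ given by the edge $\{0,2\}$ is not a composite through $1$. The case you pattern-match on, \cref{Stofc}, is exactly the degenerate situation $m=0$, where $\Ch L\repD[1,\defThn,\defS]\cong\Sigma\repD[\defThn,\defS]$ by \cref{Sh1issigma}; it does not generalize. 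As a consequence even your non-emptiness step breaks down: precomposing with a chosen morphism $m\to\top$ does not produce a factorization of an arbitrary $f$, and there is no terminal ``factorization through $m$''.

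The correct mechanism (and the paper's) is different in a way that matters: represent $f$ by a pair $(T,\gamma)$ with $T\to((W_\sigma)_{\defThn',\defS'})_{(\Ch\iota_\sigma)b,\top}$ a necklace; the last bead $B_\omega^T$ factors through $(L\repD[m+1,\defThn,\defS])_{-,\defThn',\defS'}$ and runs from some vertex $j\leq m$ (depending on $f$, and in general different from $m$) to $m+1$. Writing $T=T'\vee B_\omega^T$ and using the splitting $\Hom_{\CL T}(\alpha,\omega)\cong\Hom_{\CL T'}(\alpha,\omega)\times\Hom_{\CL B_\omega^T}(\alpha,\omega)$ of Dugger--Spivak gives a factorization $f=(\Ch\sigma')\varphi\circ(\Ch\iota_\sigma)g$ through $j$; connectedness then requires a second round of necklace bookkeeping showing that any other factorization $(j',g',\varphi')$ admits a morphism to this one (it is weakly terminal, not terminal), by comparing the two decompositions $T=T'\vee B_\omega^T=T_1\vee T_2$ and producing the mediating $\beta$ from the induced splittings. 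So your proposal needs to be repaired by replacing the ``single generator $m\to\top$'' picture with this last-bead analysis; as written, the key step fails.
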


\begin{proof} 
    Let $0\leq i\leq m+1$ and $f\colon (\Ch \iota_\sigma) b\to (\Ch \sigma')i$ be a morphism in $(\Ch W_\sigma)_{\defThn',\defS'}$. We need to show that the comma category $((\Ch\iota_\sigma)_{\defThn',\defS'}\downarrow(\Ch L[d^{m+1},\defThn,\defS])_{\defThn',\defS'})\downarrow (i,f)$ is non-empty and connected. To see this, we will show the following: 
    \begin{enumerate}[leftmargin=0.6cm,label=(\roman*)]
        \item there exist $0\leq j\leq m$, a morphism $g\colon b\to (\Ch\sigma)j$ in $(\Ch W)_{\defThn',\defS'}$, and a morphism $\varphi\colon j\to i$ in $(\Ch L\repD[m+1,\defThn,\defS])_{\defThn',\defS'}$ such that $f=(\Ch \sigma')\varphi\circ (\Ch \iota_\sigma) g$, 
        \item given another such factorization $(j',g',\varphi')$ of $f$, then there exists a morphism $\beta\colon j'\to j$ in $(\Ch L\repD[m,\defThn,\defS])_{\defThn',\defS'}$ such that $\varphi \circ \beta=\varphi'$ and $g=(\Ch \sigma)\beta\circ g'$.
    \end{enumerate}

    When $0\leq i\leq m$, as the functor $(\Ch \iota_\sigma)_{\defThn',\defS'}\colon (\Ch W)_{\defThn',\defS'}\to (\Ch W_\sigma)_{\defThn',\defS'}$ is fully faithful as a pushout of a fully faithful functor, there exists a morphism $g\colon b\to (\Ch\sigma)i$ such that $f=(\Ch \iota_\sigma)g$, and hence the tuple $(i,g,\id_i)$ gives a factorization of $f$ as in (i). It is in fact a terminal object in the comma category $((\Ch\iota_\sigma)_{\defThn',\defS'}\downarrow(\Ch L[d^{m+1},\defThn,\defS])_{\defThn',\defS'})\downarrow (i,f)$ and so it satisfies (ii) as well.

    It remains to consider the case where $i=m+1$. In order to show (i) and (ii), using the same notations as above, we will construct a commutative diagram in $(\Ch W_\sigma)_{\defThn',\defS'}$ of the following form.
    \begin{tz}
    \node[](1) {$(\Ch\iota_\sigma)b$}; 
    \node[above of=1,xshift=4cm](2) {$(\Ch \iota_\sigma)(\Ch \sigma)j'=(\Ch\sigma')j'$}; 
    \node[below of=2,xshift=3.7cm](3) {$\top$}; 
    \node[below of=1,xshift=4cm](4) {$(\Ch\iota_\sigma)(\Ch\sigma)j=(\Ch\sigma')j$}; 
    
    \draw[->] (1) to node[above,la,xshift=-1.2cm]{$(\Ch \iota_\sigma)g'=(T_1,\gamma_1)$} (2); 
    \draw[->] (2) to node[above,la,xshift=1.2cm]{$(\Ch\sigma')\varphi'=(T_2,\gamma_2)$} (3); 
    \draw[->] (1) to node[below,la,xshift=-1.2cm]{$(\Ch \iota_\sigma)g=(T',\gamma')$} (4); 
    \draw[->] (4) to node[below,la,xshift=1.2cm]{$(\Ch\sigma')\varphi=(B_\omega^T,\gamma_\omega)$} (3); 
    \draw[->] (2) to node[over,la,pos=0.7]{$(\Ch\iota_\sigma)(\Ch\sigma)\beta=(T_1',\gamma_1')$} (4);
    \draw[->,w](1) to node[above,la,pos=0.65]{$f=(T,\gamma)$} (3);
\end{tz}
    
    Recall from \cref{cor:computationhomsC} that 
    \[ \Hom_{\Ch W_\sigma}((\Ch\iota_\sigma) b,\top)\cong \diag(\colim_{T\in \catnec{(W_\sigma)_{-,\star,\star}}{(\Ch\iota_\sigma) b}{\top}} \Hom_{\CL T}(\alpha,\omega)), \]
    where $\top=(\Ch \sigma')(m+1)$. Hence a morphism $f\colon (\Ch \iota_\sigma)b\to \top$ in $(\Ch W_\sigma)_{-,\defThn',\defS'}$, i.e., an element of $\Hom_{\Ch W_\sigma}((\Ch\iota_\sigma) b,\top)_{\defThn',\defS'}$, consists of a pair $(T,\gamma)$ of a necklace $T\to ((W_\sigma)_{\defThn',\defS'})_{(\Ch \iota_\sigma)b,\top}$ and an element $\gamma\in \Hom_{\CL T}(\alpha,\omega)_{\defS'}$. 

    Now note that, if $\tau\colon \repD[m']\to (W_\sigma)_{-,\defThn',\defS'}$ is such that $\tau(m')=\top$, then by definition of $(W_\sigma)_{-,\defThn',\defS'}$ as the pushout $(L\repD[m+1,\defThn,\defS]))_{-,\defThn',\defS'}\amalg_{(L\repD[m,\defThn,\defS])_{-,\defThn',\defS'}} W_{-,\defThn',\defS'}$ in $\Dset$, the $m'$-simplex $\tau$ factors through $\sigma'_{-,\defThn',\defS'}\colon (L\repD[m+1,\defThn,\defS]))_{-,\defThn',\defS'}\to (W_\sigma)_{-,\defThn',\defS'}$. In particular, the last bead $B_\omega^T\hookrightarrow T\to ((W_\sigma)_{\defThn',\defS'})_{(\Ch \iota_\sigma)b,\top}$ satisfies this condition and so it factors through a simplex $B_\omega^T\to (L\repD[m+1,\defThn,\defS]_{-,\defThn',\defS'})_{j,m+1}$ for some $0\leq j\leq m$.

    We write $T=T'\vee B_\omega^T$. Then by \cite[Corollary 3.10]{DuggerSpivakRigidification}, there is an isomorphism of sets 
    \begin{equation} \label{splithom} \Hom_{\CL T}(\alpha,\omega)_{\defS'}\cong \Hom_{\CL T'}(\alpha,\omega)_{\defS'}\times \Hom_{\CL B_\omega^T}(\alpha,\omega)_{k'} 
    \end{equation}
    and so the element $\gamma\in \Hom_{\CL T}(\alpha,\omega)_{\defS'}$ corresponds to a pair $(\gamma',\gamma_\omega)$ with $\gamma'\in \Hom_{\CL T'}(\alpha,\omega)_{\defS'}$ and $\gamma_\omega\in \Hom_{\CL B_\omega^T}(\alpha,\omega)_{k'}$. Using \cref{cor:computationhomsC}, the pair $(T',\gamma')$ corresponds to an element of $\Hom_{\Ch W_\sigma}((\Ch \iota_\sigma)b,(\Ch \iota_\sigma)(\Ch \sigma)j)_{\defThn',\defS'}$, and so to an element $g\in \Hom_{\Ch W}(b,(\Ch \sigma)j)_{\defThn',\defS'}$ by fully faithfulness of $(\Ch \iota_\sigma)_{\defThn',\defS'}$, and the pair $(B_\omega^T,\gamma_\omega)$ to an element of $\varphi\in \Hom_{\Ch L\repD[m+1,\defThn,\defS]}(j,m+1)_{\defThn',\defS'}$ by the above discussion. Moreover, as $T=T'\vee B_{\omega}^T$ and $\gamma$ is sent to $(\gamma',\gamma_\omega)$ by the isomorphism~\eqref{splithom}, we get that $f=(\Ch \sigma')\varphi\circ (\Ch \iota_\sigma) g$, proving (i).

    Now, if $(j',g',\varphi')$ is another factorization of $f$ of the desired form, then using \cref{cor:computationhomsC} the morphism $g'$ consists of a pair $(T_1,\gamma_1)$ of a necklace $T_1\to (W_{-,\defThn',\defS'})_{b,(\Ch\sigma)j'}$ and an element $\gamma_1\in \Hom_{\CL T_1}(\alpha,\omega)_{\defS'}$ and the morphism $\varphi'$ consists of a pair $(T_2,\gamma_2)$ of a necklace $T_2\to ( L\repD[m+1,\defThn,\defS]_{-,\defThn',\defS'})_{j',m+1}$ and an element $\gamma_2\in \Hom_{\CL T_2}(\alpha,\omega)_{\defS'}$. Moreover, as $f=(\Ch\sigma')\varphi'\circ (\Ch\iota_\sigma) g'$, we get that $T=T_1\vee T_2$ and that the pair $(\gamma_1,\gamma_2)$ corresponds to $\gamma\in \Hom_{\CL T}(\alpha,\omega)_{\defS'}$ under the isomorphism of sets from \cite[Corollary 3.10]{DuggerSpivakRigidification}
    \begin{equation} \label{splithom2} \Hom_{\CL T}(\alpha,\omega)_{\defS'}\cong \Hom_{\CL T_1}(\alpha,\omega)_{\defS'}\times \Hom_{\CL T_2}(\alpha,\omega)_{k'}. \end{equation}
    
    As $T'\hookrightarrow T$ is by definition the subnecklace of $T$ containing all beads of $T$ except for $B_\omega^T$, we get that $T_1\hookrightarrow T'$. Write $T'=T_1\vee T_1'$. As $T_1\vee T_1'\vee B_\omega^T=T'\vee B_\omega^T=T=T_1\vee T_2$ we also have that $T_2=T_1'\vee B_\omega^T$. Then by \cite[Corollary 3.10]{DuggerSpivakRigidification} there are isomorphisms of sets 
    \begin{align*} 
    \Hom_{\CL T'}(\alpha,\omega)_{\defS'}\times \Hom_{\CL B_\omega^T}(\alpha,\omega)_{\defS'} &\cong \Hom_{\CL T_1}(\alpha,\omega)_{\defS'}\times \Hom_{\CL T'_1}(\alpha,\omega)_{k'}\times \Hom_{\CL B_\omega^T}(\alpha,\omega)_{\defS'} \\
    &\cong \Hom_{\CL T_1}(\alpha,\omega)_{\defS'}\times \Hom_{\CL T_2}(\alpha,\omega)_{\defS'}, \end{align*}
    and so there is a unique element $\gamma'_1\in \Hom_{\CL T'_1}(\alpha,\omega)_{k'}$ such that the pair $(\gamma_1,\gamma'_1)$ corresponds to $\gamma'$ and the pair $(\gamma'_1,\gamma_\omega)$ to $\gamma_2$, where we used that the pairs $(\gamma',\gamma_\omega)$ and $(\gamma_1,\gamma_2)$ both correspond to $\gamma\in \Hom_{\CL T}(\alpha,\omega)_{\defS'}$ under the isomorphisms \eqref{splithom} and \eqref{splithom2}, respectively. In particular, this implies that $\varphi \circ \beta=\varphi'$ and $g=(\Ch \sigma)\beta\circ g'$, proving (ii). 
\end{proof}

\begin{lemma} \label{lem:Stvssigma}
    There is an isomorphism in $[\Ch W^{\op},\Thnsset]$
    \[ (\Ch \sigma)_! \St_{L\repD[m,\defThn,\defS]}(\id_{\repD[m,\defThn,\defS]}) \cong \St_W(\sigma). \]
\end{lemma}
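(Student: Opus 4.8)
The plan is to recognise both sides of the claimed isomorphism as the precomposite of a representable $\Thnsset$-enriched presheaf with a restriction functor, and then to match them via a Beck--Chevalley argument applied to the image under $\Ch$ of the pushout square of \cref{pushoutlowersigma}. The one genuinely non-formal input, namely the cofinality of the relevant comma-category functors, has already been isolated in \cref{cofinality}; granted that, the rest is bookkeeping with (enriched) Kan extensions.

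First I would unwind both sides. By definition, $\St_W(\sigma)$ is the composite $\Ch W^{\op}\xrightarrow{\Ch(\iota_\sigma)}(\Ch W_\sigma)^{\op}\xrightarrow{\Hom_{\Ch W_\sigma}(-,\top)}\Thnsset$, that is, the restriction $(\Ch\iota_\sigma)^*$ along $\Ch\iota_\sigma$ of the representable $\Thnsset$-presheaf $\Hom_{\Ch W_\sigma}(-,\top)$ on $\Ch W_\sigma$. For the left-hand side I would instantiate the construction with $W=L\repD[m,\defThn,\defS]$ and with $\sigma$ the object of $\int_{\DThnS}L\repD[m,\defThn,\defS]$ whose transpose under $L\dashv I$ is $\id_{L\repD[m,\defThn,\defS]}$; then the top leg $\sigma$ of the pushout square of \cref{pushoutlowersigma} is the identity, so the square collapses to give $W_\sigma\cong L\repD[m+1,\defThn,\defS]$ with $\iota_\sigma=L[d^{m+1},\defThn,\defS]$ and $\top=m+1$ (equivalently one may cite \cref{Coneid}). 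Hence $\St_{L\repD[m,\defThn,\defS]}(\id_{\repD[m,\defThn,\defS]})$ is the restriction $(\Ch L[d^{m+1},\defThn,\defS])^*$ along $\Ch L[d^{m+1},\defThn,\defS]$ of the representable $\Hom_{\Ch L\repD[m+1,\defThn,\defS]}(-,m+1)$.

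Next, since $\Ch\colon\pcatThn\to\Thncat$ is a left adjoint, applying it to the pushout square of \cref{pushoutlowersigma} for the general $W$ yields the pushout square in $\Thncat$ displayed just before \cref{cofinality}, with legs $\Ch\sigma$, $\Ch L[d^{m+1},\defThn,\defS]$, $\Ch\iota_\sigma$ and $\Ch\sigma'$. I would then apply \cref{prop:BeckChevalley2} to this commutative square: its hypothesis on the induced functors between comma categories is exactly the cofinality statement of \cref{cofinality}. This produces the Beck--Chevalley isomorphism $(\Ch\sigma)_!\,(\Ch L[d^{m+1},\defThn,\defS])^*\cong(\Ch\iota_\sigma)^*\,(\Ch\sigma')_!$ of functors $[\Ch L\repD[m+1,\defThn,\defS]^{\op},\Thnsset]\to[\Ch W^{\op},\Thnsset]$.

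Finally, I would compute the enriched left Kan extension of the representable along $\Ch\sigma'$: since the $\Thnsset$-enriched left Kan extension of a representable presheaf along a functor is the representable at its image \cite[(4.32)]{Kelly}, and since $\Ch$ acts as the identity on object sets with $\top=\sigma'(m+1)$, we get $(\Ch\sigma')_!\Hom_{\Ch L\repD[m+1,\defThn,\defS]}(-,m+1)\cong\Hom_{\Ch W_\sigma}(-,\top)$. Evaluating the Beck--Chevalley isomorphism at $\Hom_{\Ch L\repD[m+1,\defThn,\defS]}(-,m+1)$ and chaining the identifications then gives
\[ (\Ch\sigma)_!\St_{L\repD[m,\defThn,\defS]}(\id_{\repD[m,\defThn,\defS]})\cong(\Ch\iota_\sigma)^*(\Ch\sigma')_!\Hom_{\Ch L\repD[m+1,\defThn,\defS]}(-,m+1)\cong(\Ch\iota_\sigma)^*\Hom_{\Ch W_\sigma}(-,\top)=\St_W(\sigma) \]
in $[\Ch W^{\op},\Thnsset]$, as desired. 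The hard part is \cref{cofinality}, already proved; the only subtlety in writing out the present argument is keeping the identifications of $W_\sigma$, $\iota_\sigma$ and $\top$ straight across the two different instantiations of \cref{pushoutlowersigma}.
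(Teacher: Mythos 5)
Your proposal is correct and follows essentially the same route as the paper's own proof: unwind both sides as restrictions of representables (using the collapse of the pushout in \cref{pushoutlowersigma} for the identity, equivalently \cref{Coneid}), apply \cref{prop:BeckChevalley2} together with \cref{cofinality} to the $\Ch$-image of the pushout square, and identify $(\Ch\sigma')_!$ of the representable via \cite[(4.32)]{Kelly}. No gaps; the bookkeeping of $W_\sigma$, $\iota_\sigma$ and $\top$ is exactly as in the paper.
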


\begin{proof}
    First recall that, by the definition of the straightening functor on representables from \cref{subsec:defStUn}, we have the following isomorphism in $[\Ch L\repD[m,\defThn,\defS]^{\op},\Thnsset]$
    \[ \St_{L\repD[m,\defThn,\defS]}(\id_{\repD[m,\defThn,\defS]})\cong (\Ch L[d^{m+1},\defThn,\defS])^*\Hom_{\Ch L\repD[m+1,\defThn,\defS]}(-,m+1)\]
    and the following isomorphism in $[\Ch W^{\op},\Thnsset]$
    \[ \St_W(\sigma)=(\Ch \iota_\sigma)^* \Hom_{\Ch W_\sigma}(-,\top)\cong (\Ch \iota_\sigma)^* (\Ch \sigma')_! \Hom_{\Ch L\repD[m+1,\defThn,\defS]}(-,m+1), \]
    where we use that the left Kan extension of the representable $\Hom_{\Ch L\repD[m+1,\defThn,\defS]}(-,m+1)$ along $\Ch\sigma'\colon \Ch L\repD[m+1,\defThn,\defS]\to \Ch W_\sigma$ is the representable $\Hom_{\Ch W_\sigma}(-,\top)$ by \cite[(4.32)]{Kelly}. Then, by combining \cref{prop:BeckChevalley2,cofinality}, we get that the following square in $\cat$
    \begin{tz}
        \node[](1) {$[\Ch L\repD[m,\defThn,\defS]^{\op},\Thnsset]$}; 
        \node[below of=1](2) {$[\Ch L\repD[m+1,\defThn,\defS]^{\op},\Thnsset]$}; 
        \node[right of=1,xshift=4.2cm](3) {$[\Ch W^{\op},\Thnsset]$}; 
        \node[below of=3](4) {$[(\Ch W_\sigma)^{\op},\Thnsset]$}; 

        \draw[->] (2) to node[left,la]{$(\Ch L[d^{m+1},\defThn,\defS])^*$} (1); 
        \draw[->] (1) to node[above,la]{$(\Ch \sigma)_!$} (3); 
        \draw[->] (2) to node[below,la]{$(\Ch \sigma')_!$} (4); 
        \draw[->] (4) to node[right,la]{$(\Ch \iota_\sigma)^*$} (3);

         \cell[la,above][n][.5]{1}{4}{$\cong$};
    \end{tz}
    commutes up to a natural isomorphism $(\Ch \sigma)_!(\Ch L[d^{m+1},\defThn,\defS])^*\cong (\Ch \iota_\sigma)^* (\Ch \sigma')_!$. By taking the component of this isomorphism at the representable $\Hom_{\Ch L\repD[m+1,\defThn,\defS]}(-,m+1)$, we get isomorphisms in $[\Ch W^{\op},\Thnsset]$
    \begin{align*}
        (\Ch \sigma)_! \St_{L\repD[m,\defThn,\defS]}(\id_{\repD[m,\defThn,\defS]}) 
        & = (\Ch \sigma)_! (\Ch L[d^{m+1},\defThn,\defS])^*\Hom_{\Ch L\repD[m+1,\defThn,\defS]}(-,m+1) \\
        & \cong (\Ch \iota_\sigma)^* (\Ch \sigma')_! \Hom_{\Ch L\repD[m+1,\defThn,\defS]}(-,m+1)\cong \St_W(\sigma). \qedhere
    \end{align*}
\end{proof}

We are now ready to prove \cref{basechangeSt} 
which we restate for the reader's convenience.

\begin{prop} \label{basechangeStappendix}
Let $f\colon W\to Z$ be a map in $\pcatThn$. Then the following square of left adjoint functors commutes.
\begin{tz}
        \node[](1) {$\sThnssetslice{W}$}; 
        \node[right of=1,xshift=2.8cm](2) {$[\Ch W^{\op},\Thnsset]$}; 
        \node[below of=1](3) {$\sThnssetslice{Z}$}; 
        \node[below of=2](4) {$[\Ch Z^{\op},\Thnsset]$}; 

        \draw[->] (1) to node[above,la]{$\St_W$} (2); 
        \draw[->] (1) to node[left,la]{$f_!$} (3); 
        \draw[->] (2) to node[right,la]{$(\Ch f)_!$} (4); 
        \draw[->] (3) to node[below,la]{$\St_Z$} (4);

         \cell[la,above][n][.5]{2}{3}{$\cong$};
    \end{tz}
\end{prop}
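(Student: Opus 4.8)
The plan is to leverage that all four functors in the square are cocontinuous: the straightening functors $\St_W$ and $\St_Z$ are left adjoints (to $\Un_W$ and $\Un_Z$), and $f_!$, $(\Ch f)_!$ are left adjoints to $f^*$ and $(\Ch f)^*$. Hence both composites $(\Ch f)_!\circ\St_W$ and $\St_Z\circ f_!$ are cocontinuous functors $\sThnssetslice{W}\to[\Ch Z^{\op},\Thnsset]$. By \cref{sliceaspresheaf}, $\sThnssetslice{W}$ is equivalent to the presheaf category on the category of elements $\int_{\DThnS}W$, so by density of representables a cocontinuous functor out of it is determined, up to canonical natural isomorphism, by its restriction along the Yoneda embedding $\int_{\DThnS}W\hookrightarrow\sThnssetslice{W}$; in particular, two cocontinuous functors with naturally isomorphic restrictions are themselves naturally isomorphic. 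Thus it suffices to construct a natural isomorphism between the restrictions of the two composites to $\int_{\DThnS}W$.

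I would first identify these restrictions. Under the equivalences of \cref{sliceaspresheaf}, the functor $f_!$ corresponds to left Kan extension along the induced functor $\int_{\DThnS}f\colon\int_{\DThnS}W\to\int_{\DThnS}Z$, which sends an object $\sigma\colon\repD[m,\defThn,\defS]\to W$ to $f\circ\sigma$; hence $f_!$ carries the representable at $\sigma$ to the representable at $f\circ\sigma$. Since $\St_Z$ restricted to representables is by construction the functor $\sigma'\mapsto\St_Z(\sigma')$ on $\int_{\DThnS}Z$, the restriction of $\St_Z\circ f_!$ is $\sigma\mapsto\St_Z(f\circ\sigma)$, while that of $(\Ch f)_!\circ\St_W$ is $\sigma\mapsto(\Ch f)_!\St_W(\sigma)$. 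So the task reduces to exhibiting an isomorphism $(\Ch f)_!\St_W(\sigma)\cong\St_Z(f\circ\sigma)$ that is natural in $\sigma\in\int_{\DThnS}W$.

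For this I would apply \cref{lem:Stvssigma} twice. For $\sigma\colon\repD[m,\defThn,\defS]\to W$ it gives $\St_W(\sigma)\cong(\Ch\sigma)_!\St_{L\repD[m,\defThn,\defS]}(\id_{\repD[m,\defThn,\defS]})$; applying the cocontinuous functor $(\Ch f)_!$ and using functoriality of $\Ch$ together with pseudofunctoriality of enriched left Kan extension, so that $(\Ch f)_!\circ(\Ch\sigma)_!\cong(\Ch f\circ\Ch\sigma)_!=(\Ch(f\circ\sigma))_!$, yields $(\Ch f)_!\St_W(\sigma)\cong(\Ch(f\circ\sigma))_!\St_{L\repD[m,\defThn,\defS]}(\id_{\repD[m,\defThn,\defS]})$. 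Applying \cref{lem:Stvssigma} once more, this time to $Z\in\pcatThn$ and the map $f\circ\sigma\colon\repD[m,\defThn,\defS]\to Z$, identifies the right-hand side with $\St_Z(f\circ\sigma)$. Composing these isomorphisms gives the desired comparison.

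The step I expect to be the main obstacle is bookkeeping: checking that the composite isomorphism is natural in $\sigma\in\int_{\DThnS}W$, so that it assembles into a natural isomorphism of the restricted functors. This amounts to verifying that the isomorphism of \cref{lem:Stvssigma} is natural in $\sigma$, which can be read off from its proof --- the pushout defining $W_\sigma$, the map $\iota_\sigma$, and the Beck--Chevalley isomorphism supplied by \cref{prop:BeckChevalley2} (whose hypothesis is furnished by \cref{cofinality}) are all built from universal properties and hence natural in $\sigma$, as are functoriality of $\Ch$ and pseudofunctoriality of $(-)_!$. Once naturality is established, the universal property of the free cocompletion promotes this pointwise isomorphism to a natural isomorphism $(\Ch f)_!\circ\St_W\cong\St_Z\circ f_!$, which is precisely the commutativity of the square.
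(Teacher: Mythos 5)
Your proposal is correct and follows essentially the same route as the paper: reduce via the presheaf description $\sThnssetslice{W}\simeq \set^{(\int_{\DThnS} W)^{\op}}$ and cocontinuity to representables $\sigma\colon \repD[m,\defThn,\defS]\to W$, then apply \cref{lem:Stvssigma} to both $\sigma$ and $f\sigma$ together with functoriality of $\Ch$ and of left Kan extensions to get $(\Ch f)_!\St_W(\sigma)\cong \St_Z(f\sigma)$. The extra care you take with naturality in $\sigma$ is consistent with (and slightly more explicit than) the paper's argument.
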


\begin{proof}
Since all functors involved are left adjoints and there is an equivalence of categories $\sThnssetslice{W}\simeq \set^{(\int_{\DThnS} W)^{\op}}$, it is enough to show that, for every representable object $\sigma\colon \repD[m,\defThn,\defS]\to W$ in $\sThnssetslice{W}$, we have an isomorphism in $[\Ch Z^{\op},\Thnsset]$
\[ (\Ch f)_! \St_W(\sigma)\cong \St_Z (f_!\sigma)=\St_Z(f\sigma). \]
By applying \cref{lem:Stvssigma} once to $\sigma$ and once to the composite $f\sigma$ and using the functoriality of left Kan extensions and of the functor $\Ch$, we get isomorphisms in $[\Ch Z^{\op},\Thnsset]$
\begin{align*}
    (\Ch f)_! \St_W(\sigma) &\cong (\Ch f)_! (\Ch \sigma)_! \St_{L\repD[m,\defThn,\defS]}(\id_{\repD[m,\defThn,\defS]}) \\
    &\cong (\Ch (f\sigma ))_!\St_{L\repD[m,\defThn,\defS]}(\id_{\repD[m,\defThn,\defS]}) \cong \St_Z(f\sigma). \qedhere
\end{align*}
\end{proof}

\begin{rmk} 
 A similar proof of this argument in the case of quasi-categories can be found in \cite[Proposition 2.12]{hhr2021straightening}. While our proof is motivated by theirs, we do carefully separate our argument in a formal part (deducing the desired Beck-Chevalley condition from an abstract cofinality assumption in \cref{prop:BeckChevalley2}) and a computational part (using specific properties of necklaces to deduce the desired cofinality in \cref{cofinality}). This separation paves the way for possible future generalizations via appropriate computations.
\end{rmk}

\subsection{Straightening of \texorpdfstring{$\repD[\ell,X]\to L\repD[m,Y]$}{F[l,X]->LF[m,Y]}, general case} \label{sec:Stofgeneral}

We first prove \cref{Stof[lX]} which gives a formula for the straightening of a map $[\mapDelta,f]\colon \repD[\ell,X]\to L \repD[m,Y]$ in $\sThnsset$ with $\mapDelta\colon [\ell]\to [m]$ an injective map in $\Delta$ and $f\colon X\to Y$ a map in $\Thnsset$ between connected $\Thn$-spaces.

Recall the definition of the object $\Cone$ in $\pcatThn$ from \cref{notationCone}. We first want to understand necklaces in $\Cone$. To do so, we first rewrite $\Cone$ as a certain pushout in $\sThnsset$. 

\begin{lemma} \label{ConePushout}
    Let $\mapDelta\colon [\ell]\to [m]$ be an injective map in $\Delta$, and $f\colon X\to Y$ be a map in $\Thnsset$. Then $\Cone$ is the following pushout in $\sThnsset$. 
\begin{tz}
\node[](1) {$(\coprod_{m+1} Y)\amalg X$};
\node[below of=1](2) {$(\coprod_{m+1} \pi_0 Y)\amalg \pi_0 X$}; 
\node[right of=1,xshift=3.6cm](3) {$\repD[m,Y]\amalg_{\repD[\ell,X]} \repD[\ell+1,X]$}; 
\node[below of=3](4) {$\Cone$}; 
\pushout{4};

\draw[->] (1) to (3);
\draw[->] (1) to (2);
\draw[->] (3) to (4);
\draw[->] (2) to (4);
\end{tz}
\end{lemma}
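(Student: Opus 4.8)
The statement of \cref{ConePushout} is a comparison between two colimits built out of the same basic pieces, so the plan is to reduce it to the pushout defining $\Cone$ (\cref{notationCone}) together with the two instances of the pushout formula for $L\repD[-,-]$ from \cref{RmkPushout}. First I would rewrite the defining pushout of $\Cone$,
\[ \Cone = L\repD[m,Y]\amalg_{L\repD[\ell,X]} L\repD[\ell+1,X], \]
where the maps are $L[\mapDelta,f]$ and $L[d^{\ell+1},X]$. Then I would substitute, using \cref{RmkPushout}, the three presentations
\[ L\repD[m,Y] = \repD[m,Y]\amalg_{\coprod_{m+1}Y}\coprod_{m+1}\pi_0 Y, \quad L\repD[\ell,X] = \repD[\ell,X]\amalg_{\coprod_{\ell+1}X}\coprod_{\ell+1}\pi_0 X, \]
\[ L\repD[\ell+1,X] = \repD[\ell+1,X]\amalg_{\coprod_{\ell+2}X}\coprod_{\ell+2}\pi_0 X. \]
The whole diagram then becomes an iterated pushout of representables and their discretizations, and the goal is to reorganize the colimit so that the representable pieces $\repD[m,Y]$, $\repD[\ell,X]$, $\repD[\ell+1,X]$ are glued first (yielding the top-right corner $\repD[m,Y]\amalg_{\repD[\ell,X]}\repD[\ell+1,X]$) and the discretization maps are applied afterward.

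The key bookkeeping step is to track how the vertex sets match up. Since $\mapDelta$ is injective and $d^{\ell+1}\colon[\ell]\to[\ell+1]$ is the face omitting the last vertex, the amalgamation $\repD[m,Y]\amalg_{\repD[\ell,X]}\repD[\ell+1,X]$ has $m+2$ vertices: the $m+1$ vertices of $\repD[m]$ together with the single new last vertex $\ell+1$ coming from $\repD[\ell+1]$ and not hit by $d^{\ell+1}$. Correspondingly, the cone point of $\repD[\ell+1,X]$ contributes a copy of $X$ (with discretization $\pi_0 X$), while the $m+1$ original vertices contribute copies of $Y$ (with discretization $\pi_0 Y$); the copies of $X$ at the vertices in the image of $\mapDelta$ get identified, via $f$, with the corresponding copies of $Y$, and since all these colimits commute this is precisely the data of the left-hand vertical map $(\coprod_{m+1}Y)\amalg X \to (\coprod_{m+1}\pi_0 Y)\amalg \pi_0 X$ in the claimed square. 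I would make this precise by invoking the standard ``pushout of pushouts is a pushout'' principle (a $3\times 3$-type argument): the square in the statement is obtained from the big diagram of representables by first computing the colimit along one direction and then the other, which is legitimate since $\sThnsset$ is cocomplete and colimits commute.

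Concretely, I would argue as follows. Form the pushout square
\begin{tz}
\node[](1) {$\repD[\ell,X]$};
\node[below of=1](2) {$\repD[m,Y]$};
\node[right of=1,xshift=2cm](3) {$\repD[\ell+1,X]$};
\node[below of=3](4) {$\repD[m,Y]\amalg_{\repD[\ell,X]}\repD[\ell+1,X]$};
\pushout{4};
\draw[->] (1) to node[above,la]{$[d^{\ell+1},X]$} (3);
\draw[->] (1) to node[left,la]{$[\mapDelta,f]$} (2);
\draw[->] (3) to (4);
\draw[->] (2) to (4);
\end{tz}
and then observe, using \cref{RmkPushout} three times, that taking the pushout of this along the evident maps from $(\coprod_{m+1}Y)\amalg X \to (\coprod_{m+1}\pi_0 Y)\amalg\pi_0 X$ yields $L\repD[m,Y]\amalg_{L\repD[\ell,X]}L\repD[\ell+1,X] = \Cone$: indeed, gluing in the discretizations $\coprod_{m+1}\pi_0 Y$ and $\pi_0 X$ level by level recovers exactly the $L$-operation applied to each bead, and the three separate $L$-pushouts assemble into a single one because the identifications along $\repD[\ell,X]$ are compatible with the discretization maps. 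The main obstacle is purely organizational: verifying that after reassociating colimits the $m+1$ copies of $Y$ (resp. $\pi_0 Y$) and the single copy of $X$ (resp. $\pi_0 X$) line up correctly with the injective reindexing induced by $\mapDelta$, and in particular that the ``extra'' last vertex $\ell+1$ of $\repD[\ell+1]$ is the one that stays an $X$-vertex while all the vertices hit by $\mapDelta$ become $Y$-vertices via $f$. This is a straightforward but slightly fiddly diagram chase, and I expect it to be the bulk of the (short) write-up; everything else is formal manipulation of pushouts in the cocomplete category $\sThnsset$.
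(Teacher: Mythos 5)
Your proposal is correct and is essentially the paper's own argument: the paper's entire proof is the one-line observation that the lemma is "an instance of pushouts commuting with pushouts, using \cref{RmkPushout}," which is precisely your plan of substituting the three pushout presentations of $L\repD[m,Y]$, $L\repD[\ell,X]$, $L\repD[\ell+1,X]$ and reassociating the iterated colimit (Fubini for pushouts). Your additional vertex bookkeeping (and the appeal to injectivity of $\mapDelta$, which in fact is not needed for the level-$0$ computation) just makes that same one-line argument explicit.
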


\begin{proof}
This is an instance of pushouts commuting with pushouts, using \cref{RmkPushout}.
\end{proof}

We now fix an injective map $\mapDelta\colon [\ell]\to [m]$ in $\Delta$ and a map $f\colon X\to Y$ in $\Thnsset$ between connected $\Thn$-spaces.

\begin{lemma} \label{Cone0}
     There is an isomorphism in $\Thnsset$
     \[ \Cone_0\cong \{0,1,\ldots,m+1\}. \]
\end{lemma}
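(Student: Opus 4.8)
\textbf{Proof plan for \texorpdfstring{$\Cone_0\cong\{0,1,\ldots,m+1\}$}{Cone0}.}
The plan is to compute $\Cone_0$ by applying the colimit-preserving functor $(-)_0\colon\sThnsset\to\Thnsset$ to the pushout description of $\Cone$ obtained in \cref{ConePushout}, and then simplify the resulting pushout of $\Thn$-spaces using the hypothesis that $X$ and $Y$ are connected. Concretely, I would first recall that for a representable $\repD[k,\theta,j]=\repD[k]\times\repD[\theta,j]$ one has $(\repD[k,\theta,j])_0\cong\repD[k]_0=\{0,1,\ldots,k\}$, since $(-)_0$ evaluates the simplicial direction at $[0]$ and the representable $\repD[\theta,j]\in\Thnsset$ contributes a single point (more precisely, $\repD[\theta,j]$ is connected by \cref{rem:repareconnected}, and applying $(-)_0$ to a connected $\Thn$-space viewed through $\set\hookrightarrow\Thnsset$ is irrelevant here since we are taking $(-)_0$ of the product $\repD[k]\times\repD[\theta,j]$ in $\sThnsset$, which is $\repD[k]$ with each level tensored by $\repD[\theta,j]$; at level $0$ this is $\coprod_{k+1}\repD[\theta,j]$, whose set of path components, after the eventual passage through $\pi_0$ below, is $\{0,\ldots,k\}$).

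Next I would apply $(-)_0$ to the pushout square of \cref{ConePushout}. Since $(-)_0$ preserves colimits, $\Cone_0$ is the pushout in $\Thnsset$ of the span
\[
\Bigl(\textstyle\coprod_{m+1}\pi_0 Y\Bigr)\amalg\pi_0 X \;\longleftarrow\; \Bigl(\textstyle\coprod_{m+1} Y\Bigr)\amalg X \;\longrightarrow\; \bigl(\repD[m,Y]\amalg_{\repD[\ell,X]}\repD[\ell+1,X]\bigr)_0.
\]
Because $X$ and $Y$ are connected, $\pi_0 X\cong\pi_0 Y\cong\ast$, so the left-hand object becomes $\coprod_{m+1}\ast\amalg\ast$, a finite set of $m+2$ points. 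For the right-hand object, I would compute $\bigl(\repD[m,Y]\amalg_{\repD[\ell,X]}\repD[\ell+1,X]\bigr)_0$ by again using that $(-)_0$ preserves pushouts: it is the pushout of $\bigl(\repD[m,Y]\bigr)_0\leftarrow\bigl(\repD[\ell,X]\bigr)_0\to\bigl(\repD[\ell+1,X]\bigr)_0$, which — after tracking the maps induced by $\mapDelta\colon[\ell]\to[m]$ (injective) and by $d^{\ell+1}\colon[\ell]\to[\ell+1]$ — is a $\Thn$-space built from copies of $Y$ indexed by $\{0,\ldots,m\}$ and one extra copy of $X$ glued on; the new vertex is the image of $\ell+1\in\repD[\ell+1,X]$, yielding an indexing set with $m+2$ "vertices''. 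Pushing out along the map to $\pi_0$'s then collapses all the $Y$- and $X$-components to points, and the result is exactly the set $\{0,1,\ldots,m+1\}$, with $m+1$ being the adjoined cone vertex.

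I expect the only mildly delicate point to be bookkeeping the identifications among vertices induced by the injective map $\mapDelta$ and the coface $d^{\ell+1}$: one must check that gluing $\repD[m,Y]$ and $\repD[\ell+1,X]$ along $\repD[\ell,X]$ over $\mapDelta$ identifies the vertex $i\in[\ell]$ (sitting inside $[\ell+1]$ via $d^{\ell+1}$) with the vertex $\mapDelta(i)\in[m]$, and that no two vertices of $[m]$ get identified (using injectivity of $\mapDelta$), so that after applying $\pi_0$ the vertex set is $\{0,\ldots,m\}$ from the $\repD[m,Y]$ summand together with the single new vertex from $\ell+1\in[\ell+1]$, giving $\{0,1,\ldots,m+1\}$. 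Since $\mapDelta$ is injective this identification is clean, and the connectedness hypothesis on $X$ and $Y$ guarantees that each "thickened'' vertex contributes exactly one point after applying $\pi_0$. Assembling these observations gives the claimed isomorphism $\Cone_0\cong\{0,1,\ldots,m+1\}$ in $\Thnsset$.
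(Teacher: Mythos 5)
Your proposal is correct and follows essentially the same route as the paper: apply the colimit-preserving functor $(-)_0$ to the pushout of \cref{ConePushout}, identify $(\repD[m,Y]\amalg_{\repD[\ell,X]}\repD[\ell+1,X])_0\cong(\coprod_{m+1}Y)\amalg X$, and use connectedness of $X$ and $Y$ (so $\pi_0X\cong\pi_0Y\cong\ast$) to collapse to $\{0,1,\ldots,m+1\}$. The bookkeeping you flag about injectivity of $\mapDelta$ is in fact not needed at level $0$, since the gluing over $\coprod_{\ell+1}X\hookrightarrow\coprod_{\ell+2}X$ only adjoins one disjoint copy of $X$, but this extra caution does not affect correctness.
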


\begin{proof} 
    By applying the colimit-preserving functor $(-)_0\colon \sThnsset\to \Thnsset$ to the push\-out $\repD[m,Y]\amalg_{\repD[\ell,X]}\repD[\ell+1,X]$, we get an isomorphism in $\Thnsset$
    \[ \textstyle(\repD[m,Y]\amalg_{\repD[\ell,X]}\repD[\ell+1,X])_0\cong(\coprod_{m+1} Y)\amalg X. \]
    Now, by applying $(-)_0\colon \sThnsset\to \Thnsset$ to the pushout from \cref{ConePushout} and using that $\pi_0 Y\cong \pi_0 X\cong \{*\}$, we get isomorphisms in $\Thnsset$
    \[ \Cone_0\cong \{0,1,\ldots,m\}\amalg \{\ell+2\}\cong \{0,1,\ldots,m+1\}. \qedhere \]
\end{proof}

\begin{rmk} \label{ConePushoutThetak}
    For $\defThn\in \Thn$ and $\defS\geq 0$, by applying the colimit-preserving evaluation functor $(-)_{-,\defThn,\defS}\colon \sThnsset\to \Dset$ to the pushout from \cref{ConePushout}, we obtain the following pushout in $\Dset$. 
    \begin{tz}
\node[](1) {$(\coprod_{m+1} \coprod_{Y_{\defThn,\defS}}\repD[0])\amalg (\coprod_{X_{\defThn,\defS}} \repD[0])$};
\node[below of=1](2) {$\coprod_{m+2} \repD[0]$}; 
\node[right of=1,xshift=6.3cm](3) {$(\coprod_{Y_{\defThn,\defS}} \repD[m])\amalg_{\coprod_{X_{\defThn,\defS}} \repD[\ell]} (\coprod_{X_{\defThn,\defS}} \repD[\ell+1])$}; 
\node[below of=3](4) {$\Cone_{-,\defThn,\defS}$}; 
\pushout{4};

\draw[->] (1) to (3);
\draw[->] (1) to (2);
\draw[->] (3) to (4);
\draw[->] (2) to (4);
\end{tz}
\end{rmk}

\begin{notation}
    We write $Q\colon \Cone\to \repD[m+1]$ for the unique map in $\pcatThn$ making the following diagram commute.
    \begin{tz}
\node[](1) {$L\repD[\ell,X]$}; 
\node[below of=1](2) {$L\repD[\ell+1,X]$}; 
\node[right of=1,xshift=1.8cm](3) {$L\repD[m,Y]$}; 
\node[below of=3](4) {$\Cone$}; 
\node[below right of=4,xshift=1cm](5) {$\repD[m+1]$}; 
\pushout{4};

\draw[->] (1) to node[above,la]{$L[\mapDelta,f]$} (3);
\draw[->] (1) to node[left,la]{$L[d^{\ell+1},X]$} (2);
\draw[->] (3) to node[right,la]{$\iota_f$} (4);
\draw[->] (2) to (4);
\draw[->,bend left] (3) to node[right,la]{$L[d^{m+1},!]$} (5);
\draw[->,bend right=15] (2) to node[below,la,yshift=-3pt]{$L[\mapDelta+1,!]$} (5);
\draw[->,dashed] (4) to node[above,la,xshift=2pt]{$Q$} (5);
\end{tz}
\end{notation}

For $\defThn\in \Thn$, $\defS\geq 0$, and $0\leq i\leq m$, the map in $\Dset$ 
\[ Q_{-,\defThn,\defS}\colon \Cone_{-,\defThn,\defS}\to \repD[m+1]_{-,\defThn,\defS}=\repD[m+1] \]
induces by post-composition a functor 
\[ (Q_{-,\defThn,\defS})_!\colon \catnec{\Cone_{-,\defThn,\defS}}{i}{m+1}\to \catnec{\repD[m+1]}{i}{m+1}. \]
We show that it is a discrete fibration as defined e.g.~in \cite[Definition 2.1.1]{LRfib}.

\begin{prop} \label{prop:Qgendiscfib}
    For $\defThn\in \Thn$, $\defS\geq 0$, and $0\leq i\leq m$, the functor 
    \[ (Q_{-,\defThn,\defS})_!\colon \catnec{\Cone_{-,\defThn,\defS}}{i}{m+1}\to \catnec{\repD[m+1]}{i}{m+1} \]
    is a discrete fibration.
\end{prop}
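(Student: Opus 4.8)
The plan is to verify the universal lifting property of a discrete fibration (in the sense of \cite[Definition~2.1.1]{LRfib}) directly, after observing that $(Q_{-,\defThn,\defS})_!$ is simply post-composition along a bi-pointed simplicial map.

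First I would record that, evaluating the map $Q\colon \Cone\to\repD[m+1]$ from \cref{notationCone} at $(\defThn,\defS)\in\Thn\times\Delta$, one obtains a simplicial map $Q_{-,\defThn,\defS}\colon \Cone_{-,\defThn,\defS}\to\repD[m+1]$ (using $\repD[m+1]_{-,\defThn,\defS}=\repD[m+1]$) which carries the base vertex $i$ to $i$ and the base vertex $m+1$ to $m+1$. This is read off from the diagram defining $Q$: on $0$-simplices, $L[d^{m+1},!]$ and $L[\mapDelta+1,!]$ act through the vertex maps $d^{m+1}$ and $\mapDelta+1$, so under the identification $\Cone_0\cong\{0,1,\ldots,m+1\}$ of \cref{Cone0} the map $Q$ fixes every vertex of $\{0,\ldots,m\}$ and sends the vertex $m+1$ (the image of $\ell+1\in L\repD[\ell+1,X]_0$) to $m+1$. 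Hence $Q_{-,\defThn,\defS}$ is a map of bi-pointed simplicial sets $(\Cone_{-,\defThn,\defS})_{i,m+1}\to\repD[m+1]_{i,m+1}$, and $(Q_{-,\defThn,\defS})_!$ is post-composition with it between the full subcategories of $\Dset_{*,*}/(\Cone_{-,\defThn,\defS})_{i,m+1}$ and $\Dset_{*,*}/\repD[m+1]_{i,m+1}$ spanned by the necklaces.

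Next I would prove in the abstract that, for any bi-pointed simplicial map $f\colon A\to B$, the post-composition functor $\Nec_{/A}\to\Nec_{/B}$ is a discrete fibration. Given an object $e=(\tau\colon T\to A)$ and a morphism $g$ of $\Nec_{/B}$ with target $f_!e=(f\tau\colon T\to B)$, unwind $g$ as a map of necklaces $\phi\colon U\to T$ whose post-composition with $f\tau$ is the structure map of the source of $g$. Then the candidate lift is $e'\coloneqq(\tau\phi\colon U\to A)$ together with $h\coloneqq\phi\colon e'\to e$. Existence is immediate: $U$ is a necklace and $\tau\phi$ is bi-pointed (a composite of bi-pointed maps), so $e'$ lies in $\Nec_{/A}$, and $f_!h=\phi$ has source $f_!e'=(f\tau\phi)$, which is exactly the source of $g$, so $f_!h=g$. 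For uniqueness I would use that post-composition is injective on underlying maps of necklaces: any lift $h'\colon e''\to e$ of $g$ has underlying map forced to be $\phi$, hence $e''$ has domain $U$ and structure map $\tau\phi$, so $e''=e'$ and $h'=h$. (Equivalently, one may phrase this via categories of elements: $\Nec_{/A}$ is the category of elements of the presheaf $\Nec(-,A)\colon\Nec^{\op}\to\set$, and a morphism of presheaves always induces a discrete fibration between categories of elements.)

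Finally I would specialize to $A=\Cone_{-,\defThn,\defS}$ and $B=\repD[m+1]$, with base points $i$ and $m+1$, to conclude. There is no serious obstacle here: the argument is a bookkeeping exercise with slice categories. The only point requiring genuine (though short) input from the concrete situation is the verification that $Q_{-,\defThn,\defS}$ fixes the two base vertices $i$ and $m+1$; tracing the vertex $m+1\in\Cone_0$ through the pushout defining $\Cone$ and the diagram defining $Q$ is where the explicit combinatorics enters, and everything downstream is formal.
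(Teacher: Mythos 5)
Your proposal is correct and takes essentially the same route as the paper: the paper's proof is exactly your lifting argument, namely that for a morphism $U\to T$ in $\catnec{\repD[m+1]}{i}{m+1}$ the composite $U\to T\to (\Cone_{-,\defThn,\defS})_{i,m+1}$ is the unique lift, so your abstract lemma about post-composition along a bi-pointed map is just this argument stated in slightly greater generality. The only genuine addition is your explicit check that $Q_{-,\defThn,\defS}$ preserves the base vertices $i$ and $m+1$ (so that the functor is well defined), which the paper leaves implicit in the construction of $Q$.
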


\begin{proof}
    Given a necklace $T\to (\Cone_{-,\defThn,\defS})_{i,m+1}$, consider its image $T\to \repD[m+1]_{i,m+1}$ under $(Q_{-,\defThn,\defS})_!$ and let $f\colon U\to T$ be a map in $\catnec{\repD[m+1]}{i}{m+1}$. Then the composite
\[ U\xrightarrow{f} T\to (\Cone_{-,\defThn,\defS})_{i,m+1},\]
is the unique lift of $f$ via $(Q_{-,\defThn,\defS})_!$. Hence $(Q_{-,\defThn,\defS})_!$ is a discrete fibration.
\end{proof}

We now construct a functor that takes a necklace to a totally non-degenerate one in order to get a generalization of the bead functor from \cref{beadfunctor} to all necklaces, and we study the compatibility of the discrete fibration $(Q_{-,\defThn,\defS})_!$ with this construction.

\begin{rmk} \label{rem:DSepi}
    Let $K$ be a simplicial set and $a,b\in K_0$. As explained in \cite[p.~18]{DuggerSpivakRigidification}, for every necklace $T\to K_{a,b}$, there is a unique epimorphism of simplicial sets $T\twoheadrightarrow \overline{T}$ over~$K_{a,b}$ to a totally non-degenerate necklace $\overline{T}\to K_{a,b}$. Moreover, a map $g\colon U\to T$ in $\catnec{K}{a}{b}$ induces a unique map $\overline{g}\colon \overline{U}\to \overline{T}$ between the induced totally non-degenerate necklaces over $K_{a,b}$ making the following square in $\catnec{K}{a}{b}$ commute. 
    \begin{tz}
        \node[](1) {$U$}; 
        \node[below of=1](2) {$\overline{U}$}; 
        \node[right of=1](3) {$T$}; 
        \node[below of=3](4) {$\overline{T}$}; 
        \draw[->] (1) to node[above,la]{$g$} (3);
        \draw[->>] (1) to (2); 
        \draw[->>] (3) to (4); 
        \draw[->] (2) to node[below,la]{$\overline{g}$} (4);
    \end{tz}
    This yields a functor $\overline{(-)}\colon \catnec{K}{a}{b}\to \tndnec{K}{a}{b}$.
\end{rmk}

\begin{lemma} \label{Qnondegsimplex}
For $\defThn\in \Thn$ and $\defS\geq 0$, a simplex of $\Cone_{-,\defThn,\defS}$ is non-degenerate if and only if its image under the map $Q_{-,\defThn,\defS}\colon \Cone_{-,\defThn,\defS}\to \repD[m+1]$ in $\Dset$ is a non-degenerate simplex of $\repD[m+1]$.
\end{lemma}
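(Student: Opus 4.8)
The plan is to describe explicitly, for fixed $\defThn\in \Thn$ and $\defS\geq 0$, the simplicial set $\Cone_{-,\defThn,\defS}$ using the pushout presentation from \cref{ConePushoutThetak}, and then to identify its non-degenerate simplices directly. Recall that a simplex $\tau\colon \repD[p]\to \Cone_{-,\defThn,\defS}$ is non-degenerate if and only if it does not factor through any codegeneracy $s^j\colon \repD[p]\to \repD[p-1]$. First I would observe that, by the pushout of \cref{ConePushoutThetak}, every $p$-simplex of $\Cone_{-,\defThn,\defS}$ either lies in the image of $\coprod_{m+2}\repD[0]$ (and hence is a $0$-simplex when $p=0$, or degenerate when $p>0$), or it comes from a $p$-simplex of $(\coprod_{Y_{\defThn,\defS}}\repD[m])\amalg_{\coprod_{X_{\defThn,\defS}}\repD[\ell]}(\coprod_{X_{\defThn,\defS}}\repD[\ell+1])$ that is \emph{not} collapsed by the pushout map; the simplices being glued together are the ones in the image of $(\coprod_{m+1}\coprod_{Y_{\defThn,\defS}}\repD[0])\amalg(\coprod_{X_{\defThn,\defS}}\repD[0])$, which are all $0$-dimensional. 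Since gluing along $0$-simplices does not create new degeneracies among simplices of positive dimension and does not merge distinct non-degenerate positive-dimensional simplices, a simplex of $\Cone_{-,\defThn,\defS}$ of dimension $p\geq 1$ is non-degenerate if and only if it comes from a non-degenerate simplex of one of the summands $\repD[m]$ or $\repD[\ell+1]$ (equivalently $\repD[\ell]$, identified inside $\repD[\ell+1]$ via $d^{\ell+1}$); and every $0$-simplex is non-degenerate.

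Next I would trace through the definition of $Q_{-,\defThn,\defS}\colon \Cone_{-,\defThn,\defS}\to\repD[m+1]$, which is induced by the maps $L[d^{m+1},!]\colon \repD[m,Y]\to\repD[m+1]$ on the $\repD[m]$-summands and $L[\mapDelta+1,!]\colon \repD[\ell+1,X]\to\repD[m+1]$ on the $\repD[\ell+1]$-summand. Concretely, a non-degenerate $p$-simplex of $\Cone_{-,\defThn,\defS}$ coming from $\repD[m]$ is sent to $\repD[m+1]$ by the injective map $d^{m+1}\colon [m]\hookrightarrow[m+1]$ on its vertex labels, and one coming from $\repD[\ell+1]$ is sent by the injective map $\mapDelta+1\colon [\ell+1]\hookrightarrow[m+1]$ (using that $\mapDelta$ is injective, so $\mapDelta+1$ is too, by \cref{notn:+1}); a $0$-simplex is sent to a vertex. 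In all three cases the image is a simplex of $\repD[m+1]$ obtained by applying an injective order map to a non-degenerate simplex, hence is itself non-degenerate. Conversely, if $\tau$ is a \emph{degenerate} simplex of $\Cone_{-,\defThn,\defS}$, then $p\geq 1$ and $\tau=s^j\sigma$ for some simplex $\sigma$ and some $j$, so $Q_{-,\defThn,\defS}(\tau)=s^j(Q_{-,\defThn,\defS}(\sigma))$ is degenerate in $\repD[m+1]$. This gives both implications.

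The main obstacle — and the only point requiring genuine care rather than routine unwinding — is justifying that forming the pushout of \cref{ConePushoutThetak} along the $0$-dimensional gluing data does not produce unexpected identifications or collapses among positive-dimensional simplices: one must check that the pushout of simplicial sets along monomorphisms between discrete simplicial sets (coproducts of $\repD[0]$'s) restricts on each summand to a monomorphism, so that the non-degenerate simplices of the pushout are exactly the images of the non-degenerate simplices of the summands that are not among the glued $0$-simplices. This is a standard fact about $\Dset$ (pushouts along monomorphisms of simplicial sets are computed levelwise and preserve monomorphisms, and a $0$-dimensional subobject meets each positive-dimensional simplex set trivially), and I would either cite it or dispatch it with the observation that the relevant maps $\coprod\repD[0]\to\repD[m]$ and $\coprod\repD[0]\to\repD[\ell+1]$ are split monomorphisms on the relevant vertex sets. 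Once this is in hand, the rest of the argument is the bookkeeping of injective maps described above.
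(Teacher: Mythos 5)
Your proposal is correct and takes essentially the same route as the paper's proof: both unwind the pushout description of $\Cone_{-,\defThn,\defS}$ from \cref{ConePushoutThetak}, lift each simplex to a single copy of $\repD[m]$ (indexed by $Y_{\defThn,\defS}$) or $\repD[\ell+1]$ (indexed by $X_{\defThn,\defS}$), and observe that $Q_{-,\defThn,\defS}$ acts on these copies through the injective maps $d^{m+1}$ and $\mapDelta+1$, so non-degeneracy transfers in both directions. The only comment is that the ``no unexpected identifications'' point you flag as the main obstacle is not actually needed for the lemma: the forward implication only requires that a lift of a non-degenerate simplex be non-degenerate (immediate, since the quotient map is simplicial), and the converse is just that simplicial maps send degenerate simplices to degenerate ones.
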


\begin{proof}
    Let $\sigma\colon \repD[m']\to \Cone_{-,\defThn,\defS}$ be an $m'$-simplex of $\Cone_{-,\defThn,\defS}$. By the description of $\Cone_{-,\defThn,\defS}$ given in \cref{ConePushoutThetak}, if $m+1$ is in the image of $\sigma$, such an $m'$-simplex comes from an $m'$-simplex
    \[  \textstyle \repD[m']\xrightarrow{\overline{\sigma}} \{x\}\times \repD[\ell+1]\hookrightarrow(\coprod_{Y_{\defThn,\defS}} \repD) \amalg_{\coprod_{X_{\defThn,\defS}} \repD[\ell]} (\coprod_{X_{\defThn,\defS}} \repD[\ell+1]) \]
    for some $x\in X_{\defThn,\defS}$, and if $m+1$ is not in the image of $\sigma$, such an $m'$-simplex comes from an $m'$-simplex
    \[  \textstyle  \repD[m']\xrightarrow{\overline{\sigma}} \{y\}\times \repD[m]\hookrightarrow(\coprod_{Y_{\defThn,\defS}} \repD) \amalg_{\coprod_{X_{\defThn,\defS}} \repD[\ell]} (\coprod_{X_{\defThn,\defS}} \repD[\ell+1]) \]
    for some $y\in Y_{\defThn,\defS}$. These are sent by $Q_{-,\defThn,\defS}$ to $m'$-simplices
\[\textstyle \repD[m']\xrightarrow{\overline{\sigma}} \{x\}\times \repD[\ell+1]\xhookrightarrow{\mapDelta+1} \repD[m+1]\quad\text{ or }\quad\repD[m']\xrightarrow{\overline{\sigma}} \{y\}\times \repD[m]\xhookrightarrow{d^{m+1}} \repD[m+1]. \]
    Hence an $m'$-simplex $\sigma$ of $\Cone_{-,\defThn,\defS}$ is non-degenerate if and only if the corresponding $m'$-simplex $\overline{\sigma}$ in $\{x\}\times F[\ell+1]$ or $\{y\}\times F[m]$ is non-degenerate if and only if the image of $\sigma$ under $Q_{-,\defThn,\defS}$ is a non-degenerate simplex of $\repD[m+1]$.
\end{proof}

\begin{lemma}
    For $\defThn\in \Thn$, $\defS\geq 0$, and $0\leq i\leq m$, the functor 
    \[ (Q_{-,\defThn,\defS})_!\colon \catnec{\Cone_{-,\defThn,\defS}}{i}{m+1}\to \catnec{\repD[m+1]}{i}{m+1} \]
    restricts to a functor 
    \[(Q_{-,\defThn,\defS})_!\colon \tndnec{\Cone_{-,\defThn,\defS}}{i}{m+1}\to \tndnec{\repD[m+1]}{i}{m+1}. \]
\end{lemma}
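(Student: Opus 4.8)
The plan is to deduce the claim directly from \cref{Qnondegsimplex}, with no new computation required. First I would observe that $\tndnec{\Cone_{-,\defThn,\defS}}{i}{m+1}$ and $\tndnec{\repD[m+1]}{i}{m+1}$ are, by definition, \emph{full} subcategories of $\catnec{\Cone_{-,\defThn,\defS}}{i}{m+1}$ and $\catnec{\repD[m+1]}{i}{m+1}$ respectively. Since the functor $(Q_{-,\defThn,\defS})_!$ on the ambient necklace categories has already been constructed in the excerpt, it therefore suffices to check that it sends \emph{objects} of $\tndnec{\Cone_{-,\defThn,\defS}}{i}{m+1}$ to objects of $\tndnec{\repD[m+1]}{i}{m+1}$; its behaviour on morphisms is then automatic, as a full subcategory inherits all morphisms between its objects.

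Next I would unwind the definition of total non-degeneracy. Let $f\colon T=\repD[m_1]\vee\ldots\vee\repD[m_t]\to (\Cone_{-,\defThn,\defS})_{i,m+1}$ be a totally non-degenerate necklace, so that for each $1\le j\le t$ the restriction $\repD[m_j]\hookrightarrow T\xrightarrow{f}\Cone_{-,\defThn,\defS}$ is a non-degenerate $m_j$-simplex. The key elementary remark is that $(Q_{-,\defThn,\defS})_!f$ is simply the post-composite $T\xrightarrow{f}\Cone_{-,\defThn,\defS}\xrightarrow{Q_{-,\defThn,\defS}}\repD[m+1]$, so it has the \emph{same} underlying necklace $T$, hence the same bead decomposition $\repD[m_1]\vee\ldots\vee\repD[m_t]$. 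Consequently the restriction of $(Q_{-,\defThn,\defS})_!f$ to the $j$-th bead is exactly the image under $Q_{-,\defThn,\defS}\colon\Cone_{-,\defThn,\defS}\to\repD[m+1]$ of the $j$-th bead restriction of $f$. By \cref{Qnondegsimplex}, this image is a non-degenerate $m_j$-simplex of $\repD[m+1]$ precisely because the $j$-th bead restriction of $f$ is non-degenerate. Since this holds for every bead, $(Q_{-,\defThn,\defS})_!f$ is totally non-degenerate, which completes the verification.

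The argument is essentially bookkeeping, so I do not expect a genuine obstacle: the only point that needs a moment's care is the identification of the bead decomposition of $(Q_{-,\defThn,\defS})_!f$ with that of $f$, and this is immediate since post-composing a necklace with $Q_{-,\defThn,\defS}$ leaves the underlying simplicial set (and hence its wedge decomposition into beads and its joints) unchanged. All the substantive content has already been isolated in \cref{Qnondegsimplex}, which in turn rested on the explicit pushout description of $\Cone_{-,\defThn,\defS}$ from \cref{ConePushoutThetak}; the present statement merely packages that fact as a restriction of the functor $(Q_{-,\defThn,\defS})_!$ to totally non-degenerate necklaces, as needed to apply discrete-fibration techniques to $\tndnec{\Cone_{-,\defThn,\defS}}{i}{m+1}$ in the sequel.
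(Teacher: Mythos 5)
Your proof is correct and follows essentially the same route as the paper: since $(Q_{-,\defThn,\defS})_!$ is post-composition with $Q_{-,\defThn,\defS}$, it suffices to check objects, and total non-degeneracy of the image follows bead-by-bead from \cref{Qnondegsimplex}. The paper's own proof is exactly this observation, stated more briefly.
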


\begin{proof}
    It suffices to show that the functor $(Q_{-,\defThn,\defS})_!$ sends a totally non-degenerate necklace in $(\Cone_{-,\defThn,\defS})_{i,m+1}$ to a totally non-degenerate necklace in $\repD[m+1]_{i,m+1}$. Since $(Q_{-,\defThn,\defS})_!$ is given by post-composition with the map $Q_{-,\defThn,\defS}\colon \Cone_{-,\defThn,\defS}\to \repD[m+1]$, this follows directly from \cref{Qnondegsimplex} as totally non-degenerate necklaces are precisely those necklaces whose beads are sent to non-degenerate simplices.
\end{proof}

\begin{prop} \label{Qcommutewithoverline}
    For $\defThn\in \Thn$, $\defS\geq 0$, and $0\leq i\leq m$, the following diagram in $\cat$ commutes. 
    \begin{tz}
        \node[](1) {$\catnec{\Cone_{-,\defThn,\defS}}{i}{m+1}$}; 
        \node[right of=1,xshift=4.3cm](2) {$\tndnec{\Cone_{-,\defThn,\defS}}{i}{m+1}$}; 
        \node[below of=1](3) {$\catnec{\repD[m+1]}{i}{m+1}$}; 
        \node[below of=2](4) {$\tndnec{\repD[m+1]}{i}{m+1}$}; 

        \draw[->] (1) to node[above,la]{$\overline{(-)}$} (2); 
        \draw[->] (1) to node[left,la]{$(Q_{-,\defThn,\defS})_!$} (3); 
        \draw[->] (2) to node[right,la]{$(Q_{-,\defThn,\defS})_!$} (4); 
        \draw[->] (3) to node[below,la]{$\overline{(-)}$} (4);
    \end{tz}
\end{prop}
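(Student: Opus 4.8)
The plan is to deduce the commutativity from the universal property of the reduction functor $\overline{(-)}$ recorded in \cref{rem:DSepi}, combined with the non-degeneracy detection of \cref{Qnondegsimplex}. Throughout, I fix $\defThn\in \Thn$, $\defS\geq 0$, and $0\leq i\leq m$, and I abbreviate $Q\coloneqq Q_{-,\defThn,\defS}$ and $Q_!\coloneqq (Q_{-,\defThn,\defS})_!$; recall that $Q_!$ acts by post-composition with $Q$, so on objects it fixes the underlying necklace and only changes the structure map to the target.

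First I would treat objects. Given a necklace $T\to (\Cone_{-,\defThn,\defS})_{i,m+1}$, consider the epimorphism $T\twoheadrightarrow \overline{T}$ over $(\Cone_{-,\defThn,\defS})_{i,m+1}$ onto the totally non-degenerate necklace from \cref{rem:DSepi}, and post-compose the structure map of $\overline{T}$ with $Q$ to obtain a necklace $\overline{T}\to \repD[m+1]_{i,m+1}$. Since each bead of $\overline{T}$ restricts to a non-degenerate simplex of $\Cone_{-,\defThn,\defS}$, \cref{Qnondegsimplex} shows that its image under $Q$ is a non-degenerate simplex of $\repD[m+1]$, so $\overline{T}\to \repD[m+1]_{i,m+1}$ is again totally non-degenerate. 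Hence $T\twoheadrightarrow \overline{T}$, now regarded over $\repD[m+1]_{i,m+1}$ via $Q$, is an epimorphism out of $Q_! T$ onto a totally non-degenerate necklace, and by the uniqueness clause of \cref{rem:DSepi} it must coincide with the reduction $Q_! T\twoheadrightarrow \overline{Q_! T}$. In particular $Q_!\overline{T}=\overline{Q_! T}$ in $\tndnec{\repD[m+1]}{i}{m+1}$, with matching reduction epimorphisms out of $T$. The possible collapsing of beads to points in forming $\overline{T}$ causes no trouble: by \cref{Qnondegsimplex} a bead of $T$ collapses over $\Cone_{-,\defThn,\defS}$ exactly when it collapses over $\repD[m+1]$.

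Next I would treat morphisms. For a map $g\colon U\to T$ in $\catnec{\Cone_{-,\defThn,\defS}}{i}{m+1}$, the morphism $\overline{g}\colon \overline{U}\to \overline{T}$ is, by \cref{rem:DSepi}, the unique map over $(\Cone_{-,\defThn,\defS})_{i,m+1}$ fitting into the commutative square with $g$ along the top and the reduction epimorphisms of $U$ and $T$ along the sides. Post-composing that square with $Q$ and using the object-level identifications $Q_!\overline{U}=\overline{Q_! U}$ and $Q_!\overline{T}=\overline{Q_! T}$, together with the matching of reduction epimorphisms, one sees that $Q_!\overline{g}$ fits into exactly the commutative square over $\repD[m+1]_{i,m+1}$ that defines $\overline{Q_! g}$; uniqueness then forces $Q_!\overline{g}=\overline{Q_! g}$. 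This yields the commutativity of the square in the statement.

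The one step carrying genuine content---rather than a real obstacle---is the passage in the first paragraph from ``totally non-degenerate over $\Cone_{-,\defThn,\defS}$'' to ``totally non-degenerate over $\repD[m+1]$'', which is precisely what \cref{Qnondegsimplex} provides; everything after that is a formal consequence of the universal property in \cref{rem:DSepi} and cancellation of epimorphisms, with no necklace bookkeeping beyond \cref{Qnondegsimplex} required.
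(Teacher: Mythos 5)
Your proof is correct, and it takes a noticeably more formal route than the paper's. The paper proves the square commutes by redoing the combinatorics: it writes out the epi--nondegenerate factorization of each bead of $T\to (\Cone_{-,\defThn,\defS})_{i,m+1}$ explicitly through the pushout description of $\Cone_{-,\defThn,\defS}$ from \cref{ConePushoutThetak}, and observes that $Q_{-,\defThn,\defS}$ carries these factorizations to the corresponding factorizations in $\repD[m+1]$, so the reduction downstairs is computed by the same data. You instead package all the combinatorial content into \cref{Qnondegsimplex} (which the paper proved by exactly that pushout analysis) and then argue by the uniqueness clause of \cref{rem:DSepi}: the reduction epi of $T$ over $\Cone_{-,\defThn,\defS}$, pushed forward along $Q$, is an epimorphism onto a totally non-degenerate necklace over $\repD[m+1]_{i,m+1}$, hence is \emph{the} reduction of $(Q_{-,\defThn,\defS})_! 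T$. This avoids duplicating the bead-by-bead analysis and is arguably the cleaner organization; your aside about collapsing beads is not needed once the uniqueness argument is in place, but it is harmless and correct. Your explicit treatment of morphisms is also fine (uniqueness there is automatic since the reduction epis are epimorphisms), though it could be skipped entirely: both composites land in $\tndnec{\repD[m+1]}{i}{m+1}$, which is a poset by \cref{tndnec1ordered} and \cref{examplesof1ordered}, so agreement on objects already forces the square to commute --- which is presumably why the paper's proof stops at the object level.
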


\begin{proof}
Let $T=\repD[m_1]\vee\ldots\vee\repD[m_t]\to (\Cone_{-,\defThn,\defS})_{i,m+1}$ be a necklace. Then the (unique) totally non-degenerate necklace $\overline{T}\to (\Cone_{-,\defThn,\defS})_{i,m+1}$ is obtained by replacing each bead $\sigma_i\colon \repD[m_i]\hookrightarrow T\to \Cone_{-,\defThn,\defS}$ by the corresponding non-degenerate $m'_i$-simplex $\sigma'_i$ obtained by factoring $\sigma_i$ \[ \repD[m_i]\twoheadrightarrow \repD[m'_i]\xrightarrow{\sigma'_i} \Cone_{-,\defThn,\defS} \]
as an epimorphism followed by a non-degenerate simplex, for all $1\leq i\leq t$. By the description of $\Cone_{-,\defThn,\defS}$ given in \cref{ConePushoutThetak}, if $m+1$ is in the image of $\sigma_i$ and so also in the image of $\sigma'_i$, then $\sigma_i$ and $\sigma'_i$ come from simplices
    \[  \textstyle \repD[m_i]\twoheadrightarrow\repD[m'_i]\xrightarrow{\overline{\sigma_i}'} \{x_i\}\times \repD[\ell+1]\hookrightarrow(\coprod_{Y_{\defThn,\defS}} \repD) \amalg_{\coprod_{X_{\defThn,\defS}} \repD[\ell]} (\coprod_{X_{\defThn,\defS}} \repD[\ell+1]) \]
    for some $x_i\in X_{\defThn,\defS}$, and if $m+1$ is not in the image of $\sigma_i$ and so also not in the image of $\sigma'_i$, then $\sigma_i$ and $\sigma'_i$ come from simplices
    \[  \textstyle  \repD[m_i]\twoheadrightarrow\repD[m'_i]\xrightarrow{\overline{\sigma_i}'} \{y_i\}\times \repD[m]\hookrightarrow(\coprod_{Y_{\defThn,\defS}} \repD) \amalg_{\coprod_{X_{\defThn,\defS}} \repD[\ell]} (\coprod_{X_{\defThn,\defS}} \repD[\ell+1]) \]
    for some $y_i\in Y_{\defThn,\defS}$. These are sent by $Q_{-,\defThn,\defS}$ to simplices
\[ \repD[m_i]\twoheadrightarrow\repD[m'_i]\xrightarrow{\overline{\sigma_i}'} \{x_i\}\times \repD[\ell+1]\xhookrightarrow{\mapDelta+1} \repD[m+1] \]
\[\repD[m_i]\twoheadrightarrow\repD[m'_i]\xrightarrow{\overline{\sigma_i}'} \{y_i\}\times \repD[m]\xhookrightarrow{d^{m+1}} \repD[m+1]. \]
In particular, the (unique) totally non-degenerate necklace $\overline{T}\to \repD[m+1]_{i,m+1}$ obtained from $T=\repD[m_1]\vee\ldots\vee\repD[m_t]\to (\Cone_{-,\defThn,\defS})_{i,m+1}\to \repD[m+1]_{i,m+1}$ is also given by the above factorization of its beads. Hence the diagram commutes.
\end{proof}

Recall from \cite[Theorem 2.1.2]{LRfib} that there is an equivalence between the categories of functors $(\catnec{\repD[m+1]}{i}{m+1})^{\op}\to \set$ and of discrete fibrations over $\catnec{\repD[m+1]}{i}{m+1}$. We now identify the set-valued functor corresponding to the discrete fibration $(Q_{-,\defThn,\defS})_!$ under this equivalence. For this, recall the notations from \cref{notn:+1,rmk:lastbead,rem:DSepi}.

\begin{notation} \label{notation:newGbar}
    For $0\leq i\leq m$, we define a functor 
    \[ \newGbar\colon (\catnec{\repD[m+1]}{i}{m+1})^{\op}\to \Thnsset. \]
    It sends an object $T\to \repD[m+1]_{i,m+1}$ in $\catnec{\repD[m+1]}{i}{m+1}$ to the $\Thn$-space
    \[ \begin{cases}
    (\prod_{B(\overline{T})\setminus \{B_\omega^{\overline{T}}\}} Y)\times X & \text{if } B_\omega^{\overline{T}}\subseteq \im(\mapDelta+1) \\
    \emptyset & \text{else .}
    \end{cases} \]
   It sends a map $g\colon U\to T$ in $\catnec{\repD[m+1]}{i}{m+1}$ to the map in $\Thnsset$
    \[  \begin{cases}
    (\prod_{B(\overline{T})\setminus \{B^{\overline{T}}_\omega\}} Y)\times X\xrightarrow{\hat{f}B(\overline{g})^*} (\prod_{B(\overline{U})\setminus \{B^{\overline{U}}_\omega\}} Y)\times X & \text{if } B^{\overline{U}}_\omega\subseteq B^{\overline{T}}_\omega\subseteq \im(\mapDelta+1) \\
    \emptyset\to (\prod_{B(\overline{U})\setminus \{B^{\overline{U}}_\omega\}} Y)\times X & \text{if } B^{\overline{U}}_\omega\subseteq \im(\mapDelta+1), \, B^{\overline{T}}_\omega \not\subseteq \im(\mapDelta+1) \\
    \emptyset \to \emptyset & \text{else,}
    \end{cases}\]
    where $\hat{f} B(\overline{g})^*$ is the composite in $\Thnsset$ from \cref{notn:Falphatnd} replacing $g\colon U\to T$ with $\overline{g}\colon \overline{U}\to\overline{T}$.

    For $\defThn\in \Thn$ and $\defS\geq 0$, we write $\newGbar_{\defThn,\defS}$ for the composite 
    \[ \newGbar_{\defThn,\defS}\colon (\catnec{\repD[m+1]}{i}{m+1})^{\op}\xrightarrow{\newGbar}\Thnsset\xrightarrow{(-)_{\defThn,\defS}} \set. \]
\end{notation}

\begin{prop} \label{QvsnewG}
    For $\defThn\in \Thn$, $\defS\geq 0$, and $0\leq i\leq m$, the discrete fibration 
    \[ (Q_{-,\defThn,\defS})_!\colon \catnec{\Cone_{-,\defThn,\defS}}{i}{m+1}\to \catnec{\repD[m+1]}{i}{m+1} \]
    corresponds to the functor 
    \[ \newGbar_{\defThn,\defS}\colon (\catnec{\repD[m+1]}{i}{m+1})^{\op}\to \set. \]
\end{prop}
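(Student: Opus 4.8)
The plan is to apply the equivalence between discrete fibrations over $\catnec{\repD[m+1]}{i}{m+1}$ and presheaves of sets on it from \cite[Theorem 2.1.2]{LRfib}: under it, a discrete fibration $p$ corresponds to the presheaf sending an object $(T,g)$ to its fibre $p^{-1}(T,g)$, the set of objects of the total category lying over $(T,g)$, with reindexing along a morphism $g'\colon U\to T$ given by pre-composition with $g'$. Since $(Q_{-,\defThn,\defS})_!$ is given by post-composition with $Q_{-,\defThn,\defS}$, its fibre over $(T,g)$ is the set of lifts $\tilde g\colon T\to\Cone_{-,\defThn,\defS}$ of $g\colon T\to\repD[m+1]_{i,m+1}$ along $Q_{-,\defThn,\defS}$. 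So it is enough to construct, naturally in $(T,g)$, a bijection between this set of lifts and $\newGbar_{\defThn,\defS}(T,g)$.

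First I would reduce to the case where $T$ is totally non-degenerate. By \cref{Qnondegsimplex} the map $Q_{-,\defThn,\defS}$ preserves and reflects non-degeneracy of simplices, so by uniqueness of Eilenberg--Zilber factorizations any lift $\tilde g$ of $g$ factors through the canonical epimorphism $T\twoheadrightarrow\overline T$ via the same epimorphism that exhibits $g$ as $T\twoheadrightarrow\overline T\xrightarrow{\overline g}\repD[m+1]_{i,m+1}$; hence $\tilde g\mapsto\overline{\tilde g}$ is a bijection from lifts of $g$ to lifts of $\overline g$. Combined with \cref{Qcommutewithoverline}, this identifies the presheaf sending $(T,g)$ to the set of lifts of $g$ with its restriction along $\overline{(-)}$, and since $\newGbar$ is itself defined through $\overline{(-)}$ it suffices to treat totally non-degenerate $T$, i.e.\ monomorphisms $T\hookrightarrow\repD[m+1]_{i,m+1}$. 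For such $T=\repD[m_1]\vee\dots\vee\repD[m_t]$, I would then read off the lifts of $g$ from the explicit pushout presentation of $\Cone_{-,\defThn,\defS}$ in \cref{ConePushoutThetak} together with the description of $Q$. As $\omega=m+1$ lies only in the last bead $B_\omega^T$, every other bead $\repD[m_j]$ misses $m+1$, so its lift necessarily lands in the $Y$-part and, the lift through $\repD[m]\xhookrightarrow{d^{m+1}}\repD[m+1]$ being unique, amounts to a choice of an element of $Y_{\defThn,\defS}$; the last bead lifts if and only if $B_\omega^T\subseteq\im(\mapDelta+1)$, in which case its lift lands in the $X$-part and amounts to a choice of an element of $X_{\defThn,\defS}$. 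Using that $\Cone_{-,\defThn,\defS}$ has discrete object of vertices (by \cref{Cone0}) and that distinct copies of $\repD[m]$, resp.\ of $\repD[\ell+1]$, overlap only in vertices, these choices at the different beads are independent, which yields a bijection between the lifts of $g$ and $(\prod_{B(T)\setminus\{B_\omega^T\}}Y_{\defThn,\defS})\times X_{\defThn,\defS}=\newGbar_{\defThn,\defS}(T,g)$ when $B_\omega^T\subseteq\im(\mapDelta+1)$, and with $\emptyset$ otherwise.

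Finally I would verify that this bijection is natural, i.e.\ that reindexing a lift $\tilde g$ along a morphism $g'\colon U\to T$ by pre-composition matches $\newGbar(g')=\hat f\,B(\overline{g'})^*$. A bead $u$ of $\overline U$ is sent by $B(\overline{g'})$ into a bead of $\overline T$; if this bead is not $B_\omega^{\overline T}$ then the $Y$-label of $\tilde g\circ g'$ at $u$ is simply that of $B(\overline{g'})(u)$, matching $B(\overline{g'})^*$; if it is $B_\omega^{\overline T}$, which may happen for a non-last bead $u$ of $\overline U$, then, $u$ not hitting $m+1$, the map $\tilde g\circ g'$ factors $u$ through $\{x\}\times\repD[\ell]\subseteq\{x\}\times\repD[\ell+1]$, which is glued to $\{f(x)\}\times\repD[m]$, so its $Y$-label is $f(x)$, precisely the effect of the factor involving $f$ in $\hat f\,B(\overline{g'})^*$, while $B_\omega^{\overline U}$, which does hit $m+1$, retains the $X$-label $x$ (cf.\ \cref{rmk:lastbead}); the empty cases are handled the same way using \cref{rmk:lastbead}.

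The main obstacle I expect is the bookkeeping in this last step: pinning down exactly when a label originally living in $X$ must be reinterpreted as a label in $Y$ via $f$ — which is precisely the phenomenon of a non-final bead of the smaller necklace being carried into the final bead of the larger one — and checking that the resulting rule agrees on the nose with the definition of $\hat f\,B(\overline{g'})^*$ from \cref{notn:Falphatnd,notation:newGbar}. The reduction to totally non-degenerate necklaces is also somewhat delicate, but should follow formally from \cref{Qnondegsimplex,Qcommutewithoverline}.
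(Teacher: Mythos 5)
Your proposal is correct and follows essentially the same route as the paper's proof: identify the fibres of $(Q_{-,\defThn,\defS})_!$ with sets of lifts, reduce to totally non-degenerate necklaces via the factorization $T\twoheadrightarrow\overline{T}$ and \cref{Qcommutewithoverline}, read off the bead-by-bead $Y$- and $X$-labels from the pushout description in \cref{ConePushoutThetak} (with the empty case when $B_\omega^{\overline T}\not\subseteq\im(\mapDelta+1)$), and check naturality under pre-composition. Your naturality discussion is in fact more explicit than the paper's, which leaves that step as a direct computation.
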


\begin{proof}
    We first show that there is an isomorphism of sets between the fiber at a given necklace $T\to \repD[m+1]_{i,m+1}$ of the functor $(Q_{-,\defThn,\defS})_!\colon \catnec{\Cone_{-,\defThn,\defS}}{i}{m+1}\to \catnec{\repD[m+1]}{i}{m+1}$ and the value at this necklace of the functor $\newGbar_{\defThn,\defS}\colon (\catnec{\repD[m+1]}{i}{m+1})^{\op}\to \set$.
    We write $\mathrm{fib}_{T\to \repD[m+1]_{i,m+1}}((Q_{-,\defThn,\defS})_!)$ for the fiber of $(Q_{-,\defThn,\defS})_!$ at the necklace $T\to \repD[m+1]_{i,m+1}$. Let $\overline{T}=\repD[m_1]\vee\ldots\vee\repD[m_t]\hookrightarrow \repD[m+1]_{i,m+1}$ be the (unique) totally non-degenerate necklace  obtained from $T\to \repD[m+1]_{i,m+1}$ by \cref{rem:DSepi}. 

    Given a necklace $T\to (\Cone_{-,\defThn,\defS})_{i,m+1}$ which is sent by the functor $(Q_{-,\defThn,\defS})_!$ to the necklace $T\to \repD[m+1]_{i,m+1}$, it follows from \cref{Qcommutewithoverline} that the (unique) totally non-degenerate $\overline{T}\to (\Cone_{-,\defThn,\defS})_{i,m+1}$ obtained from $T\to (\Cone_{-,\defThn,\defS})_{i,m+1}$ by \cref{rem:DSepi} is sent by $(Q_{-,\defThn,\defS})_!$ to the totally non-degenerate necklace $\overline{T}\hookrightarrow \repD[m+1]_{i,m+1}$. Moreover, since $T\to (\Cone_{-,\defThn,\defS})_{i,m+1}$ is the composite 
    \[ T\twoheadrightarrow \overline{T} \to (\Cone_{-,\defThn,\defS})_{i,m+1},\]
    it is uniquely determined by $\overline{T} \to (\Cone_{-,\defThn,\defS})_{i,m+1}$.
    
    Now, if $B_\omega^{\overline{T}}=\repD[m_t]\subseteq \im(\mapDelta+1)$, we show that there is an isomorphism of sets \begin{equation} \label{iso1} \textstyle \mathrm{fib}_{T\to \repD[m+1]_{i,m+1}}((Q_{-,\defThn,\defS})_!)\cong \prod_{B(\overline{T})\setminus \{B_\omega^{\overline{T}}\}} Y_{\defThn,\defS}\times X_{\defThn,\defS}=\newGbar_{\defThn,\defS}(T). \end{equation} 
    For $1\leq i\leq t-1$, the $i$th bead $\sigma_i\colon \repD[m_i]\hookrightarrow\overline{T} \to (\Cone_{-,\defThn,\defS})_{i,m+1}$ of $\overline{T}$ corresponds, by the description of $\Cone_{-,\defThn,\defS}$ given in \cref{ConePushoutThetak} and the fact that $m+1$ is not in the image of $\sigma_i$, to a non-degenerate $m_i$-simplex
    \[  \textstyle \repD[m_i]\xrightarrow{\overline{\sigma_i}} \{y_i\}\times \repD[m]\hookrightarrow(\coprod_{Y_{\defThn,\defS}} \repD) \amalg_{\coprod_{X_{\defThn,\defS}} \repD[\ell]} (\coprod_{X_{\defThn,\defS}} \repD[\ell+1]), \]
    for some $y_i\in Y_{\defThn,\defS}$. Then, the last bead $\sigma_t\colon B_{\omega}^{\overline{T}}=\repD[m_t]\hookrightarrow\overline{T} \to (\Cone_{-,\defThn,\defS})_{i,m+1}$ corresponds, by the description of $\Cone_{-,\defThn,\defS}$ given in \cref{ConePushoutThetak} and the fact that $m+1$ is in the image of $\sigma_t$, to a non-degenerate $m_t$-simplex
    \[  \textstyle \repD[m_t]\xrightarrow{\overline{\sigma_t}} \{x\}\times \repD[\ell+1]\hookrightarrow(\coprod_{Y_{\defThn,\defS}} \repD) \amalg_{\coprod_{X_{\defThn,\defS}} \repD[\ell]} (\coprod_{X_{\defThn,\defS}} \repD[\ell+1]), \]
    for some $x\in X_{\defThn,\defS}$. Then the data $(T\to \repD[m+1]_{i,m+1},\{y_i\}_{1\leq i\leq t-1},x)$ uniquely determine the necklace $T\to (\Cone_{-,\defThn,\defS})_{i,m+1}$, hence giving the desired isomorphism. 

    If $B_\omega^{\overline{T}}=\repD[m_t]\not\subseteq \im(\mapDelta+1)$, then \begin{equation} \label{iso2} \textstyle \mathrm{fib}_{T\to \repD[m+1]_{i,m+1}}((Q_{-,\defThn,\defS})_!)=\emptyset=\newGbar_{\defThn,\defS}(T). \end{equation} 
    Indeed, then there are no $m_t$-simplices of $\Cone_{-,\defThn,\defS}$ that contains $m+1$ and can be mapped to $\repD[m_t]\subseteq \repD[m+1]$ by $Q_{-,\defThn,\defS}\colon \Cone_{-,\defThn,\defS}\to \repD[m+1]$, as $Q_{-,\defThn,\defS}$ acts as $\mapDelta+1$ on simplices containing $m+1$. So there are no lifts of $T\to \repD[m+1]_{i,m+1}$.

    It remains to show that the isomorphisms \eqref{iso1} and \eqref{iso2} assemble into a natural isomorphism. For this, note that if $g\colon U\to T$ is a map in $\catnec{\repD[m+1]}{i}{m+1}$, then by the proof of \cref{prop:Qgendiscfib}, the map $g$ acts on the fibers of $(Q_{-,\defThn,\defS})_!$ by pre-composition
    \[ g^*\colon \mathrm{fib}_{T\to\repD[m+1]_{i,m+1}}((Q_{-,\defThn,\defS})_!)\to \mathrm{fib}_{U\to\repD[m+1]_{i,m+1}}((Q_{-,\defThn,\defS})_!). \] 
   Using that the map $g\colon U\to T$ is completely determined by the induced map $\overline{g}\colon \overline{U}\hookrightarrow \overline{T}$ obtained by \cref{rem:DSepi}, a direct computation using the above description of morphisms on fibers of $(Q_{-,\defThn,\defS})_!$ and the definition of $\newGbar_{\defThn,\defS}$ on morphisms shows that the isomorphisms \eqref{iso1} and \eqref{iso2} are natural in $T\to \repD[m+1]_{i,m+1}$. 
\end{proof}

We now aim to describe the hom $\Thn$-spaces of the categorification $\Ch\Cone$.

\begin{notation}
    For $m\geq 0$ and $0\leq i\leq m$, we define a functor 
    \[ \Hbar\colon \catnec{\repD[m+1]}{i}{m+1}\to \sset. \]
    It sends an object $T\to \repD[m+1]_{i,m+1}$ in $\catnec{\repD[m+1]}{i}{m+1}$ to the space $\Hom_{\CL T}(\alpha,\omega)$ and a map $g\colon U\to T$ in $\catnec{\repD[m+1]}{i}{m+1}$ to the map in $\sset$
    \[ (\CL g)_{\alpha,\omega}\colon \Hom_{\CL T}(\alpha,\omega)\to \Hom_{\CL U}(\alpha,\omega). \]
\end{notation}

\begin{lemma} \label{rewritecolim}
    For $0\leq i\leq m$, there is a natural isomorphism in $\Thnssset$
    \[ \colim_{T\in \catnec{\Cone_{-,\star,\star}}{i}{m+1}} \Hom_{\CL T}(\alpha,\omega) \cong \colim^{\newGbar_{\star,\star}}_{\catnec{\repD[m+1]}{i}{m+1}} \Hbar, \]
    where $\colim^{\newGbar_{\star,\star}}_{\catnec{\repD[m+1]}{i}{m+1}} \Hbar$ is the object of $\Thnssset$ given at $\defThn\in \Thn$ and $\defS\geq 0$ by the colimit in $\sset$ of the functor $\Hbar$ weighted by $\newGbar_{\defThn,\defS}$.
\end{lemma}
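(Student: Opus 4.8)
The plan is to establish the isomorphism level by level, i.e.\ for each fixed $\defThn\in\Thn$ and $\defS\geq0$, and then to check that the construction is natural in $(\defThn,\defS)\in\Thn\times\Delta$. The main input is the combination of \cref{prop:Qgendiscfib,QvsnewG}: together with the Grothendieck correspondence of \cite[Theorem~2.1.2]{LRfib}, they identify the category $\catnec{\Cone_{-,\defThn,\defS}}{i}{m+1}$, equipped with the projection functor $(Q_{-,\defThn,\defS})_!$, with the category of elements of the presheaf $\newGbar_{\defThn,\defS}\colon(\catnec{\repD[m+1]}{i}{m+1})^{\op}\to\set$ sitting over $\catnec{\repD[m+1]}{i}{m+1}$ (here ``corresponds'' in \cref{QvsnewG} is read as an isomorphism of discrete fibrations over $\catnec{\repD[m+1]}{i}{m+1}$).

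First I would recall the standard formula rewriting a weighted colimit as a conical one: for a weight $F\colon\cD^{\op}\to\set$ with category of elements $\pi\colon\mathrm{el}(F)\to\cD$ and a diagram $H\colon\cD\to\sset$, there is a natural isomorphism $\colim^{F}H\cong\colim_{\mathrm{el}(F)}(H\circ\pi)$ in $\sset$. Applying this with $\cD=\catnec{\repD[m+1]}{i}{m+1}$, $F=\newGbar_{\defThn,\defS}$, and $H=\Hbar$, and using the identification of $\mathrm{el}(\newGbar_{\defThn,\defS})$ with $\catnec{\Cone_{-,\defThn,\defS}}{i}{m+1}$ over $\catnec{\repD[m+1]}{i}{m+1}$ just described, we obtain a natural isomorphism in $\sset$
\[
\colim^{\newGbar_{\defThn,\defS}}_{\catnec{\repD[m+1]}{i}{m+1}}\Hbar\;\cong\;\colim_{T\in\catnec{\Cone_{-,\defThn,\defS}}{i}{m+1}}\bigl(\Hbar\circ(Q_{-,\defThn,\defS})_!\bigr)(T).
\]
It then remains to identify the composite $\Hbar\circ(Q_{-,\defThn,\defS})_!$ with the diagram $T\mapsto\Hom_{\CL T}(\alpha,\omega)$ on $\catnec{\Cone_{-,\defThn,\defS}}{i}{m+1}$, and this is in fact an equality of functors: the functor $(Q_{-,\defThn,\defS})_!$ acts on a necklace by postcomposing its structure map with $Q_{-,\defThn,\defS}$, leaving the underlying simplicial set and the underlying maps of necklaces unchanged, while both $\Hbar$ and the action of $\CL$ on morphisms of necklaces see only these underlying data. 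Substituting, the right-hand colimit becomes $\colim_{T\in\catnec{\Cone_{-,\defThn,\defS}}{i}{m+1}}\Hom_{\CL T}(\alpha,\omega)$, which is precisely the value at $(\defThn,\defS)$ of the left-hand side of the statement.

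Finally I would verify naturality in $(\defThn,\defS)\in\Thn\times\Delta$. The category $\catnec{\repD[m+1]}{i}{m+1}$ and the functor $\Hbar$ do not depend on $(\defThn,\defS)$, whereas $(\defThn,\defS)\mapsto\Cone_{-,\defThn,\defS}$ and $(\defThn,\defS)\mapsto Q_{-,\defThn,\defS}$ are functorial (the target of $Q_{-,\defThn,\defS}$ being the constant simplicial set $\repD[m+1]$), so the discrete fibrations $(Q_{-,\defThn,\defS})_!$ vary functorially and, by \cref{QvsnewG}, correspond to the $\Thnsset$-valued functor $\newGbar$. Since both the Grothendieck correspondence and the weighted-colimit formula are natural, and the identification $\Hbar\circ(Q_{-,\defThn,\defS})_!=(T\mapsto\Hom_{\CL T}(\alpha,\omega))$ is compatible with the simplicial structure maps in $(\defThn,\defS)$, the level-wise isomorphisms assemble into an isomorphism in $\Thnssset$. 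I expect the only genuine subtlety to be bookkeeping the variances---ensuring that $\newGbar_{\defThn,\defS}$ is used as a weight on $\catnec{\repD[m+1]}{i}{m+1}$ matching the contravariance built into a discrete fibration, and that the category of elements is formed in the matching sense---together with the above naturality check; all the substantive work has already been carried out in \cref{prop:Qgendiscfib,QvsnewG} and the lemmas preceding them.
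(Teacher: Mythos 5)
Your proposal is correct and follows essentially the same route as the paper: identify $\catnec{\Cone_{-,\defThn,\defS}}{i}{m+1}$ via \cref{QvsnewG} as the category of elements of the weight $\newGbar_{\defThn,\defS}$, observe that the diagram $T\mapsto\Hom_{\CL T}(\alpha,\omega)$ is exactly $\Hbar\circ(Q_{-,\defThn,\defS})_!$, and invoke the standard identification of a weighted colimit with the conical colimit over the category of elements (the paper cites \cite[(7.1.8)]{RiehlCHT} for this), with naturality in $(\defThn,\defS)$ handled just as you describe. No gaps.
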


\begin{proof}
    Let $\defThn\in \Thn$ and $\defS\geq 0$. By \cref{QvsnewG} the category of elements of the functor $\newGbar_{\defThn,\defS}\colon (\catnec{\repD[m+1]}{i}{m+1})^{\op}\to \set$ is given by the discrete fibration
\[ (Q_{-,\defThn,\defS})_!\colon \catnec{\Cone_{-,\defThn,\defS}}{i}{m+1}\to \catnec{\repD[m+1]}{i}{m+1}.\] 
So by \cite[(7.1.8)]{RiehlCHT} we have natural isomorphisms in $\sset$
\begin{align*} 
\colim_{T\in \catnec{\Cone_{-,\defThn,\defS}}{i}{m+1}} \Hom_{\CL T}(\alpha,\omega) &= \colim_{\catnec{\Cone_{-,\defThn,\defS}}{i}{m+1}} \Hbar (Q_{-,\defThn,\defS})_! \\
&\cong  \colim^{\newGbar_{\defThn,\defS}}_{\catnec{\repD[m+1]}{i}{m+1}} \Hbar. \qedhere
\end{align*}
\end{proof}

Recall the inclusion $\iota\colon \Thnsset\hookrightarrow \Thnssset$ from \cref{rmk:enrichment}.

\begin{lemma} \label{rewritecolim2}
    For $0\leq i\leq m$, there is a natural isomorphism in $\Thnssset$
    \[ \colim_{T\in \catnec{\Cone_{-,\star,\star}}{i}{m+1}} \Hom_{\CL T}(\alpha,\omega) \cong \colim^{\Hbar}_{(\catnec{\repD[m+1]}{i}{m+1})^{\op}} \iota\newGbar, \]
    where $\colim^{\Hbar}_{(\catnec{\repD[m+1]}{i}{m+1})^{\op}} \iota\newGbar$ is the $\sset$-enriched colimit of $\iota\newGbar$ weighted by~$\Hbar$.
\end{lemma}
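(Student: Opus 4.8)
The plan is to identify the two weighted colimits appearing in \cref{rewritecolim,rewritecolim2} by observing that they are computed in the two canonically isomorphic categories $\sset^{\DThnop}\cong\Thnssset$, and that passing between them amounts to swapping the roles of weight and diagram, which is exactly the content of a Fubini-type theorem for weighted colimits. Concretely, both sides of the claimed isomorphism are objects of $\Thnssset$; to check that they agree it suffices, since isomorphisms in $\Thnssset$ are detected pointwise, to fix $\defThn\in\Thn$ and $\defS\ge 0$ and produce a natural isomorphism in $\sset$ between the $(\defThn,\defS)$-components.

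First I would recall from \cref{rewritecolim} that the $(\defThn,\defS)$-component of the left-hand side is the colimit $\colim_{\catnec{\repD[m+1]}{i}{m+1}}\Hbar$ weighted by the set-valued functor $\newGbar_{\defThn,\defS}\colon(\catnec{\repD[m+1]}{i}{m+1})^{\op}\to\set$. Since $\newGbar_{\defThn,\defS}$ is \emph{set}-valued, this weighted colimit is the same as the (conical) colimit over the category of elements of $\newGbar_{\defThn,\defS}$ of $\Hbar$ composed with the projection, which is precisely how \cref{rewritecolim} was obtained via \cite[(7.1.8)]{RiehlCHT}. On the other hand, the $(\defThn,\defS)$-component of the right-hand side $\colim^{\Hbar}_{(\catnec{\repD[m+1]}{i}{m+1})^{\op}}\iota\newGbar$ is, by \cref{rmk:enrichment}, the $\sset$-enriched colimit of the functor $(\catnec{\repD[m+1]}{i}{m+1})^{\op}\to\sset$ sending $T$ to $(\newGbar(T))_{\defThn,\defS}=\newGbar_{\defThn,\defS}(T)$, weighted by $\Hbar\colon\catnec{\repD[m+1]}{i}{m+1}\to\sset$. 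So after taking $(\defThn,\defS)$-components, the only difference between the two sides is which of the two functors $\newGbar_{\defThn,\defS}$ and $\Hbar$ is regarded as the weight and which as the diagram.

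The key step is then to invoke the symmetry of the weighted colimit bifunctor: for a small category $\cI$, a weight $W\colon\cI^{\op}\to\sset$ and a diagram $D\colon\cI\to\sset$, one has a natural isomorphism $\colim^{W}D\cong\colim^{D}W$ (both being computed as the coend $\int^{\cI}W\times D$, using that $\sset$ is cartesian closed so its tensor is the cartesian product, which is symmetric). Applying this with $\cI=\catnec{\repD[m+1]}{i}{m+1}$, $W=\newGbar_{\defThn,\defS}$ and $D=\Hbar$ gives
\[
\colim^{\newGbar_{\defThn,\defS}}_{\catnec{\repD[m+1]}{i}{m+1}}\Hbar\;\cong\;\colim^{\Hbar}_{(\catnec{\repD[m+1]}{i}{m+1})^{\op}}\newGbar_{\defThn,\defS},
\]
naturally in $\defThn$ and $\defS$. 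Combining this with \cref{rewritecolim} and unwinding the definition of $\iota\newGbar$ via \cref{rmk:enrichment} (so that its $(\defThn,\defS)$-component is $\newGbar_{\defThn,\defS}$) yields the desired isomorphism in $\Thnssset$ after reassembling the pointwise isomorphisms.

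I do not expect a serious obstacle here; the statement is essentially a bookkeeping lemma reconciling the two equivalent descriptions $\sset^{\DThnop}\cong\Thnssset$ of the ambient category together with the symmetry of weighted colimits over $\sset$. The only mild care needed is to make sure that the two canonical isomorphisms $\varphi\colon\sset^{\DThnop}\cong\Thnssset$ chosen in \cref{rmk:enrichment} are used consistently on both sides — i.e.\ that the $\sset$-enrichment of $\Thnssset$ under which $\colim^{\Hbar}\iota\newGbar$ is formed is the one transported from $\sset^{\DThnop}$ — and to note that all the constructions (the functors $\newGbar$, $\Hbar$, the category $\catnec{\repD[m+1]}{i}{m+1}$, and the bead functor inputs) are natural in $T$, so the pointwise isomorphisms glue to an isomorphism of objects of $\Thnssset$ rather than merely a levelwise bijection. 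This is where I would be slightly careful in writing, but it amounts only to tracking the identifications rather than any real difficulty.
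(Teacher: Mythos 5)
Your proposal is correct and follows essentially the same route as the paper: the paper's proof is exactly the composite of \cref{rewritecolim} with a weight/diagram interchange isomorphism, which it obtains by citing \cite[Lemma 3.3.5]{MRR1} rather than proving it. Your coend-symmetry argument (levelwise in $(\defThn,\defS)$, identifying the set-valued weight with its discrete simplicial set via $\iota$) is in effect an inline proof of that cited lemma, so the content matches.
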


\begin{proof}
    By \cref{rewritecolim} and \cite[Lemma 3.3.5]{MRR1}, we have isomorphisms in $\Thnssset$
    \begin{align*} 
\colim_{T\in \catnec{\Cone_{-,\star,\star}}{i}{m+1}} \Hom_{\CL T}(\alpha,\omega) &\cong \colim^{\newGbar_{\star,\star}}_{\catnec{\repD[m+1]}{i}{m+1}} \Hbar \\
&\cong \colim^{\Hbar}_{(\catnec{\repD[m+1]}{i}{m+1})^{\op}} \iota\newGbar. \qedhere
\end{align*}
\end{proof}

\begin{prop} \label{homconeasweigthed}
    For $0\leq i\leq m$, there is a natural isomorphism in $\Thnsset$
    \[ \Hom_{\Ch\Cone}(i,m+1)\cong \diag(\colim^{\Hbar}_{(\catnec{\repD[m+1]}{i}{m+1})^{\op}} \iota\newGbar), \]
    where $\colim^{\Hbar}_{(\catnec{\repD[m+1]}{i}{m+1})^{\op}} \iota\newGbar$ is the $\sset$-enriched colimit of $\iota\newGbar$ weighted by~$\Hbar$.
\end{prop}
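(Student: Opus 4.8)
The plan is to obtain the statement as a formal consequence of \cref{cor:computationhomsC} and \cref{rewritecolim2}, since the substantive computations have already been carried out in the preceding results.

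First I would apply \cref{cor:computationhomsC} to $W=\Cone$. This is legitimate: by \cref{notationCone} the object $\Cone$ is a pushout in $\pcatThn$, hence lies in $\pcatThn$, and by \cref{Cone0} its level-$0$ object is $\Cone_0\cong\{0,1,\ldots,m+1\}$, so in particular $i$ and $m+1$ are objects of $\Cone_0$. We thus get a natural isomorphism in $\Thnsset$
\[ \Hom_{\Ch\Cone}(i,m+1)\cong \diag\Big( \colim_{T\in \catnec{\Cone_{-,\star,\star}}{i}{m+1}} \Hom_{\CL T}(\alpha,\omega)\Big), \]
the argument of $\diag$ being the object of $\Thnssset$ whose value at $\defThn\in\Thn$, $\defS\geq 0$ is the colimit $\colim_{T\in \catnec{\Cone_{-,\defThn,\defS}}{i}{m+1}} \Hom_{\CL T}(\alpha,\omega)$ in $\sset$.

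Then I would feed \cref{rewritecolim2} into this, which supplies a natural isomorphism in $\Thnssset$
\[ \colim_{T\in \catnec{\Cone_{-,\star,\star}}{i}{m+1}} \Hom_{\CL T}(\alpha,\omega) \cong \colim^{\Hbar}_{(\catnec{\repD[m+1]}{i}{m+1})^{\op}} \iota\newGbar. \]
Applying $\diag\colon\Thnssset\to\Thnsset$ to this isomorphism and composing with the previous one yields exactly
\[ \Hom_{\Ch\Cone}(i,m+1)\cong \diag\Big(\colim^{\Hbar}_{(\catnec{\repD[m+1]}{i}{m+1})^{\op}} \iota\newGbar\Big), \]
and naturality is inherited from that of the two isomorphisms being composed.

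I do not expect an obstacle at this stage. The real work lies upstream: in \cref{QvsnewG}, which identifies the discrete fibration $(Q_{-,\defThn,\defS})_!$ over $\catnec{\repD[m+1]}{i}{m+1}$ with the category of elements of $\newGbar_{\defThn,\defS}$ — resting on the description of $\Cone$ as a pushout (\cref{ConePushout}), on \cref{Qnondegsimplex,Qcommutewithoverline} describing how $Q$ interacts with non-degenerate simplices and with the totally non-degenerate replacement functor $\overline{(-)}$, and on \cref{prop:Qgendiscfib} — and in \cref{rewritecolim,rewritecolim2}, which translate the colimit over necklaces in $\Cone$ into the weighted colimit over necklaces in $\repD[m+1]$ via \cite[(7.1.8)]{RiehlCHT} and \cite[Lemma 3.3.5]{MRR1}. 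Once those are in hand, the present proposition is merely the composition of two named isomorphisms under the functor $\diag$; the only hypotheses to verify are those of \cref{cor:computationhomsC}, which are immediate from \cref{notationCone,Cone0}.
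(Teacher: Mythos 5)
Your proposal matches the paper's own proof: it combines \cref{cor:computationhomsC} (applied to $W=\Cone$, whose hypotheses hold since $\Cone\in\pcatThn$ with $\Cone_0\cong\{0,\ldots,m+1\}$) with \cref{rewritecolim2} and applies $\diag$. This is exactly the argument given in the paper, so there is nothing to add.
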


\begin{proof}
    By \cref{cor:computationhomsC,rewritecolim2}, we have natural isomorphisms in $\Thnsset$
    \begin{align*} \Hom_{\Ch\Cone}(i,m+1) &\cong \diag(\colim_{T\in \catnec{\Cone_{-,\star,\star}}{i}{m+1}} \Hom_{\CL T}(\alpha,\omega)) \\
    &\cong \diag(\colim^{\Hbar}_{(\catnec{\repD[m+1]}{i}{m+1})^{\op}} \iota\newGbar). \qedhere
    \end{align*}
\end{proof}

With this computation, we are now ready to calculate the straightening of the desired map.

\begin{rmk}
    For $0\leq i\leq m$, the construction $\newGbar$ from \cref{notation:newGbar} defines a functor $\newGbar[-]\colon \Thnssetslice{Y}\to (\Thnsset)^{(\catnec{\repD[m+1]}{i}{m+1})^{\op}}$. Given a map $X'\xrightarrow{\sigma} X\xrightarrow{f} Y$ in $\Thnssetslice{Y}$, there is an induced natural transformation $\overline{\sigma}\colon \newGbar[f\sigma]\to \newGbar$ which acts as $\sigma$ on the copies of $X'$ and $X$ in each component of $\newGbar[f\sigma]$ and $\newGbar$.
\end{rmk}

Using this functoriality, we get the following. We write $X\cong \colim_{\repD[\defThn,\defS]\xrightarrow{\sigma} X} \repD[\defThn,\defS]$ as a colimit of representables in $\Thnsset$, where we recall that $X$ is the source of the given map $f\colon X\to Y$ in $\Thnsset$ between connected $\Thn$-spaces.  

\begin{lemma} \label{colimofnewG}
    For $0\leq i\leq m$, there is a natural isomorphism in $(\Thnsset)^{(\catnec{\repD[m+1]}{i}{m+1})^{\op}}$
    \[ \colim_{\repD[\defThn,\defS]\xrightarrow{\sigma}X} \newGbar[f\sigma]\cong \newGbar. \]
\end{lemma}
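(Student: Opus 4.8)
The plan is to exploit that colimits in the functor category $(\Thnsset)^{(\catnec{\repD[m+1]}{i}{m+1})^{\op}}$ are computed pointwise, so that it is enough to establish the isomorphism after evaluating at an arbitrary object $T\to \repD[m+1]_{i,m+1}$ of $\catnec{\repD[m+1]}{i}{m+1}$, compatibly in $T$. The comparison morphism $\colim_{\repD[\defThn,\defS]\xrightarrow{\sigma}X}\newGbar[f\sigma]\to\newGbar$ is the canonical map induced by the cocone $\{\overline{\sigma}\colon \newGbar[f\sigma]\to\newGbar\}_\sigma$ supplied by the remark above; since evaluation at $T$ preserves colimits and colimit cocones, it suffices to show that each $\colim_{\sigma}\newGbar[f\sigma](T)\to\newGbar(T)$ is an isomorphism in $\Thnsset$, naturality in $T$ then being automatic as everything in sight is pointwise.

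First I would dispose of the vanishing case. If the last bead $B_\omega^{\overline{T}}$ of the totally non-degenerate necklace $\overline{T}$ associated to $T$ (\cref{rem:DSepi}) is not contained in $\im(\mapDelta+1)$, then by \cref{notation:newGbar} we have $\newGbar[f\sigma](T)=\emptyset$ for every $\sigma$ and $\newGbar(T)=\emptyset$; since the colimit of the constant $\emptyset$-valued diagram is $\emptyset$, the comparison map is an isomorphism. In the remaining case $B_\omega^{\overline{T}}\subseteq \im(\mapDelta+1)$, set $Z_T\coloneqq \prod_{B(\overline{T})\setminus\{B_\omega^{\overline{T}}\}}Y$. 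Using the description in the remark of the functor $\newGbar[-]\colon \Thnsset_{/Y}\to (\Thnsset)^{(\catnec{\repD[m+1]}{i}{m+1})^{\op}}$ on morphisms — a map over $Y$ acts by the identity on the $Y$-indexed factors and by its underlying map on the remaining domain factor — one sees that the diagram $\sigma\mapsto \newGbar[f\sigma](T)$ is obtained by postcomposing the tautological diagram $\sigma\mapsto \repD[\defThn,\defS]$ with $Z_T\times(-)\colon \Thnsset\to\Thnsset$, and that under this identification the comparison map is $Z_T\times(-)$ applied to the colimit cocone of the presentation $X\cong\colim_{\repD[\defThn,\defS]\xrightarrow{\sigma}X}\repD[\defThn,\defS]$ fixed above. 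As $\Thnsset$ is cartesian closed, $Z_T\times(-)$ preserves colimits, so
\[ \colim_\sigma\newGbar[f\sigma](T)\cong Z_T\times\bigl(\colim_\sigma \repD[\defThn,\defS]\bigr)\cong Z_T\times X=\newGbar(T), \]
with the comparison map an isomorphism, as desired.

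A cleaner way to package this, which I would likely adopt in the write-up, is to observe that the functor $\newGbar[-]\colon \Thnsset_{/Y}\to (\Thnsset)^{(\catnec{\repD[m+1]}{i}{m+1})^{\op}}$ is cocontinuous: evaluated at each $T$ it is either the constant functor at $\emptyset$ or the composite $\Thnsset_{/Y}\xrightarrow{\mathrm{dom}}\Thnsset\xrightarrow{Z_T\times(-)}\Thnsset$ of two cocontinuous functors, and colimits in the target functor category are pointwise. The lemma is then exactly the statement that $\newGbar[-]$ carries the density presentation $X\cong\colim_{\repD[\defThn,\defS]\xrightarrow{\sigma}X}\repD[\defThn,\defS]$, regarded as a colimit cone in $\Thnsset_{/Y}$ via $f$, to a colimit cone. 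I do not expect a genuine obstacle here: the argument is entirely formal once cartesian closedness of $\Thnsset$ is in hand, and the only point requiring care is matching the canonical comparison map with the cocone $\{\overline{\sigma}\}_\sigma$ of the remark and confirming that $\newGbar[-]$ is functorial with the claimed action on morphisms, which is the content of that remark together with \cref{notation:newGbar} and \cref{rem:DSepi}.
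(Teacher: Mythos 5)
Your proposal is correct and follows essentially the same route as the paper: reduce to pointwise evaluation at each necklace $T$ (colimits in the functor category being computed levelwise) and then use that products in $\Thnsset$ commute with colimits to identify $\colim_\sigma\bigl((\prod_{B(\overline{T})\setminus\{B_\omega^{\overline{T}}\}}Y)\times\repD[\defThn,\defS]\bigr)$ with $(\prod_{B(\overline{T})\setminus\{B_\omega^{\overline{T}}\}}Y)\times X$. Your explicit handling of the $\emptyset$-valued case and the identification of the comparison map with the cocone $\{\overline{\sigma}\}_\sigma$ are details the paper leaves implicit, but they do not change the argument.
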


\begin{proof}
    The component at every object $T\to \repD[m+1]_{i,m+1}$ in $\catnec{\repD[m+1]}{i}{m+1}$ of the canonical map $\colim_{\repD[\defThn,\defS]\xrightarrow{\sigma}X} \newGbar[f\sigma]\to \newGbar$ in $(\Thnsset)^{(\catnec{\repD[m+1]}{i}{m+1})^{\op}}$ 
    is an isomorphism in $\Thnsset$. Indeed, this follows from the fact that, for every object $T\to \repD[m+1]_{i,m+1}$ in $\catnec{\repD[m+1]}{i}{m+1}$, we have isomorphisms in $\Thnsset$
    \begin{align*} \textstyle \colim_{\repD[\defThn,\defS]\xrightarrow{\sigma}X}((\prod_{B(\overline{T})\setminus \{B_\omega^{\overline{T}}\}} Y)\times \repD[\defThn,\defS]) & \textstyle\cong  (\prod_{B(\overline{T})\setminus \{B_\omega^{\overline{T}}\}} Y)\times (\colim_{\repD[\defThn,\defS]\xrightarrow{\sigma}X}\repD[\defThn,\defS]) \\ &\textstyle\cong (\prod_{B(\overline{T})\setminus \{B_\omega^{\overline{T}}\}} Y)\times X,
    \end{align*}
    where the first isomorphisms holds since products in $\Thnsset$ commute with colimits.
\end{proof}

\begin{lemma} \label{colimofweightedcolim}
    For $0\leq i\leq m$, there is a natural isomorphism in $\Thnssset$
    \[ \colim_{\repD[\defThn,\defS]\xrightarrow{\sigma} X} \colim^{\Hbar}_{(\catnec{\repD[m+1]}{i}{m+1})^{\op}} \iota\newGbar[f\sigma]\cong \colim^{\Hbar}_{(\catnec{\repD[m+1]}{i}{m+1})^{\op}} \iota\newGbar. \]
\end{lemma}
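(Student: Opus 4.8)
The plan is to exhibit the claimed isomorphism by transporting the isomorphism $\colim_{\repD[\defThn,\defS]\xrightarrow{\sigma}X}\newGbar[f\sigma]\cong \newGbar$ of \cref{colimofnewG} along a colimit-preserving functor. So the first step is to check that the relevant functors are cocontinuous.

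First I would record that the inclusion $\iota\colon \Thnsset\hookrightarrow \Thnssset$ from \cref{rmk:enrichment} preserves colimits. Under the canonical isomorphisms $\Thnsset\cong \set^{\DThnop}$ and $\Thnssset\cong \sset^{\DThnop}$, the functor $\iota$ is the levelwise application of the discrete simplicial set functor $\set\to \sset$, which is left adjoint to the evaluation functor $(-)_0\colon \sset\to \set$ and hence cocontinuous; since colimits in presheaf categories are computed pointwise, $\iota$ is cocontinuous as well. Second, the $\sset$-enriched weighted colimit functor
\[ \textstyle\colim^{\Hbar}_{(\catnec{\repD[m+1]}{i}{m+1})^{\op}}(-)\colon \Thnssset^{(\catnec{\repD[m+1]}{i}{m+1})^{\op}}\to \Thnssset \]
preserves colimits: the category $\Thnssset\cong \sset^{\DThnop}$ is $\sset$-enriched, tensored and cotensored over $\sset$, so it admits all $\sset$-weighted colimits, and these are computed pointwise and are cocontinuous in the diagram variable (with the weight $\Hbar$ held fixed). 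Consequently the composite
\[ \textstyle\colim^{\Hbar}_{(\catnec{\repD[m+1]}{i}{m+1})^{\op}} \iota(-)\colon (\Thnsset)^{(\catnec{\repD[m+1]}{i}{m+1})^{\op}}\to \Thnssset \]
preserves colimits; this is the same type of reasoning already invoked, with an additional $\diag$, in \cref{lem:projcof} and in the proof following \cref{lem:Fcoeq}.

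Then I would apply this colimit-preserving functor to the natural isomorphism $\colim_{\repD[\defThn,\defS]\xrightarrow{\sigma}X}\newGbar[f\sigma]\cong \newGbar$ in $(\Thnsset)^{(\catnec{\repD[m+1]}{i}{m+1})^{\op}}$ supplied by \cref{colimofnewG}. Passing the colimit over $\sigma$ out through $\iota$ (cocontinuity of $\iota$) and then out through the weighted colimit (cocontinuity in the diagram variable) yields the chain of natural isomorphisms in $\Thnssset$
\[ \textstyle \colim_{\repD[\defThn,\defS]\xrightarrow{\sigma} X}\colim^{\Hbar}_{(\catnec{\repD[m+1]}{i}{m+1})^{\op}} \iota\newGbar[f\sigma]\cong \colim^{\Hbar}_{(\catnec{\repD[m+1]}{i}{m+1})^{\op}} \iota\bigl(\colim_{\repD[\defThn,\defS]\xrightarrow{\sigma} X}\newGbar[f\sigma]\bigr)\cong \colim^{\Hbar}_{(\catnec{\repD[m+1]}{i}{m+1})^{\op}} \iota\newGbar, \]
which is exactly the asserted isomorphism, and it is natural by naturality of the isomorphism in \cref{colimofnewG} and of the interchange isomorphisms used.

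The main (and only) point requiring care is the bookkeeping of which colimit commutes past which functor: the weight $\Hbar$ must remain fixed throughout, the colimit over $\sigma$ is the one that moves, first through $\iota$ and then through the weighted colimit, and one should be mindful that all of these colimits are genuinely pointwise in $\Thn\times\Delta$ so that the interchange is legitimate. There is no deeper obstacle here, as the genuinely combinatorial content (the identification of $\newGbar$ and of the discrete fibration $(Q_{-,\defThn,\defS})_!$) has already been carried out in \cref{QvsnewG,colimofnewG}.
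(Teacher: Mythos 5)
Your proposal is correct and follows essentially the same route as the paper: the paper's proof likewise just applies the colimit-preserving functor $\colim^{\Hbar}_{(\catnec{\repD[m+1]}{i}{m+1})^{\op}} \iota(-)$ to the isomorphism of \cref{colimofnewG}, merely asserting the cocontinuity that you spell out (cocontinuity of $\iota$ as a levelwise left adjoint, and of the weighted colimit in the diagram variable with the weight fixed).
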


\begin{proof}
    This follows directly from the isomorphism from \cref{colimofnewG}, and the fact that the functor $\colim^{\Hbar}_{(\catnec{\repD[m+1]}{i}{m+1})^{\op}} \iota(-)\colon (\Thnsset)^{(\catnec{\repD[m+1]}{i}{m+1})^{\op}} \to \Thnssset$ preserves colimits.
\end{proof}

\begin{lemma} \label{colimofhomCone}
    There is a natural isomorphism in $[\Ch L\repD[m,Y]^{\op},\Thnsset]$
    \[ \colim_{\repD[\defThn,\defS]\xrightarrow{\sigma}X} \Hom_{\Ch\Cone[f\sigma]}(-,m+1)\circ \Ch(\iota_{f\sigma})\cong \Hom_{\Ch\Cone}(-,m+1)\circ \Ch(\iota_f). \]
\end{lemma}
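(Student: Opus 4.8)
The plan is to produce a canonical comparison map out of the colimit and then check it is an isomorphism objectwise, thereby reducing to the weighted‑colimit computations already carried out. First I would note that every object $\repD[\defThn,\defS]\xrightarrow{\sigma}X$ of the indexing category of the presentation $X\cong\colim_{\repD[\defThn,\defS]\xrightarrow{\sigma}X}\repD[\defThn,\defS]$ induces, by functoriality of the pushout of \cref{notationCone} (keeping the factor $L\repD[m,Y]$ fixed), a map $\Cone[f\sigma]\to\Cone$ in $\pcatThn$ which commutes with $\iota_{f\sigma}$ and $\iota_f$ and carries the cone point $m+1$ to $m+1$. Applying the colimit‑preserving functor $\Ch$ and then the represented functors $\Hom_{\Ch(-)}(-,m+1)$, these maps form a cocone in $[\Ch L\repD[m,Y]^{\op},\Thnsset]$ under the diagram $\sigma\mapsto\Hom_{\Ch\Cone[f\sigma]}(-,m+1)\circ\Ch(\iota_{f\sigma})$ with apex $\Hom_{\Ch\Cone}(-,m+1)\circ\Ch(\iota_f)$, hence a natural map
\[ \theta\colon\colim_{\repD[\defThn,\defS]\xrightarrow{\sigma}X}\Hom_{\Ch\Cone[f\sigma]}(-,m+1)\circ\Ch(\iota_{f\sigma})\longrightarrow\Hom_{\Ch\Cone}(-,m+1)\circ\Ch(\iota_f). \]

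Next I would show $\theta$ is invertible. A map of $\Thnsset$‑enriched functors on $\Ch L\repD[m,Y]^{\op}$ is an isomorphism as soon as its components at the objects $0,1,\ldots,m$ are, and colimits in $[\Ch L\repD[m,Y]^{\op},\Thnsset]$ are computed objectwise, so it suffices to prove that for each $0\le i\le m$ the map
\[ \theta_i\colon\colim_{\repD[\defThn,\defS]\xrightarrow{\sigma}X}\Hom_{\Ch\Cone[f\sigma]}(i,m+1)\longrightarrow\Hom_{\Ch\Cone}(i,m+1) \]
is an isomorphism in $\Thnsset$. By \cref{homconeasweigthed}, applied both to $\Cone[f\sigma]$ and to $\Cone$, each side is $\diag$ of a weighted colimit over $(\catnec{\repD[m+1]}{i}{m+1})^{\op}$; since $\diag\colon\Thnssset\to\Thnsset$ is a left adjoint and so preserves colimits, the source of $\theta_i$ becomes $\diag\big(\colim_{\sigma}\colim^{\Hbar}_{(\catnec{\repD[m+1]}{i}{m+1})^{\op}}\iota\newGbar[f\sigma]\big)$, and \cref{colimofweightedcolim} identifies the inner colimit with $\colim^{\Hbar}_{(\catnec{\repD[m+1]}{i}{m+1})^{\op}}\iota\newGbar$, that is, with the target. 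It then remains to confirm that this composite isomorphism is precisely $\theta_i$, equivalently that the isomorphism of \cref{homconeasweigthed} is natural in $f$ with respect to the structure maps $\Cone[f\sigma]\to\Cone$; this I would check by unwinding its proof, tracking the discrete‑fibration correspondence of \cref{QvsnewG}, the truncation functor $\overline{(-)}$ on necklaces, and the weighted‑colimit rewritings of \cref{rewritecolim,rewritecolim2} along the maps of cones induced by $\sigma$.

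The main obstacle is exactly this final naturality step. The isomorphism in \cref{homconeasweigthed} is stated for a fixed $f$, so one must re‑examine its construction to see that, as $f$ ranges over the maps $f\sigma$ and $f$, the identifications are compatible with the canonical maps $\Cone[f\sigma]\to\Cone$ — concretely, that the map $Q_{-,\defThn,\defS}$ and the induced discrete fibration and necklace‑truncation functors behave functorially in $\sigma$, and that the external results invoked inside \cref{cor:computationhomsC,rewritecolim} (on rigidification of necklaces and on colimits over categories of elements) are applied naturally. This is lengthy but entirely mechanical; every other step is formal. Once $\theta$ is known to be an isomorphism the statement follows, and \cref{Stof[lX]} is then obtained by combining it with the colimit‑preservation of $\St_{L\repD[m,Y]}$ and the definition of the straightening on representables.
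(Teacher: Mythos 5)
Your proposal is correct and follows essentially the same route as the paper: reduce to the components at $0\leq i\leq m$, apply \cref{homconeasweigthed} on both sides, commute $\diag$ past the colimit, and invoke \cref{colimofweightedcolim}. The naturality-in-$\sigma$ point you flag as the remaining obstacle is already built in, since the isomorphism of \cref{colimofnewG} (and hence \cref{colimofweightedcolim}) is by construction the canonical comparison map induced by the maps $\Cone[f\sigma]\to\Cone$, so your $\theta_i$ agrees with the composite isomorphism without further unwinding.
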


\begin{proof}
    Let $0\leq i\leq m$. By applying the colimit-preserving functor $\diag\colon \Thnssset\to \Thnsset$ to the isomorphism from \cref{colimofweightedcolim}, we get natural isomorphisms in $\Thnsset$
    \begin{align*} \colim_{\repD[\defThn,\defS]\xrightarrow{\sigma} X} \diag(\colim&^{\Hbar}_{(\catnec{\repD[m+1]}{i}{m+1})^{\op}} \iota\newGbar[f\sigma]) \\
    &\cong \diag(\colim_{\repD[\defThn,\defS]\xrightarrow{\sigma} X} \colim^{\Hbar}_{(\catnec{\repD[m+1]}{i}{m+1})^{\op}} \iota\newGbar[f\sigma]) \\
    &\cong \diag(\colim^{\Hbar}_{(\catnec{\repD[m+1]}{i}{m+1})^{\op}} \iota\newGbar).
    \end{align*}
    Then, by \cref{homconeasweigthed}, this yields a natural isomorphism in $\Thnsset$
    \[ \colim_{\repD[\defThn,\defS]\xrightarrow{\sigma} X} \Hom_{\Ch\Cone[f\sigma]}(i,m+1)\cong \Hom_{\Ch\Cone}(i,m+1). \]
    Since the above isomorphisms are natural in $0\leq i\leq m$, they assemble into a natural isomorphism in $[\Ch L\repD[m,Y]^{\op},\Thnsset]$
    \[ \colim_{\repD[\defThn,\defS]\xrightarrow{\sigma} X} \Hom_{\Ch\Cone[f\sigma]}(-,m+1)\circ \Ch(\iota_{f\sigma})\cong \Hom_{\Ch\Cone}(-,m+1)\circ \Ch(\iota_{f}). \qedhere \]
\end{proof}

We finally prove \cref{Stof[lX]} which we restate for the reader's convenience..

\begin{prop} \label{Stof[lX]appendix}
    Let $\mapDelta\colon [\ell]\to [m]$ be an injective map in $\Delta$, and $f\colon X\to Y$ be map in $\Thnsset$ between connected $\Thn$-spaces. The straightening functor $\St_{L\repD[m,Y]}$ sends the object $[\mapDelta,f]\colon \repD[\ell,X]\to L\repD[m,Y]$ in $\sThnssetsliceshort{L\repD[m,Y]}$ to the $\Thnsset$-enriched functor 
    \[ \St_{L\repD[m,Y]} ([\mapDelta,f])\colon \Ch L\repD[m,Y]^{\op}\xrightarrow{\Ch(\iota_{f})} \Ch\Cone^{\op}\xrightarrow{\Hom_{\Ch \Cone}(-,m+1)} \Thnsset. \]
\end{prop}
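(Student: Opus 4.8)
The plan is to reduce the general statement to the already-established case of representables (\cref{pushoutlowersigma}) via a colimit argument, using that both the straightening functor and the cone construction behave well with respect to the decomposition of $X$ into representables. Concretely, I would write $X \cong \colim_{\repD[\defThn,\defS] \xrightarrow{\sigma} X} \repD[\defThn,\defS]$ as a colimit of representables in $\Thnsset$, inducing a colimit decomposition of the object $[\mapDelta, f]\colon \repD[\ell, X] \to L\repD[m,Y]$ in $\sThnssetsliceshort{L\repD[m,Y]}$ as $\colim_{\sigma} [\mapDelta, f\sigma]$, where each $f\sigma\colon \repD[\defThn,\defS] \to Y$ lands in a connected $\Thn$-space with connected source. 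The key point is that each $[\mapDelta, f\sigma]$ is a map from a \emph{representable} in $\sThnsset$ (namely $\repD[\ell, \defThn, \defS]$ precomposed appropriately, though more precisely $\repD[\ell] \times \repD[\defThn,\defS]$), so the computation of its straightening is governed by the definition of $\St_W$ on the category of elements, which by \cref{sliceaspresheaf} recovers $\St_{L\repD[m,Y]}$ on all of $\sThnssetsliceshort{L\repD[m,Y]}$ by left Kan extension along the Yoneda embedding.

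The main steps, in order, are as follows. First, I would observe that by the definition of $\St_W$ (it is left Kan extended from its values on the category of elements), it suffices to compute $\St_{L\repD[m,Y]}([\mapDelta, f\sigma])$ when $X = \repD[\defThn,\defS]$ is representable; but then the construction of $\St$ on representables together with \cref{lem:Stvssigma}-style reasoning — more precisely the pushout defining $\Cone[f\sigma]$ is exactly the instance of the pushout in \cref{pushoutlowersigma} or rather the cone pushout \cref{notationCone} — shows directly that $\St_{L\repD[m,Y]}([\mapDelta, f\sigma]) \cong \Hom_{\Ch\Cone[f\sigma]}(-, m+1) \circ \Ch(\iota_{f\sigma})$. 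This is essentially the content of how $\St_W$ is constructed in \cref{subsec:defStUn} applied to the map $[\mapDelta, f\sigma]\colon \repD[\ell,\defThn,\defS] \to L\repD[m,Y]$: one forms the pushout adding a terminal object, and that pushout is $\Cone[f\sigma]$ by \cref{notationCone}. Second, since $\St_{L\repD[m,Y]}\colon \sThnssetsliceshort{L\repD[m,Y]} \to [\Ch L\repD[m,Y]^{\op}, \Thnsset]$ is a left adjoint, it preserves the colimit $[\mapDelta, f] \cong \colim_\sigma [\mapDelta, f\sigma]$, giving
\[
\St_{L\repD[m,Y]}([\mapDelta,f]) \cong \colim_{\repD[\defThn,\defS]\xrightarrow{\sigma} X} \St_{L\repD[m,Y]}([\mapDelta, f\sigma]) \cong \colim_{\repD[\defThn,\defS]\xrightarrow{\sigma} X} \Hom_{\Ch\Cone[f\sigma]}(-,m+1)\circ \Ch(\iota_{f\sigma}).
\]
Third, I would invoke \cref{colimofhomCone}, which identifies this colimit with $\Hom_{\Ch\Cone}(-, m+1) \circ \Ch(\iota_f)$, completing the argument.

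The bulk of the technical work has in fact been front-loaded into \cref{colimofnewG,colimofweightedcolim,colimofhomCone} (and the necklace-theoretic identifications \cref{QvsnewG,homconeasweigthed} feeding into \cref{colimofhomCone}), so the remaining proof is short. The step I expect to require the most care is the first one — verifying that for a representable source the straightening genuinely equals the claimed composite. One must check that the pushout in $\pcatThn$ defining $W_\sigma$ in \cref{pushoutlowersigma}, applied with $W = L\repD[m,Y]$ and the relevant section, coincides with the pushout defining $\Cone$ in \cref{notationCone}; this is an unwinding of the definition of $\St_{L\repD[m,Y]}$ at the object $[\mapDelta, f\sigma]$ and amounts to matching up the two pushout squares (with $L[d^{\ell+1}, X]$ playing the role of the cone inclusion and $L[\mapDelta, f]$ the attaching map), together with the observation from the remark preceding \cref{notationCone} that $\Cone_0 \cong \{0,\ldots,m+1\}$ so that the added object $\top$ is precisely $m+1$. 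Once that identification is in place, the colimit-preservation and the appeal to \cref{colimofhomCone} are purely formal.

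\begin{proof}
    By construction (see \cref{subsec:defStUn}), the functor $\St_{L\repD[m,Y]}\colon \sThnssetsliceshort{L\repD[m,Y]}\to [\Ch L\repD[m,Y]^{\op},\Thnsset]$ is obtained by left Kan extending along the Yoneda embedding the functor defined on the category of elements, which sends a map $\tau\colon \repD[\ell',\defThn',\defS']\to L\repD[m,Y]$ to the composite $\Hom_{\Ch (L\repD[m,Y])_\tau}(-,\top)\circ \Ch(\iota_\tau)$, where $(L\repD[m,Y])_\tau$ is the pushout from \cref{pushoutlowersigma}. In particular, as a left adjoint, $\St_{L\repD[m,Y]}$ preserves colimits.

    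Write $X\cong \colim_{\repD[\defThn,\defS]\xrightarrow{\sigma}X} \repD[\defThn,\defS]$ as a colimit of representables in $\Thnsset$. Since products in $\sThnsset$ commute with colimits, we get
    \[ [\mapDelta,f]\cong \colim_{\repD[\defThn,\defS]\xrightarrow{\sigma}X} [\mapDelta,f\sigma] \]
    in $\sThnssetsliceshort{L\repD[m,Y]}$, where each $f\sigma\colon \repD[\defThn,\defS]\to Y$ is a map in $\Thnsset$ between connected $\Thn$-spaces by \cref{rem:repareconnected}.

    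Fix such a $\sigma$. The map $[\mapDelta,f\sigma]\colon \repD[\ell,\defThn,\defS]\to L\repD[m,Y]$ corresponds under $L\dashv I$ to a map $L\repD[\ell,\defThn,\defS]\to L\repD[m,Y]$, and forming the pushout from \cref{pushoutlowersigma} with $W=L\repD[m,Y]$ along this map recovers precisely the pushout defining $\Cone[f\sigma]$ in \cref{notationCone} (with $L[d^{\ell+1},\repD[\defThn,\defS]]$ playing the role of the cone inclusion and $L[\mapDelta,f\sigma]$ the attaching map), using $L\repD[\ell,\defThn,\defS]\cong L\repD[\ell,\repD[\defThn,\defS]]$. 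Under this identification, the added object $\top$ is the image of $\ell+1\in L\repD[\ell+1,\repD[\defThn,\defS]]_0$, which is $m+1\in \Cone[f\sigma]_0\cong \{0,1,\ldots,m+1\}$ by \cref{Cone0}. Hence, by the definition of $\St_{L\repD[m,Y]}$ on the category of elements, there is an isomorphism in $[\Ch L\repD[m,Y]^{\op},\Thnsset]$
    \[ \St_{L\repD[m,Y]}([\mapDelta,f\sigma])\cong \Hom_{\Ch\Cone[f\sigma]}(-,m+1)\circ \Ch(\iota_{f\sigma}). \]

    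Combining the above, and using that $\St_{L\repD[m,Y]}$ preserves colimits together with \cref{colimofhomCone}, we obtain isomorphisms in $[\Ch L\repD[m,Y]^{\op},\Thnsset]$
    \begin{align*}
        \St_{L\repD[m,Y]}([\mapDelta,f])
        &\cong \colim_{\repD[\defThn,\defS]\xrightarrow{\sigma}X} \St_{L\repD[m,Y]}([\mapDelta,f\sigma]) \\
        &\cong \colim_{\repD[\defThn,\defS]\xrightarrow{\sigma}X} \Hom_{\Ch\Cone[f\sigma]}(-,m+1)\circ \Ch(\iota_{f\sigma}) \\
        &\cong \Hom_{\Ch\Cone}(-,m+1)\circ \Ch(\iota_f). \qedhere
    \end{align*}
\end{proof}
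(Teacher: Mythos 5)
Your proposal is correct and follows essentially the same route as the paper: decompose $X$ as a colimit of representables, use that $\St_{L\repD[m,Y]}$ preserves colimits together with its definition on representables (matching the pushout of \cref{pushoutlowersigma} with the cone pushout of \cref{notationCone}, identifying $\top$ with $m+1$), and conclude by \cref{colimofhomCone}. The one step you flag as delicate — the identification $L\repD[m,Y]_{[\mapDelta,f\sigma]}\cong \Cone[f\sigma]$ — is exactly the point the paper also singles out, and your handling of it is fine.
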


\begin{proof}
    Write $X\cong \colim_{\repD[\defThn,\defS]\xrightarrow{\sigma} X} \repD[\defThn,\defS]$ as a colimit of representables in $\Thnsset$. Since $\St_{L\repD[m,Y]}\colon \sThnssetsliceshort{L\repD[m,Y]} \to [\Ch L\repD[m,Y]^{\op},\Thnsset]$ commutes with colimits and by its definition on representables, we have isomorphisms in $[\Ch L\repD[m,Y]^{\op},\Thnsset]$
    \begin{align*}
        \St_{L\repD[m,Y]}([\mapDelta,f]) &\cong \colim_{\repD[\defThn,\defS]\xrightarrow{\sigma} X} \St_{L\repD[m,Y]}([\mapDelta,f\sigma]) \\
        &\cong \colim_{\repD[\defThn,\defS]\xrightarrow{\sigma} X} \Hom_{\Ch(L\repD[m,Y]_{[\mapDelta,f\sigma]})}(-,\top)\circ \Ch(\iota_{f\sigma}).
    \end{align*}
    By \cref{pushoutlowersigma,notationCone}, we see that there is an isomorphism $L\repD[m,Y]_{[\mapDelta,f\sigma]}\cong \Cone[f\sigma]$ in $\pcatThn$ that identifies $\top$ with $m+1$, for every map $\sigma\colon \repD[\defThn,\defS]\to X$ in $\Thnsset$. Hence, combining with \cref{colimofhomCone}, we get isomorphisms in $[\Ch L\repD[m,Y]^{\op},\Thnsset]$
    \begin{align*}
    \colim_{\repD[\defThn,\defS]\xrightarrow{\sigma} X} \Hom_{\Ch(L\repD[m,Y]_{[\mapDelta,f\sigma]})}&(-,\top)\circ \Ch(\iota_{f\sigma}) \\ &\cong \colim_{\repD[\defThn,\defS]\xrightarrow{\sigma} X} \Hom_{\Ch\Cone[f\sigma]}(-,m+1)\circ \Ch(\iota_{f\sigma}) \\
    &\cong \Hom_{\Ch\Cone}(-,m+1)\circ \Ch(\iota_{f}).
    \end{align*}
    All together they assemble into the desired isomorphism in $[\Ch L\repD[m,Y]^{\op},\Thnsset]$.
\end{proof}

\subsection{Enrichment of straightening} \label{sec:enrichmentofSt}

We now prove \cref{lem:Stprestensors} which states that the straightening functor is enriched over $\Thnsset$.

Let us fix $m,\defS\geq 0$, $\defThn\in \Thn$, and an object $X\in \Thnsset$. We first compare the straightening of the canonical projection 
\[ [\id_{[m]},\pi]\colon \repD[m,\defThn,\defS]\times X\to \repD[m,\defThn,\defS] \]
with the tensor by $X$ of the straightening of the identity at $\repD[m,\defThn,\defS]$.

\begin{lemma} \label{lem:hompivsid}
    For $0\leq i\leq m$, there is an isomorphism in $\Thnsset$
    \[ \Hom_{\Cone[\pi][\id_{[m]}]}(i,m+1)\cong \Hom_{\Cone[\id_{\repD[\defThn,\defS]}][\id_{[m]}]}(i,m+1)\times X. \]
\end{lemma}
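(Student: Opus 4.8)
The plan is to unwind the definition of $\Cone[\pi][\id_{[m]}]$ and $\Cone[\id_{\repD[\defThn,\defS]}][\id_{[m]}]$ from \cref{notationCone} and compute their hom $\Thn$-spaces using the necklace formula. First I would observe that by \cref{Coneid} (applied with $X$ replaced by $\repD[\defThn,\defS]\times X$ in the first case, and $\repD[\defThn,\defS]$ in the second, noting both are connected by \cref{rem:repareconnected}), we have $\Cone[\pi][\id_{[m]}]\cong L\repD[m+1,\defThn,\defS]\times X$ wait---more precisely, $\Cone[\id_{\repD[\defThn,\defS]\times X}][\id_{[m]}]\cong L\repD[m+1,\repD[\defThn,\defS]\times X]$, whereas $\Cone[\pi][\id_{[m]}]$ is the cone on the projection $[\id_{[m]},\pi]\colon \repD[m,\repD[\defThn,\defS]\times X]\to \repD[m,\repD[\defThn,\defS]]$; these are genuinely different, so I would instead work directly from \cref{homconeasweigthed}, which expresses $\Hom_{\Ch\Cone}(i,m+1)\cong \diag(\colim^{\Hbar}_{(\catnec{\repD[m+1]}{i}{m+1})^{\op}} \iota\newGbar)$.

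Next I would identify the two weight functors. For $\Cone[\id_{\repD[\defThn,\defS]}][\id_{[m]}]$, the functor $\newGbar[\id_{\repD[\defThn,\defS]}]$ sends a necklace $T\to \repD[m+1]_{i,m+1}$ (with $\mapDelta+1=\id_{[m+1]}$, so the condition $B_\omega^{\overline{T}}\subseteq \im(\mapDelta+1)$ is automatic) to $(\prod_{B(\overline T)\setminus\{B_\omega^{\overline T}\}} \repD[\defThn,\defS])\times \repD[\defThn,\defS]= \prod_{B(\overline T)}\repD[\defThn,\defS]$; for $\Cone[\pi][\id_{[m]}]$, the functor $\newGbar[\pi]$ sends $T$ to $(\prod_{B(\overline T)\setminus\{B_\omega^{\overline T}\}} \repD[\defThn,\defS])\times (\repD[\defThn,\defS]\times X)\cong (\prod_{B(\overline T)}\repD[\defThn,\defS])\times X = \newGbar[\id_{\repD[\defThn,\defS]}](T)\times X$. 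The key point is that this isomorphism is natural in $T$ (the transition maps of $\newGbar[\pi]$ are, by \cref{notation:newGbar}, built from $B(\overline g)^*$ together with $\hat{f}$ for $f=\pi$, and since $\pi$ is a projection the extra $X$ factor is carried along by the identity), so $\iota\newGbar[\pi]\cong \iota\newGbar[\id_{\repD[\defThn,\defS]}]\otimes X$ as functors into $\Thnssset$, where the tensor is by the image of $X$ under $\iota$.

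Then I would apply the functor $\colim^{\Hbar}_{(\catnec{\repD[m+1]}{i}{m+1})^{\op}}\iota(-)$: since weighted colimits commute with tensoring by a fixed object (as products in $\Thnsset$, hence in $\Thnssset$ via the enrichment of \cref{rmk:enrichment}, commute with colimits), we get
\[ \colim^{\Hbar}_{(\catnec{\repD[m+1]}{i}{m+1})^{\op}}\iota\newGbar[\pi]\cong \Big(\colim^{\Hbar}_{(\catnec{\repD[m+1]}{i}{m+1})^{\op}}\iota\newGbar[\id_{\repD[\defThn,\defS]}]\Big)\otimes X. \]
Applying $\diag\colon \Thnssset\to\Thnsset$, which also preserves colimits and products, and invoking \cref{homconeasweigthed} twice gives $\Hom_{\Ch\Cone[\pi][\id_{[m]}]}(i,m+1)\cong \Hom_{\Ch\Cone[\id_{\repD[\defThn,\defS]}][\id_{[m]}]}(i,m+1)\times X$, as desired.

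The main obstacle I anticipate is being careful about the naturality of the isomorphism $\newGbar[\pi](T)\cong \newGbar[\id_{\repD[\defThn,\defS]}](T)\times X$ across morphisms $g\colon U\to T$ in $\catnec{\repD[m+1]}{i}{m+1}$, especially tracking which bead becomes the last bead and how the distinguished $X$-factor interacts with the map $\hat{f}B(\overline g)^*$ of \cref{notn:Falphatnd}. Since $f=\pi$ is a projection $\repD[\defThn,\defS]\times X\to \repD[\defThn,\defS]$, the composite defining $\hat{\pi}B(\overline g)^*$ acts on the $X$-coordinate simply by reindexing (the $X$ is pulled through the product rearrangement untouched), so the square of transition maps commutes on the nose; making this precise is a routine but slightly fiddly diagram chase that I would carry out at the level of components $\newGbar[\pi](T)\to\newGbar[\pi](U)$.
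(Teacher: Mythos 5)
Your argument is correct and coincides with the paper's own proof: both identify $\newGbar[\pi][i][\id_{[m]}]$ with $\newGbar[\id_{\repD[\defThn,\defS]}][i][\id_{[m]}]\otimes X$ as functors on $(\catnec{\repD[m+1]}{i}{m+1})^{\op}$, push this identification through $\iota$, the $\Hbar$-weighted colimit (which commutes with the tensor/product by $X$), and $\diag$, and then conclude with \cref{homconeasweigthed} applied to both cones. The discarded opening idea via \cref{Coneid} is harmless since you abandon it, and your check that the transition maps $\hat{\pi}B(\overline{g})^*$ carry the $X$-factor untouched is precisely the ``direct computation'' the paper leaves implicit.
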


\begin{proof}
    For every object $T\to \repD[m+1]_{i,m+1}$ in $\catnec{\repD[m+1]}{i}{m+1}$, a direct computation shows that
    \[ \textstyle \newGbar[\pi][i][\id_{[m]}](T)=(\prod_{B(\overline{T})} \repD[\defThn,\defS])\times X=\newGbar[\id_{\repD[\defThn,\defS]}](T)\times X=(\newGbar[\id_{\repD[\defThn,\defS]}]\otimes X)(T)\]
    and so we have $\newGbar[\pi][i][\id_{[m]}]=\newGbar[\id_{\repD[\defThn,\defS]}]\otimes X$ in $(\Thnsset)^{(\catnec{\repD[m+1]}{i}{m+1})^{\op}}$. Since the inclusion functor $\iota\colon \Thnsset\to \Thnssset$ preserves products, we obtain an isomorphism in $(\Thnssset)^{(\catnec{\repD[m+1]}{i}{m+1})^{\op}}$
    \[ \iota\newGbar[\pi][i][\id_{[m]}]= \iota(\newGbar[\id_{\repD[\defThn,\defS]}]\otimes X)\cong (\iota\newGbar[\id_{\repD[\defThn,\defS]}])\otimes \iota X. \]
    Now, since weighted colimits commute with each other and tensors are an example of such by \cite[(3.44)]{Kelly}, we get isomorphisms in $\Thnssset$
    \begin{align*} \colim^{\Hbar}_{(\catnec{\repD[m+1]}{i}{m+1})^{\op}}\iota\newGbar[\pi][i][\id_{[m]}] &\cong \colim^{\Hbar}_{(\catnec{\repD[m+1]}{i}{m+1})^{\op}}(\iota\newGbar[\id_{\repD[\defThn,\defS]}]\otimes \iota X) \\
    &\cong (\colim^{\Hbar}_{(\catnec{\repD[m+1]}{i}{m+1})^{\op}}\iota\newGbar[\id_{\repD[\defThn,\defS]}])\times \iota X. 
    \end{align*}
    Finally, as $\diag\colon \Thnssset\to \Thnsset$ commutes with products and $\diag(\iota X)=X$, we get isomorphisms in $\Thnsset$
    \begin{align*} \diag(\colim^{\Hbar}_{(\catnec{\repD[m+1]}{i}{m+1})^{\op}} \iota\newGbar[\pi][i][\id_{[m]}]) &\cong \diag ((\colim^{\Hbar}_{(\catnec{\repD[m+1]}{i}{m+1})^{\op}}\iota\newGbar[\id_{\repD[\defThn,\defS]}])\times \iota X) \\
    &\cong \diag (\colim^{\Hbar}_{(\catnec{\repD[m+1]}{i}{m+1})^{\op}}\iota\newGbar[\id_{\repD[\defThn,\defS]}])\times X. 
    \end{align*}
    By \cref{homconeasweigthed}, this gives an isomorphism in $\Thnsset$
    \[ \Hom_{\Cone[\pi][\id_{[m]}]}(i,m+1)\cong \Hom_{\Cone[\id_{\repD[\defThn,\defS]}][\id_{[m]}]}(i,m+1)\times X. \qedhere \]
\end{proof}

\begin{lemma} \label{lem:streppivsid}
    There is an isomorphism in $[\Ch L\repD[m,\defThn,\defS]^{\op},\Thnsset]$
    \[ \St_{L\repD[m,\defThn,\defS]}([\id_{[m]},\pi])\cong \St_{L\repD[m,\defThn,\defS]}(\id_{\repD[m,\defThn,\defS]})\otimes X. \]
\end{lemma}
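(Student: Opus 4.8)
The plan is to compute both straightenings explicitly via \cref{Stof[lX]} and then identify them using the pointwise comparison of \cref{lem:hompivsid}. Observe first that, as an object of $\sThnssetsliceshort{L\repD[m,\defThn,\defS]}$, the map $[\id_{[m]},\pi]$ is precisely the tensor $\id_{\repD[m,\defThn,\defS]}\otimes X$; so the lemma asserts that straightening this particular tensor commutes with the tensor. Since $X$ is a colimit of representables in $\Thnsset$, since $\St_{L\repD[m,\defThn,\defS]}$ and $(-)\otimes X$ preserve colimits, and since $\repD[m,\defThn,\defS]\times(-)$ carries a colimit cocone to a colimit cocone in $\sThnssetsliceshort{L\repD[m,\defThn,\defS]}$ (colimits in a slice being created by the forgetful functor), it suffices to treat the case where $X$ is representable, in which case the product $\repD[\defThn,\defS]\times X$ is connected.

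Assuming $X$ connected, \cref{Stof[lX]} applied with $\mapDelta=\id_{[m]}$ and $f=\pi\colon \repD[\defThn,\defS]\times X\to \repD[\defThn,\defS]$ identifies $\St_{L\repD[m,\defThn,\defS]}([\id_{[m]},\pi])$ with the composite
\[ \Ch L\repD[m,\defThn,\defS]^{\op}\xrightarrow{\Ch(\iota_\pi)} \Ch\Cone[\pi][\id_{[m]}]^{\op}\xrightarrow{\Hom_{\Ch\Cone[\pi][\id_{[m]}]}(-,m+1)} \Thnsset, \]
and, applied with $f=\id_{\repD[\defThn,\defS]}$, identifies $\St_{L\repD[m,\defThn,\defS]}(\id_{\repD[m,\defThn,\defS]})$ with the analogous composite through $\Ch\Cone[\id_{\repD[\defThn,\defS]}][\id_{[m]}]^{\op}$ (recall $\Cone[\id_{\repD[\defThn,\defS]}][\id_{[m]}]\cong L\repD[m+1,\defThn,\defS]$ by \cref{Coneid}). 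As $\Ch(\iota_\pi)$ and $\Ch(\iota_{\id_{\repD[\defThn,\defS]}})$ are the evident inclusions $\{0,\dots,m\}\hookrightarrow\{0,\dots,m+1\}$ on objects, evaluating the two composites at $0\le i\le m$ gives the hom $\Thn$-spaces $\Hom_{\Ch\Cone[\pi][\id_{[m]}]}(i,m+1)$ and $\Hom_{\Ch\Cone[\id_{\repD[\defThn,\defS]}][\id_{[m]}]}(i,m+1)$; and since the tensor of an enriched presheaf by $X$ is computed pointwise as the product with $X$ (\cref{subsec:MSproj}), we have $(\St_{L\repD[m,\defThn,\defS]}(\id_{\repD[m,\defThn,\defS]})\otimes X)(i)=\Hom_{\Ch\Cone[\id_{\repD[\defThn,\defS]}][\id_{[m]}]}(i,m+1)\times X$. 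Thus \cref{lem:hompivsid} supplies, for each $0\le i\le m$, an isomorphism $\St_{L\repD[m,\defThn,\defS]}([\id_{[m]},\pi])(i)\cong(\St_{L\repD[m,\defThn,\defS]}(\id_{\repD[m,\defThn,\defS]})\otimes X)(i)$, and it remains to check that these are natural in $i$, i.e.\ assemble into an isomorphism of $\Thnsset$-enriched functors on $\Ch L\repD[m,\defThn,\defS]^{\op}$.

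This naturality is the single point requiring care, and it is handled exactly as the ``natural in $0\le i\le m$'' step in the proof of \cref{Stof[lX]appendix}. In the proof of \cref{lem:hompivsid} the level-$i$ isomorphism arises from the identity $\iota\newGbar[\pi][i][\id_{[m]}]\cong(\iota\newGbar[\id_{\repD[\defThn,\defS]}][i][\id_{[m]}])\otimes\iota X$ of $\sset$-weighted diagrams on $(\catnec{\repD[m+1]}{i}{m+1})^{\op}$ by applying the colimit-preserving operations $\colim^{\Hbar}_{(\catnec{\repD[m+1]}{i}{m+1})^{\op}}\iota(-)$ and $\diag$; a morphism of $\Ch L\repD[m,\defThn,\defS]$ acts on both composites only through the transition maps among these necklace categories, which do not touch the apex coefficient $X$ (carried entirely by the last bead, which maps to $m+1$), so this identity of weight diagrams, and hence its image under $\colim^{\Hbar}_{(\catnec{\repD[m+1]}{i}{m+1})^{\op}}\iota(-)$ and $\diag$, is compatible with the $\Ch L\repD[m,\defThn,\defS]$-action. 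Therefore the pointwise isomorphisms assemble into $\St_{L\repD[m,\defThn,\defS]}([\id_{[m]},\pi])\cong \St_{L\repD[m,\defThn,\defS]}(\id_{\repD[m,\defThn,\defS]})\otimes X$ in $[\Ch L\repD[m,\defThn,\defS]^{\op},\Thnsset]$, and undoing the reduction to representable $X$ yields the statement in general. I expect this verification to be routine bookkeeping with the necklace description of morphisms of $\Ch L\repD[m,\defThn,\defS]$, entirely parallel to \cref{sec:Stofgeneral}, with no genuinely new ingredient; it is nonetheless the main thing that must be checked, since \cref{lem:hompivsid} by itself records only a levelwise isomorphism.
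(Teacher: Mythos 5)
Your proof is correct and takes essentially the same route as the paper's: both compute the two sides levelwise via \cref{Stof[lX]} together with \cref{lem:hompivsid} and then assemble the levelwise isomorphisms into an isomorphism in $[\Ch L\repD[m,\defThn,\defS]^{\op},\Thnsset]$ by naturality in $i$ (a point the paper simply asserts, while you sketch the necklace-level reason). The only divergence is your preliminary reduction to representable $X$ so that the connectedness hypothesis of \cref{Stof[lX]} is literally met before applying it to $\pi$; the paper applies its formulas directly without this step, so your detour is harmless (if anything, slightly more careful), provided you note that the isomorphisms for representable $X_j$ are natural in $X_j$ when passing back along the colimit.
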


\begin{proof}
    Let $0\leq i\leq m$. By \cref{Stof[lX]appendix,lem:hompivsid}, we have isomorphisms in $\Thnsset$
    \begin{align*}
        \St_{L\repD[m,\defThn,\defS]}([\id_{[m]},\pi])(i)
     &\cong \Hom_{\Cone[\pi][\id_{[m]}]}(i,m+1) \cong \Hom_{\Cone[\id_{\repD[\defThn,\defS]}][\id_{[m]}]}(i,m+1)\times X \\
        &\cong \St_{L\repD[m,\defThn,\defS]}(\id_{\repD[m,\defThn,\defS]})(i)\times X=(\St_{L\repD[m,\defThn,\defS]}(\id_{\repD[m,\defThn,\defS]})\otimes X)(i).
    \end{align*}
    Since the above isomorphisms are natural in $0\leq i\leq m$, they assemble into a natural isomorphism in $[\Ch L\repD[m,\defThn,\defS]^{\op},\Thnsset]$
    \[  \St_{L\repD[m,\defThn,\defS]}([\id_{[m]},\pi])\cong \St_{L\repD[m,\defThn,\defS]}(\id_{\repD[m,\defThn,\defS]})\otimes X. \qedhere \]
\end{proof}

From this, we can deduce \cref{lem:Stprestensors} which we restate for the reader's convenience.

\begin{prop} \label{Stpreservetensors}
Let $W$ be an object in $\pcatThn$ and $X\in\Thnsset$. Then the following square of left adjoint functors commutes up to a natural isomorphism.
\begin{tz}
        \node[](1) {$\sThnssetslice{W}$}; 
        \node[right of=1,xshift=2.8cm](2) {$[\Ch W^{\op},\Thnsset]$}; 
        \node[below of=1](3) {$\sThnssetslice{W}$}; 
        \node[below of=2](4) {$[\Ch W^{\op},\Thnsset]$}; 

        \draw[->] (1) to node[above,la]{$\St_W$} (2); 
        \draw[->] (1) to node[left,la]{$(-)\otimes X$} (3); 
        \draw[->] (2) to node[right,la]{$(-)\otimes X$} (4); 
        \draw[->] (3) to node[below,la]{$\St_W$} (4);
         \cell[la,above][n][.5]{2}{3}{$\cong$};
    \end{tz}
\end{prop}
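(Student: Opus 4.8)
The plan is to exploit that every functor in the square is a left adjoint, so the whole statement is determined on representables, and then to reduce the representable case to the already-established naturality of straightening (\cref{basechangeSt}) together with the comparison \cref{lem:streppivsid} obtained earlier by necklace calculus. The genuine computational content lies in \cref{lem:hompivsid,lem:streppivsid}, which are already available, so what remains here is essentially formal bookkeeping.

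First I would reduce to representables. By \cref{sliceaspresheaf} we have $\sThnssetslice{W}\simeq \set^{(\int_{\DThnS}W)^{\op}}$, whose representables are the maps $\sigma\colon\repD[m,\defThn,\defS]\to W$ in $\sThnsset$. All four functors in the square preserve colimits: $\St_W$ is a left adjoint; $(-)\otimes X$ on $\sThnssetslice{W}$ sends $p\colon P\to W$ to $P\times X\xrightarrow{\pi}P\xrightarrow{p}W$, which preserves colimits since $\sThnsset$ is locally cartesian closed; and $(-)\otimes X$ on $[\Ch W^{\op},\Thnsset]$ is computed pointwise by $(-)\times X$, which preserves colimits since $\Thnsset$ is cartesian closed. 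Hence it suffices to produce, for each representable $\sigma$, a natural isomorphism $\St_W(\sigma\otimes X)\cong\St_W(\sigma)\otimes X$, natural in $\sigma$ and in $X$.

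Next I would treat the representable case via base change. Fix $\sigma\colon\repD[m,\defThn,\defS]\to W$ in $\sThnsset$ and write $\sigma\colon L\repD[m,\defThn,\defS]\to W$ for the corresponding map in $\pcatThn$; let $\lambda\colon\repD[m,\defThn,\defS]\to L\repD[m,\defThn,\defS]$ be the canonical map. Then, viewed as objects of $\sThnssetslice{W}$, one has $\sigma=\sigma_!(\id_{\repD[m,\defThn,\defS]})$ and $\sigma\otimes X=\sigma_!([\id_{[m]},\pi])$, where $\id_{\repD[m,\defThn,\defS]}\colon\repD[m,\defThn,\defS]\to L\repD[m,\defThn,\defS]$ and $[\id_{[m]},\pi]\colon\repD[m,\defThn,\defS]\times X\to L\repD[m,\defThn,\defS]$ are regarded as objects of $\sThnssetslice{L\repD[m,\defThn,\defS]}$ (this uses only that post-composing $\lambda\circ\pi$ with $\sigma$ recovers $\sigma\otimes X$). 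Applying \cref{basechangeSt} to $f=\sigma$ yields natural isomorphisms
\[
\St_W(\sigma\otimes X)\cong(\Ch\sigma)_!\,\St_{L\repD[m,\defThn,\defS]}([\id_{[m]},\pi]),\qquad
\St_W(\sigma)\cong(\Ch\sigma)_!\,\St_{L\repD[m,\defThn,\defS]}(\id_{\repD[m,\defThn,\defS]})
\]
(the second one is also \cref{lem:Stvssigma}). By \cref{lem:streppivsid} the first right-hand side equals $(\Ch\sigma)_!\big(\St_{L\repD[m,\defThn,\defS]}(\id_{\repD[m,\defThn,\defS]})\otimes X\big)$, and since $(\Ch\sigma)_!$ is a $\Thnsset$-enriched left Kan extension it commutes with tensors by \cite[Theorem 4.50]{Kelly}, so this equals $\big((\Ch\sigma)_!\St_{L\repD[m,\defThn,\defS]}(\id_{\repD[m,\defThn,\defS]})\big)\otimes X\cong\St_W(\sigma)\otimes X$. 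Unwinding the constructions behind \cref{basechangeSt} and \cref{lem:streppivsid} shows these isomorphisms are natural in $\sigma$ and $X$, which by the first paragraph gives the natural isomorphism $\St_W((-)\otimes X)\cong\St_W(-)\otimes X$. Finally, the displayed isomorphism exhibits the left adjoint $\St_W$ as compatible with the copowers by objects of $\Thnsset$ on both sides, so exactly as in \cref{lem:intprestensors} the adjunction $\St_W\dashv\Un_W$ inherits a $\Thnsset$-enrichment.

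The \emph{main obstacle} is not in this argument but in what it rests on: the real work — the identification of the hom $\Thn$-spaces of $\Ch\Cone[\pi][\id_{[m]}]$ in \cref{lem:hompivsid}, via the weighted-colimit description of $\Hom_{\Ch\Cone}$ from \cref{homconeasweigthed} together with the commutation of weighted colimits with tensors — is already carried out. Within the present proof, the only point requiring care is the bookkeeping: checking that $\sigma\otimes X$ is genuinely $\sigma_!$ of the tensored representable $[\id_{[m]},\pi]$, and, more delicately, that the composite of isomorphisms above is natural in both $\sigma$ and $X$ simultaneously, for which one must appeal to the universal-property descriptions underlying \cref{basechangeSt} and \cref{lem:streppivsid} rather than merely quoting their statements.
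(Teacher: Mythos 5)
Your proposal is correct and follows essentially the same route as the paper: reduce to representables via $\sThnssetslice{W}\simeq \set^{(\int_{\DThnS}W)^{\op}}$ and colimit-preservation, invoke \cref{lem:streppivsid} (whose content is the necklace computation \cref{lem:hompivsid}), use that $(\Ch\sigma)_!$ preserves tensors, and transport along the naturality isomorphism of \cref{basechangeSt}. The only cosmetic difference is that you make the bookkeeping $\sigma\otimes X=\sigma_!([\id_{[m]},\pi])$ and the naturality checks explicit, which the paper leaves implicit.
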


\begin{proof}
Since all functors involved are left adjoints and there is an equivalence of categories $\sThnssetslice{W}\simeq \set^{(\int_{\DThnS} W)^{\op}}$, it is enough to show that, for every representable object $\sigma\colon \repD[m,\defThn,\defS]\to W$ in $\sThnssetslice{W}$, we have an isomorphism in $[\Ch W^{\op},\Thnsset]$
    \[ \St_W(\sigma\otimes X)\cong \St_W(\sigma)\otimes X. \]
    By \cref{lem:streppivsid}, we have an isomorphism in $[\Ch L\repD[m,\defThn,\defS]^{\op},\Thnsset]$
    \[  \St_{L\repD[m,\defThn,\defS]}(\id_{\repD[m,\defThn,\defS]}\otimes X)=\St_{L\repD[m,\defThn,\defS]}([\id_{[m]},\pi])\cong \St_{L\repD[m,\defThn,\defS]}(\id_{\repD[m,\defThn,\defS]})\otimes X. \]
    As $(\Ch\sigma)_!$ preserves tensors by \cref{subsec:MSproj}, we get isomorphisms in $[\Ch W^{\op},\Thnsset]$
    \begin{align*} (\Ch\sigma)_!\St_{L\repD[m,\defThn,\defS]}(\id_{\repD[m,\defThn,\defS]}\otimes X) &\cong (\Ch\sigma)_!(\St_{L\repD[m,\defThn,\defS]}(\id_{\repD[m,\defThn,\defS]})\otimes X) \\
    &\cong ((\Ch\sigma)_!\St_{L\repD[m,\defThn,\defS]}(\id_{\repD[m,\defThn,\defS]}))\otimes X. 
    \end{align*}
    Finally, by \cref{basechangeStappendix}, this gives an isomorphism in $[\Ch W^{\op},\Thnsset]$
    \[ \St_W(\sigma\otimes X)\cong \St_W(\sigma)\otimes X. \qedhere \]
\end{proof}

\subsection{Straightening of \texorpdfstring{$\repD[\ell,X]\to L\repD[m,Y]$}{F[l,X]->LF[m,Y]}, monomorphism case} \label{sec:Stofmono}

We now prove \cref{Stofalphafmono} computing the straightening of a map $[\mapDelta,f]\colon \repD[\ell,X]\to L \repD[m,Y]$ in $\sThnsset$ in the special case where $f\colon X\to Y$ is a monomorphism in $\Thnsset$.

Let us fix an injective map $\mapDelta\colon [\ell]\to [m]$ in $\Delta$ and a monomorphism $f\colon X\hookrightarrow Y$ in $\Thnsset$ between connected spaces. Under these assumptions, we first show that the simplicial set $\Cone_{-,\defThn,\defS}$ is $1$-ordered as defined in \cite[Definition 2.1.3]{MRR1}.

\begin{prop} \label{prop:Cone1ordered}
    For $\defThn\in \Thn$ and $\defS\geq 0$, the simplicial set $\Cone_{-,\defThn,\defS}$ is $1$-ordered.
\end{prop}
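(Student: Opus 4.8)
The plan is to deduce $1$-orderedness of $\Cone_{-,\defThn,\defS}$ from that of $\repD[m+1]$ (\cref{examplesof1ordered}) by transporting structure along the map $Q_{-,\defThn,\defS}\colon \Cone_{-,\defThn,\defS}\to \repD[m+1]$. Recall from \cite[Definition~2.1.3]{MRR1} that being $1$-ordered amounts, in essence, to admitting a linear order on the set of vertices for which every non-degenerate simplex is strictly increasing. The two facts I would combine are: first, that $Q_{-,\defThn,\defS}$ preserves and reflects non-degeneracy of simplices, which is precisely \cref{Qnondegsimplex}; and second, that $Q_{-,\defThn,\defS}$ is injective on vertices. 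For the latter, observe that by \cref{Cone0} the map $(Q_{-,\defThn,\defS})_0$ is a map $\{0,1,\ldots,m+1\}\to \{0,1,\ldots,m+1\}$, and by the compatibility of $Q$ with $L[d^{m+1},!]$ and $L[\mapDelta+1,!]$ recorded in its defining square it is the identity; in particular it is injective and order-preserving for the standard orders.

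Granting these, the verification is short. I would equip the vertex set $(\Cone_{-,\defThn,\defS})_0$ with the order pulled back along $(Q_{-,\defThn,\defS})_0$ from the standard order on $[m+1]$, which by the above is just the standard order $0<1<\cdots<m+1$. Given a non-degenerate simplex $x$ of $\Cone_{-,\defThn,\defS}$, its image $Q_{-,\defThn,\defS}(x)$ is a non-degenerate simplex of $\repD[m+1]$ by \cref{Qnondegsimplex}, hence is strictly increasing on vertices since $\repD[m+1]$ is $1$-ordered; as $(Q_{-,\defThn,\defS})_0$ is injective and order-preserving, $x$ is itself strictly increasing on vertices. This establishes the defining condition of $1$-orderedness for $\Cone_{-,\defThn,\defS}$. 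More abstractly, the point is that $1$-orderedness is inherited along any map to a $1$-ordered simplicial set which is injective on vertices and preserves and reflects non-degeneracy, applied here to $Q_{-,\defThn,\defS}$.

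I do not expect a genuine obstacle: once \cref{Qnondegsimplex} is available --- which is where the real combinatorial work lies --- the present statement is a formal consequence of it. The standing hypotheses enter only to make the inputs applicable: connectedness of $X$ and $Y$ forces $\pi_0 X=\pi_0 Y=\ast$, hence \cref{Cone0}, and injectivity of $\mapDelta$ makes $\mapDelta+1$ order-preserving and injective, hence both the description of $Q$ and \cref{Qnondegsimplex}. (The monomorphism hypothesis on $f$ is part of the standing assumptions of this subsection but is not needed for this proposition.) Should the precise formulation of $1$-orderedness in \cite[Definition~2.1.3]{MRR1} carry an auxiliary clause beyond the one above, that clause too is inherited along $Q_{-,\defThn,\defS}$ by the same injectivity-on-vertices and (co)reflection-of-non-degeneracy reasoning, so nothing essentially new is required.
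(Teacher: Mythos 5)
There is a genuine gap, and it sits exactly where you hedge at the end. Being $1$-ordered in the sense of \cite[Definition~2.1.3]{MRR1} is not merely the existence of an order on the vertices making non-degenerate simplices strictly increasing: it also requires (and this is what the paper spends most of its proof on) that a non-degenerate $m'$-simplex restricts to a monomorphism on the spine $\Sp[m']$ and is \emph{uniquely determined} by that restriction. The antisymmetry and spine-monomorphism clauses do transport along $Q_{-,\defThn,\defS}$ as you describe, since $Q_{-,\defThn,\defS}$ is bijective on vertices and preserves and reflects non-degeneracy (\cref{Qnondegsimplex}), and a composite being a monomorphism forces its first factor to be one. But the uniqueness clause does not: from two non-degenerate simplices $\sigma,\tau$ of $\Cone_{-,\defThn,\defS}$ with equal spine restrictions you can only conclude $Q_{-,\defThn,\defS}\sigma=Q_{-,\defThn,\defS}\tau$, and $Q_{-,\defThn,\defS}$ is far from injective on simplices --- by \cref{ConePushoutThetak} it collapses all the copies $\{y\}\times\repD$, $y\in Y_{\defThn,\defS}$, and $\{x\}\times\repD[\ell+1]$, $x\in X_{\defThn,\defS}$, onto single faces of $\repD[m+1]$. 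So your closing ``inheritance principle'' is false as stated: two copies of $\repD[2]$ glued along their spine map to $\repD[2]$ by a map that is bijective on vertices and preserves and reflects non-degeneracy, yet the source is not $1$-ordered (two distinct non-degenerate $2$-simplices share a spine), even though it satisfies your ``strictly increasing on vertices'' condition --- which also shows that condition is strictly weaker than the actual definition.

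What your argument is missing is precisely the uniqueness step, which the paper carries out with the explicit levelwise pushout description of \cref{ConePushoutThetak}: every non-degenerate simplex of $\Cone_{-,\defThn,\defS}$ factors through a single copy $\{x\}\times\repD[\ell+1]$ or $\{y\}\times\repD$; given two non-degenerate simplices with the same spine, one first argues they lie in the \emph{same} copy (using that the comparison map from the pushout is injective on the relevant $1$-simplices, a step where the standing hypotheses of the section, in particular $X_{\defThn,\defS}\subseteq Y_{\defThn,\defS}$, are invoked --- so your parenthetical dismissal of the monomorphism hypothesis on $f$ is at least not justified as stated), and only then does $1$-orderedness of $\repD[\ell+1]$ and $\repD$ (\cref{examplesof1ordered}) apply. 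In short, the transport along $Q_{-,\defThn,\defS}$ disposes of the easy clauses, but the real content of \cref{prop:Cone1ordered} --- non-degenerate simplices being determined by their spines --- cannot be obtained that way and requires the copy-by-copy analysis.
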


\begin{proof}
    By \cref{Cone0}, we have that $\Cone_{0,\defThn,\defS} \cong \{0,1,\ldots,m+1\}$ and by construction every $1$-simplex goes from $i$ to $j$ where $i \leq j$. Hence the relation $\preceq_{\Cone_{-,\defThn,\defS}}$ is anti-symmetric.

    For $m'\geq 1$, we first show that the Segal map 
    \[ \Cone_{m',\defThn,\defS}\to \Cone_{1,\defThn,\defS}\times_{\Cone_{0,\defThn,\defS}} \ldots \times_{\Cone_{0,\defThn,\defS}}  \Cone_{1,\defThn,\defS}\]
    restricts to a map 
    \begin{equation} \label{eq1}
    \Cone_{m',\defThn,\defS}^{\mathrm{nd}}\to \{ g\colon \Sp[m']\to \Cone_{-,\defThn,\defS} \mid g \text{ mono} \},
    \end{equation} 
    where $\Sp[m']$ denotes the spine of $\repD[m']$. Let $\sigma\colon \repD[m']\to \Cone_{-,\defThn,\defS}$ be a non-degenerate $m'$-simplex of $\Cone_{-,\defThn,\defS}$. By the description of $\Cone_{-,\defThn,\defS}$ given in \cref{ConePushoutThetak}, if $m+1$ is in the image of $\sigma$, such an $m'$-simplex comes from a non-degenerate $m'$-simplex
    \[  \textstyle \repD[m']\xrightarrow{\overline{\sigma}} \{x\}\times \repD[\ell+1]\hookrightarrow(\coprod_{Y_{\defThn,\defS}} \repD) \amalg_{\coprod_{X_{\defThn,\defS}} \repD[\ell]} (\coprod_{X_{\defThn,\defS}} \repD[\ell+1]), \]
    for some $x\in X_{\defThn,\defS}$, and if $m+1$ is not in the image of $\sigma$, such an $m'$-simplex comes from a non-degenerate $m'$-simplex
    \[  \textstyle \repD[m']\xrightarrow{\overline{\sigma}} \{y\}\times \repD[m]\hookrightarrow(\coprod_{Y_{\defThn,\defS}} \repD) \amalg_{\coprod_{X_{\defThn,\defS}} \repD[\ell]} (\coprod_{X_{\defThn,\defS}} \repD[\ell+1]), \]
    for some $y\in Y_{\defThn,\defS}$. Since the simplicial sets $\repD[\ell+1]$ and $\repD$ are $1$-ordered by \cref{examplesof1ordered}, it follows that the induced map \[ \Sp[m']\hookrightarrow \repD[m']\xrightarrow{\overline{\sigma}} \{x\}\times\repD[\ell+1] \quad \text{or} \quad \Sp[m']\hookrightarrow\repD[m']\xrightarrow{\overline{\sigma}} \{y\}\times\repD \]
    is a monomorphism. Now, as $X_{\defThn,\defS}\subseteq Y_{\defThn,\defS}$, the composites
    \[ \textstyle\{x\}\times \repD[\ell+1]\hookrightarrow (\coprod_{Y_{\defThn,\defS}} \repD) \amalg_{\coprod_{X_{\defThn,\defS}} \repD[\ell]} (\coprod_{X_{\defThn,\defS}} \repD[\ell+1])\to \Cone_{-,\defThn,\defS} \]
    \[ \textstyle\{y\}\times \repD[m]\hookrightarrow (\coprod_{Y_{\defThn,\defS}} \repD) \amalg_{\coprod_{X_{\defThn,\defS}} \repD[\ell]} (\coprod_{X_{\defThn,\defS}} \repD[\ell+1])\to \Cone_{-,\defThn,\defS} \]
    are also monomorphisms and so the induced map $\Sp[m']\hookrightarrow\repD[m']\xrightarrow{\sigma} \Cone_{-,\defThn,\defS}$ is a composite of monomorphisms, hence a monomorphism. 

    Now we show that the map \eqref{eq1} is injective. Let $\sigma,\tau\colon \repD[m']\to \Cone_{-,\defThn,\defS}$ be non-degenerate $m'$-simplices of $\Cone_{-,\defThn,\defS}$ such that their restrictions along $\Sp[m']\hookrightarrow \repD[m']$ coincide. First note that $m+1$ is in the image of $\sigma$ if and only if it is in the image of~$\tau$. Indeed, if $m+1$ is in the image of $\sigma$, then it is in the image of its restriction along $\Sp[m']\hookrightarrow \repD[m']$, and so it must be in the image of $\tau$; and conversely. 
    
    In the case where $m+1$ is in the image of $\sigma$ and $\tau$, as before, the $m'$-simplices $\sigma$ and $\tau$ come from non-degenerate $m'$-simplices 
    \[ \overline{\sigma}\colon \repD[m']\to \{x\}\times \repD[\ell+1] \quad \text{and} \quad  \overline{\tau}\colon \repD[m']\to \{x'\}\times \repD[\ell+1], \]
    for some $x,x'\in X_{\defThn,\defS}$. As the restrictions of $\sigma$ and $\tau$ along $\Sp[m']\hookrightarrow \repD[m']$ coincide, and the map 
    \[ \textstyle (\coprod_{Y_{\defThn,\defS}} \repD) \amalg_{\coprod_{X_{\defThn,\defS}} \repD[\ell]} (\coprod_{X_{\defThn,\defS}} \repD[\ell+1])\to \Cone_{-,\defThn,\defS}\]
    is injective on $1$-simplices, it follows that $x=x'$. So $\overline{\sigma}$ and $\overline{\tau}$ are two non-degenerate $m'$-simplices of $\{x\}\times \repD[\ell+1]$ whose restrictions along $\Sp[m']\hookrightarrow \repD[m']$ coincide. Hence, as $\repD[\ell+1]$ is $1$-ordered by \cref{examplesof1ordered}, it follows that $\overline{\sigma}=\overline{\tau}$ and so $\sigma=\tau$. 

    The case where $m+1$ is not in the image of $\sigma$ and $\tau$ proceeds similarly, replacing $\{x\}\times \repD[\ell+1]$ and $\{x'\}\times \repD[\ell+1]$ by $\{y\}\times \repD$ and $\{y'\}\times \repD$ for some $y,y'\in Y_{\defThn,\defS}$.
\end{proof}

Since $\Cone$ is $1$-ordered, in order to compute the hom $\Thn$-spaces of the categorification $\Ch\Cone$, we need to study totally non-degenerate necklaces in $\Cone$. For this, we first show that the restriction of the discrete fibration \[ (Q_{-,\defThn,\defS})_!\colon \catnec{\Cone_{-,\defThn,\defS}}{i}{m+1}\to \catnec{\repD[m+1]}{i}{m+1} \] from \cref{prop:Qgendiscfib} to subcategories of totally non-degenerate necklaces is in fact its pullback.  

\begin{prop} \label{prop:pullbackQ}
For $\defThn\in \Thn$, $\defS\geq 0$, and $0\leq i\leq m$, we have the following pullback square in $\cat$.
\begin{tz}
        \node[](1) {$\tndnec{\Cone_{-,\defThn,\defS}}{i}{m+1}$}; 
        \node[right of=1,xshift=4.3cm](2) {$\catnec{\Cone_{-,\defThn,\defS}}{i}{m+1}$}; 
        \node[below of=1](3) {$\tndnec{\repD[m+1]}{i}{m+1}$}; 
        \node[below of=2](4) {$\catnec{\repD[m+1]}{i}{m+1}$}; 

        \draw[right hook->] (1) to (2); 
        \draw[->] (1) to node[left,la]{$(Q_{-,\defThn,\defS})_!$} (3); 
        \draw[->] (2) to node[right,la]{$(Q_{-,\defThn,\defS})_!$} (4); 
        \draw[right hook->] (3) to (4);
        \pullback{1};
    \end{tz}
 \end{prop}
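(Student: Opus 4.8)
The plan is to reduce the statement to a purely combinatorial observation about beads, exploiting that all four categories in the square are categories of necklaces over a fixed bi-pointed simplicial set and that both horizontal maps are full subcategory inclusions. Concretely, since the inclusion $\tndnec{\repD[m+1]}{i}{m+1}\hookrightarrow\catnec{\repD[m+1]}{i}{m+1}$ is a full subcategory inclusion, the pullback of this inclusion along $(Q_{-,\defThn,\defS})_!$ is again a full subcategory inclusion into $\catnec{\Cone_{-,\defThn,\defS}}{i}{m+1}$, whose objects are exactly the necklaces $T\to(\Cone_{-,\defThn,\defS})_{i,m+1}$ whose image under $(Q_{-,\defThn,\defS})_!$ lies in $\tndnec{\repD[m+1]}{i}{m+1}$. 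As $\tndnec{\Cone_{-,\defThn,\defS}}{i}{m+1}$ is likewise a full subcategory of $\catnec{\Cone_{-,\defThn,\defS}}{i}{m+1}$, it suffices to show that these two full subcategories have the same objects; the claimed pullback square then follows formally, with no check needed on morphisms.

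The key step is then to unwind both object conditions. First I would recall that $(Q_{-,\defThn,\defS})_!$ sends a necklace $T=\repD[m_1]\vee\ldots\vee\repD[m_t]\to(\Cone_{-,\defThn,\defS})_{i,m+1}$ to the necklace with the same underlying bi-pointed simplicial set $T$, now mapping to $\repD[m+1]_{i,m+1}$ by post-composition with $Q_{-,\defThn,\defS}$; in particular, its $j$-th bead is the composite $\repD[m_j]\hookrightarrow T\to\Cone_{-,\defThn,\defS}\xrightarrow{Q_{-,\defThn,\defS}}\repD[m+1]$. By definition, $T$ is totally non-degenerate as a necklace in $\Cone_{-,\defThn,\defS}$ precisely when each bead restriction $\repD[m_j]\hookrightarrow T\to\Cone_{-,\defThn,\defS}$ is a non-degenerate simplex of $\Cone_{-,\defThn,\defS}$, while $(Q_{-,\defThn,\defS})_!T$ is totally non-degenerate precisely when each $\repD[m_j]\hookrightarrow T\to\Cone_{-,\defThn,\defS}\xrightarrow{Q_{-,\defThn,\defS}}\repD[m+1]$ is a non-degenerate simplex of $\repD[m+1]$. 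Applying \cref{Qnondegsimplex} --- which states exactly that a simplex of $\Cone_{-,\defThn,\defS}$ is non-degenerate if and only if its image under $Q_{-,\defThn,\defS}$ is non-degenerate --- to each bead individually, I conclude that $T$ is totally non-degenerate if and only if $(Q_{-,\defThn,\defS})_!T$ is, which is the desired coincidence of object sets.

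The substantive input --- which is already handled upstream --- is \cref{Qnondegsimplex}, whose proof relies on the explicit presentation of $\Cone_{-,\defThn,\defS}$ as a pushout of simplicial sets from \cref{ConePushoutThetak} together with the fact that $Q_{-,\defThn,\defS}$ acts as $\mapDelta+1$, respectively $d^{m+1}$, on the relevant summands. Granting that result, the present proposition is essentially formal, and the only points to be careful about are phrasing the pullback correctly as the full subcategory on objects mapping into $\tndnec{\repD[m+1]}{i}{m+1}$ and observing that comparing objects is enough for full subcategories.
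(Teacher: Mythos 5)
Your proposal is correct and follows essentially the same route as the paper: reduce the pullback claim to the statement that a necklace in $\Cone_{-,\defThn,\defS}$ is totally non-degenerate if and only if its image under $(Q_{-,\defThn,\defS})_!$ is, and deduce this bead-by-bead from \cref{Qnondegsimplex}. Your extra remark that fullness of both inclusions makes the object-level comparison sufficient is exactly the (implicit) justification the paper uses.
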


\begin{proof}
    To show that the above square is a pullback, it is enough to show that a necklace in $(\Cone_{-,\defThn,\defS})_{i,m+1}$ is totally non-degenerate if and only if its image under the functor
    \[ (Q_{-,\defThn,\defS})_!\colon \catnec{\Cone_{-,\defThn,\defS}}{i}{m+1}\to \catnec{\repD[m+1]}{i}{m+1} \] 
    is a totally non-degenerate necklace in $\repD[m+1]_{i,m+1}$. However, this follows directly from \cref{Qnondegsimplex} as totally non-degenerate necklaces are precisely those necklaces whose beads are sent to non-degenerate simplices. 
\end{proof}

Since discrete fibrations are closed under pullbacks, as a consequence of \cref{prop:Qgendiscfib,prop:pullbackQ}, we obtain the following. 

\begin{prop} \label{prop:Qmonodiscfib}
    For $\defThn\in \Thn$, $\defS\geq 0$, and $0\leq i\leq m$, the functor 
    \[ (Q_{-,\defThn,\defS})_!\colon \tndnec{\Cone_{-,\defThn,\defS}}{i}{m+1}\to \tndnec{\repD[m+1]}{i}{m+1} \]
    is a discrete fibration.
\end{prop}

We now identify the functor corresponding to this discrete fibration. For this, recall the functor $\newG\colon (\tndnec{\repD[m+1]}{i}{m+1})^{\op}\to \Thnsset$ from \cref{notn:Falphatnd}.

\begin{prop} \label{QmonovsnewG}
    For $\defThn\in\Thn$, $\defS\geq 0$, and $0\leq i\leq m$, the discrete fibration 
    \[ (Q_{-,\defThn,\defS})_!\colon \tndnec{\Cone_{-,\defThn,\defS}}{i}{m+1}\to \tndnec{\repD[m+1]}{i}{m+1} \]
    corresponds to the functor 
    \[ \newG_{\defThn,\defS}\colon (\tndnec{\repD[m+1]}{i}{m+1})^{\op}\to \set. \]
\end{prop}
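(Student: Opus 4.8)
\textbf{Proof strategy for \cref{QmonovsnewG}.}
The plan is to exploit the pullback square of \cref{prop:pullbackQ}, which exhibits the discrete fibration over $\tndnec{\repD[m+1]}{i}{m+1}$ as the restriction of the discrete fibration over $\catnec{\repD[m+1]}{i}{m+1}$ along the inclusion of totally non-degenerate necklaces. Since pulling back discrete fibrations corresponds on the presheaf side to restricting set-valued functors along the opposite inclusion $(\tndnec{\repD[m+1]}{i}{m+1})^{\op}\hookrightarrow (\catnec{\repD[m+1]}{i}{m+1})^{\op}$, and since we already identified in \cref{QvsnewG} the functor classifying $(Q_{-,\defThn,\defS})_!\colon \catnec{\Cone_{-,\defThn,\defS}}{i}{m+1}\to \catnec{\repD[m+1]}{i}{m+1}$ as $\newGbar_{\defThn,\defS}$, it suffices to check that the restriction of $\newGbar_{\defThn,\defS}$ along this inclusion is $\newG_{\defThn,\defS}$.

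First I would recall the equivalence between discrete fibrations over a category $\cE$ and set-valued functors on $\cE^{\op}$ from \cite[Theorem 2.1.2]{LRfib}, and note that under this equivalence, pullback of a discrete fibration $p\colon \cX\to \cE$ along a functor $\iota\colon \cE'\to \cE$ corresponds to precomposition of the classifying functor with $\iota^{\op}\colon (\cE')^{\op}\to \cE^{\op}$. Applying this to the pullback square of \cref{prop:pullbackQ}, with $p$ the discrete fibration $(Q_{-,\defThn,\defS})_!$ over $\catnec{\repD[m+1]}{i}{m+1}$ and $\iota$ the fully faithful inclusion $\tndnec{\repD[m+1]}{i}{m+1}\hookrightarrow \catnec{\repD[m+1]}{i}{m+1}$, I conclude that the functor classifying $(Q_{-,\defThn,\defS})_!\colon \tndnec{\Cone_{-,\defThn,\defS}}{i}{m+1}\to \tndnec{\repD[m+1]}{i}{m+1}$ is the restriction $\newGbar_{\defThn,\defS}|_{(\tndnec{\repD[m+1]}{i}{m+1})^{\op}}$.

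Next I would verify that this restriction equals $\newG_{\defThn,\defS}$. This is essentially definitional: for a totally non-degenerate necklace $T\hookrightarrow \repD[m+1]_{i,m+1}$, the canonical totally non-degenerate necklace $\overline{T}$ produced by \cref{rem:DSepi} is $T$ itself, so comparing \cref{notation:newGbar} with \cref{notn:Falphatnd} shows that $\newGbar(T)=\newG(T)$ on objects, and similarly for morphisms $g\colon U\hookrightarrow T$ in $\tndnec{\repD[m+1]}{i}{m+1}$ one has $\overline{g}=g$ (using \cref{tndnec1ordered}, all such morphisms are monomorphisms and $\overline{(-)}$ is the identity on these), so $\newGbar(g)=\newG(g)$. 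Applying the levelwise evaluation $(-)_{\defThn,\defS}$ then gives $\newGbar_{\defThn,\defS}|_{(\tndnec{\repD[m+1]}{i}{m+1})^{\op}}=\newG_{\defThn,\defS}$, which completes the proof.

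I do not expect any serious obstacle here: the content has been front-loaded into \cref{QvsnewG} (the genuine computation with fibers and the isomorphisms \eqref{iso1}, \eqref{iso2}) and \cref{prop:pullbackQ} (the pullback statement, itself a consequence of \cref{Qnondegsimplex}). The only point requiring a small amount of care is the bookkeeping that the restriction of $\newGbar$ to totally non-degenerate necklaces genuinely coincides with $\newG$ as functors — i.e., that the clause distinctions in \cref{notation:newGbar} collapse to those of \cref{notn:Falphatnd} once $\overline{T}=T$ — but this is a routine unwinding of definitions rather than a substantive difficulty.
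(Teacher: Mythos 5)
Your proposal is correct and follows essentially the same route as the paper's proof: naturality of the discrete-fibration/presheaf correspondence with respect to pullback along the inclusion of totally non-degenerate necklaces (using \cref{prop:pullbackQ}), combined with the identification of the classifying functor from \cref{QvsnewG}, and the observation that restricting $\newGbar$ to totally non-degenerate necklaces recovers $\newG$ since $\overline{T}=T$ there. The paper states this last identification without the explicit unwinding you give, but your extra check is the same routine content.
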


\begin{proof}
    Recall that the equivalence between discrete fibrations and set-valued functors from \cite[Theorem 2.1.2]{LRfib} is natural with respect to functors $F\colon \cC\to \cD$ which act by sending a discrete fibration over $\cD$ to its pullback along $F$ and a functor $\cD\to \set$ to its pre-composition with $F$. Hence, by \cref{QvsnewG,prop:pullbackQ} the functor $(\tndnec{\repD[m+1]}{i}{m+1})^{\op}\to \set$ corresponding to the discrete fibration $(Q_{-,\defThn,\defS})_!\colon \tndnec{\Cone_{-,\defThn,\defS}}{i}{m+1}\to \tndnec{\repD[m+1]}{i}{m+1}$ is obtained by restricting the functor $\newGbar\colon (\catnec{\repD[m+1]}{i}{m+1})^{\op}\to \Thnsset$ from \cref{notation:newGbar} along the inclusion $\tndnec{\repD[m+1]}{i}{m+1}\hookrightarrow \catnec{\repD[m+1]}{i}{m+1}$. But this is precisely the functor $\newG_{\defThn,\defS}\colon (\tndnec{\repD[m+1]}{i}{m+1})^{\op}\to \set$.
\end{proof}

Recall the functor $H^i_{m+1}\colon \tndnec{\repD[m+1]}{i}{m+1}\to \sset$ from \cref{notationH}.

\begin{lemma} \label{rewritecolimmono}
    For $0\leq i\leq m$, there is a natural isomorphism in $\Thnssset$
    \[ \colim_{T\in \tndnec{\Cone_{-,\star,\star}}{i}{m+1}} \Hom_{\CL T}(\alpha,\omega) \cong \colim^{\newG_{\star,\star}}_{\tndnec{\repD[m+1]}{i}{m+1}} H^i_{m+1}, \]
    where $\colim^{\newG_{\star,\star}}_{\tndnec{\repD[m+1]}{i}{m+1}} H^i_{m+1}$ is the object of $\Thnssset$ given at $\defThn\in \Thn$ and $\defS\geq 0$ by the colimit in $\sset$ of the functor $H^i_{m+1}$ weighted by $\newG_{\defThn,\defS}$.
\end{lemma}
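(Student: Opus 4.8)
The plan is to run exactly the argument used for \cref{rewritecolim}, with the single change that the discrete fibration over \emph{all} necklaces is replaced by its restriction to \emph{totally non-degenerate} necklaces; the relevant replacement for \cref{QvsnewG} is \cref{QmonovsnewG}, which was set up precisely for this purpose. So the proof will be essentially formal once that lemma is invoked.

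Concretely, I would first fix $\defThn\in\Thn$ and $\defS\geq 0$ and recall from \cref{QmonovsnewG} that the category of elements of the functor $\newG_{\defThn,\defS}\colon(\tndnec{\repD[m+1]}{i}{m+1})^{\op}\to\set$ is the discrete fibration
\[ (Q_{-,\defThn,\defS})_!\colon \tndnec{\Cone_{-,\defThn,\defS}}{i}{m+1}\to \tndnec{\repD[m+1]}{i}{m+1}. \]
Next I would observe that, since $(Q_{-,\defThn,\defS})_!$ is given by post-composing a necklace $T\to(\Cone_{-,\defThn,\defS})_{i,m+1}$ with $Q_{-,\defThn,\defS}$, it does not alter the underlying necklace $T$; hence the composite $H^i_{m+1}\circ(Q_{-,\defThn,\defS})_!\colon\tndnec{\Cone_{-,\defThn,\defS}}{i}{m+1}\to\sset$ is exactly the functor sending $T\to(\Cone_{-,\defThn,\defS})_{i,m+1}$ to $\Hom_{\CL T}(\alpha,\omega)$, with a morphism $g$ acting by $(\CL g)_{\alpha,\omega}$. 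Then, applying the standard identification of a weighted colimit with the colimit over the category of elements of the weight, \cite[(7.1.8)]{RiehlCHT}, I get natural isomorphisms in $\sset$
\begin{align*}
\colim_{T\in \tndnec{\Cone_{-,\defThn,\defS}}{i}{m+1}} \Hom_{\CL T}(\alpha,\omega)
&= \colim_{\tndnec{\Cone_{-,\defThn,\defS}}{i}{m+1}} H^i_{m+1}\,(Q_{-,\defThn,\defS})_! \\
&\cong \colim^{\newG_{\defThn,\defS}}_{\tndnec{\repD[m+1]}{i}{m+1}} H^i_{m+1}.
\end{align*}
Finally, every construction in sight ($\Cone_{-,\star,\star}$, the maps $Q_{-,\star,\star}$, and the weights $\newG_{\star,\star}$) is functorial in $\Thn\times\Delta$, so these isomorphisms are natural in $\defThn$ and $\defS$; assembling them levelwise gives the claimed isomorphism in $\Thnssset$.

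The main obstacle is purely bookkeeping: one must check that the morphism action on the fibers of $(Q_{-,\defThn,\defS})_!$ — described in the proof of \cref{prop:Qgendiscfib} as precomposition of necklaces — matches $\newG_{\defThn,\defS}$ as a presheaf under the discrete-fibration/presheaf equivalence of \cite[Theorem 2.1.2]{LRfib}, but this is exactly \cref{QmonovsnewG}, so nothing new is needed. It is worth noting that, unlike the downstream application of \cref{cor:computationshomC1ordered}, this lemma does not itself use the $1$-orderedness of $\Cone_{-,\defThn,\defS}$ from \cref{prop:Cone1ordered}; the relation \cref{Qnondegsimplex} between non-degenerate simplices of $\Cone_{-,\defThn,\defS}$ and of $\repD[m+1]$, already packaged into \cref{prop:Qmonodiscfib} and \cref{QmonovsnewG}, is all that the present argument requires.
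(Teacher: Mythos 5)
Your proposal is correct and is essentially the paper's own argument: the paper proves this lemma by repeating the proof of \cref{rewritecolim} verbatim in the totally non-degenerate setting, replacing \cref{QvsnewG} by \cref{QmonovsnewG} and then invoking \cite[(7.1.8)]{RiehlCHT} levelwise in $\defThn$ and $\defS$, exactly as you do. Your closing observation that $1$-orderedness of $\Cone_{-,\defThn,\defS}$ is not needed here (only downstream, via \cref{cor:computationshomC1ordered}) is also accurate.
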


\begin{proof}
    The proof works as in \cref{rewritecolim}, using \cref{QmonovsnewG}.
\end{proof}

\begin{lemma} \label{rewritecolim2mono}
    For $0\leq i\leq m$, there is a natural isomorphism in $\Thnsset$
    \[ \colim_{T\in \tndnec{\Cone_{-,\star,\star}}{i}{m+1}} \Hom_{\CL T}(\alpha,\omega) \cong \colim^{H^i_{m+1}}_{(\tndnec{\repD[m+1]}{i}{m+1})^{\op}} \iota\newG, \]
    where $\colim^{H^i_{m+1}}_{(\tndnec{\repD[m+1]}{i}{m+1})^{\op}} \iota\newG$ is the $\sset$-enriched colimit of $\iota\newG$ weighted by $H^i_{m+1}$.
\end{lemma}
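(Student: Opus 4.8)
The plan is to obtain the isomorphism formally from \cref{rewritecolimmono}, following verbatim the argument used to prove the unrestricted analogue \cref{rewritecolim2}. The only additional input is the ``weight--diagram swap'' identity recorded in \cite[Lemma 3.3.5]{MRR1}, which rewrites the colimit of an $\sset$-valued diagram $D\colon \mathcal{C}\to\sset$ weighted by a $\set$-valued weight $W\colon \mathcal{C}^{\op}\to\set$ as the $\sset$-enriched colimit of $\iota W\colon \mathcal{C}^{\op}\to\Thnssset$ weighted by $D$, where $\iota$ inflates a $\set$-valued functor along the inclusion $\set\hookrightarrow\sset$ from \cref{rmk:enrichment}.

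First I would invoke \cref{rewritecolimmono} to replace $\colim_{T\in \tndnec{\Cone_{-,\star,\star}}{i}{m+1}} \Hom_{\CL T}(\alpha,\omega)$ by the object of $\Thnssset$ whose value at $(\defThn,\defS)$ is the colimit in $\sset$ of $H^i_{m+1}\colon \tndnec{\repD[m+1]}{i}{m+1}\to\sset$ weighted by $\newG_{\defThn,\defS}\colon (\tndnec{\repD[m+1]}{i}{m+1})^{\op}\to\set$. Then, at each fixed $(\defThn,\defS)$, I would apply \cite[Lemma 3.3.5]{MRR1} with $\mathcal{C}=\tndnec{\repD[m+1]}{i}{m+1}$, $D=H^i_{m+1}$, and $W=\newG_{\defThn,\defS}$ to identify this $\sset$-object with $\colim^{H^i_{m+1}}_{(\tndnec{\repD[m+1]}{i}{m+1})^{\op}} \iota\newG_{\defThn,\defS}$. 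Since the isomorphism of \cite[Lemma 3.3.5]{MRR1} is natural in $W$, these identifications are compatible with the maps induced by the simplicial operators acting on the weights $\newG_{\defThn,\defS}$, hence assemble, as $(\defThn,\defS)$ ranges over $\DThnop$, into an isomorphism in $\Thnssset$ between $\colim_{T\in \tndnec{\Cone_{-,\star,\star}}{i}{m+1}} \Hom_{\CL T}(\alpha,\omega)$ and $\colim^{H^i_{m+1}}_{(\tndnec{\repD[m+1]}{i}{m+1})^{\op}} \iota\newG$; here I would use that $(\iota\newG)_{\defThn,\defS}=\iota\newG_{\defThn,\defS}$ by the definition of $\iota$ in \cref{rmk:enrichment} and of $\newG_{\defThn,\defS}$ in \cref{notn:Falphatnd}.

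I expect no genuine obstacle: all the substantive work --- computing the relevant totally non-degenerate necklaces via the discrete fibration $(Q_{-,\defThn,\defS})_!$ and identifying the corresponding $\set$-valued weight with $\newG_{\defThn,\defS}$ --- was already carried out in \cref{prop:Qmonodiscfib,QmonovsnewG,rewritecolimmono}, and what is left is bookkeeping of $\set$- versus $\sset$-weighted colimits. The only point deserving a line of care is the compatibility of the pointwise swaps with the simplicial structure on both sides, but this is immediate from the naturality clause of \cite[Lemma 3.3.5]{MRR1}.
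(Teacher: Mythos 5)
Your proposal is correct and follows exactly the paper's argument: the paper's proof of this lemma is literally ``the proof works as in \cref{rewritecolim2}, using \cref{rewritecolimmono}'', i.e.\ first apply \cref{rewritecolimmono} and then the weight--diagram swap of \cite[Lemma 3.3.5]{MRR1}, which is precisely what you do (with the same care about levelwise application and naturality in $(\defThn,\defS)$). No gaps; the only cosmetic remark is that the isomorphism lives in $\Thnssset$, as you correctly state, matching \cref{rewritecolim2} rather than the $\Thnsset$ appearing in the lemma's statement.
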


\begin{proof}
    The proof works as in \cref{rewritecolim2}, using \cref{rewritecolimmono}.
\end{proof}

Finally, we can compute the hom $\Thn$-spaces of the categorification $\Ch\Cone$, and hence the straightening of the desired map. 

\begin{prop} \label{homconeasweigthedmono}
    For $0\leq i\leq m$, there is a natural isomorphism in $\Thnsset$
    \[ \Hom_{\Ch\Cone}(i,m+1)\cong \diag(\colim^{H^i_{m+1}}_{(\tndnec{\repD[m+1]}{i}{m+1})^{\op}} \iota\newG), \]
    where $\colim^{H^i_{m+1}}_{(\tndnec{\repD[m+1]}{i}{m+1})^{\op}} \iota\newG$ is the $\sset$-enriched colimit of $\iota\newG$ weighted by $H^i_{m+1}$.
\end{prop}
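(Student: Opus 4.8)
The plan is to follow the proof of \cref{homconeasweigthed} essentially verbatim, replacing the unrestricted necklace category $\catnec{\Cone_{-,\star,\star}}{i}{m+1}$ by its totally non-degenerate subcategory $\tndnec{\Cone_{-,\star,\star}}{i}{m+1}$; this substitution is licensed by the standing assumption that $f$ is a monomorphism. Concretely, first I would record that, by \cref{prop:Cone1ordered}, for every $\defThn\in \Thn$ and $\defS\geq 0$ the simplicial set $\Cone_{-,\defThn,\defS}$ is $1$-ordered. This is precisely the hypothesis required to apply \cref{cor:computationshomC1ordered} with $W=\Cone$, $a=i$, and $b=m+1$, which gives a natural isomorphism in $\Thnsset$
\[ \Hom_{\Ch\Cone}(i,m+1)\cong \diag(\colim_{T\in \tndnec{\Cone_{-,\star,\star}}{i}{m+1}} \Hom_{\CL T}(\alpha,\omega)), \]
where the object inside $\diag$ lives in $\Thnssset$ and is computed levelwise in $\sset$.

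Next I would rewrite this totally non-degenerate necklace colimit over the varying object $\Cone$ as a $\sset$-enriched weighted colimit over the fixed poset $\tndnec{\repD[m+1]}{i}{m+1}$. This is exactly \cref{rewritecolim2mono}, which supplies a natural isomorphism in $\Thnssset$
\[ \colim_{T\in \tndnec{\Cone_{-,\star,\star}}{i}{m+1}} \Hom_{\CL T}(\alpha,\omega)\cong \colim^{H^i_{m+1}}_{(\tndnec{\repD[m+1]}{i}{m+1})^{\op}} \iota\newG. \]
Composing the two displayed isomorphisms and applying the colimit-preserving functor $\diag\colon \Thnssset\to \Thnsset$ then yields the claimed isomorphism, with naturality inherited from the two inputs. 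That is the whole of the proof.

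The real work is therefore not in this proposition but in the lemmas it cites. The substantive step is \cref{rewritecolim2mono}: its proof runs through the identification (\cref{QmonovsnewG}) of the discrete fibration $(Q_{-,\defThn,\defS})_!\colon \tndnec{\Cone_{-,\defThn,\defS}}{i}{m+1}\to \tndnec{\repD[m+1]}{i}{m+1}$ with the weight $\newG_{\defThn,\defS}$, which in turn rests on the pullback square of \cref{prop:pullbackQ} and on the description of non-degenerate simplices of $\Cone_{-,\defThn,\defS}$ via $Q$ (\cref{Qnondegsimplex}), together with the standard formula rewriting a colimit over a category of elements as a weighted colimit. Within the present argument, the only point demanding a moment's care is checking that the two isomorphisms are isomorphisms of objects of $\Thnssset$, rather than merely of their individual levels, so that $\diag$ can legitimately be applied to the composite; but this compatibility is already built into the statements of \cref{cor:computationshomC1ordered,rewritecolim2mono}.
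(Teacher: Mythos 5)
Your proposal is correct and matches the paper's own proof: both invoke \cref{prop:Cone1ordered} to make $\Cone_{-,\defThn,\defS}$ $1$-ordered, apply \cref{cor:computationshomC1ordered} to express $\Hom_{\Ch\Cone}(i,m+1)$ via totally non-degenerate necklaces, and then use \cref{rewritecolim2mono} to rewrite the result as the weighted colimit before applying $\diag$. Your remark that the intermediate isomorphisms live in $\Thnssset$ (so that applying $\diag$ is legitimate) is the right reading of those lemmas.
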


\begin{proof}
    Recall from \cref{prop:Cone1ordered} that, for every $\defThn\in \Thn$ and $\defS\geq 0$, the simplicial set $\Cone_{-,\defThn,\defS}$ is $1$-ordered. Hence, by \cref{cor:computationshomC1ordered,rewritecolim2mono}, we have natural isomorphisms in $\Thnsset$
    \begin{align*} \Hom_{\Ch\Cone}(i,m+1) &\cong \diag(\colim_{T\in \tndnec{\Cone_{-,\star,\star}}{i}{m+1}} \Hom_{\CL T}(\alpha,\omega)) \\
    &\cong \diag(\colim^{H^i_{m+1}}_{(\tndnec{\repD[m+1]}{i}{m+1})^{\op}} \iota\newG). \qedhere
    \end{align*}
\end{proof}

We can now prove \cref{Stofalphafmono} which we restate for the reader's convenience. 

\begin{prop} 
    Let $\mapDelta\colon [\ell]\to [m]$ be an injective map in $\Delta$, and $f\colon X\hookrightarrow Y$ be a monomorphism in $\Thnsset$ between connected $\Thn$-spaces. For $0\leq i\leq m$, there is a natural isomorphism in $\Thnsset$
    \[ \St_{L\repD[m,Y]}([\mapDelta,f])(i)\cong \diag(\colim^{H^i_{m+1}}_{(\tndnec{\repD[m+1]}{i}{m+1})^{\op}} \iota\newG). \]
\end{prop}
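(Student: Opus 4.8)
The plan is to obtain the statement by simply assembling the two computations already in place: \cref{Stof[lX]appendix}, which identifies the straightening of $[\mapDelta,f]$ with a representable‑hom functor of $\Ch\Cone$, and \cref{homconeasweigthedmono}, which rewrites those hom $\Thn$-spaces as the weighted colimit appearing in the claim. So the argument at this point is short; the real work has been done in the preparatory results of this subsection.

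First I would apply \cref{Stof[lX]appendix} to get a natural isomorphism $\St_{L\repD[m,Y]}([\mapDelta,f]) \cong \Hom_{\Ch\Cone}(-,m+1)\circ \Ch(\iota_f)$ in $[\Ch L\repD[m,Y]^{\op},\Thnsset]$. To evaluate the left-hand side at an object $0\le i\le m$ I need the action of $\iota_f\colon L\repD[m,Y]\to\Cone$ on objects. Since $Y$ is connected, \cref{RmkPushout} gives $L\repD[m,Y]_0\cong\coprod_{m+1}\pi_0 Y\cong\{0,\ldots,m\}$, while \cref{Cone0} gives $\Cone_0\cong\{0,\ldots,m+1\}$; inspecting the pushout of \cref{notationCone} on object sets (namely the pushout of $\mapDelta\colon\{0,\ldots,\ell\}\to\{0,\ldots,m\}$ along the injection $d^{\ell+1}\colon\{0,\ldots,\ell\}\hookrightarrow\{0,\ldots,\ell+1\}$) shows that $\iota_f$ is the evident inclusion $i\mapsto i$. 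Hence $(\Ch\iota_f)(i)=i$ and $\St_{L\repD[m,Y]}([\mapDelta,f])(i)\cong\Hom_{\Ch\Cone}(i,m+1)$.

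It then remains to substitute \cref{homconeasweigthedmono}, which yields $\Hom_{\Ch\Cone}(i,m+1)\cong\diag(\colim^{H^i_{m+1}}_{(\tndnec{\repD[m+1]}{i}{m+1})^{\op}}\iota\newG)$, and to observe that all isomorphisms used are natural — in $i$, and in the slice object $[\mapDelta,f]$ — since \cref{Stof[lX]appendix} and \cref{homconeasweigthedmono} are themselves natural. I do not expect this concluding step to present any obstacle: the genuine content lies upstream in the already-proved results, chiefly \cref{prop:Cone1ordered} (so that \cref{cor:computationshomC1ordered} applies and restricts attention to totally non-degenerate necklaces) and \cref{QmonovsnewG} (identifying the restricted discrete fibration $(Q_{-,\defThn,\defS})_!$ with the category of elements of $\newG_{\defThn,\defS}$), which together feed into \cref{rewritecolimmono,rewritecolim2mono} and hence into \cref{homconeasweigthedmono}. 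If anything here is delicate, it is bookkeeping the identification $\top \leftrightarrow m+1$ consistently across the pushouts of \cref{pushoutlowersigma,notationCone}, which is exactly the point addressed in the proof of \cref{Stof[lX]appendix}.
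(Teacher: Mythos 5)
Your proposal is correct and matches the paper's own proof: the statement is deduced exactly by composing \cref{Stof[lX]appendix} with \cref{homconeasweigthedmono}, with all the substance residing in the upstream results (\cref{prop:Cone1ordered}, \cref{QmonovsnewG}, \cref{rewritecolim2mono}). Your extra check that $\Ch(\iota_f)$ acts as $i\mapsto i$ on objects is a harmless elaboration of what the paper leaves implicit.
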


\begin{proof}
    By \cref{Stof[lX]appendix,homconeasweigthedmono}, we have natural isomorphisms in $\Thnsset$
    \[
        \St_{L\repD[m,Y]}([\mapDelta,f])(i) \cong \Hom_{\Ch\Cone}(i,m+1) \cong \diag(\colim^{H^i_{m+1}}_{(\tndnec{\repD[m+1]}{i}{m+1})^{\op}} \iota\newG). \qedhere \]
\end{proof}

\section{Construction of the comparison map}

The goal of this is to prove \cref{natphic}. In \cref{sec:B1}, we first construct a $\Thnsset$-enriched functor $\Pi_{m,Y}\colon \Ch L\repD[m+1,Y]\to \Ch L\repD[m,Y]\amalg_{[0]} \Sigma Y$. Then, building on this, in \cref{sec:comparisonmap}, we construct, for every object $c\in \cC$, the desired $\Thnsset$-enriched natural transformation in $[\cC^{\op},\Thnsset]$ 
\[ \textstyle (\varepsilon_\cC)_!\St_{\Nh\cC} \int_\cC^\Nh\Hom_\cC(-,c)\to \Hom_\cC(-,c). \]
We then verify that it is a section of the $\Thnsset$-enriched natural transformation
\[ \textstyle\Hom_\cC(-,c)\cong (\varepsilon_\cC)_!\St_{\Nh\cC}(c)\xrightarrow{(\varepsilon_\cC)_!\St_{\Nh\cC}(\id_c)} (\varepsilon_\cC)_!\St_{\Nh\cC} \int_\cC^\Nh\Hom_\cC(-,c), \]
and that it is natural in $c\in \cC$, hence proving \cref{natphic}.

\subsection{The projection \texorpdfstring{$\Ch L\repD[m+1,Y]\to \Ch L\repD[m,Y]\amalg_{[0]}\Sigma Y$}{CL[m+1,Y]-->CL[m,X]USigmaY}} \label{sec:B1}

Let us fix $m\geq 0$ and a connected $\Thn$-space $Y$. In this section, we build a $\Thnsset$-enriched functor
\[ \Pi_{m,Y}\colon \Ch L\repD[m+1,Y]\to \Ch L\repD[m,Y]\amalg_{[0]} \Sigma Y.  \]

First, we let the $\Thnsset$-enriched functor $\Pi_{m,Y}$ be the identity on objects, and it remains to describe its action on hom $\Thn$-spaces. For this, we first study the totally non-degenerate necklaces in $L(\repD[m,Y]\amalg_{F[0,Y]}\repD[1,Y])_{-,\defThn,\defS}$. We get the following description.

\begin{lemma} \label{pullbackwedge}
For $\defThn\in \Thn$, $\defS\geq 0$, and $0\leq i<j\leq m+1$, we have the following pullback square in $\cat$.
\begin{tz}
        \node[](1) {$\tndnec{L(\repD[m,Y]\amalg_{F[0,Y]}\repD[1,Y])_{-,\defThn,\defS}}{i}{j}$}; 
        \node[right of=1,xshift=5.6cm](2) {$\tndnec{L\repD[m+1,Y]_{-,\defThn,\defS}}{i}{j}$}; 
        \node[below of=1](3) {$\tndnec{\repD[m]\vee\repD[1]}{i}{j}$}; 
        \node[below of=2](4) {$\tndnec{\repD[m+1]}{i}{j}$}; 

        \draw[right hook->] (1) to (2); 
        \draw[->] (1) to (3); 
        \draw[->] (2) to node[right,la]{$(Q_{-,\defThn,\defS})_!$} (4); 
        \draw[right hook->] (3) to (4);
        \pullback{1};
    \end{tz}
 \end{lemma}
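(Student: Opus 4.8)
The plan is to reduce the statement about totally non-degenerate necklaces to a statement about underlying simplicial sets, and then exploit the fact that $Q_{-,\defThn,\defS}$ is compatible with the wedge decomposition. First I would recall from \cref{ConePushout} (in the special case $\mapDelta = \langle m\rangle$, $f = \id_Y$, so that $\Cone = L(\repD[m,Y]\amalg_{\repD[0,Y]}\repD[1,Y])$ by \cref{ConepickmX}, up to the identification of $L\repD[0,Y]$ with $\repD[0]$ since $Y$ is connected) the explicit description of $\Cone_{-,\defThn,\defS}$ as a pushout in $\Dset$, and similarly recall that $L\repD[m+1,Y]_{-,\defThn,\defS}$ is a pushout as in \cref{ConePushoutThetak}. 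The map $Q\colon \Cone\to \repD[m+1]$ restricts at each level to a map $Q_{-,\defThn,\defS}$ which, on simplices, acts by $\langle m\rangle + 1 = \langle m, m+1\rangle$ extended appropriately — concretely it sends the ``$Y$-block'' of $m$-simplices via $d^{m+1}\colon\repD[m]\hookrightarrow\repD[m+1]$ and the ``$Y$-block'' of $1$-simplices containing $m+1$ via $\langle m,m+1\rangle\colon\repD[1]\hookrightarrow\repD[m+1]$, so that $Q_{-,\defThn,\defS}$ is built over the canonical map $\repD[m]\vee\repD[1]\hookrightarrow\repD[m+1]$.

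The key structural input is \cref{Qnondegsimplex}: an $m'$-simplex of $\Cone_{-,\defThn,\defS}$ is non-degenerate if and only if its image under $Q_{-,\defThn,\defS}$ is non-degenerate in $\repD[m+1]$. I would use this exactly as in the proof of \cref{prop:pullbackQ}: to check a square of this shape is a pullback in $\cat$ it suffices to check it is a pullback on objects and that the functors are fully faithful compatibly, i.e.\ that a necklace $T\to L(\repD[m,Y]\amalg_{\repD[0,Y]}\repD[1,Y])_{-,\defThn,\defS}$ from $i$ to $j$ is totally non-degenerate precisely when its image $T\to\repD[m+1]_{i,j}$ under $(Q_{-,\defThn,\defS})_!$ lands in $\tndnec{\repD[m]\vee\repD[1]}{i}{j}$, and that the morphisms match. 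Since $(Q_{-,\defThn,\defS})_!$ acts by post-composition, total non-degeneracy of $T$ (which by definition only concerns the non-degeneracy of each bead of $T$ as a simplex of the target) transports along $Q_{-,\defThn,\defS}$ by \cref{Qnondegsimplex}; and the condition ``the image factors through $\repD[m]\vee\repD[1]$'' is automatic because $Q_{-,\defThn,\defS}$ itself factors through $\repD[m]\vee\repD[1]\hookrightarrow\repD[m+1]$ by the description above. The morphism-level compatibility is immediate because all the categories of (totally non-degenerate) necklaces in a $1$-ordered simplicial set are posets (\cref{tndnec1ordered}, noting $\repD[m+1]$, $\repD[m]\vee\repD[1]$, and $\Cone_{-,\defThn,\defS}$ — the latter by \cref{prop:Cone1ordered} — are all $1$-ordered) with morphisms given by subnecklace inclusions, and $(Q_{-,\defThn,\defS})_!$ reflects and preserves such inclusions by the cancellation property of monomorphisms.

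Concretely I would structure the argument: (1) identify $\Cone$ with $L(\repD[m,Y]\amalg_{\repD[0,Y]}\repD[1,Y])$ via \cref{ConepickmX} and record the induced identification of $Q$ with the map over $\repD[m]\vee\repD[1]\hookrightarrow\repD[m+1]$; (2) observe that both vertical functors in the square are discrete fibrations — the right one by \cref{prop:Qmonodiscfib}, the left one by the same argument applied to the $1$-ordered simplicial set $\Cone_{-,\defThn,\defS}$ and the poset of totally non-degenerate necklaces in $\repD[m]\vee\repD[1]$; (3) conclude the square is a pullback by checking, for each object $(U\hookrightarrow\repD[m]\vee\repD[1])$ downstairs and each lift of its image in $\tndnec{\repD[m+1]}{i}{j}$ to $\tndnec{L\repD[m+1,Y]}{i}{j}$, that this lift automatically factors through $\tndnec{L(\repD[m,Y]\amalg_{\repD[0,Y]}\repD[1,Y])}{i}{j}$, using that the ``$m+1$-containing simplices'' of $L\repD[m+1,Y]_{-,\defThn,\defS}$ that map into $\repD[m]\vee\repD[1]$ are exactly the ones in the image of the inclusion from $\Cone_{-,\defThn,\defS}$ — this is the content of the pushout description of $\Cone_{-,\defThn,\defS}$, cf.\ the proof of \cref{QvsnewG}.

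The main obstacle I anticipate is purely bookkeeping: carefully matching the two pushout presentations (for $\Cone$ and for $L\repD[m+1,Y]$ at level $(\defThn,\defS)$) and verifying that the map $Q_{-,\defThn,\defS}$ restricted to the ``last bead'' region behaves as claimed, so that a totally non-degenerate necklace downstairs in $\repD[m]\vee\repD[1]$ lifts uniquely. Once that identification is in place the pullback claim is formal — it follows the same template as \cref{prop:pullbackQ}, with $\repD[m]\vee\repD[1]\hookrightarrow\repD[m+1]$ playing the role that $\tndnec{\repD[m+1]}{i}{m+1}\hookrightarrow\catnec{\repD[m+1]}{i}{m+1}$ played there, and I do not expect any genuinely new difficulty beyond keeping the indices and inclusions straight.
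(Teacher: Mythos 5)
The paper states \cref{pullbackwedge} without proof, so there is no written argument to compare against; your proposal supplies essentially the argument the authors evidently have in mind (it follows the template of \cref{prop:pullbackQ} and the simplex analysis of \cref{Qnondegsimplex,QvsnewG}), and I believe it is correct. The genuine content is the step you defer to ``bookkeeping'': from the pushout presentations (\cref{RmkPushout,ConePushoutThetak}, together with the identification of the top-left corner with $\Cone[\id_Y][\langle m\rangle]$ via \cref{ConepickmX}), one checks that at each level $(\defThn,\defS)$ the square of simplicial sets is a pullback, i.e.\ a simplex of $L\repD[m+1,Y]_{-,\defThn,\defS}$ whose image under $Q_{-,\defThn,\defS}$ lands in $\repD[m]\vee\repD[1]$ lies in, and lifts uniquely to, the subobject $L(\repD[m,Y]\amalg_{\repD[0,Y]}\repD[1,Y])_{-,\defThn,\defS}$; since the beads of a necklace generate it, this yields the unique factorization of necklaces, and total non-degeneracy transfers because the horizontal maps are monomorphisms (and via \cref{Qnondegsimplex} for the verticals). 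That claim should be written out rather than waved at --- note in particular that your phrase ``$m{+}1$-containing simplices'' covers only the beads mapping into the $\repD[1]$-part; the beads landing in $\repD[m]$ lift through $L\repD[m,Y]$ and need to be mentioned too. Two further small points: \cref{prop:Qmonodiscfib,prop:Qgendiscfib} are stated only for necklaces ending at $m+1$, whereas the lemma allows arbitrary $0\leq i<j\leq m+1$, so you should say explicitly that the same one-line composition argument gives the discrete fibration property in the generality you use (alternatively, once the level-wise pullback and the poset observations of \cref{tndnec1ordered,prop:Cone1ordered} are in place, you can conclude directly without invoking discrete fibrations at all); and your fiberwise criterion ``discrete fibrations over a common base square is a pullback iff it is a bijection on fibers'' is correct but deserves the one-sentence justification via the correspondence with set-valued presheaves.
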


 As a consequence, we get that the hom $\Thn$-spaces of the $\Thnsset$-enriched categories coincide when $0\leq i<j\leq m$.

 \begin{lemma} \label{necofijlowerthanm}
     For $\defThn\in \Thn$, $\defS\geq 0$, and $0\leq i<j\leq m$, there is an isomorphism of categories
     \[ \tndnec{L(\repD[m,Y]\amalg_{F[0,Y]}\repD[1,Y])_{-,\defThn,\defS}}{i}{j}\cong \tndnec{L\repD[m+1,Y]_{-,\defThn,\defS}}{i}{j}. \]
 \end{lemma}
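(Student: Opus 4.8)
\textbf{Plan for the proof of \cref{necofijlowerthanm}.} The idea is to read off the claimed isomorphism of categories directly from the pullback square of \cref{pullbackwedge}, using that for $0\leq i<j\leq m$ the bottom horizontal functor in that square is an isomorphism. More precisely, I would first observe that the inclusion of simplicial sets $\repD[m]\vee \repD[1]\hookrightarrow \repD[m+1]$ becomes, after restricting to the full subcategories of bi-pointed necklaces from $i$ to $j$ with $0\leq i<j\leq m$, an isomorphism of categories
\[ \tndnec{\repD[m]\vee\repD[1]}{i}{j}\xrightarrow{\ \cong\ }\tndnec{\repD[m+1]}{i}{j}. \]
Indeed, since both $\repD[m]\vee\repD[1]$ and $\repD[m+1]$ are $1$-ordered by \cref{examplesof1ordered} (the wedge $\repD[m]\vee\repD[1]$ being a subobject of $\repD[m+1]$ containing all its vertices, one checks it is again $1$-ordered), by \cref{tndnec1ordered} the categories of totally non-degenerate necklaces from $i$ to $j$ are posets of sub-necklaces, and a sub-necklace of $\repD[m+1]$ with both endpoints in $\{0,1,\ldots,m\}$ is automatically contained in $\repD[m]\vee \repD[1]$ — the only simplices of $\repD[m+1]$ not already in $\repD[m]\vee\repD[1]$ are those whose vertex set contains both $m$ and $m+1$ but is not $\{m,m+1\}$, and these cannot appear as beads of a necklace ending at $j\leq m$. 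Hence the two posets of sub-necklaces coincide.

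Next I would invoke \cref{pullbackwedge}: the square there exhibits $\tndnec{L(\repD[m,Y]\amalg_{F[0,Y]}\repD[1,Y])_{-,\defThn,\defS}}{i}{j}$ as the pullback of $(Q_{-,\defThn,\defS})_!$ along the bottom inclusion $\tndnec{\repD[m]\vee\repD[1]}{i}{j}\hookrightarrow \tndnec{\repD[m+1]}{i}{j}$. Since in the range $0\leq i<j\leq m$ that bottom inclusion is an isomorphism of categories by the previous paragraph, the pullback of $(Q_{-,\defThn,\defS})_!$ along it is simply $(Q_{-,\defThn,\defS})_!$ itself up to the canonical isomorphism, and the top horizontal functor
\[ \tndnec{L(\repD[m,Y]\amalg_{F[0,Y]}\repD[1,Y])_{-,\defThn,\defS}}{i}{j}\hookrightarrow \tndnec{L\repD[m+1,Y]_{-,\defThn,\defS}}{i}{j} \]
becomes an isomorphism of categories. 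This is exactly the asserted isomorphism, once one recalls that $\Cone[\id_Y][\langle m\rangle]\cong L\repD[m,Y]\amalg_{\repD[0]} L\repD[1,Y]$ identifies the relevant $(\DThnS)$-space (and that at level $\defThn,\defS$ the simplicial set $(L(\repD[m,Y]\amalg_{F[0,Y]}\repD[1,Y]))_{-,\defThn,\defS}$ is the one appearing in \cref{pullbackwedge}).

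The main obstacle, and the one place requiring a genuine check rather than formal nonsense, is the claim that the inclusion $\tndnec{\repD[m]\vee\repD[1]}{i}{j}\hookrightarrow\tndnec{\repD[m+1]}{i}{j}$ is an isomorphism for $j\leq m$: one must verify that every totally non-degenerate necklace $T\hookrightarrow \repD[m+1]_{i,j}$ with $i<j\leq m$ in fact factors through $\repD[m]\vee\repD[1]$. By \cref{tndnec1ordered} such a $T$ is a monomorphism, hence determined by the vertex sets of its beads, each an increasing sequence inside $\{0,1,\ldots,m+1\}$; since the last joint $j$ satisfies $j\leq m$, no bead contains the vertex $m+1$, so $T$ lies in the sub-presheaf $\repD[m]$ of $\repD[m+1]$, which is contained in $\repD[m]\vee\repD[1]$. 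The reverse direction is the evident inclusion. Everything else is bookkeeping, so I would keep the written proof short: state the $1$-orderedness of $\repD[m]\vee\repD[1]$, deduce the poset identification, and then quote \cref{pullbackwedge} to conclude.
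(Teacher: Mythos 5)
Your proposal is correct and takes essentially the same route as the paper: the paper's proof likewise consists of observing that for $0\leq i<j\leq m$ the inclusion $\tndnec{\repD[m]\vee\repD[1]}{i}{j}\hookrightarrow\tndnec{\repD[m+1]}{i}{j}$ is an isomorphism of categories and then concluding from the pullback square of \cref{pullbackwedge}, since pulling back along an isomorphism yields an isomorphism on top. The only blemish is your parenthetical description of which simplices of $\repD[m+1]$ are missing from the wedge (for instance the edge with vertices $\{0,m+1\}$ is missing yet does not contain $m$), but your actual argument --- every vertex of a necklace from $i$ to $j\leq m$ lies in $\{i,\ldots,j\}$, so each bead lands in the face $\repD[m]\subseteq\repD[m]\vee\repD[1]$ --- does not rely on that description and is sound.
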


 \begin{proof}
     When $0\leq i<j\leq m$, there is an isomorphism of categories 
     \[ \tndnec{\repD[m]\vee\repD[1]}{i}{j}\cong \tndnec{\repD[m+1]}{i}{j}. \]
     Hence, the desired result follows from \cref{pullbackwedge}. 
 \end{proof}

 \begin{prop} \label{isoonhomsijlowerthanm}
     For $0\leq i<j\leq m$, there is an isomorphism in $\Thnsset$
     \[ (\Pi_{m,Y})_{i,j}\colon \Hom_{\Ch L\repD[m+1,Y]}(i,j)\cong \Hom_{\Ch L\repD[m,Y]\amalg_{[0]} \Sigma Y}(i,j). \]
 \end{prop}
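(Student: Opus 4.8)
The plan is to deduce this from \cref{necofijlowerthanm}, applying $\Ch$ to the already-constructed isomorphism $\Cone[\id_Y][\langle m\rangle]\cong L\repD[m,Y]\amalg_{\repD[0]}L\repD[1,Y]$ from \cref{ConepickmX}, together with the hom-space computations from \cref{subsec:necklaces}. Concretely, recall from \cref{Coneid} that $\Cone[\id_Y][\id_{[m]}]\cong L\repD[m+1,Y]$, and observe that $L(\repD[m,Y]\amalg_{F[0,Y]}\repD[1,Y])$ is, by \cref{ConepickmX}, exactly the domain-wedge appearing in the statement of \cref{necofijlowerthanm}. So the content of \cref{necofijlowerthanm} is precisely that, for $0\le i<j\le m$ and all $\defThn\in\Thn$, $\defS\ge0$, the two categories of totally non-degenerate necklaces in the relevant $(\defThn,\defS)$-levels agree.

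First I would use \cref{prop:Cone1ordered} (in the case $\mapDelta=\id_{[m]}$, $f=\id_Y$, so that $\Cone[\id_Y][\id_{[m]}]\cong L\repD[m+1,Y]$, and also in the case $\mapDelta=\langle m\rangle$, $f=\id_Y$, so that $\Cone[\id_Y][\langle m\rangle]\cong L\repD[m,Y]\amalg_{\repD[0]}L\repD[1,Y]$) to conclude that for every $\defThn\in\Thn$ and $\defS\ge0$ the simplicial sets $L\repD[m+1,Y]_{-,\defThn,\defS}$ and $L(\repD[m,Y]\amalg_{F[0,Y]}\repD[1,Y])_{-,\defThn,\defS}$ are $1$-ordered. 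This makes \cref{cor:computationshomC1ordered} applicable, giving
\[
\Hom_{\Ch L\repD[m+1,Y]}(i,j)\cong \diag\Bigl(\colim_{T\in \tndnec{L\repD[m+1,Y]_{-,\star,\star}}{i}{j}} \Hom_{\CL T}(\alpha,\omega)\Bigr)
\]
and the analogous formula with $L\repD[m+1,Y]$ replaced by $L(\repD[m,Y]\amalg_{F[0,Y]}\repD[1,Y])\cong\Cone[\id_Y][\langle m\rangle]$; by \cref{CConepickmX} the latter has categorification $\Ch L\repD[m,Y]\amalg_{[0]}\Sigma Y$, so the right-hand colimit computes $\Hom_{\Ch L\repD[m,Y]\amalg_{[0]}\Sigma Y}(i,j)$.

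Next I would invoke \cref{necofijlowerthanm} to identify the two indexing categories $\tndnec{L\repD[m+1,Y]_{-,\defThn,\defS}}{i}{j}$ and $\tndnec{L(\repD[m,Y]\amalg_{F[0,Y]}\repD[1,Y])_{-,\defThn,\defS}}{i}{j}$ for $0\le i<j\le m$; since the functor $T\mapsto \Hom_{\CL T}(\alpha,\omega)$ depends only on the underlying simplicial set $T$ of the necklace (not on the ambient object), this identification is compatible with the diagrams being colimited. Hence the two colimits agree levelwise in $(\defThn,\defS)$, so applying $\diag$ yields the desired isomorphism $\Hom_{\Ch L\repD[m+1,Y]}(i,j)\cong \Hom_{\Ch L\repD[m,Y]\amalg_{[0]}\Sigma Y}(i,j)$, which we take as the definition of $(\Pi_{m,Y})_{i,j}$. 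The only mild subtlety — the main thing to check carefully rather than a genuine obstacle — is that these hom-space isomorphisms are compatible with composition and units, so that $\Pi_{m,Y}$ (identity on objects, the above maps on homs for $i<j\le m$, and whatever is forced on the remaining homs involving the top object $m+1$) really is a $\Thnsset$-enriched functor; this is where the naturality of the necklace identification in \cref{necofijlowerthanm} and the compatibility of the isomorphism $\Cone[\id_Y][\langle m\rangle]\cong L\repD[m,Y]\amalg_{\repD[0]}L\repD[1,Y]$ with the canonical maps is used. For the present statement, however, only the isomorphism on homs for $0\le i<j\le m$ is asserted, and that follows directly from the chain of identifications above.
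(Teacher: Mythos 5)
Your argument is correct and is essentially the paper's own proof: both hom $\Thn$-spaces are computed via \cref{cor:computationshomC1ordered} as diagonals of colimits over categories of totally non-degenerate necklaces, and these colimits are then identified using the isomorphism of indexing categories from \cref{necofijlowerthanm}. Your additional justifications — $1$-orderedness of the relevant levels via \cref{prop:Cone1ordered} and the identification of the wedge through \cref{ConepickmX} and \cref{CConepickmX} — simply make explicit what the paper's terse proof leaves implicit.
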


 \begin{proof}
By \cref{cor:computationshomC1ordered}, we have the following isomorphisms in $\Thnsset$
\begin{tz}
    \node[](1) {$\Hom_{\Ch L \repD[m+1,Y]}(i,j)\cong \diag(\colim_{T\in\tndnec{L\repD[m+1,Y]_{-,\star,\star}}{i}{j}}\Hom_{\CL T}(\alpha,\omega))$}; 
    \node[below of=1,xshift=.55cm](2) {$\Hom_{\Ch L\repD[m,Y]\amalg_{[0]}\Sigma Y}(i,j)\cong\diag(\colim_{T\in\tndnec{L(\repD[m,Y]\amalg_{F[0,Y]} \repD[1,Y])_{-,\star,\star}}{i}{j}}\Hom_{\CL T}(\alpha,\omega))$};
    \punctuation{2}{.};
    
    \draw[->] ($(1.south)-(4.5cm,0)$) to node[left,la]{$(\Pi_{m,Y})_{i,j}$} ($(2.north)-(5.05cm,0)$);
    \draw[->] ($(1.south)+(2.1cm,0)$) to node[right,la]{} ($(2.north)+(1.55cm,0)$);
    \end{tz}
    By \cref{necofijlowerthanm}, there is a canonical isomorphism in $\Thnsset$ between the right-hand terms and hence we get an isomorphism between the left-hand terms as desired.
 \end{proof}

 It remains to construct the action on the hom $\Thn$-spaces whose target is $m+1$. For this, we first provide a more combinatorial description of the category $\tndnec{\repD[m+1]}{i}{m+1}$ and its subcategory $\tndnec{\repD[m]\vee\repD[1]}{i}{m+1}$.

 \begin{defn}
     For $0\leq i\leq m$, we define the category $\cP air_{i,m+1}$ to be the poset such that
     \begin{itemize}[leftmargin=.6cm]
         \item its objects are pairs $(J,V)$ of subsets $\{i,m+1\}\subseteq J\subseteq V\subseteq \{i,\ldots,m+1\}$,
         \item there is a morphism $(J,V)\to (J',V')$ if and only if $V\subseteq V'$ and $J'\subseteq J$.
     \end{itemize}

     We write $\cP air_{i,m+1}^m$ for the full subcategory of $\cP air_{i,m+1}$ consisting of those pairs $(J,V)$ such that $m\in J\subseteq V$. 
 \end{defn}

\begin{prop} \label{isowithpairs}
For $0\leq i\leq m$, there are isomorphisms of categories
\[\tndnec{\repD[m+1]}{i}{m+1} \cong \cP air_{i,m+1} \quad \text{and}\quad \tndnec{\repD[m]\vee \repD[1]}{i}{m+1} \cong \cP air^m_{i,m+1}.\]
\end{prop}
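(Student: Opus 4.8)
The plan is to establish the two claimed isomorphisms of categories by constructing explicit functors in both directions, leveraging \cref{tndnec1ordered} to know that both sides are posets (since $\repD[m+1]$ and $\repD[m]\vee\repD[1]$ are $1$-ordered, using \cref{examplesof1ordered} and the fact that a wedge of $1$-ordered simplicial sets along a vertex is again $1$-ordered). Because both source and target are posets, it suffices to give mutually inverse order-preserving bijections on objects; no higher coherence needs to be checked.

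First I would describe the object correspondence. Given a totally non-degenerate necklace $T=\repD[m_1]\vee\ldots\vee\repD[m_t]\hookrightarrow\repD[m+1]_{i,m+1}$, by \cref{tndnec1ordered} it is a monomorphism, so each bead is a non-degenerate simplex of $\repD[m+1]$, i.e.\ injective on vertices. Collecting the images of all joints gives a subset $J\subseteq\{i,\ldots,m+1\}$ containing $i$ and $m+1$ (the joints partition $T$ into beads), and collecting the images of all vertices of $T$ gives a subset $V$ with $J\subseteq V\subseteq\{i,\ldots,m+1\}$. One checks this assignment $T\mapsto(J,V)$ is a bijection onto $\cP air_{i,m+1}$: conversely, given $(J,V)$ with $\{i,m+1\}\subseteq J\subseteq V$, the subsets $J$ determines how to break the interval $[i,m+1]$ into consecutive blocks (the beads), and within each block the subset $V$ picks out which vertices are present, producing a unique totally non-degenerate necklace; the conditions $i,m+1\in J$ guarantee it is correctly bi-pointed. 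The order relation matches: a monomorphism $U\hookrightarrow T$ of necklaces over $\repD[m+1]_{i,m+1}$ exists iff $U$ refines the bead structure of $T$ (more joints, so $J_U\supseteq J_T$) and uses fewer vertices ($V_U\subseteq V_T$), which is exactly the relation $(J_T,V_T)\to(J_U,V_U)$ in $\cP air_{i,m+1}$ — note the variance, since $\tndnec{\repD[m+1]}{i}{m+1}$ has morphisms $U\to T$ running from sub-necklaces, matching the definition where $(J,V)\to(J',V')$ needs $V\subseteq V'$, $J'\subseteq J$.

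For the second isomorphism, the point is that a totally non-degenerate necklace $T\hookrightarrow(\repD[m]\vee\repD[1])_{i,m+1}$ is the same as one in $\repD[m+1]$ that factors through the subcomplex $\repD[m]\vee\repD[1]\hookrightarrow\repD[m+1]$, where $\repD[m]$ is the face on $\{0,\ldots,m\}$ and $\repD[1]$ the edge on $\{m,m+1\}$. The simplices of $\repD[m+1]$ landing in this subcomplex are exactly those whose vertex set is either contained in $\{0,\ldots,m\}$ or contained in $\{m,m+1\}$; for a necklace from $i$ to $m+1$ this forces the final edge $\{m,m+1\}$ to appear as (part of) the last bead, and it forces the vertex $m$ to be a joint. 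Translating through the object correspondence of the previous paragraph, the condition ``$m$ is a joint of $T$'' is precisely $m\in J$, so the necklace corresponds to a pair $(J,V)$ with $m\in J\subseteq V$, i.e.\ an object of $\cP air^m_{i,m+1}$. Since this is a full subcategory inclusion on both sides and the first isomorphism restricts correctly, the second isomorphism follows immediately.

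The main obstacle I anticipate is getting the bookkeeping of joints versus interior vertices exactly right, and in particular handling the degenerate-looking edge cases: when $t=1$ (a single bead), when consecutive joints are adjacent integers (forcing a bead $\repD[1]$), and the convention from the necklace definition that beads $\repD[m_i]$ have $m_i>0$ when $t>1$. I would want to verify that the pair $(J,V)$ with $J=V=\{i,m+1\}$ and $i<m+1$ does correspond to the single bead $\repD[1]$ on $\{i,m+1\}$ rather than something ill-formed, and conversely that every pair genuinely arises. I expect this to be routine once the dictionary ``$J=$ joints, $V=$ all vertices'' is fixed, and the variance of morphisms is the one subtle point worth stating carefully. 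The rest is a straightforward unwinding of \cref{tndnec1ordered,beadfunctor}.
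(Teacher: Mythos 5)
Your proof is correct and takes essentially the same route as the paper, which likewise sends a totally non-degenerate necklace $T$ to the pair $(J_T,V_T)$ of its joints and vertices and declares the verification straightforward; your write-up just supplies the omitted details (the inverse construction, the characterization of monomorphisms $U\hookrightarrow T$ by $J_T\subseteq J_U$ and $V_U\subseteq V_T$, and the observation that factoring through $\repD[m]\vee\repD[1]$ is equivalent to $m$ being a joint). The one slip is the direction of the arrow: a morphism $U\hookrightarrow T$ corresponds to $(J_U,V_U)\to(J_T,V_T)$, which by the definition of $\cP air_{i,m+1}$ requires exactly $V_U\subseteq V_T$ and $J_T\subseteq J_U$, not to $(J_T,V_T)\to(J_U,V_U)$ as written, so the comparison is a covariant isomorphism with no op needed.
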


 \begin{proof}
The left-hand isomorphism is induced by the functor $\tndnec{\repD[m+1]}{i}{m+1}\to \cP air_{i,m+1}$ which sends a totally non-degenerate necklace $T\hookrightarrow \repD[m+1]_{i,m+1}$ to the pair $(J_T,V_T)$ of subsets of $\repD[m+1]_0$ consisting of the joints and vertices of $T$, respectively. It is straightforward to check that this defines an isomorphism of categories and that it restricts to an isomorphism as depicted above right.
\end{proof}

We then obtain the following description of the hom $\Thn$-spaces from \cite[Corollary 3.10]{DuggerSpivakRigidification}.

 \begin{prop}
     For $0\leq i\leq m$ and $T\hookrightarrow \repD[m+1]_{i,m+1}$ a totally non-degenerate necklace, there is an isomorphism in $\sset$
     \[ \textstyle\Hom_{\CL T}(\alpha,\omega)\cong \prod_{V_T\setminus J_T}\repS[1]. \]
 \end{prop}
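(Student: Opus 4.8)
The statement is the classical computation that the rigidification of a necklace has a hom-space which is a cube, now phrased through the bookkeeping of vertices and joints. The plan is to reduce to a single bead via the wedge-splitting formula, apply the cube formula for a simplex, and then match the indexing sets.

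First I would write $T=\repD[m_1]\vee\ldots\vee\repD[m_t]$ with beads $B(T)=\{\repD[m_1],\ldots,\repD[m_t]\}$ (and $m_s\geq 1$ for all $s$ when $t>1$; the case $T=\repD[0]$, which does not occur here since $i\leq m<m+1$, is vacuous). Iterating the splitting isomorphism $\Hom_{\CL{T'\vee T''}}(\alpha,\omega)\cong \Hom_{\CL{T'}}(\alpha,\omega)\times\Hom_{\CL{T''}}(\alpha,\omega)$ from \cite[Corollary 3.10]{DuggerSpivakRigidification} along the decomposition $T=\repD[m_1]\vee(\repD[m_2]\vee\ldots\vee\repD[m_t])$ gives a natural isomorphism in $\sset$
\[ \Hom_{\CL T}(\alpha,\omega)\cong \prod_{s=1}^{t}\Hom_{\CL{\repD[m_s]}}(\alpha,\omega). \]
Next I would invoke the explicit description of the rigidification of a simplex (see \cite{DuggerSpivakRigidification}, going back to Cordier): for each bead, $\Hom_{\CL{\repD[m_s]}}(\alpha,\omega)=\Hom_{\CL{\repD[m_s]}}(0,m_s)\cong \prod_{\{1,\ldots,m_s-1\}}\repS[1]$, the cube indexed by the interior (neither initial nor final) vertices of that bead. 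Substituting yields
\[ \Hom_{\CL T}(\alpha,\omega)\cong \prod_{s=1}^{t}\ \prod_{\mathrm{int}(\repD[m_s])}\repS[1], \]
where $\mathrm{int}(\repD[m_s])$ denotes the set of vertices of the $s$-th bead other than its two joints.

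It then remains to identify the total indexing set $\coprod_{s}\mathrm{int}(\repD[m_s])$ with $V_T\setminus J_T$. For this I would observe that the vertex set $V_T$ is the disjoint union of the joint set $J_T$ with the interior vertices of the beads: every vertex of $T$ lies in a unique bead, and within a bead the joints of $T$ are exactly the initial and final vertices of that bead, since consecutive beads are glued precisely along joints. Hence $V_T\setminus J_T=\coprod_s\mathrm{int}(\repD[m_s])$, and as a consistency check the cardinalities agree: $|V_T|=\sum_s m_s+1$, $|J_T|=t+1$, so $|V_T\setminus J_T|=\sum_s(m_s-1)$. Combining the two isomorphisms gives $\Hom_{\CL T}(\alpha,\omega)\cong\prod_{V_T\setminus J_T}\repS[1]$, as claimed; under the identification $\tndnec{\repD[m+1]}{i}{m+1}\cong\cP air_{i,m+1}$ of \cref{isowithpairs} this is exactly the assignment $(J_T,V_T)\mapsto\prod_{V_T\setminus J_T}\repS[1]$.

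\textbf{Main obstacle.} There is no deep difficulty here: the only real point is to be careful that the product decomposition produced by iterating the wedge-splitting is compatible, factor by factor, with the partition of $V_T\setminus J_T$ by beads, so that the isomorphism is indexed honestly by $V_T\setminus J_T$ rather than merely having the right dimension. This amounts to tracking that the splitting isomorphism of \cite[Corollary 3.10]{DuggerSpivakRigidification} is natural and associative, which it is.
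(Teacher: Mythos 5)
Your argument is correct. The paper offers no proof of this proposition at all --- it is imported directly from Dugger--Spivak (cited as Corollary 3.10) --- and your derivation, splitting $\Hom_{\CL T}(\alpha,\omega)$ bead by bead via the wedge-splitting isomorphism, applying the cube formula $\Hom_{\CL{\repD[m_s]}}(0,m_s)\cong\prod_{\{1,\ldots,m_s-1\}}\repS[1]$ to each bead, and identifying $V_T\setminus J_T$ with the disjoint union of the beads' interior vertices, is exactly the standard proof of that cited result, so the two treatments coincide.
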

 
\begin{notation}
    For $0\leq i\leq m$, we define a functor 
    \[ (-)^{+m}\colon \cP air_{i,m+1}\to \cP air^m_{i,m+1}, \]
    which sends a pair $(J,V)$ to the pair $(J\cup \{m\},V\cup\{m\})$. This is well-defined.
\end{notation}

\begin{rmk}
    For $0\leq i\leq m$, the above functor induces under the isomorphisms from \cref{isowithpairs} a functor
    \[ (-)^{+m}\colon \tndnec{\repD[m+1]}{i}{m+1}\to \tndnec{\repD[m]\vee \repD[1]}{i}{m+1}. \]
    This functor adds the vertex $m$ to the set of joints and vertices of a totally non-degenerate necklace $T\hookrightarrow\repD[m+1]_{i,m+1}$.
\end{rmk}

\begin{rmk} \label{composetoid}
    Note that the composite of functors 
    \[ \cP air^m_{i,m+1}\hookrightarrow \cP air_{i,m+1}\xrightarrow{(-)^{+m}} \cP air^m_{i,m+1} \]
    is the identity, and hence the composite of functors
    \[ \tndnec{\repD[m]\vee \repD[1]}{i}{m+1}\hookrightarrow\tndnec{\repD[m+1]}{i}{m+1}\xrightarrow{(-)^{+m}}\tndnec{\repD[m]\vee \repD[1]}{i}{m+1}\]
    is also the identity.
\end{rmk}

\begin{rmk}
    As the below outer square of functors commute by \cref{composetoid}, we get a 
    functor 
    \[ (-)^{+m}\colon \tndnec{L\repD[m+1,Y]_{-,\defThn,\defS}}{i}{m+1}\to \tndnec{L(\repD[m,Y]\amalg_{\repD[0,Y]}\repD[1,Y])_{-,\defThn,\defS}}{i}{m+1}\] 
    given by the universal property of the pullback from \cref{pullbackwedge}. 
    \begin{tz}
    \node[](1') {$\tndnec{L\repD[m+1,Y]_{-,\defThn,\defS}}{i}{m+1}$};  
        \node[below of=1,yshift=-.5cm](3') {$\tndnec{\repD[m+1]}{i}{m+1}$}; 
        \node[below of=1',xshift=4.4cm,yshift=.5cm](1) {$\tndnec{L(\repD[m,Y]\amalg_{F[0,Y]}\repD[1,Y])_{-,\defThn,\defS}}{i}{m+1}$}; 
        \node[right of=1,xshift=5cm](2) {$\tndnec{L\repD[m+1,Y]_{-,\defThn,\defS}}{i}{m+1}$}; 
        \node[below of=1,yshift=-.5cm](3) {$\tndnec{\repD[m]\vee\repD[1]}{i}{m+1}$}; 
        \node[below of=2,yshift=-.5cm](4) {$\tndnec{\repD[m+1]}{i}{m+1}$}; 

        \draw[right hook->] (1) to (2); 
        \draw[->] (1) to node[left,la]{$(Q_{-,\defThn,\defS})_!$} (3);
        \draw[->] (1') to node[left,la]{$(Q_{-,\defThn,\defS})_!$} (3');
        \draw[->,dashed] (1') to node[above,la,xshift=25pt,yshift=-3pt]{$(-)^{+m}$} (1);
        \draw[->] (3') to node[below,la,xshift=-7pt]{$(-)^{+m}$} (3);
        \draw[->] (2) to node[right,la]{$(Q_{-,\defThn,\defS})_!$} (4); 
        \draw[right hook->] (3) to (4);
        \draw[->,bend left=10] (1') to node[above,la]{$\id$} (2);
        \pullback{1};
    \end{tz}
\end{rmk}

\begin{constr} \label{constr:varphimX}
    For $0\leq i\leq m$ and $T\hookrightarrow \repD[m+1]_{i,m+1}$ a totally non-degenerate necklace, the inclusion $V_T\cup \{m\}\setminus J_T\cup \{m\}\subseteq V_T\setminus J_T$ induces by pre-composition a map in $\sset$
    \begin{tz}
    \node[](1) {$\Hom_{\CL T}(\alpha,\omega)\cong \prod_{V_T\setminus J_T} \repS[1]$}; 
    \node[below of=1,xshift=.43cm](2) {$\Hom_{\CL (T^{+m})}(\alpha,\omega)\cong \prod_{V_T\cup \{m\}\setminus J_T\cup \{m\}} \repS[1]$};
    \punctuation{2}{.};
    
    \draw[->] ($(1.south)-(1.3cm,0)$) to node[left,la]{$\varphi_T$} ($(2.north)-(1.73cm,0)$);
    \draw[->] ($(1.south)+(1.4cm,0)$) to node[right,la]{} ($(2.north)+(.97cm,0)$);
    \end{tz}
    It is straightforward to check that this construction is natural in $T\in\tndnec{\repD[m+1]}{i}{m+1}$.

    For $\defThn\in \Thn$ and $\defS\geq 0$, we get an induced map between colimits in $\sset$ making the following diagrams commute
    \begin{tz}
        \node[](1) {$\Hom_{\CL T}(\alpha,\omega)$}; 
        \node[right of=1,xshift=6.2cm](2) {$\colim_{T\in\tndnec{L\repD[m+1,Y]_{-,\defThn,\defS}}{i}{m+1}}\Hom_{\CL T}(\alpha,\omega)$}; 
        \node[below of=1](3) {$\Hom_{\CL (T^{+m})}(\alpha,\omega)$}; 
        \node[below of=2](4) {$\colim_{T\in\tndnec{L(\repD[m,Y]\amalg_{F[0,Y]} \repD[1,Y])_{-,\defThn,\defS}}{i}{m+1}}\Hom_{\CL T}(\alpha,\omega)$};

        \draw[->](1) to node[left,la]{$\varphi_T$} (3); 
        \draw[->](1) to node[above,la]{$\gamma_T$} (2); 
        \draw[->](2) to node[right,la]{$(\varphi^i_{m,Y})_{\defThn,\defS}$} (4); 
        \draw[->] (3) to node[below,la]{$\gamma_{T^{+m}}$} (4);
    \end{tz}
    for all $T\in \tndnec{L\repD[m+1,Y]_{-,\defThn,\defS}}{i}{m+1}$, where $\gamma$ denote the colimit cones. Since the above defined map is natural in $\defThn\in \Thn$ and $\defS\geq 0$, this induces a map in $\Thnssset$
    \begin{tz} 
        \node[](2) {$\colim_{T\in\tndnec{L\repD[m+1,Y]_{-,\star,\star}}{i}{m+1}}\Hom_{\CL T}(\alpha,\omega)$};  
        \node[below of=2](4) {$\colim_{T\in\tndnec{L(\repD[m,Y]\amalg_{F[0,Y]} \repD[1,Y])_{-,\star,\star}}{i}{m+1}}\Hom_{\CL T}(\alpha,\omega)$};

        \draw[->](2) to node[right,la]{$\varphi_{m,Y}^i$} (4); 
    \end{tz}
\end{constr}

We are now ready to build the desired $\Thnsset$-enriched functor. 

\begin{constr}
    We construct a $\Thnsset$-enriched functor 
    \[ \Pi_{m,Y}\colon \Ch L\repD[m+1,Y]\to \Ch L\repD[m,Y]\amalg_{[0]} \Sigma Y \]
    between directed $\Thnsset$-enriched categories such that 
    \begin{itemize}[leftmargin=0.6cm]
        \item it is the identity on objects,
        \item for $0\leq i<j\leq m$, the map on hom $\Thn$-spaces is given by the isomorphism in $\Thnsset$ from \cref{isoonhomsijlowerthanm}  
        \[ (\Pi_{m,Y})_{i,j}\colon \Hom_{\Ch L\repD[m+1,Y]}(i,j)\cong \Hom_{\Ch L\repD[m,Y]\amalg_{[0]} \Sigma Y}(i,j), \]
        \item for $0\leq i\leq m$, the map on hom $\Thn$-spaces is given by the map in $\Thnsset$ obtained by taking the diagonal of the map in $\Thnssset$ from \cref{constr:varphimX}
    \end{itemize}
        \begin{tz}
    \node[](1) {$\Hom_{\Ch L\repD[m+1,Y]}(i,m+1)\cong \diag(\colim_{T\in\tndnec{L\repD[m+1,Y]_{-,\star,\star}}{i}{m+1}}\Hom_{\CL T}(\alpha,\omega))$}; 
    \node[below of=1,xshift=.55cm](2) {$\Hom_{\Ch L\repD[m,Y]\amalg_{[0]}\Sigma Y}(i,m+1)\cong\diag(\colim_{T\in\tndnec{L(\repD[m,Y]\amalg_{F[0,Y]} \repD[1,Y])_{-,\star,\star}}{i}{m+1}}\Hom_{\CL T}(\alpha,\omega))$};
    
    \draw[->] ($(1.south)-(4.5cm,0)$) to node[left,la]{$(\Pi_{m,Y})_{i,m+1}$} ($(2.north)-(5.05cm,0)$);
    \draw[->] ($(1.south)+(2.1cm,0)$) to node[right,la]{$\diag(\varphi_{m,Y}^i)$} ($(2.north)+(1.55cm,0)$);
    \end{tz}
    It is straightforward to check that this construction is compatible with composition.
\end{constr}

Moreover, we observe that, by construction, we have the following. 

\begin{lemma} \label{compisinclusion}
    The composite of $\Thnsset$-enriched functor 
    \[ \Ch L\repD[m,Y]\xrightarrow{\Ch L[d^{m+1},Y]} \Ch L\repD[m+1,Y]\xrightarrow{\Pi_{m,Y}} \Ch L\repD[m,Y]\amalg_{[0]} \Sigma Y \]
    coincides with the canonical inclusion of the coproduct. 
\end{lemma}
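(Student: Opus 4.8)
The plan is to compare the two $\Thnsset$-enriched functors $\Ch L\repD[m,Y]\to\Ch L\repD[m,Y]\amalg_{[0]}\Sigma Y$ in question — the composite $\Pi_{m,Y}\circ\Ch L[d^{m+1},Y]$ and the canonical coproduct inclusion — directly, object by object and hom by hom. Since both have source the $\Thnsset$-enriched category $\Ch L\repD[m,Y]$, whose object set is $\{0,\ldots,m\}$, and since a $\Thnsset$-enriched functor preserves identities while $\Hom_{\Ch L\repD[m,Y]}(i,j)=\emptyset$ for $j<i$, it suffices to check agreement on objects and on the hom $\Thn$-spaces $\Hom_{\Ch L\repD[m,Y]}(i,j)$ for $0\le i<j\le m$. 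On objects this is immediate: $\Ch L[d^{m+1},Y]$ acts by $i\mapsto d^{m+1}(i)=i$ and $\Pi_{m,Y}$ is the identity on objects by construction, so the composite is the identity on $\{0,\ldots,m\}$, as is the coproduct inclusion. In particular, for $0\le i<j\le m$ the composite factors through $\Hom_{\Ch L\repD[m+1,Y]}(i,j)$ with $j\le m$, where $\Pi_{m,Y}$ is given by the isomorphism of \cref{isoonhomsijlowerthanm}; the more delicate description of $\Pi_{m,Y}$ at the target object $m+1$ — the map $\diag(\varphi_{m,Y}^i)$ built from the functors $\newG$, $\newGbar$ and the weights $H^i_{m+1}$ — therefore never enters.

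Next I would unwind these hom maps through \cref{cor:computationshomC1ordered}. As $Y$ is connected, $L\repD[0,Y]\cong\repD[0]$ by \cref{RmkPushout}; since $L$ and $\Ch$ preserve colimits and $\Ch L\repD[1,Y]\cong\Sigma Y$ by \cref{Sh1issigma}, there is a natural identification $\Ch L\repD[m,Y]\amalg_{[0]}\Sigma Y\cong\Ch L(\repD[m,Y]\amalg_{F[0,Y]}\repD[1,Y])$, under which the coproduct inclusion corresponds to $\Ch$ applied to the coproduct inclusion $j_1\colon L\repD[m,Y]\to L(\repD[m,Y]\amalg_{F[0,Y]}\repD[1,Y])$ — this is the identification already implicit in the construction of $\Pi_{m,Y}$. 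With this in hand, for $0\le i<j\le m$ the three relevant hom maps are obtained by passing to diagonals of the colimits of the functors $T\mapsto\Hom_{\CL T}(\alpha,\omega)$ along, respectively: the functor of totally non-degenerate necklace categories $\tndnec{L\repD[m,Y]_{-,\defThn,\defS}}{i}{j}\to\tndnec{L\repD[m+1,Y]_{-,\defThn,\defS}}{i}{j}$ induced by $L[d^{m+1},Y]$; the inverse of the inclusion $\tndnec{L(\repD[m,Y]\amalg_{F[0,Y]}\repD[1,Y])_{-,\defThn,\defS}}{i}{j}\hookrightarrow\tndnec{L\repD[m+1,Y]_{-,\defThn,\defS}}{i}{j}$ from \cref{pullbackwedge}, which is an isomorphism for $j\le m$ by \cref{necofijlowerthanm}; and post-composition with $j_1$.

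The heart of the argument is then the commutativity of the triangle of totally non-degenerate necklace categories formed by these three functors, and this reduces to a single identity of maps in $\pcatThn$: the canonical map $L(\repD[m,Y]\amalg_{F[0,Y]}\repD[1,Y])\to L\repD[m+1,Y]$, which induces the inclusion of \cref{pullbackwedge} on totally non-degenerate necklaces from $i$ to $j$ for $j\le m$, precomposed with $j_1$ equals $L[d^{m+1},Y]$, because on the summand $\repD[m,Y]$ the wedge map restricts to $[d^{m+1},\id_Y]$. As all of these functors are induced by post-composition with maps of simplicial sets, they are strictly compatible with the diagrams $T\mapsto\Hom_{\CL T}(\alpha,\omega)$, which depend only on the underlying necklace $T$; applying $\diag$ to the colimits over these diagrams then yields the claimed equality of hom maps and completes the proof. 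The only real difficulty is bookkeeping — carrying the several identifications of hom $\Thn$-spaces with diagonals of colimits and tracking the colimit cones — and once the triangle above is checked to commute the conclusion is purely formal, requiring no new necklace computation.
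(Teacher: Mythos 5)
Your proof is correct, and it takes the only sensible route here: the paper records this lemma without argument (as an immediate consequence of the construction of $\Pi_{m,Y}$), and your write-up is exactly the expected unwinding of that construction. In particular, the reduction via directedness of $\Ch L\repD[m,Y]$ to the homs $\Hom(i,j)$ with $0\le i<j\le m$, where $(\Pi_{m,Y})_{i,j}$ is the necklace-category isomorphism of \cref{isoonhomsijlowerthanm}, together with the identity $(\text{wedge map})\circ j_1=L[d^{m+1},Y]$ in $\pcatThn$ and the compatibility of all induced necklace-category functors with the diagrams $T\mapsto\Hom_{\CL T}(\alpha,\omega)$, is precisely what makes the claim hold ``by construction.''
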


\subsection{The comparison map} \label{sec:comparisonmap}

Let us fix a $\Thnsset$-enriched category $\cC$ and an object $c\in \cC$. We want to construct a $\Thnsset$-enriched natural transformation in $[\cC^{\op},\Thnsset]$ 
\begin{equation}\label{nattransf} \textstyle (\varepsilon_\cC)_!\St_{\Nh\cC} \int_\cC^\Nh\Hom_\cC(-,c)\to \Hom_\cC(-,c). 
\end{equation}
Since the composite of left adjoints $(\varepsilon_\cC)_!\St_{\Nh\cC}$ preserves colimits, we have the following isomorphisms in $[\cC^{\op},\Thnsset]$
\begin{align*} \textstyle(\varepsilon_\cC)_!\St_{\Nh\cC} \int_\cC^\Nh \Hom_\cC(-,c)&\cong (\varepsilon_\cC)_!\St_{\Nh\cC}(\colim_{\repD[m,\defThn,\defS]\xrightarrow{\sigma} \int_\cC^\Nh \Hom_\cC(-,c)} \repD[m,\defThn,\defS]) \\
&\cong \colim_{\repD[m,\defThn,\defS]\xrightarrow{\sigma} \int_\cC^\Nh \Hom_\cC(-,c)}(\varepsilon_\cC)_!\St_{\Nh\cC}(\pi_{\Hom_\cC(-,c)}\sigma).
\end{align*}
Hence to construct \eqref{nattransf}, we need to define, for every map $\sigma\colon \repD[m,\defThn,\defS]\to \int_\cC^\Nh \Hom_\cC(-,c)$ in $\sThnsset$, a $\Thnsset$-enriched natural transformation in $[\cC^{\op},\Thnsset]$
\[ (\varepsilon_\cC)_!\St_{\Nh\cC}(\pi_{\Hom_\cC(-,c)}\sigma)\to \Hom_\cC(-,c)\]
natural in $\sigma\in \int_{\DThnS}(\int_\cC^\Nh\Hom_\cC(-,c))$.

\begin{prop} \label{boat}
    For every $m\geq 0$ and every $Y\in \Thnsset$, there is a one-to-one correspondence between maps in $\sThnsset$
    \[ \textstyle \sigma\colon \repD[m,Y]\to \int_\cC^\Nh \Hom_\cC(-,c)\]
    and $\Thnsset$-enriched functors 
    \[ F_\sigma\colon \Ch L\repD[m,Y]\amalg_{[0]}\Sigma Y\to \cC \]
    such that $F_\sigma(m+1)=c$. 
    
    Moreover, for every morphism $\mapDelta\colon [\ell]\to [m]$ in $\Delta$, every map $f\colon X\to Y$ in $\Thnsset$, and every map $\sigma\colon \repD[m,Y]\to \int_\cC^\Nh \Hom_\cC(-,c)$ in $\sThnsset$, the following boat commutes, 
    \begin{tz} 
        \node[](1') {$\Ch L\repD[\ell,X]$};
        \node[below of=1'](2') {$\Ch L\repD[m,Y]$};
        \node[right of=1',xshift=2.6cm](1'') {$\Ch L\repD[\ell,X]\amalg_{[0]} \Sigma X$};
        \node[below of=1''](2'') {$\Ch L\repD[m,Y]\amalg_{[0]} \Sigma Y$};
        \node[right of=1'',yshift=-.75cm,xshift=2.5cm](3) {$\cC$};

        \draw[->] ($(1''.east)$) to node[above,la]{$F_{\sigma[\mapDelta,f]}$} (3);
        \draw[->] ($(2''.east)$) to node[below,la]{$F_{\sigma}$} (3);
        \draw[->] (1') to (1'');
        \draw[->] (1') to node[left,la]{$\Ch L[\mapDelta,f]$} (2');
        \draw[->] (2') to (2'');
    \end{tz}
    where the horizontal maps are the canonical inclusions of the coproducts.
\end{prop}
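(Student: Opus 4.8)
\textbf{Proof plan for \cref{boat}.}

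The plan is to establish the one-to-one correspondence by chaining together the adjunctions and structural isomorphisms already in place, and then to deduce the naturality (the commuting boat) by invoking the projection functor $\Pi_{m,Y}$ built in \cref{sec:B1}. First I would unwind the left-hand side: by \cref{sliceaspresheaf} (or rather by the fact that $\int_\cC^\Nh$ lands in $\sThnssetslice{\Nh\cC}$) a map $\sigma\colon \repD[m,Y]\to \int_\cC^\Nh\Hom_\cC(-,c)$ in $\sThnsset$ is the same as a map $\repD[m,Y]\to \int_\cC^\Nh\Hom_\cC(-,c)$ over $\Nh\cC$, which since $\repD[m,Y]\to\Nh\cC$ factors through $L\repD[m,Y]$ and $W\mapsto W_\sigma$ is a pushout, corresponds by the $L\dashv I$ adjunction and \cref{RmkPushout} to a map $L\repD[m,Y]\to \int_\cC^\Nh\Hom_\cC(-,c)$ over $\Nh\cC$ — but one should be careful here: the correspondence being asserted is with maps \emph{out of} the cone-type object $\Ch L\repD[m,Y]\amalg_{[0]}\Sigma Y$ hitting $c$, not merely maps over $\Nh\cC$. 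So the cleaner route is: use the defining pushout of $W_\sigma$ from \cref{pushoutlowersigma} in the case $W=\Nh\cC$, apply $\Ch$ (colimit-preserving) together with \cref{Sh1issigma} to get that $\Ch(\Nh\cC)_\sigma\cong \Ch\Nh\cC\amalg_{[0]}(\cdots)$, and combine with the universal property of $\int_\cC^\Nh$ over $\Nh\cC$. Concretely, a map $\sigma\colon\repD[m,Y]\to\int_\cC^\Nh\Hom_\cC(-,c)$ is, by the pullback defining the levels of $\int_\cC^\Nh$ together with the description $(\int_\cC^\Nh\Hom_\cC(-,c))_0\cong\coprod_{a}\Hom_\cC(a,c)$, exactly the data of a map $\repD[m,Y]\to\Nh\cC$ (equivalently $L\repD[m,Y]\to\Nh\cC$ in $\pcatThn$, equivalently $\Ch L\repD[m,Y]\to\cC$ by $\Ch\dashv\Nh$) together with a lift of the image of the last vertex $m$ to an element of $\Hom_\cC(F_\sigma(m),c)_{\defThn,\defS}$, i.e.\ a map $\repD[\defThn,\defS]\to\Hom_\cC(F_\sigma(m),c)$, i.e.\ by $\Sigma\dashv(\text{nothing})$ more directly a $\Thnsset$-enriched functor $\Sigma Y\to\cC$ sending $0\mapsto F_\sigma(m)$ and $1\mapsto c$. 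Gluing these along the object $m=F_\sigma(m)$ gives precisely a $\Thnsset$-enriched functor $F_\sigma\colon\Ch L\repD[m,Y]\amalg_{[0]}\Sigma Y\to\cC$ with $F_\sigma(m+1)=c$, and this assignment is manifestly invertible.

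For the second part — the commuting boat — I would argue as follows. Given $\sigma\colon\repD[m,Y]\to\int_\cC^\Nh\Hom_\cC(-,c)$ and $[\mapDelta,f]\colon\repD[\ell,X]\to\repD[m,Y]$, precomposition yields $\sigma[\mapDelta,f]$, and under the correspondence just established the functor $F_{\sigma[\mapDelta,f]}$ is, by functoriality of all the adjunctions involved, the composite of $F_\sigma$ with the map on $\Thnsset$-enriched categories induced by $[\mapDelta,f]$. So the content of the boat is precisely that the square on the left — whose horizontal maps are the canonical inclusions $\Ch L\repD[\ell,X]\hookrightarrow\Ch L\repD[\ell,X]\amalg_{[0]}\Sigma X$ and similarly for $Y$, and whose vertical maps are $\Ch L[\mapDelta,f]$ on the left column and the induced $\Ch L[\mapDelta,f]\amalg_{[0]}\Sigma f$ (or the appropriate map) on the right column — commutes, and that $F_\sigma$ composed around both ways agrees. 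The commutativity of that square is a formal consequence of the naturality of the coproduct inclusions and of the fact that $\Ch$, $L$, $\Sigma$ are functors; the key point where one must be slightly careful is identifying the \emph{right} vertical map: when $\mapDelta(\ell)=m$ the last vertex is preserved and the map on the $\Sigma$-summand is simply $\Sigma f$, but in general $\mapDelta(\ell)$ need not be $m$, in which case the last vertex $\ell$ of $\repD[\ell,X]$ maps to an interior vertex of $\repD[m,Y]$ and the relevant map $\Ch L\repD[\ell,X]\amalg_{[0]}\Sigma X\to\Ch L\repD[m,Y]\amalg_{[0]}\Sigma Y$ sends the $\Sigma X$-hom into $\Hom_{\Ch L\repD[m,Y]}(\mapDelta(\ell),m)\times(\text{hom of }\Sigma Y)$ via composition — this is exactly the kind of composite-with-the-last-bead bookkeeping encoded by the functor $\Pi_{m,Y}$ and the bead functor in \cref{sec:B1}, and invoking \cref{compisinclusion} handles the base case cleanly.

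I would then spell out the verification in coordinates only to the extent of checking compatibility with the composition maps of the enriched categories (using that $\Ch L\repD[m,Y]\amalg_{[0]}\Sigma Y$ is \emph{directed} in the sense of the definition in \cref{subsec:MSenriched}, so that the value of an enriched functor out of it and the naturality of an enriched natural transformation are determined by the relevant hom $\Thn$-spaces), and conclude. The main obstacle, as flagged above, is bureaucratic rather than conceptual: correctly tracking what the last vertex does under an arbitrary injective (or general) $\mapDelta$, and correspondingly pinning down the induced map on the glued objects $\Ch L\repD[-,-]\amalg_{[0]}\Sigma(-)$ so that the boat genuinely commutes on the nose and not merely up to isomorphism. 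Everything else — the one-to-one correspondence and the functoriality — follows by assembling the adjunctions $\Ch\dashv\Nh$, $L\dashv I$, the pushout descriptions in \cref{pushoutlowersigma,RmkPushout}, and the identification $\Ch L\repD[1,X]\cong\Sigma X$ of \cref{Sh1issigma}.
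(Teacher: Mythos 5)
Your treatment of the bijection is essentially the paper's own argument: unwind $\sigma$ via Yoneda and the pullback defining $(\int_\cC^\Nh \Hom_\cC(-,c))_m$, then transport through $L\dashv I$, $\Ch\dashv\Nh$ and $\Ch L\repD[1,Y]\cong\Sigma Y$ (\cref{Sh1issigma}); whether one applies the adjunction to the glued object $\repD[m,Y]\amalg_{[0]}\repD[1,Y]\to\Nh\cC$ all at once, as the paper does, or to the two pieces separately and glues the resulting enriched functors, as you do, makes no real difference.

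The problem is in the second half. You have read the boat as a square requiring a comparison functor $\Ch L\repD[\ell,X]\amalg_{[0]}\Sigma X\to\Ch L\repD[m,Y]\amalg_{[0]}\Sigma Y$ through which $F_{\sigma[\mapDelta,f]}$ factors, and you yourself flag the construction of that map when $\mapDelta(\ell)\neq m$ (the ``last-bead'' bookkeeping via $\Pi_{m,Y}$) as the main obstacle --- but you never carry it out, and the claim that $F_{\sigma[\mapDelta,f]}$ equals $F_\sigma$ composed with such an induced map does not follow from ``functoriality of the adjunctions'': on the $\Sigma X$-summand it would require unwinding the simplicial structure of $\int_\cC^\Nh$, in particular the last face $d_m$ defined through the evaluation maps $\ev^F_{a,b}$, which is genuinely nontrivial content that your plan defers. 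None of this is needed, because the boat in the statement contains no arrow between the two glued objects: it only asserts that the two composites out of $\Ch L\repD[\ell,X]$ agree. Writing $\tau$ for the composite of $\sigma$ with the projection $\int_\cC^\Nh\Hom_\cC(-,c)\to\Nh\cC$ (as in \cref{not:tauf}), the bottom composite $F_\sigma\circ\mathrm{incl}\circ\Ch L[\mapDelta,f]$ is adjunct under $\Ch L\dashv I\Nh$ to $\tau\circ[\mapDelta,f]$, and the top composite $F_{\sigma[\mapDelta,f]}\circ\mathrm{incl}$ is adjunct to the projection of $\sigma\circ[\mapDelta,f]$, which is again $\tau\circ[\mapDelta,f]$ since the projection is a map of simplicial objects; so the two composites coincide by naturality of the adjunction --- this is exactly the paper's one-line argument. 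As written, your plan replaces this easy statement with a stronger, unestablished one, so the second assertion of the proposition is not actually proved by your proposal.
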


\begin{proof}
    Composites in $\sThnsset$
    \[  \textstyle\repD[m,Y]\xrightarrow{\sigma} \int_\cC\Hom_\cC(-,c) \to \Nh\cC \]
    correspond by Yoneda to composites in $\Thnsset$
    \[ \textstyle Y\xrightarrow{\sigma} (\int_\cC\Hom_\cC(-,c))_m\to (\Nh\cC)_m, \]
    which by definition of $(\int_\cC\Hom_\cC(-,c))_m$ correspond to composites in $\sThnsset$ 
    \[ Y\xrightarrow{\sigma}(\Nh\cC)_m\times_{(\Nh\cC)_0}\times (\Nh\cC)_1\times_{(\Nh\cC)_0} \{c\}\to (\Nh\cC)_m. \]
    By the universal property of pullback and Yoneda, above composites in $\Thnsset$ correspond to composites in $\sThnsset$
    \[ F[m,Y]\to F[m,Y]\amalg_{[0]} F[1,Y]\xrightarrow{\sigma} \Nh\cC\]
    with $\sigma(m+1)=c$, and so by the adjunction $\Ch L\dashv I\Nh$ and \cref{Sh1issigma} to composites of $\Thnsset$-enriched functors 
    \[ \Ch L F[m,Y]\to \Ch L F[m,Y]\amalg_{[0]} \Sigma Y\xrightarrow{F_\sigma} \cC\]
    such that $F_\sigma(m+1)=c$. In particular, we can deduce from this the desired bijection between maps $\sigma$ and $\Thnsset$-enriched functors $F_\sigma$. Finally, the commutativity of the boat follows from the naturality of the adjunction $\Ch L\dashv I\Nh$.
\end{proof}

Let us now fix $m\geq 0$, a connected $\Thn$-space $Y$, and a map $\sigma\colon \repD[m,Y]\to \int_\cC^\Nh \Hom_\cC(-,c)$ in $\Thnsset$. In particular, one can take $Y=\repD[\defThn,\defS]$. 

\begin{notation} \label{not:tauf}
    We write $\tau$ for the composite of maps in $\sThnsset$ 
    \[ \textstyle\tau\colon \repD[m,Y]\xrightarrow{\sigma} \int_\cC^\Nh\Hom_\cC(-,c)\to \Nh\cC. \]
\end{notation}

We aim to construct a $\Thnsset$-enriched natural transformation in $[\cC^{\op}, \Thnsset]$
    \[ \varphi_\sigma\colon (\varepsilon_\cC)_!\St_{\Nh\cC}(\tau)\to \Hom_\cC(-,c). \]

    \begin{rmk} \label{stofid}
        By \cref{Stof[lX]appendix} in the case where $\mapDelta=\id_{[m]}$ and $f=\id_Y$, there is an isomorphism in $[\Ch L\repD[m,Y]^{\op},\Thnsset]$
        \[ \St_{L\repD[m,Y]}(\id_{\repD[m,Y]})\cong \Ch L[d^{m+1},Y]^* \Hom_{\Ch L\repD[m+1,Y]}(-,m+1). \]
    \end{rmk}
    
\begin{constr} \label{constr:varphisigma}
    Recall from \cref{boat} that the map $\sigma\colon \repD[m,Y]\to \int_\cC^\Nh \Hom_\cC(-,c)$ corresponds to a $\Thnsset$-enriched functor $F_\sigma\colon \Ch L\repD[m,Y]\amalg_{[0]} \Sigma Y\to \cC$ such that $F_\sigma(m+1)=c$. We further have a commutative diagram in $\Thncat$
    \begin{tz}
        \node[](1) {$\Ch L\repD[m,Y]$}; 
        \node[below of=1](2) {$\Ch L\repD[m+1,Y]$};
        \node[right of=2,xshift=3.1cm](2') {$\Ch L\repD[m,Y]\amalg_{[0]} \Sigma Y$}; 
        \node[right of=2',xshift=1.95cm](2'') {$\cC$}; 
        \node[above of=2''](1'') {$\Ch\Nh\cC$};
        \draw[->] (1) to node[left,la]{$\Ch L[d^{m+1},Y]$} (2); 
        \draw[->] (1) to node[above,la]{$\Ch\tau$} (1'');
        \draw[->] (1'') to node[right,la]{$\varepsilon_\cC$} (2'');
        \draw[->] (2) to node[below,la]{$\Pi_{m,Y}$} (2');
        \draw[->] (2') to node[below,la]{$F_\sigma$} (2'');
    \end{tz}
    Then the $\Thnsset$-enriched functor $F_\sigma\Pi_{m,Y}$ induces a $\Thnsset$-enriched natural transformation in $[\Ch L\repD[m+1,Y]^{\op}, \Thnsset]$
    \[ (F_\sigma\Pi_{m,Y})_{-,m+1}\colon \Hom_{\Ch L\repD[m+1,Y]}(-,m+1)\to \Pi_{m,Y}^*F_\sigma^*\Hom_\cC(-,c). \]
    By restricting along $\Ch L[d^{m+1},Y]$, we obtain a $\Thnsset$-enriched natural transformation in $[\Ch L\repD[m,Y]^{\op}, \Thnsset]$
    \[ \gamma_\sigma\coloneqq \Ch L[d^{m+1},Y]^*\Hom_{\Ch L\repD[m+1,Y]}(-,m+1)\to \Ch L[d^{m+1},Y]^*\Pi_{m,Y}^*F_\sigma^*\Hom_\cC(-,c), \]
    which by \cref{stofid} and the fact that the above diagram commutes amounts to a $\Thnsset$-enriched natural transformation in $[\Ch L\repD[m,Y]^{\op}, \Thnsset]$
    \[ \gamma_\sigma\colon \St_{L\repD[m,Y]}(\id_{\repD[m,Y]})\to (\Ch\tau)^*(\varepsilon_\cC)^*\Hom_\cC(-,c). \]
    By transposing along the adjunctions $(\Ch\tau)_!\dashv (\Ch\tau)^*$ and $(\varepsilon_\cC)_!\dashv (\varepsilon_\cC)^*$, we get a $\Thnsset$-enriched natural transformation in $[\cC^{\op}, \Thnsset]$
    \[ \varphi_\sigma\coloneqq (\varepsilon_\cC)_!\St_{\Nh\cC}(\tau)\cong (\varepsilon_\cC)_!(\Ch\tau)_!\St_{L\repD[m,Y]}(\id_{\repD[m,Y]})\to \Hom_\cC(-,c), \]
    where the isomorphism holds by \cref{basechangeStappendix} in the case where $f=\tau$. 
\end{constr}

To show that this construction is natural, we will make use of the following formal results. 

\begin{lemma} \label{unitisFonhoms}
    Let $F\colon \cC\to \cD$ be a $\Thnsset$-enriched functor and $c\in \cC$ be an object. The unit of the adjunction $F_!\dashv F^*$ evaluated at $\Hom_\cC(-,c)$ is the $\Thnsset$-enriched natural transformation in $[\cC^{\op},\Thnsset]$
    \[ F_{-,c}\colon \Hom_\cC(-,c)\to \Hom_\cD(F(-),Fc)=F^*\Hom_\cD(-,Fc)\cong F^*F_!\Hom_\cC(-,c). \]
\end{lemma}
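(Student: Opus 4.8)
The plan is to prove this by the enriched Yoneda lemma, reducing the equality of two $\Thnsset$-enriched natural transformations out of the representable $\Hom_\cC(-,c)$ to a single check at the identity morphism $\id_c$. First I would recall that, by the enriched Yoneda lemma, a $\Thnsset$-enriched natural transformation out of $\Hom_\cC(-,c)$ is determined by its value at $\id_c$, so it suffices to compare the two transformations there. Applying this with target $F^*\Hom_\cD(-,Fc)=\Hom_\cD(F(-),Fc)$, and recalling from \cite[(4.32)]{Kelly} the natural isomorphism $\kappa\colon\Hom_\cD(-,Fc)\xrightarrow{\ \sim\ }F_!\Hom_\cC(-,c)$ already used in the proof of \cref{lem:computeLKE}, I obtain an isomorphism $F^*\kappa\colon\Hom_\cD(F(-),Fc)\xrightarrow{\ \sim\ }F^*F_!\Hom_\cC(-,c)$ through which $\eta_{\Hom_\cC(-,c)}$ and $F_{-,c}$ become comparable. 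Functoriality of $F$ immediately gives that $F_{-,c}$ sends $\id_c$ to $F_{c,c}(\id_c)=\id_{Fc}$, hence that $F^*\kappa\circ F_{-,c}$ sends $\id_c$ to $\kappa_{Fc}(\id_{Fc})$.

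Next I would pin down the value of the unit at $\id_c$. The point is that the isomorphism $\kappa$ of \cite[(4.32)]{Kelly} is, by construction, the one corresponding under the Yoneda lemma in $[\cD^{\op},\Thnsset]$ to the composite of natural bijections
\begin{align*}
[\cD^{\op},\Thnsset](F_!\Hom_\cC(-,c),G) &\cong [\cC^{\op},\Thnsset](\Hom_\cC(-,c),F^*G) \\
&\cong (F^*G)(c)=G(Fc)\cong [\cD^{\op},\Thnsset](\Hom_\cD(-,Fc),G),
\end{align*}
where the first bijection is the adjunction $F_!\dashv F^*$ (sending $\psi$ to $F^*(\psi)\circ\eta_{\Hom_\cC(-,c)}$) and the outer two are instances of the Yoneda lemma. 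Feeding $\id_{F_!\Hom_\cC(-,c)}$ through this composite and unwinding the three bijections shows on the one hand that it produces $\kappa$, and on the other hand that the element at $Fc$ selected by $\kappa$ is exactly $(\eta_{\Hom_\cC(-,c)})_c(\id_c)$; that is, $\kappa_{Fc}(\id_{Fc})=(\eta_{\Hom_\cC(-,c)})_c(\id_c)$. Combining with the previous paragraph, $F^*\kappa\circ F_{-,c}$ and $\eta_{\Hom_\cC(-,c)}$ are two $\Thnsset$-enriched natural transformations $\Hom_\cC(-,c)\Rightarrow F^*F_!\Hom_\cC(-,c)$ with the same value at $\id_c$, hence equal by enriched Yoneda; this is precisely the asserted description of the unit.

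The main obstacle is purely bookkeeping: one must be careful that the isomorphism invoked from \cite[(4.32)]{Kelly} is the canonical, Yoneda-compatible one (so that no stray automorphism of $\Hom_\cD(-,Fc)$ slips in) and that the two enriched Yoneda bijections, in $[\cC^{\op},\Thnsset]$ and in $[\cD^{\op},\Thnsset]$, are threaded through the adjunction in the correct order. A clean way to package the whole argument is to observe that the two $\Thnsset$-enriched functors $\cC\xrightarrow{\ F\ }\cD\xrightarrow{\ y_\cD\ }[\cD^{\op},\Thnsset]$ and $\cC\xrightarrow{\ y_\cC\ }[\cC^{\op},\Thnsset]\xrightarrow{\ F_!\ }[\cD^{\op},\Thnsset]$ agree up to the isomorphism of \cite[(4.32)]{Kelly} (here $y_\cC$, $y_\cD$ are the Yoneda embeddings); the claimed formula for $\eta$ at the representable $\Hom_\cC(-,c)$ is then the mate of this commuting triangle under $F_!\dashv F^*$, using that $y_\cC$ is fully faithful and acts on hom $\Thn$-spaces by the identity.
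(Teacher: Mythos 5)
Your proposal is correct and takes essentially the same route as the paper: both arguments come down to the enriched Yoneda lemma's assertion that a $\Thnsset$-enriched natural transformation out of $\Hom_\cC(-,c)$ is determined by its value at $\id_c$, together with the observation that $F_{-,c}$ sends $\id_c$ to $\id_{Fc}$. The only cosmetic difference is that the paper verifies directly that the pair $(\Hom_\cD(-,Fc),F_{-,c})$ has the universal property of the unit of $F_!\dashv F^*$ (so it never needs to invoke the isomorphism of \cite[(4.32)]{Kelly}), whereas you fix that canonical isomorphism and check agreement of the two transformations at $\id_c$.
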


\begin{proof}
    By the enriched Yoneda lemma, the $\Thnsset$-enriched natural transformation $F_{-,c}$ is determined by the fact that it sends $\id_c\in \Hom_\cC(c,c)$ to $\id_{Fc}\in \Hom_\cC(Fc,Fc)$. It then follows that the pair $(\Hom_\cD(-,Fc),F_{-,c})$ is initial in the category of pairs $(G,\eta)$ of a $\Thnsset$-enriched functor $G\colon \cC\to \cD$ and a $\Thnsset$-enriched natural transformation $\eta\colon \Hom_\cC(-,c)\Rightarrow F^* G$. To see this, note that the latter is again determined by the image of $\id_c\in \Hom_\cC(c,c)$ under $\eta$, namely $\eta(\id_c)\in G(Fc)$. This shows that $F_{-,c}$ is the unit of the adjunction $F_!\dashv F^*$.
\end{proof}

\begin{rmk} \label{rem:mates}
    Consider a commutative square in $\Thncat$ as below left.
    \begin{tz}
        \node[](1) {$\cA$}; 
        \node[below of=1](2) {$\cC$}; 
        \node[right of=1,xshift=.5cm](3) {$\cB$}; 
        \node[below of=3](4) {$\cD$}; 

        \draw[->] (1) to node[left,la]{$I$} (2); 
        \draw[->] (1) to node[above,la]{$F$} (3); 
        \draw[->] (2) to node[below,la]{$G$} (4); 
        \draw[->] (3) to node[right,la]{$J$} (4);

        \node[right of=3,xshift=2cm](1) {$[\cA^{\op},\Thnsset]$}; 
        \node[below of=1](2) {$[\cC^{\op},\Thnsset]$}; 
        \node[right of=1,xshift=2.8cm](3) {$[\cB^{\op},\Thnsset]$}; 
        \node[below of=3](4) {$[\cD^{\op},\Thnsset]$}; 

        \cell[][n][.5]{1}{4}{};
        \draw[->] (2) to node[left,la]{$I^*$} (1); 
        \draw[->] (1) to node[above,la]{$F_!$} (3); 
        \draw[->] (2) to node[below,la]{$G_!$} (4); 
        \draw[->] (4) to node[right,la]{$J^*$} (3);
    \end{tz} 
    Then the component at a $\Thnsset$-enriched functor $H\colon \cC^{\op}\to \Thnsset$ of the natural transformation $F_!I^*\to J^*G_!$ as in the above right square in $\cat$ is given by the $\Thnsset$-enriched natural transformation $F_!I^*(H)\to J^*G_!(H)$ in $[\cB^{\op},\Thnsset]$ corresponding under the adjunction $F_!\dashv F^*$ to the $\Thnsset$-enriched natural transformation in $[\cA^{\op},\Thnsset]$
    \[ I^*(H)\xrightarrow{I^*\eta_H} I^*G^*G_!(H)=F^*J^*G_!(H), \]
    where $\eta$ denotes the unit of the adjunction $G_!\dashv G^*$.
\end{rmk}

We now show that the construction $\varphi_\sigma$ from \cref{constr:varphisigma} is natural in $\sigma$. Let us fix a morphism $\mapDelta\colon [\ell]\to [m]$ in $\Delta$, a map $f\colon X\to Y$ in $\Thnsset$ between connected $\Thn$-spaces, and a map $\sigma\colon \repD[m,Y]\to \int_\cC^\Nh \Hom_\cC(-,c)$ in $\sThnsset$.

\begin{lemma} \label{firstcommute}
    The following diagram in $[\Ch L\repD[\ell,X]^{\op},\Thnsset]$ commutes. 
    \begin{tz}
        \node[](1) {$\Ch L[d^{\ell+1},X]^*\Hom_{\Ch L\repD[\ell+1,X]}(-,\ell+1)$}; 
        \node[below of=1](1') {$\Ch L[d^{\ell+1},X]^*\Ch L[\mapDelta+1,f]^*\Hom_{\Ch L\repD[m+1,Y]}(-,m+1)$};
        \node[below of=1',yshift=.5cm](2) {$\Ch L[\mapDelta,f]^*\Ch L[d^{m+1},Y]^*\Hom_{\Ch L\repD[m+1,Y]}(-,m+1)$}; 
        \node[right of=1,xshift=7cm](3) {$\Ch L[\mapDelta,f]^*(\Ch \tau)^*(\varepsilon_\cC)^*\Hom_\cC(-,c)$}; 
        \node[below of=3,yshift=-1cm](4) {$\Ch L[\mapDelta,f]^*(\Ch\tau)^*(\varepsilon_\cC)^*\Hom_\cC(-,c)$}; 
        \draw[->] (1) to node[above,la]{$\gamma_{\sigma [\mapDelta,f]}$} (3);
        \draw[->] (1) to node[left,la]{$\Ch L[d^{\ell+1},Y]^* \Ch L[\mapDelta+1,f]_{-,\ell+1}$} (1');
        \draw[d] (3) to (4);
        \draw[->] (2) to node[below,la]{$\Ch L[\mapDelta,f]^*\gamma_\sigma$} (4);
        \node at ($(1')!0.5!(2)$) {\rotatebox{270}{$\cong$}};
    \end{tz}
\end{lemma}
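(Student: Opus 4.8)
The plan is to unwind the definitions of $\gamma_\sigma$ and $\gamma_{\sigma[\mapDelta,f]}$ from \cref{constr:varphisigma} and reduce the claimed commutativity to a single equality of $\Thnsset$-enriched functors. First I would record the identity
\[ [\mapDelta+1,f]\circ[d^{\ell+1},\id_X]=[d^{m+1}\circ\mapDelta,f]=[d^{m+1},\id_Y]\circ[\mapDelta,f] \]
in $\sThnsset$; applying $\Ch L$ and the restriction operation $(-)^*$ shows that the isomorphism ``$\cong$'' in the left column of the diagram is precisely this identification, and that $\Ch L[\mapDelta,f]^*\gamma_\sigma$ precomposed with the left vertical map equals $\Ch L[d^{\ell+1},X]^*$ applied to $(F_\sigma\circ\Pi_{m,Y}\circ\Ch L[\mapDelta+1,f])_{-,\ell+1}$ — here one uses functoriality of $\Ch L$, functoriality of $(-)^*$, and the compatibility of the ``action on hom $\Thn$-spaces'' with composition, namely $(HG)_{-,b}=G^*(H_{-,b})\circ G_{-,b}$. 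Since $\gamma_{\sigma[\mapDelta,f]}=\Ch L[d^{\ell+1},X]^*((F_{\sigma[\mapDelta,f]}\circ\Pi_{\ell,X})_{-,\ell+1})$ by definition, and since the equality of targets ($C=C'$) follows from functoriality of $\Ch$ and $(-)^*$ together with \cref{compisinclusion} and the commuting square in \cref{constr:varphisigma} (which give $F_{\sigma[\mapDelta,f]}\circ\Pi_{\ell,X}\circ\Ch L[d^{\ell+1},X]=F_{\sigma[\mapDelta,f]}\circ\iota_\ell=\varepsilon_\cC\circ\Ch(\tau\circ L[\mapDelta,f])$), the lemma reduces to proving the equality
\[ F_\sigma\circ\Pi_{m,Y}\circ\Ch L[\mapDelta+1,f]=F_{\sigma[\mapDelta,f]}\circ\Pi_{\ell,X}\colon\ \Ch L\repD[\ell+1,X]\to\cC. \]

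To prove this equality I would use that $\Ch L\repD[\ell+1,X]$ is a directed $\Thnsset$-enriched category: its object set is $\{0,\ldots,\ell+1\}$, and by \cref{cor:computationshomC1ordered} (each level of $\repD[\ell+1,X]$ being a coproduct of copies of the $1$-ordered simplicial set $\repD[\ell+1]$) its hom $\Thn$-spaces $\Hom(i,j)$ are $\emptyset$ for $j<i$ and $\repS[0]$ for $j=i$. Hence, by the remark following the definition of directed, it suffices to check agreement on objects and on $\Hom_{\Ch L\repD[\ell+1,X]}(i,j)$ for $0\le i<j\le\ell+1$. On objects both composites send $k\mapsto\tau(\mapDelta(k))$ for $k\le\ell$ and $\ell+1\mapsto c$, using $F_\sigma(k)=\tau(k)$, $F_\sigma(m+1)=c$, $(\mapDelta+1)(\ell+1)=m+1$, and the analogues for $\sigma[\mapDelta,f]$. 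For $j\le\ell$, the hom $\Hom_{\Ch L\repD[\ell+1,X]}(i,j)$ lies in the image of $\Ch L[d^{\ell+1},X]$, so on it $\Ch L[\mapDelta+1,f]$ acts as $\Ch L[d^{m+1},Y]\circ\Ch L[\mapDelta,f]$ by the displayed identity; \cref{compisinclusion} then turns both projections $\Pi$ into the canonical coproduct inclusions on this hom, \cref{isoonhomsijlowerthanm} identifies the relevant hom $\Thn$-spaces, and the boat of \cref{boat} gives $F_{\sigma[\mapDelta,f]}\circ\iota_\ell=F_\sigma\circ\iota_m\circ\Ch L[\mapDelta,f]$, so the two composites agree on $\Hom(i,j)$.

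The remaining case $j=\ell+1$ is the heart of the argument and the main obstacle. Here I would unwind the construction of $\Pi_{m,Y}$ and $\Pi_{\ell,X}$ on homs into the cone point from \cref{constr:varphimX} — through the ``add-the-last-vertex'' functors $(-)^{+m}$, $(-)^{+\ell}$ on totally non-degenerate necklaces and the induced weighted-colimit maps $\varphi^i_{m,Y}$, $\varphi^i_{\ell,X}$ — together with the description of $F_\sigma$ on $\Hom_{\Ch L\repD[m,Y]\amalg_{[0]}\Sigma Y}(i,m+1)\cong\Hom_{\Ch L\repD[m,Y]}(i,m)\times Y$ as the composition map of $\cC$ precomposed with $(F_\sigma\circ\iota_m)_{i,m}$ and the ``fiber datum'' $g_\sigma\colon Y\to\Hom_\cC(\tau(m),c)$ of $\sigma$ (the $\coprod_a\Hom_\cC(a,c)$-component of $\sigma$ at the last vertex coming from the pullback defining $\int_\cC^\Nh$), and likewise $F_{\sigma[\mapDelta,f]}$ with $g_{\sigma[\mapDelta,f]}\colon X\to\Hom_\cC(\tau(\mapDelta(\ell)),c)$. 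The key compatibility is that $g_{\sigma[\mapDelta,f]}$ is computed from $g_\sigma$ by the simplicial operator $\mapDelta^*$ of $\int_\cC^\Nh\Hom_\cC(-,c)$ (\cref{lem:simpstruct} and the ensuing construction of its faces): when $\mapDelta(\ell)=m$ it is simply $g_\sigma\circ f$, and when $\mapDelta(\ell)<m$ it is $g_\sigma\circ f$ followed, in $\cC$, by the morphism $\tau(\mapDelta(\ell))\to\tau(m)$ read off from $\tau$ — and this same post-composition with the ``$\tau$-segment from $\mapDelta(\ell)$ to $m$'' is precisely what $F_\sigma$ contributes through $\Hom_{\Ch L\repD[m,Y]}(\mapDelta(\ell),m)$ once $\Pi_{m,Y}\circ\Ch L[\mapDelta+1,f]$ has split off the last bead. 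Matching the bead combinatorics of $(-)^{+m}$ with the action of $\mapDelta+1$, together with these two formulas for the fiber datum, yields the equality on $\Hom(i,\ell+1)$, and hence the lemma. I expect the careful bookkeeping in the case $\mapDelta(\ell)<m$ — tracking how composition with a segment of $\tau$ appears symmetrically on the two sides — to be the delicate point.
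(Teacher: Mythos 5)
Your first two paragraphs are a correct unwinding: the lemma is precisely the assertion that the two composites $F_{\sigma[\mapDelta,f]}\circ\Pi_{\ell,X}$ and $F_\sigma\circ\Pi_{m,Y}\circ\Ch L[\mapDelta+1,f]$ out of $\Ch L\repD[\ell+1,X]$ induce the same maps on the hom $\Thn$-spaces $\Hom_{\Ch L\repD[\ell+1,X]}(i,\ell+1)$, $0\leq i\leq\ell$ (your use of $(\mapDelta+1)\circ d^{\ell+1}=d^{m+1}\circ\mapDelta$ and of $(HG)_{-,b}=G^*(H_{-,Gb})\circ G_{-,b}$ is fine), and the agreement on objects and on $\Hom(i,j)$ with $j\leq\ell$ does follow from \cref{compisinclusion}, \cref{isoonhomsijlowerthanm} and \cref{boat} as you indicate. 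This is a different and more explicit route than the paper's: the paper assembles \cref{compisinclusion} and \cref{boat} into one commuting diagram of $\Thnsset$-enriched functors and then simply passes to the induced natural transformations on hom $\Thn$-spaces with target $\ell+1$, restricted along $\Ch L[d^{\ell+1},X]$; it never unwinds $\Pi_{m,Y}$ on homs into the cone object nor the simplicial structure of $\int_\cC^\Nh\Hom_\cC(-,c)$, whereas your plan rebuilds the statement from exactly those ingredients.

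The problem is that your third paragraph, which you rightly call the heart, is a plan rather than a proof, and what it defers is essentially the entire content of the lemma: neither \cref{compisinclusion} nor \cref{boat} says anything about the two functors on $\Hom(-,\ell+1)$ (both only constrain them after precomposition with inclusions that do not see the cone object), so everything hangs on your asserted compatibility between the fiber data $g_{\sigma[\mapDelta,f]}$ and $g_\sigma$ and on the matching of bead combinatorics, and those assertions are exactly as hard as the lemma itself. Two concrete points are left open. First, ``the morphism $\tau(\mapDelta(\ell))\to\tau(m)$ read off from $\tau$'' must be identified precisely: by the construction of the faces of $\int_\cC^\Nh\Hom_\cC(-,c)$ (\cref{lem:simpstruct} and the face $d_m$ built from $\rho$ and $\ev$) it is a specific composite of edge data of $\tau$, and since $\Nh\cC$ composes only up to coherence, plausible candidates such as the long edge $\tau\langle\mapDelta(\ell),m\rangle$ and the spine composite need not coincide; you have to pin down which one the simplicial operator actually produces and prove it matches the other leg. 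Second, on that other leg, after $\Ch L[\mapDelta+1,f]$ the last bead of the image necklace stretches past $\mapDelta(\ell)$ up to $m+1$; $(-)^{+m}$ splits it at $m$, and the resulting coherence coordinates (the $\prod\repS[1]$ factors handled by $\varphi_T$ in \cref{constr:varphimX}) must then be chased through $F_\sigma$ before they can be compared, levelwise in $\defThn$ and $\defS$ and necklace by necklace, with the first computation. Until that comparison is actually carried out, the only nontrivial case of the lemma remains unproven; acknowledging that the bookkeeping is delicate does not close the gap.
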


\begin{proof}
    Combining \cref{compisinclusion,boat}, the following diagram of $\Thnsset$-enriched functors commutes.
    \begin{tz}
        \node[](1) {$\Ch L\repD[\ell,X]$}; 
        \node[right of=1,xshift=2.5cm](1') {$\Ch L\repD[\ell+1,X]$};
        \node[below of=1](2) {$\Ch L\repD[m,Y]$};
        \node[below of=1'](2') {$\Ch L\repD[m+1,Y]$};
        \node[right of=1',xshift=3cm](1'') {$\Ch L\repD[\ell,X]\amalg_{[0]} \Sigma X$};
        \node[below of=1''](2'') {$\Ch L\repD[m,Y]\amalg_{[0]} \Sigma Y$};
        \node[right of=1'',yshift=-.75cm,xshift=2.5cm](3) {$\cC$};

        \draw[->] ($(1''.east)$) to node[above,la]{$F_{\sigma[\mapDelta,f]}$} (3);
        \draw[->] ($(2''.east)$) to node[below,la]{$F_{\sigma}$} (3);
        \draw[->] (1) to node[left,la]{$\Ch L[\mapDelta,f]$} (2); 
        \draw[->] (1) to node[above,la]{$\Ch L[d^{\ell+1},X]$} (1');
        \draw[->] (1') to node[above,la]{$\Pi_{\ell,X}$} (1'');
        \draw[->] (2) to node[below,la]{$\Ch L[d^{m+1},Y]$} (2');
        \draw[->] (2') to node[below,la]{$\Pi_{m,Y}$} (2'');
    \end{tz}
    By taking the induced $\Thnsset$-enriched natural transformations between hom $\Thn$-spaces with target $\ell+1$ and restricting along $\Ch L[d^{\ell+1},X]$, we get the desired result. 
\end{proof}

\begin{lemma} \label{Stadjunct}
    The $\Thnsset$-enriched natural transformation in $[\Ch L\repD[m,Y]^{\op},\Thnsset]$ 
    \[ \St_{L\repD[m,Y]}([\mapDelta,f])\colon \Ch L[\mapDelta,f]_!\St_{L \repD[\ell,X]}(\id_{\repD[\ell,Y]})\cong \St_{L \repD[m,Y]}([\mapDelta,f])\to \St_{L\repD[m,Y]}(\id_{\repD[m,Y]}) \]
    corresponds under the adjunction $\Ch L[\mapDelta,f]_!\dashv \Ch L[\mapDelta,f]^*$ to the $\Thnsset$-enriched natural transformation in $[\Ch L\repD[\ell,X]^{\op},\Thnsset]$
    \[ \Ch L[d^{\ell+1},X]^* \Ch L[\mapDelta+1,f]_{-,\ell+1}\colon \St_{LF[\ell,X]}(\id_{F[\ell,X]})\to \Ch L[\mapDelta,f]^*\St_{LF[m,Y]}(\id_{F[m,Y]})\]
\end{lemma}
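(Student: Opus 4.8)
The plan is to compute the adjoint transpose explicitly and match it, on hom $\Thnsset$-valued functors, with the asserted map. Write $G=\Ch L[\mapDelta,f]$, and let $\iota_f\colon L\repD[m,Y]\to\Cone$ and $j\colon L\repD[\ell+1,X]\to\Cone$ denote the two legs of the pushout square defining $\Cone=\Cone[f][\mapDelta]$ from \cref{notationCone} (with $j$ the leg induced by $L[d^{\ell+1},X]$, which sends the object $\ell+1$ to $m+1$). First I would record the identifications in play: by \cref{stofid} (via \cref{Stof[lX]appendix} and \cref{Coneid}) we have $\St_{L\repD[\ell,X]}(\id_{\repD[\ell,X]})\cong\Ch L[d^{\ell+1},X]^*\Hom_{\Ch L\repD[\ell+1,X]}(-,\ell+1)$ and $\St_{L\repD[m,Y]}(\id_{\repD[m,Y]})\cong\Ch L[d^{m+1},Y]^*\Hom_{\Ch L\repD[m+1,Y]}(-,m+1)$, while $\St_{L\repD[m,Y]}([\mapDelta,f])\cong(\Ch\iota_f)^*\Hom_{\Ch\Cone}(-,m+1)$ by \cref{Stof[lX]appendix}. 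I would also observe that $L[d^{m+1},Y]$ and $L[\mapDelta+1,f]$ (with $\mapDelta+1$ as in \cref{notn:+1}) agree on $L\repD[\ell,X]$, hence induce a map $\kappa\colon\Cone\to L\repD[m+1,Y]$ with $\kappa\circ\iota_f=L[d^{m+1},Y]$ and $\kappa\circ j=L[\mapDelta+1,f]$, and that $\iota_f\circ L[\mapDelta,f]=j\circ L[d^{\ell+1},X]$; all of this is immediate from \cref{notationCone}.

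Next I would unwind the transformation in the statement. By definition it is the composite of the isomorphism $G_!\St_{L\repD[\ell,X]}(\id)\cong\St_{L\repD[m,Y]}([\mapDelta,f])$ of \cref{basechangeStappendix} with $\St_{L\repD[m,Y]}(g)$, where $g$ is the morphism of slice objects $[\mapDelta,f]\to[\id_{[m]},\id_Y]$ coming from $[\mapDelta,f]\colon\repD[\ell,X]\to\repD[m,Y]$. Its transpose under $G_!\dashv G^*$ is therefore $G^*\St_{L\repD[m,Y]}(g)\circ\phi$, where $\phi\colon\St_{L\repD[\ell,X]}(\id)\to G^*\St_{L\repD[m,Y]}([\mapDelta,f])$ is the transpose of the \cref{basechangeStappendix}-isomorphism. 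Since $\sigma\mapsto W_\sigma$ is functorial and $g$ induces $\kappa$ on the pushouts, we get $\St_{L\repD[m,Y]}(g)=(\Ch\iota_f)^*\big((\Ch\kappa)_{-,m+1}\big)$, and using $\iota_f\circ L[\mapDelta,f]=j\circ L[d^{\ell+1},X]$ this gives $G^*\St_{L\repD[m,Y]}(g)=\Ch L[d^{\ell+1},X]^*(\Ch j)^*\big((\Ch\kappa)_{-,m+1}\big)$.

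The key step is the identification of $\phi$. The isomorphism of \cref{basechangeStappendix} is — this is the content of \cref{lem:Stvssigma}, whose proof combines the cofinality of \cref{cofinality} with \cref{prop:BeckChevalley2} — precisely the Beck–Chevalley isomorphism attached to the image under $\Ch$ of the pushout square defining $\Cone$, evaluated at the representable $\Hom_{\Ch L\repD[\ell+1,X]}(-,\ell+1)$ and simplified via $(4.32)$ of Kelly. Feeding this square into \cref{rem:mates}, together with the description of the relevant unit in \cref{unitisFonhoms}, identifies $\phi=\Ch L[d^{\ell+1},X]^*\big((\Ch j)_{-,\ell+1}\big)$. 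Composing with $G^*\St_{L\repD[m,Y]}(g)$ and using functoriality of $H\mapsto H_{-,a}$ on hom $\Thnsset$-functors together with $\kappa\circ j=L[\mapDelta+1,f]$ then yields
\[ G^*\St_{L\repD[m,Y]}(g)\circ\phi \;=\; \Ch L[d^{\ell+1},X]^*\big((\Ch\kappa\circ\Ch j)_{-,\ell+1}\big) \;=\; \Ch L[d^{\ell+1},X]^*\big((\Ch L[\mapDelta+1,f])_{-,\ell+1}\big), \]
which is the claimed transpose.

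Two points will require care. First, \cref{lem:Stvssigma}, \cref{cofinality}, and the $\Cone$-description of \cref{Stof[lX]appendix} are phrased for representable sources, whereas here $X$ is an arbitrary $\Thn$-space; as in the proof of \cref{Stof[lX]appendix} I would write $X$ as a colimit of representables and check that the functors and adjunctions involved preserve the relevant colimits, so that the identifications of $\phi$ and of $\St_{L\repD[m,Y]}(g)$ descend from the representable case. Second — and I expect this to be the main obstacle — one must confirm that the isomorphism \cref{basechangeStappendix} actually produces is the Beck–Chevalley isomorphism of \cref{prop:BeckChevalley2} for that pushout square, i.e. trace through the chain of isomorphisms in the proofs of \cref{basechangeStappendix} and \cref{lem:Stvssigma} and verify it is assembled exactly from \cref{prop:BeckChevalley2}; once this bookkeeping is settled, \cref{rem:mates} applies verbatim and the rest is formal.
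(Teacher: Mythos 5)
Your proposal is correct and takes essentially the paper's route: the paper likewise works with the $\Ch\Cone$-pushout square, identifies $\St_{L\repD[m,Y]}([\mapDelta,f])\to\St_{L\repD[m,Y]}(\id)$ as $(\Ch\iota_f)^*$ applied to the action on hom $\Thn$-spaces of the comparison functor $\Ch\Cone\to\Ch L\repD[m+1,Y]$, and then computes the adjoint transpose by evaluating the pasted mate squares at the representable $\Hom_{\Ch L\repD[\ell+1,X]}(-,\ell+1)$ via \cref{rem:mates} and \cref{unitisFonhoms}. The ``main obstacle'' you flag is settled essentially by construction, since the isomorphism of \cref{basechangeSt} is assembled from \cref{lem:Stvssigma}, whose proof produces it exactly as the component at that representable of the Beck--Chevalley transformation from \cref{prop:BeckChevalley2,cofinality} (with the extension from representable $X$ to connected $X$ handled by the colimit argument you indicate).
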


\begin{proof}
Consider the following diagram in $\Thncat$. 
\begin{tz}
\node[](1) {$\Ch L\repD[\ell,X]$}; 
\node[below of=1](2) {$\Ch L\repD[\ell+1,X]$}; 
\node[right of=1,xshift=2cm](3) {$\Ch L\repD[m,Y]$}; 
\node[below of=3](4) {$\Ch \Cone$}; 
\node[below right of=4,xshift=1.3cm](5) {$\Ch L\repD[m+1,Y]$}; 
\pushout{4};

\draw[->] (1) to node[above,la]{$\Ch L[\mapDelta,f]$} (3);
\draw[->] (1) to node[left,la]{$\Ch L[d^{\ell+1},X]$} (2);
\draw[->] (3) to node[right,la]{$\Ch \iota_f$} (4);
\draw[->] (2) to node[below,la]{$J$} (4);
\draw[->,bend left] (3) to node[right,la]{$\Ch L[d^{m+1},Y]$} (5);
\draw[->,bend right=15] (2) to node[below,la,yshift=-3pt]{$\Ch L[\mapDelta+1,f]$} (5);
\draw[->,dashed] (4) to node[above,la,xshift=2pt]{$G$} (5);
\end{tz}
Then we can deduce from \cref{Stof[lX]appendix} that the $\Thnsset$-enriched natural transformation in $[\Ch L\repD[m,Y]^{\op},\Thnsset]$ 
    \[ \St_{L\repD[m,Y]}([\mapDelta,f])\colon  \St_{L \repD[m,Y]}([\mapDelta,f])\to \St_{L\repD[m,Y]}(\id_{\repD[m,Y]}) \]
    is given by the $\Thnsset$-enriched natural transformation 
    \[ (\Ch \iota_f)^*G_{-,m+1}\colon (\Ch \iota_f)^*\Hom_{\Cone}(-,m+1)\to \Ch L[d^{m+1},Y]^*\Hom_{\Ch L\repD[m+1,Y]}(-,m+1). \]
    Consider the following diagram of natural transformations. 
    \begin{tz}
        \node[](1) {$[\Ch L\repD[\ell,X]^{\op},\Thnsset]$}; 
        \node[below of=1](2) {$[\Ch L\repD[\ell+1,X]^{\op},\Thnsset]$}; 
        \node[right of=1,xshift=5cm](3) {$[\Ch L\repD[m,Y]^{\op},\Thnsset]$}; 
        \node[below of=3](4) {$[\Ch \Cone^{\op},\Thnsset]$}; 
        \node[below of=2](5) {$[\Ch L\repD[\ell+1,X]^{\op},\Thnsset]$}; 
        \node[below of=4](6) {$[\Ch L\repD[m+1,Y]^{\op},\Thnsset]$};

         \cell[la,above][n][.5]{1}{4}{};

          \cell[la,above][n][.5]{2}{6}{};

        \draw[->] (2) to node[left,la]{$\Ch L\repD[d^{\ell+1},X]^*$} (1); 
        \draw[d] (2) to (5);
        \draw[->] (1) to node[above,la]{$\Ch L[\mapDelta,f]_!$} (3);
        \draw[->] (5) to node[below,la]{$\Ch L[\mapDelta+1,f]_!$} (6);
        \draw[->] (2) to node[below,la]{$J_!$} (4); 
        \draw[->] (4) to node[right,la]{$(\Ch \iota_f)^*$} (3);
        \draw[->] (6) to node[right,la]{$G^*$} (4);
        \draw[->,bend right=50] ($(6.north)+(2cm,0)$) to node[right,la]{$\Ch L[d^{m+1},Y]^*$} ($(3.south)+(2cm,0)$);
    \end{tz}
    By \cref{lem:Stvssigma}, the component of the upper natural transformation at the representable object $\Hom_{\Ch L\repD[\ell+1,X]}(-,\ell+1)\in [\Ch L\repD[\ell+1,X]^{\op},\Thnsset]$ is given by the isomorphism in $[\Ch L\repD[m,Y]^{\op},\Thnsset]$
    \[  \Ch L[\mapDelta,f]_! \St_{L\repD[\ell,X]} (\id_{\repD[\ell,X]}) \cong \St_{L\repD[m,Y]}([\mapDelta,f]). \]
    Then, combining \cref{rem:mates,unitisFonhoms}, the component of the lower natural transformation at $\Hom_{\Ch L\repD[\ell+1,X]}(-,\ell+1)$ is given by the $\Thnsset$-enriched natural transformation in $[\Ch \Cone ^{\op},\Thnsset]$ 
    \[ G_{-,m+1}\colon \Hom_{\Cone}(-,m+1)\to G^*\Hom_{\Ch L\repD[m+1,Y]}(-,m+1). \]
    Hence, by the above, their composite is given by the $\Thnsset$-enriched natural transformation in $[\Ch L\repD[m,Y]^{\op},\Thnsset]$
    \[ \Ch L[\mapDelta,f]_!\St_{L \repD[\ell,X]}(\id_{\repD[\ell,Y]})\cong \St_{L \repD[m,Y]}([\mapDelta,f])\xrightarrow{\St_{L\repD[m,Y]}([\mapDelta,f])} \St_{L\repD[m,Y]}(\id_{\repD[m,Y]}). \]
    On the other hand, combining \cref{rem:mates,unitisFonhoms}, the component of the composite of the two natural transformations at $\Hom_{\Ch L\repD[\ell+1,X]}(-,\ell+1)$ is given by the $\Thnsset$-enriched natural transformation in $[\Ch L\repD[m,Y]^{\op},\Thnsset]$ corresponding under the adjunction $\Ch L[\mapDelta,f]_!\dashv \Ch L[\mapDelta,f]^*$ to the restriction along $\Ch L[d^{\ell+1},X]$ of the $\Thnsset$-enriched natural transformation in $[\Ch L\repD[\ell+1,X]^{\op},\Thnsset]$
    \[ \Ch L[\mapDelta+1,f]_{-,\ell+1}\colon \Hom_{\Ch L\repD[\ell+1,X]}(-,\ell+1)\to \Ch L[\mapDelta+1,f]^*\Hom_{\Ch L\repD[m+1,Y]}(-,m+1). \]
    Using that $\Ch L[d^{\ell+1},X]^*\Ch L[\mapDelta+1,f]^*=\Ch L[\mapDelta,f]^*\Ch L[d^{m+1},Y]^*$ and \cref{Stof[lX]appendix}, it is isomorphic to the $\Thnsset$-enriched natural transformation in $[\Ch L\repD[\ell,X]^{\op},\Thnsset]$ 
    \[ \Ch L[d^{\ell+1},X]^* \Ch L[\mapDelta+1,f]_{-,\ell+1}\colon \St_{LF[\ell,X]}(\id_{F[\ell,X]})\to \Ch L[\mapDelta,f]^*\St_{LF[m,Y]}(\id_{F[m,Y]}). \]
    This gives the desired result.
\end{proof}

\begin{lemma} \label{secondcommute}
    The following diagram in $[\Ch L\repD[m,Y]^{\op},\Thnsset]$ commutes. 
    \begin{tz}
        \node[](1) {$\Ch L[\mapDelta,f]_!\St_{LF[\ell,X]}(\id_{F[\ell,X]})$}; 
        \node[below of=1,yshift=.5cm](1') {$\St_{LF[m,Y]}([\mapDelta,f])$};
        \node[below of=1'](2) {$\St_{LF[m,Y]}(\id_{F[m,Y]})$}; 
        \node[right of=1,xshift=6.5cm](3) {$\Ch L[\mapDelta,f]_!\Ch L[\mapDelta,f]^*(\Ch \tau)^*(\varepsilon_\cC)^*\Hom_\cC(-,c)$}; 
        \node[below of=3,yshift=-1cm](4) {$(\Ch\tau)^*(\varepsilon_\cC)^*\Hom_\cC(-,c)$}; 
        \draw[->] (1) to node[above,la]{$\Ch L[\mapDelta,f]_! \gamma_{\sigma [\mapDelta,f]}$} (3);
        \draw[->] (1') to node[left,la]{$\St_{L\repD[m,Y]}([\mapDelta,f])$} (2);
        \draw[->] (3) to node[right,la]{$\epsilon$} (4);
        \draw[->] (2) to node[below,la]{$\gamma_\sigma$} (4);
        \node at ($(1)!0.5!(1')$) {\rotatebox{270}{$\cong$}};
    \end{tz}
\end{lemma}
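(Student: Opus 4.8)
The plan is to deduce the commutativity of the square from \cref{firstcommute} and \cref{Stadjunct} by transposing everything along the adjunction $\Ch L[\mapDelta,f]_!\dashv \Ch L[\mapDelta,f]^*$. Both composites around the square are $\Thnsset$-enriched natural transformations with source $\Ch L[\mapDelta,f]_!\St_{L\repD[\ell,X]}(\id_{\repD[\ell,X]})$ and target $(\Ch\tau)^*(\varepsilon_\cC)^*\Hom_\cC(-,c)$, so it suffices to show that their transposes --- which are $\Thnsset$-enriched natural transformations $\St_{L\repD[\ell,X]}(\id_{\repD[\ell,X]})\to \Ch L[\mapDelta,f]^*(\Ch\tau)^*(\varepsilon_\cC)^*\Hom_\cC(-,c)$ in $[\Ch L\repD[\ell,X]^{\op},\Thnsset]$ --- coincide. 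Here one uses that $\Ch(\tau[\mapDelta,f])=\Ch\tau\circ \Ch L[\mapDelta,f]$, so that $\gamma_{\sigma[\mapDelta,f]}$ from \cref{constr:varphisigma} is indeed valued in $\Ch L[\mapDelta,f]^*(\Ch\tau)^*(\varepsilon_\cC)^*\Hom_\cC(-,c)$.

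First I would compute the transpose of the ``right then down'' composite $\epsilon\circ \Ch L[\mapDelta,f]_!(\gamma_{\sigma[\mapDelta,f]})$: by naturality of the unit and the triangle identity for $\Ch L[\mapDelta,f]_!\dashv \Ch L[\mapDelta,f]^*$, the transpose of any map of the form $\epsilon_D\circ \Ch L[\mapDelta,f]_!(g)$ with $g\colon A\to \Ch L[\mapDelta,f]^*D$ is $g$ itself, so this transpose is $\gamma_{\sigma[\mapDelta,f]}$. Next I would compute the transpose of the ``down then right'' composite $\gamma_\sigma\circ \St_{L\repD[m,Y]}([\mapDelta,f])$, where the left leg is the morphism $\St_{L\repD[m,Y]}([\mapDelta,f])\colon \Ch L[\mapDelta,f]_!\St_{L\repD[\ell,X]}(\id_{\repD[\ell,X]})\cong \St_{L\repD[m,Y]}([\mapDelta,f])\to \St_{L\repD[m,Y]}(\id_{\repD[m,Y]})$. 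Since the transpose of a composite $h\circ m$ with $m\colon \Ch L[\mapDelta,f]_!A\to C$ is $\Ch L[\mapDelta,f]^*(h)$ precomposed with the transpose of $m$, applying this with $h=\gamma_\sigma$ and invoking \cref{Stadjunct} to identify the transpose of $\St_{L\repD[m,Y]}([\mapDelta,f])$ with $\Ch L[d^{\ell+1},X]^*\Ch L[\mapDelta+1,f]_{-,\ell+1}$ (under the identifications of \cref{stofid} and \cref{Stof[lX]appendix}), the transpose of this composite becomes $\Ch L[\mapDelta,f]^*(\gamma_\sigma)\circ \Ch L[d^{\ell+1},X]^*\Ch L[\mapDelta+1,f]_{-,\ell+1}$.

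Finally I would observe that these two transposes agree: this is precisely the content of \cref{firstcommute}, once one notes that the vertical identification appearing there is the equality of functors $\Ch L[d^{\ell+1},X]^*\Ch L[\mapDelta+1,f]^*=\Ch L[\mapDelta,f]^*\Ch L[d^{m+1},Y]^*$, coming from the relation $(\mapDelta+1)\circ d^{\ell+1}=d^{m+1}\circ \mapDelta$ in $\Delta$. Hence both composites around the square of \cref{secondcommute} have the same transpose and are therefore equal. I expect the only delicate point to be the bookkeeping: one must carefully match the implicit identifications (the abuse of notation $\Ch\tau$, the isomorphism of \cref{stofid}, and the canonical isomorphisms produced by functoriality of $\Ch L(-)$ and of enriched left Kan extension) relating the corners of the present square to those appearing in \cref{firstcommute} and \cref{Stadjunct}; there is no genuinely new computation to perform.
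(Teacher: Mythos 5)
Your proposal is correct and is essentially the paper's own argument: the paper likewise obtains the square by transposing the diagram of \cref{firstcommute} along the adjunction $\Ch L[\mapDelta,f]_!\dashv \Ch L[\mapDelta,f]^*$, invoking \cref{stofid} and \cref{Stadjunct} for the identifications. You have simply spelled out the standard adjunction bookkeeping (triangle identity for the counit leg, compatibility of transposes with postcomposition) that the paper leaves implicit.
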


\begin{proof}
    This is obtained by transposing the commutative diagram from \cref{firstcommute} along the adjunction $\Ch L[\mapDelta,f]_!\dashv \Ch L[\mapDelta,f]^*$ using \cref{stofid,Stadjunct}.
\end{proof}

\begin{prop} \label{thirdcommute}
    The following diagram in $[\cC^{\op},\Thnsset]$ commutes. 
    \begin{tz}
        \node[](1) {$(\varepsilon_\cC)_!\St_{\Nh\cC}(\tau [\mapDelta,f])$}; 
        \node[below of=1](2) {$(\varepsilon_\cC)_!\St_{\Nh\cC}(\tau)$}; 
        \node[right of=1,xshift=2.7cm](3) {$\Hom_\cC(-,c)$};
        \node[below of=3](4) {$\Hom_\cC(-,c)$};
        \draw[->](1) to node[left,la]{$(\varepsilon_\cC)_!\St_{\Nh\cC}([\mapDelta,g])$} (2);
        \draw[->](1) to node[above,la]{$\varphi_{\sigma[\mapDelta,f]}$} (3);
        \draw[->](2) to node[below,la]{$\varphi_{\sigma}$} (4);
        \draw[d] (3) to (4);
    \end{tz}
\end{prop}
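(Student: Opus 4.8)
The plan is to realize both composites in the square as adjoint transposes of the \emph{same} morphism in $[\Ch L\repD[m,Y]^{\op},\Thnsset]$, so that commutativity becomes formal. Write $\tau$ for the map to $\Nh\cC$ associated to $\sigma$ as in \cref{not:tauf}. Recall that $\varphi_\sigma$ was defined in \cref{constr:varphisigma} as the transpose, along the composite adjunction $(\varepsilon_\cC)_!(\Ch\tau)_!\dashv(\Ch\tau)^*(\varepsilon_\cC)^*$, of the morphism $\gamma_\sigma\colon\St_{L\repD[m,Y]}(\id_{\repD[m,Y]})\to(\Ch\tau)^*(\varepsilon_\cC)^*\Hom_\cC(-,c)$, where the isomorphism $(\varepsilon_\cC)_!\St_{\Nh\cC}(\tau)\cong(\varepsilon_\cC)_!(\Ch\tau)_!\St_{L\repD[m,Y]}(\id_{\repD[m,Y]})$ of \cref{basechangeStappendix} is used to locate the domain. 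I would prove that $\varphi_{\sigma[\mapDelta,f]}$ is likewise the transpose, along the \emph{same} adjunction, of the morphism $\gamma_\sigma\circ\St_{L\repD[m,Y]}([\mapDelta,f])$, where $\St_{L\repD[m,Y]}([\mapDelta,f])\colon\St_{L\repD[m,Y]}([\mapDelta,f])\to\St_{L\repD[m,Y]}(\id_{\repD[m,Y]})$ is the map of \cref{Stadjunct}. Granting this, the square commutes since transposing $\gamma_\sigma$ yields $\varphi_\sigma$, while transposing $\St_{L\repD[m,Y]}([\mapDelta,f])$ yields $(\varepsilon_\cC)_!(\Ch\tau)_!\St_{L\repD[m,Y]}([\mapDelta,f])$, which by naturality of the isomorphism of \cref{basechangeStappendix} applied to the morphism $[\mapDelta,f]$ in $\sThnssetslice{L\repD[m,Y]}$ equals $(\varepsilon_\cC)_!\St_{\Nh\cC}([\mapDelta,f])$.

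To establish the claim I would first unwind $\varphi_{\sigma[\mapDelta,f]}$ from \cref{constr:varphisigma}. The map to $\Nh\cC$ associated to $\sigma[\mapDelta,f]$ is $\tau[\mapDelta,f]$, and $\Ch(\tau[\mapDelta,f])=\Ch\tau\circ\Ch L[\mapDelta,f]$ by functoriality of $\Ch$ and of $L$, so $(\Ch(\tau[\mapDelta,f]))_!=(\Ch\tau)_!\Ch L[\mapDelta,f]_!$. The counit of the triple composite adjunction $(\varepsilon_\cC)_!(\Ch\tau)_!\Ch L[\mapDelta,f]_!\dashv\Ch L[\mapDelta,f]^*(\Ch\tau)^*(\varepsilon_\cC)^*$ factors as $(\varepsilon_\cC)_!(\Ch\tau)_!$ applied to the counit $\epsilon$ of $\Ch L[\mapDelta,f]_!\dashv\Ch L[\mapDelta,f]^*$, followed by the counit of $(\varepsilon_\cC)_!(\Ch\tau)_!\dashv(\Ch\tau)^*(\varepsilon_\cC)^*$. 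Hence $\varphi_{\sigma[\mapDelta,f]}$ is the transpose along $(\varepsilon_\cC)_!(\Ch\tau)_!\dashv(\Ch\tau)^*(\varepsilon_\cC)^*$ of the morphism $\epsilon\circ\Ch L[\mapDelta,f]_!(\gamma_{\sigma[\mapDelta,f]})$, once its domain $\Ch L[\mapDelta,f]_!\St_{L\repD[\ell,X]}(\id_{\repD[\ell,X]})$ is identified with $\St_{L\repD[m,Y]}([\mapDelta,f])$ via \cref{Stof[lX]appendix} (equivalently \cref{stofid} and \cref{Stadjunct}). Now \cref{secondcommute} is precisely the statement that, under this identification, $\epsilon\circ\Ch L[\mapDelta,f]_!(\gamma_{\sigma[\mapDelta,f]})$ equals $\gamma_\sigma\circ\St_{L\repD[m,Y]}([\mapDelta,f])$, which closes the claim.

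The purely formal manipulations---factoring the counit of a composite of adjunctions, commuting left Kan extensions past transposition---are routine, and the only substantive inputs are \cref{secondcommute} and the naturality of \cref{basechangeStappendix}. The main obstacle I anticipate is bookkeeping: one must check that the several natural isomorphisms in play---the identification of $\St_{L\repD[m,Y]}([\mapDelta,f])$ with a left Kan extension of $\St_{L\repD[\ell,X]}(\id_{\repD[\ell,X]})$, the identifications of $(\varepsilon_\cC)_!\St_{\Nh\cC}(\tau)$ and of $(\varepsilon_\cC)_!\St_{\Nh\cC}(\tau[\mapDelta,f])$ via \cref{basechangeStappendix}, and the counit factorization---are mutually compatible, so that ``the two transposes agree'' is a literal equality of morphisms rather than an equality up to a further coherence isomorphism. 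The formal lemmas \cref{rem:mates} and \cref{unitisFonhoms} are tailored to carry out exactly this verification, so I expect no new ingredient beyond \cref{secondcommute} to be required.
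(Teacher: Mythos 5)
Your proof is correct and is essentially the paper's own argument: the paper likewise obtains the square by transposing the diagram of \cref{secondcommute} along the adjunctions $(\Ch\tau)_!\dashv(\Ch\tau)^*$ and $(\varepsilon_\cC)_!\dashv(\varepsilon_\cC)^*$, using \cref{basechangeStappendix} and the definitions of $\varphi_\sigma$ and $\varphi_{\sigma[\mapDelta,f]}$. Your write-up simply makes explicit the counit factorization of the composite adjunction and the compatibility bookkeeping (via \cref{lem:Stvssigma}, \cref{Stadjunct}, \cref{rem:mates}) that the paper leaves implicit.
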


\begin{proof}
    This is obtained by transposing the diagram from \cref{secondcommute} along the adjunctions $(\Ch\tau)_!\dashv (\Ch\tau)^*$ using \cref{basechangeStappendix} and the definitions of $\varphi_\sigma$ and $\varphi_{\sigma[\mapDelta,f]}$.
\end{proof}

\begin{constr}
    By \cref{thirdcommute}, the $\Thnsset$-enriched natural transformations in $[\cC^{\op},\Thnsset]$ 
    \[ \varphi_\sigma\colon (\varepsilon_\cC)_!\St_{\Nh\cC}(\tau)\to \Hom_\cC(-,c), \]
    with $\sigma\colon \repD[m,\defThn,\defS]\to \int_\cC^\Nh\Hom_\cC(-,c)$ in $\sThnsset$, form a natural cone over $\Hom_\cC(-,c)$. Hence we get an induced $\Thnsset$-enriched natural transformation in $[\cC^{\op},\Thnsset]$ 
    \[ \textstyle\varphi_c\colon (\varepsilon_\cC)_!\St_{\Nh\cC}\int_\cC^\Nh\Hom_\cC(-,c)\cong \colim_{\repD[m,\defThn,\defS]\xrightarrow{\sigma} \int_\cC^\Nh\Hom_\cC(-,c)} (\varepsilon_\cC)_!\St_{\Nh\cC}(\tau)\to \Hom_\cC(-,c). \]
\end{constr}

We first show that $\varphi_c$ provides a retract of the map $(\varepsilon_\cC)_! \St_{\Nh\cC}(\id_c)$.

\begin{prop}
    The following composite in $[\cC^{\op},\Thnsset]$ is the identity. 
    \[ \textstyle\Hom_\cC(-,c)\cong (\varepsilon_\cC)_! \St_{\Nh\cC}(c)\xrightarrow{(\varepsilon_\cC)_! \St_{\Nh\cC}(\id_c)} (\varepsilon_\cC)_! \St_{\Nh\cC}\int_\cC\Hom_\cC(-,c)\xrightarrow{\varphi_c} \Hom_\cC(-,c)\]
\end{prop}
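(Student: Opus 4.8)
The plan is to reduce the assertion to the single ``universal'' instance of the comparison map attached to the distinguished $0$-simplex $\sigma_0=\id_c\colon \repD[0]\to \int_\cC^\Nh\Hom_\cC(-,c)$ of \cref{not:tauf}, and then to show that this instance is literally the identity. Since $\St_{\Nh\cC}$ and $(\varepsilon_\cC)_!$ are left adjoints, the canonical presentation of $\int_\cC^\Nh\Hom_\cC(-,c)$ as a colimit of representables $\repD[m,\defThn,\defS]$ indexed by the category of elements $\int_{\DThnS}(\int_\cC^\Nh\Hom_\cC(-,c))$ is carried by $(\varepsilon_\cC)_!\St_{\Nh\cC}$ to the colimit presentation $(\varepsilon_\cC)_!\St_{\Nh\cC}\int_\cC^\Nh\Hom_\cC(-,c)\cong \colim_\sigma (\varepsilon_\cC)_!\St_{\Nh\cC}(\tau)$ used to define $\varphi_c$. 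Under this identification, the morphism $\id_c$ in $\sThnssetslice{\Nh\cC}$ is precisely the colimit-cone inclusion at the object $\sigma_0$ of the category of elements (for which the associated map $\tau$ is $c\colon \repD[0]\to \Nh\cC$), so that $(\varepsilon_\cC)_!\St_{\Nh\cC}(\id_c)$ is this cone component; by the very construction of $\varphi_c$ as the map induced from the cone $\{\varphi_\sigma\}_\sigma$, the composite in question equals $\varphi_{\sigma_0}$, modulo the isomorphism $\Hom_\cC(-,c)\cong (\varepsilon_\cC)_!\St_{\Nh\cC}(c)$ of \cref{lem:computeLKE}. Hence it suffices to prove that $\varphi_{\sigma_0}$ is the identity of $\Hom_\cC(-,c)$.

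Next I would unwind \cref{constr:varphisigma} for $\sigma_0$, taking $m=0$ and $Y=\repD[0]$. Here $L\repD[0,\repD[0]]\cong \repD[0]$, and by \cref{Sh1issigma} one has $\Ch L\repD[1,\repD[0]]\cong \Sigma\repD[0]\cong \Ch L\repD[0,\repD[0]]\amalg_{[0]}\Sigma\repD[0]$; moreover $\Pi_{0,\repD[0]}$ is the identity of $\Sigma\repD[0]$, since it is the identity on objects and on the only nontrivial hom $\Thn$-space $\Hom_{\Sigma\repD[0]}(0,1)=\repD[0]$ it must be the unique endomorphism of the terminal $\Thn$-space. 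By \cref{boat} and \cref{compisinclusion}, the $\Thnsset$-enriched functor $F_{\sigma_0}\colon \Sigma\repD[0]\to \cC$ classifying $\sigma_0$ sends both objects to $c$ (its restriction along the coproduct inclusion being $\varepsilon_\cC\circ\Ch(c)$), and its action $\repD[0]=\Hom_{\Sigma\repD[0]}(0,1)\to \Hom_\cC(c,c)$ on homs picks out $\id_c$ because $\sigma_0=\id_c$. Combining this with \cref{prop:StF0} (or \cref{stofid}), which identifies $\St_{\repD[0]}(\id_{\repD[0]})$ with the corepresentable $\Hom_{[0]}(-,0)$, the natural transformation $\gamma_{\sigma_0}\colon \Hom_{[0]}(-,0)\to (\Ch c)^*(\varepsilon_\cC)^*\Hom_\cC(-,c)$ of \cref{constr:varphisigma} is, by the enriched Yoneda lemma, the one classified by the element $\id_c\in \Hom_\cC(c,c)$.

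Finally I would carry out the two transpositions defining $\varphi_{\sigma_0}$. Transposing $\gamma_{\sigma_0}$ along $(\Ch c)_!\dashv(\Ch c)^*$ yields a map $\Hom_{\Ch\Nh\cC}(-,c)\cong (\Ch c)_!\Hom_{[0]}(-,0)\to (\varepsilon_\cC)^*\Hom_\cC(-,c)$; by \cref{unitisFonhoms}, \cref{rem:mates}, and the enriched Yoneda lemma this map sends $\id_c$ to $\id_c$, hence coincides with $(\varepsilon_\cC)_{-,c}$, which by \cref{unitisFonhoms} is the unit of $(\varepsilon_\cC)_!\dashv(\varepsilon_\cC)^*$ at $\Hom_{\Ch\Nh\cC}(-,c)$. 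Transposing once more along $(\varepsilon_\cC)_!\dashv(\varepsilon_\cC)^*$, the transpose of a unit is an identity, so $\varphi_{\sigma_0}$ is the identity of $(\varepsilon_\cC)_!\Hom_{\Ch\Nh\cC}(-,c)$, which under the isomorphisms of \cref{Stofc} and \cref{lem:computeLKE} (and \cref{basechangeStappendix}) is $\id_{\Hom_\cC(-,c)}$, as desired. I expect the \textbf{main obstacle} to be the bookkeeping of these transpositions together with the verification that the various isomorphisms at play --- in particular the one from \cref{lem:computeLKE}, the one from \cref{Stofc}, and the identification of $(\varepsilon_\cC)_!\St_{\Nh\cC}(\id_c)$ with the colimit-cone component --- are all the ``same'' isomorphism, so that no stray automorphism of $\Hom_\cC(-,c)$ is introduced; once $\sigma_0$ is recognized as the trivial datum everything else is routine.
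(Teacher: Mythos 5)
Your proposal is correct and follows essentially the same route as the paper: both reduce the composite to the colimit-cone component $\varphi_{\id_c}$ at $\sigma=\id_c$ (with $\tau=c$), identify $\gamma_{\id_c}$ as the map $\repS[0]\to\Hom_\cC(c,c)$ picking out $\id_c$ via the classifying functor $F_{\id_c}$, and then conclude via \cref{unitisFonhoms} together with the enriched Yoneda lemma (your "transpose of the unit is the identity" phrasing is the same formal step the paper performs by evaluating at $\id_c$). The extra unwinding you do (e.g.\ that $\Pi_{0,\repD[0]}$ is the identity) is harmless but not needed.
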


\begin{proof}
    First note that the above composite is simply given by the component of the colimit at $\id_c\colon \repD[0]\to \int_\cC \Hom_\cC(-,c)$ of $\varphi_c$, i.e., it is the $\Thnsset$-enriched natural transformation in $[\cC^{\op},\Thnsset]$ 
    \[ \varphi_{\id_c}\colon (\varepsilon_\cC)_! \St_{\Nh\cC}(c)\cong (\varepsilon_\cC)_! c^* \St_{F[0]}(\id_{F[0]})\to \Hom_\cC(-,c) \]
    from \cref{constr:varphisigma} taking $\sigma=\id_c\colon \repD[0]\to \int_\cC \Hom_\cC(-,c)$ and $\tau=c\colon \repD[0]\to \Nh\cC$. By noticing that $F_{\id_c}\colon [1]\to \cC$ is the $\Thnsset$-enriched that picks the identity morphism $\id_c$ in~$\cC$, we see that, by construction, the $\Thnsset$-enriched natural transformation $\varphi_{\id_c}$ corresponds under the adjunctions $c_!\dashv c^*$ and $(\varepsilon_\cC)_!\dashv (\varepsilon_\cC)^*$ to the map in $[\Ch\repD[0],\Thnsset]\cong \Thnsset$ 
    \[ \gamma_{\id_c}\colon \St_{\repD[0]}(\id_{\repD[0]})\cong \repS[0]\to c^*(\varepsilon_\cC)^* \Hom_\cC(-,c)\cong \Hom_\cC(c,c), \]
    which picks the identity $\id_c$. Using the Yoneda Lemma and \cref{unitisFonhoms} in the case where $F=F_{\id_c}$, we get that the component at $c$ of the $\Thnsset$-enriched natural transformation
    \[ \varphi_{\id_c}\colon \Hom_\cC(-,c)\cong (\varepsilon_\cC)_! \St_{\Nh\cC}(c)\to \Hom_\cC(-,c) \]
    takes $\id_c$ to $\id_c$ and so must be the identity. 
\end{proof}

We now show that the construction $\varphi_c$ is natural in $c\in \cC$. Let us fix a morphism $g\colon c\to d$ in the underlying category of the $\Thnsset$-enriched category $\cC$. Let us further fix $m\geq 0$, a connected $\Thn$-space $Y\in \Thnsset$, and a map $\sigma\colon \repD[m,Y]\to \int_\cC^\Nh\Hom_\cC(-,c)$ in $\sThnsset$.

\begin{prop} \label{firstnatc}
    The following diagram in $[\Ch L\repD[m,Y]^{\op},\Thnsset]$ commutes. 
    \begin{tz}
        \node[](1) {$\St_{L\repD[m,Y]}(\id_{\repD[m,Y]})$}; 
        \node[right of=1,xshift=3.8cm](2) {$(\varepsilon_\cC)^*(\Ch\tau)^*\Hom_\cC(-,c)$}; 
        \node[below of=2](3) {$(\varepsilon_\cC)^*(\Ch\tau)^*\Hom_\cC(-,d)$};

        \draw[->] (1) to node[above,la]{$\gamma_\sigma$} (2);
        \draw[->] (1) to node[below,la]{$\gamma_{\int_\cC^\Nh g_*\sigma}$} (3);
        \draw[->] (2) to node[right,la]{$(\varepsilon_\cC)^* (\Ch\tau)^* g_*$} (3);
    \end{tz}
\end{prop}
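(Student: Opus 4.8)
The statement asserts the naturality in $c$ of the comparison data constructed via $\gamma_\sigma$; concretely it says that post-composing the map $\gamma_\sigma$ with the action of $g\colon c\to d$ on representables recovers the analogous map built from the pushed-forward section $\int_\cC^\Nh g_* \sigma\colon \repD[m,Y]\to \int_\cC^\Nh\Hom_\cC(-,d)$. The plan is to unravel both $\gamma_\sigma$ and $\gamma_{\int_\cC^\Nh g_*\sigma}$ through their defining \cref{constr:varphisigma}, and reduce the claimed commutativity to a statement about the $\Thnsset$-enriched functors $F_\sigma$ and $F_{\int_\cC^\Nh g_*\sigma}$ produced by \cref{boat}. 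The key observation will be that, under the bijection of \cref{boat}, pushing $\sigma$ forward along $g_*$ corresponds precisely to post-composing the classifying functor $F_\sigma$ with $g$: more precisely, $F_{\int_\cC^\Nh g_*\sigma}$ factors as $\Ch L\repD[m,Y]\amalg_{[0]}\Sigma Y\xrightarrow{F_\sigma} \cC$ followed by "apply $g$ to the cone point", i.e.~there is a commutative diagram relating $F_\sigma$, $F_{\int_\cC^\Nh g_*\sigma}$ and the morphism $g\in \Hom_\cC(c,d)$ seen as a $\Thnsset$-enriched functor out of the directed category on $\{0,1\}$ glued appropriately.

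The steps, in order, would be: (i) spell out, using the description of $\int_\cC^\Nh$ on objects from \cref{subsec:Grothendieck}, that the map $\int_\cC^\Nh g_*\colon \int_\cC^\Nh\Hom_\cC(-,c)\to \int_\cC^\Nh\Hom_\cC(-,d)$ acts at level $0$ by post-composition with $g$ on the $\coprod_{a} \Hom_\cC(a,c)$ summands, hence the composite $\tau' = (\int_\cC^\Nh g_*\sigma$ followed by the projection to $\Nh\cC)$ equals $\tau$ itself (the underlying map to $\Nh\cC$ does not change, only the marking at the cone point does); (ii) deduce from \cref{boat} and the naturality of the adjunction $\Ch L\dashv I\Nh$ that $F_{\int_\cC^\Nh g_*\sigma} = g\circ F_\sigma'$ in the sense that the square
\begin{tz}
\node(1){$\Ch L\repD[m,Y]\amalg_{[0]}\Sigma Y$};
\node[right of=1,xshift=3cm](2){$\cC$};
\node[below of=2](3){$\cC$};
\draw[->] (1) to node[above,la]{$F_\sigma$} (2);
\draw[->] (1) to node[below,la]{$F_{\int_\cC^\Nh g_*\sigma}$} (3);
\draw[->] (2) to node[right,la]{$g_*$} (3);
\end{tz}
commutes, where $g_*\colon \cC\to\cC$ is not literally an endofunctor but the diagram is read as "the two $\Thnsset$-enriched natural transformations into $\Hom_\cC(-,c)$ resp.~$\Hom_\cC(-,d)$ built from hom $\Thn$-spaces with target $m+1$ agree after post-composing with $g_{-,*}\colon \Hom_\cC(-,c)\to\Hom_\cC(-,d)$"; (iii) transport this compatibility through the projection $\Pi_{m,Y}$ from \cref{sec:B1}, the restriction along $\Ch L[d^{m+1},Y]$, and the isomorphism of \cref{stofid}, exactly as in \cref{constr:varphisigma}, to conclude that $(\varepsilon_\cC)^*(\Ch\tau)^*g_*\circ \gamma_\sigma = \gamma_{\int_\cC^\Nh g_*\sigma}$.

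The main obstacle I anticipate is step (ii): making precise and verifying that the bijection of \cref{boat} is natural with respect to the $g_*$-action on the Grothendieck construction. This requires chasing the chain of adjunction isomorphisms in the proof of \cref{boat} (Yoneda, the pullback defining $(\int_\cC^\Nh F)_m$, and $\Ch L\dashv I\Nh$) and checking that at each stage post-composition with $g$ in $\Hom_\cC(-,c)$ corresponds to post-composition with $g$ at the cone point of the classifying functor. Once this is in hand, the rest is a diagram chase entirely parallel to \cref{firstcommute,secondcommute,thirdcommute} — indeed the present lemma is the "$c$-variable" analogue of \cref{firstcommute}, with the role of the simplicial map $[\mapDelta,f]$ replaced by the morphism $g$ of $\cC$ — so it should go through by the same formal manipulations (transposing along $(\Ch\tau)_!\dashv(\Ch\tau)^*$ and $(\varepsilon_\cC)_!\dashv(\varepsilon_\cC)^*$, using \cref{unitisFonhoms,rem:mates}). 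I would present step (ii) as the one genuinely new computation and treat (i) and (iii) as routine.
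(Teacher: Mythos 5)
Your proposal is correct and follows essentially the same route as the paper: unwind $\gamma_\sigma$ via \cref{constr:varphisigma}, observe that $\int_\cC^\Nh g_*$ leaves the underlying map $\tau$ to $\Nh\cC$ unchanged and only post-composes the cone-point datum with $g$, so that $F_{\int_\cC^\Nh g_*\sigma}$ differs from $F_\sigma$ exactly by $g_*$ on the $\Sigma Y$-edge, and then transport back through $\Pi_{m,Y}$, $\Ch L[d^{m+1},Y]^*$ and \cref{stofid}. The paper packages your step (ii) by noting that $F_\sigma$ and $F_{\int_\cC^\Nh g_*\sigma}$ agree after composing with $\Ch L[d^{m+1},Y]$ and $\Pi_{m,Y}$, so that it suffices to check the diagram at the single object $m$, where the two maps are $f$ and $g_*f$ on $\Hom_{\Ch L\repD[m+1,Y]}(m,m+1)=Y$ -- the same computation you propose, just organized as a one-object check.
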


\begin{proof}
By \cref{stofid} and the definitions of $\gamma_\sigma$ and $\gamma_{\int_\cC^\Nh g_*\sigma}$, this amounts to show that the following diagram in $[\Ch L\repD[m,Y]^{\op},\Thnsset]$ commutes. 
    \begin{tz}
        \node[](1) {$\Ch L[d^{m+1},Y]^*\Hom_{\Ch L\repD[m+1,Y]}(-,m+1)$}; 
        \node[right of=1,xshift=7.7cm](2) {$\Ch L[d^{m+1},Y]^*\Pi_{m,Y}^* F_\sigma^* \Hom_\cC(-,c)$}; 
        \node[below of=2](3) {$\Ch L[d^{m+1},Y]^*\Pi_{m,Y}^* F_{\int_\cC^\Nh g_*\sigma}^* \Hom_\cC(-,d)$};

        \draw[->] (1) to node[above,la,yshift=5pt]{$\Ch L[d^{m+1},Y]^*(\Pi_{m,Y}F_\sigma)_{-,m+1}$} (2);
        \draw[->] (1) to node[below,la,xshift=-40pt]{$\Ch L[d^{m+1},Y]^*(\Pi_{m,Y}F_{\int_\cC^\Nh g_*\sigma})_{-,m+1}$} (3);
        \draw[->] (2) to node[right,la]{$\Ch L[d^{m+1},Y]^*\Pi_{m,Y}^* F_\sigma^* g_*$} (3);
    \end{tz}
As $\Ch L[d^{m+1},Y]\Pi_{m,Y} F_\sigma=(\Ch\tau) \varepsilon_\cC=\Ch L[d^{m+1},Y]\Pi_{m,Y} F_{\int_\cC^\Nh g_*\sigma}$, to check the commutativity of the above diagram, it is enough to verify that it commutes at the object $m\in \Ch L\repD[m,Y]$. For this, note that the map in $\sThnsset$
\[ \textstyle Y\cong \repD[0,Y]\xrightarrow{[\langle m\rangle,Y]} \repD[m,Y] \xrightarrow{\sigma}\int_\cC^\Nh\Hom_\cC(-,c)\xrightarrow{\int_\cC^\Nh g_*} \int_\cC^\Nh\Hom_\cC(-,d) \]
is given by Yoneda and the connectedness of $Y$ by a map in $\Thnsset$
\[ Y\xrightarrow{f}\Hom_\cC(e,c)\xrightarrow{g_*} \Hom_\cC(e,d), \]
where $f$ is the map corresponding to $Y\cong \repD[0,Y]\xrightarrow{[\langle m\rangle,Y]} \repD[m,Y] \xrightarrow{\sigma}\int_\cC^\Nh\Hom_\cC(-,c)$. Hence the commutativity of the above diagram follows from the commutativity of the following diagram.
\begin{center} \hfill
\begin{tikzpicture}[scale=1]
        \node[](1) {$\Hom_{\Ch L\repD[m+1,Y]}(m,m+1)=Y$}; 
        \node[right of=1,xshift=3.7cm](2) {$\Hom_\cC(e,c)$}; 
        \node[below of=2](3) {$\Hom_\cC(e,d)$};

        \draw[->] (1) to node[above,la]{$f$} (2);
        \draw[->] (1) to node[below,la,xshift=-5pt]{$g_*f$} (3);
        \draw[->] (2) to node[right,la]{$g_*$} (3);
    \end{tikzpicture} \qedhere
    \end{center}
\end{proof}

\begin{prop} \label{secondnatc}
    The following diagram in $[\cC^{\op},\Thnsset]$ commutes. 
    \begin{tz}
        \node[](1) {$(\varepsilon_\cC)_!\St_{\Nh\cC}(\tau)$}; 
        \node[right of=1,xshift=2.3cm](2) {$\Hom_\cC(-,c)$}; 
        \node[below of=2](3) {$\Hom_\cC(-,d)$};

        \draw[->] (1) to node[above,la]{$\varphi_\sigma$} (2);
        \draw[->] (1) to node[below,la]{$\varphi_{\int_\cC^\Nh g_*\sigma}$} (3);
        \draw[->] (2) to node[right,la]{$g_*$} (3);
    \end{tz}
\end{prop}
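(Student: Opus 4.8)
The plan is to deduce \cref{secondnatc} by transposing the commutative triangle of \cref{firstnatc} along the composite adjunction $(\varepsilon_\cC)_!(\Ch\tau)_!\dashv (\Ch\tau)^*(\varepsilon_\cC)^*$. Recall from \cref{constr:varphisigma} that, by construction, the $\Thnsset$-enriched natural transformation $\varphi_\sigma\colon (\varepsilon_\cC)_!\St_{\Nh\cC}(\tau)\to \Hom_\cC(-,c)$ is the transpose of $\gamma_\sigma\colon \St_{L\repD[m,Y]}(\id_{\repD[m,Y]})\to (\varepsilon_\cC)^*(\Ch\tau)^*\Hom_\cC(-,c)$, where the source is identified with $(\varepsilon_\cC)_!(\Ch\tau)_!\St_{L\repD[m,Y]}(\id_{\repD[m,Y]})$ via \cref{basechangeStappendix} in the case $f=\tau$.

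First I would observe that $\int_\cC^\Nh g_*\colon \int_\cC^\Nh\Hom_\cC(-,c)\to \int_\cC^\Nh\Hom_\cC(-,d)$ is a morphism in $\sThnssetslice{\Nh\cC}$, so the composite of $\int_\cC^\Nh g_*\sigma$ with the projection $\int_\cC^\Nh\Hom_\cC(-,d)\to \Nh\cC$ is again $\tau$ in the sense of \cref{not:tauf}. In particular, $\varphi_{\int_\cC^\Nh g_*\sigma}$ has the same source $(\varepsilon_\cC)_!\St_{\Nh\cC}(\tau)$ as $\varphi_\sigma$, and by \cref{constr:varphisigma} applied to the map $\int_\cC^\Nh g_*\sigma$ it is the transpose of $\gamma_{\int_\cC^\Nh g_*\sigma}$ under the very same identification coming from \cref{basechangeStappendix} (which only depends on $\tau$, hence is unchanged).

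Then the statement follows from naturality of the transposition bijection in the target variable, applied twice (once for $(\Ch\tau)_!\dashv(\Ch\tau)^*$ and once for $(\varepsilon_\cC)_!\dashv(\varepsilon_\cC)^*$). Indeed, \cref{firstnatc} gives the identity $(\varepsilon_\cC)^*(\Ch\tau)^* g_*\circ\gamma_\sigma=\gamma_{\int_\cC^\Nh g_*\sigma}$ in $[\Ch L\repD[m,Y]^{\op},\Thnsset]$; transposing both sides, the left-hand side becomes $g_*\circ\varphi_\sigma$, since the transpose of $(\varepsilon_\cC)^*(\Ch\tau)^* g_*\circ\gamma_\sigma$ is obtained from the transpose $\varphi_\sigma$ of $\gamma_\sigma$ by post-composing with $g_*$, while the right-hand side becomes $\varphi_{\int_\cC^\Nh g_*\sigma}$ by the previous paragraph. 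This is exactly the commutativity of the square in \cref{secondnatc}.

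The only genuine bookkeeping point is to confirm that the two transposes have literally the same source, i.e.\ that $\tau$ and the isomorphism of \cref{basechangeStappendix} used in the definitions of $\varphi_\sigma$ and $\varphi_{\int_\cC^\Nh g_*\sigma}$ agree; both are immediate because everything takes place over $\Nh\cC$. Beyond that the argument is a purely formal consequence of \cref{firstnatc} and the naturality of adjunction units/counits, entirely parallel to the way \cref{thirdcommute} was obtained from \cref{secondcommute}.
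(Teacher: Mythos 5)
Your proposal is correct and matches the paper's own argument: the paper likewise obtains \cref{secondnatc} by transposing the triangle of \cref{firstnatc} along the adjunctions $(\Ch\tau)_!\dashv(\Ch\tau)^*$ and $(\varepsilon_\cC)_!\dashv(\varepsilon_\cC)^*$, using \cref{basechangeStappendix} and the definitions of $\varphi_\sigma$ and $\varphi_{\int_\cC^\Nh g_*\sigma}$. Your added check that $\int_\cC^\Nh g_*\sigma$ projects to the same $\tau$, so both transposes share the source $(\varepsilon_\cC)_!\St_{\Nh\cC}(\tau)$, is exactly the bookkeeping point implicit in the paper's proof.
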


\begin{proof}
    This is obtained by transposing the diagram from \cref{firstnatc} along the adjunctions $(\Ch\tau)_!\dashv (\Ch\tau)^*$ and $(\varepsilon_\cC)_!\dashv(\varepsilon_\cC)^*$ using \cref{basechangeStappendix} and the definitions of $\varphi_\sigma$ and $\varphi_{\int_\cC^\Nh g_*\sigma}$.
\end{proof}

\begin{prop}
    The $\Thnsset$-enriched natural transformations in $[\cC^{\op},\Thnsset]$ 
    \[ \textstyle\varphi_c\colon (\varepsilon_\cC)_!\St_{\Nh\cC}\int_\cC^\Nh\Hom_\cC(-,c)\to \Hom_\cC(-,c) \] 
    are natural in objects $c\in \cC$. 
\end{prop}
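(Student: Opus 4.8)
The plan is to obtain the naturality of $\varphi_c$ in $c$ essentially for free from the componentwise compatibility already established in \cref{secondnatc}, by exploiting the presentation of $\varphi_c$ as a morphism induced between colimits.

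First I would recall that, since $(\varepsilon_\cC)_!\St_{\Nh\cC}$ is a composite of left adjoints and hence preserves colimits, for every object $c\in \cC$ there is a canonical isomorphism in $[\cC^{\op},\Thnsset]$
\[ \textstyle (\varepsilon_\cC)_!\St_{\Nh\cC}\int_\cC^\Nh\Hom_\cC(-,c)\cong \colim_{\repD[m,\defThn,\defS]\xrightarrow{\sigma}\int_\cC^\Nh\Hom_\cC(-,c)}(\varepsilon_\cC)_!\St_{\Nh\cC}(\tau), \]
where $\tau$ denotes the composite $\repD[m,\defThn,\defS]\xrightarrow{\sigma}\int_\cC^\Nh\Hom_\cC(-,c)\to\Nh\cC$, and that by construction $\varphi_c$ is the unique morphism out of this colimit whose leg at $\sigma$ is the $\Thnsset$-enriched natural transformation $\varphi_\sigma$ from \cref{constr:varphisigma}. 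Next, for a morphism $g\colon c\to d$ in the underlying category of $\cC$, I would observe that $\int_\cC^\Nh g_*$ commutes with the projections to $\Nh\cC$, so that post-composition with it reindexes the colimit diagram for $d$ along $\sigma\mapsto\int_\cC^\Nh g_*\sigma$ while leaving the associated composite $\tau$ unchanged; consequently $(\varepsilon_\cC)_!\St_{\Nh\cC}\int_\cC^\Nh g_*$ is precisely the map on colimits induced by this reindexing, carrying the colimit leg at $\sigma$ to the leg at $\int_\cC^\Nh g_*\sigma$ via the identity of $(\varepsilon_\cC)_!\St_{\Nh\cC}(\tau)$.

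It then remains to check that the square with $\varphi_c$, $\varphi_d$, $g_*$, and $(\varepsilon_\cC)_!\St_{\Nh\cC}\int_\cC^\Nh g_*$ commutes. Since the colimit legs at the various $\sigma$ are jointly epimorphic, it suffices to precompose with each of them, which turns the square into the identity $g_*\circ\varphi_\sigma=\varphi_{\int_\cC^\Nh g_*\sigma}$ of $\Thnsset$-enriched natural transformations; this is exactly the content of \cref{secondnatc}. The only delicate point is the bookkeeping of the previous paragraph — verifying that replacing $\sigma$ by $\int_\cC^\Nh g_*\sigma$ does not alter $\tau$ and that the reindexed colimit cone genuinely computes $(\varepsilon_\cC)_!\St_{\Nh\cC}\int_\cC^\Nh g_*$ — but this is immediate from the definitions of $\int_\cC^\Nh$ and of $\varphi_c$, and all the substantive work has already been carried out in \cref{firstnatc,secondnatc}.
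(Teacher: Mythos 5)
Your proposal is correct and follows essentially the same route as the paper: both present $(\varepsilon_\cC)_!\St_{\Nh\cC}\int_\cC^\Nh\Hom_\cC(-,c)$ as the colimit over maps $\sigma\colon \repD[m,\defThn,\defS]\to\int_\cC^\Nh\Hom_\cC(-,c)$ and use the universal property to reduce the naturality square to its components, where it becomes exactly the identity $g_*\circ\varphi_\sigma=\varphi_{\int_\cC^\Nh g_*\sigma}$ from \cref{secondnatc}. Your extra bookkeeping about $\tau$ being unchanged under $\sigma\mapsto\int_\cC^\Nh g_*\sigma$ is a point the paper leaves implicit, and it is accurate.
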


\begin{proof}
    We need to show that, given a morphism $g\colon c\to d$ in $\cC$, then the following diagram commutes.
    \begin{tz}
        \node[](1) {$(\varepsilon_\cC)_!\St_{\Nh\cC}\int_\cC^\Nh\Hom_\cC(-,c)$}; 
        \node[below of=1](2) {$(\varepsilon_\cC)_!\St_{\Nh\cC}\int_\cC^\Nh\Hom_\cC(-,d)$}; 
        \node[right of=1,xshift=3.3cm](3) {$\Hom_\cC(-,c)$};
        \node[below of=3](4) {$\Hom_\cC(-,d)$};
        \draw[->](1) to node[left,la]{$(\varepsilon_\cC)_!\St_{\Nh\cC}\int_\cC^\Nh g_*$} (2);
        \draw[->](1) to node[above,la]{$\varphi_c$} (3);
        \draw[->](2) to node[below,la]{$\varphi_d$} (4);
        \draw[->] (3) to node[right,la]{$g_*$} (4);
    \end{tz}
    As $(\varepsilon_\cC)_!\St_{\Nh\cC}\int_\cC^\Nh\Hom_\cC(-,c)\cong \colim_{\repD[m,\defThn,\defS]\xrightarrow{\sigma} \int_\cC^\Nh\Hom_\cC(-,c)} (\varepsilon_\cC)_!\St_{\Nh\cC}(\tau)$, by the universal property of colimits, it is enough to check that this diagram commute at a specific component $\sigma\colon \repD[m,\defThn,\defS]\to \int^\Nh_\cC\Hom_\cC(-,c)$ of the colimit. However, this is precisely \cref{secondnatc}.
\end{proof}

\bibliographystyle{amsalpha}
\bibliography{refstraightening}

\end{document}